\definecolor{gray}{gray}{.75}
\definecolor{gray2}{gray}{.50}
\renewcommand{\thechapter}{\Roman{chapter}}
\renewcommand{\thesection}{\arabic{section}}
\numberwithin{equation}{chapter}
\renewcommand{\thetheorem}{(\Roman{chapter},~\arabic{section}.\arabic{theorem})}
\renewcommand{\theequation}{\Roman{chapter},~\arabic{equation}}
\begin{document}

\title{An introduction to Seiberg-Witten theory on closed 3-manifolds}
\author{Michael Bohn\\[12pt] \small\it Department of Mathematics, Bonn
University\\ \small\it Beringstr.1, D-53115 Bonn, Germany\\ \small
\textrm{e-mail: mbohn@math.uni-bonn.de}}
\date{}

\maketitle

\abstract

This is a version of the author's diploma thesis written at the
University of Cologne in 2002/03. The topic is the construction
of Seiberg-Witten invariants of closed 3-manifolds. In analogy to
the four dimensional case, the structure of the moduli space is
investigated. The Seiberg-Witten invariants are defined and their
behaviour under deformation of the Riemannian metric is analyzed.

Since it is essentially an exposition of results which were
already known during the time of writing, the thesis has not been
published. In particular, the author does not claim any
originality concerning the results. Moreover, new developments of
the theory are not included. However, the detailed
account---together with the appendices on the required functional
analytic and geometric background---might be of interest for
people starting to work in the area of gauge field theory.

\pagestyle{myheadings}\pagenumbering{roman}
\markboth{\textsc{Contents}}{\textsc{Contents}} \tableofcontents
\cleardoublepage\pagenumbering{arabic}

\markboth{\textsc{Introduction}}{\textsc{Introduction}}

\chapter*{Introduction}
\addcontentsline{toc}{chapter}{Introduction}

\section{Historical remarks and background}

Seiberg-Witten theory is a gauge theoretical approach to low
dimensional topology. By making use of Riemannian geometry and
the analysis of partial differential equations, one finds smooth
invariants of three or four dimensional manifolds, thus entering
the area of differential topology.

\subsection{Low dimensional topology}

There has been a long standing interest in understanding the
structure of manifolds. In this process, the dimensions three and
four have always played a prominent role since an easy
classification as in the two dimensional case is not possible. On
the other hand, the very efficient topological tools which have
proved useful in the theory of higher dimensional manifolds
cannot be applied in the dimensions three and four. Among many
other results alluding to these peculiarities are, for example,
the discovery of exotic structures on $\R^4$.

At the beginning of the 1980s, the work of M. Freedman gave a new
insight in the topological classification of simply connected
compact 4-manifolds via their intersection forms. About the same
time, S.K. Donaldson succeeded in establishing criteria how the
intersection form can prevent a topological 4-manifold from
being smoothable.\\

\noindent\textbf{Mathematical gauge theory.} The main idea of
Donaldson's work is to study solutions of the anti self-dual
instanton equations---a set of partial differential equations
arising from Yang-Mills theory, which describes elementary
particles in physics. By making use of gauge symmetries, one
defines the so-called {\em moduli space} whose structure reflects
much of the underlying manifold's topology. It turns out that
generically, the moduli space is a finite dimensional manifold
with boundary except at a finite number of singular points
occurring at solutions having too much gauge symmetry. By means of
establishing a relation between these singularities and the
intersection form, Donaldson's famous Theorem A excludes certain
4-manifolds from admitting a differentiable structure.

Subsequent work---part of which culminated in the definition of
Donaldson's polynomial invariants---has emphasized the
fruitfulness of the gauge theoretical approach to low dimensional
topology. As self-contained introductions to this
development---written by some of the major participants---the
books of Donaldson \& Kronheimer \cite{DK} and Freed \& Uhlenbeck
\cite{FU} are highly
recommended. \\

\noindent\textbf{Gauge theory on 3-manifolds.} The basic idea of
gauge theory is not restricted to the four dimensional setting.
It thus was soon applied to 3-manifolds as well. However, since
the anti self-dual equations cannot be formulated on a manifold
of this dimension, the viewpoint had to be altered a little. The
critical points of the so-called {\em Chern-Simons function} were
found to be a promising substitute.\footnote{At least on homology
spheres.} An important concept of this work---particularly due to
C.H. Taubes \cite{Tau:CI} and A. Floer \cite{Flo:Inst}---is to
interpret the Chern-Simons function as a Morse function on the
space of all gauge fields modulo the action of the group of gauge
transformations. An invariant is then defined in the same manner
as in finite dimensional Morse theory: There, the Euler
characteristic of a manifold can be computed as the signed count
of Morse indices. The major problems connected with this idea,
namely the question of how to deal with degenerate critical
points and how to generalize the notion of the Morse index to an
infinite dimensional setting, has successfully been solved by
Taubes and Floer. The invariant obtained by using their approach
turned out to equal the topologically defined Casson invariant.

In fact, Floer's work went much beyond that. He generalized the
concept of a Morse complex and constructed cohomology groups
associated to the Chern-Simons function which yield even more
refined invariants. A recent monograph by Donaldson \cite{D}
gives a detailed exposition of the so-called {\em Floer homology
groups} in the gauge theoretical context.\\

\noindent\textbf{Seiberg-Witten theory.} While the anti self-dual
instanton equations of pure Yang-Mills theory are easily written
down, the involved calculations are complicated due to the
non-abelian nature of the symmetry group. In 1994, new impulses in
mathematical gauge theory came from E. Witten's famous article
\cite{Wit:Mon}. He announced that a system of partial differential
equations---the {\em monopole equations} which arose in his joint
work with N. Seiberg---should in some sense be equivalent to the
anti self-dual instanton equations. However, the Seiberg-Witten
equations have an abelian gauge symmetry and are therefore easier
to be dealt with from an analytical point of view. For example,
Witten proved that the corresponding moduli space is always
compact so that smooth invariants can be extracted in a much
easier way than in instanton theory.

In the subsequent months, many of the results obtained via
Donaldson theory could be reproved by making use of what was soon
called {\em Seiberg-Witten theory}. For example, P.B. Kronheimer
and T. Mrowka established a remarkably simple proof of the Thom
conjecture \cite{KroMro:Gen}. Moreover---as Witten pointed out in
his original paper---the structure of the Seiberg-Witten
equations simplify considerably when formulated on K\"ahler
surfaces so that the new theory had an instantaneous impact on
complex geometry. Soon, C.H. Taubes managed in a series of
papers---starting with \cite{Tau:Sym}---to establish deep
relations to invariants of symplectic 4-manifolds. Beginning with
the work of C. LeBrun \cite{LeB:Einst}, Seiberg-Witten theory was
also found promising in answering unsolved questions in
Riemannian geometry.

Although the pace in which new results were obtained decreased
after a while, Seiberg-Witten theory has become an important tool
for studying 4-manifolds. Nowadays there are not only many survey
articles reviewing the dawn of Seiberg-Witten theory (e.g.
Donaldson \cite{Don:SW}, Kronheimer \cite{Kro:ES} and recently K.
Iga \cite{Iga:Top}) but also monographs giving a more detailed
exposition of the theory (e.g. J.W. Morgan \cite{Mo} and J.D.
Moore \cite{Moo}). The book of L.I. Nicolaescu \cite{Nic:SW} is
perhaps the most extensive introduction to the four dimensional
theory which has appeared until now. M. Marcolli's textbook
\cite{Ma} provides a remarkable selection of excellent references
for any aspect of Seiberg-Witten theory---including the physical
background.

\subsection{Seiberg-Witten theory on 3-manifolds}

It was soon realized by P.B. Kronheimer and T. Mrowka in
\cite{KroMro:Gen} that the four dimensional theory can be carried
over to 3-manifolds if the Seiberg-Witten equations are studied
on a manifold $M\times S^1$, where $M$ is a compact 3-manifold.
Subsequently, much concentration was focussed on studying the new
theory from a three dimensional point of view as well.

Kronheimer and Mrowka found out that as in instanton theory, the
partial differential equations obtained for 3-manifolds have a
natural interpretation as the gradient flow equations of a
Chern-Simons-like function. Moreover, the Seiberg-Witten moduli
space of a Riemannian 3-manifold is always compact. Due to the
variational aspects of the theory, it turns out that---up to a
generic perturbation---the moduli space consists of isolated
points. Applying Taubes' and Floer's ideas from instanton theory
it is thus possible to define the signed count of monopoles,
which is again reminiscent of expressing a finite dimensional
manifold's Euler characteristic in terms of a Morse function's
critical points. The number obtained in this way is then expected
to be independent of the chosen metric and the perturbation term.

It was proved by Y. Lim in \cite{Lim:SW}, for example, that this
is indeed true for manifolds with first Betti number $b_1>1$,
whereas in the case $b_1=1$ the number depends on a certain
cohomological datum encoded in the perturbation term. In
\cite{MenTau:SW}, G. Meng and C.H. Taubes exposed a relationship
between a version of the Seiberg-Witten invariant and the Milnor
torsion invariant for manifolds with $b_1\ge 1$.

For rational homology spheres, however, one finds a severe
dependence on the underlying Riemannian structure and the
perturbation term. Nevertheless, there is the possibility of
adding a counter term---a certain combination of
$\eta$-invariants---to the signed count of monopoles so that the
sum obtained in this way has the desired invariance properties. It
was conjectured by Kronheimer and later independently proved by
W. Chen \cite{Che:CI} and Y.Lim \cite{Lim:CI} that in the case of
an integer homology sphere, the number obtained in this way
equals the Casson invariant. Moreover, for rational homology
spheres, there is a relation to the so-called Casson-Walker
invariant (cf. M. Marcolli \& B.L. Wang \cite{MarWan:SW} and L.I.
Nicolaescu \cite{Nic:Rat}).\\

\noindent\textbf{Seiberg-Witten-Floer homology.} Very soon after
the appearance of the new theory, Donaldson pointed out in
\cite{Don:SW} that Floer's construction of a Morse complex
associated to the Chern-Simons function should carry over to
Seiberg-Witten theory as well. A derivation of three dimensional
theory from the four dimensional case, pointing out the physical
background and the connection to topological quantum field theory,
was performed by A.L. Carey et al. in \cite{CarWan:SW3} and made
establishing a Seiberg-Witten Floer homology even more demanding.

For manifolds with non-vanishing first Betti number, it was soon
accomplished by M. Marcolli in \cite{Mar:SWF} to build-up the
Morse complex and prove its topological invariance. About the
same time, B.L. Wang \cite{Wan:SWF} exposed a severe dependence
on the metric in the case of homology spheres. Subsequently, many
authors began approaching the problem of defining a unified
Seiberg-Witten-Floer homology for all 3-manifolds and much of
this task seems to be solved nowadays (cf. Marcolli \& Wang
\cite{MarWan:SWF} and K. Iga \cite{Iga:SWF}).\newpage

\section{Organization of this thesis}
The goal of this thesis is to present a detailed and largely
self-contained construction of Seiberg-Witten invariants on
closed 3-manifolds. We take a purely gauge theoretical point of
view and shall not attempt to expose the relations to other
topological invariants to which we have alluded above. In this
sense, we restrict ourselves to only one---though the
major---aspect of three dimensional Seiberg-Witten theory. With a
view towards Seiberg-Witten-Floer theory, we emphasize the Morse
theoretical aspects of the constructions but again, a more
detailed integration of this far reaching subject is beyond the
scope of this thesis.\\

\noindent The organization of the chapters is as follows:
\begin{itemize}
\item Chapter \ref{SW:mon} establishes the gauge theoretical
set-up in which the three dimensional Seiberg-Witten equations
are formulated.

\item Chapter \ref{moduli} investigates the topological structure
of the moduli space in analogy to the four dimensional case.
Understanding the local structure of the moduli space will then
make it possible to define the signed count of monopoles in the
same way as it is performed in Taubes' work on instanton theory.

\item Chapter \ref{inv} is devoted to the analysis of how the
signed count of monopoles depends on the metric. Following the
work of Lim \cite{Lim:SW}, Chen \cite{Che:CI} and Nicolaescu
\cite{Nic:SW3}, we shall establish the main theorems of this
thesis, which prove invariance for manifolds with $b_1>1$, provide
a ``wall-crossing" formula in the case $b_1=1$, and exhibit the
severe dependence on the metric in the case of rational homology
spheres.
\end{itemize}
Since gauge theory requires nontrivial geometrical and functional
analytic constructions, we append short summaries of the material
we need:
\begin{itemize}
\item Appendix \ref{app:ell} contains a survey of the functional
analytic aspects of nonlinear elliptic equations on compact
manifolds.

\item In Appendix \ref{app:det}, we present a version of the
determinant line bundle over the space of Fredholm operators
which is needed in gauge theory to equip moduli spaces with an
orientation.

\item In Appendix \ref{app:SF&OT}, the notion of spectral flow is
recalled, which we shall need to exhibit a geometrical
interpretation of the orientation of the moduli space as the
signed count of critical points.

\item The material needed to understand the geometrical set-up of
Seiberg-Witten theory is presented in Appendix \ref{app:spinc}.
\end{itemize}

Even though familiarity with most of these constructions is
assumed, the reader is advised to browse through the appendices
since it is there, where most notations are fixed.

\section{Acknowledgements}
Many thanks are due to my thesis advisor Matthias Lesch for
offering me the opportunity to work on this beautiful subject. I
am deeply grateful for his patience and advice. Special thanks
also go to Christian Frey for answering a lot of my questions
concerning functional analysis. Many helpful discussions with him
made the preparation of these pages a lot easier to handle.
Moreover, I would like to thank my family and my friends for
their encouragement while I was working on this thesis. Finally,
the support of the Studienstiftung des deutschen Volkes during my
studies is gratefully acknowledged.




\cleardoublepage

\cleardoublepage
\renewcommand{\chaptermark}[1]%
   {\markboth{\textsc{Chapter \thechapter.\ #1}}{}}
\renewcommand{\sectionmark}[1]%
   {\markright{\textsc{\thesection.\ #1}}}

\chapter{Seiberg-Witten Monopoles}\label{SW:mon}

The Seiberg-Witten equations are formulated within a framework
arising from spin geometry. They are a set of partial differential
equations involving a spinor field---the ``matter field"---and a
connection on a certain Hermitian line bundle---the ``gauge
field".

In this chapter we describe the special set-up arising in the
three dimensional context. We follow the notation of Appendix
\ref{app:spinc} where an exposition of \spinc manifolds is given.

To describe the interrelation between the curvature of the gauge
field---the ``field strength"---and the spinor, we have to
perform some purely linear algebraic constructions. This is the
content of Section \ref{alg:prem}. Having done so, we shall
formulate the Seiberg-Witten equations in Section \ref{SW:eqn}.
With a view towards the Morse theoretical approach to three
dimensional Seiberg-Witten theory, we then interpret solutions to
these equations as the critical points of a Chern-Simons-like
functional. We shall also see how the Seiberg-Witten equations
fit into the context of elliptic equations.

\section{Algebraic preliminaries}\label{alg:prem}

\textbf{Spin representation in dimension three.} Let $(V,g)$ be
an oriented three dimensional Euclidean vector space. The complex
Clifford algebra $\clc(V)$ is isomorphic to $M_2(\C)\oplus
M_2(\C)$, where $M_2(\C)$ denotes the ring of $(2\times
2)$-matrices. If $(e_1,e_2,e_3)$ is an oriented orthonormal basis
of $V$, then this isomorphism has an explicit description, which
is given by its action on this basis via
\[
\clc(V) \to M_2(\C)\oplus M_2(\C),\quad e_j\mapsto
\begin{pmatrix} i\gs_j &0\\ 0 & -i\gs_j \end{pmatrix}.
\]
Here, $\gs_j$ denote the Pauli matrices
\[
\gs_1 = \begin{pmatrix} 0 &1\\ 1 &0 \end{pmatrix},\quad
\gs_2 = \begin{pmatrix} 0 &-i\\ i &0 \end{pmatrix},\quad \gs_3 =
\begin{pmatrix} 1 &0\\ 0 &-1 \end{pmatrix}.
\]
The two non-isomorphic irreducible representations of $\clc(V)$
on $\gD:=\C^2$ are given by $e_j\mapsto i\gs_j$ and
$e_j\mapsto -i\gs_j$ respectively. We require that the complex
volume element $\go^c=-e_1e_2e_3$ (cf. \eqref{compl:vol}) acts as
the identity on $\gD$, i.e., we fix the latter
representation\footnote{There is some ambiguity in the literature
but most authors consider this representation as the standard
one.}. In particular, Clifford multiplication takes the following
form:
\[
c:V \to \End(\gD),\quad c(e_j)=-i\gs_j.
\]
It is skew Hermitian with respect to the standard metric
$\scalar{\ }{\ }$ on $\gD$.\\

\noindent\textbf{The quadratic map.} Let $\psi\in \gD$ be a
spinor. We define a linear map $q(\psi): V\to \C$ by letting
$q(\psi)(v):=-\lfrac{1}{2}\Scalar{c(v)\psi}{\psi}$. With respect
to an orthonormal basis:
\begin{equation}\label{q}\index{>@$q(\psi)$,
quadratic map} \fbox{$\displaystyle q(\psi)=
-\lfrac{1}{2}\Scalar{c(e_j)\psi}{\psi}e^j$}\;,
\end{equation}
where $(e^1,e^2,e^3)$ denotes the dual basis and we take the
sum\footnote{When using coordinates we shall always use the
Einstein convention, i.e., we sum over all indices appearing
twice.} over all $j$. Since $c(e_j)$ is skew Hermitian, $q(\psi)$
is a purely imaginary valued co-vector. We thus obtain a
quadratic map
\[
q:\gD\to V^*\otimes_\R i\R=:iV^*.
\]
Polarization gives the associated $\R$-bilinear map
\begin{equation}\label{b^q}
q(\psi,\gf)=\lfrac{1}{4}\big(q(\psi+\gf)-q(\psi-\gf)\big)=
-\lfrac{1}{2}i \Im \Scalar{c(e_j)\psi}{\gf}e^j.
\end{equation}

Clifford multiplication extends to $iV^*$ via action on the
co-vector part, i.e.,
\[
c(i\ga)\psi:= ic(v_\ga)\psi,
\]
where $\psi\in\gD$, $\ga\in V^*$, and $v_\ga$ denotes the metric
dual of $\ga$. Observe that Clifford multiplication with
imaginary valued co-vectors is Hermitian and trace-free. In fact,
we have
\begin{lemma}
Clifford multiplication is an isomorphism of $\R$ vector spaces
\[
c:iV^*\to\setdef{T\in\End(\gD)}{T\text{ Hermitian}, \Tr T=0}.
\]
\end{lemma}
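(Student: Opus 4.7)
The plan is to prove the lemma by a direct dimension count combined with an explicit computation on a basis. Both the domain and codomain are naturally $3$-dimensional real vector spaces, so it will suffice to verify that the map is well-defined (i.e.\ lands in traceless Hermitian matrices) and that it sends a basis to a basis.

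First I would check that the codomain has real dimension three. The space of Hermitian $2\times 2$ matrices is a $4$-dimensional real vector space with basis $\{\mathrm{Id},\sigma_1,\sigma_2,\sigma_3\}$, since every Hermitian matrix can be uniquely written as $a\,\mathrm{Id}+b_j\sigma_j$ with $a,b_j\in\R$. The tracelessness condition eliminates the identity component, so the traceless Hermitian matrices have $\{\sigma_1,\sigma_2,\sigma_3\}$ as a real basis and are $3$-dimensional. The domain $iV^*$ is evidently $3$-dimensional.

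Next I would check well-definedness. For $\alpha\in V^*$, write $v_\alpha=\alpha_j e_j$ using the metric identification; then $c(v_\alpha)=\alpha_j c(e_j)=-i\alpha_j\sigma_j$. Since the Pauli matrices are Hermitian and traceless, $c(v_\alpha)$ is skew-Hermitian and traceless, so $c(i\alpha)=ic(v_\alpha)=\alpha_j\sigma_j$ is Hermitian and traceless, as required. This already reveals the final, trivial step: applying the map to the basis $(ie^1,ie^2,ie^3)$ of $iV^*$ yields
\[
c(ie^j)=ic(e_j)=i(-i\sigma_j)=\sigma_j,
\]
which is exactly the chosen basis of the codomain. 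Hence $c$ is an $\R$-linear bijection.

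There is no serious obstacle here; the lemma is essentially a restatement of the fact that the three Pauli matrices span the traceless Hermitian $2\times 2$ matrices. The only point requiring mild care is keeping track of the two factors of $i$ (one from $iV^*$, one from $c(e_j)=-i\sigma_j$) so that they cancel correctly; once that bookkeeping is right, the rest is automatic from the dimension count.
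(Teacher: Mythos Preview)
Your argument is correct. The paper takes a slightly different route: instead of computing the images of a basis explicitly, it proves injectivity via the Clifford relation $c(i\alpha)^2 = -c(v_\alpha)^2 = |v_\alpha|^2\,\id$, which forces $v_\alpha = 0$ whenever $c(i\alpha)=0$, and then appeals to the same dimension count you use. Your approach has the advantage of being fully explicit---you actually identify $c(ie^j)$ with $\sigma_j$, which makes the statement concrete---but it relies on the chosen Pauli-matrix model for the representation. The paper's argument is coordinate-free in that it uses only the Clifford relation and the fact that the target is $3$-dimensional, so it would transfer verbatim to any other model of $\Delta$. In this setting the difference is cosmetic, since the representation has been fixed from the outset.
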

\begin{proof}
The map is injective because $c(i\ga)^2=- c(v_\ga)^2 = |v_\ga|^2
\id$. Since the $\R$ vector space of trace-free and Hermitian
endomorphisms on $\gD$ is three dimensional, the result follows.
\end{proof}
\begin{remark*}
The trace-free and Hermitian endomorphism given by Clifford
multiplication with $q(\psi)$ is given by
\begin{equation}\label{q:alt}
\fbox{$\displaystyle c\big(q(\psi)\big)=\psi^*\otimes \psi
-\lfrac{1}{2}|\psi|^2\id$}\;,
\end{equation}
which means\footnote{Observe that we use the convention that
Hermitian metrics are complex linear in the first entry.} that
$c(q(\psi))\gf=\scalar{\gf}{\psi}\psi-\lfrac{1}{2}|\psi|^2 \gf$
for any spinor $\gf$. With respect to any unitary basis of $\gD$,
this endomorphism has the matrix description
\[
c\big(q(\psi)\big) =  \frac{1}{2}\begin{pmatrix}
|\ga|^2-|\gb|^2 & 2\ga\Bar \gb \\
2\Bar \ga \gb & |\gb|^2 -|\ga|^2
\end{pmatrix},\qquad  \psi=\begin{pmatrix}\ga \\ \gb
\end{pmatrix}.
\]
As we shall not use this description, we skip the proof of
\eqref{q:alt}. With the explicit representation given by the
Pauli matrices the involved computations are rather simple. The
description \eqref{q:alt} is more satisfactory than definition
\eqref{q} because it is invariant of any explicit representation
and can easily be carried over to other dimensions. However, the
definition we gave is much more convenient for explicit
calculations.
\end{remark*}

We endow the $\R$ vector space $iV^*$ with the scalar product
induced by $g$, i.e., we let
\[
\scalar{i\ga}{i\gb}_g:=g(v_\ga,v_\gb),
\]
where $\ga,\gb\in V^*$ with metric duals $v_\ga,v_\gb$. Later on,
we shall drop the subscript $g$ for notational convenience.
However, for the time being, we keep it to distinguish $\scalar{\
}{\ }_g$ from the Hermitian scalar product $\scalar{\ }{\ }$ on
$\gD$.

\begin{prop}\label{q:prop}
For all spinors $\psi$ and $\gf$ the following holds:
\begin{enumerate}
\item $\Scalar{a}{q(\psi,\gf)}_g=\lfrac{1}{2}\Re\Scalar{c(a)\psi}{\gf}$
for any $a\in iV^*$.
\item $\big|q(\psi,\gf)\big|_g^2=
\lfrac{1}{4}\Big(|\psi|^2|\gf|^2-\big(\Re\scalar{i\psi}{\gf}\big)^2\Big)$.
In particular, $|q(\psi)|_g = \lfrac{1}{2}|\psi|^2$.
\item If $\psi\neq 0$, then $\gf\in\ker q(\psi,.)$
if and only if $\gf$ is a multiple of $\psi$ by an imaginary
number.
\end{enumerate}
\end{prop}

\begin{proof} Let $(e_1,e_2,e_3)$ be an oriented orthonormal basis of
$(V,g)$.
\begin{enumerate}
\item  We write $a=i \ga_je^j$ with $\ga_j\in\R$. Then
\begin{align*}
\Scalar{a}{q(\psi,\gf)}_g &= -\lfrac{1}{2}\SCalar{i\ga_je^j}{i
\Im\Scalar{c(e_k)\psi}{\gf}e^k}_g
\\
&=-\lfrac{1}{2}\ \ga_j \Im\Scalar{c(e_k)\psi}{\gf}\gd^{jk}
= - \lfrac{1}{2}\Im\Scalar{c(\ga_je^j)\psi}{\gf} \\
&=-\lfrac{1}{2}\Im\Scalar{-i c(i\ga_je^j)\psi}{\gf}
=\lfrac{1}{2}\Re\Scalar{c(a)\psi}{\gf}.
\end{align*}
\item Without loss of generality, we may assume that $\psi\neq 0$.
If we interpret $\gD$ as a 4-dimensional Euclidean vector space
with respect to the real scalar product $\Re \scalar{.}{.}$, then
the elements $i\psi$, $c(e_1)i\psi$, $c(e_2)i\psi$, $c(e_3)i\psi$
form an orthogonal basis of $\gD$. We thus have
\[
|\psi|^2|\gf|^2=\big(\Re\scalar{i\psi}{\gf}\big)^2+\sum_{j=1}^3
\big(\Re\scalar{c(e_j)i\psi}{\gf}\big)^2.
\]
This in mind, we conclude:
\begin{align*}
\big|q(\psi,\gf)\big|_g^2 &=\lfrac{1}{4} \sum_{j=1}^3
\big(\Im\Scalar{c(e_j)\psi}{\gf}\big)^2 = \lfrac{1}{4}\sum_{j=1}^3
\big(\Re\Scalar{c(e_j)i\psi}{\gf}\big)^2 \\
&=\lfrac{1}{4}\Big(|\gf|^2|\psi|^2 -
\big(\Re\scalar{i\psi}{\gf}\big)^2\Big).
\end{align*}
\item According to part (ii), we have
\[
q(\psi,\gf)=0\quad\Longleftrightarrow\quad |\psi||\gf| =
|\Re\scalar{i\psi}{\gf}|.
\]
It thus follows from the Cauchy-Schwarz inequality that
$q(\psi,\gf)=0$ if and only if
$\gf$ is a real multiple of $i\psi$.\qedhere\\
\end{enumerate}
\end{proof}

\noindent\textbf{Hodge-star-operator and wedge product.} We
recall that there is an isometry on $\gL^\bullet V^*$,
\[
*:\gL^k V^*\to \gL^{3-k}V^*,\quad k\in\{0,\ldots,3\},
\]
uniquely characterized by the property that
\[
\ga\wedge*\gb=\scalar{\ga}{\gb}dv_g,\quad \ga\in\gL^k
V^*,\gb\in\gL^{3-k}V^*.
\]
Here, $dv_g$\index{d@$dv_g$, oriented volume element} denotes the
oriented volume element of $V$. The fact that $V$ is three
dimensional implies that
\[
*^2=\id_{\gL^\bullet V^*}\,.
\]
We shall also need the Hodge-star-operator on $i\gL^\bullet V^*$.
This is because $i\R$ is the Lie algebra of $\U_1$ and in gauge
theory, Lie algebra valued forms play a decisive role. We adapt
the standard convention and extend $*$ complex linearly, i.e., we
let $* (i\ga) = i* \ga$, where $\ga\in\gL^\bullet V^*$. This is
not to be confused with regarding $i\gL^\bullet V^*$ as a subspace
of $\gL^\bullet V^*\otimes\C$ endowed with the complex anti-linear
Hodge-star-operator of complex differential geometry.

Interpreting $i\R$ as a Lie algebra, there is a canonical way of
defining a wedge product on $i\gL^\bullet V^*$. As $i\R$ is an
abelian Lie algebra, the so defined product would, however,
vanish. In contrast to the above, we will therefore use the wedge
product of $\gL^\bullet V^*\otimes\C$. This gives the possibility
to form the wedge product of an imaginary valued co-vector with a
real valued one. Note that for $\ga,\gb\in\gL^\bullet V^*$
\begin{equation}\label{Hodge:wedge}
i\ga\wedge i\gb = - \ga\wedge\gb.
\end{equation}
As a consequence of our conventions, $a\in i\gL^\bullet V^*$
satisfies
\[
a\wedge * a = -\scalar{a}{a}dv_g.
\]

\section{The Seiberg-Witten equations}\label{SW:eqn}

Let $(M,g)$ be a closed,\footnote{We use the convention that a
closed manifold is a compact and connected manifold without
boundary. Although the assumption about connectedness is usually
not standard, we include it here for simplicity of notation.}
oriented Riemannian 3-manifold. According to Proposition
\ref{3mfd=spinc}, $M$ admits a \spinc structure. Fixing
$\gs\in\text{spin}^c(M)$, we let $L(\gs)$ denote the associated
Hermitian line bundle, and let $S=S(\gs)$ be the fundamental
spinor bundle over $M$ associated to the $\Spinc$-bundle
$P_{\Spinc}(\gs)$ via the representation chosen in Section
\ref{alg:prem}. Then $S(\gs)$ is a Hermitian vector bundle of
rank 2 over $M$.

The quadratic map $q$ extends to a morphism $C^\infty(M,S)\to
i\gO^1(M)$. For later purposes we establish a necessary condition
for $q(\psi)$ to be co-closed.
\begin{prop}\label{dastq}
Let $A$ be an arbitrary Hermitian connection on $L(\gs)$, $\cD_A$
the associated \spinc Dirac operator. Then we have the
(pointwise) identity
\[
d^*q(\psi)=i\Im \scalar{\cD_A\psi}{\psi}.
\]
In particular, $q$ is co-closed whenever $\psi$ is a harmonic
spinor.
\end{prop}
\begin{proof}
At an arbitrary point $x_0$, we consider a normal\footnote{Recall
that this means $(\nabla_i e_j)(x_0)=0$.} frame $(e_1,e_2,e_3)$
with dual co-frame $(e^1,e^2,e^3)$. This implies that
\[
\nabla^A_j \big(c(e_i)\psi\big)(x_0)=
\big(c(e_i)\nabla^A_j\psi\big)(x_0).
\]
Using that in the case at hand, $d^*=- *d*$, that $A$ is
Hermitian and that $c(e_i)$ is skew Hermitian, we find that at
the point $x_0$
\begin{align*}
d^*q(\psi) &= -\lfrac{1}{2}d^*\scalar{c(e_i)\psi}{\psi}e^i
=\lfrac{1}{2}* d* \scalar{c(e_i)\psi}{\psi}e^i \\
&=\lfrac{1}{2}* \Big(\scalar{c(e_i)\nabla^A_j\psi}{\psi} +
\scalar{c(e_i)\psi}{\nabla^A_j\psi}\Big)e^j\wedge* e^i\\
&=\lfrac{1}{2} \Big(\scalar{c(e_i)\nabla^A_i\psi}{\psi} -
\scalar{\psi}{c(e_i)\nabla^A_i\psi}\Big) * (e^i\wedge* e^i) \\
&= \lfrac{1}{2} \Big(\scalar{\cD_A\psi}{\psi} -
\scalar{\psi}{\cD_A\psi}\Big)* dv_g \\
&=i\Im \scalar{\cD_A\psi}{\psi}.
\end{align*}
Note that we have also employed the local description of $\cD_A$
(cf. \eqref{dirac:def}) and the fact that $e^i\wedge*
e^j=\gd^{ij}dv_g$.
\end{proof}

Let $\cA(\gs)$ denote the affine space of Hermitian connections
on $L(\gs)$. We define the {\em configuration space}
as\index{>@$\cC(\gs)$, configuration space}\index{configuration
space}
\[
\cC=\cC(\gs):=C^\infty(M,S(\gs))\times\cA(\gs).
\]
Since $\cA(\gs)$ is an affine space modelled on
$C^\infty(M,iT^*M)$, the configuration space is also an affine
space which is modelled on $C^\infty\big(M,S\oplus iT^*M\big)$. To
make formul{\ae} clearer and notation shorter we define
\[
E=E(\gs):=S(\gs)\oplus iT^*M.
\]
The {\em group of gauge transformations} of a \spinc structure is
(cf. Definition \ref{gg})
\[
\cG:=C^\infty(M,U_1).
\]
Its natural operations on $C^\infty(M,S)$ and $\cA$ (cf.
\eqref{gg:on:spinor} and \eqref{gg:on:A}) induce an action on the
configuration space $\cC$, given by
\begin{equation}\label{gg:on:conf}
\begin{split}
\cG\times\cC(\gs)&\longrightarrow\cC(\gs), \\
\big(\gamma,(\psi,A)\big)&\longmapsto
\big(\gamma^{-1}\psi,A+2\gamma^{-1}d\gamma\big).
\end{split}
\end{equation}
We define the quotient of the configuration space with respect to
the $\cG$ action by\index{>@$\cB(\gs)$, quotient of $\cC$ by
$\cG$}
\[
\cB(\gs):=\cC(\gs)/\cG.
\]
The action of $\cG$ on $\cC$ lifts naturally to $C^\infty(M,E)$,
the tangent space of $\cC$ at an arbitrary point $(\psi,A)$, via
\begin{align}\label{gg:on:tangent}
\gamma\cdot(\gf,a):= \lfrac{d}{dt}\big|_{t=0}
\gamma\cdot(\psi+t\gf, A+ta)= (\gamma^{-1}\gf,a).
\end{align}
\begin{dfn}\index{configuration space!reducible configuration}
\index{configuration space!irreducible configuration}
$(\psi,A)\in\cC(\gs)$ is called {\em irreducible} if the
stabilizer $\cG_{(\psi,A)}$ of the $\cG$ action at the point
$(\psi,A)$ is trivial. Otherwise, it is called {\em reducible}.
The subset of irreducible configurations is denoted by
$\cC^*(\gs)$. and its quotient with respect to the $\cG$ action by
$\cB^*(\gs)$.
\end{dfn}

Let $\gamma\neq 1$ lie in the stabilizer $\cG_{(\psi,A)}$, i.e.,
$\gamma\cdot(\psi,A)=(\psi,A)$. Then $\gamma^{-1}\psi=\psi$ and
$2\gamma^{-1}d\gamma=0$. As $M$ is connected, we deduce the
following.
\begin{lemma}
A configuration $(\psi,A)$ is reducible if and only if
$\psi\equiv 0$. In this case, the stabilizer $\cG_{(\psi,A)}$
consists of the constant maps $M\to\U_1$.\\
\end{lemma}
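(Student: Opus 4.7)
The plan is to read off both directions directly from the action formula recalled just before the lemma, using only that $M$ is connected.

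First I would prove the ``only if'' direction together with the description of the stabilizer. Suppose $\gamma \in \cG_{(\psi,A)}$, so $\gamma^{-1}\psi = \psi$ and $A + 2\gamma^{-1}d\gamma = A$. The second equation gives $\gamma^{-1}d\gamma = 0$ and hence $d\gamma = 0$, since $\gamma$ is pointwise invertible. Because $M$ is connected, $\gamma$ must be a constant map, say $\gamma \equiv c$ with $c \in \U_1$. Substituting back, the first equation becomes $(c^{-1}-1)\psi(x) = 0$ for every $x \in M$; so at each point either $c = 1$ or $\psi(x) = 0$. Now, if the stabilizer is nontrivial we can pick some $\gamma = c \neq 1$ in it, whence $\psi(x) = 0$ everywhere, i.e.\ $\psi \equiv 0$. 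Conversely, if the stabilizer at $(\psi,A)$ equals the trivial group $\{1\}$, then by definition $(\psi,A)$ is irreducible, so reducibility forces $\psi \equiv 0$.

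For the ``if'' direction and the description, assume $\psi \equiv 0$. Then for any constant $c \in \U_1$ we have $c^{-1}\psi = 0 = \psi$ and $d c = 0$, so $c \in \cG_{(\psi,A)}$; in particular the stabilizer contains $\U_1$ and is nontrivial, so $(\psi,A)$ is reducible. Combining this with the first paragraph, which showed that every element of the stabilizer is a constant map, we get $\cG_{(\psi,A)} = \U_1$ (viewed as the constant maps $M\to\U_1$) whenever $\psi \equiv 0$.

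There is essentially no hard step: the only point to be careful about is the passage from $d\gamma = 0$ to ``$\gamma$ is a constant map'', which uses the connectedness hypothesis on $M$ explicitly stated in the footnote to the definition of a closed manifold. Everything else is a one-line manipulation of the explicit action \eqref{gg:on:conf}.
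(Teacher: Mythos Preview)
Your proof is correct and follows exactly the same approach as the paper: the paper's argument is the short paragraph immediately preceding the lemma, which derives $d\gamma=0$ from the action formula and then invokes connectedness of $M$. You have simply written out the details more fully, including the converse direction and the explicit identification of the stabilizer with the constant maps.
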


\noindent\textbf{The moduli space.} We can now formulate the
Seiberg-Witten equations.
\begin{dfn} Let $M$ be a closed, oriented Riemannian
3-manifold with \spinc structure $\gs$. For each gauge field $A$,
we let $\cD_A$ denote the \spinc Dirac operator associated to
$A$, and $F_A\in i\gO^2(M)$ the curvature 2-form of $A$. Then a
configuration $(\psi,A)$ is called a {\em Seiberg-Witten
monopole} if it solves the equations
\begin{equation}\label{eqn}\index{Seiberg-Witten
equations}\index{configuration space!Seiberg-Witten monopole}
\fbox{$\begin{array}{rcl}
\cD_A\psi&=&0 \\
* F_A &= &\lfrac{1}{2}q(\psi)
\end{array}$}\;.\end{equation}
\end{dfn}
We can interpret Seiberg-Witten monopoles as the zeros of a
vector field on $\cC$. We define the {\em Seiberg-Witten
map}\index{>@$\SW$, Seiberg-Witten map}
\[
\SW:\cC(\gs)\to C^\infty(M,E(\gs)),\quad (\psi,A)\mapsto
(\cD_A\psi,\lfrac{1}{2}q(\psi)-* F_A).
\]
\begin{lemma}\label{SW:equiv}
The map $\SW:\cC\to C^\infty(M,E)$ is equivariant with respect to
the $\cG$-actions \eqref{gg:on:conf} and \eqref{gg:on:tangent} on
$\cC$ and $C^\infty(M,E)$ respectively.
\end{lemma}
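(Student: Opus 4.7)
The plan is to unwind the two sides componentwise and verify the equality $\SW(\gamma\cdot(\psi,A))=\gamma\cdot\SW(\psi,A)$ by hand. Writing $A':=A+2\gamma^{-1}d\gamma$, what has to be checked is
\[
\big(\cD_{A'}(\gamma^{-1}\psi),\lfrac{1}{2}q(\gamma^{-1}\psi)-*F_{A'}\big)=\big(\gamma^{-1}\cD_A\psi,\lfrac{1}{2}q(\psi)-*F_A\big).
\]
This breaks into three independent identities, one for the spinor slot and two for the form slot.

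First I would handle the quadratic map. Since $\gamma(x)\in\U_1$ for every $x\in M$, multiplication by $\gamma^{-1}$ is pointwise unitary on the fibres of $S$, hence by the defining formula \eqref{q}
\[
q(\gamma^{-1}\psi)=-\lfrac{1}{2}\Scalar{c(e_j)\gamma^{-1}\psi}{\gamma^{-1}\psi}e^j
=-\lfrac{1}{2}|\gamma^{-1}|^2\Scalar{c(e_j)\psi}{\psi}e^j=q(\psi).
\]
For the curvature I would use that $\cA(\gs)$ is affine and that the curvature transforms as $F_{A+\alpha}=F_A+d\alpha$ for $\alpha\in i\gO^1(M)$. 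Applied to $\alpha=2\gamma^{-1}d\gamma$, one computes $d(\gamma^{-1}d\gamma)=-\gamma^{-2}d\gamma\wedge d\gamma=0$, so that $F_{A'}=F_A$ and consequently $*F_{A'}=*F_A$. Together these two computations give equivariance in the second slot, observing that the $\cG$-action on the $i T^*M$-summand in \eqref{gg:on:tangent} is trivial.

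The first slot is the content where the factor $2$ in \eqref{gg:on:conf} is essential. The \spinc Dirac operator is $\cD_A=c\circ\nabla^A$, where the covariant derivative on $S$ combines the Levi-Civita connection with one-half of the $\U_1$-connection $A$ on $L(\gs)$ (cf.\ Appendix \ref{app:spinc}). Hence changing $A\mapsto A+2\gamma^{-1}d\gamma$ changes the connection induced on $S$ by $\gamma^{-1}d\gamma$, which is exactly the $1$-form appearing in the gauge transformation of a section of a Hermitian line bundle with structure group $\U_1$. I would therefore verify directly, using the Leibniz rule, that
\[
\nabla^{A'}(\gamma^{-1}\psi)=\gamma^{-1}\nabla^A\psi,
\]
whence $\cD_{A'}(\gamma^{-1}\psi)=c\big(\nabla^{A'}(\gamma^{-1}\psi)\big)=\gamma^{-1}c(\nabla^A\psi)=\gamma^{-1}\cD_A\psi$ because Clifford multiplication is complex linear and so commutes with the scalar function $\gamma^{-1}$.

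No real obstacle arises; the only subtle point is bookkeeping the factor $2$ in the gauge action, which is precisely what makes the Dirac operator equivariant given the half-integer weight with which $A$ enters the spinor connection. All three calculations are pointwise, so no global assumptions on $M$ beyond those already made are needed.
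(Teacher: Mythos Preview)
Your proof is correct and follows essentially the same approach as the paper: both verify equivariance componentwise, using closedness of $\gamma^{-1}d\gamma$ for the curvature, unitarity of $\gamma^{-1}$ for $q$, and the factor $\tfrac12$ in the spinor connection for the Dirac operator. The only cosmetic difference is that the paper invokes Lemma~\ref{A+a} to write $\cD_{A+2\gamma^{-1}d\gamma}=\cD_A+c(\gamma^{-1}d\gamma)$ and then applies the Leibniz rule at the level of $\cD_A$, whereas you carry out the equivalent computation one step earlier at the level of $\nabla^A$.
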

\begin{proof}
Let $(\psi,A)$ be an arbitrary configuration, and let
$\gamma\in\cG$. Then Lemma \ref{A+a} implies
\begin{align*}
\cD_{(A+2\gamma^{-1}d\gamma)}(\gamma^{-1}\psi) &=
\cD_A(\gamma^{-1}\psi)+c(\gamma^{-1}d\gamma)\gamma^{-1}\psi\\
&=\gamma^{-1}\cD_A\psi+c(d\gamma^{-1})\psi+c(\gamma^{-2}d\gamma)\psi=
\gamma^{-1}\cD_A\psi
\end{align*}
because $d\gamma^{-1}=-\gamma^{-2}d\gamma$. Furthermore, since
$\gamma^{-1}d\gamma$ is closed,
\[
\lfrac{1}{2}q(\gamma^{-1}\psi) - * F_{A+2\gamma^{-1}d\gamma}
=\lfrac{1}{2}q(\gamma^{-1}\psi)-* (F_A+2d(\gamma^{-1}d\gamma))
=\lfrac{1}{2}q(\psi)-* F_A\,.
\]
Note that in the last equality we have used that
$q(\gl\psi)=q(\psi)$ for every $\gl\in\C$ with $|\gl|=1$.
\end{proof}

In particular, the set of Seiberg-Witten monopoles is a
$\cG$-invariant subset of the configuration space $\cC$ and thus,
the set
\[
\cM(\gs):=\SW^{-1}(0)/\cG \subset\cB(\gs)
\]
is a well-defined subset of the set of gauge equivalence classes.
$\cM$ is called the {\em Seiberg-Witten moduli
space}\index{>@$\cM(\gs)$, moduli space}\index{moduli space}
associated to the \spinc structure $\gs$ on the Riemannian
manifold $(M,g)$. The subset of $\cM(\gs)$ given by gauge
equivalence classes of irreducible monopoles is denoted by
$\cM^*(\gs)\subset\cB^*(\gs)$.
\begin{remark*}\index{>@$\cC^*,\cB^*,\cM^*$, irreducible parts}
The structure of the moduli space $\cM(\gs)$ depends heavily on
the particular choice of $g$. Whenever we want to stress this
dependence on the metric, we shall write $\cM(\gs;g)$ instead of
$\cM(\gs)$.\\
\end{remark*}

\noindent\textbf{Variational aspects of the Seiberg-Witten
equations.} We will now present a special feature of three
dimensional Seiberg-Witten theory that does not carry over to the
four dimensional case: We can interpret Seiberg-Witten monopoles
as critical points of a Chern-Simons like functional. This was
firstly observed by Kronheimer and Mrowka in \cite{KroMro:Gen}.

\begin{dfn}\label{chern-simons}\index{>@$\csd$,
Chern-Simons-Dirac functional}\index{Chern-Simons-Dirac
functional} For every configuration $(\psi,A)$, we define the {\em
Chern-Simons-Dirac functional}
\[
\fbox{$\displaystyle
\csd(\psi,A):=\frac{1}{2}\int_M\Big(\scalar{\psi}{\cD_A\psi}dv_g
+ (A-A_0)\wedge (F_A+F_{A_0})\Big)$}\;,
\]
where $A_0$ is an arbitrary fixed connection on $L(\gs)$. Observe
that $\csd$ is $\R$-valued since $\cD_A$ is formally self-adjoint.
\end{dfn}

\begin{remark*}
Note that the definition of the Chern-Simons-Dirac functional
depends on the choice of $A_0$ so that we should write
$\csd_{A_0}$. However, if $A_1$ is another fixed connection on
$L(\gs)$, then a short calculation shows that
\[
\csd_{A_1}-\csd_{A_0} = -\int_M F_{A_1}\wedge \big(A_1-A_0\big),
\]
i.e., the value of $\csd$ is well-defined up to an additive
constant. Since we are only interested in critical points of
$\csd$ we shall not bother stressing the dependence on $A_0$.
\end{remark*}

We endow the $\R$ vector space $C^\infty(M,E)$ with the pre
Hilbert scalar product induced by the scalar products $\scalar{\
}{\ }$ on $S$ and $\scalar{\ }{\ }_g$ on $iT^*M$, i.e., for
$(\gf,a)$, $(\gf',a')\in C^\infty(M,E)$ we let
\begin{align*}
\LScalar{(\gf,a)}{(\gf',a')}&:=
\Re\Lscalar{\gf}{\gf'}+\Lscalar{a}{a'}\\
&=\int_M\Re\scalar{\gf}{\gf'}dv_g+\int_M\scalar{a}{a'}_g dv_g.
\end{align*}

\begin{prop}\label{F:prop}
The map $\SW:\cC\to C^\infty(M,E)$ is the gradient of the
Chern-Simons-Dirac functional with respect to the $L^2$ scalar
product, i.e., for $(\psi,A)\in\cC$ and $(\gf,a)\in
C^\infty(M,E)$:
\[
\lfrac{d}{dt}\big|_{t=0}\csd(\psi+t\gf,A+ta)=
\LScalar{\SW(\psi,A)}{(\gf,a)}.
\]
The Hessian of $\csd$ at $(\psi,A)$ is the formally self-adjoint
first-order differential operator
\[
F_{(\psi,A)}:=D_{(\psi,A)}\SW:C^\infty(M,E)\to C^\infty(M,E),
\]
given by
\begin{equation}\label{F}\index{>@$F_{(\psi,A)}$, differential of $\SW$}
F_{(\psi,A)}(\gf,a)=\big(\cD_A\gf+\lfrac{1}{2}c(a)\psi,q(\psi,\gf)-*
da\big).
\end{equation}
\end{prop}

\begin{proof}
Let $(\gf,a)\in C^\infty(M,E)$. Then
\begin{align*}
\lfrac{d}{dt}\big|_{t=0}&\csd(\psi+t\gf,A+ta) \\
=&\lfrac{d}{dt}\big|_{t=0}\frac{1}{2}
\int_M\Big(\Re\Scalar{\psi+t\gf}{\cD_A(\psi+t\gf)+
\lfrac{1}{2}c(ta)(\psi+t\gf)}dv_g\\
&\qquad\qquad\quad +(A+ta-A_0)\wedge(F_A + tda +
F_{A_0})\Big)\\
=&\frac{1}{2}
\int_M\Big(\Re\Scalar{\gf}{\cD_A\psi}dv_g+\Re\Scalar{\psi}{\cD_A\gf+
\lfrac{1}{2}c(a)\psi}dv_g\\
&\qquad\qquad\quad +a\wedge(F_A+ F_{A_0}) +(A-A_0)\wedge da\Big)\\
=&\int_M\Re\Scalar{\gf}{\cD_A\psi}dv_g + \frac12
\int_M\Re\Scalar{\psi}{\lfrac{1}{2}c(a)\psi}dv_g\\
&\qquad\qquad\quad -\frac12 \int_M\Big( \Scalar{a}{*(F_A+
F_{A_0})}+\Scalar{A-A_0}{*da}\Big)dv_g, \intertext{where we have
employed formal self-adjointness of $\cD_A$ and formula
\eqref{Hodge:wedge} in the last line. Formal self-adjointness of
$*d$ on 1-forms and Proposition \ref{q:prop} show that this
equals} \Re\LScalar{\cD_A\psi&}{\gf}
+\LScalar{\lfrac{1}{2}q(\psi)}{a}
-\frac12\LSCalar{*(F_A+F_{A_0})+*d(A-A_0)}{a}\\
=&\Re\LScalar{\cD_A\psi}{\gf} +
\LScalar{\lfrac{1}{2}q(\psi)-*F_A}{a}\\
=&\LScalar{\SW(\psi,A)}{(\gf,a)}.
\end{align*}
The Hessian $F_{(\psi,A)}$ at $(\psi,A)$ is computed as follows:
\begin{align*}
D_{(\psi,A)}\SW(\gf,a)&=\lfrac{d}{dt}\big|_{t=0}\SW(\psi+t\gf,A+ta)\\
&=\lfrac{d}{dt}\big|_{t=0}\big(\cD_{A+ta}(\psi+t\gf),\ \lfrac12
q(\psi+t\gf)-*F_{A+ta}\big)\\
&=\big(\cD_A\gf+\lfrac12 c(a)\psi,\ q(\psi,\gf)-*da\big).
\end{align*}
Observe that we have used that the differential of $q$ at the
point $\psi$ is given by
\[
D_\psi q(\gf)= 2 q(\psi,\gf).
\]
As it is the Hessian of $\csd$, formal self-adjointness of
$F_{(\psi,A)}$ is immediate.
\end{proof}

Our main interest lies in gauge equivalence classes of critical
points of the Chern-Simons-Dirac functional. However, one major
observation is that $\csd:\cC\to \R$ is \emph{not} gauge
invariant. In fact,
\begin{equation}
\csd(\gamma\cdot(\psi,A))=\csd(\psi,A)-8\pi^2
\int_M\big[\lfrac{1}{2\pi i}\gamma^{-1}d\gamma\big]\wedge c(\gs),
\end{equation}
where $c(\gs)$ denotes the canonical class of the \spinc
structure $\gs$ (cf. Definition \ref{canline}).
\begin{proof}
Since $d(\gamma^{-1}d\gamma)=0$ and
$d\gamma^{-1}=-\gamma^{-2}d\gamma$ we have
\begin{align*}
\csd(\gamma\cdot(\psi,A))=&\frac{1}{2}\int_M\Big(
\SCalar{\gamma^{-1}\psi}{\cD_A(\gamma^{-1}\psi)+
\lfrac{1}{2}c(2\gamma^{-1}d\gamma)(\gamma^{-1}\psi)}dv_g\\
&\qquad\qquad\quad
+(A+2\gamma^{-1}d\gamma-A_0)\wedge(F_A+F_{A_0})\Big)\\
=&\frac{1}{2}\int_M\Big(\scalar{\gamma^{-1}\psi}{
\gamma^{-1}\cD_A\psi}dv_g
+ (A-A_0)\wedge (F_A+F_{A_0})\Big)\\
&\qquad\qquad\quad +\int_M \gamma^{-1}d\gamma\wedge(F_A + F_{A_0})
\intertext{Since $\gamma^{-1}$ acts unitary and
$[F_A]=[F_{A_0}]=2\pi i c(\gs)$,} \csd(\gamma\cdot(\psi,A)) =&
\csd(\psi,A) - 8\pi^2 \int_M\big[\lfrac{1}{2\pi i}
\gamma^{-1}d\gamma\big]\wedge c(\gs).\qedhere
\end{align*}
\end{proof}
According to \eqref{derham}, the class $[\lfrac{1}{2\pi
i}\gamma^{-1}d\gamma]$ belongs to $H^1_{dR}(M;\Z)$ which is the
image of $H^1(M;\Z)$ in $H^1_{dR}(M;\R)$. The same applies to
$c(\gs)$. Therefore, the integral over $[\lfrac{1}{2\pi
i}\gamma^{-1}d\gamma]\wedge c(\gs)$ is integer valued and
vanishes for all $\gamma$ only if $c(\gs)$ is a torsion class.

Hence in the general case, the Chern-Simons-Dirac functional
descends to $\cB$ as a function with values in $\R/(8\pi^2\Z)$.
Whenever we want to refer to this phenomenon, we shall usually
view $\csd$ as a function
\[
\csd:\cB\to S^1
\]

We now want to linearize the action of the group of gauge
transformations.\footnote{In the next chapter we shall see that
if we consider suitable Sobolev completions, the group of gauge
transformations is a Banach Lie group which acts smoothly on the
configuration space. Hence, taking the differential of the action
is meaningful. For the time being, we will perform the involved
calculations only formally.} As the Lie algebra of $\U_1$ is
$i\R$, infinitesimal gauge transformations are smooth maps $M\to
i\R$.

\begin{prop}\label{G:adj}
If $(\psi,A)\in \cC$, the ``derivative" of the action map
\[
\cG\to\cC,\quad \gamma\mapsto \gamma\cdot(\psi,A)
\]
at $\gamma=1$ is the first-order differential operator
\[
G_{(\psi,A)}: C^\infty(M,i\R)\to C^\infty(M,E),
\]
given by
\begin{equation}\label{G}\index{>@$G_{(\psi,A)}$, the action map}
G_{(\psi,a)}(f):=\lfrac{d}{dt}\big|_{t=0}\exp(tf)\cdot(\psi,A)=
(-f\psi,2df).
\end{equation}
The formal adjoint of $G_{(\psi,A)}$ with respect to the $L^2$
scalar products is
\begin{equation}\label{Gt}\index{>@$G_{(\psi,A)}^*$, formal
adjoint of $G_{(\psi,A)}$}
G_{(\psi,A)}^*(\gf,a)=2d^*a-i\Im\scalar{\gf}{\psi}.
\end{equation}
\end{prop}
\begin{proof}
Let $f\in C^\infty(M,i\R)$. Then
\begin{align*}
\lfrac{d}{dt}\big|_{t=0}\exp(tf)\cdot(\psi,A)&=
\lfrac{d}{dt}\big|_{t=0}\big(\exp(-tf)\psi,\
A+2\exp(-tf)d\exp(tf)\big)\\
&=\big(-f\psi,\ 2df\big).
\end{align*}
To calculate the formal adjoint of $G_{(\psi,A)}$, we now let
$(\gf,a)\in C^\infty(M,E)$. Then, recalling that $f$ is imaginary
valued, we find
\begin{align*}
\LScalar{(\gf,a)}{G_{(\psi,A)}(f)}&=  \Re\Lscalar{\gf}{-f\psi}+
\Lscalar{a}{2df} \\
&=\LScalar{2d^*a-i\Im\scalar{\gf}{\psi}}{f}.\qedhere\\
\end{align*}
\end{proof}

\noindent\textbf{The elliptic complex.} If $(\psi,A)$ is a
monopole, then $\SW(\gamma\cdot(\psi,A))=0$ for every
$\gamma\in\cG$. Taking derivatives at $\gamma=1$ yields
$F_{(\psi,A)}\circ G_{(\psi,A)}=0$, and we obtain a complex
\begin{equation}\label{complex}
0\to C^\infty(M,i\R)\overset{G}{\longrightarrow}
C^\infty(M,E\oplus i\R )\overset{F+
G}{\longrightarrow}C^\infty(M,E)\to 0,
\end{equation}
where we denote the map $f\mapsto (G(f),0)$ a little inaccurately
also by $G$. This is a complex of first-order differential
operators. Associated to \eqref{complex} there is the rolled-up
operator
\begin{equation}\label{T}
\begin{split}
T_{(\psi,A)}:=\big(F_{(\psi,A)}\oplus
G_{(\psi,A)},&G_{(\psi,A)}^*\big):\\ &C^\infty(M,E\oplus i\R)\to
C^\infty(M,E\oplus i\R).
\end{split}
\end{equation}
This operator is well-defined, irrespective of whether $(\psi,A)$
is a SW-monopole or not. Explicitly, it is given by
\begin{equation}\label{T:explicit}\index{>@$T_{(\psi,A)}$}
T_{(\psi,A)}\begin{pmatrix}\gf \\a \\ f\end{pmatrix} =
\begin{pmatrix} \cD_A &0 &0 \\
                0 &-* d &2d \\
                0 &2d^* &0 \end{pmatrix}
\begin{pmatrix}\gf \\a \\ f\end{pmatrix} +
\begin{pmatrix} \lfrac{1}{2}c(a)\psi -f\psi \\
                q(\psi,\gf) \\
                -i\Im\scalar{\gf}{\psi}\end{pmatrix}.
\end{equation}
\begin{prop}\label{T:ell}
For each configuration $(\psi,A)$, the differential operator
$T_{(\psi,A)}$ is elliptic and formally self-adjoint. Hence, if
$(\psi,A)$ is a monopole, the complex \eqref{complex} is
elliptic, i.e., the associated sequence of principal symbols is
exact.
\end{prop}
\begin{proof}
To prove ellipticity, we only have to consider the first-order
term of $T_{(\psi,A)}$. According to the explicit description
\eqref{T:explicit}, this term splits into the elliptic operator
$\cD_A:C^\infty(M,S)\to C^\infty(M,S)$ and the operator
\begin{equation}\label{odd:signature}
\begin{pmatrix} -* d &2d \\ 2d^* &0 \end{pmatrix}:
C^\infty(M,iT^*M\oplus i\R)\to C^\infty(M,iT^*M\oplus i\R).
\end{equation}
To prove ellipticity of the latter operator we use the rolled-up
operator of the deRham complex
\[
0\to \gO^0(M)\overset{d}{\longrightarrow}\gO^1(M)
\overset{d}{\longrightarrow}\gO^2(M)
\overset{d}{\longrightarrow}\gO^3(M)\to 0.
\]
This is an elliptic operator given by
\[
\begin{pmatrix} d^* &d \\ d &0 \end{pmatrix}= \begin{pmatrix} *d*
&d \\ d &0 \end{pmatrix}:\gO^2\oplus \gO^0\to \gO^1\oplus \gO^3.
\]
Here, we are using the explicit description of $d^*$ on
3-manifolds. The operator \eqref{odd:signature} is elliptic
because it can be obtained from the above elliptic operator by a
combination with bundle isomorphisms in the following way:
\[
\gO^1\oplus \gO^0\overset{\big(\begin{smallmatrix} * &0 \\ 0
&-2\id
\end{smallmatrix}\big)}{\longrightarrow}\gO^2\oplus\gO^0
\overset{\big(\begin{smallmatrix} *d* &d \\ d &0
\end{smallmatrix}\big)}{\longrightarrow}\gO^1\oplus\gO^3
\overset{\big(\begin{smallmatrix} -\id &0\\ 0 &-2*
\end{smallmatrix}\big)}{\longrightarrow}\gO^1\oplus\gO^0.
\]
Therefore, the first-order term of $T_{(\psi,A)}$ is the direct
sum of two elliptic operators, which shows that $T_{(\psi,A)}$ is
also elliptic. The assertion about formal self-adjointness is an
immediate consequence of formal self-adjointness of
$F_{(\psi,A)}$.
\end{proof}
\begin{remark*}
In four dimensional Seiberg-Witten theory, the geometric origin
of the complex corresponding to \eqref{complex} is more
transparent since one does not have to add the term $G$ to the
operator $F$ in order to have ellipticity. Later, when we are
going to study the local structure of the moduli space in Section
\ref{moduli}.\ref{loc:struc}, the nature of \eqref{complex} will
become clearer.
\end{remark*}

\cleardoublepage

\chapter{The Structure of the Moduli Space}\label{moduli}

To understand the structure of the Seiberg-Witten moduli space, we
have to endow the configuration space and the group of gauge
transformations with suitable topologies. As the objects of our
study are solutions of a system of partial differential
equations, it is natural to do this via Sobolev spaces. We can
then exploit the powerful machinery provided by the theory of
elliptic equations on compact manifolds to prove remarkable
topological properties. The material we need is summarized in
Appendix \ref{app:ell}.

The organization of this chapter is as follows. Section
\ref{funct:setup} introduces the functional analytic setting. It
turns out that $\cG$ can be made a Banach Lie group acting
smoothly on $\cC$. In Section \ref{top:quot}, we analyse the
action of $\cG$ on $\cC$ using the situation occurring for proper
actions of finite dimensional Lie groups as a guideline. We
establish the Hausdorff property of the quotient and a slice
theorem which shall provide the irreducible part of $\cB$ with
the structure of a Banach manifold.

Moreover, we will see in Section \ref{compact} that the moduli
space $\cM$ is a sequentially compact subset of $\cB$. This
contrasts the corresponding result in instanton theory, where the
moduli space has to be compactified through a complicated
procedure (cf. Donaldson \& Kronheimer \cite{DK} or Freed \&
Uhlenbeck \cite{FU}). This is one of the reasons why
Seiberg-Witten theory is considered as a simplification of
Donaldson theory. Making use of the implicit function theorem we
will then observe in Section \ref{loc:struc} that the irreducible
part of the moduli space is usually expected to be a submanifold
of dimension zero. Therefore, in the absence of reducible
monopoles, $\cM$ consists solely of finitely many points.
Finally, we shall use this observation to define an orientation of
the moduli space in Section \ref{count}. There is a general
procedure, introduced by Donaldson in \cite{Don:Orient}, to endow
gauge theoretical moduli spaces with an orientation. This applies
to Seiberg-Witten theory as well. Using the Chern-Simons-Dirac
functional, we shall then interpret the signed count of monopoles
obtained in this way as an Euler characteristic associated to the
irreducible part of the quotient $\cB$.

The presentation of the topological aspects we are giving is an
imitation of the corresponding results in four dimensional
Seiberg-Witten theory as they are presented, for example, in the
monographs by J. Morgan \cite{Mo} and L.I. Nicolaescu
\cite{Nic:SW}. The discussion of the local structure and the
orientation of the moduli space follows the work of C.H. Taubes
\cite{Tau:CI} and W. Chen \cite{Che:CI}.

\section{Functional analytic set-up}\label{funct:setup}

Suppose that $M$ is a closed, oriented Riemannian 3-manifold with
\spinc structure $\gs$. Let $L_1^2(M,S)$ denote the sections of
the spinor bundle $S(\gs)$ which are of Sobolev class 1, and let
$\cA_1(\gs)$ be the affine Hilbert space of $L_1^2$-connections
on $L(\gs)$, i.e.,\index{=@$\cA(\gs)$, space of gauge fields}
\[
\cA_1(\gs):=\setdef{A_0+a}{a\in L^2_1(M,iT^*M)},
\]
where $A_0$ is a fixed $C^\infty$ gauge field. Because of the
affine structure of $\cA$, the definition of $\cA_1(\gs)$ is
independent of the particular choice of $A_0$. Then the
configuration space\index{>@$\cC(\gs)$, configuration
space}\index{configuration space}
\[
\cC_1(\gs):=L^2_1(M,S(\gs))\times \cA_1(\gs)
\]
is a real affine Hilbert space modelled on $L^2_1(M,S\oplus iT^*
M)$. As the group of gauge transformations we now take
\index{>@$\cG$, group of gauge transformations}
\[
\cG_2:=L^2_2(M,\U_1),
\]
which is the set of functions $\gamma:M\to \C$ of Sobolev class 2
that take values in $\U_1$. Note that this definition makes sense
since on 3-manifolds, $L^2_2$ embeds in $C^0$ (cf. Theorem
\ref{sobolev}). The moduli space $\cM$ carries the topology
induced by the quotient topology on $\cB_1=\cC_1/\cG_2$.

\begin{remark*}
As we shall see below, the Sobolev orders we are choosing are
motivated by the consideration in Example \ref{sob:mult:n=3}: It
must be guaranteed that there are continuous Sobolev
multiplications
\[
L^2_k\times L^2_k\to L^2_k\quad\text{ and }\quad L^2_k\times
L^2_l\to L^2_l,
\]
where $k$ and $l$ are the Sobolev orders associated to gauge
transformations and configurations respectively. Since this
depends on the dimension of the underlying manifold, one has to
choose $k$ and $l$ differently in four dimensional Seiberg-Witten
theory. However, some authors assume higher Sobolev orders in the
three dimensional case as well which simplifies some proofs.
Although we will see in Section \ref{compact} that the structure
of the moduli space does not depend on the particular choice, we
do not avoid the slightly bigger effort of working with the lowest
possible Sobolev orders as this will allow some of the involved
differential operators to be defined on their natural domains.\\
\end{remark*}

\noindent\textbf{Differentiability properties.} We now want to
establish the basic set-up for performing calculus in the given
framework.

\begin{lemma}\label{q:diff}
The quadratic map $\psi\mapsto q(\psi)$ induces a smooth map
\[
q:L^2_1(M,S)\to L^2(M,iT^*M).
\]
If $k\ge 2$, we obtain a smooth map $q:L^2_k(M,S)\to
L^2_k(M,iT^*M)$.
\end{lemma}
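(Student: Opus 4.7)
The plan is to exploit the fact that $q$ is polynomial of degree two and that polynomial maps between Banach spaces are automatically smooth once the underlying multilinear operation is continuous. Concretely, from \eqref{b^q} we have the symmetric $\mathbb{R}$-bilinear form
\[
\tilde q(\psi,\varphi)=-\tfrac{1}{2}i\,\Im\langle c(e_j)\psi,\varphi\rangle e^j,
\qquad q(\psi)=\tilde q(\psi,\psi),
\]
so it suffices to prove that $\tilde q$ extends to a continuous $\mathbb{R}$-bilinear map on the relevant Sobolev spaces. Once continuity of $\tilde q$ is established, smoothness of $q$ follows from the general principle that the $n$-th derivative of $\psi\mapsto\tilde q(\psi,\psi)$ is $2\,\tilde q$ for $n=2$ and vanishes for $n\ge 3$; in particular $Dq(\psi)\varphi=2\tilde q(\psi,\varphi)$, which is consistent with \eqref{F}.

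First I would handle the case $k=1$. Pointwise, $\tilde q(\psi,\varphi)$ is a bilinear combination of the components of $\psi$ and $\varphi$ with bounded coefficients (coming from the smooth endomorphisms $c(e_j)$ and the smooth coframe $e^j$), so controlling $\tilde q$ reduces to controlling the pointwise product. On a closed $3$-manifold the Sobolev embedding $L^2_1\hookrightarrow L^6$ together with H\"older's inequality yields a continuous multiplication
\[
L^2_1(M)\times L^2_1(M)\longrightarrow L^3(M)\hookrightarrow L^2(M),
\]
which is precisely the content of Example \ref{sob:mult:n=3}. Combining this with the fact that $c$ and the frame are smooth (hence bounded in $L^\infty$) gives the required continuous bilinear extension $\tilde q\colon L^2_1(M,S)\times L^2_1(M,S)\to L^2(M,iT^*M)$.

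For the case $k\ge 2$ I would invoke the fact that on a $3$-manifold $L^2_k$ is a Banach algebra once $k>3/2$, i.e.\ already for $k\ge 2$; again this is recorded in Example \ref{sob:mult:n=3}. Since the coefficients of $\tilde q$ are smooth, multiplication by them preserves $L^2_k$, and therefore $\tilde q$ extends continuously to $L^2_k\times L^2_k\to L^2_k$. Smoothness of $q$ in both settings is then the observation above about quadratic maps.

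The only real obstacle is bookkeeping: one must check that the smooth ``coefficients'' $c(e_j)$ and the coframe components, together with the projection $\Im$, really do preserve the relevant Sobolev classes, but this is immediate from the fact that multiplication by a fixed $C^\infty$ tensor field acts continuously on every $L^2_k$. Apart from this, the argument is purely formal, relying entirely on the Sobolev multiplication results in Appendix \ref{app:ell}.
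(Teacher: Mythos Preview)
Your argument is correct and follows exactly the paper's approach: reduce to continuity of the underlying bilinear map via the Sobolev multiplication results (Proposition~\ref{sob:mult} for $L^2_1\times L^2_1\to L^2$ and Example~\ref{sob:mult:n=3} for $L^2_k\times L^2_k\to L^2_k$ when $k\ge 2$), then use that continuous bilinear maps yield smooth quadratic maps. Your write-up is simply a more explicit unpacking of the paper's two-line proof.
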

\begin{proof}
Since $M$ is three dimensional, Proposition \ref{sob:mult}
guarantees that there is a bounded Sobolev multiplication
$L^2_1\times L^2_1\to L^2$. The first assertion then immediately
follows because $q(\psi)$ is a quadratic expression in $\psi$.
For $k\ge 2$ we deduce from Example \ref{sob:mult:n=3} that there
is a bounded Sobolev multiplication $L^2_k \times L^2_k\to L^2_k$
associated to any bilinear map. This yields smoothness of
$q:L^2_k(M,S)\to L^2_k(M,iT^*M)$ in this case as well.
\end{proof}

\begin{prop}\label{cs:diff}
The Chern-Simons-Dirac functional $\csd:\cC_1\to\R$ is a smooth
map. Its $L^2$-gradient $\SW$ extends naturally to a smooth map
$\cC_1\to L^2(M,E)$. For any $(\psi,A)\in\cC_1$, the Hessian
$F_{(\psi,A)}$ defines a symmetric operator in $L^2(M,E)$ with
domain $L^2_1(M,E)$.
\end{prop}

\begin{proof}
Since $\int_M: L^1(M,\gL^3T^*M)\to \R$ is smooth, we have to
establish that the integrand of the Chern-Simons-Dirac functional
is a smooth map $\cC_1\to L^1(M,\gL^3T^*M)$. For this, we have to
prove first that
\[
\cC_1\to L^1(M,\R),\;(\psi,A)\mapsto \Re\scalar{\psi}{\cD_A\psi}
\]
is smooth. Smoothness of the multiplication $L^2\times L^2\to
L^1$ shows that it suffices to establish that for a fixed
$C^\infty$ gauge field $A_0$, the map
\[
L^2_1(M,E)\to L^2(M,S),\quad (\psi,a)\mapsto
\cD_{A_0}\psi+\lfrac{1}{2}c(a)\psi\,.
\]
is smooth. $\cD_{A_0}$ induces a bounded linear---and
consequently a smooth---map $L^2_1(M,S)\to L^2(M,S)$. As we have
already seen before, Proposition \ref{sob:mult} shows that the
second term also yields a smooth map $L^2_1(M,E)\to L^2(M,E)$.
Secondly, we have to show that
\[
\cA_1\to L^1(M,\gL^3T^*M),\; A\mapsto (A-A_0)\wedge(F_A+F_{A_0}),
\]
is smooth. Since there is a Sobolev multiplication $L_1^2\times
L^2\to L^1$, this easily follows from smoothness of
\[
L^2_1(M,iT^*M)\to L^2(M,\gL^2iT^*M),\quad a\mapsto F_{A_0} + da.
\]
From the above considerations and Lemma \ref{q:diff}, one also
concludes that the $L^2$ gradient $\SW$ extends to a smooth map
$\cC_1\to L^2(M,E)$. Since the computations of Proposition
\ref{F:prop} in the last chapter remain valid for $L^2_1$
configurations, the extension of $\SW$ is the $L^2$ gradient of
$\csd:\cC_1\to\R$.\footnote{However, $\SW$ is not a gradient
vector field with respect to the natural metric on $\cC_1$, given
by the $L^2_1$ scalar product on $L^2_1(M,E)$.} Moreover, the
differential of $\SW$ at $(\psi,A)\in\cC_1$ is again given by
formula \eqref{F}. Since this is a formally self-adjoint
first-order differential operator, it extends to an unbounded
symmetric operator in $L^2(M,E)$ with natural domain
$L_1^2(M,E)$. Notice that the zero-order term of $F_{(\psi,A)}$
is possibly non-smooth. Then, however, it is easy to check that
the multiplication rule $L_1^2\times L_1^2\to L^2$ guarantees that
$F_{(\psi,A)}$ is still well-defined as a bounded operator
$L_1^2(M,E)\to L^2(M,E)$.
\end{proof}

\begin{prop}\label{action:diff}\index{gauge transformations|(}
The group of gauge transformations $\cG_2$ is a Banach Lie group
modelled on $L^2_2(M,i\R)$, and its action on $\cC_1$ is smooth.
\end{prop}
\begin{proof}
Sobolev multiplication \ref{sob:mult:n=3} guarantees that
multiplication of complex functions on $M$ extends to a smooth
bilinear map
\[
L^2_2(M,\C)\times L^2_2(M,\C)\to L^2_2(M,\C).
\]
Like in finite dimensional Lie group theory, the implicit
function theorem---which is also valid in Banach
spaces---guarantees that taking the inverse of an invertible
function $f\in L^2_2(M,\C)$ is a smooth map, defined on the subset
\[
L^2_2(M,\C^*):=\bigsetdef{f\in L^2_2(M,\C)}{\forall_{x\in
M}:\,f(x)\in\C^*}.
\]
Note that this set is well-defined since there is a continuous
embedding of $L^2_2(M,\C)$ in $C^0(M,\C)$. Moreover, with respect
to this embedding, $L^2_2(M,\C^*)$ is the preimage of the open
subset $C^0(M,\C^*)$ of $C^0(M,\C)$ and is therefore open in
$L^2_2(M,\C)$. Hence, $L^2_2(M,\C^*)$ is a Banach Lie group.

We now want to show that it contains $\cG_2=L^2_2(M,\U_1)$ as a
closed Lie subgroup. For this it suffices to establish that
$\cG_2$ is a closed submanifold of $L^2_2(M,\C^*)$. This shall be
done by constructing local charts of $L^2_2(M,\C^*)$ mapping
$\cG_2$ to the real subspace $L^2_2(M,i\R)$ of $L^2_2(M,\C)$.
Note that this subspace is closed---again because of the
embedding of $L^2_2$ in $C^0$.

Let $\C^-:=\C\setminus (-\infty,0]$. Then the set
\[
L^2_2(M,\C^-):=\setdef{f\in L^2_2(M,\C)}{\forall_{x\in
M}:\,f(x)\in\C^-}
\]
is an open subset of $L^2_2(M,\C^*)$. If we let
\[
V:=\R\times (-i\pi,i\pi)\subset\C,
\]
then $\exp|_V:V\to \C^-$ is a diffeomorphism. This in turn induces
a diffeomorphism
\begin{equation*}
\exp:L^2_2(M,V)\to L^2_2(M,\C^-),\quad f\mapsto \exp(f),
\end{equation*}
where $L^2_2(M,V)$ is defined in the same manner as
$L^2_2(M,\C^*)$ and $L^2_2(M,\C^-)$ above. Clearly,
\begin{equation*}
\exp\Big(L^2_2(M,V)\cap L^2_2(M,i\R)\Big)= L^2_2(M,\C^-)\cap
L^2_2(M,\U_1).
\end{equation*}
Multiplication by an element of $L^2_2(M,\U_1)$ induces a
diffeomorphism of $L^2_2(M,\C^*)$. Hence, for arbitrary
$\gamma\in L^2_2(M,\U_1)$, the set $\gamma\cdot L^2_2(M,\C^-)$ is
an open neighbourhood of $\gamma$ in $L^2_2(M,\C^*)$. As a
consequence,
\[
\gamma\cdot\exp:L^2_2(M,V)\to \gamma\cdot L^2_2(M,\C^-),\quad
f\mapsto \gamma\cdot\exp(f),
\]
is a diffeomorphism satisfying
\[
(\gamma\cdot\exp)\big(L^2_2(M,V)\cap L^2_2(M,i\R)\big)=
\big(\gamma\cdot L^2_2(M,\C^-)\big)\cap L^2_2(M,\U_1).
\]
Since $L^2_2(M,\U_1)$ can be covered by sets of the above type, it
is a closed submanifold of $L^2_2(M,\C^*)$ modelled on
$L^2_2(M,i\R)$.

According to Example \ref{sob:mult:n=3}, there is a smooth
multiplication
\[
L_2^2\times L_1^2\to L_1^2.
\]
Hence, the action of $\cG_2$ on $L^2_1(M,S)$, which is given by
$(\gamma,\psi)\mapsto\gamma^{-1}\psi$, is smooth. $\cG_2$ acts on
the space of gauge fields via $(\gamma,A)\mapsto
A+2\gamma^{-1}d\gamma$. As $d:L^2_2(M,\C)\to L^2_1(M,\C)$ is a
bounded linear operator, it follows again from Example
\ref{sob:mult:n=3} that the map
\[
L^2_2(M,\U_1)\to L^2_1(M,iT^*M),\quad
\gamma\mapsto\gamma^{-1}d\gamma,
\]
is smooth. Thus, the Seiberg-Witten configuration space is acted
on smoothly by $\cG_2$.
\end{proof}\index{gauge transformations|)}

We are now in the position to make the considerations in
Proposition \ref{G:adj} more precise. Let $(\psi,A)\in\cC_1$.
Since the action of $\cG_2$ on $\cC_1$ is smooth, we may take the
derivative of the map
\[
\cG_2\to\cC_1,\quad \gamma\mapsto \gamma\cdot(\psi,A).
\]
The formal calculation in loc.~cit. shows that this results in
the bounded linear map
\[
G_{(\psi,A)}:L^2_2(M,i\R)\to L^2_1(M,E),\quad f\mapsto
(-f\psi,2df),
\]
where---as always---we are using the abbreviation
\[
E:=S\oplus iT^*M.
\]
Notice that $G_{(\psi,A)}$ above is not defined on its natural
domain $L^2_1(M,i\R)$. We shall, however, consider $G_{(\psi,A)}$
as a closed operator $L^2(M,i\R)\to L^2(M,E)$ with domain
$L_1^2(M,i\R)$ restricting to $L_2^2(M,i\R)$ whenever it is
necessary. In the same way, $G_{(\psi,A)}^*$ shall always denote
the {\em functional analytic} adjoint of $G_{(\psi,A)}$. When
restricted to $L_1^2(M,E)$ it coincides with the natural
extension of the formal adjoint \eqref{Gt}.

We will now turn to the extension of the elliptic operator
$T_{(\psi,A)}$ which was defined in \eqref{T} for smooth
$(\psi,A)$ as
\[
T_{(\psi,A)}=\big(F_{(\psi,A)}+
G_{(\psi,A)},G_{(\psi,A)}^*\big):C^\infty(M,E\oplus i\R)\to
C^\infty(M,E\oplus i\R).
\]
Then $T_{(\psi,A)}$ is also well-defined as an operator in
$L^2(M,E\oplus i\R)$ with natural domain $L^2_1(M,E\oplus i\R)$.
Our aim is to show that $T_{(\psi,A)}$ is a self-adjoint operator
with compact resolvent. Fixing a smooth gauge field $A_0$ and
writing $A=A_0+a_0$, we let
\[
K_{(\psi,a_0)}:= T_{(\psi,A_0+a_0)}-T_{(0,A_0)}.
\]
Since $\cD_A=\cD_{A_0}+\lfrac{1}{2}c(a_0)$, formula
\eqref{T:explicit} shows that
\[
K_{(\psi,a_0)}
\begin{pmatrix}\gf \\a \\ f\end{pmatrix} =
\begin{pmatrix}\lfrac{1}{2}c(a_0)\gf+\lfrac{1}{2}c(a)\psi -f\psi \\
                q(\psi,\gf) \\
                -i\Im\scalar{\gf}{\psi}\end{pmatrix},
\quad \begin{pmatrix}\gf \\a \\ f\end{pmatrix}\in L^2_1(M,E\oplus
i\R).
\]
Using the considerations at the end of Appendix \ref{app:ell} as a
guideline, we now need to ascertain the following:
\begin{lemma} For any $(\psi,a_0)\in L^2_1(M,E)$, the operator
\[
K_{(\psi,a_0)}:L^2_1(M,E\oplus i\R)\to L^2(M,E\oplus i\R)
\]
is compact and symmetric with respect to the $L^2$ scalar product.
\end{lemma}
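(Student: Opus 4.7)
The starting point is the observation that $K_{(\psi,a_0)}$ is a \emph{zeroth-order} operator: inspecting \eqref{T:explicit}, the only first-order contribution to $T_{(\psi,A_0+a_0)} - T_{(0,A_0)}$ comes from the difference $\cD_{A_0+a_0}-\cD_{A_0}=\tfrac{1}{2}c(a_0)$, which is itself of order zero. Hence $K_{(\psi,a_0)}$ is a pointwise multiplication/pairing operator whose coefficients come from the fixed datum $(\psi,a_0)\in L^2_1(M,E)$ and whose argument $(\gf,a,f)$ lives in $L^2_1(M,E\oplus i\R)$. In particular, every entry of $K_{(\psi,a_0)}$ is bilinear in (one factor from $(\psi,a_0)$)$\times$(one factor from $(\gf,a,f)$).

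\textbf{Compactness.} I would derive this from Rellich--Kondrachov together with H\"older. In dimension three we have the continuous Sobolev embedding $L^2_1(M)\hookrightarrow L^4(M)$, which is moreover \emph{compact} (since $4<6$), and the pointwise product pairs $L^4\cdot L^4$ continuously into $L^2$. Given a bounded sequence $(\gf_n,a_n,f_n)$ in $L^2_1(M,E\oplus i\R)$, the Rellich theorem lets me pass to a subsequence converging in $L^4$. Multiplying by the fixed coefficients $\psi,a_0\in L^2_1\hookrightarrow L^4$ then yields convergence in $L^2$ of each of the terms $c(a_0)\gf_n$, $c(a_n)\psi$, $f_n\psi$, $q(\psi,\gf_n)$, and $\Im\scalar{\gf_n}{\psi}$. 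Thus $K_{(\psi,a_0)}$ sends bounded sequences in $L^2_1$ to relatively compact sets in $L^2$, which is the definition of compactness.

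\textbf{Symmetry.} The shortest route is to appeal to Proposition \ref{T:ell}: for smooth $(\psi,A)$, the operator $T_{(\psi,A)}$ is formally self-adjoint, so in particular $T_{(\psi,A_0+a_0)}$ and $T_{(0,A_0)}$ are symmetric on smooth sections. The identities that underlie formal self-adjointness are pointwise algebraic facts---Clifford multiplication by an imaginary one-form is Hermitian, and Proposition \ref{q:prop}(i) pairs $q(\psi,\gf)$ with $\tfrac{1}{2}\Re\scalar{c(a)\psi}{\gf}$---so the symmetry relation extends verbatim to arguments in $L^2_1$ with coefficients in $L^2_1$, since all the integrands land in $L^1$ by the multiplication $L^2_1\cdot L^2_1\to L^2\subset L^1$. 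Taking the difference gives symmetry of $K_{(\psi,a_0)}$.

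\textbf{Main obstacle.} The only subtlety I anticipate is bookkeeping: ensuring that the rough coefficient $(\psi,a_0)\in L^2_1$ (rather than $C^\infty$) does not spoil the algebraic cancellations used in Proposition \ref{T:ell}. Since all cancellations are pointwise (Hermiticity of $c(a_0)$, Proposition \ref{q:prop}(i), and the $i\R$-valuedness of $f$), a density argument in the $L^2_1$-topology---together with the continuity of the relevant Sobolev multiplications---reduces the matter to the smooth case, so no new difficulty actually arises.
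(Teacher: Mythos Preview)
Your proof is correct and follows the same overall strategy as the paper: reduce compactness to the fact that multiplication by a fixed $L^2_1$ section is compact as a map $L^2_1\to L^2$, and obtain symmetry from $K_{(\psi,a_0)}$ being the difference of two symmetric operators.

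The one noteworthy variation is in how you factor the compactness. The paper uses the Sobolev multiplication $L^2_1\times L^2_1\to L^p_1$ for some $p\in(6/5,3/2)$ and then invokes the compact embedding $L^p_1\hookrightarrow L^2$; you instead use the compact embedding $L^2_1\hookrightarrow L^4$ first and then the H\"older pairing $L^4\cdot L^4\to L^2$. Both are valid, but your route is a bit more elementary---it avoids the multiplication theorem with derivatives (Proposition~\ref{sob:mult}) and needs only the $L^p$ Sobolev embedding plus H\"older's inequality. The paper's version has the mild advantage that it packages everything as ``multiplication lands in a Sobolev space of one derivative, then Rellich,'' which is the pattern reused elsewhere in the text.
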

\begin{proof}
From the explicit description of $K_{(\psi,a_0)}$ we deduce that
the assertion of the lemma reduces to the claim that
multiplication by an element of $L^2_1$ yields a compact operator
$L^2_1\to L^2$.

Carefully checking the assumptions of Proposition \ref{sob:mult},
we obtain a continuous Sobolev multiplication
\[
L^2_1\times L^2_1 \to L_1^p\,,
\]
provided that $p\in (1,\lfrac{3}{2})$. If in addition
$p>\lfrac65$, we have $1-\lfrac{3}{p}> -\lfrac{3}{2}$, and the
Rellich-Kondrachov Theorem \ref{rellich} implies that $L_1^p$
embeds compactly in $L^2$. Then the desired compactness
property follows. The symmetry with respect to the $L^2$ scalar
product is an immediate consequence of the definition of
$K_{(\psi,a_0)}$ as the difference of two symmetric operators.
\end{proof}

Invoking Theorem \ref{relative:comp} about relative compact
perturbations of operators with compact resolvent, we can now draw
an important conclusion:

\begin{prop}\label{T:prop}
Let $(\psi,A)\in \cC_1$. Then the operator $T_{(\psi,A)}$ induces
a self-adjoint operator in $L^2(M,E\oplus i\R)$ with domain
$L^2_1(M,E\oplus i\R)$, i.e., using the notation of \eqref{Lsa},
\[
T_{(\psi,A)}\in \sL_{sa}\big(L^2_1(M,E\oplus i\R),L^2(M,E\oplus
i\R)\big).
\]
Moreover, $T_{(\psi,A)}$ has compact resolvent and thus discrete
spectrum. In particular, it is a Fredholm operator.
\end{prop}

According to the considerations in App.~\ref{app:SF&OT},
Sec.~\ref{app:SF}, $\sL_{sa}$ is an open subset of the Banach
space $\sL_{sym}$ which is the space of symmetric operators with
the same fixed domain. Hence $\sL_{sa}$ inherits the structure of
a Banach manifold if endowed with the operator norm topology. With
respect to this, we have:

\begin{prop}\label{T:diff}
The assignment $(\psi,A)\mapsto T_{(\psi,A)}$ defines a smooth map
\[
\cC_1(\gs)\to \sL_{sa}\big(L^2_1(M,E\oplus i\R),L^2(M,E\oplus
i\R)\big),
\]
\end{prop}
\begin{proof}
Since $\sL_{sa}$ is an open subset of $\sL_{sym}$, it suffices to
insure that the assignment
\[
L^2_1(M,E)\to \sL_{sym}\big(L^2_1(M,E\oplus i\R),L^2(M,E\oplus
i\R)\big),\quad (\psi,a_0)\mapsto K_{(\psi,a_0)}
\]
is smooth. By linearity of this map, smoothness is equivalent to
continuity.

Using the continuous Sobolev multiplication $L^2_1\times L^2_1\to
L^2$, one straightforwardly obtains
\[
\big\|K_{(\psi,a_0)}(\gf,a,f)\big\|_{L^2}\le \const\cdot
\big\|(\psi,a_0)\big\|_{L^2_1}\cdot\big\|(\gf,a,f)
\big\|_{L^2_1}\,.
\]
Therefore, we can estimate the operator norm by
\begin{align*}
\big\|K_{(\psi,a_0)}\big\|_{L^2_1,L^2}=\sup_{(\gf,a,f)}
\frac{\big\|K_{(\psi,a_0)}
(\gf,a,f)\big\|_{L^2}}{\big\|(\gf,a,f)\big\|_{L^2_1}} \le
\const\cdot \big\|(\psi,a_0)\big\|_{L^2_1}
\end{align*}
which ensures continuity of $(\psi,a_0)\mapsto K_{(\psi,a_0)}$.
\end{proof}

\section{Topology of the quotient space}\label{top:quot}

Our next aim is to investigate the topology of the set of
configurations modulo gauge equivalence, i.e., of the quotient
$\cB_1=\cC_1/\cG_2$. The first observation is that $\cB_1$ is
second countable since its topology is defined as the quotient of
an affine space modelled on a separable Hilbert space.\\

\noindent\textbf{The Hausdorff property.} When studying the
quotient of group actions $G\times X\to X$, the situation
simplifies if $G$ acts \emph{properly} on $X$, i.e., if the map
\[
G\times X\to X\times X,\quad (g,x)\mapsto (x,gx)
\]
is proper.\footnote{This means that the map is closed and that
preimages of points are compact} Recall (e.g. from \cite{toD:TG},
Sec.~I.3) that the quotient $X/G$ of a proper group action is
always a Hausdorff space. The following (simple) criterium is
useful in our context:
\begin{lemma}\label{proper:crit}
Let $G\times X\to X$ be a (topological) group action. Suppose
that all stabilizers are compact and that for all sequences
$(x_n)$ in $X$ and $(g_n)$ in $G$ the following holds:
\begin{quotation}
\noindent If $x_n\to x$ and $g_nx_n\to y$, then there exists a
convergent subsequence of $(g_n)$ whose limit point $g\in G$
satisfies $gx=y$.
\end{quotation}
Then $G$ acts properly on $X$.
\end{lemma}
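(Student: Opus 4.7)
The plan is to verify the two clauses in the definition of proper map (cf.\ the footnote: closed map with compact point-preimages) separately for $\Phi\colon G\times X\to X\times X$, $\Phi(g,x)=(x,gx)$, and to use the sequential hypothesis exactly to prove closedness.

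For the compactness of fibres I would just unwind definitions. Given $(a,b)\in X\times X$, one has $\Phi^{-1}(a,b)=\{g\in G: ga=b\}\times\{a\}$. If this set is nonempty, pick $g_0$ with $g_0a=b$; then the $g$-factor equals the left coset $g_0\cdot G_a$, where $G_a$ is the stabiliser of $a$. Since $G_a$ is compact by hypothesis and left translation is a homeomorphism of $G$, this coset is compact, and so is the whole fibre.

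For closedness I would use the sequential criterion, which is available because the ambient spaces in the applications (and more generally in all first-countable spaces) characterise closedness through sequences. Let $C\subseteq G\times X$ be closed, and suppose $\Phi(g_n,x_n)=(x_n,g_nx_n)\to (a,b)$ for some sequence $(g_n,x_n)\in C$. Then $x_n\to a$ and $g_nx_n\to b$, so the hypothesis of the lemma yields a subsequence $g_{n_k}\to g$ with $ga=b$. Consequently $(g_{n_k},x_{n_k})\to(g,a)$ in $G\times X$, and closedness of $C$ forces $(g,a)\in C$, so $(a,b)=\Phi(g,a)\in \Phi(C)$. Hence $\Phi(C)$ is closed.

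There is no real obstacle: the only conceptual point is recognising that the assumed convergence property is precisely what the sequential test for closedness of $\Phi$ demands, and that compactness of stabilisers handles the point-preimages by the coset identification. Combining the two parts gives properness, and hence (by the standard result recalled in the statement) the Hausdorff property of the quotient, which is what the subsequent sections will invoke.
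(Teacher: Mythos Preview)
The paper does not actually prove this lemma; it is stated as a ``(simple) criterium'' and then immediately applied in Proposition~\ref{proper} without further comment. Your argument is correct and is exactly the natural one: identify the fibres of $\Phi$ with cosets of stabilisers to get compactness of point-preimages, and feed the sequential hypothesis directly into the sequential test for closedness of $\Phi$. Your caveat about first-countability is well taken---since the hypothesis is phrased in terms of sequences, it can only force closedness in spaces where sequences detect closure; in the intended application $X=\cC_1$ is metrisable, so this is no issue.
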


\begin{prop}\label{proper}
The group $\cG_2$ acts properly on $\cC_1$. In particular, the
quotient $\cB_1$ is a Hausdorff space.
\end{prop}
\begin{proof}
We use the criterium in Lemma \ref{proper:crit}. Let
$\big((\psi_n,A_n)\big)$ and $(\gamma_n)$ be sequences in $\cC_1$
and $\cG_2$ respectively. Suppose that there exists
$(\psi_0,A_0)$ and $(\psi,A)$ in$ \cC_1$ such that
\[
(\psi_n,A_n)\xrightarrow{n\to\infty} (\psi_0,A_0)\quad
\text{and}\quad \gamma_n\cdot(\psi_n,A_n)\xrightarrow{n\to\infty}
(\psi,A).
\]
In particular,
\begin{equation}\label{proper:1}
\big\|\gamma_n^{-1}\psi_n-\psi\big\|_{L^2_1}
\xrightarrow{n\to\infty}0
\end{equation}
and, if we let $a_n:=A_n-A$,
\begin{equation}\label{proper:2}
\big\|A_n+2\gamma_n^{-1}d\gamma_n-A\big\|_{L^2_1}=\big\|a_n+
2\gamma_n^{-1}d\gamma_n\big\|_{L^2_1} \xrightarrow{n\to\infty}0.
\end{equation}

Since multiplication with a gauge transformation does not change
the value of $\|.\|_{L^p}$, we deduce from the continuous
embedding $L^2_1\hookrightarrow L^4$ that
\begin{equation}\label{proper:3}
\begin{split}
\big\|d\gamma_n\big\|_{L^4}&=
\big\|\gamma_n^{-1}d\gamma_n\big\|_{L^4} \le \const\cdot
\big\|\gamma_n^{-1}d\gamma_n\big\|_{L^2_1}\\ &\le \const\cdot\big(
\big\|a_n\big\|_{L^2_1}
+\big\|a_n+2\gamma_n^{-1}d\gamma_n\big\|_{L^2_1}\big),
\end{split}
\end{equation}
where in the last line we have employed the triangle inequality.
As a convergent sequence, $(a_n)$ is bounded in $L_1^2$. We thus
conclude from \eqref{proper:2} that $(d\gamma_n)$ is a bounded
sequence in $L^4$ (and hence also in $L^2$). To obtain an
$L^2_2$-bound on $(\gamma_n)$ it remains to consider the sequence
of the second derivatives. Viewing $(\gamma_n)$ as a sequence in
$L^2_2(M;\C)$, we have
\[
\nabla^2\gamma_n=\nabla (d\gamma_n)\in L^2(M,T^*M^{\otimes
2}\otimes \C).
\]
Note that
\[
\nabla (d\gamma_n) =\gamma_n \nabla (\gamma^{-1}_nd\gamma_n) -
\gamma_n d\gamma_n^{-1}\otimes d\gamma_n
\]
Since $d\gamma_n^{-1} =-\gamma_n^{-2}(d\gamma_n)$, the
(pointwise) norm of the second summand is equal to
$|d\gamma_n|^2$. Therefore,
\begin{equation}\label{proper:4}
\begin{split}
\big\|\nabla(d\gamma_n)\big\|_{L^2} &\le \big\|\nabla
(\gamma^{-1}_nd\gamma_n)  \big\|_{L^2}
+\big\|d\gamma_n^{-1}\otimes d\gamma_n\big\|_{L^2}\\ &\le
\|\gamma_n^{-1}d\gamma_n\|_{L^2_1} + \|d\gamma_n\|_{L^4}^2\, .
\end{split}
\end{equation}
As we have seen in \eqref{proper:3}, the right hand side of
\eqref{proper:4} is bounded so that we obtain the desired
$L^2_2$-bound on $(\gamma_n)$.

The Rellich-Kondrachov Theorem \ref{rellich} now implies that
$L^2_2$ embeds compactly in $L^2_1$. We can thus find a
subsequence of $(\gamma_n)$ which converges in $L^2_1$.
Additionally, according to \eqref{proper:4}, the sequence
$(\nabla(d\gamma_n))$ is bounded in the Hilbert space
$L^2(M,T^*M^{\otimes 2}\otimes\C)$. Hence, $(\gamma_n)$ contains
a subsequence such that the second derivatives are weakly
convergent in $L^2$. Without loss of generality, we may therefore
assume that $(\gamma_n)$ converges strongly in $L^2_1$ to, say,
$\gamma$ and that $(\nabla(d\gamma_n))$ converges weakly in $L^2$
to, say, $\eta$. This implies that $\nabla(d\gamma)=\eta$ weakly
in $L^2$. As $\nabla\circ d$ is injectively elliptic, we have
$\gamma\in L^2_2(M,\U_1)$.

It remains to show that $\gamma\cdot(\psi_0,A_0)=(\psi,A)$, i.e.,
with $a:=A_0-A$, that
\[
\gamma^{-1}\psi_0=\psi\quad\text{ and }\quad 2d\gamma +\gamma a=0.
\]
Equations \eqref{proper:1} and \eqref{proper:2} guarantee that
$(\gamma_n^{-1}\psi_n)\to\psi$ in $L^2$ as well as
$2d\gamma_n+\gamma_n a_n\to 0$ in $L^2$. On the other hand,
continuity of the multiplication $L^2_1\times L^2_1\to L^2$ shows
that the sequence $(\gamma_n^{-1}\psi_n)$ converges to
$\gamma^{-1}\psi_0$ in $L^2$, and $d\gamma_n\to d\gamma$ as well
as $\gamma_na_n\to\gamma a$ in $L^2$. Hence, uniqueness of the
limit points implies the above formul\ae.\qedhere\\
\end{proof}

\noindent\textbf{Local slices for the action.} In this paragraph
we shall establish a slice theorem for the action of $\cG_2$ on
$\cC_1$, analogous to the well-known situation from the theory of
finite dimensional Lie group actions (cf. \cite{toD:TG},
Sec.~I.5): For every $(\psi,A)\in\cC_1$ we are looking for a
subspace
\[
S_{(\psi,A)}\subset T_{(\psi,A)}\cC_1=L_1^2(M,E),
\]
complementary to the tangent space of the orbit $\cG_2\cdot
(\psi,A)$. We wish to model nearby orbits by $\cG_2\times
S_{(\psi,A)}$, making use of the natural map
\begin{equation}\label{pi}
\begin{split}
\pi:\cG_2\times S_{(\psi,A)} \to \cC_1,\quad
\pi\big(&\gamma,(\gf,a)\big)=\gamma\cdot\big((\psi,A)+(\gf,a)\big)
\\
&=(\gamma^{-1}\psi+\gamma^{-1}\gf,A+a+2\gamma^{-1}d\gamma).
\end{split}
\end{equation}
Clearly, if $(\psi,A)$ is a reducible configuration, the map
$\pi$ cannot be injective. Therefore, $S_{(\psi,A)}$ has to be
chosen to be invariant under the natural action of the stabilizer
$\cG_{(\psi,A)}$. We will then have to study
\[
\cG_2\times_{\cG_{(\psi,A)}}S_{(\psi,A)}:=(\cG_2\times
S_{(\psi,A)})/\cG_{(\psi,A)}.
\]
Since the action of the stabilizer on
$T_{(\psi,A)}\cC_1=L_1^2(M,E)$ is orthogonal with respect to the
$L^2$ metric, a natural choice for $S_{(\psi,A)}$ is provided by
taking the orthogonal complement of the tangent space to the
gauge orbit. This tangent space is essentially the image of the
differential of action map, i.e., the image of (cf. \eqref{G})
\[
G_{(\psi,A)}:L^2_2(M,i\R)\to L^2_1(M,E),\quad f\longmapsto
(-f\psi,2df).
\]
As the leading term of $G_{(\psi,A)}$ is injectively elliptic, we
infer from the Hodge decomposition \eqref{Hodge:dec:inj} that
there is an $L^2$-orthogonal splitting
\begin{equation}\label{Hodge:dec:G}
L^2_1(M,E)=\im(G_{(\psi,A)}|_{L_2^2})\oplus \ker(
G_{(\psi,A)}^*|_{L_1^2}).
\end{equation}
Recall from Proposition \ref{G:adj} that for $(\gf,a)\in
L_1^2(M,E)$,
\[
G_{(\psi,A)}^*(\gf,a)=2d^*a-i\Im\scalar{\gf}{\psi}.
\]

\begin{dfn}
For all $(\psi,A)\in\cC_1$ we define the {\em local slice} of the
$\cG_2$-action at the point $(\psi,A)$ as the subspace
\[
S_{(\psi,A)}:=\ker (G_{(\psi,A)}^*|_{L_1^2})\subset L^2_1(M,E).
\]
\end{dfn}

\begin{lemma}
For every $(\psi,A)\in \cC_1$, the local slice $S_{(\psi,A)}$ is
$\cG_{(\psi,A)}$-invariant.
\end{lemma}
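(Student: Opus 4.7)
The plan is to reduce immediately to the reducible case and then dispatch it by a one-line computation. First I would invoke the lemma from Section~\ref{SW:eqn} asserting that $(\psi,A)$ is reducible if and only if $\psi\equiv 0$, in which case the stabilizer $\cG_{(\psi,A)}$ consists of the constant $\U_1$-valued maps $M\to\U_1$. In the irreducible case the stabilizer is trivial and there is nothing to prove, so only $\psi\equiv 0$ requires attention.

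In that case, the formula of Proposition~\ref{G:adj} collapses to $G^*_{(\psi,A)}(\gf,a)=2d^*a$, and hence $S_{(\psi,A)} = \{(\gf,a)\in L_1^2(M,E) : d^*a=0\}$. The linearized action \eqref{gg:on:tangent} sends a constant $\gamma\in\U_1$ to $(\gf,a)\mapsto (\gamma^{-1}\gf,a)$, leaving the $iT^*M$-component untouched; therefore $G^*_{(\psi,A)}$ is preserved and so is its kernel.

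The cleaner, coordinate-free viewpoint that I would actually record in the proof is this: for any $(\psi,A)$, the action of $\cG_{(\psi,A)}$ on $L_1^2(M,E)$ is by $L^2$-isometries, because multiplication by $\gamma^{-1}$ has pointwise unit modulus and fixes $a$; moreover this action preserves $\im(G_{(\psi,A)}|_{L_2^2})$ trivially, since the stabilizer fixes the point $(\psi,A)$ and hence the entire orbit and its tangent space at that point. By the orthogonal decomposition \eqref{Hodge:dec:G}, the $L^2$-orthogonal complement $S_{(\psi,A)}$ is therefore also preserved. No serious obstacle arises; the statement is really a bookkeeping check, and the only choice is whether to argue by direct calculation in the reducible case or to phrase the argument invariantly via the Hodge splitting.
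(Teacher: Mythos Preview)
Your proposal is correct and takes essentially the same approach as the paper: reduce to the reducible case $\psi\equiv 0$, identify $S_{(0,A)}=L_1^2(M,S)\oplus\ker(d^*|_{L_1^2})$, and observe that the stabilizer acts only on the spinor factor. Your additional invariant argument via the $L^2$-orthogonal splitting \eqref{Hodge:dec:G} is a valid alternative not present in the paper, but it is not needed here.
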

\begin{proof}
If $(\psi,A)$ is irreducible, the stabilizer is trivial. For
reducible $(\psi,A)$, i.e., if $\psi=0$, we have
\begin{equation}\label{slice:red}
S_{(\psi,A)}=L_1^2(M,S)\oplus \ker (d^*|_{L_1^2}).
\end{equation}
Recall from \eqref{gg:on:tangent} that $\cG_2$, and hence also
$\cG_{(\psi,A)}$, acts only on the spinor part of $L_1^2(M,E)$.
Therefore, \eqref{slice:red} is invariant under the action of
$\cG_{(\psi,A)}$.
\end{proof}

\noindent The stabilizer acts on $\cG_2\times S_{(\psi,A)}$ via
\[
\gl\cdot\big(\gamma,(\gf,a)\big):=
\big(\gamma\gl^{-1},\gl\cdot(\gf,a)\big)=
\big(\gamma\gl^{-1},(\gl^{-1}\gf,a)\big),\quad
\gl\in\cG_{(\psi,A)}.
\]
One readily checks that the natural map \eqref{pi} is
$\cG_{(\psi,A)}$-invariant thus factoring to a map
\[
\Bar{\pi}:\cG_2\times_{\cG_{(\psi,A)}} S_{(\psi,A)}
\longrightarrow \cC_1.
\]
To lift the $\cG_2$-action on $\cC_1$ to $\cG_2\times
S_{(\psi,A)}$ we let $\cG_2$ act from the left on the first
factor, i.e., for $\big(\gamma,(\gf,a)\big)\in \cG_2\times
S_{(\psi,A)}$ and $\gamma'\in\cG_2$ we let
\[
\gamma'\cdot\big(\gamma,(\gf,a)\big):=
\big(\gamma'\gamma,(\gf,a)\big).
\]
Then $\pi$ is clearly $\cG_2$-equivariant. Moreover, the actions
of $\cG_2$ and $\cG_{(\psi,A)}$ on $\cG_2\times S_{(\psi,A)}$
commute so that the quotient $\cG_2\times_{\cG_{(\psi,A)}}
S_{(\psi,A)}$ inherits a $\cG_2$-action.

\begin{lemma}\label{slice:lem}
Let $(\psi,A)\in\cC_1$. The differential
\[
D_{(1,0)}\pi:L^2_2(M,i\R)\oplus S_{(\psi,A)}\longrightarrow
L^2_1(M,E)
\]
of $\pi$ at the point $(1,0)\in\cG_2\times S_{(\psi,A)}$ is
surjective, and
\[
\ker(D_{(1,0)}\pi)= \ker (G_{(\psi,A)}|_{L_2^2})\oplus\{0\}.
\]
\end{lemma}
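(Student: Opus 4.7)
The strategy is to compute $D_{(1,0)}\pi$ explicitly, recognize it as the sum of $G_{(\psi,A)}$ and the inclusion of the slice, and then read off both surjectivity and the kernel from the Hodge-type decomposition \eqref{Hodge:dec:G}.

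First I would identify the tangent space of $\cG_2$ at the identity with $L^2_2(M,i\R)$ (this is part of the Banach Lie group structure established in Proposition \ref{action:diff}). Then, for $f\in L^2_2(M,i\R)$ and $(\gf,a)\in S_{(\psi,A)}$, I would differentiate the curve $t\mapsto \pi(\exp(tf),t(\gf,a))$ at $t=0$. Using the explicit formula \eqref{pi} and the computation from Proposition \ref{G:adj} that yielded $G_{(\psi,A)}(f)=(-f\psi,2df)$, this gives
\[
D_{(1,0)}\pi\bigl(f,(\gf,a)\bigr)=G_{(\psi,A)}(f)+(\gf,a).
\]
So $D_{(1,0)}\pi$ is precisely the sum of the map $G_{(\psi,A)}\colon L^2_2(M,i\R)\to L^2_1(M,E)$ and the inclusion $S_{(\psi,A)}\hookrightarrow L^2_1(M,E)$.

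Surjectivity is then immediate from the $L^2$-orthogonal splitting \eqref{Hodge:dec:G}:
\[
L^2_1(M,E)=\im\bigl(G_{(\psi,A)}|_{L^2_2}\bigr)\oplus \ker\bigl(G_{(\psi,A)}^*|_{L^2_1}\bigr)=\im\bigl(G_{(\psi,A)}|_{L^2_2}\bigr)\oplus S_{(\psi,A)},
\]
since every element of the target can be written uniquely in the form $G_{(\psi,A)}(f)+(\gf,a)$ with $f\in L^2_2(M,i\R)$ and $(\gf,a)\in S_{(\psi,A)}$. For the kernel, suppose $G_{(\psi,A)}(f)+(\gf,a)=0$ with $(\gf,a)\in S_{(\psi,A)}$. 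Then $(\gf,a)=-G_{(\psi,A)}(f)$ lies in both summands of the orthogonal decomposition, hence vanishes; this forces $G_{(\psi,A)}(f)=0$ as well, and we recover $\ker D_{(1,0)}\pi=\ker(G_{(\psi,A)}|_{L^2_2})\oplus\{0\}$.

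There is no real obstacle: the content sits entirely in the already-established Hodge decomposition \eqref{Hodge:dec:G}, which relies on the injective ellipticity of the leading symbol of $G_{(\psi,A)}$. The only point that requires a line of care is justifying that the derivative of $t\mapsto \exp(tf)$ at $t=0$ really delivers $f$ and $-f\psi$ in the appropriate Sobolev topologies; this is handled by the smoothness of the $\cG_2$-action on $\cC_1$ proved in Proposition \ref{action:diff} together with the continuous Sobolev multiplication $L^2_2\times L^2_1\to L^2_1$.
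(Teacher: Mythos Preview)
Your proof is correct and follows essentially the same approach as the paper: compute $D_{(1,0)}\pi(f,(\gf,a))=G_{(\psi,A)}(f)+(\gf,a)$ by differentiating along $\exp(tf)$, then read off both surjectivity and the kernel from the direct sum decomposition \eqref{Hodge:dec:G}. The paper phrases the kernel argument as ``the decomposition is direct, hence both summands vanish'' while you phrase it as ``$(\gf,a)$ lies in both summands of an orthogonal decomposition, hence vanishes''; these are logically the same.
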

\begin{proof}
The differential at the point $(1,0)$ is given by
\begin{equation*}
\begin{split}
D_{(1,0)}\pi(f,\gf,a)&=\lfrac{d}{dt}\big|_{t=0}
\exp(tf)\cdot\big((\psi,A)+t(\gf,a)\big)\\
&=G_{(\psi,A)}(f)+(\gf,a).
\end{split}
\end{equation*}
Hence, surjectivity of $D_{(1,0)}\pi$ is immediate from the
decomposition
\[
L^2_1(M,E)=\im(G_{(\psi,A)}|_{L_2^2})\oplus S_{(\psi,A)},
\]
given in \eqref{Hodge:dec:G}. Moreover, as the above decomposition
is direct,
\[
G_{(\psi,A)}(f)+(\gf,a)=0\quad\Longleftrightarrow\quad
G_{(\psi,A)}(f)=0\quad\text{and}\quad(\gf,a)=0
\]
which proves the second assertion.
\end{proof}

\begin{prop}\label{slice:prop}
For each $(\psi,A)\in\cC_1$ there exists a
$\cG_{(\psi,A)}$-invariant open neighbourhood $V$ of $(1,0)$ in
$\cG_2\times S_{(\psi,A)}$ such that
\begin{enumerate}
\item $\pi|_V$ is a submersion.
\item The fibres of $\pi|_V$ are in 1-1 correspondence with the
$\cG_{(\psi,A)}$-orbits.
\end{enumerate}\end{prop}
\begin{proof}
We have to study two cases:
\begin{Cases}
\item \textit{$(\psi,A)$ is irreducible:}
In this case $\ker(G_{(\psi,A)})=0$ so that according to the
preceding lemma, the differential of $\pi$ at $(1,0)$ is an
isomorphism. Invoking the inverse function theorem for Banach
manifolds we conclude that there exists a neighbourhood $V$ of
$(1,0)$ in $\cG_2\times S_{(\psi,A)}$ such that $\pi|_V$ is a
diffeomorphism onto its image. As $\cG_{(\psi,A)}=\{1\}$, the set
$V$ is clearly $\cG_{(\psi,A)}$-invariant.

\item \textit{$(\psi,A)$ is reducible, i.e., $\psi\equiv 0:$}
Lemma \ref{slice:lem} ensures that the differential of $\pi$ at
the point $(1,0)$ is surjective. Invoking the implicit function
theorem, we deduce that this holds true also on an open
neighbourhood $V$ of $(1,0)$. Obviously, $V$ can be chosen to be
$\cG_{(0,A)}$-invariant since otherwise, we may consider
$\cG_{(0,A)}\cdot V$. Note that the differential of $\pi$ is
still surjective on that set because $\pi$ is
$\cG_{(0,A)}$-invariant.

It remains to prove the second assertion in this case. Suppose we
have $(\gamma_i,\gf_i,a_i)\in V$ such that
$\pi(\gamma_1,\gf_1,a_1)=\pi(\gamma_2,\gf_2,a_2)$. Then, since
$\psi=0$,
\[
\big(\gamma_1^{-1}\gf_1,\;a_1+2\gamma_1^{-1}d\gamma_1\big)=
\big(\gamma_2^{-1}\gf_2,\;a_2+2\gamma_2^{-1}d\gamma_2\big).
\]
Defining $\gamma:=\gamma_2^{-1}\gamma_1$, we can express this
alternatively as
\begin{equation}\label{slice:prop:1}
\gf_1=\gamma^{-1}\gf_2\quad\text{ and }\quad
a_2-a_1=2\gamma^{-1}d\gamma.
\end{equation}
Then part (ii) is established provided that
$\gamma\in\cG_{(0,A)}$, i.e., that $\gamma$ is constant.

Recall from \eqref{derham} that $[\gamma^{-1}d\gamma]\in
H^1_{dR}(M;2\pi i\Z)$, which is a lattice in $H^1_{dR}(M;i\R)$.
If $V$ is chosen small enough, cohomology classes of the form
$[a_2-a_1]$ can be forced to lie in a small neighbourhood of 0 in
$H^1_{dR}(M;i\R)$ hitting $H^1_{dR}(M;2\pi i\Z)$ only in 0.
Hence, without loss of generality, the second part of
\eqref{slice:prop:1} can only be fulfilled if
$[a_2-a_1]=[\gamma^{-1}d\gamma]=0$.

On the other hand, according to \eqref{slice:red}, the 1-forms
$a_2$ and $a_1$ are co-closed which implies that
$2\gamma^{-1}d\gamma=a_2-a_1$ is also co-closed and hence
harmonic. Together with $[\gamma^{-1}d\gamma]=0$, this implies
$\gamma^{-1}d\gamma=0$ and therefore, $\gamma$ is
constant.\qedhere
\end{Cases}
\end{proof}

After these preparations we can now state and prove the slice
theorem. We follow the presentation in J.W. Morgan's book
\cite{Mo}.
\begin{theorem}\label{slice:thm}\index{configuration space!slice
theorem} Let $(\psi,A)$ be an arbitrary configuration. Then there
exists a $\cG_{(\psi,A)}$-invariant open neighbourhood $U$ of
$\;0\in S_{(\psi,A)}$ such that
\[
\pi:\cG_2\times U\longrightarrow \cC_1
\]
induces a homeomorphism of $\cG_2\times_{\cG_{(\psi,A)}}U$ onto a
$\cG_2$-invariant open neighbourhood of $(\psi,A)$ in $\cC_1$.
\end{theorem}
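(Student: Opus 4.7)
The plan is to promote the local slice information of Proposition \ref{slice:prop}---which only controls a small neighborhood of $(1,0)$ in $\cG_2\times S_{(\psi,A)}$---to a global statement on $\cG_2\times U$ by means of $\cG_2$-equivariance, and then to invoke the properness result Proposition \ref{proper} to secure injectivity after descending to the quotient by the stabilizer. First I would apply Proposition \ref{slice:prop} to produce a $\cG_{(\psi,A)}$-invariant open neighborhood $V$ of $(1,0)$ on which $\pi$ is a submersion whose fibres are precisely the $\cG_{(\psi,A)}$-orbits. Since $\cG_{(\psi,A)}$ is either trivial or $\U_1$ acting linearly (and only on the spinor slot) on $S_{(\psi,A)}$, the $\cG_{(\psi,A)}$-invariant open neighborhoods of $0$ form a neighborhood basis at $0$; hence we may shrink to pick open $W\ni 1$ in $\cG_2$ and open $\cG_{(\psi,A)}$-invariant $U\ni 0$ in $S_{(\psi,A)}$ with $W\times U\subset V$.

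Next, by $\cG_2$-equivariance of $\pi$ (left multiplication in the first factor) and the smoothness of the $\cG_2$-action on both sides, being a submersion is translation-invariant along the $\cG_2$-factor, so $\pi$ is a submersion at every point of $\cG_2\times U$ and hence an open map. Consequently $\pi(\cG_2\times U)$ is an open $\cG_2$-invariant neighborhood of $(\psi,A)=\pi(1,0)$ in $\cC_1$. Because $U$ is $\cG_{(\psi,A)}$-invariant and $\pi$ is $\cG_{(\psi,A)}$-invariant, $\pi$ factors continuously through $\bar\pi:\cG_2\times_{\cG_{(\psi,A)}}U \to \cC_1$, whose image coincides with $\pi(\cG_2\times U)$.

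The heart of the proof is showing that, after possibly shrinking $U$, the factored map $\bar\pi$ is injective. Suppose not: then for every $\cG_{(\psi,A)}$-invariant neighborhood of $0$ in $S_{(\psi,A)}$ we can find $(\gamma_1,x_1),(\gamma_2,x_2)$ in $\cG_2\times U$ with $\pi(\gamma_1,x_1)=\pi(\gamma_2,x_2)$ but $\gamma_1^{-1}\gamma_2\notin\cG_{(\psi,A)}$. By $\cG_2$-equivariance we may reduce to $\gamma_1=1$. Letting $U$ shrink yields sequences $x_{1,n},x_{2,n}\to 0$ and $\gamma_n\notin\cG_{(\psi,A)}$ with $\pi(1,x_{1,n})=\pi(\gamma_n,x_{2,n})$. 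Writing $P_n:=(\psi,A)+x_{2,n}\to(\psi,A)$, the hypothesis gives $\gamma_n\cdot P_n=\pi(1,x_{1,n})\to(\psi,A)$, so Proposition \ref{proper} (via Lemma \ref{proper:crit}) supplies a subsequence $\gamma_n\to\gamma_\infty$ with $\gamma_\infty\in\cG_{(\psi,A)}$. Setting $\lambda_n:=\gamma_n\gamma_\infty^{-1}\to 1$ and $y_{2,n}:=\gamma_\infty\cdot x_{2,n}\in U$, the $\cG_{(\psi,A)}$-invariance of $\pi$ gives $\pi(\lambda_n,y_{2,n})=\pi(\gamma_n,x_{2,n})=\pi(1,x_{1,n})$. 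For $n$ large enough $(\lambda_n,y_{2,n})\in W\times U\subset V$ while $(1,x_{1,n})\in V$, so Proposition \ref{slice:prop}(ii) forces these two points into a common $\cG_{(\psi,A)}$-orbit; comparing first coordinates, this pins $\lambda_n$ (and hence $\gamma_n$) into $\cG_{(\psi,A)}$, a contradiction. Thus some choice of $U$ makes $\bar\pi$ injective.

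Finally, $\bar\pi$ is then a continuous bijection from $\cG_2\times_{\cG_{(\psi,A)}}U$ onto the open set $\pi(\cG_2\times U)\subset \cC_1$; since $\pi$ is open and the quotient topology on the source is precisely the one induced from $\cG_2\times U$, $\bar\pi$ itself is open, hence a homeomorphism. The main obstacle is the injectivity step: Proposition \ref{slice:prop} only delivers the fibre property inside the small neighborhood $V$, so one genuinely needs the global information contained in properness of the action to exclude ``distant'' gauge transformations that could conspire to identify configurations arbitrarily close to $(\psi,A)$. The remaining steps are essentially packaging via equivariance and the inverse/implicit function theorems already built into Proposition \ref{slice:prop}.
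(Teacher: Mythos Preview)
Your argument is correct and follows the same overall architecture as the paper's proof: propagate the local submersion property of Proposition~\ref{slice:prop} along the $\cG_2$-factor by equivariance, then argue injectivity of $\bar\pi$ by contradiction, producing a sequence $\gamma_n\notin\cG_{(\psi,A)}$ that is forced to converge into the stabilizer, and finish with the fibre description of Proposition~\ref{slice:prop}(ii).

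The one genuine difference is how you obtain convergence of $\gamma_n$. The paper does this by hand: from $\gamma_n\cdot\big((\psi,A)+(\gf_n,a_n)\big)=(\psi,A)+(\gf_n',a_n')$ it extracts $d\gamma_n=\tfrac12\gamma_n(a_n'-a_n)$ and bootstraps through $L^p$ and $L^2_1$ to an $L^2_2$-bound on $\gamma_n$, then uses Rellich to pass to a convergent subsequence with constant limit. You instead invoke Proposition~\ref{proper} (properness of the $\cG_2$-action) directly, which already packages exactly this convergence statement via Lemma~\ref{proper:crit}. This is a legitimate and arguably cleaner shortcut: the paper's bootstrapping here is essentially a reprise of the estimates already carried out in the proof of Proposition~\ref{proper}, so you are avoiding duplication. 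The trade-off is that the paper's self-contained argument makes the analytic mechanism visible at the point of use, while yours emphasizes the structural role of properness. A minor organizational difference is that the paper takes $U$ to be the full projection of $V$ onto the slice factor, whereas you shrink to a product $W\times U\subset V$; both choices work, and your version makes the final ``for $n$ large, both points lie in $V$'' step slightly more transparent.
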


\begin{proof}
Let $V$ be chosen as in Proposition \ref{slice:prop}. Then
$\cG_2\cdot V$ can be written as $\cG_2\times U$ where $U$ is a
$\cG_{(\psi,A)}$-invariant open neighbourhood of $0\in
S_{(\psi,A)}$. More concretely,
\begin{equation}\label{slice:0}
U:=\bigsetdef{(\gf,a)}{\exists_{\gamma\in\cG_2}:\,(\gamma,\gf,a)\in
V}.
\end{equation}
The map $\pi|_{\cG_2\times U}$ is a submersion since $\pi|_V$ is
one and $\pi$ is $\cG_2$-equivariant. Furthermore,
$\pi(\cG_2\times U)$ is a $\cG_2$-invariant open neighbourhood of
$(\psi,A)$ in $\cC_1$.

We now establish the assertion by contradiction. Assume that
possibly making $V$ smaller, we cannot achieve that the induced
map $\Bar{\pi}$ on $\cG_2\times_{\cG_{(\psi,A)}}U$ is injective.
This means that for every $V$ as in Proposition \ref{slice:prop}
and corresponding $U$ of the form \eqref{slice:0} there exists a
point $(\gamma,\gf,a)\in \cG_2\times U$ such that the fibre of
$\pi|_{\cG_2\times U}$ which contains $(\gamma,\gf,a)$ is larger
than the corresponding $\cG_{(\psi,A)}$ orbit. We may thus choose
sequences $(\gf_n,a_n),\;(\gf_n',a_n')$ in $S_{(\psi,A)}$ and
$(\gamma_n)$ in $\cG_2$ such that
\[
(\gf_n,a_n)\xrightarrow{n\to\infty}0,\qquad
(\gf_n',a_n')\xrightarrow{n\to\infty}0,
\]
and
\begin{equation}\label{slice:1}
\gamma_n\cdot\big((\psi,A)+(\gf_n,a_n)\big)=
(\psi,A)+(\gf_n',a_n'),\quad\text{ but }
\gamma_n\notin\cG_{(\psi,A)}.
\end{equation}
From this we conclude that
\[
d\gamma_n=\lfrac{1}{2}\gamma_n(a_n' -a_n).
\]
Since all $\gamma_n$ are maps $M\to \U_1$, the sequence
$(\gamma_n)$ is bounded with respect to $\|.\|_{\infty}$. On the
other hand, $(a_n'-a_n)$ converges to 0 in $L^2_1$ and thus also
in $L^p$ for all $p\le 6$. Therefore, $(d\gamma_n)$ converges to
0 in every $L^p$ for $p\le 6$ which in turn provides an
$L_1^p$-bound on $(\gamma_n)$. If $p>3$, then there is a
continuous multiplication $L_1^p\times L^2_1\to L^2_1$. Invoking
the above equation again shows that $(d\gamma_n)$ converges to 0
in $L^2_1$. Consequently, $(\gamma_n)$ is a bounded sequence in
$\cG_2$.

We may thus deduce from the Rellich-Kondrachov Theorem
\ref{rellich} that---possibly restricting to a subsequence---the
sequence $(\gamma_n)$ converges in $L_1^2(M,\C)$. Let $\gamma$
denote the limit point. Since $d\gamma_n\to 0$ in $L^2$, we
conclude that $d\gamma=0$ weakly in $L^2$. Since $d$ is
injectively elliptic on functions, regularity guarantees that
$\gamma\in C^\infty(M,\C)$ and that $d\gamma=0$. In particular,
$\gamma$ is a constant function and $(\gamma_n)$ converges to
$\gamma$ in $L^2_1$. Actually, this convergence is with respect
to $L_2^2$ because $d\gamma_n\to 0$ in $L_1^2$ and $d\gamma=0$.
By virtue of the embedding of $L^2_2$ in $C^0$, this implies that
$\gamma$ takes values in $\U_1$ since all $\gamma_n$ do so.
Invoking continuity of $\cG_2\times \cC_1\to\cC_1$ we may now
deduce from \eqref{slice:1} that
\[
\gamma\cdot(\psi,A)=(\psi,A)
\]
which shows that $\gamma\in\cG_{(\psi,A)}$.

The remaining part of the proof works as in the finite
dimensional case: Let us consider an open neighbourhood $V$ of
$(1,0)$ in $\cG_2\times S_{(\psi,A)}$ as in Proposition
\ref{slice:prop}. Since $V$ is $\cG_{(\psi,A)}$-invariant, we
also have $(\gamma,0)\in V$. As the sequences
$(\gamma_n,\gf_n,a_n)$ and $(1,\gf_n',a_n')$ converge to
$(\gamma,0)$ and $(1,0)$ respectively, there exists $n\in\N$ such
that
\[
(\gamma_n,\gf_n,a_n)\in V\quad\text{ and }(1,\gf_n',a_n')\in V
\]
since $V$ is an open neighbourhood of both limit points. By means
of \eqref{slice:1},
\[
\pi(\gamma_n,\gf_n,a_n)=\pi(1,\gf_n',a_n').
\]
According to Proposition \ref{slice:prop}, this requires
$\gamma_n\in\cG_{(\psi,A)}$ since the fibres of $\pi|_V$
correspond to the $\cG_{(\psi,A)}$ orbits. However,
$\gamma_n\in\cG_{(\psi,A)}$ contradicts \eqref{slice:1}.

We may thus suppose that the induced map $\Bar{\pi}$ is injective
on $\cG_2\times_{\cG_{(\psi,A)}}U$. Moreover, since $\pi|_V$ is
continuous and an open map, the same holds true for the respective
restriction of $\Bar{\pi}$. Hence, it induces a homeomorphism from
$\cG_2\times_{\cG_{(\psi,A)}}U$ onto the $\cG_2$-invariant open
neighbourhood $\pi(\cG_2\times U)$ of $(\psi,A)$ in $\cC_1$ as is
illustrated in the following diagram:
\[
\begindc[5]
\obj(1,1){$\cG_2\times_{\cG_{(\psi,A)}}U$}[quot]
\obj(1,11){$\cG_2\times U$}[GxU] \obj(17,11){$\pi(\cG_2\times
U)$}[C] \mor{GxU}{quot}{} \mor{GxU}{C}{$\pi$}
\mor(2,3)(15,10){$\Bar{\pi}$}
\enddc\qedhere
\]
\end{proof}

\begin{cor}\label{slice:cor}\quad
\begin{enumerate}
\item Suppose $U\subset S_{(\psi,A)}$ is chosen as in the slice
theorem. Then $U/\cG_{(\psi,A)}$ is homeomorphic to a
neighbourhood of $[\psi,A]$ in $\cB_1=\cC_1/\cG_2$.
\item The irreducible part $\cB_1^*\subset\cB_1$ carries the structure
of a smooth Banach manifold. Its tangent space at a point
$[\psi,A]$ is naturally isomorphic to the local slice
$S_{(\psi,A)}$.
\item The projection $\cC_1^*\to\cB_1^*$ is a principal
$\cG_2$-bundle.
\end{enumerate}
\end{cor}
\begin{proof}
\begin{enumerate}
\item Since $\cG_2$ acts only on the first factor of $\cG_2\times U$,
the quotient $(\cG_2\times U)/\cG_2$ can be identified with $U$.
Therefore, as the actions of $\cG_2$ and $\cG_{(\psi,A)}$ commute,
\[
\big(\cG_2\times_{\cG_{(\psi,A)}}U\big)/\cG_2\cong
U/\cG_{(\psi,A)}.
\]
The map $\Bar\pi$ is a $\cG_2$-equivariant homeomorphism hence
induces a homeomorphism
\[
\big(\cG_2\times_{\cG_{(\psi,A)}}U\big)/\cG_2\cong
\Bar\pi(\cG_2\times_{\cG_{(\psi,A)}} U)/\cG_2.
\]
This establishes part (i) for the right hand side is an open
neighbourhood of $[\psi,A]$ in $\cB_1$.
\item Let $(\psi,A)\in\cC_1^*$ be an irreducible
configuration, $U$ a neighbourhood of $0\in S_{(\psi,A)}$ as in
the slice theorem. Suppose that $V:=\pi(\cG_2\times U)$ is
entirely contained in the irreducible part $\cC_1^*$. We define a
map $\gF:V/\cG_2\to U$ by letting
\[
\gF\big([\gamma(\psi+\gf,A+a)]\big):=(\gf,a).
\]
Note that $\gF$ is well-defined, and that (i) ensures that it
yields a homeomorphism $\cB_1^*\supset V/\cG_2\cong U\subset
S_{(\psi,A)}$. We have to ascertain that the collection of all
such $\gF$ provides $\cB_1^*$ with a differentiable structure.

Suppose that $(\psi',A')$ is another irreducible configuration,
and let $U'\subset S_{(\psi',A')}$ and $V'\subset \cC_1^*$ be
chosen correspondingly. Without loss of generality, we may assume
that $V\cap V'\neq\emptyset$. Since $V$ and $V'$ are
$\cG_2$-invariant, this yields
\[
V/\cG_2\cap V'/\cG_2=(V\cap V')/\cG_2\neq\emptyset.
\]
If $\Tilde{U}:=\gF^{-1}(V/\cG_2\cap V'/\cG_2)$ and
$\Tilde{U}':={\gF'}^{-1}(V/\cG_2\cap V'/\cG_2)$, then the
following diagram commutes.
\[
\begindc
\obj(1,1){$\Tilde{U}$}[lu] \obj(1,3){$\cG_2\times\Tilde{U}$}[lo]
\obj(4,1){$V/\cG_2\cap V'/\cG_2$}[mu] \obj(4,3){$V\cap V'$}[mo]
\obj(7,1){$\Tilde{U}'$}[ru] \obj(7,3){$\cG_2\times\Tilde{U}'$}[ro]
\mor{lu}{lo}{}[\atleft,\injectionarrow] \mor{mo}{mu}{}
\mor{ro}{ru}{} \mor{lo}{mo}{$\pi$} \mor{mo}{ro}{${\pi'}^{-1}$}
\mor{lu}{mu}{$\gF^{-1}$} \mor{mu}{ru}{$\gF'$}
\enddc
\]
The slice theorem shows that $\pi$ and $\pi'$ are diffeomorphisms
so that ${\pi'}^{-1}\circ\pi:\cG_2\times\Tilde{U}\to
\cG_2\times\Tilde{U}'$ is smooth. Thus, the above diagram
establishes that
\[
\gF'\circ\gF^{-1}:\Tilde{U}\subset S_{(\psi,A)}\to S_{(\psi',A')}
\]
is also smooth. Therefore, the maps $\{\gF:U\to S_{(\psi,A)}\}$
define a differentiable atlas of $\cB_1^*$. This shows that
$\cB_1^*$ is indeed a Banach manifold modelled on the isomorphism
class of $S_{(\psi,A)}$.
\item This is an easy consequence of the proof of (ii).\qedhere
\end{enumerate}\end{proof}
\begin{remark*}
The proof of (ii) shows that $\cB_1^*$ is actually a Hilbert
manifold with respect to the induced $L^2_1$ scalar product on the
local slice $S_{(\psi,A)}$. Whenever we use a scalar product on
$S_{(\psi,A)}$, it shall, however, be the induced $L^2$ scalar
product. To stress that the local slice is not complete with
respect to $\Lscalar{.}{.}$ we thus do not use the terminology
\emph{Hilbert} manifold.
\end{remark*}

\section{Compactness}\label{compact}
Our next task is to establish that the moduli space is
sequentially compact. It will turn out that as corollary to the
proof of the compactness theorem, the topology of the moduli
space is independent of the initially chosen Sobolev orders.\\

\noindent\textbf{Gauge fixing.}\index{gauge transformations!gauge
fixing} The first idea leading to the results mentioned above is
to find a suitable representative of a gauge equivalence class of
monopoles. An appropriate method of fixing such a configuration
is the so-called {\em Coulomb gauge}.

\begin{lemma}\label{d^asta=0}
Let $A, A_0\in\cA_1$. Then there exists $\gamma\in\cG_2$ such that
\[
d^*(A-A_0+2\gamma^{-1}d\gamma)=0,
\]
i.e., each connection $A$ is equivalent to a gauge field differing
from a given $A_0$ only by a co-closed, imaginary valued 1-form.
\end{lemma}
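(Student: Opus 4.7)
The plan is to reduce the problem to a scalar Laplace equation via the ansatz $\gamma = \exp(f)$ with $f \in L^2_2(M,i\R)$. Set $a := A - A_0 \in L^2_1(M, iT^*M)$, which is a well-defined imaginary-valued $1$-form of Sobolev class $1$. If we can find $f \in L^2_2(M, i\R)$ satisfying
\[
2\,d^*df = -d^*a,
\]
then $\gamma := \exp(f)$ takes values in $\U_1$ (since $f$ is imaginary-valued), belongs to $L^2_2(M,\U_1)$ by the smoothness properties of $\exp$ on $L^2_2$ established in the proof of Proposition~\ref{action:diff}, and satisfies $\gamma^{-1}d\gamma = df$. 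Then $d^*(a + 2\gamma^{-1}d\gamma) = d^*a + 2\,d^*df = 0$, as required.

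Thus the problem reduces to solvability of the Poisson-type equation $\Delta f = -\tfrac{1}{2} d^*a$, where $\Delta = d^*d$ is the scalar Laplacian acting on $i\R$-valued functions. The right hand side lies in $L^2(M,i\R)$ since $d^*$ is a first-order operator and $a \in L^2_1$. The operator $\Delta$ is a formally self-adjoint, elliptic operator of order two on $M$, with kernel equal to the locally constant (hence constant, as $M$ is connected) $i\R$-valued functions. By the Fredholm alternative summarized in Appendix~\ref{app:ell}, it suffices to check that $d^*a$ is $L^2$-orthogonal to the constants; this is immediate, for any constant $c \in i\R$ we have
\[
\LScalar{d^*a}{c} = \LScalar{a}{dc} = 0.
\]
Elliptic regularity then yields a solution $f \in L^2_2(M,i\R)$, and an improvement of two Sobolev orders over the right hand side is exactly what we need to land in the domain of gauge transformations.

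The key step that requires care is verifying that $\gamma := \exp(f)$ indeed lies in $\cG_2 = L^2_2(M,\U_1)$ and that the identity $\gamma^{-1}d\gamma = df$ holds in $L^2_1$. The first assertion follows from the Sobolev multiplication $L^2_2 \times L^2_2 \to L^2_2$ in dimension three (Example~\ref{sob:mult:n=3}), which makes $L^2_2(M,\C)$ into a Banach algebra and ensures that the exponential series $\sum f^k/k!$ converges in $L^2_2$; the image is pointwise in $\U_1$ by the embedding $L^2_2 \hookrightarrow C^0$ and the fact that $f$ is imaginary-valued. The differentiation identity is standard once one recognizes that $\exp$ is smooth on $L^2_2(M,i\R)$ with derivative given by multiplication, and that the chain rule then gives $d\exp(f) = \exp(f)\,df$ in $L^2_1$. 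I expect this bookkeeping---ensuring that everything stays in the correct Sobolev class---to be the only real point that needs attention; the analytic content is the elementary elliptic theory of the Laplacian.
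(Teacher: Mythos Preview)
Your proof is correct and follows essentially the same route as the paper: both use the ansatz $\gamma=\exp(f)$ with $f\in L^2_2(M,i\R)$ and reduce to removing the exact part of $a=A-A_0$. The only cosmetic difference is that the paper invokes the Hodge decomposition $a=\eta+df+d^*\omega$ to read off $f$ directly (taking $\gamma=\exp(-f/2)$), whereas you solve the equivalent Poisson equation $\Delta f=-\tfrac12 d^*a$ via the Fredholm alternative; these are the same step in different packaging.
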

\begin{proof}
The Hodge decomposition assures that
\[
A-A_0=\eta+df+d^*\go,
\]
where $f\in L^2_2(M,i\R)$, $\go\in L^2_2(M,\gL^2iT^*M)$, and
$\eta\in i\gO(M)$ is harmonic. Let
\[
\gamma:=\exp\big(-\lfrac{f}{2}\big)\in L^2_2(M,\U_1).
\]
Then $2\gamma^{-1}d\gamma=-df$ and therefore,
\[
d^*(A-A_0+2\gamma^{-1}d\gamma)=d^*(\eta+d^*\go)=0.\qedhere
\]
\end{proof}

\begin{prop}\label{coulomb:gauge}
Let $A_0$ be a fixed gauge field of Sobolev class $k_0$. If
$(\psi,A)\in\cC_1$ is a monopole such that $a:=A-A_0$ is
co-closed, then
\[
(\psi,A)\in L^2_{k_0}(M,S)\times\cA_{k_0}.
\]
Moreover, for all $k\in\N$ and $p\ge 2$ such that $L_{k_0}^2$
embeds in $L_{k+1}^p$, the following inequalities hold.
\begin{equation}\label{coulomb:gauge:ell}
\begin{split}
\|\psi\|_{L_{k+1}^p}&\le
\const\cdot\Big(\|c(a)\psi\|_{L_k^p}+\|\psi\|_{L_k^p}\Big),\\
\|a\|_{L_{k+1}^p}&\le \const\cdot\Big(
\big\|q(\psi)-*F_{A_0}\big\|_{L_k^p}+\|Pa\|_{L_{k+1}^p}\Big),
\end{split}
\end{equation}
where $P:L^2(M,iT^*M)\to L^2(M,iT^*M)$ denotes the
$L^2$-orthogonal projection onto the space of harmonic 1-forms.
\end{prop}

\begin{proof}
The proof is an impressive application of the so-called {\em
elliptic bootstrap} technique. Since $d^*a=0$, we can reformulate
the Seiberg-Witten equations for $(\psi,A)$ in the following way
\begin{equation}\label{coulomb:gauge:1}
\begin{split}
\cD_{A_0}\psi&=-\lfrac{1}{2}c(a)\psi\,, \\
* (d+d^*)a&=\lfrac{1}{2}q(\psi)-* F_{A_0}\,.
\end{split}
\end{equation}
There is a Sobolev embedding of $L^2_1$ in $L^6$. Therefore, $a$
and $\psi$ lie in $L^6$ and the H\"older inequality shows that
$c(a)\psi\in L^3$. As $\cD_{A_0}$ is an elliptic operator,
elliptic regularity for $L^p$ Sobolev spaces applied to the first
line of \eqref{coulomb:gauge:1} guarantees that $\psi\in L_1^3$.
Hence, $\psi\in L^p$ for all $1\le p<\infty$. Employing the
H\"older inequality again, we deduce that $q(\psi)\in L^p$ for
all $p$.

Since $d+d^*:\gO^*\to \gO^*$ is elliptic, we obtain from elliptic
regularity---this time applied to the second equation in
\eqref{coulomb:gauge:1}---that $a\in L_1^p$ whenever $1\le
p<\infty$. Proposition \ref{sob:mult} shows that there is a
Sobolev multiplication $L^p_1\times L^3_1\to L^2_1$ for all $p\ge
2$. Therefore, $c(a)\psi\in L^2_1$.

Again, ellipticity of $\cD_{A_0}$ yields $\psi\in L^2_2$.
Applying Lemma \ref{q:diff}, we deduce that this yields
$q(\psi)\in L^2_2$. Thus, elliptic regularity shows that $a\in
L^2_3$. Using Sobolev multiplication and elliptic regularity in
this manner further on we can prove inductively that $a\in L_k^2$
and $\psi\in L^2_k$ for all $1\le k\le k_0$. Note that $*F_{A_0}$
is in $L^2_{k_0-1}$ since $A_0$ is of Sobolev class $k_0$.
Moreover, $\cD_{A_0}$ can be expressed as an elliptic operator
with smooth coefficients plus a zero order term given by Clifford
multiplication with a 1-form of Sobolev class $k_0$, i.e., a
continuous map $L^2_k(M,S)\to L^2_k(M,S)$ for all $1\le k\le
k_0$. Therefore, the bootstrapping does not cease at an earlier
level.

Let $k\in\N$ and $p\ge 2$ such that $L_{k_0}^2$ embeds in
$L_{k+1}^p$. Combining the elliptic estimate \eqref{ell:est},
i.e.,
\[
\|\psi\|_{L_{k+1}^p}\le
\const\cdot\big(\|\cD_{A_0}\psi\|_{L_k^p}+\|\psi\|_{L_k^p}\big),
\]
with the first equation in \eqref{coulomb:gauge:1}, we readily
obtain the first inequality in \eqref{coulomb:gauge:ell}.
Moreover, the triangle inequality yields
\[
\|a\|_{L_{k+1}^p}\le \|a-Pa\|_{L_{k+1}^p} + \|Pa\|_{L_{k+1}^p}.
\]
The Poincar\'{e} inequality \eqref{poinc} applied to $d+d^*$ shows
that
\[
\big\|a-Pa\big\|_{L^p_{k+1}}\le\const\cdot
\big\|*(d+d^*)(a-Pa)\big\|_{L^p_k}
=\const\cdot\big\|*(d+d^*)a\big\|_{L_k^p}\,
\]
because $(d+d^*)Pa=0$. Inserting the second equation of
\eqref{coulomb:gauge:1} in the above inequalities, we get the
second inequality in \eqref{coulomb:gauge:ell}.
\end{proof}

An immediate consequence of Proposition \ref{coulomb:gauge} and
Lemma \ref{d^asta=0}---when combined with Sobolev embedding---is
the following.
\begin{cor}\label{moduli:smooth}
Every SW-monopole is gauge equivalent to a $C^\infty$-monopole,
i.e., every class in $\cM\subset \cC_1/\cG_2$ has a representative
which is smooth.
\end{cor}

The next task shall be to derive compactness of the moduli space
from the inequalities \eqref{coulomb:gauge:ell}. For this we need
to find a priori estimates for the maximum norms.

\begin{lemma}\label{Pa:bounded}\index{=@$\cH^k(M;g)$, harmonic
forms} Let $\cH^1(M)$ denote the space of harmonic 1-forms and
let $P:L^2(M,iT^*M)\to L^2(M,iT^*M)$ be the $L^2$-orthogonal
projection onto the subspace $i\cH^1(M)$. Then there exists a
constant $C>0$ such that for each $a\in L^2(M,iT^*M)$ we can find
$\gamma\in \cG_2$ such that
\[
\|P(a+2\gamma^{-1}d\gamma)\|_\infty < C.
\]
\end{lemma}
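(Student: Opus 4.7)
The plan is to reduce the statement to the fact that, as $\gamma$ ranges over $\cG_2$, the harmonic projections $2P(\gamma^{-1}d\gamma)$ fill out a full-rank lattice $\Lambda$ in the finite dimensional vector space $i\cH^1(M)$. Once this is established, for each given $a$ I would pick a gauge transformation $\gamma$ such that $Pa+2P(\gamma^{-1}d\gamma)$ lands in a prescribed relatively compact fundamental domain $F\subset i\cH^1(M)$. Because $i\cH^1(M)$ is finite dimensional and its elements are smooth harmonic forms, all norms on it (in particular $\|\cdot\|_\infty$ and $\|\cdot\|_{L^2}$) are mutually equivalent, so $C:=\sup_{\eta\in F}\|\eta\|_\infty$ is finite and provides the required uniform constant.

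To show $\Lambda$ is a full-rank lattice, I would first observe that for any $\gamma\in\cG_2$ the $i\R$-valued 1-form $\gamma^{-1}d\gamma$ is closed, and by \eqref{derham} its de Rham class lies in the image of $H^1(M;2\pi i\Z)$ inside $H^1_{dR}(M;i\R)$. Since $P$ picks out the unique harmonic representative of a de Rham class, $\Lambda$ is contained in the lattice $4\pi i\cdot\Lambda_0\subset i\cH^1(M)$, where $\Lambda_0$ denotes the image of $H^1(M;\Z)$ under the de Rham isomorphism. Conversely, the identification $[M,S^1]\cong H^1(M;\Z)$ together with smooth approximation shows that every integral class is realized by some smooth $\gamma:M\to U_1$, which a fortiori lies in $\cG_2$. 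Hence $\Lambda=4\pi i\cdot\Lambda_0$, and since $\Lambda_0$ has rank $b_1(M)=\dim_\R\cH^1(M)$, it is indeed a lattice of maximal rank, admitting a bounded fundamental domain $F$.

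The final step is then essentially immediate: given $a\in L^2(M,iT^*M)$, pick $\gamma\in\cG_2$ whose associated lattice element $2P(\gamma^{-1}d\gamma)$ brings $Pa+2P(\gamma^{-1}d\gamma)$ into $F$. By the equivalence of norms on $i\cH^1(M)$, one obtains $\|P(a+2\gamma^{-1}d\gamma)\|_\infty<C$ uniformly in $a$. The only mildly delicate point is the surjectivity $\cG_2\to\Lambda_0$, i.e.\ the realization of every integral cohomology class by a gauge transformation of Sobolev class $L^2_2$; but any smooth $\gamma:M\to U_1$ in the relevant homotopy class works since smooth sections sit inside $L^2_2$, and the existence of smooth representatives in every class of $[M,S^1]$ is standard. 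All remaining ingredients—Hodge projection, finite dimensionality of $\cH^1(M)$, and norm equivalence—are elementary.
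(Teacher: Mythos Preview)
Your proposal is correct and follows essentially the same route as the paper: both arguments show that $\{2P(\gamma^{-1}d\gamma):\gamma\in\cG\}$ is a full-rank lattice in the finite dimensional space $i\cH^1(M)$ (via the surjection $C^\infty(M,\U_1)\to H^1_{dR}(M;2\pi i\Z)$ of \eqref{derham}) and then translate $Pa$ into a bounded fundamental domain, using equivalence of norms on $i\cH^1(M)$ to get the uniform $\|\cdot\|_\infty$-bound. Your write-up is in fact a bit more explicit about the rank of the lattice and the norm-equivalence step than the paper's version.
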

\begin{proof}
The image of $H^1(M;\Z)$ in $H^1(M;\R)$ is a lattice. According to
\eqref{derham}, there is a surjective map
\[
C^\infty(M,\U_1)\to H^1_{dR}(M;2\pi i\Z),\quad \gamma\mapsto
[\gamma^{-1}d\gamma].
\]
Therefore, the image of
\[
C^\infty(M,\U_1)\to i\cH^1(M),\quad \gamma\mapsto
P(2\gamma^{-1}d\gamma),
\]
forms a lattice in the space of imaginary valued harmonic 1-forms.
Given an arbitrary harmonic 1-form $\go$, we can thus find
$\gamma\in C^\infty(M,\U_1)$ such that
\[
\|\go +P(2\gamma^{-1}d\gamma)\|_\infty < C,
\]
where $C$ is a constant depending only on the lattice. Note that
we are using that $\cH^1(M)$ is finite dimensional. This implies
the assertion.\qedhere\\
\end{proof}

\noindent\textbf{The key estimate.} The second ingredient to
establish compactness of the moduli space is an a priori estimate
for the norm of the spinor part of a monopole. We need the
following result.

\begin{lemma}\label{kato:ineq}
Let $\psi$ be a twice continuously differentiable spinor, for
example, $\psi\in L^2_4(M,S)$. Then for every $C^2$-gauge field
$A$,
\[
\gD_g|\psi|^2\le 2\Re\Scalar{(\nabla^A)^*\nabla^A\psi}{\psi},
\]
where $\gD_g:=d^*d$ denotes the Laplacian of the Riemannian
manifold $(M,g)$.
\end{lemma}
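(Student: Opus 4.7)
The plan is to prove the pointwise inequality at an arbitrary point $x_0\in M$ by a direct local computation in a normal frame, exactly as in Proposition \ref{dastq}. Choose an orthonormal frame $(e_1,e_2,e_3)$ around $x_0$ with $(\nabla_i e_j)(x_0)=0$ and dual co-frame $(e^1,e^2,e^3)$. Since $\Delta_g = d^\ast d$ on functions, in such a normal frame one has $(\Delta_g f)(x_0) = -\sum_i (e_i e_i f)(x_0)$, so the goal reduces to differentiating $|\psi|^2$ twice.

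The key input is that $A$ is Hermitian, hence $\nabla^A$ is compatible with $\Scalar{\ }{\ }$ on $S$. Consequently
\[
e_i |\psi|^2 \;=\; e_i\Scalar{\psi}{\psi} \;=\; 2\Re\Scalar{\nabla^A_i\psi}{\psi},
\]
and differentiating once more, again using metric compatibility,
\[
e_i e_i |\psi|^2 \;=\; 2\Re\Scalar{\nabla^A_i\nabla^A_i\psi}{\psi} + 2\bigl|\nabla^A_i\psi\bigr|^2 .
\]
These identities are legitimate because $\psi\in C^2$ and $A\in C^2$, so all derivatives exist pointwise.

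Summing over $i$ and using the defining formula for the connection Laplacian in a normal frame — namely $\bigl((\nabla^A)^\ast\nabla^A\psi\bigr)(x_0) = -\sum_i \bigl(\nabla^A_i\nabla^A_i\psi\bigr)(x_0)$, which is valid precisely because $(\nabla_i e_j)(x_0)=0$ makes the Christoffel correction terms vanish at $x_0$ — one obtains
\[
(\Delta_g |\psi|^2)(x_0) \;=\; 2\Re\Scalar{(\nabla^A)^\ast\nabla^A\psi}{\psi}(x_0) \;-\; 2\bigl|\nabla^A\psi\bigr|^2(x_0).
\]
Since $|\nabla^A\psi|^2\ge 0$, dropping this non-negative term yields the claimed inequality at $x_0$. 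As $x_0$ was arbitrary, this establishes the lemma.

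There is no real obstacle here: the argument is essentially a one-line Weitzenböck-type computation, and the only things to watch are (i) choosing the normal frame so that the bundle Laplacian has its simple second-derivative form at $x_0$, and (ii) invoking Hermitian compatibility of $\nabla^A$ to differentiate $|\psi|^2$. The regularity hypothesis $\psi\in C^2$ (which is implied by $\psi\in L^2_4(M,S)$ via Sobolev embedding on a 3-manifold) is exactly what is required to carry out the two differentiations pointwise; no distributional argument is needed.
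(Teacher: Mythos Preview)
Your proof is correct and follows essentially the same approach as the paper: a pointwise computation in a normal frame, using metric compatibility of $\nabla^A$ to differentiate $|\psi|^2$ twice, then identifying $-\sum_i\nabla^A_i\nabla^A_i$ with $(\nabla^A)^*\nabla^A$ at $x_0$ and dropping the nonnegative $|\nabla^A\psi|^2$ term. The only cosmetic difference is that the paper writes the scalar Laplacian as $-*d*d$ and tracks the wedge products $e^j\wedge *e^i=\delta^{ji}dv_g$ explicitly, whereas you invoke the normal-frame formula $\Delta_g f=-\sum_i e_ie_if$ directly.
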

\begin{proof} At an arbitrary point $x$, we choose a normal frame
$(e_1,e_2,e_3)$ with dual co-frame $(e^1,e^2,e^3)$. Using the
fact that $\nabla^A$ is compatible with the metric, we deduce that
at the point $x$,
\begin{align*}
\gD_g|\psi|^2 &= -* d* d\scalar{\psi}{\psi} = -2 * d
\Re\scalar{\nabla^A_{e_i}\psi}{\psi}* e^i
\\
&=-2\Re\Big(\scalar{\nabla^A_{e_j}\nabla^A_{e_i}\psi}{\psi} +
\scalar{\nabla^A_{e_i}\psi}{\nabla^A_{e_j}\psi}\Big)*(e^j\wedge
* e^i) \\
&=-2\sum_i\Re\scalar{\nabla^A_{e_i}\nabla^A_{e_i}\psi}{\psi} -
2\sum_i |\nabla^A_{e_i}\psi|^2\,.
\end{align*}
Note that we have employed the relation $e^j\wedge * e^i =
\gd^{ji}dv_g$. Recall that in a normal frame at $x$,
\[
(\nabla^A)^*\nabla^A \psi
=-\sum_i\nabla^A_{e_i}\nabla^A_{e_i}\psi\,.
\]
In combination with the above computations this implies the
assertion.
\end{proof}

\begin{prop}\label{key:prop}
Suppose that $(\psi,A)$ is a monopole which is at least $C^2$.
Then
\begin{equation}\label{key:estimate}
\|\psi\|_\infty^2\le \max\big\{0,\max_{x\in M}-2s_g(x)\big\}
=\max\big\{0,-2\min_{x\in M}s_g(x)\big\},
\end{equation}
where $s_g$ denotes the scalar curvature of the Riemannian
manifold $(M,g)$.
\end{prop}

\begin{proof}
Combining Lemma \ref{kato:ineq} with the Weitzenb\"{o}ck formula
\ref{weitzenboeck}, we obtain
\begin{align*}
\gD_g|\psi|^2 &\le 2\cdot\Re\scalar{\cD_A^2\psi}{\psi}
-\lfrac{1}{2}s_g|\psi|^2 -\Re\scalar{c(F_A)\psi}{\psi} \\
&=-\lfrac{1}{2}s_g|\psi|^2
-\lfrac{1}{2}\Re\scalar{c\big(q(\psi)\big)\psi}{\psi},
\end{align*}
where we have employed the Seiberg-Witten equations in the last
line.\footnote{Note that we have also used that $c(*
F_A)=c(F_A)$. This relation is an immediate consequence of our
agreement to choose the spin representation in such a way that
$c(dv_g)=-\id$. Recall that Clifford multiplication by 2-forms is
defined via the isomorphism of vector spaces $\gL^\bullet V\cong
\cl(V)$.} Using Proposition \ref{q:prop}, we infer that
\[
\gD_g|\psi|^2 \le -\lfrac{1}{2}s_g|\psi|^2 - |q(\psi)|^2 =
-\lfrac{1}{2}s_g|\psi|^2 -\lfrac{1}{4}|\psi|^4.
\]
Let $x_0\in M$ be a point where $|\psi|^2$ achieves its maximum.
Then according to our sign convention,
\[
\gD_g|\psi|^2(x_0)\ge 0.
\]
Together with the above estimate, this yields
\[
\big(-\lfrac{1}{2}s_g|\psi|^2- \lfrac{1}{4}|\psi|^4\big)(x_0)\ge
0.
\]
We therefore obtain that
\[
|\psi(x_0)|^2=0 \quad\text{ or }\quad |\psi(x_0)|^2\le -2s_g(x_0).
\]
Since $|\psi(x_0)|^2$ is maximal, this proves the proposition.
\end{proof}
This result allows two immediate conclusions.

\begin{cor}
Suppose $(M,g)$ is a closed, oriented Riemannian 3-manifold whose
scalar curvature is nonnegative, i.e., $s_g \ge 0$. Then every
monopole $(\psi,A)$ fulfills $\psi\equiv 0$. That is, the
Seiberg-Witten moduli space consists only of equivalence classes
of reducible configurations.
\end{cor}

\begin{remark*}
This is a typical example of how gauge theory can be used to
prove nonexistence results: As it shall turn out that the
existence of irreducible monopoles is in a sense independent of
the chosen Riemannian structure, finding an irreducible monopole
with respect to an arbitrary metric prevents $M$ from admitting a
metric of nonnegative scalar curvature. Corresponding statements
in the four dimensional case have turned out to be very useful.
For a brief discussion of the above result's implications, we
refer to Meng \& Taubes \cite{MenTau:SW}.
\end{remark*}

\begin{cor}\label{fin}
Let $M$ be a closed, oriented Riemannian 3-manifold. Then there
exist only finitely many \spinc structures on $M$ for which the
irreducible part of the moduli space is nonempty.
\end{cor}

\begin{proof}
Let $\gs$ be a \spinc structure on $M$ with canonical class
$c(\gs)$. For any point in the moduli space we can find a
representative $(\psi,A)$ which is at least $C^2$. According to
Proposition \ref{q:prop},
\[
|q(\psi)| = \lfrac12 |\psi|^2,
\]
so that the key estimate \eqref{key:estimate} implies that
\[ \psi=0\quad\text{or}\quad
|q(\psi)|\le -\min_{x\in M}s_g(x).
\]
Since $* F_A=\lfrac{1}{2}q(\psi)$, this establishes a bound on
$|F_A|$. According to the Chern-Weil construction, this implies
that the image of $c(\gs)$ in $H^1_{dR}(M;\Z)$ lies in a bounded
subset.

Therefore, only a finite subset of $H^1_{dR}(M;\Z)$ corresponds to
canonical classes of \spinc structures admitting irreducible
monopoles. This proves the assertion since the number of
canonical classes mapped to the same element in $H^1_{dR}(M;\Z)$
is finite. Here, we are using that $H^2(M;\Z)$ is finitely
generated so that there cannot be infinitely many torsion
elements.
\end{proof}

\begin{theorem}\label{thm:comp}\index{moduli space!compactness}
The SW-moduli space $\cM\subset\cC_1/\cG_2$ is sequentially
compact.
\end{theorem}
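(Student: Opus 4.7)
The plan is to pick a smooth representative in each gauge equivalence class, put it in Coulomb gauge relative to a fixed smooth $A_0$, bootstrap the Seiberg--Witten equations to obtain a uniform $L^2_2$ bound, and then extract an $L^2_1$-convergent subsequence using Rellich--Kondrachov. Let $\big([\psi_n,A_n]\big)$ be a sequence in $\cM$. By Corollary~\ref{moduli:smooth} we may assume every $(\psi_n,A_n)$ is smooth. Fix a smooth gauge field $A_0$. Using Lemma~\ref{d^asta=0}, apply a gauge transformation in $\cG_2$ to each representative so that $a_n:=A_n-A_0$ becomes co-closed. Then, using Lemma~\ref{Pa:bounded}, apply a further gauge transformation by a harmonic representative of the relevant lattice so that $\|Pa_n\|_\infty$ is uniformly bounded; since this additional transformation changes $a_n$ by a harmonic $1$-form, the condition $d^*a_n=0$ is preserved.

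Next I would extract the two fundamental a priori bounds. From Proposition~\ref{key:prop}, $\|\psi_n\|_\infty^2\le\max\{0,-2\min_M s_g\}$ uniformly, and Proposition~\ref{q:prop}(ii) then controls $\|q(\psi_n)\|_\infty$ by $\tfrac12\|\psi_n\|_\infty^2$. Combined with the $L^\infty$ control on $Pa_n$, these are the only two ``input'' estimates that do not come from elliptic regularity.

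The central step is the bootstrap via Proposition~\ref{coulomb:gauge}. Since $Pa_n$ lives in the finite-dimensional space $i\cH^1(M)$, on which all Sobolev norms are equivalent, the $L^\infty$ bound on $Pa_n$ upgrades automatically to a uniform bound in every $L^2_k$. Applying \eqref{coulomb:gauge:ell} with $k=0,p=2$ and using the uniform $L^2$ bound on $q(\psi_n)-*F_{A_0}$, we obtain a uniform $L^2_1$ bound on $a_n$; feeding this, together with the $L^\infty$ bound on $\psi_n$, into the first inequality of \eqref{coulomb:gauge:ell} gives an $L^2_1$ bound on $\psi_n$ (via $\|c(a_n)\psi_n\|_{L^2}\le\|a_n\|_{L^2}\|\psi_n\|_\infty$). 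One more iteration with $k=1$: the Leibniz estimate $|\nabla q(\psi_n)|\lesssim|\psi_n||\nabla\psi_n|$ together with the already established bounds yields a uniform $L^2_1$ bound on $q(\psi_n)$, hence an $L^2_2$ bound on $a_n$; Sobolev embedding $L^2_2\hookrightarrow L^\infty$ then bounds $c(a_n)\psi_n$ in $L^2_1$, giving a uniform $L^2_2$ bound on $\psi_n$.

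The Rellich--Kondrachov Theorem~\ref{rellich} implies that $L^2_2\hookrightarrow L^2_1$ is compact, so after passing to a subsequence the gauge-fixed $(\psi_n,A_n)$ converge in $L^2_1$ to some $(\psi,A)\in\cC_1$. Since $\SW:\cC_1\to L^2(M,E)$ is smooth by Proposition~\ref{cs:diff}, the limit satisfies $\SW(\psi,A)=0$, so $[\psi,A]\in\cM$; continuity of the projection $\cC_1\to\cB_1$ finishes the proof. The main obstacle is the bootstrap itself: in dimension three $L^2_1$ just fails to be an algebra, so one cannot plug the $L^2_1$ bounds directly into Lemma~\ref{q:diff}, and the uniform $L^\infty$ control coming from the key estimate must be used in tandem with elliptic regularity at each step to force the iteration to converge.
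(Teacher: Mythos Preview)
Your proof is correct and follows essentially the same strategy as the paper: gauge-fix into Coulomb gauge with bounded harmonic part, use the key estimate for the $L^\infty$ bound on $\psi_n$, bootstrap via the elliptic inequalities \eqref{coulomb:gauge:ell} to a uniform $L^2_2$ bound, and finish with Rellich--Kondrachov. The only notable technical difference is in the bootstrap step itself: the paper first obtains $L^p_1$ bounds for all $p\ge 2$ and then invokes the Sobolev multiplication $L^p_1\times L^p_1\to L^2_1$ (valid for large $p$) to control $q(\psi_n)$ and $c(a_n)\psi_n$ in $L^2_1$, whereas you reach the same conclusion via pointwise Leibniz estimates combined with the $L^\infty$ bound on $\psi_n$ and the embedding $L^2_2\hookrightarrow L^\infty$ for $a_n$; both routes are valid, and the paper's variant has the mild advantage that it iterates cleanly to $L^2_k$ for all $k$ (see the remark following the proof).
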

\begin{proof}
We have to show that any sequence $\big([\psi_n,A_n]\big)$ in
$\cM$ contains a convergent subsequence. Choosing representatives
$(\psi_n,A_n)\in \cC_1$ and a fixed $C^\infty$-gauge field $A_0$,
we let $a_n:=A_n-A_0$. As in Lemma \ref{Pa:bounded} we may apply
gauge transformations to achieve that $(Pa_n)$ is a bounded
sequence in $i\cH^1(M)$. Possibly gauge transforming again, we
may also assume that $d^* a_n=0$. Observe that the second
property can be achieved using a gauge transformation of the form
$\gamma:=\exp(\lfrac{f}{2})$. Therefore, $2\gamma^{-1}d\gamma
=df$ lies in the kernel of $P$ and the fact that $(Pa_n)$ is
bounded remains unaffected. In consequence of the second
condition, Proposition \ref{coulomb:gauge} shows that all
$(\psi_n,A_n)$ are smooth configurations.

\begin{steps}
\item \textit{For $p\ge 2$, the sequence $(\psi_n,a_n)$ is bounded
in $L^p_1(M,E)$:}\\
Due to the key estimate \eqref{key:estimate}, the sequence
$(\psi_n)$ is bounded with respect to $\|.\|_\infty$. Therefore,
the sequence $\big(q(\psi_n)-* F_{A_0}\big)_{n\ge 1}$ is also
$\|.\|_{\infty}$-bounded and hence with respect to $\|.\|_{L^p}$.
Moreover, $(Pa_n)$ is $L_1^p$-bounded because all norms are
equivalent on the finite dimensional space $\cH^1(M)$. Therefore,
the second equation in \eqref{coulomb:gauge:ell} shows that
$(a_n)$ is a bounded sequence in $L_1^p$. Since $(\psi_n)$ is
bounded with respect to $\|.\|_\infty$, we deduce that
$\big(c(a_n)\psi_n\big)$ is bounded in $L^p$. Therefore, from the
first inequality in \eqref{coulomb:gauge:ell} we may infer that
$(\psi_n)$ is bounded in $L^p_1$.

\item \textit{$(\psi_n,a_n)$ is a bounded sequence in
$L^2_2(M,E)$:}\\
Proposition \ref{sob:mult} implies that if $p$ is large enough
(e.g. $p=5$), there is a continuous multiplication
\[
L^p_1\times L^p_1\to L^2_1.
\]
As $(\psi_n,a_n)$ is bounded in $L^p_1$ for each $p\ge 2$, this
shows that $(c(a_n)\psi_n)$ and $(q(\psi_n))$ are bounded with
respect to $\|.\|_{L_1^2}$. Since $(Pa_n)$ is bounded in $L_2^2$,
the right hand sides of the inequalities
\eqref{coulomb:gauge:ell} are bounded. Therefore, $(a_n)$ and
$(\psi_n)$ are bounded in $L_2^2$.
\end{steps}

The Rellich-Kondrachov Theorem \ref{rellich} shows that
$L^2_2(M,E)$ embeds compactly in $L^2_1(M,E)$. Hence, there
exists a subsequence of $(\psi_n,A_n)$ which converges in $\cC_1$
to, say, $(\psi,A)\in\cC_1$. The Seiberg-Witten map $\SW:\cC_1\to
L^2(M,E)$ is continuous which yields $\SW(\psi,A)=0$. Therefore,
$\big([\psi_n,A_n]\big)$ contains a subsequence which converges
in $\cM$ with respect to the induced topology.
\end{proof}

\begin{remark*}
A simple induction shows that the sequence $(\psi_n,a_n)$ in the
above proof is also bounded with respect to $\|.\|_{L_k^2}$ for
any $k\ge 1$: Assume that $k\ge 2$ and that $(\psi_n,a_n)$ is
bounded in $L_k^2$. Then, since there is a continuous
multiplication $L_k^2 \times L_k^2\to L_k^2$, the sequences
$c(a_n)\psi_n$ and $q(\psi_n)$ are also $L_k^2$-bounded. The
inequalities \eqref{coulomb:gauge:ell} then guarantee that
$(\psi_n,a_n)$ is bounded in $L_{k+1}^2$. We shall need this
remark in the next paragraph.\\
\end{remark*}

\noindent\textbf{Choosing different Sobolev orders.} As was
pointed out before, some authors endow the configuration space
and the group of gauge transformations with different Sobolev
structures. We now want to deduce from the above considerations
that this does not affect the structure of the moduli space.\\

\noindent For any $k\ge 1$ we define
\[
\cC_k:=L_k^2(M,S)\times \cA_k\quad\text{ and }\quad
\cG_{k+1}:=L_{k+1}^2(M,\U_1).
\]
Since $k\ge 1$, we deduce from Example \ref{sob:mult:n=3} that
there are continuous multiplications
\[
L^2_{k+1}\times L^2_{k+1}\to L^2_{k+1}\quad\text{ and }\quad
L^2_{k+1}\times L^2_k\to L^2_k.
\]
This guarantees in the same way as before that $\cG_{k+1}$ is a
Banach Lie group acting smoothly on $\cC_k$. We define
\[
\cB_k:=\cC_k/\cG_{k+1}\quad\text{ and }\quad\cM_k:=
(SW^{-1}(0)\cap \cC_k)/\cG_{k+1}.
\]
\begin{lemma}
Let $(\psi,A)\in\cC_k$ and $\gamma\in\cG_2$ such that
$\gamma\cdot(\psi,A)\in\cC_k$. Then $\gamma\in\cG_{k+1}$.
\end{lemma}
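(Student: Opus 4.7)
The plan is to exploit the identity $d\gamma = \gamma\cdot(\gamma^{-1}d\gamma)$ and bootstrap the regularity of $\gamma$ using the Sobolev multiplications on a three-manifold recalled in Example \ref{sob:mult:n=3}. From the hypothesis $\gamma\cdot(\psi,A)\in\cC_k$ we extract in particular that $A+2\gamma^{-1}d\gamma\in\cA_k$; since $A\in\cA_k$ this gives
\[
\eta:=\gamma^{-1}d\gamma\in L^2_k(M,iT^*M).
\]
The case $k=1$ is immediate because then $\cG_{k+1}=\cG_2$ contains $\gamma$ by hypothesis, so assume $k\geq 2$.

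The core of the argument is an induction on $j$ proving $\gamma\in L^2_j(M,\C)$ for every $2\leq j\leq k+1$. The base case $j=2$ is simply the hypothesis $\gamma\in\cG_2$. For the induction step, suppose $\gamma\in L^2_j$ with $2\leq j\leq k$. Since $\dim M=3$ and $j\geq 2$, Example \ref{sob:mult:n=3} provides a continuous Sobolev multiplication $L^2_j\times L^2_j\to L^2_j$. Applying this to the product $d\gamma=\gamma\,\eta$, and noting that $\eta\in L^2_k\subset L^2_j$, yields $d\gamma\in L^2_j$. Since the continuous embedding $L^2_2\hookrightarrow C^0$ forces $|\gamma|\equiv 1$ (so in particular $\gamma\in L^\infty\subset L^2$), the condition $d\gamma\in L^2_j$ upgrades $\gamma$ to $L^2_{j+1}$.

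Iterating this step starting from $j=2$ produces $\gamma\in L^2_{k+1}(M,\C)$. Because $|\gamma|\equiv 1$ is preserved throughout, we conclude $\gamma\in L^2_{k+1}(M,\U_1)=\cG_{k+1}$.

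The only subtle point is the validity of the multiplication $L^2_j\times L^2_j\to L^2_j$ used at each step, but this is exactly the three-dimensional Sobolev multiplication in Example \ref{sob:mult:n=3}, which requires only $j\geq 2$; the restriction $j\leq k$ ensures $\eta$ lies in $L^2_j$ at every stage. No nontrivial analytic obstacle arises.
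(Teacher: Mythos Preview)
Your proof is correct and follows essentially the same approach as the paper: extract $\gamma^{-1}d\gamma\in L^2_k$ from the gauge-action hypothesis, then bootstrap via $d\gamma=\gamma\cdot(\gamma^{-1}d\gamma)$ using the Sobolev multiplication $L^2_j\times L^2_j\to L^2_j$ for $j\ge 2$. The only cosmetic difference is that the paper organizes the induction over $k$ (invoking the lemma for $k-1$ to get $\gamma\in\cG_k$) and phrases the final step as ``elliptic regularity'', whereas you induct on $j$ within a fixed $k$ and read off $\gamma\in L^2_{j+1}$ directly from $d\gamma\in L^2_j$; the content is identical.
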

\begin{proof} The proof is another application of the elliptic
bootstrap technique and is established by induction on $k$. If
$k=1$, the assertion is trivial. Hence, let $k\ge 2$ and assume
that we have already proved that $\gamma\in\cG_k$. As
$A+2\gamma^{-1}d\gamma \in\cA_k$, we have
\[
\gamma^{-1}d\gamma\in L_k^2(M,T^*M\otimes\C).
\]
Since $k\ge 2$ there is a multiplication $L^2_k\times L^2_k\to
L^2_k$ and we find
\[
d\gamma=\gamma\cdot(\gamma^{-1}d\gamma)\in L^2_k(M,T^*M\otimes\C)
\]
From elliptic regularity we deduce that $\gamma\in
L_{k+1}^2(M,\C)$.
\end{proof}

We may now interpret $\cB_k$ as a subset of $\cB_1$: Since
$\cC_k\subset \cC_1$, taking the quotient of $\cC_1$ modulo
$\cG_2$ induces a map $\pi:\cC_k\to \cB_1$. From the above lemma
we deduce that
\[
\begin{split}
\pi(\psi_1,A_1)=\pi(\psi_2,A_2)&\Longleftrightarrow
\exists_{\gamma\in\cG_{k+1}}:\
(\psi_2,A_2)=\gamma\cdot(\psi_1,A_1)\\
&\Longleftrightarrow [\psi_1,A_1]=[\psi_2,A_2] \text{ in }\cB_k.
\end{split}
\]
Therefore, $\pi$ induces an injective map
\[
\bar \pi:\cB_k\to \cB_1.
\]
Since $\cC_k\subset \cC_1$ is continuous, the map $\bar\pi$ is
also continuous. In particular, $\cB_k$ is Hausdorff. Moreover,
$\bar\pi$ restricts to an inclusion of the moduli spaces,
\[\cM_k\hookrightarrow\cM_1.\]
Corollary \ref{moduli:smooth} shows that this map is, in fact, a
bijection. From the remark we stated after the proof of Theorem
\ref{thm:comp}, one easily establishes that $\cM_k$ is
sequentially compact as well. Therefore, the inclusion of $\cM_k$
in $\cM_1$ is a continuous bijection defined on a sequentially
compact set. This implies continuity of the inverse as well, and
we have the following result:

\begin{cor}
The topology of the moduli space $\cM$ is independent of the
chosen Sobolev orders, i.e., for any $k\ge 2$ the map
\[
\cM_k \hookrightarrow \cM_1
\]
is a homeomorphism.
\end{cor}

\section{Local structure of the moduli space}\label{loc:struc}

We give a brief motivation for some considerations arising at this
point in four dimensional Seiberg-Witten theory as well as in
Yang-Mills theory. Consider the restriction of the Seiberg-Witten
map to the irreducible part of $\cC_1$, i.e., $\SW:\cC_1^*\to
L^2(M,E)$. We suppose that at some point $(\psi,A)\in \cC_1^*$ the
differential $F_{(\psi,A)}$ of $\SW$ is surjective. Under this
assumption, the implicit function theorem ensures that the set of
monopoles near $(\psi,A)$ is a smooth manifold. Its tangent space
at $(\psi,A)$ is given by $\ker F_{(\psi,A)}$. Dividing out the
group action and invoking the slice theorem shows that in this
case a neighbourhood of $[\psi,A]$ in the moduli space is a smooth
manifold which is modelled on the tangent space
\[
T_{(\psi,A)}\cM=\frac{\ker F_{(\psi,A)}}{\im
(G_{(\psi,A)}|_{L_2^2})}.
\]
This space is the first cohomology group of the following complex:
\[
0\to L^2_2(M,i\R)\xrightarrow{G_{(\psi,A)}}L^2_1(M,E)
\xrightarrow{F_{(\psi,A)}} L^2(M,E)\to 0.
\]
In four dimensional Seiberg-Witten theory, the corresponding
complex---being of a slightly different form than here---is
elliptic so that the expected dimension of the moduli space can
be computed as the index of rolled-up elliptic operator. At a
first glimpse, the three dimensional situation is a bit more
complicated since the above complex is not elliptic but has to be
altered as in \eqref{complex}. Nevertheless, by slightly
reformulating the Seiberg-Witten equations and introducing
``virtual" monopoles, the local analysis of the moduli space can
be carried over exactly as in the four dimensional case (see Lim
\cite{Lim:SW}). However, we shall take another approached since
the arguments involved are more intuative from a geometrical point
of view.\\

\noindent\textbf{The Chern-Simons-Dirac functional
revisited.}\index{Chern-Simons-Dirac functional|(} The nature of
the Seiberg-Witten map as the gradient of the Chern-Simons-Dirac
functional yields another possibility to analyse the local
structure of the moduli space than the one via virtual monopoles
mentioned above. This point of view reproduces the
original\footnote{A very clear explanation of Taubes's ideas is
given by P. Kirk in Sec.~3 of \cite{Kir:GT}. Our approach mimics
the arguments given there.} ideas of Taubes \cite{Tau:CI} so that
similarities with instanton theory on 3-manifolds become more
intriguing. However, the essential ingredients---the slice
theorem combined with the implicit function theorem---are the
same in both approaches.

For the time being, we restrict our attention to the irreducible
part of $\cB_1$ and consider the principal $\cG_2$-bundle
$\cC_1^*\to\cB_1^*$. The assignment
\[
(\psi,A)\mapsto S_{(\psi,A)}=\ker (G_{(\psi,A)}^*|_{L_1^2})
\]
defines a smooth subbundle of the tangent bundle of $\cC_1^*$.
This follows from the fact that $G_{(\psi,A)}$ is injective with
injectively elliptic first order term. Moreover, the bundle is
$\cG_2$-invariant, i.e., for all $\gamma\in\cG_2$,
\[
\gamma\cdot\ker (G_{(\psi,A)}^*|_{L_1^2})= \ker(
G_{\gamma\cdot(\psi,A)}^*|_{L_1^2}).
\]
From part (ii) of Corollary \ref{slice:cor} it becomes clear that
this subbundle is the pullback of the tangent bundle of $\cB_1^*$
to $\cC_1^*$. This in mind, we can relate objects defined on
$T\cB_1^*$ with objects on the tangent bundle of $\cC_1^*$.

\begin{lemma}\label{grad:lem}
The section $\SW$ is the pullback of the Chern-Simons-Dirac
functional's $L^2$-gradient on $\cB_1^*$. In particular, the
irreducible part of the moduli space is exactly the set of
critical points of $\csd:\cB_1^*\to S^1$.
\end{lemma}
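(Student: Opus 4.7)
The plan is to argue in two steps: first, to check that the vector field $\SW$ on $\cC_1^*$ actually lies in the slice subbundle $\bigcup_{(\psi,A)}S_{(\psi,A)}\subset T\cC_1^*$, and second, to transfer the gradient identity from Proposition \ref{F:prop} to the quotient $\cB_1^*$.

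For the first step, I want to show that $G_{(\psi,A)}^*\bigl(\SW(\psi,A)\bigr)=0$ for every $(\psi,A)\in\cC_1^*$. Using the explicit formula \eqref{Gt} for $G_{(\psi,A)}^*$ together with $\SW(\psi,A)=\bigl(\cD_A\psi,\,\lfrac{1}{2}q(\psi)-*F_A\bigr)$, one obtains
\[
G_{(\psi,A)}^*\bigl(\SW(\psi,A)\bigr)=d^*q(\psi)-2d^*\!*\!F_A-i\Im\scalar{\cD_A\psi}{\psi}.
\]
The middle term vanishes because, in dimension three, $d^*\!*\!F_A=\pm *dF_A=0$ by the Bianchi identity $dF_A=0$. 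The remaining two terms cancel by Proposition \ref{dastq}, which gives $d^*q(\psi)=i\Im\scalar{\cD_A\psi}{\psi}$. Hence $\SW(\psi,A)\in\ker(G_{(\psi,A)}^*)=S_{(\psi,A)}$, so that $\SW$ descends to a well-defined section of $T\cB_1^*$ (it is already $\cG_2$-equivariant by Lemma \ref{SW:equiv}).

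For the second step, let $[\psi,A]\in\cB_1^*$ and $\xi\in T_{[\psi,A]}\cB_1^*$, which by Corollary \ref{slice:cor} is identified with an element of $S_{(\psi,A)}\subset L_1^2(M,E)$. Choose a smooth path $t\mapsto(\psi_t,A_t)\in\cC_1^*$ with $(\psi_0,A_0)=(\psi,A)$ and $\lfrac{d}{dt}|_{t=0}(\psi_t,A_t)=\xi$. Although $\csd$ is only $S^1$-valued on $\cB_1^*$, the composition $t\mapsto\csd(\psi_t,A_t)$ lifts to an $\R$-valued function on any interval, and Proposition \ref{F:prop} gives
\[
\lfrac{d}{dt}\Big|_{t=0}\csd(\psi_t,A_t)=\LScalar{\SW(\psi,A)}{\xi}.
\]
Since both $\SW(\psi,A)$ and $\xi$ lie in $S_{(\psi,A)}$, this is precisely the $L^2$ pairing used to define the gradient on the Hilbert tangent space $T_{[\psi,A]}\cB_1^*$. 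Thus the image of $\SW$ under the projection $\cC_1^*\to\cB_1^*$ is the $L^2$-gradient of $\csd:\cB_1^*\to S^1$.

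The second assertion of the lemma then follows at once: a point $[\psi,A]\in\cB_1^*$ is critical for $\csd$ iff its $L^2$-gradient vanishes there, iff $\SW(\psi,A)=0$, iff $(\psi,A)$ is a Seiberg-Witten monopole. The only delicate point is the first step, ensuring that $\SW$ is tangent to the slice; everything else is bookkeeping around the already-established slice theorem and gradient formula.
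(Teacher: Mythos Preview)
Your proof is correct and follows essentially the same approach as the paper: the key computation $G_{(\psi,A)}^*\bigl(\SW(\psi,A)\bigr)=0$ via Proposition \ref{dastq} and the Bianchi identity is identical, and the remaining gradient bookkeeping is just made more explicit than in the paper. The only minor difference is that the paper performs the computation first for smooth configurations and then extends by continuity and density of $C^\infty\subset\cC_1$, which sidesteps the need to justify the pointwise identity of Proposition \ref{dastq} directly at the $L^2_1$ level; your direct argument is fine but implicitly relies on this extension.
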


\begin{remark*}
As we have already noted, $\SW$ is only a gradient vector field
in a weak sense since it takes values in the $L^2$-completion of
the tangent bundle of $\cC_1^*$. Hence, we have also to consider
the $L^2$-completion of the tangent bundle of $\cB_1^*$.
According to Corollary \ref{L^2:compl}, the $L^2$-completion of
$\ker (G_{(\psi,A)}^*|_{L_1^2})$ coincides with $\ker
G_{(\psi,A)}^*$.
\end{remark*}

\begin{proof}[Proof of Lemma \ref{grad:lem}]
We have to ascertain that $\SW$ is a $\cG_2$-equivariant map with
values in the subbundle $\ker G^*$. The equivariance property has
been proved earlier. For a smooth configuration $(\psi,A)$, we
have
\begin{equation*}
G_{(\psi,A)}^*\big(\SW(\psi,A)\big)=2d^*\big(\lfrac{1}{2}q(\psi)-
* F_A\big)- i\Im\scalar{\cD_A\psi}{\psi} =0
\end{equation*}
since Proposition \ref{dastq} implies that
$d^*q(\psi)=i\Im\scalar{\cD_A\psi}{\psi}$. Since smooth
configurations lie dense in $\cC_1$ the assertion follows from
continuity of $\SW$.
\end{proof}

The Hessian of $\csd:\cC_1^*\to\R$ at a point $(\psi,A)$ is given
by the differential $F_{(\psi,A)}:L^2_1(M,E)\to L^2(M,E)$ of
$\SW$. To obtain the pullback of the Hessian on $\cB_1^*$ we have
to project $F_{(\psi,A)}$ to the subbundle $\ker G^*$ since this
means taking the induced covariant derivative of the gradient
$\cB_1^*\to T\cB_1^*$. Again, we have to account for the fact that
$F_{(\psi,A)}$ maps $L_1^2(M,E)$ to $L^2(M,E)$. Then the pullback
of Hessian of $\csd:\cB_1^*\to S^1$ is the unbounded operator in
$\ker G_{(\psi,A)}^*$ given by
\begin{equation}\label{H}\index{>@$H_{(\psi,A)}$, Hessian of $\csd$}
H_{(\psi,A)}:=\Proj_{\ker G^*}\circ F_{(\psi,A)},\quad
\dom(H_{(\psi,A)}):= \ker ( G_{(\psi,A)}^*|_{L_1^2}).
\end{equation}

In consistency with the terminology in finite dimensional Morse
theory, we now define:

\begin{dfn}
An irreducible Seiberg-Witten monopole is called {\em
non-degenerate} if the Hessian is an invertible operator
\[
H_{(\psi,A)}:\ker (G_{(\psi,A)}^*|_{L_1^2})\to\ker G^*_{(\psi,A)}
\]
Otherwise, it is called {\em degenerate}.
\end{dfn}
Clearly, this definition only depends on the gauge equivalence
class of the monopole $(\psi,A)$. An immediate consequence of the
inverse function theorem and Corollary \ref{slice:cor} is:

\begin{prop}\label{irr:nd}\index{moduli space!irreducible part}
Let $(\psi,A)\in\cC_1^*$ be an irreducible, non-degenerate
monopole. Then its gauge equivalence class $[\psi,A]$ is an
isolated point of the moduli space $\cM(\gs)$.
\end{prop}\index{Chern-Simons-Dirac functional|)}

The Hessian $H_{(\psi,A)}$ is not very tractable since, for
example, the Hilbert space in which it is defined depends on the
point $(\psi,A)$. Moreover, it is not yet clear how to assign a
``Hessian" to reducible configurations. Yet, as we are ultimately
only interested in the spectral properties of $H_{(\psi,A)}$, we
will relate the Hessian to the elliptic operator $T_{(\psi,A)}$ we
considered at the end of Chapter \ref{SW:mon}.

For the time being, we shall drop the reference to the base point
$(\psi,A)$ to simplify notation. We recall that,
\[
T:=(F + G,G^*):L^2_1(M,E\oplus i\R)\to L^2(M,E\oplus i\R).
\]
Hodge decomposition yields that
\begin{equation}\label{Hodge:G}
L^2_1(M,E)=\ker (G^*|_{L_1^2})\oplus\im (G|_{L_2^2})\quad \text{
and }\quad L^2(M,E)=\ker G^*\oplus\im G.
\end{equation}
With respect to this we now extend the Hessian $H$ an operator in
$L^2(M,E\oplus i\R)$, with domain $L^2_1(M,E\oplus i\R)$, by
letting
\begin{equation}\label{Tilde:H}
\Tilde{H}:=\left(
\begin{array}{ccc}
H &0 &0 \\
0 &0 & G \\
0 &G^*&0
\end{array}\right).
\end{equation}

\begin{lemma}\label{H=F}
If $(\psi,A)$ is an irreducible monopole, then $\Tilde
H_{(\psi,A)}$ coincides with the operator $T_{(\psi,A)}$.
Moreover, $(\psi,A)$ is non-degenerate if and only if
$T_{(\psi,A)}$ is invertible.
\end{lemma}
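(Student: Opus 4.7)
The plan is to decompose both $T_{(\psi,A)}$ and $\tilde H_{(\psi,A)}$ with respect to the Hodge-type splitting
\[
L^2_1(M,E\oplus i\R)=\ker\bigl(G^*_{(\psi,A)}|_{L^2_1}\bigr)\,\oplus\,\im\bigl(G_{(\psi,A)}|_{L^2_2}\bigr)\,\oplus\,L^2_1(M,i\R)
\]
from \eqref{Hodge:G}, and to match the block forms. First I would exploit the monopole condition through the fact recorded just before \eqref{complex}: differentiating the $\cG$-equivariance of $\SW$ at a zero gives $F_{(\psi,A)}\circ G_{(\psi,A)}=0$. Since $F_{(\psi,A)}$ is formally self-adjoint (Prop.~\ref{F:prop}), the formal adjoint identity yields $G^*_{(\psi,A)}\circ F_{(\psi,A)}=0$ as differential operators, and hence as bounded maps $L^2_1(M,E)\to L^2(M,i\R)$ after composing with the natural extensions. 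Consequently $F_{(\psi,A)}$ vanishes on $\im(G_{(\psi,A)}|_{L^2_2})$ and sends the slice $\ker(G^*_{(\psi,A)}|_{L^2_1})$ into $\ker G^*_{(\psi,A)}$; in particular, the projection in \eqref{H} is the identity on the image, so $H_{(\psi,A)}$ coincides with $F_{(\psi,A)}$ restricted to the slice.

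Writing $(\phi,a,f)$ according to the decomposition as $((\phi,a)_1,(\phi,a)_2,f)$, a direct computation using the definition of $T$ from \eqref{T} gives
\[
T_{(\psi,A)}\bigl((\phi,a)_1,(\phi,a)_2,f\bigr)=\bigl(F_{(\psi,A)}(\phi,a)_1,\,G_{(\psi,A)}(f),\,G^*_{(\psi,A)}(\phi,a)_2\bigr),
\]
because $F_{(\psi,A)}$ annihilates $\im G$, $G(f)\in\im G$, and $G^*$ annihilates $\ker G^*$. Identifying the top block with $H_{(\psi,A)}$ yields precisely the matrix \eqref{Tilde:H}, establishing the first assertion.

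For the second assertion, the block-triangular form of $\tilde H_{(\psi,A)}$ reduces invertibility to the simultaneous invertibility of $H_{(\psi,A)}$ and of the off-diagonal $2\times 2$ block $\bigl(\begin{smallmatrix} 0 & G\\ G^* & 0\end{smallmatrix}\bigr)$ acting from $\im(G|_{L^2_2})\oplus L^2_1(M,i\R)$ to $\im G\oplus L^2(M,i\R)$. The off-diagonal block is always invertible when $(\psi,A)$ is irreducible: indeed, $G_{(\psi,A)}(f)=(-f\psi,2df)=0$ together with $M$ connected and $\psi\not\equiv 0$ forces $f\equiv 0$, so $G_{(\psi,A)}$ is injective, and the operator $G^*_{(\psi,A)}G_{(\psi,A)}=4\Delta_g+|\psi|^2$ (up to sign conventions) is a positive elliptic self-adjoint operator on a closed manifold, hence invertible. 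This gives a continuous right inverse for $G^*_{(\psi,A)}$ on $\im G$ and makes the off-diagonal block bijective. Thus $T_{(\psi,A)}$ is invertible if and only if $H_{(\psi,A)}$ is, i.e.\ if and only if $(\psi,A)$ is non-degenerate.

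The main obstacle I anticipate is bookkeeping of the Sobolev domains: the splitting \eqref{Hodge:G} on $L^2_1(M,E)$ uses $\im(G|_{L^2_2})$ rather than $\im(G|_{L^2_1})$, and the formal identity $G^*F=0$ must be interpreted in the sense that $F$ sends the $L^2_1$-slice into $\ker G^*\subset L^2$ and not just into the dense subspace $\ker(G^*|_{L^2_1})$. Once one verifies this via elliptic regularity for $G^*$ (which is injectively elliptic away from the constants), the block-diagonal identification goes through without further analytic difficulty.
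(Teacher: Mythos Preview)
Your proposal is correct and follows essentially the same route as the paper: both use the monopole relation $F\circ G=0$ together with the symmetry of $F$ to conclude that $F$ vanishes on $\im(G|_{L^2_2})$ and maps the slice into $\ker G^*$, so that $T$ decomposes block-diagonally as $H\oplus\tilde G$. The only minor difference is in the argument for the invertibility of the off-diagonal block $\tilde G$: the paper defers this to a separate lemma showing $\tilde G$ has symmetric spectrum avoiding $0$ via injectivity of $G$, whereas you give the slightly more explicit argument that $G^*G=4\Delta_g+|\psi|^2$ is a strictly positive elliptic operator when $\psi\not\equiv 0$, which directly yields that both $G:L^2_1\to\im G$ and $G^*:\im(G|_{L^2_2})\to L^2$ are isomorphisms. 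Your Sobolev bookkeeping concern is well-placed but resolves exactly as you indicate.
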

\begin{proof}
According to the definition of $T$ and \eqref{H}, it suffices to
show that the operator $F:L_1^2(M,E)\to L^2(M,E)$ satisfies
\begin{equation}\label{H=F:1}
F=\Proj_{\ker G^*}\circ\, F\circ \Proj_{\ker G^*}.
\end{equation}
As we have noticed before, $F\circ G|_{L_2^2}=0$ whenever
$(\psi,A)$ is a monopole. On the one hand this implies that
$F=F\circ \Proj_{\ker G^*}$ and on the other hand,
\[
\im (G|_{L_2^2}) \subset \ker F\subset \ker F^*
\]
since $F$ is symmetric. Hence, $\im F \subset (\im G)^\perp = \ker
G^*$. Together, we get \eqref{H=F:1}. Then the second assertion
follows from the next lemma.
\end{proof}

\begin{lemma}\label{G:mat}
The eigenvalues of
\[
\Tilde G:= \begin{pmatrix} 0 & G \\ G^* &0\end{pmatrix} : \im
(G|_{L_2^2})\oplus L^2_1(M,i\R)\longrightarrow \im G\oplus
L^2(M,i\R).
\]
form a symmetric subset of $\R$, not containing 0. In particular,
$H$ is invertible if and only if $\Tilde H$ is.
\end{lemma}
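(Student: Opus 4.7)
The plan is to exploit the off-diagonal Dirac-type structure of $\tilde G$ and pass the spectral information through $\tilde G^2$, using injectivity of $G_{(\psi,A)}$ at irreducible configurations to rule out a zero eigenvalue.

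First I would set up the eigenvalue problem in components. Writing $(v,f)\in\im(G|_{L_2^2})\oplus L^2_1(M,i\R)$ and reading off $\tilde G(v,f)=\lambda(v,f)$ gives the coupled pair $Gf=\lambda v$ and $G^*v=\lambda f$. Symmetry of the spectrum is then immediate from the sign-flip involution: since $\tilde G(-v,f)=(Gf,-G^*v)=-\lambda(-v,f)$, the map $(v,f)\mapsto(-v,f)$ carries the $\lambda$-eigenspace isomorphically onto the $(-\lambda)$-eigenspace.

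For the exclusion of $\lambda=0$, suppose $Gf=0$ and $G^*v=0$. Recalling that $G_{(\psi,A)}(f)=(-f\psi,2df)$, the first equation forces $df=0$ and $-f\psi=0$; since $M$ is connected, $f$ is constant, and in the irreducible case $\psi\not\equiv 0$ then forces $f=0$. For the other component, $v\in\im(G|_{L_2^2})$ means $v=Gh$ for some $h\in L^2_2(M,i\R)$, so $\|v\|^2_{L^2}=\langle Gh,Gh\rangle=\langle h,G^*v\rangle=0$, yielding $v=0$. Thus no nonzero eigenvector of $\tilde G$ has eigenvalue $0$.

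The final ``in particular'' assertion follows from the block structure in \eqref{Tilde:H}. By Lemma \ref{H=F}, in the irreducible case $\tilde H_{(\psi,A)}$ coincides with the Fredholm self-adjoint operator $T_{(\psi,A)}$ of Proposition \ref{T:prop}, which has compact resolvent and hence purely discrete spectrum. The Hodge-type orthogonal splittings \eqref{Hodge:G} exhibit $\tilde H$ as the direct sum $H\oplus\tilde G$ compatible with domains, so $\tilde G$ inherits self-adjointness and compact resolvent. Combined with the previous step, $0$ is not in the spectrum of $\tilde G$, making $\tilde G$ invertible; therefore $\tilde H$ is invertible if and only if $H$ is. The main obstacle I anticipate is merely the functional-analytic bookkeeping--verifying that the stated domain of $\tilde G$ really is a closed subspace of $L^2_1(M,E\oplus i\R)$ preserved by the block decomposition of $T_{(\psi,A)}$, so that ``no zero eigenvalue'' upgrades to genuine invertibility of $\tilde G$ as an unbounded operator between the spaces indicated in the statement.
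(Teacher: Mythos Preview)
Your proof is correct and follows essentially the same approach as the paper: the sign-flip involution for symmetry of the spectrum (the paper flips $f$ rather than $v$, but this is the same idea), injectivity of $G$ at irreducible configurations to rule out $\lambda=0$, and the direct-sum structure $\tilde H=H\oplus\tilde G$ for the final equivalence. Your treatment is in fact more careful than the paper's in explicitly invoking compact resolvent (via $\tilde H=T_{(\psi,A)}$ and Proposition~\ref{T:prop}) to upgrade ``no zero eigenvalue'' to genuine invertibility of $\tilde G$; the paper leaves this step implicit.
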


\begin{proof}
Clearly, the operator $\Tilde G$ is symmetric with respect to the
induced $L^2$ scalar product. Therefore, the set of eigenvalues
is a subset of $\R$. If $\gl$ is an eigenvalue with corresponding
eigenvector $\big((\gf,a),f\big)$,
\[
\Tilde G\big((\gf,a), - f\big) = -\gl\cdot\big((\gf,a), - f\big) .
\]
Hence, $-\gl$ is also an eigenvalue, and that shows the set
of eigenvalues is symmetric. Moreover, $G$ is injective since
$(\psi,A)$ is irreducible. This implies that $\Tilde G$ is
injective as well. Since $\Tilde H$ is the direct sum of $H$ and
$\Tilde G$, the second assertion follows.
\end{proof}

\begin{remark}\label{Rem:H:SF}
We will see in Corollary \ref{H:prop} that $\Tilde H$ is a
self-adjoint operator depending smoothly on $(\psi,A)$ and having
compact resolvent. Hence, given a $C^1$-path of irreducible
configuration, Definition \ref{SF:def} of the spectral flow
applies to the operators $\Tilde H$ associated to this path. Now,
Lemma \ref{G:mat} shows that the direct summand $\Tilde G$ gives
no contribution to the spectral flow since the corresponding
spectra are bounded away from 0. It is thus intuitively clear
that, the spectral flow of the Hessian $H$ may be represented by
the spectral flow of $\Tilde H$.
\end{remark}

If $(\psi,A)\in\cC_1^*$ is not a monopole, the equality $\Tilde
H_{(\psi,A)}=T_{(\psi,A)}$ does not necessarily hold. However, we
can say the following:

\begin{lemma}\label{F-H:compact}
For each irreducible configuration $(\psi,A)$, the operator
\[
T_{(\psi,A)}-\Tilde{H}_{(\psi,A)}: L^2_1(M,E\oplus i\R)\to
L^2(M,E\oplus i\R)
\]
is compact and symmetric with respect to the $L^2$ scalar product.
\end{lemma}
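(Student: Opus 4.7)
By Proposition \ref{T:prop}, $T_{(\psi,A)}$ is self-adjoint and in particular symmetric. The operator $\Tilde H_{(\psi,A)}$ is symmetric by inspection of the block form \eqref{Tilde:H}: the off-diagonal pair $(G,G^*)$ is formally self-adjoint, and $H$ itself is symmetric because $\Lscalar{H\xi_1}{\xi_2} = \Lscalar{F\xi_1}{\xi_2} = \Lscalar{\xi_1}{F\xi_2} = \Lscalar{\xi_1}{H\xi_2}$ for $\xi_1,\xi_2\in\ker(G^*|_{L_1^2})$, using formal self-adjointness of $F$. Hence $T-\Tilde H$ is symmetric, and the essential content of the lemma is the compactness statement.

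\textbf{Reduction to a zero-order operator.} Using the Hodge splitting $L_1^2(M,E)=\ker(G^*|_{L_1^2})\oplus\im(G|_{L_2^2})$ from \eqref{Hodge:G}, decompose $(\gf,a)=\xi+\eta$. A block-by-block comparison (in which the $Gf$- and $G^*\eta$-contributions cancel between $T$ and $\Tilde H$) yields
\[
(T_{(\psi,A)}-\Tilde H_{(\psi,A)})(\gf,a,f) = \bigl(\Proj_{\im G}F\xi + F\eta,\;0\bigr).
\]
I would then verify, directly from \eqref{F}, \eqref{G}, \eqref{Gt} and Proposition \ref{dastq}, the two identities
\[
F_{(\psi,A)}\circ G_{(\psi,A)}(h)=\bigl(-h\,\cD_A\psi,\;0\bigr),\qquad G_{(\psi,A)}^*\circ F_{(\psi,A)}(\gf',a')=i\Im\scalar{\cD_A\psi}{\gf'};
\]
the first uses $q(\psi,h\psi)=0$ for imaginary-valued $h$ together with $d^2h=0$, and the second follows by polarising Proposition \ref{dastq} and observing $d^*(*da)=0$ on a 3-manifold together with $\Im\scalar{c(a)\psi}{\psi}=0$. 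These identities identify both summands of $T-\Tilde H$ as zero-order in their arguments, with all of the $(\psi,A)$-dependence packaged in the coefficient $\cD_A\psi\in L^2(M,S)$ (in agreement with Lemma \ref{H=F}, since $\cD_A\psi$ vanishes precisely on monopoles).

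\textbf{Compactness.} For the $F\eta$-summand: write $\eta=Gh$ with $h=(G^*G)^{-1}G^*\eta$. Irreducibility of $(\psi,A)$ makes $G^*G=4\gD_g+|\psi|^2$ a positive, invertible second-order elliptic operator, so $\eta\mapsto h$ is bounded $L_1^2(M,E)\to L_2^2(M,i\R)$. In dimension three the embedding $L_2^2\hookrightarrow L^\infty$ is compact by Rellich--Kondrachov (Theorem \ref{rellich}), and multiplication $L^\infty\times L^2\to L^2$ is continuous; hence $h\mapsto -h\cD_A\psi$ is compact $L_2^2\to L^2$, and consequently $\eta\mapsto F\eta$ is compact $L_1^2\to L^2$. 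For the $\Proj_{\im G}F\xi$-summand, write $\Proj_{\im G}F\xi = G(G^*G)^{-1}\bigl(i\Im\scalar{\cD_A\psi}{\xi_{\text{spinor}}}\bigr)$. Since $L_1^2\hookrightarrow L^q$ is compact for $q<6$, H\"older combined with $\cD_A\psi\in L^2$ makes the multiplication $\xi_{\text{spinor}}\mapsto i\Im\scalar{\cD_A\psi}{\xi_{\text{spinor}}}$ compact from $L_1^2$ into some $L^p$ with $p$ slightly below $3/2$; $L^p$-elliptic regularity for $G^*G$ followed by boundedness of $G:L_2^p\to L_1^p$ and the Sobolev embedding $L_1^p\hookrightarrow L^r$ for some $r>2$ then return the result into $L^2$, preserving compactness.

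\textbf{Main obstacle.} The only technically delicate point is this last compactness argument, which invokes $L^p$-elliptic regularity for $G^*G$ with the low-regularity coefficient $|\psi|^2\in L^3$ in a narrow range of exponents $p$ near $3/2$. A cleaner alternative would bypass the full inversion of $G^*G$ by estimating $\Proj_{\im G}F\xi$ dually via $\Lscalar{\Proj_{\im G}F\xi}{Gh} = \Lscalar{F\xi}{Gh} = \Lscalar{\xi}{FGh}$ for test sections $Gh\in\im(G|_{L_2^2})$ and applying Cauchy--Schwarz, thereby reducing everything to the same multiplication estimates already established for the $F\eta$-part.
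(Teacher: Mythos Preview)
Your approach is essentially the same as the paper's through the symmetry argument, the block reduction, and the $F|_{\im G}$ piece: both compute $F\circ G(h)=(-h\,\cD_A\psi,0)$ and deduce compactness from a compact Sobolev embedding of $L_2^2$ followed by a multiplication estimate against $\cD_A\psi\in L^2$. (The paper routes this through $L_2^2\hookrightarrow L_1^p$ compactly for $3<p<6$ together with the Sobolev multiplication $L_1^p\times L^2\to L^2$, in place of your $L_2^2\hookrightarrow L^\infty$; the difference is cosmetic.)

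The only genuine divergence is in the treatment of $\Proj_{\im G}\circ F|_{\ker G^*}$. The paper avoids inverting $G^*G$ altogether: it observes that for \emph{smooth} $(\gf,a)$ one has $F(\gf,a)\in L_1^2$ (because $F$ is a first-order operator with $L_1^2$ coefficients acting on a smooth section), and that the $L^2$-orthogonal projection onto $\im G$ carries $L_1^2$ into $\im(G|_{L_2^2})\subset L_1^2$ by the Hodge decomposition \eqref{Hodge:dec:G}; compactness then follows from the compact embedding $L_1^2\hookrightarrow L^2$ together with density of $C^\infty$ in $L_1^2$. This completely bypasses the $L^p$-elliptic theory for $G^*G$ with the rough coefficient $|\psi|^2\in L^3$ that you correctly flag as the delicate point of your argument. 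On the other hand, the paper's density step is terse about the implicit uniform bound $\|\Proj_{\im G}F(\gf,a)\|_{L_1^2}\le C\|(\gf,a)\|_{L_1^2}$; your explicit identity $G^*_{(\psi,A)}F_{(\psi,A)}(\gf,a)=i\Im\scalar{\cD_A\psi}{\gf}$---which the paper does not record---makes transparent that the composite is zero-order and is precisely what would justify that bound, as well as what drives your dual alternative.
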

\begin{proof}
It obviously suffices to show that, with respect to the
decomposition \eqref{Hodge:G}, the operator
\[
F-\begin{pmatrix} H &0\\
0&0\end{pmatrix}:L^2_1(M,E)\to L^2(M,E)
\]
is compact and symmetric. The latter property is obviously
fulfilled since both, $F$ and $H$, are symmetric. According to
the definition of $H$, we are now to show that the operators
\begin{equation}\label{F-H:compact:1}
F|_{\im G} :\im (G|_{L_2^2}) \to L^2(M,E)
\end{equation}
and
\begin{equation}\label{F-H:compact:2}
(F- \Proj_{\ker G^*}\circ F )|_{\ker G^*}:\ker (G^*|_{L_1^2}) \to
L^2(M,E)
\end{equation}
are compact operators. Note that $G:L_2^2(M,i\R)\to \im
(G|_{L_2^2})$ is an isomorphism because it is injective and has
closed range. Hence, compactness of \eqref{F-H:compact:1} is
equivalent to compactness of
\[
F\circ G|_{L_2^2}: L_2^2(M,i\R)\to L^2(M,E).
\]
A short computation using part (iii) of Proposition \ref{q:prop}
shows that at the point $(\psi,A)$, for all $f\in L_2^2(M,i\R)$,
\[
F\circ G(f)= \big( - f\cdot \cD_A\psi, 0\big).
\]
If $p>3$, there is a continuous Sobolev multiplication
$L_1^p\times L^2 \to L^2$. On the other hand, $L_2^2$ embeds
compactly in $L_1^p$ for all $2\le p<6$. This shows that
$f\mapsto f\cdot \cD_A\psi$ is a compact operator $L_2^2\to L^2$.
Thus, compactness of \eqref{F-H:compact:1} is proved. Regarding
\eqref{F-H:compact:2}, we claim that for $(\gf,a)\in
C^{\infty}(M,E)$,
\begin{equation}\label{F-H:compact:3}
(F- \Proj_{\ker G^*}\circ F)(\gf,a) =\Proj_{\im G}\circ F(\gf,a)
\in L_1^2(M, E).
\end{equation}
For this note first, that $F(\gf,a)\in L_1^2(M,E)$ since $(\gf,a)$
is smooth and $F$ is a differential operator with $L_1^2$
coefficients. Second, projecting an element of $L_1^2(M,E)$ to
$\im G$ results in an element of $\im (G|_{L_2^2})$ because $G$
is injectively elliptic, cf. Corollary \ref{L^2:compl}. Together,
this implies that \eqref{F-H:compact:3} holds for smooth
$(\gf,a)$.
As $L_1^2$ embeds compactly in $L^2$, this shows that the operator
\[
\Proj_{\im G}\circ F : L_1^2(M,E)\to L^2(M;E),
\]
restricted to the subspace $C^\infty(M,E)$ is compact. Since
$C^{\infty}$ is dense in $L_1^2$, it follows that the operator is
compact on the whole domain. This clearly implies compactness of
\eqref{F-H:compact:2}.
\end{proof}

The above result shows that $\Tilde H_{(\psi,A)}$ is obtained via
a relative compact perturbation of $T_{(\psi,A)}$. Using the
corresponding properties of $T_{(\psi,A)}$ (cf. Proposition
\ref{T:prop}), Theorem \ref{relative:comp} implies the following:

\begin{cor}\label{H:prop}
Suppose $(\psi,A)$ is an irreducible configuration. Then the
operator $\Tilde H_{(\psi,A)}$ defines a closed, self-adjoint
Fredholm operator in $L^2(M,E\oplus i\R)$. It has compact
resolvent and thus discrete spectrum. Moreover, the assignment
\[
(\psi,A)\longmapsto \Tilde H_{(\psi,A)}
\]
is smooth with respect to the operator norm topology on
$\sL_{sa}$.
\end{cor}

\begin{cor}\label{H:SF}
Let $(\psi_t,A_t):[a,b]\to \cC_1^*$, be a $C^1$-path of
irreducible configurations such that $(\psi_a,A_a)$ and
$(\psi_b,A_b)$ are monopoles. Then
\[
\SF(T_{(\psi_a,A_t)})= \SF(\Tilde H_{(\psi_t,A_t)}).
\]
\end{cor}
\begin{proof}
Consider the homotopy
\[
[a,b]\times [0,1]\to \sL(L_1^2,L^2),\quad (t,s)\mapsto  (1-s)\cdot
T_{(\psi_t,A_t)} + s\cdot\Tilde H_{(\psi_t,A_t)},
\]
which---due to our assumption---leaves the endpoints fixed. It
follows as in Corollary \ref{H:prop} that this homotopy takes
values in the space $\sL_{sa}$ of closed, self-adjoint operators
in $L^2$ with domain $L_1^2$. Thus, according to Proposition
\ref{SF}, the paths $T_{(\psi_t,A_t)}$ and $\Tilde
H_{(\psi_t,A_t)}$ have the same spectral flow.\qedhere\\
\end{proof}

\noindent\textbf{Reducible configurations.} Until now we have
restricted our attention to irreducible configurations for the
above geometrical motivation is only meaningful on the manifold
$\cB_1^*$. However, $T_{(\psi,A)}$ is defined independently of
$\psi$ being zero or not. If $A$ is a reducible configuration,
then the explicit formula is
\begin{equation}\label{T:red:explicit}
T_{(0,A)} = \begin{pmatrix} \cD_A &0 &0 \\
                0 &-* d &2d \\
                0 &2d^* &0 \end{pmatrix}.
\end{equation}
Therefore,
\begin{equation}\label{T:red}
\begin{split}
\ker T_{(0,A)} & = \ker \cD_A \oplus \ker(d\oplus d^*) \oplus
\ker d\\
&= \ker \cD_A\oplus H_{dR}^1(M;i\R)\oplus H_{dR}^0(M;i\R).
\end{split}
\end{equation}
\begin{proof}[Proof of \eqref{T:red}]
From \eqref{T:red:explicit} it is clear that
$T_{(0,A)}(\gf,a,f)=0$ if and only if $\cD_A\gf=0$, $2df=* da$,
and $d^* a=0$. According to the Hodge decomposition of
$L^2(M,T^*M)$, we have $\im d\perp \im d^*$. Therefore, since $*
da = -d^*(*a)$, this implies that $2df = * da$ if and only if
both, $df$ and $da$, vanish.
\end{proof}

\begin{remark*}
We shall see in the next chapter that if $A$ is a reducible
monopole, then the summand $H^1_{dR}(M;i\R)$ of $\ker T_{(0,A)}$
represents the ``tangent space" to the reducible part of the
moduli space whereas $\ker \cD_A$ determines whether $[0,A]$ is an
accumulation point for irreducible elements.\\
\end{remark*}

\section{Counting monopoles}\label{count}\index{moduli
space!orientation}

We shall now equip the points of $\cB_1$ with a sign. With respect
to this, the algebraic count of points in the modulo space will be
the number which lies in the center of our interest. Clearly,
this is only meaningful if the number of gauge equivalence
classes of monopoles is finite. Yet, compactness of the moduli
space and the fact that non-degenerate monopoles lie isolated in
the irreducible part of $\cB_1$ suggest that this might indeed be
true---at least modulo some small perturbation of the
Chern-Simons-Dirac functional. How this perturbation has to be
chosen is the topic of Section \ref{inv}.\ref{pert:SW}. For the
time being we shall have to assume that the irreducible part of
the moduli space, $\cM^*(\gs)$, consists only of non-degenerate
monopoles and is finite.

Another expression of the algebraic count of monopoles, which we
shall derive shortly after its initial definition, is reminiscent
of a kind of Euler characteristic associated to $\cB_1^*$. The
subject of Chapter \ref{inv} is, essentially, to prove that this
number is indeed independent of the chosen Riemannian metric (and
the perturbation term), thus yielding a smooth invariant of the
underlying 3-manifold $M$.

After these preliminary remarks let us now equip each
configuration with a sign. Recall from Proposition \ref{T:diff}
that the assignment $(\psi,A)\mapsto T_{(\psi,A)}$ is a smooth
map from $\cC_1$ to $\sL_{sa}$, the space of self-adjoint
operators in $L^2$ with domain $L_1^2$. If $(\psi_t,A_t):[a,b]\to
\cC_1$ is a continuous path of configurations, then the
associated family $\{T_{(\psi_t,A_t)}\}_{t\in[a,b]}$ also depends
continuously on $t$ and we are in the situation of Definition
\ref{OT:def}.

\begin{dfn}\index{>@$\eps(\psi,A)$}\index{moduli space!orientation}
For each configuration $(\psi,A)$ let
\[
\eps(\psi,A):=\eps(T_{(t\psi,A)};\;{\scriptstyle 0\le t\le 1})
\]
be the orientation transport along the family
$\{T_{(t\psi,A)}\}_{t\in[0,1]}$ assigned to the affine path from
$(0,A)$ to $(\psi,A)$.
\end{dfn}

\begin{lemma}\label{eps:prop}
For every configuration $(\psi,A)$ the following holds:
\begin{enumerate}
\item If $A_0$ is an arbitrary connection, then
\begin{equation*}
\eps(\psi,A)= (-1)^{\SF(T_{(0,A_0)+t(\psi,A-A_0})}.
\end{equation*}
\item For every gauge transformation $\gamma\in\cG$,
\[
\eps(\psi,A)=\eps(\gamma\cdot(\psi,A)).
\]
\end{enumerate}
\end{lemma}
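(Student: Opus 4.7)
The overall plan is to reduce both parts to the identification $\eps(T_t)=(-1)^{\SF(T_t)}$ of orientation transport with spectral flow, developed in Appendix~\ref{app:SF&OT}, together with two standard properties of $\SF$: invariance (mod~$2$) under homotopies of paths in $\sL_{sa}$ that fix the endpoints, and additivity under concatenation.

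For~(i), I would concatenate the defining path $t\mapsto T_{(t\psi,A)}$ with the reducible segment $s\mapsto T_{(0,\,A_0+s(A-A_0))}$, traversed first. Because $\cC_1$ is affine and $(\psi,A)\mapsto T_{(\psi,A)}$ is smooth into $\sL_{sa}$ by Proposition~\ref{T:diff}, this concatenation is homotopic rel endpoints to the straight-line family $t\mapsto T_{(0,A_0)+t(\psi,A-A_0)}$, whence
\[
\SF(T_{(0,A_0)+t(\psi,A-A_0)})\;\equiv\;\SF(T_{(0,A_s)})+\SF(T_{(t\psi,A)})\pmod 2.
\]
It thus suffices to show that the middle term is even. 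Writing $A_s:=A_0+s(A-A_0)$, the formula~\eqref{T:red:explicit} furnishes the $s$-independent splitting
\[
T_{(0,A_s)}=\cD_{A_s}\oplus\begin{pmatrix}-*d & 2d\\ 2d^* & 0\end{pmatrix},
\]
so only $\cD_{A_s}$ contributes. But $\cD_{A_s}$ is $\C$-linear on the complex Hilbert space $L^2(M,S)$; every real eigenvalue therefore has even $\R$-multiplicity, and each zero-crossing contributes an even integer to the spectral flow, giving~(i) after applying $\eps=(-1)^{\SF}$ to the defining path.

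For~(ii), the gauge action is linear in the spinor slot, so the defining path for $\eps(\gamma\cdot(\psi,A))$ is simply $t\mapsto\gamma\cdot(t\psi,A)$. A short computation, modelled on the proof of Lemma~\ref{SW:equiv} and using the transformation rule for $\cD_A$ recorded there, the unitarity of multiplication by $\gamma^{-1}$, and the polarized identity $q(\gamma^{-1}\psi,\gamma^{-1}\gf)=q(\psi,\gf)$, shows that the pointwise map $R_\gamma\colon(\gf,a,f)\mapsto(\gamma^{-1}\gf,a,f)$ intertwines the Hessians: $T_{\gamma\cdot(\psi,A)}=R_\gamma\circ T_{(\psi,A)}\circ R_\gamma^{-1}$. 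Since $R_\gamma$ is unitary on $L^2(M,E\oplus i\R)$ and independent of $t$, the defining families for $\eps(\psi,A)$ and $\eps(\gamma\cdot(\psi,A))$ are simultaneously unitarily conjugate; spectra, kernels and orientation transport are preserved under such conjugation, proving~(ii).

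The one genuinely non-formal step is the evenness claim for the reducible segment in~(i): it hinges on the convention that $\SF$ records each real zero-crossing of $\cD_{A_s}$ with its full $\R$-multiplicity, which needs to be cross-checked against the precise definitions in Appendix~\ref{app:SF&OT}. Modulo that check, everything else is either homotopy/additivity bookkeeping for $\SF$ or a direct verification of gauge equivariance of $T$, patterned on Lemma~\ref{SW:equiv}.
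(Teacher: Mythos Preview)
Your proposal is correct and follows essentially the same approach as the paper: for (i), concatenate the defining path with the reducible segment, use homotopy invariance and additivity of $\SF$, and observe that along the reducible segment $T_{(0,A_s)}$ splits via \eqref{T:red:explicit} into a constant block and the $\C$-linear family $\cD_{A_s}$, whose $\R$-spectral flow is even; for (ii), use the $\cG$-equivariance $T_{\gamma\cdot(\psi,A)}=R_\gamma T_{(\psi,A)}R_\gamma^{-1}$ with $R_\gamma$ the $t$-independent unitary $(\gf,a,f)\mapsto(\gamma^{-1}\gf,a,f)$. The only cosmetic difference is that the paper phrases (ii) in terms of a canonical isomorphism of the determinant line bundles rather than unitary conjugation, and your caveat about $\R$-multiplicities is exactly the point the paper invokes when it says ``since we are regarding $L^2(M,S)$ as an $\R$-Hilbert space, the spectral flow of a path of complex linear operators is always congruent $0$ mod $2$.''
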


\begin{proof}\quad
(i) The linear path from $(0,A)$ to $(\psi,A)$ is homotopic to the
the path going from $(0,A)$ to $(0,A_0)$ and then to $(\psi,A)$.
Using the homotopy invariance and the additivity property of the
spectral flow, we deduce that
\begin{equation}\label{eps:prop:1}
\SF(T_{(t\psi,A)})=\SF(T_{(0,A) + t(0,A_0-A)})+
\SF(T_{(0,A_0)+t(\psi_0,A-A_0)}).
\end{equation}
According to \eqref{T:red:explicit}, the first path entering the
right hand side is the direct sum of a path of complex linear
operators and a constant path. Since we are regarding $L^2(M,S)$
as an $\R$-Hilbert space, the spectral flow of a path of complex
linear operators is always congruent 0 mod 2. Moreover, the
spectral flow of a constant path is 0 so that the first summand in
\eqref{eps:prop:1} is 0 mod 2. Now, using Theorem \ref{OT=SF} and
inserting the considerations we just made we find that
\[
\eps(\psi,A)=(-1)^{\SF(T_{(t\psi,A)})} =
(-1)^{\SF(T_{(0,A_0)+t(\psi_0,A-A_0)})}.
\]

(ii) From the equivariance of $\SW$ one readily deduces that $T$
is $\cG_2$-equivariant which means that for all $(\gf,a,f)\in
L_1^2(M,E\oplus i\R)$,
\[
T_{\gamma\cdot(\psi,A)}(\gf,a,f)=\gamma\cdot
T_{(\psi,A)}\big(\gamma^{-1}\cdot(\gf,a,f)\big).
\]
Recall for this from \eqref{gg:on:tangent} that $\cG_2$ acts only
on the spinor part of $E\oplus i\R$. Using this, one
straightforwardly concludes that the determinant line bundles of
$T_{(t\psi,A)}$ and $T_{\gamma\cdot(t\psi,A)}$ are canonically
isomorphic via the isomorphism induced by
\[
\ker T_{(t\psi,A)}\to \ker T_{\gamma\cdot(t\psi,A)},\quad
(\gf,a,f)\mapsto \gamma\cdot(\gf,a,f)
\]
Then it is immediate that the orientation transports along
$T_{(t\psi,A)}$ and $T_{\gamma\cdot(t\psi,A)}$ coincide.
\end{proof}

\begin{dfn}
Assume that $\cM^*(\gs)$ consists only of non-degenerate
monopoles and is finite. Then we let
\begin{equation}\label{sw0}\index{>@$\sw_0(\gs)$}
\sw_0(\gs):=\sum_{[\psi,A]\in\cM^*(\gs)}\eps(\psi,A),
\end{equation}
where $(\psi,A)$ is an arbitrary representative of $[\psi,A]$.
\end{dfn}

Note that as a consequence of Lemma \ref{eps:prop}, part (ii),
the number $\sw_0$ is well-defined. Moreover, Corollary \ref{fin}
immediately implies that $\sw_0(\gs)$ vanishes for all but
finitely many \spinc structures $\gs$.\\

\noindent\textbf{Morse theoretical
interpretation.}\index{Chern-Simons-Dirac functional!signed count
of indices|(} From part (i) of Lemma \ref{eps:prop} we now deduce
a geometrical motivation for the definition of $\sw_0(\gs)$. For
this let us briefly recall the ideas we need from finite
dimensional Morse theory:

If $f:M\to \R$ is a Morse function on a compact manifold, then
the Euler characteristic of $M$ can be expressed as the signed
count of critical points. Here, the sign associated to a critical
point is given by the parity of its Morse index, i.e., the number
of negative eigenvalues of the respective Hessian.

However, this expression of the Euler characteristic does not
necessarily require an a priori knowledge of all Morse indices.
If we fix one critical point $x_0$, then the Morse index of
another critical point $x$ can be computed via the Morse index at
the point $x_0$ and the {\em difference} of the Morse index of
$x$ relative to the index of $x_0$. In other words, what we need
is an understanding of how the number of negative eigenvalues of
the Hessian changes from one critical point to another, i.e., we
have to consider the spectral flow of the Hessian along paths
connecting two critical points. In this way, the problem of having
to define an ``index" for operators with unbounded spectrum in
the infinite dimensional setting at hand can be overcome. These
ideas in mind, we now give an alternative expression of
$\sw_0(\gs)$.

\begin{prop}
Assume that $\cM^*(\gs)$ is finite and consists only of gauge
equivalence classes of non-degenerate monopoles. Then for every
fixed $(\psi_0,A_0)\in \cM^*(\gs)$,
\begin{equation}\label{sw:alt}
\sw_0(\gs) = \eps(\psi_0,A_0)\cdot \sum_{[\psi,A]\in\cM^*(\gs)}
(-1)^{\SF(T_{(\psi_0,A_0) + t (\psi-\psi_0,A-A_0)})}\,.
\end{equation}
\end{prop}

\begin{proof}
The affine path connecting $(0,A_0)$ with $(\psi,A)$ is homotopic
to the concatenation of the affine path from $(0,A_0)$ to
$(\psi_0,A_0)$ and the affine path from $(\psi_0,A_0)$ to
$(\psi,A)$. Hence,
\[
\SF(T_{(0,A_0)+t(\psi,A-A_0)})= \SF(T_{(t\psi_0,A_0)})
+\SF(T_{(\psi_0,A_0) + t (\psi-\psi_0,A-A_0)}).
\]
Inserting this in Lemma \ref{eps:prop}, part (i), we immediately
get the assertion.
\end{proof}

\begin{remark*}\quad
\begin{enumerate}
\item Corollary \ref{H:SF} and Remark \ref{Rem:H:SF} show that the
fact that \eqref{sw:alt} involves the operator $T$ rather than the
Hessian $H$ does not collide with the ideas of the Morse
theoretical motivation.
\item The term
$\eps(\psi_0,A_0)$ occurring in formula \eqref{sw:alt}
corresponds to fixing the parity of one particular Morse index.
Without this term only the absolute value of $\sw_0(\gs)$ could
be expected to be an invariant. The sign convention of Chen
\cite{Che:CI} is a bit more complicated but seems to be the same
we have chosen. The description in terms of $\eps(\psi,A)$
exhibits a natural choice for this convention---depending,
however, on the way of how to define the spectral flow of a path
whose endpoints are not invertible.
\end{enumerate}
\end{remark*}

\index{Chern-Simons-Dirac functional!signed count of indices|)}

\cleardoublepage

\chapter[Seiberg-Witten Invariants]{Seiberg-Witten Invariants
of 3-Manifolds}\label{inv}

Until now there is one major problem in the definition of
$\sw_0(\gs)$: We cannot guarantee that all critical points of the
Chern-Simons-Dirac functional are non-degenerate. As in finite
dimensional Morse theory we cannot expect that the signed count
of critical points describes the Euler characteristic if there
also exist degenerate ones. Therefore, we are lead to study
perturbations of the Chern-Simons-Dirac functional in order to
obtain non-degenerateness. This is explicitly carried out in
Section \ref{pert:SW}. Before, we include a section about the
abstract setting which lies behind these ideas.

In the remaining parts of this chapter we will then analyse the
behaviour of $\sw_0(\gs)$ under deformation of the Riemannian
metric and the perturbation. It turns out that for manifolds with
$b_1>1$ we achieve topological invariance of the signed count of
monopoles in this way. If $b_1\le 1$, a topological invariant can
also be obtained---at least with some extra effort.

\section{Regular values and perturbed level sets}\label{reg:val}
\index{Fredholm maps|(}

Suppose that $\gF:X\to Y$ is a smooth Fredholm map between
paracompact Banach manifolds, i.e., $\gF$ is a smooth map such
that for every $x\in X$, the differential $D_x\gF:T_xX\to
T_{\gF(x)}Y$ is a Fredholm operator. If $X$ is connected---what
we will henceforth assume---then the function $x\mapsto\ind
D_x\gF$ is constant on $X$. We thus can define the {\em index} of
$\gF$ as this common value.

Let us assume for a moment that $y_0\in Y$ is a regular value of
$\gF$, i.e., $D_x \gF$ is surjective for every $x\in
M:=\gF^{-1}(y_0)$. Then $M$ is either empty or a smooth
submanifold of $X$ with $\dim M= \ind \gF$. In the applications
we have in mind, this assumption is usually not fulfilled. The
following considerations show how to achieve regularity by
perturbing $\gF$.

Suppose that $P$ is an affine Banach space, modelled on a
separable Banach space $E$. Let $\gF$ extend to a $C^m$-map $\Hat
\gF : X\times P \to Y$ with $\Hat\gF(\cdot,p_0)=\gF$ for some
$p_0\in P$. We will usually refer to $P$ as the {\em perturbation
space} and call $\Hat \gF$ the {\em perturbation map}. In
addition, we require that $\Hat\gF(\cdot,p):X\to Y$ is Fredholm
for each $p\in P$. As $P$ is connected, the index of
$\Hat\gF(\cdot,p)$ remains to be equal to the index of $\gF$ as
$p$ varies. Moreover, a reasonable perturbation has to be chosen
in such a way that $y_0$ is a regular value of $\Hat \gF$, which
we will assume in the following. Then the level set
\[
\widehat M:=\Hat\gF^{-1}(y_0)
\]
is either empty or a (possibly infinite dimensional)
$C^m$-submanifold of $X\times P$.

\begin{prop}\label{pert:levelset}\index{Fredholm maps!perturbed
level sets} The projection map $\pi:\widehat M\to P$ is a
$C^m$-Fredholm map, with index equal to $\ind\gF$. Thus for each
regular value $p$ of $\pi$, the set $M_p:=\pi^{-1}(p)$ defines an
$\ind \gF$-dimensional $C^m$-submanifold of $X$. Moreover, $p$ is
a regular value of $\pi$ if and only if $y_0$ is a regular value
of the map $\Hat\gF(\cdot,p)$.
\end{prop}
\begin{proof}
The asserted differentiability properties are obvious so that we
have only to compute the index of the differential at a point
$(x,p)\in \widehat M$. Consider the maps $\Hat\gF_p:=\Hat
\gF(\cdot,p): X\to Y$ and $\Hat \gF_x:= \Hat \gF(x,\cdot): P\to Y$
obtained by fixing $p$ and $x$ respectively. Clearly, if we prove
that there exist algebraic isomorphisms
\begin{equation}\label{pert:levelset:1}
\ker D_{(x,p)}\pi \cong \ker D_x(\Hat\gF_p)\quad\text{and}\quad
\coker D_{(x,p)}\pi \cong \coker D_x(\Hat\gF_p)\,,
\end{equation}
the Fredholm property of $\pi$ and the assertion about its index
are immediate. To prove \eqref{pert:levelset:1} note first of all
that the tangent space of $\widehat M$ at $(x,p)$ is given by
\begin{equation}\label{pert:levelset:2}
T_{(x,p)}\widehat M = \ker D_{(x,p)}\Hat \gF = \ker
\big(D_p(\Hat\gF_x) + D_x(\Hat\gF_p)\big).
\end{equation}
From this the first equation of \eqref{pert:levelset:1} follows
because
\[
\begin{split}
\ker D_x(\Hat\gF_p) &\cong \bigsetdef{(v,0)\in T_xX\oplus
T_pP}{D_x(\Hat\gF_p)(v)=0}\\
&= \bigsetdef{(v,w)\in T_{(x,p)}\widehat M}{w=0} =\ker
D_{(x,p)}\pi\,.
\end{split}
\]
Next observe that \eqref{pert:levelset:2} implies
\begin{align*}
\im D_{(x,p)} \pi &= \bigsetdef{w\in T_p P}{\exists_{v\in T_xX}:
(v,w)\in T_{(x,p)}\widehat M} \\
&= \bigsetdef{w\in T_p P}{D_p(\Hat \gF_x) (w)\in \im
D_x(\Hat\gF_p)}\,.
\end{align*}
Hence, there is an algebraic isomorphism
\[
\frac{\im D_{(x,p)}\pi}{\ker D_p(\Hat\gF_x)}\cong \im
D_p(\Hat\gF_x) \cap \im D_x(\Hat\gF_p)\,.
\]
Furthermore, there always exists an abstract isomorphism
\[
\frac{T_pP}{\ker D_p(\Hat\gF_x)}\cong \im D_p(\Hat\gF_x)\,.
\]
Together, these isomorphisms imply that
\[
\frac{T_pP}{\im D_{(x,p)}\pi} \cong \frac{\big(\frac{T_pP}{\ker
D_p(\Hat\gF_x)}\big)}{\big(\frac{\im D_{(x,p)}\pi}{\ker
D_p(\Hat\gF_x)}\big)} \cong \frac{\im D_p(\Hat\gF_x)}{\im
D_p(\Hat\gF_x) \cap \im D_x(\Hat\gF_p)}\,.
\]
Invoking surjectivity of $D_{(x,p)}\Hat\gF$, we can decompose
$T_{y_0} Y$ as
\[
T_{y_0} Y = \im D_p(\Hat\gF_x) + \im D_x(\Hat\gF_p) \cong
\frac{\im D_p(\Hat\gF_x)}{\im D_p(\Hat\gF_x) \cap \im
D_x(\Hat\gF_p)} \oplus \im D_x(\Hat\gF_p)\,.
\]
Finally, we get the following chain of isomorphisms
\[
\coker D_{(x,p)}\pi = \frac{T_pP}{\im D_{(x,p)}\pi} \cong
\frac{\im D_p(\Hat\gF_x)}{\im D_p(\Hat\gF_x) \cap \im
D_x(\Hat\gF_p)} \cong \coker D_x(\Hat\gF_p)\,
\]
This proves the second part of \eqref{pert:levelset:1}. Hence,
$\pi:\widehat M\to P$ is a Fredholm map of index $\ind
D_x(\Hat\gF_p)$ at the point $(x,p)$. By our assumptions this
index equals the index of $\gF$.

Suppose now that $p$ is a regular value of $\pi$. Then according
to the above, $\coker D_x(\Hat \gF_{p_0})=0$ for all $x\in M_p$.
Hence, $D_x(\Hat\gF_p)$ is surjective whenever $x\in M_p$. This
shows that $y_0$ is a regular value of $\Hat\gF_p$.
\end{proof}

The importance of the above proposition becomes obvious when we
combine it with an infinite dimensional version of Sard's Theorem
due to Smale. Recall that a subset of a topological space is
called {\em generic} (or equivalently {\em of second category})
if it is the countable intersection of open and dense subsets.

\begin{theorem}\label{sard-smale}{\rm(cf. \cite{Sma:Sard-Smale},
Thm.~1.3)}.\index{Fredholm maps!Sard-Smale Theorem} Let $\pi:M\to
P$ be a $C^m$-Fredholm map between two paracompact Banach
manifolds, and suppose that $m>\max\{0,\ind \pi\}$. Then the set
of regular values of $\pi$ is a generic subset of $P$. In
particular, for a generic choice of $p\in P$, the level set
$\pi^{-1}(p)$ is either empty or a $C^m$-submanifold of $M$ with
dimension equal to $\ind \pi$.
\end{theorem}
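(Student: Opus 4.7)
The plan is to reduce the Sard--Smale statement to the classical finite--dimensional Morse--Sard theorem by exploiting the Fredholm decomposition locally, and then assemble the result globally using paracompactness. First I would cover $M$ by a countable collection of relatively compact coordinate charts $\{U_i\}$ (possible since paracompact Banach manifolds modelled on separable spaces are Lindel\"of). Since a countable intersection of open dense sets is generic, it suffices to prove that for each $i$ the set $R_i$ of regular values of $\pi|_{U_i}$ contains an open dense subset of $P$, and then intersect.

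Next I would produce a local normal form for $\pi$ near a given $x_0 \in U_i$. Using Fredholmness of $D_{x_0}\pi$, split $T_{x_0}M = \ker(D_{x_0}\pi)\oplus E_1$ and $T_{\pi(x_0)}P = F_1 \oplus F_0$, where $F_1 = \operatorname{im}(D_{x_0}\pi)$ is closed and $F_0 \cong \operatorname{coker}(D_{x_0}\pi)$ is finite dimensional. The restriction $D_{x_0}\pi\colon E_1 \to F_1$ is a topological isomorphism, so the implicit function theorem in Banach spaces yields local $C^m$-coordinates $(y,z)$ (with $z$ varying in the finite dimensional space $\ker(D_{x_0}\pi)$) and a corresponding decomposition of the target, in which $\pi$ takes the form $(y,z) \mapsto (y,\varphi(y,z))$ with $\varphi$ valued in $F_0$.

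In this normal form a point $(y,z)$ is critical for $\pi$ precisely when $\partial_z\varphi(y,z)\colon\ker(D_{x_0}\pi)\to F_0$ fails to be surjective, and a point of $P$ is in the image of the critical set only if it is of the form $(y,\varphi(y,z))$ for such $(y,z)$. Fixing $y$, the remaining task is to estimate the set of critical values of the $C^m$-map $z \mapsto \varphi(y,z)$ between finite dimensional spaces of dimensions $n := \dim\ker(D_{x_0}\pi)$ and $k := \dim\operatorname{coker}(D_{x_0}\pi)$, noting that $n - k = \operatorname{ind}\pi$. The classical Morse--Sard theorem applies because the regularity hypothesis $m > \max\{0,\operatorname{ind}\pi\} = \max\{0,n-k\}$ is exactly the sharp condition needed. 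A Fubini argument in the $y$-direction (the identity component is trivially submersive) then shows that the image of the critical set of $\pi|_{U_i}$ has Lebesgue measure zero in any finite dimensional slice, and therefore has empty interior; combined with the closedness of the image of the critical set of $\pi|_{\overline{U_i}}$ (which uses relative compactness together with the fact that the set of Fredholm operators of fixed index on which the cokernel jumps is closed), one obtains that $R_i$ is open and dense in $P$.

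The main obstacle is twofold. First, one must be careful that the reduction really gives openness after intersecting with a relatively compact set, which requires lower semicontinuity of $\dim\operatorname{coker}$ together with the local normal form; without relative compactness the set of regular values need not be open globally, which is precisely why the conclusion is phrased in terms of generic sets rather than open dense sets. Second, one must verify that the regularity order $m$ is preserved at every step of the reduction (the implicit function theorem yields only $C^m$-coordinates, matching the hypothesis on $\pi$), so that when the finite dimensional Morse--Sard theorem is finally invoked the required differentiability $m > \max\{0,\operatorname{ind}\pi\}$ is precisely the hypothesis of the theorem. The last assertion, that each regular level set $\pi^{-1}(p)$ is either empty or a $C^m$-submanifold of dimension $\operatorname{ind}\pi$, is then an immediate application of the regular value theorem for Fredholm maps, in the same spirit as Proposition \ref{pert:levelset}.
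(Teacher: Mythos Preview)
The paper does not give its own proof of this theorem; it is stated with a citation to Smale's original paper and used as a black box. So there is nothing to compare your argument against in the text itself.

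That said, your outline follows Smale's original strategy (local normal form via the implicit function theorem, reduction to finite-dimensional Sard, global assembly via countability), and the broad architecture is correct. There is, however, one genuine gap worth flagging. You propose to cover $M$ by a countable family of \emph{relatively compact} coordinate charts, and later invoke relative compactness to get that the image of the critical set of $\pi|_{\overline{U_i}}$ is closed. In an infinite-dimensional Banach manifold, nonempty open sets are essentially never relatively compact, so this step as written cannot be carried out. What Smale actually uses in its place is the fact that a $C^1$ Fredholm map is \emph{locally proper}: every point has a neighbourhood $U$ such that $\pi|_{\overline{U}}$ is a proper map. This is a nontrivial lemma (it relies on the finite-dimensionality of the kernel in the local normal form), and it is precisely what yields closedness of the image of the critical set restricted to $\overline{U}$. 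Once you replace ``relatively compact'' by ``neighbourhood on which $\pi$ is proper'', the rest of your argument goes through: for each such $U$ the set of critical values of $\pi|_{\overline U}$ is closed, and the slice-by-slice application of finite-dimensional Sard shows it has empty interior, so its complement is open and dense.

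A minor related point: the phrase ``Fubini argument in the $y$-direction'' is loose, since the $y$-direction is infinite dimensional and carries no Lebesgue measure. The cleaner way to phrase the conclusion is exactly what you say next: a closed subset of $F_1 \times F_0$ whose intersection with every slice $\{y\}\times F_0$ has measure zero can contain no open set, hence has empty interior.
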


The well-known Baire category Theorem states that a generic
subset of a complete metric space is necessarily dense. We can
thus combine Theorem \ref{sard-smale} with Proposition
\ref{pert:levelset} to find arbitrarily small
perturbations such that the level sets are regular.\\

\noindent\textbf{Parametrized level sets.} We now consider a
family of $C^m$-Fredholm maps $\{\gF^g:X\to Y\}$ depending
smoothly on an additional parameter $g\in R$, where $R$ denotes a
connected Banach manifold. In our applications---where for
example, $R$ is a completion of the space of all Riemannian
metrics on a fixed 3-manifold---we want to compare the level sets
$M(g):=(\gF^{g})^{-1}(y_0)$ for different parameters. For this
let $g_1$ and $g_{-1}$ be two distinct elements of $R$ and let
$g=g_t:[-1,1]\to R$ be a smooth path connecting them. Then the
corresponding level sets at the endpoints are related by a
cobordism given by the $y_0$-level set of
\begin{equation}\label{param:levelset:0}
\Psi:X\times [-1,1] \to Y,\quad \Psi(x,t):= \gF^{g_t}(x).
\end{equation}
Since
\begin{equation}\label{param:levelset:1}
D_{(x,t)}\Psi=D_x(\gF^{g_t}) + D_t( \gF^g(x)),
\end{equation}
the map $\Psi$ is a $C^m$-Fredholm map of index $1+\ind \gF$.
This is because the first summand of \eqref{param:levelset:1} is a
Fredholm operator $T_xX\oplus\R\to T_{\Psi(x,t)}Y$ of index
$1+\ind \gF^{g_t}$, whereas the second term is rank 1 so that it
does not affect the Fredholm index.

As in the preceding paragraph, the idea is now that we get
regularity of the $y_0$-level set of $\Psi$ if we consider small
perturbations. For this we consider a smooth family of
perturbation maps
\[
\Hat \gF^g:X\times P\to Y,\quad g\in R,
\]
such that each $\Hat \gF^g$ is a $C^m$-Fredholm map with regular
value $y_0$. Applying the Sard-Smale Theorem to the projections
$(\Hat\gF^{g_i})^{-1}(y_0)\to P$, we find parameters $p_1$ and
$p_{-1}$ in $P$ such that $y_0$ is a regular value of $\Hat
\gF_{p_i}^{g_i}:=\Hat\gF^{g_i}(\cdot,p_i)$ for both
$i\in\{-1,1\}$. To relate the corresponding level sets
\[
M_{p_i}(g_i):=(\Hat\gF_{p_i}^{g_i})^{-1}(y_0),\quad i\in\{-1,1\},
\]
by a regular cobordism we define a new perturbation space by
letting
\[
\widehat P:=\bigsetdef{p:[-1,1]\to P} {p \text{ is } C^m \text{
and } p(i)=p_i \text{ for } i\in\{-1,1\}}.
\]
Since $P$ is an affine space, modelled on some separable Banach
space $E$ the set $\widehat P$ is also an affine space, modelled
on the Banach space
\[
\bigsetdef{w:[-1,1]\to E}{w \text{ is } C^m \text{ and } w(i)=0
\text{ for } i\in\{-1,1\}}.
\]
Note that this is indeed a Banach space with respect to uniform
convergence of all derivatives of $w$ up to order $m$. In the
spirit of the last paragraph the map $\Psi$ given in
\eqref{param:levelset:0} is now perturbed by
\begin{equation*}
\Hat \Psi:X\times [-1,1]\times \widehat P \to Y,\quad
\Hat\Psi(x,t,p):= \Hat \gF^{g_t}(x,p_t).
\end{equation*}
To apply Proposition \ref{pert:levelset} we have to ascertain the
following:
\begin{lemma}
Let $\Hat \Psi$ be defined as above. Then the point $y_0$ is a
regular value of $\Hat \Psi$.
\end{lemma}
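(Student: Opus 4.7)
The plan is to check surjectivity of the differential of $\widehat{\Psi}$ at every point of $\widehat{\Psi}^{-1}(y_0)$ by splitting into two cases according to whether the $t$-coordinate lies in the open interval $(-1,1)$ or at one of the endpoints $\pm 1$. The point is that $\widehat{P}$ restricts our freedom only at the endpoints, where the path must pass through the prescribed $p_{\pm 1}$, but there the hypothesis on $p_{\pm 1}$ already provides surjectivity without any variation of $p$.

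First I would fix a point $(x_0,t_0,p_0)\in \widehat{\Psi}^{-1}(y_0)$ and compute the differential as
\[
D_{(x_0,t_0,p_0)}\widehat{\Psi}(v,s,w) \;=\; D_{x_0}\bigl(\widehat{\Phi}^{g_{t_0}}_{p_0(t_0)}\bigr)(v) \;+\; s\cdot\partial_t\bigr|_{t=t_0}\widehat{\Phi}^{g_t}(x_0,p_0(t)) \;+\; D_{p_0(t_0)}\bigl(\widehat{\Phi}^{g_{t_0}}_{x_0}\bigr)(w(t_0)),
\]
where $w\in T_{p_0}\widehat{P}$ is a $C^m$ curve $[-1,1]\to E$ vanishing at $\pm 1$. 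Only the first and third summands will be exploited; the middle term is not needed for surjectivity.

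For the case $t_0\in (-1,1)$, I would use a bump function $\chi\in C^m([-1,1];\R)$ with $\chi(t_0)=1$ and $\chi(\pm 1)=0$. For any $e\in E$ the curve $w:=\chi\cdot e$ lies in the model space of $\widehat{P}$ and satisfies $w(t_0)=e$; therefore the range of $w\mapsto D_{p_0(t_0)}(\widehat{\Phi}^{g_{t_0}}_{x_0})(w(t_0))$ is exactly $\operatorname{im} D_{p_0(t_0)}(\widehat{\Phi}^{g_{t_0}}_{x_0})$. Taking also all variations in $x$ with $w\equiv 0$ and $s=0$, the total image contains $\operatorname{im} D_{(x_0,p_0(t_0))}\widehat{\Phi}^{g_{t_0}}$, which by the standing assumption that $y_0$ is a regular value of $\widehat{\Phi}^{g_{t_0}}$ fills all of $T_{y_0}Y$. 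For the case $t_0\in\{-1,1\}$ we no longer have freedom in the third summand because $w(\pm 1)=0$, but now $p_0(t_0)=p_{t_0}$ is precisely the perturbation parameter for which $y_0$ was chosen to be a regular value of $\widehat{\Phi}^{g_{t_0}}_{p_{t_0}}$. Hence the first summand $D_{x_0}(\widehat{\Phi}^{g_{t_0}}_{p_{t_0}})$ is already surjective on its own.

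Combining the two cases yields surjectivity of $D_{(x_0,t_0,p_0)}\widehat{\Psi}$ at every preimage point of $y_0$, which is exactly the claim. I expect no real obstacle; the only subtle point is that the model space of $\widehat{P}$ must contain enough curves $w$ with $w(t_0)$ prescribed (for $t_0$ interior), which is handled by the bump function construction and uses that $E$ is a Banach space and $\widehat{P}$ is modelled on $C^m$-maps vanishing at $\pm 1$.
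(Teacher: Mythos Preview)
Your proposal is correct and follows essentially the same approach as the paper: split into the cases $t_0\in(-1,1)$ and $t_0\in\{-1,1\}$, using surjectivity of $D_{(x_0,p_0(t_0))}\widehat{\Phi}^{g_{t_0}}$ in the interior and surjectivity of $D_{x_0}\widehat{\Phi}^{g_{t_0}}_{p_{t_0}}$ at the endpoints. The only difference is that you make explicit the bump-function construction showing $w(t_0)$ can be prescribed freely when $t_0$ is interior, whereas the paper simply asserts that $(v,w_{t_0})$ ``can be chosen arbitrarily'' in that case.
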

\begin{proof}
Let $(x,t,p)\in \Hat\Psi^{-1}(y_0)$ and consider
\[
D_{(x,t,p)}\Hat\Psi (v,s,w) = D_{(x,p_t)}(\Hat\gF^{g_t})(v,w_t) +
s\cdot \lfrac{d}{dt}\big|_t\Hat \gF^{g_t}\big(x,p_t\big).
\]
Since $y_0$ is a regular value of $\Hat\gF^{g_t}$ for any $t$,
the first operator on the right hand side is surjective.
Therefore, $(x,p,t)$ is a regular point of $\Hat \Psi$ provided
that $(v,w_t)$ can be chosen arbitrarily. This is, however, only
true if $t\notin\{-1,1\}$ since $w$ is subject to the condition
$w_i=0$ for $i\in\{-1,1\}$. But in this case,
\[
D_{(x,p_i)}(\Hat\gF^{g_i})(v,0)= D_x (\gF^{g_i}_{p_i})(v)
\]
which is already surjective due to the choice of $p_{-1}$ and
$p_1$.
\end{proof}

As a consequence of Proposition \ref{pert:levelset}, the
projection map
\[
\Pi:\Hat\Psi^{-1}(y_0)\longrightarrow \widehat P
\]
is a $C^m$-Fredholm map with index equal to $1+\ind\gF$. We thus
can apply Sard-Smale again, deducing that for a generic path
$p=p_t\in \widehat P$, the parametrized level set
\[
\widehat M_p(g):=\Pi^{-1}(p)\cong\bigcup_{t\in
[-1,1]}\big(M_{p_t}(g_t)\times \{t\}\big) \subset X\times [-1,1]
\]
is either empty or an $(1+\ind \gF)$-dimensional
$C^m$-submanifold of $X\times [-1,1]$. Summarizing the above
considerations, we have proved:

\begin{prop}\label{param:levelset}
Let $g=g_t:[-1,1]\to R$ be a smooth path. Suppose that $p_{-1}$
and $p_1$ are chosen in such a way that $y_0$ is a regular value
of $\Hat\gF_{p_i}^{g_i}$ for $i\in\{-1,1\}$. Then for a generic
path $p=p_t\in \widehat P$, the parametrized level set
\[
\widehat M_p(g):=\bigcup_{t\in [-1,1]}\big(M_{p_t}(g_t)\times
\{t\}\big) \subset X\times [-1,1]
\]
is either empty or a $C^m$-submanifold of dimension $(1+\ind
\gF)$.
\end{prop}

\begin{remark}\label{transv:section}\index{Fredholm maps!Fredholm
section} As was pointed out before, we apply the preceding
results to the $L^2$-gradient vector field of the
Chern-Simons-Dirac functional. Therefore, we have to generalize
the above considerations slightly as the gradient vector field is
not a Fredholm map between Banach manifolds but a {\em Fredholm
section} of a bundle of Banach spaces, i.e., a section which in
local trivializations can be represented by Fredholm maps. We
will now describe the changes to be made:

Given a smooth Fredholm section $\gF:X\to V$ of a bundle of
Banach spaces $V\to X$, where $X$ denotes a connected,
paracompact Banach manifold, consider the intrinsic differential
$D_x\gF:T_xX \to V_x$. If this is surjective at any zero $x$ of
$\gF$, then the section $\gF$ is called {\em transversal to the
zero section}. In this case, the zero set $\gF^{-1}(0)$ is a
smooth submanifold of $X$ of dimension equal to $\ind \gF$. As
before, we might have to perturb $\gF$ to obtain transversality.
Similarly, this is done by studying an extension map $\Hat \gF$,
which is in this case a section of the pullback bundle
$\pr_1^*V\to X\times P$, where $P$ is some perturbation space. If
we can achieve that $\Hat \gF$ is transversal to the zero section
of $\pr_1^*V\to X\times P$, then the union of all perturbed zero
sets $\widehat M:=\Hat\gF^{-1}(0)$ is a smooth submanifold of
$X\times P$, and Proposition \ref{pert:levelset} and
Prop.\ref{param:levelset} continue to hold also in this setting.
\end{remark}\index{Fredholm maps|)}

\section{The perturbed Seiberg-Witten equations}\label{pert:SW}

Using the preceding section as a guideline, we now consider
perturbations of the Chern-Simons-Dirac functional to obtain
transversality of the Seiberg-Witten map. To put it another way,
we want to produce a moduli space consisting solely of
non-degenerate critical points. Moreover, the perturbed moduli
space should preferably contain only irreducible points. As we
shall see, there are topological obstructions to the latter which
lie encoded in the first Betti number.

To begin with, we have to specify an appropriate set of
perturbations. Let $Z^2_k(M;i\R)$ denote the space of pure
imaginary valued, closed\footnote{Many authors choose co-closed
1-forms of an appropriate Sobolev class as perturbations which is
in more agreement with 4-dimensional Seiberg-Witten theory. We
follow Lim's approach in \cite{Lim:SW} since the space of closed
2-forms does not depend on the metric which is of some
convenience later.} 2-forms of some fixed Sobolev class $k\ge 2$.

\begin{dfn}\index{Chern-Simons-Dirac functional!perturbed}
Let $(M,g)$ be a closed, oriented Riemannian 3-manifold equipped
with a \spinc structure $\gs$. For $\eta\in Z^2_k(M;i\R)$ we
define the {\em $\eta$-perturbed} Chern-Simons-Dirac functional by
\begin{equation*}
\begin{split}
\csd_\eta&(\psi,A):=\csd(\psi,A)+\int_M (A-A_0)\wedge \eta \\
&=\frac{1}{2}\int_M\Big(\scalar{\psi}{\cD_A\psi}dv_g +
(A-A_0)\wedge (F_A+F_{A_0}+2\eta)\Big),
\end{split}
\end{equation*}
where $A_0$ is a fixed element of $\cA(\gs)$.
\end{dfn}
Using the computations at the end of Chapter \ref{SW:mon} as a
pattern, one finds that the $L^2$-gradient of $\csd_\eta$ is
given by\footnote{\label{convention}In this chapter we are going
to drop the reference to the Sobolev class of configurations and
gauge transformations, i.e., we write $\cC$ instead of $\cC$ and
so on.}
\begin{equation}\label{pert:SW:map}
\begin{split}
\SW_\eta:\cC(\gs)&\longrightarrow L^2(M,E) \\
(\psi,A)&\longmapsto \SW(\psi,A) - (0,*\eta).
\end{split}\end{equation}

\begin{dfn}
Let $(M,g)$ be a closed, oriented Riemannian 3-manifold equipped
with a \spinc structure $\gs$. For $\eta\in Z^2_k(M;i\R)$ we call
$(\psi,A)\in \cC(\gs)$ an {\em $\eta$-monopole} if it is a
critical point of $\csd_\eta$, i.e., if it solves the {\em
$\eta$-perturbed Seiberg-Witten equations}:
\begin{equation}\label{pert:SW:eqn}\index{Seiberg-Witten
equations!perturbed}
\fbox{$\begin{array}{rcl}
\cD_A\psi&=&0 \\
* (F_A+\eta) &= &\lfrac{1}{2}q(\psi)
\end{array}$}\;.\end{equation}
The moduli space of $\eta$-monopoles modulo gauge equivalence is
denoted by
$\cM_\eta(\gs)\subset\cB(\gs)$.\index{>@$\cM_\eta(\gs)$,
perturbed moduli space}\index{moduli space!perturbed}
\end{dfn}

Note that $\cM_\eta(\gs)$ is well-defined since $\SW_\eta$ is
$\cG$-equivariant. This is because the group of gauge
transformations acts only on the spinor part of $L^2(M,E)$ so
that equivariance of $\SW_\eta$ follows from equivariance of
$\SW$. Moreover, the section $\SW_\eta$ restricts to a section of
the bundle $\ker G^* \to \cC^*$ because $\eta$ is closed and
therefore,
\[
G_{(\psi,A)}^*\circ \SW_\eta(\psi,A)= G_{(\psi,A)}^*\circ
\SW(\psi,A) - 2d^*(*\eta)=0.
\]
As in Section \ref{moduli}.\ref{loc:struc}, we shall usually work
equivariantly on the bundle $\ker G^*\to \cC^*$ instead of
regarding $\SW_\eta$ as an $L^2$-gradient vector field on
$\cB^*$. In this context, the covariant derivative of $\SW_\eta:
\cC\to \ker G^*$ is again the index 0 Fredholm operator
\[
H_{(\psi,A)}:\ker (G_{(\psi,A)}^*|_{L_1^2})\to\ker G_{(\psi,A)}^*
\]
as defined in \eqref{H}. For this note that the dependence on
$\eta$ vanishes when we differentiate $\SW_\eta$.\\

\noindent\textbf{Regularity of the perturbed moduli space.}
Following the guideline of Section \ref{reg:val}, our next task
is to establish that the perturbation map\index{>@$\widehat{\SW}$,
perturbation map}
\[
\widehat{\SW}:\cC^*\times Z^2_k(M;i\R)\to \ker G^*
,\quad\widehat{\SW}\big((\psi,A),\eta\big):=\SW_\eta(\psi,A)
\]
is transversal to the zero section. It is straightforward to see
that at a point $\big((\psi,A),\eta\big)$, its derivative is
given by the index 1 Fredholm operator
\begin{equation}\label{pert:map:der}\begin{split}
\ker (G_{(\psi,A)}^*|_{L_1^2})\oplus Z^2_k(M;i\R)&\to\ker
G_{(\psi,A)}^* \\
\big((\gf,a),\nu\big)&\mapsto H_{(\psi,A)}(\gf,a) + (0,-*\nu).
\end{split}\end{equation}

\begin{prop}\label{pert:map:trans}
The perturbation map $\widehat\SW$ is transversal to the zero
section.
\end{prop}
\begin{proof}
Suppose that $\big((\psi,A),\eta\big)$ is a zero of
$\widehat{\SW}$. Since we are interested only in gauge
equivalence classes of $\eta$-monopoles, we may assume that
modulo a gauge transformation, the $\eta$-monopole $(\psi,A)$ is
at least of Sobolev class $k$ (see Remark \ref{moduli:eta:ord}
below). Under this assumption we now have to show that the
derivative of $\widehat\SW$ at $\big((\psi,A),\eta\big)$ is
surjective. Note that the derivative has closed image as it is
Fredholm. Thus we consider $(\gf,a)\in \ker G^*$, orthogonal to
the image, wanting to show that this implies $(\gf,a)=0$.

For each $(\gf,a)$ which is orthogonal to the image of
\eqref{pert:map:der}, we have
\begin{equation}\label{pert:map:trans:1}
(\gf,a)\in (\im H)^\perp=\ker H \quad\text{ and }\quad a \perp
* Z_k^2(M;i\R),
\end{equation}
where we are using that $H$ is self-adjoint and are dropping the
reference to $(\psi,A)$. From the definition of $H$ it is
immediate that
\[
\ker H= \ker F\cap \ker G^* = \ker T\cap \ker G^*,
\]
where $F$ is the differential of $\SW$ and $T$ is defined in
\eqref{T}. This shows that
\begin{equation}\label{pert:map:trans:2}
(\gf,a)\in \ker F\quad\text{ and }\quad (\gf,a)\in L_k^2(M,E)
\end{equation}
because $(\psi,A)$ is of Sobolev class $k$ so that the operator
$T$ is elliptic with $L^2_k$-coefficients and we may thus apply
elliptic regularity. Therefore, $\gf$ and $a$ are at least
continuous as $k\ge 2$.

From \eqref{pert:map:trans:1} we deduce that $a$ must be closed.
Together with the fact that $(\gf,a)\in\ker F$, this yields
\begin{equation}\label{pert:map:trans:3}
0=q(\psi,\gf)-* da = q(\psi,\gf).
\end{equation}
Let $U$ be the open subset of $M$ on which $\psi$ is nowhere
vanishing. Invoking Proposition \ref{q:prop}, we infer from
\eqref{pert:map:trans:3} that there exists $f\in L^2_k(U,i\R)$
such that
\[
\gf|_U = f\psi|_U.
\]
Note that $L^2_k$-regularity of $f$ follows form the explicit
formula
\[
f=i|\psi|^{-2}\scalar{\gf}{i\psi}.
\]
To proceed, we require more information about the structure of
$U$. This is provided by the so-called \textit{unique
continuation principle} which we state without proof.
\begin{theorem*}{\rm(cf. \cite{BW}, Thm.~8.2)}.\index{Dirac
operator!unique continuation principle} Let $M$ be a compact
Riemannian manifold and $S$ a $\cl(M)$-module with
$\cl(M)$-compatible connection. Then the unique continuation
principle is valid for the corresponding Dirac operator $\cD$.
That is, any solution $\gf$ of $\cD\gf=0$ which vanishes on an
open subset on $M$ also vanishes on the whole connected component
of $M$.
\end{theorem*}
As we have chosen $(\psi,A)$ irreducible, the spinor $\psi$ is
nonzero so that $U$ is nonempty. Furthermore, $\psi$ is harmonic
with respect to $\cD_A$. Hence, the unique continuation principle
ensures that $U$ is dense since the complement of $U$ cannot
contain any open subset of $M$.

Using the first part of the equation $F(\gf,a)=0$, we find that
\[
0=\cD_A(f\psi|_U) + \lfrac{1}{2}c(a)\psi|_U= f\cD_A\psi|_U +
c\big(df+\lfrac{1}{2}a\big)\psi|_U.
\]
Therefore, since $\cD_A\psi=0$ and as $\psi|_U$ is nowhere
vanishing,
\begin{equation}\label{pert:map:trans:4}
df+\lfrac{1}{2}a|_U=0\,.
\end{equation}
Now $(\gf,a)\in\ker G^*$ ensures that
\[
0=G^*(\gf|_U,a|_U)=G^*(f\psi|_U,a|_U)=
2d^*a|_U-i\Im\scalar{f\psi|_U}{\psi|_U}.
\]
Applying $d^*$ to \eqref{pert:map:trans:4} then yields
\[
0=4 d^*d f + 2d^*a|_U = 4d^*d f + i\Im \Scalar{f\psi|_U}{\psi|_U}
= 4d^*d f + \big|(\psi|_U)\big|^2f.
\]
Multiplying this equation with $f$ and integrating over $U$
results in
\[
0=\int_U\Big(4\scalar{df}{df}+|\psi|^2f^2\Big)dv_g.
\]
For $\psi|_U$ is nowhere vanishing, we deduce that $f=0$ and thus
also $\gf|_U=0$. In view of \eqref{pert:map:trans:4}, we hence
obtain that $\gf$ and $a$ vanish on an open and dense subset of
$M$. Because of continuity, we can finally draw the conclusion
that $(\gf,a)=0$.
\end{proof}

As a corollary to Proposition \ref{pert:levelset} and to Theorem
\ref{sard-smale} we now obtain from the above result:

\begin{theorem}\label{generic:eta}\index{moduli space!irreducible
part} Let $(M,g)$ be a closed, oriented Riemannian 3-manifold
with \spinc structure $\gs$. Then, for a generic choice of
$\eta\in Z_k^2(M,i\R)$, the irreducible part of the
$\eta$-perturbed moduli space consists of non-degenerate points.
\end{theorem}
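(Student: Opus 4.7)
The plan is to deduce the theorem as a direct application of the abstract machinery developed in Section \ref{reg:val}, with Proposition \ref{pert:map:trans} as the crucial analytical input. We work equivariantly on the bundle $\ker G^* \to \cC^*$ (on which the moduli space is cut out as a zero set), and descend at the end to $\cM^*_\eta(\gs) \subset \cB^*$.

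First, I would note that the perturbation map $\widehat{\SW}:\cC^*\times Z^2_k(M;i\R)\to \ker G^*$ is a smooth Fredholm section in the sense of Remark \ref{transv:section}: for fixed $\eta$, its intrinsic differential at a zero $(\psi,A)$ is the Hessian $H_{(\psi,A)}$, which by Corollary \ref{H:prop} is a Fredholm operator of index $0$, and the $\eta$-dependence contributes only the finite-rank piece $\nu \mapsto (0,-*\nu)$ described in \eqref{pert:map:der}. Proposition \ref{pert:map:trans} asserts that $\widehat{\SW}$ is transversal to the zero section, so by (the Fredholm-section version of) Proposition \ref{pert:levelset}, the parametrized zero set
\[
\widehat M := \widehat{\SW}^{-1}(0) \subset \cC^*\times Z^2_k(M;i\R)
\]
is a smooth Banach submanifold, and the projection $\pi:\widehat M \to Z^2_k(M;i\R)$ is a Fredholm map whose index equals that of the unperturbed vertical differential, namely $\ind H_{(\psi,A)} = 0$.

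Next I would invoke the Sard--Smale theorem (Theorem \ref{sard-smale}), applicable since $\pi$ is smooth and has nonnegative index $0$: the set of regular values of $\pi$ is generic in $Z^2_k(M;i\R)$. For any such regular value $\eta$, Proposition \ref{pert:levelset} guarantees that $\eta$ is in fact a regular value of $\widehat{\SW}(\cdot,\eta) = \SW_\eta$, i.e.\ the differential $D_{(\psi,A)}\SW_\eta = H_{(\psi,A)}$ is surjective at every zero $(\psi,A) \in \cC^*$ of $\SW_\eta$. Since $H_{(\psi,A)}$ has index $0$, surjectivity forces invertibility, and this is exactly the definition of $(\psi,A)$ being a non-degenerate $\eta$-monopole.

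Finally, I would descend to the moduli space: both $\SW_\eta$ and the perturbation are $\cG$-equivariant (with $\cG$ acting trivially on the perturbation space, since $\eta$ is closed), and non-degeneracy depends only on the gauge equivalence class. Hence for a generic $\eta$, every point of $\cM^*_\eta(\gs)$ is non-degenerate. The main subtlety to watch is not the genericity step itself---that is purely formal once transversality is in hand---but the identification of the vertical differential of $\widehat{\SW}$ with the self-adjoint Fredholm operator $H_{(\psi,A)}$ (rather than with the non-self-adjoint $F_{(\psi,A)}$); this is precisely why we have worked with sections of $\ker G^* \to \cC^*$ throughout, using Lemma \ref{grad:lem} to guarantee that $\SW_\eta$ really takes values in this subbundle.
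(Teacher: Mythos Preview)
Your proof is correct and follows exactly the same approach as the paper: the theorem is stated there as an immediate corollary of Proposition \ref{pert:map:trans} together with Proposition \ref{pert:levelset} and the Sard--Smale Theorem \ref{sard-smale}. Your write-up is in fact more detailed than the paper's one-line deduction; the only minor quibble is that the Fredholm property and index-$0$ statement for $H_{(\psi,A)}$ are recorded in the text just before \eqref{pert:map:der} rather than in Corollary \ref{H:prop} (which concerns $\Tilde H$), but the substance is identical.
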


Introducing a perturbation does not change the basic topological
features of the moduli space. As it is a topological subspace of
$\cB(\gs)$, the set $\cM_\eta(\gs)$ is Hausdorff. Modifying the
proof of the key estimate \eqref{key:estimate} slightly, one
readily gets the following:
\begin{prop}
Let $\eta\in Z^2_k(M;i\R)$, where $k\ge 4$. Suppose $(\psi,A)$ is
an $\eta$-monopole of some Sobolev class $\ge 4$. Then
\begin{equation}\label{pert:key:estimate}
\|\psi\|_\infty^2\le \max\big\{0,-2\min_{x\in
M}s_g(x)+2\|\eta\|_\infty\big\}.
\end{equation}
\end{prop}
This proposition implies compactness of $\cM_\eta(\gs)$ in the
same way as in Section \ref{moduli}.\ref{compact}.

\begin{remark}\label{moduli:eta:ord}
It should be pointed out that in contrast to Chapter \ref{moduli}
the $\eta$-perturbed moduli spaces need not consist of gauge
equivalence classes of smooth configurations. Reviewing the
corresponding proofs shows that we can only guarantee regularity
up to the Sobolev order of $\eta$.\\
\end{remark}

\noindent\textbf{Reducible $\boldsymbol{\eta}$-monopoles.}
\index{moduli space!reducible part} Compactness of
$\cM_\eta(\gs)$ in combination with Theorem \ref{generic:eta} does
not necessarily imply that the irreducible part of $\cM_\eta(\gs)$
is a finite set. This is because some {\em reducible}
$\eta$-monopole might be an accumulation point. We thus have to
understand the structure of the reducible locus.

\begin{prop}\label{red:exist}
Let $M$ be a closed, oriented 3-manifold with \spinc structure
$\gs$. Then, for every perturbation $\eta\in Z^2_k(M;i\R)$, there
exist reducible $\eta$-monopoles if and only if the cohomology
class of $\eta$ satisfies
\begin{equation}\label{red:exist:1}
2\pi ic(\gs)=[\eta],
\end{equation}
where $c(\gs)\in H^1_{dR}(M;\R)$ denotes the canonical class of
$\gs$. If nonempty, the set of gauge equivalence classes of
reducible $\eta$-monopoles is homeomorphic to the
$b_1$-dimensional torus
\[
H^1_{dR}(M;i\R)/H^1_{dR}(M;4\pi i\Z),
\]
where $b_1$ denotes the first Betti number of $M$.
\end{prop}

\begin{proof}
Suppose that there exists a reducible $\eta$-monopole $A$. Then
$*(F_A +\eta)$ is zero, and this implies that
\[
2\pi i c(\gs)=2\pi i [\lfrac{i}{2\pi}F_A]=2\pi i
[\lfrac{1}{2\pi i}\eta]=[\eta].
\]
On the other hand, $2\pi i c(\gs)= [\eta]$ can only hold if
$[F_{A_0}]=-[\eta]$ for each gauge field $A_0$. This means that
there exists an imaginary valued 1-form $a$ such that $-\eta =
F_{A_0} + da$. We conclude that $A_0+a$ is a reducible
$\eta$-monopole.

Let $Z^1_1(M;i\R)$ be the space of closed, imaginary valued
1-forms of Sobolev class 1. Supposing for the rest of the proof
that the set of such elements is nonempty, we fix a reducible
$\eta$-monopole $A_0$. Then the map
\[
\gF: Z^1_1(M;i\R)\to \cC,\quad \gF(a):=A_0+a,
\]
parametrizes the whole set of reducible $\eta$-monopoles. As we
have seen before, two elements of the form $A_0+a$ and $A_0+a+df$
lie in the same gauge orbit which shows that the map $\gF$
descends to a continuous map
\[
\Bar \gF: H^1_{dR}(M;i\R)\to \cM_\eta(\gs).
\]
The image of $\Bar\gF$ consists of the reducible part of
$\cM_\eta(\gs)$. It remains to insure that
\[
\Bar\gF([a])=\Hat\gF([a']) \Longleftrightarrow [a'-a]\in
\im\big(H^1_{dR}(M;4\pi i \Z)\to H^1_{dR}(M;\R)\big).
\]
This is, however, only a restatement of the considerations in
\eqref{derham}. Hence, $\Bar \gF$ factors to a bijective and
continuous map between the $b_1$-dimensional torus
$H^1_{dR}(M;i\R)/H^1_{dR}(M;4\pi i\Z)$ and the reducible part of
the moduli space. Since the torus is compact, this map is
necessarily a homeomorphism.
\end{proof}
An immediate conclusion of this result shall play a major role
later: If $b_1=0$, condition \eqref{red:exist:1} is always
fulfilled. Since a 0-dimensional torus is simply a point, we get:

\begin{cor}\label{red:exist:cor}
Let $M$ be a closed, oriented 3-manifold with vanishing first
Betti number, and let $\gs$ be a \spinc structure on $M$. Then,
for every perturbation $\eta\in Z^2_k(M;i\R)$, there exists
exactly one reducible point in
$\cM_\eta(\gs)$.\\
\end{cor}

\noindent\textbf{Suitable perturbations on manifolds with
$\boldsymbol{b_1>0}$.} Since we want to achieve that the perturbed
moduli space consists only of irreducible points, Proposition
\ref{red:exist} naturally leads to considering the restricted
perturbation space\index{>@$\cP_k(\gs)$, perturbation space}
\begin{equation}\label{rest:pert}
\cP_k(\gs):=\bigsetdef{\eta\in Z^2_k(M;i\R)}{2\pi
ic(\gs)\neq[\eta]}.
\end{equation}
As we have seen above the restricted perturbation space is empty
if $b_1=0$. Yet, if $b_1>0$, then $\cP_k(\gs)$ is an open, dense
subset of $Z^2_k(M;i\R)$.
\begin{dfn}\label{suit:pert:b>0}
Let $(M,g)$ be a closed and oriented Riemannian 3-manifold with
\spinc structure $\gs$ and first Betti number $b_1>0$.
\begin{enumerate}
\item An element $\eta\in \cP_k(\gs)$ is called a {\em suitable
perturbation} with respect to $g$ if the $\eta$-perturbed moduli
space $\cM_\eta(\gs;g)$ consists only of finitely many
non-degenerate, irreducible points.
\item If $\eta\in\cP_k(\gs)$ is a suitable perturbation with
respect to $g$, we define
\[
\sw_\eta(\gs;g):=\sum_{[\psi,A]\in\cM_\eta(\gs;g)} \eps(\psi,A).
\]
\end{enumerate}
\end{dfn}
Recall that $\eps(\psi,A)$ is defined as the orientation transport
along the family $T_{(t\psi,A)}$ associated to the linear path
connecting $(0,A)$ to $(\psi,A)$. Moreover, as an immediate
consequence of Theorem \ref{generic:eta} we obtain that the set
of suitable perturbations is a generic subset of $Z^2_k(M;i\R)$.
In particular, there exist suitable perturbation with respect to
arbitrary metrics on $M$.

\begin{remark*}
In Section \ref{=0} we will return to the question of how to
choose perturbations appropriately in the remaining case $b_1=0$.
\end{remark*}

\section[Invariance for manifolds with $b_1>1$]{Invariance
for manifolds with $\boldsymbol{b}_1\mathbf{>1}$}\label{>1}

We now want to prove that on 3-manifolds with $b_1>1$, the number
$\sw_\eta(\gs;g)$ is independent of the metric $g$ and the
perturbation $\eta$. \\

\noindent\textbf{The parametrized moduli space.} Before we take
up the proof of the theorem we have in mind, we make some general
observations concerning the perturbed moduli spaces associated to
two different metrics $g_{-1}$ and $g_1$.

Consider a Sobolev completion of the space of Riemannian metrics
and let $\{g_t\}_{t\in[-1,1]}$ be a continuous family of metrics.
For every continuous path of perturbations, $\eta_t:[-1,1]\to
Z_k^2(M;i\R)$, we define the {\em parametrized moduli space}
by\index{moduli space!parametrized}
\begin{equation}\label{param:moduli}
\index{>@$\widehat{\cM}_\eta(\gs;g)$, parametrized moduli space}
\widehat{\cM}_\eta(\gs;g) :=
\bigcup_{t\in[-1,1]}\cM_{\eta_t}(\gs;g_t)\times \{t\} \subset
\cB\times [-1,1]\,.
\end{equation}
\begin{remark*}
Note that this definition necessitates a procedure to identify
the spaces $\cB(\gs;g_t)$ for different parameters $t$. The
material we need for this is summarized in Section
\ref{app:spinc}.\ref{met:dep}. We choose, say, $g_0$ as a fixed
reference metric and redefine $\cM_{\eta_t}(\gs;g_t)$ as the zero
set (modulo gauge equivalence) of the map
\[
(\psi,A)\mapsto
\SW_{\eta_t}^{g_t}(\psi,A):=\big(\cD_A^t\psi,\lfrac{1}{2}q_t(\psi)
-*_t(F_A+\eta_t)\big),
\]
where $*_t$ denotes the Hodge-star-operator on $T^*M$ induced by
the metric $g_t$. Moreover, we employ the notation of Section
\ref{app:spinc}.\ref{met:dep} and write
\[
\cD_A^t\psi:=\Hat\gk^{-1}_t\cD_A^{g_t}\Hat\gk_t\psi\qquad
\text{ and }\qquad q_t(\psi):=q^{g_t}(\Hat\gk_t\psi),
\]
where $\Hat\gk_t: L^2(M,S;g_0)\to L^2(M,S;g_t)$ is induced by
identifying the spinor bundles associated to different metrics.
\end{remark*}

Irrespective of the value of $b_1$, the following holds:

\begin{prop}\label{pert:moduli:comp}\index{moduli
space!parametrized} Let $M$ be a closed, oriented 3-manifold with
\spinc structure $\gs$, and let $\{g_t\}_{t\in [-1,1]}$ be a
continuous family of Riemannian metrics on $M$. Then, for every
continuous path $\eta:[-1,1]\to Z_k^2(M;i\R)$, the parametrized
moduli space $\widehat{\cM}_\eta(\gs;g)\subset \cB(\gs)\times
[-1,1]$ is sequentially compact.
\end{prop}
\begin{proof}
Let $\big([\psi_n,A_n,t_n]\big)_{n\ge 1}$ be a sequence in
$\widehat{\cM}_\eta(\gs;g)$. Without loss of generality, we can
assume that $t_n$ converges to some $t_0\in [-1,1]$. We then have
to show that there exists a subsequence of $([\psi_n,A_n])_{n\ge
1}$ which converges to an element in
$\cM_{\eta_{t_0}}(\gs;g_{t_0})$. Essentially, this amounts to
transferring the corresponding arguments in the proof of the
compactness theorem of Chapter \ref{moduli}. We shall only give a
brief sketch of how this is done.

We choose $g_{t_0}$ as a reference metric. As before, we may
represent $[\psi_n,A_n]$ by configurations $(\psi_n, A_0+a_n)$,
where $a_n$ satisfies $d^{*_{t_0}}a_n=0$. Reviewing the proof of
Proposition \ref{coulomb:gauge}, we draw the conclusion that this
yields $(\psi_n,a_n)\in L^2_k(M,E)$, where $k\ge 2$ is the
Sobolev class of $\eta_t$. Moreover, we can achieve---by possibly
invoking another gauge transformation---that the sequence of the
harmonic parts of $a_n$ (with respect to $g_{t_0}$) is bounded.
The elements $(\psi_n,a_n)$ are solutions to
\begin{align*}
\cD_{A_0}^{t_n}\psi_n&=-\lfrac{1}{2}c(\Hat k_{t_n}^{-1}a_n)\psi_n \\
(d+d^{*_{t_n}})a_n&=
*_{t_n}\lfrac{1}{2}q_{t_n}(\psi_n)-F_{A_0}-\eta_{t_n}
\end{align*}
Since $(\Hat\gk_{t_n}\psi_n,A_0+a_n)$ is an $\eta_{t_n}$-monopole
(with respect to the metric $g_{t_n}$), we deduce as in Chapter
\ref{moduli} that the $L^2(g_{t_n})$-norm of the second equation's
left hand side is smaller than some number depending on the scalar
curvature $s_{t_n}$ and the $g_{t_n}$-norm of $\eta_{t_n}$. Here,
we have to use the estimate \eqref{pert:key:estimate} instead of
the key estimate in Chapter \ref{moduli}. Since $g_t$ and $\eta_t$
are continuous paths, this data depends continuously on the
parameter $t$. We thus also obtain a $L^2(g_{t_0})$-bound on
$\big(*_{t_n}\lfrac{1}{2}q_{t_n}(\psi_n)-F_{A_0}-\eta_{t_n}\big)$.
Arguing exactly as before, we deduce that $a_n$ is bounded with
respect to $L^2_1(g_{t_0})$. Proceeding in this manner, we can
finally achieve that $(\psi_n,a_n)$ is a bounded sequence with
respect to the $L^2_2(g_{t_0})$-norm, and this implies the
assertion.\qedhere\\
\end{proof}

\noindent\textbf{The Seiberg-Witten invariant.} We are now in the
position to prove the main result of this thesis. Our
presentation follows the proof of the corresponding result in the
four dimensional case as given in Nicolaescu's book
\cite{Nic:SW}, Sec.~2.3.2.

\begin{theorem}\label{b>1}\index{Seiberg-Witten invariant}
Let $M$ be a closed, oriented 3-manifold with \spinc structure
$\gs$ and first Betti number $b_1>1$. Suppose that $g_{-1}$ and
$g_1$ are two Riemannian metrics on $M$ and that $\,\eta_{-1}$ and
$\eta_1$ are respectively chosen suitable perturbations. Then
\[
\sw_{\eta_{-1}}(\gs;g_{-1})=\sw_{\eta_1}(\gs;g_1).
\]
We thus can define the ``Seiberg-Witten invariant" of the \spinc
manifold $(M,\gs)$ by letting
\[
\sw(\gs):=\sw_\eta(\gs;g),
\]
where $g$ is an arbitrary Riemannian metric, and $\eta$ is a
suitable perturbation with respect to $g$.
\end{theorem}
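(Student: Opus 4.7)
The plan is to prove the invariance by a cobordism argument, building a $1$-dimensional parametrized moduli space connecting $\cM_{\eta_{-1}}(\gs;g_{-1})$ with $\cM_{\eta_1}(\gs;g_1)$ and showing that its boundary, counted with signs, vanishes. Since the space of Riemannian metrics on $M$ is convex and the perturbation space $Z_k^2(M;i\R)$ is affine, I would first choose a smooth path $\{g_t\}_{t\in[-1,1]}$ of metrics from $g_{-1}$ to $g_1$, and then regard the parametrized Seiberg--Witten map as a Fredholm section in the sense of Remark \ref{transv:section} over $\cC^*\times[-1,1]$, perturbed by a path $\eta_t$ of closed $2$-forms with the prescribed endpoints.

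The first real issue is to keep the cobordism inside the irreducible locus. By Proposition \ref{red:exist}, reducible $\eta$-monopoles occur precisely when the cohomology class $[\eta]\in H^2_{dR}(M;i\R)$ equals $2\pi i\,c(\gs)$, so the ``wall'' of bad perturbations is an affine subspace of $Z_k^2(M;i\R)$ of codimension $b_1$. Since $b_1>1$, a generic $C^m$-path $\eta_t$ with fixed endpoints $\eta_{\pm 1}\in\cP_k(\gs)$ avoids this wall entirely, so all configurations appearing are irreducible and the arguments of Section \ref{pert:SW} apply uniformly in $t$. Next, I would invoke the parametrized version of Sard--Smale (Proposition \ref{param:levelset} together with Remark \ref{transv:section}), applied to the perturbation map $\widehat{\SW}^{g_t}$, which is transversal to the zero section by Proposition \ref{pert:map:trans} for each $t$; this yields, for a generic admissible path $\eta_t$, a smooth $C^m$-submanifold
\[
\widehat{\cM}_\eta(\gs;g)\subset \cB^*\times[-1,1]
\]
of dimension $1+\ind \SW=1$, with boundary exactly $\cM_{\eta_{-1}}(\gs;g_{-1})\sqcup\cM_{\eta_1}(\gs;g_1)$ sitting over $t=\pm 1$.

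Compactness is handed to us by Proposition \ref{pert:moduli:comp}: the parametrized moduli space is sequentially compact, and since it is a $C^m$-manifold of dimension $1$ with finite boundary, it must be a finite disjoint union of closed intervals and circles. The circles contribute nothing to the boundary, while each interval connects two boundary monopoles, one in each (or both in the same) moduli space at $t=\pm 1$.

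The main obstacle, as in every such cobordism argument, is orientations: I need that along each arc of $\widehat{\cM}_\eta(\gs;g)$ the induced boundary sign from the $1$-manifold coincides with the intrinsic sign $\eps(\psi,A)$ defined via orientation transport of the family $T_{(t\psi,A)}$. To verify this I would use Lemma \ref{eps:prop}(i), which identifies $\eps$ with $(-1)^{\SF}$ of a spectral flow from a fixed reference reducible configuration, together with the homotopy invariance of the spectral flow and Corollary \ref{H:SF}: moving along an arc in the cobordism corresponds to varying $(t\psi_t,A_t,g_t,\eta_t)$ continuously, and the parity of $\SF(T)$ at the two endpoints differs exactly by the orientation of the induced $1$-manifold, so the two boundary contributions have opposite signs. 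Summing $\eps$ over $\partial\widehat{\cM}_\eta(\gs;g)$ then yields
\[
\sw_{\eta_1}(\gs;g_1)-\sw_{\eta_{-1}}(\gs;g_{-1})=0,
\]
establishing the invariance and legitimizing the definition of $\sw(\gs)$.
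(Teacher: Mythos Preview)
Your overall strategy matches the paper's: build a one-dimensional parametrized moduli space over a generic path $(g_t,\eta_t)$, use $b_1>1$ to keep the path inside $\cP_k(\gs)$ so that everything stays irreducible, apply Proposition~\ref{param:levelset} and Proposition~\ref{pert:moduli:comp} to get a compact $1$-manifold with boundary $\cM_{\eta_{-1}}\sqcup\cM_{\eta_1}$, and argue that the signed boundary vanishes. All of this is correct and is exactly how the paper proceeds.

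The gap is in your orientation argument. Your sentence ``the parity of $\SF(T)$ at the two endpoints differs exactly by the orientation of the induced $1$-manifold, so the two boundary contributions have opposite signs'' asserts precisely the thing that needs to be proved, and your phrasing is in fact not quite right: if both endpoints of an arc lie over the \emph{same} boundary component $t=\pm 1$, their $\eps$-signs are opposite; if they lie over \emph{different} components, their $\eps$-signs are equal (and they cancel because one enters $\sw_{\eta_1}$ and the other $\sw_{\eta_{-1}}$). The paper establishes this by a concrete computation that you have skipped. Along an arc $c(s)=(\psi(s),A(s),t(s))$ the paper observes that the operator $K_{(\psi,A,t)}(x)=x\cdot\frac{d}{dt}\SW_{\eta_t}^{g_t}(\psi,A)$ is a stabilizer for $T_s:=T^{g_{t(s)}}_{(\psi(s),A(s))}$, so that $\ker(T_s+K_s)=\Span_\R(c'(s))$ is the tangent line to the arc and the determinant bundle of $T$ is canonically the tangent bundle of $\widehat\cM_\eta$. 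Then Lemma~\ref{OT:inv} gives the explicit formula
\[
\eps(T_s;\,s\in[a,b])=\sgn\frac{t'(b)}{t'(a)},
\]
and combining this with the factorization $\eps(T_s)=\eps(\psi(a),A(a))\cdot\eps(T_s^0)\cdot\eps(\psi(b),A(b))$ (where $\eps(T_s^0)=1$ because $T^0$ is a direct sum of a complex family and a family with constant kernel) yields the case analysis above. Invoking Lemma~\ref{eps:prop}(i) and Corollary~\ref{H:SF} alone does not produce this; you still need the stabilizer identification and the computation of $\eps(T_s)$ in terms of $t'(a),t'(b)$, which is the actual content of the proof.
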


\begin{proof}
Let $\widehat{\cP}_k(\gs)$ denote the space of all continuously
differentiable paths $\eta:[-1,1]\to \cP_k(\gs)$ which connect
$\eta_{-1}$ and $\eta_1$. We endow this space with its natural
$C^1$-topology thus providing it with the structure of a Banach
manifold. Since we are using only the restricted perturbation
space, the parametrized moduli space associated to a path
$\eta\in\widehat{\cP}_k(\gs)$ is entirely contained in the
irreducible part $\cB^*\times [-1,1]$. For all $\eta$ in a
certain generic subset of $\widehat{\cP}_k(\gs)$, Proposition
\ref{param:levelset} implies that $\widehat{\cM}_\eta(\gs;g)$ is
either empty or carries the structure of a 1-dimensional
$C^1$-submanifold of $\cB^*\times [-1,1]$. In the first case it
is immediate that
\[
\sw_{\eta_{-1}}(\gs;g_{-1})=\sw_{\eta_1}(\gs;g_1)=0.
\]
We shall thus assume from now on that the parametrized moduli
space is nonempty, hence a 1-dimensional manifold. According to
Proposition \ref{pert:moduli:comp} this manifold is sequentially
compact. Moreover, its boundary is given by
\[
\big(\cM_{\eta_{-1}}(\gs;g_{-1})\times\{-1\}\big)\;\cup\;
\big(\cM_{\eta_1}(\gs;g_1)\times\{1\}\big).
\]
Therefore, $\widehat{\cM}_\eta(\gs;g)$ consists of a finite union
of continuously differentiable arcs, say, $c_i:[a_i,b_i]\to
\cB^*\times [-1,1]$, $i=1,\ldots,n$, whose endpoints lie on the
boundary (see Fig. \ref{param:b>1}). Note that we are neglecting
closed arcs since they neither contribute to
$\sw_{\eta_{-1}}(\gs;g_{-1})$ nor to $\sw_{\eta_1}(\gs;g_1)$.

\begin{figure}
\begin{center}
\setlength{\unitlength}{0.0005in}
\begingroup\makeatletter\ifx\SetFigFont\undefined%
\gdef\SetFigFont#1#2#3#4#5{%
  \reset@font\fontsize{#1}{#2pt}%
  \fontfamily{#3}\fontseries{#4}\fontshape{#5}%
  \selectfont}%
\fi\endgroup%
{\renewcommand{\dashlinestretch}{30}
\begin{picture}(5937,4001)(0,-10)
\texture{0 0 0 888888 88000000 0 0 80808
    8000000 0 0 888888 88000000 0 0 80808
    8000000 0 0 888888 88000000 0 0 80808
    8000000 0 0 888888 88000000 0 0 80808 }
\color{gray}
\shade\path(12,450)(12,3093)(893,3974)
    (893,1331)(12,450)
\shade\path(4812,450)(4812,3093)(5693,3974)
    (5693,1331)(4812,450)
\path(12,450)(12,3093)(893,3974)
    (893,1331)(12,450)
\path(4812,450)(4812,3093)(5693,3974)
    (5693,1331)(4812,450)
\path(462,360)(5262,360)
\color{black}
\thicklines
\path(462,2625)(463,2626)(466,2627)
    (472,2630)(480,2635)(493,2641)
    (509,2649)(529,2659)(553,2671)
    (580,2685)(612,2700)(646,2716)
    (683,2733)(722,2751)(764,2769)
    (807,2787)(851,2804)(895,2821)
    (941,2837)(987,2851)(1034,2865)
    (1082,2876)(1130,2886)(1178,2894)
    (1228,2900)(1278,2903)(1329,2903)
    (1381,2900)(1433,2894)(1485,2883)
    (1537,2869)(1587,2850)(1635,2827)
    (1679,2801)(1720,2773)(1756,2744)
    (1789,2715)(1817,2687)(1842,2659)
    (1863,2633)(1881,2608)(1896,2585)
    (1909,2562)(1920,2541)(1929,2521)
    (1937,2501)(1943,2482)(1950,2462)
    (1955,2443)(1960,2423)(1966,2402)
    (1971,2380)(1977,2357)(1983,2331)
    (1989,2304)(1996,2274)(2003,2241)
    (2011,2206)(2018,2168)(2025,2127)
    (2031,2085)(2035,2040)(2038,1995)
    (2037,1950)(2033,1904)(2025,1860)
    (2015,1819)(2003,1782)(1991,1749)
    (1979,1720)(1967,1695)(1955,1673)
    (1944,1654)(1934,1637)(1924,1623)
    (1915,1611)(1905,1600)(1896,1590)
    (1887,1581)(1877,1573)(1867,1564)
    (1855,1556)(1842,1547)(1827,1537)
    (1810,1527)(1791,1516)(1768,1504)
    (1742,1492)(1713,1479)(1680,1466)
    (1643,1453)(1602,1442)(1558,1432)
    (1512,1425)(1464,1422)(1416,1422)
    (1369,1426)(1325,1432)(1283,1440)
    (1244,1450)(1209,1461)(1177,1472)
    (1148,1485)(1122,1498)(1098,1512)
    (1076,1526)(1056,1540)(1037,1554)
    (1018,1569)(1000,1583)(981,1598)
    (962,1613)(941,1629)(919,1644)
    (896,1660)(870,1676)(842,1692)
    (813,1708)(781,1725)(747,1741)
    (713,1757)(678,1772)(644,1787)
    (612,1800)(591,1809)(572,1817)
    (555,1824)(541,1830)(529,1835)
    (519,1839)(511,1842)(505,1845)
    (500,1847)(497,1848)(496,1848)
    (497,1848)(499,1847)(502,1846)
    (506,1844)(510,1842)(515,1840)
    (521,1838)(528,1835)(534,1832)
    (541,1829)(548,1827)(555,1824)
    (562,1821)(569,1818)(576,1815)
    (582,1812)(588,1810)(593,1808)
    (597,1806)(601,1804)(605,1803)
    (607,1802)(609,1801)(610,1801)
    (611,1800)(612,1800)
\path(612,3225)(613,3226)(615,3227)
    (619,3229)(625,3233)(634,3238)
    (645,3244)(660,3253)(679,3264)
    (701,3276)(726,3290)(755,3306)
    (787,3324)(822,3343)(860,3364)
    (901,3386)(944,3408)(989,3432)
    (1036,3456)(1085,3480)(1135,3504)
    (1186,3528)(1237,3552)(1290,3575)
    (1343,3597)(1397,3619)(1451,3640)
    (1506,3660)(1561,3679)(1618,3696)
    (1674,3712)(1732,3727)(1790,3740)
    (1849,3751)(1908,3760)(1969,3767)
    (2030,3771)(2092,3773)(2154,3772)
    (2216,3768)(2277,3761)(2337,3750)
    (2408,3732)(2473,3710)(2533,3684)
    (2584,3656)(2628,3626)(2664,3595)
    (2693,3564)(2714,3532)(2728,3501)
    (2735,3469)(2738,3439)(2736,3409)
    (2729,3379)(2720,3349)(2708,3320)
    (2695,3291)(2681,3262)(2667,3233)
    (2654,3204)(2643,3175)(2634,3144)
    (2629,3113)(2628,3081)(2632,3049)
    (2642,3015)(2659,2980)(2683,2945)
    (2715,2909)(2756,2872)(2806,2835)
    (2864,2799)(2931,2764)(3006,2730)
    (3087,2700)(3153,2679)(3221,2660)
    (3289,2644)(3356,2630)(3423,2619)
    (3488,2610)(3550,2603)(3611,2597)
    (3668,2594)(3723,2592)(3775,2592)
    (3824,2593)(3870,2595)(3915,2599)
    (3956,2603)(3996,2608)(4034,2614)
    (4070,2620)(4105,2627)(4139,2635)
    (4173,2642)(4206,2650)(4239,2658)
    (4272,2666)(4305,2674)(4339,2682)
    (4374,2690)(4411,2698)(4448,2706)
    (4488,2713)(4529,2720)(4572,2727)
    (4617,2734)(4664,2740)(4713,2745)
    (4763,2750)(4816,2755)(4870,2759)
    (4924,2763)(4979,2766)(5034,2769)
    (5088,2771)(5139,2773)(5187,2775)
    (5229,2777)(5267,2778)(5300,2779)
    (5329,2780)(5354,2781)(5375,2782)
    (5393,2783)(5407,2783)(5417,2784)
    (5425,2784)(5430,2784)(5432,2785)
    (5429,2785)(5425,2785)(5418,2784)
    (5410,2784)(5401,2784)(5390,2784)
    (5377,2783)(5364,2783)(5350,2782)
    (5335,2782)(5319,2782)(5303,2781)
    (5286,2781)(5270,2780)(5254,2780)
    (5238,2779)(5222,2779)(5207,2778)
    (5193,2778)(5180,2777)(5167,2777)
    (5156,2776)(5147,2776)(5138,2776)
    (5131,2776)(5125,2775)(5121,2775)
    (5117,2775)(5115,2775)(5113,2775)(5112,2775)
\path(462,1200)(463,1200)(466,1200)
    (470,1200)(477,1200)(488,1201)
    (502,1201)(519,1201)(541,1202)
    (567,1203)(597,1203)(630,1204)
    (668,1205)(709,1205)(754,1206)
    (801,1207)(852,1208)(904,1208)
    (958,1209)(1014,1209)(1071,1209)
    (1130,1209)(1189,1209)(1248,1209)
    (1308,1208)(1369,1208)(1429,1206)
    (1490,1205)(1551,1203)(1612,1201)
    (1673,1198)(1734,1194)(1796,1191)
    (1858,1186)(1919,1181)(1981,1175)
    (2043,1169)(2104,1162)(2165,1154)
    (2224,1145)(2282,1135)(2337,1125)
    (2432,1103)(2511,1080)(2571,1057)
    (2612,1033)(2636,1010)(2645,987)
    (2640,965)(2625,943)(2602,922)
    (2574,901)(2543,881)(2512,862)
    (2484,842)(2460,824)(2444,806)
    (2437,789)(2443,773)(2464,760)
    (2501,750)(2555,744)(2626,744)
    (2712,750)(2764,757)(2818,766)
    (2872,776)(2926,788)(2979,801)
    (3030,815)(3079,830)(3126,845)
    (3170,860)(3211,875)(3249,890)
    (3284,906)(3318,921)(3349,936)
    (3378,951)(3405,966)(3432,981)
    (3457,995)(3481,1010)(3506,1025)
    (3530,1040)(3555,1055)(3580,1070)
    (3607,1085)(3635,1101)(3666,1116)
    (3698,1132)(3732,1149)(3770,1165)
    (3810,1182)(3854,1200)(3900,1217)
    (3950,1235)(4003,1253)(4059,1270)
    (4117,1288)(4178,1305)(4239,1321)
    (4301,1336)(4362,1350)(4440,1366)
    (4512,1379)(4579,1389)(4639,1396)
    (4694,1401)(4742,1403)(4786,1404)
    (4825,1402)(4861,1399)(4892,1395)
    (4920,1389)(4946,1382)(4969,1374)
    (4990,1366)(5009,1357)(5026,1348)
    (5041,1338)(5055,1329)(5067,1320)
    (5077,1311)(5086,1303)(5093,1296)
    (5099,1290)(5104,1285)(5107,1281)
    (5109,1278)(5111,1276)(5112,1275)
\path(2712,1725)(2711,1703)(2715,1683)
    (2721,1665)(2729,1650)(2738,1637)
    (2748,1626)(2758,1617)(2768,1610)
    (2778,1603)(2788,1598)(2798,1594)
    (2808,1591)(2818,1587)(2829,1585)
    (2840,1582)(2853,1580)(2866,1577)
    (2881,1575)(2898,1573)(2918,1571)
    (2939,1569)(2964,1568)(2991,1567)
    (3021,1568)(3053,1570)(3087,1575)
    (3125,1583)(3162,1592)(3198,1603)
    (3231,1614)(3262,1625)(3290,1636)
    (3316,1646)(3340,1655)(3361,1663)
    (3381,1672)(3400,1680)(3418,1688)
    (3436,1696)(3454,1704)(3471,1714)
    (3489,1724)(3508,1737)(3526,1750)
    (3545,1766)(3563,1784)(3580,1804)
    (3595,1826)(3606,1850)(3612,1875)
    (3612,1898)(3608,1920)(3600,1942)
    (3590,1962)(3578,1980)(3565,1997)
    (3551,2013)(3537,2028)(3524,2041)
    (3510,2054)(3496,2066)(3482,2077)
    (3468,2088)(3454,2098)(3439,2108)
    (3422,2118)(3405,2128)(3386,2138)
    (3366,2147)(3343,2156)(3318,2164)
    (3291,2170)(3262,2176)(3230,2179)
    (3196,2179)(3162,2175)(3128,2167)
    (3094,2156)(3063,2143)(3034,2128)
    (3007,2112)(2983,2096)(2962,2079)
    (2942,2064)(2925,2048)(2909,2033)
    (2895,2017)(2881,2002)(2868,1987)
    (2856,1972)(2843,1957)(2831,1941)
    (2818,1924)(2805,1906)(2791,1887)
    (2778,1866)(2764,1845)(2750,1822)
    (2737,1798)(2726,1773)(2717,1749)(2712,1725)
\put(5112,0){\makebox(0,0)[lb]{$1$}}
\put(280,0){\makebox(0,0)[lb]{$-1$}}
\put(2690,0){\makebox(0,0)[lb]{$t$}}
\end{picture}
}
\caption{The parametrized moduli space}\label{param:b>1}
\end{center}
\end{figure}

To simplify notation we want to embed $\widehat{\cM}_\eta(\gs;g)$
in $\cC^*\times[-1,1]$. This can be done in the following way: We
choose representatives for $c_i(a_i)$ and lift the paths
$c_i:[a_i,b_i]\to \cB^*\times [-1,1]$ horizontally to
$\cC^*\times [-1,1]$. This can be done since $\cC^*\to \cB^*$ is
a principal bundle with $\ker G^*\to \cC^*$ as a horizontal
structure. Then the tangent space to $\widehat{\cM}_\eta(\gs;g)$
at a point $(\psi,A,t)$ is given by the kernel of
\[
D_{(\psi,A,t)}\widehat{\SW}_\eta:\ker
(G_{(\psi,A)}^*|_{L_1^2})\oplus \R \to \ker G_{(\psi,A)}^*,
\]
i.e., it is given by all $(\gf,a,x)\in \ker
(G_{(\psi,A)}^*|_{L_1^2})\oplus \R$ such that
\begin{equation}\label{b>1:1}
H_{(\psi,A)}^{g_t}(\gf,a)+ x\cdot\lfrac{d}{dt} \big|_t
\SW_{\eta_t}^{g_t}(\psi,A) =0,
\end{equation}
where $H_{(\psi,A)}^{g_t}$ denotes the Hessian at $(\psi,A)$ with
respect to the metric $g_t$. To relate this with the operator
$T$, we infer from Lemma \ref{H=F} that for each zero of
$\widehat{\SW}_\eta$,
\[
T_{(\psi,A,t)}:=T_{(\psi,A)}^{g_t}=
\begin{pmatrix}
H_{(\psi,A)}^{g_t} &0 &0 \\
0 &0 &G_{(\psi,A)} \\
0 &G^{*_t}_{(\psi,A)} &0
\end{pmatrix},
\]
where we are using the decomposition $\big(\ker G^*\oplus \im
G\oplus L^2(M,i\R)\big)$ of the space $L^2(M,E\oplus i\R)$. We
now define $K_{(\psi,A,t)}:\R\to L^2(M,E\oplus i\R)$ by letting
\[
K_{(\psi,A,t)}(x):= x\cdot\lfrac{d}{dt} \big|_t
\SW_{\eta_t}^{g_t}(\psi,A).
\]
Since $\left(\begin{smallmatrix} 0 &G \\
G^*&0\end{smallmatrix}\right)$ yields an isomorphism from $\im
(G|_{L_2^2})\oplus L^2_1(M,i\R)$ to $\im G\oplus L^2(M,i\R)$, we
deduce from surjectivity in \eqref{b>1:1} that the operator
\[
T_{(\psi,A,t)} + K_{(\psi,A,t)} :L^2_1(M,E\oplus i\R)\oplus \R
\to L^2(M,E\oplus i\R)
\]
is onto. Moreover, its kernel is isomorphic to \eqref{b>1:1},
i.e., to the tangent space of $\widehat{\cM}_\eta(\gs;g)$ at the
point $(\psi,A,t)$. The above considerations show the following:
If $(\psi,A,t)$ is an element of the parametrized moduli space,
then the tangent space of $\widehat{\cM}_\eta(\gs;g)$ is
naturally isomorphic to the 1-dimensional space $\ker
\big(T_{(\psi,A,t)} + K_{(\psi,A,t)}\big)$. In the language of
Appendix \ref{app:det}, the map $K$ is a stabilizer of $T$ over
the parametrized moduli space, hence we have a natural
isomorphism of vector bundles
\[
T\widehat{\cM}_\eta(\gs;g) = \big(\det T\to
\widehat{\cM}_\eta(\gs;g)\big).
\]
We now use this observation to show that, given a path $c:[a,b]\to
\widehat{\cM}_\eta(\gs;g)$ which connects two boundary points,
the contribution to
\begin{equation}\label{b>1:2}
\sw_{\eta_1}(\gs;g_1)- \sw_{\eta_{-1}}(\gs;g_{-1})
\end{equation}
given by the endpoints cancel each other out. Summing over all
paths yields that \eqref{b>1:2} vanishes which is the assertion of
the theorem.

Writing $c(s) = \big(\psi(s),A(s),t(s)\big)$, it is immediate
from homotopy invariance of the orientation transport that
\begin{equation}\label{b>1:3}
\eps(T_s)=\eps\big(\psi(a),A(a)\big) \cdot
\eps(T_s^0)\cdot\eps\big(\psi(b),A(b)\big),
\end{equation}
where $T_s:=T_{(\psi(s),A(s))}^{g_{t(s)}}$ and
\[
T_s^0:=\begin{pmatrix} \cD_{A(s)}^{t(s)} &0 &0 \\
                            0 &-*_{t(s)} d &2d \\
                            0 & 2d^{*_{t(s)}}&0 \end{pmatrix}.
\]
The path $T_s^0$ is the direct sum of a complex family
and the family $\left(\begin{smallmatrix} -*_s d &2d \\
2d^{*_s}&0 \end{smallmatrix}\right)$. As in the proof of Lemma
\ref{eps:prop}, part (i), one sees that hence, $\eps(T_s^0)=1$.

Therefore, it remains to compute $\eps(T_s)$. Since $\eta_1$ and
$\eta_{-1}$ are suitable perturbations with respect to $g_1$ and
$g_{-1}$, the operators $T_a$ and $T_b$ are invertible. Hence, we
may apply Lemma \ref{OT:inv}, using the above observation that
$K_s:=K_{(\psi(s),A(s),t(s))}$ is a stabilizer for $T_s$. If we
choose a parametrization in such a way that $c'(s)\neq 0$, then
\begin{equation}\label{b>1:4}
\ker (T_s+K_s) = \Span_\R \big(\psi'(s),A'(s),0,t'(s)\big).
\end{equation}
Note that the---slightly confusing---ordering of variables stems
from the fact that $\ker (T_s+K_s)\subset L^2(M,E\oplus
i\R)\oplus \R$, whereas $c'(s)\in L^2(M,E)\oplus \R$. The
trivialization \eqref{b>1:4} of $\ker (T+K)$ induces an
isomorphism
\[
\Psi_a^b:\ker (T+K)_a\to\ker (T+K)_b,
\]
given by
\[
\Psi_a^b\big(\psi'(a),A'(a),0,t'(a)\big):=\big(\psi'(b),A'(b),0,t'(b)\big).
\]
The diagram \eqref{OT:CD} in the case at hand is
\[
\begin{CD}
\ker(T_a\oplus K_a)  @>{\Psi_a^b}>>   \ker(T_b\oplus K_b)\\
@VV{P_\R}V                    @VV{P_\R}V \\
\R                   @>{\gF_\R}>>        \R
\end{CD}
\]
where is uniquely determined by $\gF_\R(t'(a))=t'(b)$. Observe
that it follows from the fact that $T_a$ and $T_b$ are invertible
that both, $t'(a)$ and $t'(b)$, are nonzero. According to Lemma
\ref{OT:inv}, the orientation transport along $T_s$ is then given
by the parity of $\gF_\R$, i.e.,
\[
\eps(T_s)=\sgn \lfrac{t'(b)}{t'(a)}.
\]
\begin{figure}
\begin{center}
\setlength{\unitlength}{0.0005in}
\begingroup\makeatletter\ifx\SetFigFont\undefined%
\gdef\SetFigFont#1#2#3#4#5{%
  \reset@font\fontsize{#1}{#2pt}%
  \fontfamily{#3}\fontseries{#4}\fontshape{#5}%
  \selectfont}%
\fi\endgroup%
{\renewcommand{\dashlinestretch}{30}
\begin{picture}(6012,4046)(0,-10)
\texture{0 0 0 888888 88000000 0 0 80808
    8000000 0 0 888888 88000000 0 0 80808
    8000000 0 0 888888 88000000 0 0 80808
    8000000 0 0 888888 88000000 0 0 80808 }
\color{gray}
\shade\path(87,495)(87,3138)(968,4019)
    (968,1376)(87,495)
\path(87,495)(87,3138)(968,4019)
    (968,1376)(87,495)
\shade\path(4887,495)(4887,3138)(5768,4019)
    (5768,1376)(4887,495)
\path(4887,495)(4887,3138)(5768,4019)
    (5768,1376)(4887,495)
\path(537,345)(5337,345)
\path(537,345)(5337,345)
\color{black}
\path(762,3270)(1737,3420)
\whiten\path(1622.957,3372.102)(1737.000,3420.000)(1613.834,3431.404)(1622.957,3372.102)
\path(600,1170)(1662,1245)
\whiten\path(1544.654,1205.885)(1662.000,1245.000)(1540.053,1265.708)(1544.654,1205.885)
\path(5000,2820)(5637,3120)
\whiten\path(5547.695,3034.416)(5637.000,3120.000)(5517.927,3086.511)(5547.695,3034.416)
\path(730,2730)(12,2670)
\whiten\path(127.953,2713.068)(12.000,2670.000)(134.579,2653.435)(127.953,2713.068)
\color{gray2}
\thicklines
\path(762,3270)(764,3270)(767,3271)
    (774,3272)(784,3273)(798,3275)
    (816,3277)(837,3280)(861,3282)
    (888,3285)(917,3288)(947,3291)
    (979,3294)(1011,3296)(1044,3298)
    (1078,3299)(1111,3299)(1146,3299)
    (1181,3297)(1217,3295)(1253,3291)
    (1290,3285)(1327,3279)(1362,3270)
    (1410,3255)(1447,3238)(1471,3223)
    (1484,3209)(1486,3197)(1480,3187)
    (1469,3178)(1456,3170)(1443,3161)
    (1432,3152)(1428,3141)(1433,3127)
    (1450,3111)(1481,3091)(1527,3069)
    (1587,3045)(1626,3032)(1665,3019)
    (1704,3008)(1740,2997)(1773,2988)
    (1802,2979)(1828,2972)(1850,2965)
    (1869,2960)(1885,2955)(1898,2950)
    (1910,2946)(1921,2942)(1931,2939)
    (1941,2935)(1953,2932)(1966,2929)
    (1982,2925)(2002,2922)(2025,2918)
    (2054,2915)(2087,2911)(2127,2907)
    (2173,2904)(2225,2900)(2283,2898)
    (2346,2896)(2412,2895)(2471,2896)
    (2528,2897)(2583,2900)(2634,2903)
    (2680,2907)(2722,2912)(2758,2918)
    (2789,2923)(2815,2929)(2838,2936)
    (2856,2942)(2871,2949)(2884,2956)
    (2894,2962)(2903,2969)(2912,2976)
    (2921,2983)(2930,2990)(2941,2997)
    (2954,3003)(2971,3010)(2990,3016)
    (3014,3022)(3042,3027)(3076,3032)
    (3115,3037)(3160,3041)(3211,3044)
    (3268,3046)(3329,3047)(3395,3047)
    (3462,3045)(3530,3042)(3596,3038)
    (3658,3032)(3717,3026)(3770,3020)
    (3817,3012)(3859,3005)(3895,2997)
    (3926,2989)(3953,2981)(3976,2973)
    (3995,2965)(4011,2957)(4025,2949)
    (4038,2941)(4050,2932)(4061,2924)
    (4074,2916)(4087,2908)(4103,2900)
    (4121,2892)(4142,2884)(4167,2876)
    (4197,2868)(4230,2860)(4269,2853)
    (4313,2845)(4361,2839)(4414,2833)
    (4470,2827)(4528,2823)(4587,2820)
    (4658,2818)(4724,2818)(4786,2820)
    (4843,2824)(4895,2829)(4942,2835)
    (4986,2842)(5025,2850)(5062,2858)
    (5096,2867)(5128,2877)(5157,2887)
    (5185,2897)(5210,2908)(5234,2918)
    (5255,2927)(5274,2937)(5290,2945)
    (5304,2952)(5315,2958)(5324,2963)
    (5330,2966)(5334,2968)(5336,2969)(5337,2970)
\path(387,2670)(388,2670)(391,2670)
    (395,2671)(403,2672)(414,2673)
    (428,2674)(446,2676)(468,2678)
    (494,2681)(524,2683)(557,2686)
    (594,2689)(634,2692)(677,2696)
    (722,2699)(769,2702)(818,2705)
    (867,2708)(918,2710)(970,2712)
    (1023,2713)(1075,2714)(1128,2714)
    (1182,2713)(1236,2712)(1289,2710)
    (1344,2706)(1398,2702)(1453,2696)
    (1508,2690)(1563,2681)(1619,2671)
    (1674,2660)(1729,2646)(1783,2631)
    (1836,2614)(1887,2595)(1943,2571)
    (1994,2545)(2040,2519)(2081,2494)
    (2115,2469)(2145,2446)(2170,2425)
    (2191,2405)(2208,2387)(2221,2371)
    (2232,2356)(2240,2342)(2246,2329)
    (2250,2318)(2253,2306)(2256,2295)
    (2257,2284)(2259,2272)(2260,2259)
    (2261,2246)(2262,2231)(2264,2214)
    (2266,2195)(2268,2174)(2270,2151)
    (2272,2124)(2274,2096)(2275,2064)
    (2275,2030)(2274,1995)(2269,1958)
    (2262,1920)(2250,1881)(2235,1843)
    (2217,1808)(2199,1776)(2181,1748)
    (2163,1723)(2147,1702)(2131,1684)
    (2117,1669)(2104,1657)(2092,1647)
    (2081,1638)(2070,1631)(2060,1625)
    (2049,1620)(2039,1615)(2027,1609)
    (2014,1603)(1999,1596)(1981,1587)
    (1961,1577)(1937,1565)(1909,1551)
    (1877,1534)(1841,1515)(1799,1494)
    (1753,1471)(1701,1446)(1646,1420)
    (1587,1395)(1530,1372)(1473,1351)
    (1416,1331)(1360,1313)(1305,1297)
    (1252,1283)(1200,1270)(1150,1258)
    (1101,1247)(1053,1238)(1006,1229)
    (960,1222)(915,1215)(871,1209)
    (827,1203)(785,1198)(745,1194)
    (706,1190)(668,1186)(634,1183)
    (601,1180)(572,1178)(546,1176)
    (524,1174)(505,1173)(490,1172)
    (479,1171)(471,1171)(466,1170)
    (463,1170)(462,1170)
\color{black}
\put(600,3370){\makebox(0,0)[lb]{$\scriptstyle +$}}
\put(5037,2600){\makebox(0,0)[lb]{$\scriptstyle +$}}
\put(400,1370){\makebox(0,0)[lb]{$\scriptstyle -$}}
\put(300,2300){\makebox(0,0)[lb]{$\scriptstyle +$}}
\put(2200,1500){\makebox(0,0)[lb]{$\eps=-1$}}
\put(2750,3200){\makebox(0,0)[lb]{$\eps=1$}}

\put(5187,45){\makebox(0,0)[lb]{$1$}}
\put(387,45){\makebox(0,0)[lb]{$-1$}}
\put(2750,45){\makebox(0,0)[lb]{$t$}}
\put(1137,3495){\makebox(0,0)[lb]{$\scriptstyle c'(a)$}}
\put(5187,3195){\makebox(0,0)[lb]{$\scriptstyle c'(b)$}}
\put(250,2820){\makebox(0,0)[lb]{$\scriptstyle c'(b)$}}
\put(912,870){\makebox(0,0)[lb]{$\scriptstyle c'(a)$}}
\end{picture}
}
\caption{Computing the orientation transport}\label{eps:orient}
\end{center}
\end{figure}
If $c$ connects elements in distinct parts of the boundary, i.e.,
$t(b) = - t(a)$, then necessarily $\sgn t'(a)= \sgn t'(b)$ (cf.
Fig. \ref{eps:orient}). The above formula then implies that
$\eps(T_s)=1$. If on the contrary, $c$ connects elements lying in
the same part of the boundary, then the signs of $t'(a)$ and
$t'(b)$ differ. Hence, in this case, $\eps(T_s)=-1$. Applying
these considerations to \eqref{b>1:3}, we get
\[
t(a)\cdot \eps\big(\psi(a),A(a)\big)+t(b)\cdot
\eps\big(\psi(b),A(b)\big)=0.
\]
From this one readily deduces that the contribution given by the
endpoints of $c$ to \eqref{b>1:2} cancel each other out.
\end{proof}

\section[The case $b_1=1$: Wall-crossing formula]{The case
$\boldsymbol{b}_1\mathbf{=1}$: Wall-crossing formula}\label{=1}

If $M$ is a 3-manifold with first Betti number $b_1=1$, then the
complement of the space of suitable perturbations,
\[
\cP_k(\gs)=\bigsetdef{\eta\in Z^2_k(M;i\R)}{2\pi
ic(\gs)\neq[\eta]},
\]
is a 1-codimensional affine subspace of $Z_k^2(M;i\R)$. The set
$Z_k^2(M;i\R)\setminus \cP_k$ is called {\em the wall}, and we
denote it by
\[
\cW_k(\gs):=\bigsetdef{\eta\in Z^2_k(M;i\R)}{2\pi
ic(\gs)=[\eta]}.
\]
Note that
\begin{equation}\label{wall}
\cW_k(\gs)=\eta_0 + d\big(L^2_{k+1}(M,iT^*M)\big).
\end{equation}
The parameter space now decomposes into two connected components
which are separated by the wall. In general, we thus cannot choose
a path in $\cP_k(\gs)$ connecting arbitrary suitable
perturbations. The goal of this section is to establish a
so-called {\em wall-crossing formula} which relates
$\sw_{\eta_{-1}}(\gs)$ to $\sw_{\eta_1}(\gs)$ if $\eta_{-1}$ and
$\eta_1$ lie in different components of $\cP_k(\gs)$.

The original version of the wall-crossing formula seems to be due
to Y. Lim \cite{Lim:SW}. Our presentation largely follows the
lecture notes by Nicolaescu \cite{Nic:SW3} and the corresponding
proof in the four dimensional case as it can be found in
\cite{Nic:SW}, Sec.~2.3.3.\\

\noindent\textbf{Positive and negative chambers.} First of all, we
have to provide a way of distinguishing the two components of
$\cP_k(\gs)$. This is done via an orientation of the second
cohomology of $M$. Since $b_1=1$ such an orientation is given by
a closed 2-form $\mu$ such that the cohomology class $[\mu]$ in
$H^2(M;\R)$ is nonzero. If $\eta_0\in\cW_k(\gs)$, we have $\eta_0
+ i\mu\notin \cW_k(\gs)$. This follows from \eqref{wall} and the
fact that the chosen 2-form $\mu$ is complementary to $\im d$.
This motivates the following definition.

\begin{dfn}
Let $M$ be a closed, oriented 3-manifold with $b_1=1$. Moreover,
let $M$ be equipped with a \spinc structure $\gs$ and an
orientation of the second cohomology. Let $\mu$ be a closed
2-form inducing the given cohomology orientation. Then, for
$\eta_0\in \cW_k(\gs)$, we define the {\em positive chamber}
$\cP^+_k(\gs)$ to be the component of $\cP_k(\gs)$ containing
$\eta_0+i\mu$. The complement of $\cP^+_k(\gs)$ in $\cP_k(\gs)$
is called the {\em negative chamber} $\cP_k^-(\gs)$. We thus
obtain a decomposition\index{>@$\cP_k^\pm(\gs)$, pos./neg.
chamber}
\[
Z_k^2(M;i\R)=\cP^-_k(\gs)\,
\dot\cup\;\cW_k(\gs)\;\dot\cup\;\cP^+_k(\gs).
\]
\end{dfn}
One readily checks that if $\mu, \mu'$ and $\eta_0, \eta_0'$ are
respectively chosen as above and if $[\mu']$ is a positive
multiple of $[\mu]$, then the linear path connecting
$\eta_0+i\mu$ with $\eta_0'+i\mu'$ never crosses the wall.
Therefore, the above definition depends only on the chosen
orientation of $H^2(M;\R)$. Moreover, note that if $\mu$ is
harmonic with respect to some Riemannian metric, then
\[
\cP_k^\pm(\gs)=\eta_0+ \bigsetdef{\eta\in
Z^2_k(M,i\R)}{\pm\Lscalar{\eta}{i\mu}>0}
\]
and
\[
\cW_k(\gs)=\eta_0+ \bigsetdef{\eta\in
Z^2_k(M,i\R)}{\Lscalar{\eta}{i\mu}=0}.
\]

Exactly as in Theorem \ref{b>1} one concludes that the number
$\sw_\eta(\gs;g)$ is independent of $\eta$ and $g$ as long as
$\eta$ is a suitable perturbation taken from only one of the two
chambers. We thus have the following result:
\begin{prop}
Let $(M,\gs)$ be a closed, oriented \spinc 3-manifold with
$b_1=1$. Moreover, let $M$ be equipped with an orientation of the
second cohomology. Then, for an arbitrary Riemannian metric $g$
and corresponding suitable perturbations $\eta^\pm \in
\cP_k^\pm(\gs)$ the numbers
\[
\sw^\pm(\gs) := \sw_{\eta^\pm}(\gs;g)
\]
are independent of $g$ and $\eta^\pm$.\\
\end{prop}

\noindent\textbf{The circle of reducibles.} Whenever
$\eta_0\in\cW_k(\gs)$, we know from Proposition \ref{red:exist}
that the reducible part of $\cM_{\eta_0}(\gs)$ is homeomorphic to
the circle $H^1_{dR}(M;i\R)/H^1_{dR}(M;4\pi i\Z)$. If we let
$\go_0$ be a generator of the lattice $H^1_{dR}(M;4\pi i\Z)$,
then it follows from the proof of Proposition \ref{red:exist}
that the circle of reducibles is parametrized by
$\{A_0+r\go_0\}_{r\in[0,1)}$, where $A_0$ is a fixed reducible
$\eta_0$-monopole.

If $M$ is equipped with a Riemannian metric $g$, we may assume
that $\go_0$ is harmonic with respect to $g$. This implies
$*\go_0\in i\cH^2(M;g)$ so that $*\go_0$ defines an orientation of
$i\cH^2(M;g)$ and thus an orientation of the second cohomology.

\begin{dfn}
Let $(M,g)$ be a closed, oriented Riemannian 3-manifold with
$b_1=1$, equipped with an orientation of $H^2(M;\R)$. Moreover,
let $\gs$ be a \spinc structure on $M$, and let $\eta_0\in
\cW_k(\gs)$. A harmonic 1-form $\go_0\in i\cH^1(M;g)$ is called a
\emph{generator of reducible $\eta_0$-monopoles} if $[\go_0]$
generates the lattice $H^1_{dR}(M;4\pi i\Z)$ and if $*\go_0$
induces the given orientation of the second cohomology.
\end{dfn}
Note that this characterizes $\go_0$ uniquely since there are
only two possible generators of $H^1_{dR}(M;4\pi i\Z)$.\\

\noindent\textbf{Finding a suitable path.} To find the relation
between $\sw^+(\gs)$ and $\sw^-(\gs)$ we have to consider a path
connecting two suitable perturbations $\eta^\pm\in\cP^{\pm}(\gs)$. We
will then derive the wall-crossing formula from a detailed
analysis of the parametrized moduli space near the circle of
reducibles. Finding an appropriate path requires some preliminary
considerations.

\begin{prop}\label{Dirac:pert:b=1}
Let $(M,g)$ be a closed, oriented Riemannian 3-manifold with
$b_1=1$. Moreover, let $M$ be equipped with an orientation of the
second cohomology and a \spinc structure $\gs$. For
$\eta_0\in\cW_k(\gs)$, let $\go_0$ be the generator of reducible
$\eta_0$-monopoles. Then for a generic element $A$ in $\cA(\gs)$,
the family $\{\cD_{A+r\go_0}\}_{r\in[0,1]}$ is transversal with
only simple crossings, cf. Definition \ref{transversal}.
\end{prop}

\begin{proof}
We use the perturbation methods of Section \ref{reg:val}.
Consider the open set $X:=\setdef{\psi\in L^2_1(M,S)}{\psi\neq 0}$
of non-vanishing spinors, and define a vector bundle $V\to X$ via
\[
V_\psi:=\ker \Re \Lscalar{i\psi}{.},\quad 0\neq \psi\in
L_1^2(M,S).
\]
Then $V\to X$ is a bundle of $\R$-Hilbert spaces. More precisely,
$V_\psi$ is a subspace of $L_1^2(M,S)$, which we consider as an
$\R$-Hilbert space. $V_\psi$ has codimension 1 because
\[
\big(\ker\Re\Lscalar{i\psi}{.}\big)^\perp=\Span_\R i\psi.
\]
Next consider the section $\gF:X\to V$ given by
\[
\gF(\psi):=\cD_{A_0}\psi,
\]
where $A_0$ is a fixed gauge field. Observe that $\gF$ is
well-defined since formal self-adjointness of the Dirac operator
implies that $\Re\Lscalar{i\psi}{\cD_{A_0}\psi}=0$. At a zero
$\psi$ of $\gF$ we have
\[
D_\psi\gF =\cD_{A_0}:L^2_1(M,S)\to \ker\Re\Lscalar{i\psi}{.}.
\]
Hence, $D_\psi\gF$ is a Fredholm operator of index 1. This is
because the Fredholm operator $\cD_{A_0}:L^2_1(M,S)\to L^2(M,S)$
of index 0 produces a Fredholm operator index 1 when the target
space is restricted to a 1-codimensional subspace.

Proceeding as in Section \ref{reg:val}, we now perturb $\gF$ to
make it transversal to the zero section. As the perturbation
space $P$ we take the space of imaginary valued 1-forms of
Sobolev class $1$, and the perturbation map is defined as the
section $\Hat \gF$ of $\pr_1^*V\to X \times P$ given by
\[
\Hat \gF(\psi,a):= \cD_{A_0+a}\psi.
\]
The next thing to establish is that for each zero $(\psi,a)$ of
$\Hat\gF$, the differential $D_{(\psi,a)}\Hat \gF$ is surjective.
We thus compute
\begin{equation}\label{Dirac:pert:b=1:1}
D_{(\psi,a)}\Hat \gF (\gf,b)= \cD_{A_0+a}\gf+\lfrac{1}{2}c(b)\psi.
\end{equation}
Assume that $\gf_0$ is orthogonal to the closed subspace $\im
D_{(\psi,a)}\Hat \gF$. Then the above equation shows that for
every $\gf\in L^2_1(M,S)$, the scalar product
$\Re\LScalar{\cD_{A_0+a}\gf}{\gf_0}$ vanishes. In combination
with formal self-adjointness of the Dirac operator, this yields
that
\[
\gf_0\in \ker \cD_{A_0+a}\;.
\]
On the other hand, according to equation \eqref{Dirac:pert:b=1:1},
the scalar product $\Re\LScalar{c(b)\psi}{\gf_0}$ vanishes for all
1-forms $b$. According to part (i) of Proposition \ref{q:prop}, ,
we thus have
\[
\LScalar{b}{q(\psi,\gf_0)}=0
\]
for each 1-form $b$. This implies that $q(\psi,\gf_0)=0$. As in
the proof of Proposition \ref{pert:map:trans} we may therefore
deduce that there exists $f\in L^2_1(M,i\R)$ such that
\[
\gf_0=f\cdot\psi.
\]
From the fact that $\cD_{A_0+a}\gf_0=0$ we infer that
\[
0= \cD_{A_0+a}(f\cdot\psi)= f\cdot\cD_{A_0+a}\psi + c(df)\psi =
c(df)\psi.
\]
This shows that $df=0$, since $\psi\neq 0$ and can thus only
vanish on the complement of a dense open set. Therefore, $f$ is
an imaginary constant and $\gf_0$ is a multiple of $\psi$ by $f$.
As $\gf_0\in\ker \Re\Lscalar{i\psi}{.}$, this demands
$\gf_0\equiv 0$. Hence, $D_{(\psi,a)}\Hat \gF$ is surjective.

We now apply similar considerations as in Proposition
\ref{param:levelset} and find that
\[
\Hat\Psi:X\times [0,1]\times P\to
\pr_1^*V,\quad\Hat\Psi(\psi,r,a)=\cD_{A+a+r\go_0}\psi,
\]
is transversal to the zero section. Therefore, given a generic
$a\in P$, the set
\begin{equation}\label{Dirac:pert:b=1:2}
\bigcup_{r\in[0,1]}\big(\ker\cD_{A_0+a+r\go_0}\setminus
\{0\}\big)\times \{r\}
\end{equation}
is either empty or a 2-dimensional real submanifold of $X\times
[0,1]$. Let us assume the latter holds (otherwise, there is
nothing left to prove). Then the projection
\[
\bigcup_{r\in[0,1]}\big(\ker\cD_{A_0+a+r\go_0}\setminus
\{0\}\big)\times \{r\}\to [0,1]
\]
is a smooth map from a 2-dimensional manifold to a 1-dimensional
one. By virtue of (the finite dimensional version of) Sard's
Theorem, we can ascertain that for each $r$ chosen from a dense
subset of $[0,1]$, the set
\[
\ker \big(\cD_{A_0+a+r\go_0}\big)\setminus \{0\}
\]
is either empty or a manifold of $\R$-dimension 1. On the other
hand, if nonempty, it necessarily has $\C$-dimension $\ge 1$.
This can only be possible if this manifold is empty and
therefore, $\ker \cD_{A_0+a+r\go_0}= 0$ for every $r$ in a dense
subset of $[0,1]$. For any other $r$, the kernel of
$\cD_{A_0+a+r\go_0}$ is of $\R$-dimension 2, since otherwise
\eqref{Dirac:pert:b=1:2} could not form a 2-dimensional manifold.

The last thing remaining to prove is that the family
$\{\cD_{A_0+a+r\go_0}\}_{r\in[0,1]}$ is transversal if $a$ is
chosen from the generic subset. Fix $r_0\in[0,1]$ such that the
corresponding operator $\cD_{A_0+a+r\go_0}$ is not invertible, and
let $\psi_0$ be an element of the kernel, of norm 1.
Transversality of the section $(\psi,r)\mapsto \Hat\Psi(\psi,r,a)$
guarantees that
\[
D_{(\psi_0,r_0)}\Hat\Psi(\gf,r,a)= \cD_{A_0+a+r_0\go_0}\gf +
r\cdot\lfrac{d}{ds}\big|_{r=r_0}\cD_{A_0+a+r\go_0}\psi_0
\]
yields a surjection $L^2_1(M,S)\oplus\R \to \ker
\Re\Lscalar{i\psi_0}{.}$. In particular, there exists $(\gf,r)\in
L^2_1(M,S)\oplus\R$ such that
\[
\cD_{A_0+a+r_0\go_0}\gf +
r\cdot\lfrac{d}{ds}\big|_{r=r_0}\cD_{A_0+a+r\go_0}\psi_0 = \psi_0.
\]
Since $\psi_0$ is a nonzero harmonic spinor with respect to
$\cD_{A_0+a+r_0\go_0}$, taking the $L^2$-product with $\psi_0$
enforces that
\[
\LScalar{\lfrac{d}{ds}\big|_{r=r_0}\cD_{A_0+a+r\go_0}\psi_0}{\psi_0}\neq
0.
\]
This shows that the zero eigenvalue at the point $r_0$ crosses
transversally.
\end{proof}

\begin{cor}\label{eta:path:b=1}
Let $(M,g)$ be a closed, oriented Riemannian 3-manifold with
$b_1=1$. Moreover, let $M$ be equipped with an orientation of the
second cohomology and a \spinc structure $\gs$. Suppose that
$\eta_{\pm 1}\in\cP^\pm_k(\gs)$ are suitable perturbations with
respect to $g$. Then there exists a connecting $C^3$-path
\[
\eta:[-1,1]\to Z_k^2(M;i\R)
\]
such that
\begin{enumerate}
\item The path $\eta$ meets the wall transversally and does so only in
0. That is, $\eta_t\in \cW_k(\gs)$ if and only if $t=0$ and
$\Lscalar{\eta_0'}{*\go_0}>0$, where $\go_0$ is the generator of
reducible $\eta_0$-monopoles.
\item The irreducible part of the parametrized moduli space is
either empty or a 1-dimensional $C^3$-submanifold of $\cB^*\times
[-1,1]$.
\item If $A_0$ is a reducible $\eta_0$-monopole, then the family
$\{\cD_{A_0+r\go_0}\}_{r\in[0,1]}$ is transversal with only simple
crossings.
\end{enumerate}
\end{cor}
\begin{proof}
Fix a gauge field $A_0$ lying in the generic set given by
Proposition \ref{Dirac:pert:b=1}. We define $\eta_0:= -F_{A_0}\in
Z^2_k(M;i\R)$, forcing $A_0$ to become a reducible
$\eta_0$-monopole. In particular, $\eta_0\in\cW_k(\gs)$. We now
provide an appropriate perturbation space to employ the results
of Section \ref{pert:levelset} again. Let us additionally fix a
constant $C>0$ and consider the set of all $C^3$-paths
$\eta:[-1,1]\to Z^2_k(M;i\R)$ satisfying
\begin{itemize}
\item $\eta(t)\in\cP^\pm_k(\gs)\;$ if $\;\pm t >0$,
\item $\eta(i)=\eta_i$ for $i\in\{-1,0,1\}$,
\item $\LScalar{\eta'(0)}{*\go_0}>C$,
\end{itemize}
where $\go_0$ is the generator of reducible $\eta_0$-monopoles.
Equipped with the $C^3$-topology, this set becomes a Banach
manifold since it is an open subset of the affine Banach space
\[
\bigsetdef{\eta:[-1,1]\to Z^2_k(M;i\R)}{\eta\text{ is }C^3\text{
and } \eta(i)=\eta_i\text{ for }i\in\{-1,0,1\}}.
\]
Note for this, that the requirement
$\LScalar{\eta'(0)}{*\go_0}>C$ is an open condition. It implies
that near 0 a path $\eta$ satisfies $\eta(t)\in\cP^\pm_k(\gs)$ if
$\pm t >0$. Away from 0, the first property is also an open
condition since $\cP_k^\pm(\gs)$ are open subsets of
$Z^2_k(M;i\R)$.

We now use the above Banach manifold as a perturbation space.
Each corresponding path automatically satisfies (i) and (iii).
Similarly as in the proof of Theorem \ref{b>1}, an application of
Proposition \ref{pert:levelset} and the Sard-Smale Theorem yields
that for a generic choice of such $\eta$, property (ii) can also
be achieved.\qedhere\\
\end{proof}

\noindent\textbf{The singular cobordism.} Let us now fix a path
$\eta$ as in the above Corollary. As we have seen in Section
\ref{>1}, the parametrized moduli space
\[
\widehat{\cM}_\eta(\gs)=\bigcup_{t\in[-1,1]}
\cM_{\eta_t}(\gs)\times\{t\}
\]
is compact. However, it does in general not necessarily form a
$C^3$-cobordism between the moduli spaces
$\cM_{\eta_{-1}}(\gs;g)$ and $\cM_{\eta_1}(\gs;g)$ for
singularities may occur at $\cM_{\eta_0}(\gs;g)\times\{0\}$, i.e.,
when the path $\eta$ crosses the wall. As the reducible part of
$\cM_{\eta_0}(\gs)$ is a circle, the singular cobordism will look
roughly as in Fig. \ref{param:b1}.

\begin{figure}
\begin{center}
\setlength{\unitlength}{0.0005in}
\begingroup\makeatletter\ifx\SetFigFont\undefined%
\gdef\SetFigFont#1#2#3#4#5{%
  \reset@font\fontsize{#1}{#2pt}%
  \fontfamily{#3}\fontseries{#4}\fontshape{#5}%
  \selectfont}%
\fi\endgroup%
{\renewcommand{\dashlinestretch}{30}
\begin{picture}(5937,4050)(0,-10)
\thinlines
\texture{0 0 0 888888 88000000 0 0 80808
    8000000 0 0 888888 88000000 0 0 80808
    8000000 0 0 888888 88000000 0 0 80808
    8000000 0 0 888888 88000000 0 0 80808 }
\color{gray}
\path(462,360)(5262,360)
\shade\path(2412,499)(2412,3142)(3293,4023)
    (3293,1380)(2412,499)
\path(2412,499)(2412,3142)(3293,4023)
    (3293,1380)(2412,499)
\shade\path(4812,499)(4812,3142)(5693,4023)
    (5693,1380)(4812,499)
\path(4812,499)(4812,3142)(5693,4023)
    (5693,1380)(4812,499)
\shade\path(12,499)(12,3142)(893,4023)
    (893,1380)(12,499)
\path(12,499)(12,3142)(893,4023)
    (893,1380)(12,499)
\color{black}
\thicklines
\path(387,1399)(388,1398)(391,1397)
    (396,1394)(403,1390)(414,1383)
    (428,1375)(445,1365)(466,1353)
    (491,1340)(518,1324)(549,1307)
    (582,1289)(617,1270)(653,1250)
    (692,1230)(731,1210)(772,1189)
    (813,1169)(855,1148)(898,1128)
    (942,1109)(987,1089)(1033,1070)
    (1080,1052)(1128,1034)(1177,1017)
    (1228,1001)(1279,986)(1332,972)
    (1385,959)(1437,949)(1504,939)
    (1564,932)(1617,929)(1661,930)
    (1695,933)(1722,939)(1740,946)
    (1753,955)(1760,965)(1763,976)
    (1763,987)(1762,999)(1761,1011)
    (1761,1023)(1763,1034)(1768,1046)
    (1779,1056)(1795,1066)(1818,1076)
    (1848,1083)(1886,1090)(1931,1095)
    (1982,1098)(2037,1099)(2089,1098)
    (2139,1096)(2185,1093)(2226,1089)
    (2262,1085)(2292,1080)(2316,1076)
    (2335,1071)(2349,1067)(2360,1062)
    (2369,1058)(2375,1053)(2381,1049)
    (2387,1044)(2394,1040)(2403,1035)
    (2415,1029)(2432,1023)(2453,1016)
    (2481,1009)(2515,1001)(2556,992)
    (2605,982)(2661,972)(2722,960)
    (2787,949)(2845,939)(2901,929)
    (2955,920)(3005,911)(3051,903)
    (3092,895)(3129,888)(3160,882)
    (3188,876)(3211,871)(3232,866)
    (3250,861)(3265,857)(3280,853)
    (3293,849)(3307,845)(3321,841)
    (3336,838)(3353,834)(3373,830)
    (3395,826)(3421,822)(3451,818)
    (3485,814)(3524,810)(3566,807)
    (3612,804)(3661,801)(3711,800)
    (3762,799)(3828,800)(3886,803)
    (3936,808)(3975,813)(4005,819)
    (4026,825)(4039,831)(4045,837)
    (4047,843)(4046,849)(4043,855)
    (4040,862)(4039,868)(4040,875)
    (4045,882)(4057,890)(4075,899)
    (4101,908)(4136,918)(4180,928)
    (4231,939)(4287,949)(4336,957)
    (4386,965)(4434,972)(4480,979)
    (4525,984)(4567,989)(4608,994)
    (4648,998)(4686,1001)(4723,1004)
    (4760,1007)(4795,1010)(4828,1012)
    (4861,1014)(4891,1016)(4920,1018)
    (4946,1019)(4969,1021)(4989,1022)
    (5005,1022)(5017,1023)(5027,1024)
    (5032,1024)(5036,1024)(5037,1024)
\path(5262,2149)(5261,2150)(5258,2152)
    (5253,2156)(5246,2162)(5236,2171)
    (5222,2182)(5206,2195)(5187,2210)
    (5166,2227)(5142,2245)(5117,2265)
    (5091,2285)(5063,2305)(5035,2326)
    (5006,2347)(4976,2367)(4945,2387)
    (4913,2407)(4881,2426)(4847,2445)
    (4812,2463)(4775,2480)(4738,2496)
    (4700,2511)(4662,2524)(4615,2537)
    (4573,2545)(4537,2550)(4509,2552)
    (4488,2552)(4472,2550)(4462,2546)
    (4456,2541)(4452,2536)(4449,2530)
    (4447,2524)(4444,2517)(4439,2511)
    (4431,2503)(4419,2496)(4401,2488)
    (4379,2479)(4352,2470)(4321,2460)
    (4287,2449)(4250,2436)(4217,2424)
    (4189,2414)(4166,2406)(4147,2400)
    (4133,2397)(4122,2395)(4113,2393)
    (4106,2393)(4099,2392)(4091,2390)
    (4082,2387)(4071,2381)(4058,2372)
    (4042,2360)(4024,2343)(4005,2323)
    (3987,2299)(3972,2274)(3961,2249)
    (3953,2228)(3949,2210)(3946,2196)
    (3945,2186)(3946,2178)(3947,2173)
    (3950,2168)(3952,2163)(3955,2156)
    (3958,2147)(3961,2135)(3965,2118)
    (3969,2096)(3974,2068)(3980,2036)
    (3987,1999)(3995,1965)(4003,1933)
    (4011,1904)(4017,1879)(4022,1857)
    (4026,1840)(4028,1825)(4029,1813)
    (4030,1802)(4031,1793)(4032,1783)
    (4033,1773)(4036,1762)(4041,1749)
    (4048,1734)(4059,1715)(4073,1695)
    (4090,1672)(4112,1648)(4137,1624)
    (4164,1603)(4190,1586)(4214,1572)
    (4234,1562)(4250,1555)(4262,1551)
    (4271,1548)(4278,1548)(4283,1548)
    (4287,1549)(4292,1550)(4297,1552)
    (4306,1553)(4318,1554)(4335,1554)
    (4358,1553)(4387,1552)(4423,1551)
    (4465,1550)(4512,1549)(4553,1549)
    (4594,1551)(4635,1553)(4674,1556)
    (4711,1559)(4747,1563)(4782,1567)
    (4816,1571)(4848,1576)(4880,1581)
    (4911,1586)(4941,1591)(4969,1596)
    (4996,1601)(5021,1606)(5044,1610)
    (5063,1614)(5079,1617)(5092,1620)
    (5101,1622)(5107,1623)(5111,1624)(5112,1624)
\path(462,2299)(463,2300)(466,2303)
    (472,2308)(480,2316)(491,2327)
    (506,2341)(524,2357)(545,2376)
    (569,2397)(595,2419)(622,2442)
    (652,2465)(683,2488)(714,2510)
    (747,2531)(780,2551)(815,2569)
    (851,2584)(888,2598)(926,2608)
    (966,2615)(1007,2618)(1050,2617)
    (1093,2611)(1137,2599)(1176,2583)
    (1213,2564)(1246,2541)(1275,2516)
    (1300,2489)(1320,2462)(1336,2434)
    (1348,2405)(1357,2377)(1362,2348)
    (1366,2320)(1368,2292)(1368,2264)
    (1368,2236)(1368,2209)(1369,2181)
    (1370,2154)(1374,2127)(1379,2100)
    (1387,2073)(1399,2047)(1414,2022)
    (1433,1998)(1457,1976)(1484,1957)
    (1515,1941)(1550,1930)(1587,1924)
    (1625,1924)(1661,1930)(1696,1941)
    (1728,1955)(1757,1973)(1782,1992)
    (1804,2013)(1822,2036)(1837,2059)
    (1850,2083)(1861,2107)(1871,2131)
    (1879,2155)(1887,2180)(1895,2205)
    (1903,2230)(1913,2256)(1924,2281)
    (1937,2307)(1952,2334)(1970,2360)
    (1992,2386)(2017,2412)(2046,2438)
    (2078,2462)(2113,2485)(2149,2506)
    (2187,2524)(2230,2540)(2271,2551)
    (2308,2558)(2343,2560)(2375,2558)
    (2404,2554)(2431,2546)(2455,2537)
    (2478,2525)(2500,2512)(2520,2497)
    (2539,2482)(2556,2466)(2572,2450)
    (2587,2435)(2599,2421)(2610,2408)
    (2619,2397)(2626,2388)(2631,2382)
    (2634,2377)(2636,2375)(2637,2374)
\path(537,3199)(538,3200)(540,3201)
    (543,3204)(548,3208)(556,3214)
    (566,3222)(579,3232)(594,3244)
    (612,3257)(633,3273)(657,3290)
    (682,3308)(710,3327)(740,3347)
    (771,3368)(804,3389)(839,3410)
    (875,3431)(912,3452)(951,3472)
    (992,3492)(1034,3511)(1078,3530)
    (1124,3548)(1173,3565)(1224,3580)
    (1278,3595)(1335,3609)(1394,3621)
    (1457,3631)(1523,3639)(1592,3645)
    (1662,3649)(1729,3650)(1794,3648)
    (1857,3645)(1916,3640)(1970,3633)
    (2020,3625)(2065,3616)(2105,3606)
    (2140,3595)(2170,3584)(2196,3573)
    (2218,3561)(2237,3549)(2253,3537)
    (2266,3524)(2278,3512)(2289,3499)
    (2300,3486)(2310,3474)(2321,3461)
    (2333,3448)(2346,3435)(2362,3422)
    (2381,3409)(2403,3396)(2429,3383)
    (2459,3370)(2494,3357)(2534,3345)
    (2579,3333)(2629,3321)(2683,3310)
    (2742,3299)(2805,3289)(2870,3281)
    (2937,3274)(3011,3268)(3083,3264)
    (3150,3262)(3211,3262)(3266,3264)
    (3314,3267)(3356,3272)(3391,3277)
    (3420,3284)(3443,3292)(3462,3300)
    (3477,3310)(3489,3319)(3498,3329)
    (3505,3339)(3512,3349)(3519,3359)
    (3526,3369)(3535,3379)(3546,3388)
    (3561,3398)(3579,3406)(3602,3414)
    (3630,3421)(3663,3426)(3703,3431)
    (3750,3434)(3802,3436)(3861,3436)
    (3924,3434)(3992,3430)(4062,3424)
    (4128,3417)(4193,3408)(4256,3397)
    (4316,3386)(4374,3374)(4429,3361)
    (4482,3347)(4532,3333)(4580,3319)
    (4626,3304)(4671,3288)(4713,3273)
    (4754,3257)(4794,3241)(4833,3225)
    (4870,3208)(4906,3192)(4940,3176)
    (4973,3160)(5004,3145)(5033,3130)
    (5060,3117)(5085,3104)(5107,3092)
    (5126,3082)(5142,3073)(5156,3066)
    (5167,3060)(5175,3056)(5181,3053)
    (5184,3051)(5186,3049)(5187,3049)
\thinlines
\color{gray2}
\put(2862,2224){\ellipse{450}{1350}}
\color{black}
\put(312,0){\makebox(0,0)[lb]{$-1$}}
\put(5112,0){\makebox(0,0)[lb]{$1$}}
\put(2700,0){\makebox(0,0)[lb]{$t$}}
\put(3450,3950){\makebox(0,0)[lb]{\scriptsize circle of}}
\put(3900,3700){\makebox(0,0)[lb]{\scriptsize reducibles}}
\dashline{40.000}(2950,2950)(3800,3800)
\put(1000,1500){\makebox(0,0)[lb]{\scriptsize degenerate}}
\put(1250,1250){\makebox(0,0)[lb]{\scriptsize reducible}}
\dashline{50.000}(2600,2250)(2150,1500)
\end{picture}
}
\caption{The parametrized moduli space in the case
$b_1=1$}\label{param:b1}
\end{center}
\end{figure}

We now have to understand the nature of these singularities. Due
to property (ii) of the path $\eta$, they may only occur at the
circle of reducibles.

\begin{dfn}
Let $(M,g)$ be a closed, oriented Riemannian 3-manifold.
Moreover, let $\gs$ be a \spinc structure on $M$ and let
$\eta_0\in \cW_k(\gs)$. Then a reducible $\eta_0$-monopole $A$ is
called {\em non-degenerate} if $\cD_A$ is invertible and {\em
slightly degenerate} provided that $\dim_\C\ker\cD_A=1$.
\end{dfn}

In the case at hand, since the reducible part of
$\cM_{\eta_0}(\gs)$ can be parametrized by
$\{A_0+r\go_0\}_{r\in[0,1]}$, part (iii) of Corollary
\ref{eta:path:b=1} ensures that there may only occur reducibles
which are at most slightly degenerate. \\

\noindent\textbf{Local structure near a reducible.} The
difficulty in understanding the parametrized moduli space near the
circle of reducibles stems from the fact that reducible points lie
in a different stratum of the quotient $\cB$ than the irreducible
part of the moduli space. Recall that according to the slice
theorem, the local model is as follows: If $(0,A_0)\in\cC$ is a
reducible configuration, then there exists a $\U_1$-invariant open
neighbourhood $U$ of 0 in the slice, i.e.,
\begin{equation*}\label{U}
U\subset \ker (G_{(0,A)}^*|_{L_1^2})=L^2_1(M,S)\oplus \ker
(d^*|_{L_1^2}),
\end{equation*}
such that $U/\U_1$ models an open neighbourhood of $[0,A_0]$ in
$\cB$. Recall that in accordance with our earlier considerations,
$\U_1$ acts only on the spinor part of $L^2_1(M,S)\oplus \ker
(d^*|_{L_1^2})$.

If $\eta:[-1,1]\to Z_k^2(M,i\R)$ is a path of perturbations such
that $\eta_0$ admits a reducible monopole $(0,A_0)$, then the
above shows that the parametrized moduli space
$\widehat\cM_\eta(\gs)$ near $[0,A_0,0]$ is homeomorphic to
\begin{equation}\label{loc:struc:b=1:1}
\bigsetdef{(\gf,a,t)\in U\times (-\eps,\eps)}{\SW_{\eta_t}(\gf,
A_0+a)=0}/\U_1,
\end{equation}
where $U$ is a suitable $\U_1$-invariant open neighbourhood of 0
in the slice at $(0,A_0)$ given by the slice theorem. To
understand the above zero set, we let
\begin{equation}\label{s:b=1}
s:L^2_1(M,S)\oplus \ker (d^*|_{L_1^2})\times
(-\eps,\eps)\longrightarrow L^2(M,S)\oplus \ker d^*
\end{equation}
be defined by
\begin{align*}
s(\gf,a,t):=&\Proj_{L^2(M,S)\oplus\ker d^*}\circ
\,\SW_{\eta_t}(\gf,A_0+a)\\
=&\Proj_{L^2(M,S)\oplus\ker d^*}\big(\cD_{A_0+a}\gf
,\,\lfrac{1}{2}q(\gf)-*(F_{A_0+a} + \eta_t)\big).
\end{align*}
Therefore, $s(\gf,a,t)$ and $\SW_{\eta_t}(\gf,A_0+a)$ coincide
whenever the 1-form factor of $\SW_{\eta_t}(\gf,A_0+a)$ is
co-closed. Since $s(\gf,a,t)=0$ implies that $\gf$ is harmonic,
Proposition \ref{dastq} shows that this is true at every zero of
$s$, i.e.,
\[
s(\gf,a,t)=0\Longleftrightarrow \SW_{\eta_t}(\gf,A_0+a)=0.
\]
Hence, the local model \eqref{loc:struc:b=1:1} can be replaced by
a neighbourhood of 0 in $s^{-1}(0)/{\U_1}$. Summarizing this, we
have the following:
\begin{lemma}\label{s:lemma}
Let $(M,g)$ be a closed, oriented Riemannian 3-manifold, and let
$\eta:[-1,1]\to Z_k^2(M,i\R)$ be a a path of perturbations such
that $\eta_0$ admits a reducible monopole $(0,A_0)$. Then there
exists a $\U_1$-invariant neighbourhood $U$ of 0 in
$L^2_1(M,S)\oplus \ker (d^*|_{L_1^2})$ such that
\[
\bigsetdef{(\gf,a,t)\in U\times (-\eps,\eps)}{s(\gf,a,t)=0} / \U_1
\]
is homeomorphic to an open neighbourhood of $[0,A_0,0]$ in
$\widehat\cM_\eta(\gs)$.
\end{lemma}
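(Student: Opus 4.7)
The plan is to combine the slice theorem (Theorem \ref{slice:thm}) with Proposition \ref{dastq}: the slice theorem identifies a local model for $\cB$ near a reducible configuration, while Proposition \ref{dastq} ensures that at a zero of $s$ the $1$-form part of $\SW_{\eta_t}(\gf,A_0+a)$ is already co-closed and hence coincides with its own projection to $\ker d^*$.

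First I would invoke Theorem \ref{slice:thm} at the reducible configuration $(0,A_0)$. Its stabilizer is the group of constants $\U_1 \subset \cG_2$, and the corresponding slice is precisely $S_{(0,A_0)} = L_1^2(M,S)\oplus \ker(d^*|_{L_1^2})$. The theorem produces a $\U_1$-invariant open neighbourhood $U$ of $0$ in this slice together with a homeomorphism
\[
(\cG_2\times U)/\U_1 \;\longrightarrow\; \pi(\cG_2\times U)\subset\cC_1
\]
onto a $\cG_2$-saturated open neighbourhood of $(0,A_0)$. Descending to $\cB$ and taking the product with $(-\eps,\eps)\subset[-1,1]$, this gives a homeomorphism of $(U\times(-\eps,\eps))/\U_1$ onto an open neighbourhood of $[0,A_0,0]$ in $\cB\times[-1,1]$. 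Since $\SW_{\eta_t}$ is $\cG_2$-equivariant, the portion of $\widehat\cM_\eta(\gs)$ inside this neighbourhood corresponds under the homeomorphism to the $\U_1$-quotient of $\{(\gf,a,t)\in U\times(-\eps,\eps):\SW_{\eta_t}(\gf,A_0+a)=0\}$.

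The crux is then to verify the equivalence $\SW_{\eta_t}(\gf,A_0+a)=0 \iff s(\gf,a,t)=0$ on $U\times(-\eps,\eps)$. The direction ``$\Rightarrow$'' is trivial since $s$ is merely a projection of $\SW_{\eta_t}(\gf,A_0+a)$. For ``$\Leftarrow$'', I would observe first that the spinor component $\cD_{A_0+a}\gf$ already lies in $L^2(M,S)$ and is untouched by the projection, so $s(\gf,a,t)=0$ already forces $\cD_{A_0+a}\gf=0$. Proposition \ref{dastq} then gives $d^* q(\gf)=0$. Moreover, combining the identity $d^*=-*d*$ on $1$-forms in dimension three with the Bianchi identity $dF_{A_0+a}=0$ and the closedness assumption $d\eta_t=0$ yields
\[
d^*\bigl(*(F_{A_0+a}+\eta_t)\bigr) = -*d(F_{A_0+a}+\eta_t) = 0.
\]
Hence the full $1$-form component $\lfrac{1}{2}q(\gf)-*(F_{A_0+a}+\eta_t)$ of $\SW_{\eta_t}(\gf,A_0+a)$ is itself co-closed, so it coincides with its projection to $\ker d^*$, which vanishes by hypothesis.

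Combining the two steps, the $\U_1$-invariant zero loci of $s$ and of $(\gf,a,t)\mapsto \SW_{\eta_t}(\gf,A_0+a)$ inside $U\times(-\eps,\eps)$ agree, and the slice theorem transports the quotient onto an open neighbourhood of $[0,A_0,0]$ in $\widehat\cM_\eta(\gs)$. The only genuine obstacle is the equivalence $s=0\Leftrightarrow\SW_{\eta_t}=0$, which rests squarely on Proposition \ref{dastq} together with the closedness of $F$ and $\eta_t$; the remainder is a direct application of the slice theorem.
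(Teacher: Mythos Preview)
Your proposal is correct and follows essentially the same route as the paper: apply the slice theorem at the reducible $(0,A_0)$ to obtain the local model $U/\U_1\times(-\eps,\eps)$, and then use Proposition~\ref{dastq} (together with closedness of $F_{A_0+a}$ and $\eta_t$) to show that $s(\gf,a,t)=0$ forces the full $1$-form component of $\SW_{\eta_t}(\gf,A_0+a)$ to be co-closed, hence to vanish. If anything, you are slightly more explicit than the paper about why $*(F_{A_0+a}+\eta_t)$ is co-closed.
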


For the remainder of this section, we consider the situation of
the last paragraph, i.e., we shall always assume that $b_1=1$ and
that $\eta$ is chosen as in Corollary \ref{eta:path:b=1}.
Moreover, we fix a reducible $\eta_0$-monopole $(0,A_0)$, and let
$\go_0$ denote the generator of reducible $\eta_0$-monopoles. To
study $\widehat \cM_\eta(\gs)$ near $[0,A_0,0]$, we compute that
the differential of $s$ at 0,
\[
D_0s:L^2_1(M,S)\oplus \ker (d^*|_{L_1^2})\oplus \R \to
L^2(M,S)\oplus \ker d^*,
\]
is given by
\begin{equation*}
\begin{split}
D_0s(\gf,a,t)&=\Proj_{L^2(M,S)\oplus\ker d^*}
\big(\cD_{A_0}\gf,-* da -* (D_0\eta)(t)\big) \\
&= \big(\cD_{A_0}\gf, -* da - t* \eta'_0\big),
\end{split}
\end{equation*}
where we are using that $* da$ and $*\eta_0'$ are co-closed.
Observe that $da+t\cdot\eta'_0=0$ if and only if $da=0$ and
$t=0$. This is because $\Lscalar{\eta'_0}{* \go_0}>0$ so that
necessarily $[\eta'_0]\neq 0$, whereas $[da]=0$. We conclude that
\begin{align*}
\ker D_0s &= \ker \cD_{A_0}\oplus \ker d|_{\ker d^*}\oplus
\{0\}\\
&=\ker \cD_{A_0}\oplus i\cH^1(M)\oplus \{0\}.
\end{align*}
The cokernel of $D_0s$ is given by the orthogonal complement of
$\im D_0s$ in $L^2(M,S)\oplus \ker d^*$. Since $\ker d^*$ is
spanned by $\eta_0'$ and the image of $* d$, we deduce that
\[
\coker D_0s= \coker \cD_{A_0} \oplus \{0\}=\ker \cD_{A_0}
\oplus\{0\}.
\]

\begin{lemma}\label{loc:red:b=1:nd}
If $A_0$ is non-degenerate, then a neighbourhood of $[0,A_0,0]$ in
the parametrized moduli space $\widehat{\cM}_\eta(\gs)$ is
homeomorphic to a neighbourhood of $A_0$ in the circle of
reducibles.
\end{lemma}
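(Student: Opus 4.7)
The plan is to apply the implicit function theorem to the map $s$ from \eqref{s:b=1} using the kernel/cokernel data already computed. Non-degeneracy of $A_0$ means $\ker \cD_{A_0}=0$, so those formulas collapse to
\[
\ker D_0 s=\{0\}\oplus i\cH^1(M)\oplus\{0\},\qquad \coker D_0 s=0.
\]
In particular $D_0 s$ is surjective with a one-dimensional kernel (recall $b_1=1$), so the implicit function theorem produces a one-dimensional $C^3$-submanifold $N$ of $L^2_1(M,S)\oplus\ker(d^*|_{L_1^2})\times(-\eps,\eps)$ which coincides with $s^{-1}(0)$ on a neighbourhood of the origin and whose tangent space at $0$ is $i\cH^1(M)$, sitting entirely in the 1-form factor.

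Next I would exhibit an explicit curve landing in $s^{-1}(0)$. Because $A_0$ is a reducible $\eta_0$-monopole we have $F_{A_0}=-\eta_0$, and since $\go_0\in i\cH^1(M;g)$ is both closed and co-closed the assignment $r\mapsto(0,\,r\go_0,\,0)$ is a smooth curve into the slice satisfying
\[
s(0,r\go_0,0)=\bigl(0,\,-*(F_{A_0}+r\,d\go_0+\eta_0)\bigr)=0
\]
for all small $r$. Its velocity at $r=0$ is $\go_0$, a non-zero element of $\ker D_0 s$. Uniqueness in the implicit function theorem then forces this curve to coincide with $N$ near the origin. Consequently $N$ sits entirely in the reducible stratum $\{\gf=0\}$ and in the slice $t=0$, and under the parametrization $r\mapsto A_0+r\go_0$ it matches precisely an open neighbourhood of $A_0$ in the circle of reducibles.

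Finally, since every point of $N$ has vanishing spinor component and the $\U_1$-action from \eqref{gg:on:tangent} acts only on the spinor factor, the action on $N$ is trivial, so $N/\U_1=N$. Combined with Lemma \ref{s:lemma} this yields the desired homeomorphism between a neighbourhood of $[0,A_0,0]$ in $\widehat{\cM}_\eta(\gs)$ and a neighbourhood of $A_0$ in the circle of reducibles. The only mild subtlety is to ensure that the arc $N$ really fits inside the $\U_1$-invariant slice neighbourhood $U$ provided by Lemma \ref{s:lemma}; this causes no trouble, since $N$ is a smooth arc through the origin and one may shrink $\eps$ and $U$ simultaneously.
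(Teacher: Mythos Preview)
Your proof is correct and follows essentially the same approach as the paper: both use the non-degeneracy hypothesis to conclude that $D_0s$ is surjective with one-dimensional kernel $i\cH^1(M)$, apply the implicit function theorem, and then identify the resulting one-dimensional zero set with the explicit reducible arc $r\mapsto(0,r\go_0,0)$, noting that the $\U_1$-action is trivial there. Your version is slightly more explicit in verifying that this arc lies in $s^{-1}(0)$ and in handling the passage through $\U_1$, but the argument is the same.
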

\begin{proof}
Recall that the circle of reducibles near $[0,A_0,0]$ is given by
\[
\bigsetdef{[0,A_0+r\go_0,0]\in\cB\times [-1,1]}{r\in (-\gd,\gd)},
\]
where $\gd<\frac12$, and $\go_0$ is the generator of reducible
$\eta_0$-monopoles. In terms of the local model $s^{-1}(0)/\U_1$,
this corresponds to the line
\begin{equation}\label{loc:red:b=1:nd:1}
\bigsetdef{(0,r\go_0,0)\in L_1^2(M,S)\oplus \ker
d^*|_{L_1^2}\times (-\eps,\eps)}{r\in (-\gd,\gd)},
\end{equation}
where we are using that $\U_1$ acts only on the spinor part,
hence has no effect on the above set. As $\ker \cD_{A_0}=0$ at a
non-degenerate monopole, the differential $D_0s$ is surjective
and has kernel equal to $i\cH^1(M)$. As this is 1-dimensional,
the implicit function theorem implies that
\eqref{loc:red:b=1:nd:1} is exactly the zero set of $s$ near 0.
\end{proof}
\begin{remark*}
The fact that $\ker D_0s= i\cH^1(M)$ motivates the remark
following \eqref{T:red} that $i\cH^1(M)$ is the tangent space to
the reducible part of the moduli space.
\end{remark*}

\begin{prop}\label{loc:red:b=1}
Let $A_0$ be slightly degenerate and fix $\psi_0\in L^2_1(M,S)$ of
norm 1 spanning $\ker \cD_{A_0}$. Then in a neighbourhood of 0 the
following holds:
\[
s(\gf,a,t)=0 \Longleftrightarrow (\gf,a,t)=
\begin{cases}
(0,r\go_0,0) &\text{ if } \gf=0 \\
\big(z\psi_0,g(z)\go_0,0\big) + f(z,g(z)) &\text{ if } \gf\neq 0
\end{cases}\;,
\]
with small $(z,r)\in\C\times \R$, and where $\go_0$ is the
generator of reducible $\eta_0$-monopoles. The map $f$ is a
$\U_1$-equivariant $C^3$-map
\[
f:\C\times \R \to (\ker \cD_{A_0}\oplus i\cH^1)^\perp,
\]
where the orthogonal complement is taken in $L_1^2(M,S)\oplus
\ker (d^*|_{L_1^2})\oplus \R$. The map $g:\C\to \R$ is $C^1$ and
$\U_1$-invariant. Both maps vanish of second order in 0.
\end{prop}

\begin{proof}
We will use the so-called local Kuranishi technique to study the
zero set of $s$ near 0. Let
\[
\gF:=\Proj_{\im D_0s}\circ s\quad \text{ and } \quad
\Psi:=\Proj_{\coker D_0s}\circ s.
\]
As $s=\gF\oplus \Psi$, we have to find the common zeros of $\gF$
and $\Psi$. Since we have forced $\gF$ to have a surjective
differential at 0, we can apply the implicit function theorem to
obtain a $C^3$-map
\[
f:\ker \cD_{A_0}\oplus i\cH^1\to (\ker \cD_{A_0}\oplus
i\cH^1)^\perp
\]
such that the graph of $f$ locally describes the zero set of
$\gF$. More precisely, using coordinates $(z,r)$ with respect to
$(\psi_0,\go_0)$ on $\ker \cD_{A_0}\oplus i\cH^1$, we have for
small $(\gf,a,t)$ that
\[
\gF(\gf,a,t)=0 \Longleftrightarrow (\gf,a,t)=(z\psi_0,r\go_0,0)+
f(z,r)\,.
\]
The first observation is that $f(0,r)=0$ for every $r$. This
follows from the fact that $\gF(0,r\go_0,0)=0$. Moreover, as the
zero set of $\gF$ is $\U_1$-invariant, we infer that $f$ is
necessarily $\U_1$-equivariant, i.e.,
\begin{equation*}
f(\gl z,r)=\gl f(z,r),\quad \gl \in\U_1.
\end{equation*}
Recall that on the right hand side, $\gl$ only operates on the
spinor part.

To extract information about the zeros of $s$ we now investigate
\[
\Hat \Psi:\C\times \R\to \ker \cD_{A_0},\quad \Hat\Psi(z,r):=
\Psi\big((z\psi_0,r\go_0,0)+ f(z,r)\big).
\]
This map is called the \textit{Kuranishi obstruction map}. It
reduces the original infinite dimensional problem of finding the
zeros of $s$ to finite dimensions. From the corresponding
property of $f$ it is immediate that $\Hat\Psi(0,r)=0$ for every
$r$ and that $\Hat \Psi$ is a $\U_1$-equivariant map. Note that
the latter implies that for fixed $r$, the map $\Hat \Psi(.,r)$ is
complex differentiable in 0. Therefore, we can factor out $z$
writing
\[
\Hat\Psi(z,r)=z\cdot \Hat\Psi_1(z,r),\quad \text{where }
\Hat\Psi_1(z,r):=\begin{cases} \frac{\Hat\Psi(z,r)}{z}, &z\neq
0,\\ \frac{\partial}{\partial z}\big|_{(0,r)}\Hat\Psi(z,r), & z=0.
\end{cases}
\]
Since $\Hat\Psi(z,r)$ is $C^3$, the map $\Hat\Psi_1$ is at least
$C^1$. This easily follows from the Taylor Formula. We now claim
that
\begin{equation}\label{kappa}
\lfrac{\partial}{\partial r}\big|_{(0,0)}\Hat\Psi_1(z,r)
=\gk\cdot\psi_0,\quad \text{ where }\quad\gk:=
\lfrac{1}{2}\Lscalar{c(\go_0)\psi_0}{\psi_0}\,.
\end{equation}
Observe that $\frac{\partial}{\partial
r}\big|_{(0,0)}\Hat\Psi_1(z,r)= \frac{\partial^2}{\partial
r\partial z}\big|_{(0,0)}\Hat\Psi(z,r)$. Making use of the
definition of $\Psi$ and letting $\Pi:=\Proj_{\ker\cD_{A_0}}$ we
have
\begin{align*}
\lfrac{\partial}{\partial z}\big|_{(0,r)}&\Hat\Psi(z,r) =
\lfrac{\partial}{\partial z}\big|_{(0,r)} \Pi\circ s\circ
\big((z\psi_0,r\go_0,0) + f(z,r)\big)\\
&=\Pi\circ D_{(0,r\go_0,0)}s\circ \big(\lfrac{\partial}{\partial
z}\big|_{(0,r)}(z\psi_0,r\go_0,0) + \lfrac{\partial}{\partial
z}\big|_{(0,r)} f(z,r) \big)\\
&=\Pi\circ \big(\cD_{A_0+r\go_0}\psi_0\big) + \Pi\circ
\lfrac{\partial}{\partial z}\big|_{(0,r)} \big(
\cD_{A_0+a(z,r)}\gf(z,r)\big),
\\
\end{align*}
where $\gf(z,r)$ and $a(z,r)$ denote the spinor and the 1-form
part of $f(z,r)$ respectively. Note that we have written down only
the spinor part of $D_{(0,r\go_0,0)}s$ which is sufficient since
the 1-form part vanishes when $\Pi=\Proj_{\ker \cD_{A_0}}$ is
applied. Since $\psi_0\in \ker \cD_{A_0}$, we find that
\[
\Pi\circ \big(\cD_{A_0+r\go_0}\psi_0\big) =
\LScalar{\cD_{A_0+r\go_0}\psi_0}{\psi_0} =
\lfrac{1}{2}\Lscalar{c(r\go_0)\psi_0}{\psi_0}= \gk\cdot r.
\]
The second term in the above expression of
$\lfrac{\partial}{\partial z}\big|_{(0,r)}\Hat\Psi(z,r)$ is equal
to zero because
\begin{align*}
\Pi\circ &\lfrac{\partial}{\partial z}\big|_{(0,r)} \big(
\cD_{A_0+a(z,r)}\gf(z,r)\big)= \lfrac{\partial}{\partial
z}\big|_{(0,r)} \LScalar{ \cD_{A_0+a(z,r)}\gf(z,r)}{\psi_0}\\ &=
\lfrac{\partial}{\partial
z}\big|_{(0,r)}\LScalar{\cD_{A_0}\gf(z,r)}{\psi_0}
+\lfrac{1}{2}\lfrac{\partial}{\partial
z}\big|_{(0,r)}\LSCalar{c(a(z,r))\gf(z,r)}{\psi_0} \\
&= \lfrac{\partial}{\partial
z}\big|_{(0,r)}\LScalar{\cD_{A_0}\gf(z,r)}{\psi_0} +
\lfrac{1}{2}\LSCalar{c\big(\lfrac{\partial}{\partial
z}\big|_{(0,r)} a(z,r)\big)\gf(0,r) \\ &\hspace{33ex} +
c\big(a(0,r)\big)\big(\lfrac{\partial}{\partial z}
\big|_{(0,r)}\gf(z,r)\big)}{\psi_0}.
\end{align*}
Here, the first summand vanishes as $\psi_0$ is harmonic and
$\cD_{A_0}$ is formally self-adjoint, while the second and the
third term equal zero since $a(0,r)=\gf(0,r)=0$. For this recall
that $f(0,r)=0$. Putting these computations together proves
\eqref{kappa}. Note that we have also proved that
$\Hat\Psi_1(0,0)=0$.

The next observation is that
\begin{equation}\label{sign:kappa}
\sgn \gk = \SF(\cD_{A_0+r\go_0};\;{\scriptstyle |r|\ll 1} )\,.
\end{equation}
This is because the family $\{\cD_{A_0+r\go_0}\}$ has only simple
crossings so that
\[
\SF(\cD_{A_0+r\go_0};\;{\scriptstyle |r|\ll 1})=
\sgn\LScalar{\lfrac{d}{dr}\big|_{r=0}\cD_{A_0+r\go_0}\psi_0}{\psi_0}
= \sgn\lfrac{1}{2}\LScalar{c(\go_0)\psi_0}{\psi_0}\,.
\]
In particular, $\gk\neq 0$ since the family is transversal.

Combining this information with \eqref{kappa} shows that
$\Hat\Psi_1(0,0)=0$ and $\lfrac{\partial}{\partial
r}\big|_{(0,0)}\Hat\Psi_1(z,r)\neq 0$. This allows us to apply
the implicit function theorem to the map $\Hat\Psi_1$ near 0 which
produces a $C^1$-map $g:\C\to\R$ such that in a neighbourhood of
0,
\[
\Hat \Psi_1(z,r) = 0\; \Longleftrightarrow\; r=g(z)\,.
\]
By definition of $\Hat\Psi$ and according to the equivariance
property of $f$, one readily ensures that the map $g$ must be
$\U_1$-invariant.

Putting all pieces of information together yields
\begin{align*}
s(\gf,a,t)=0 &\Longleftrightarrow \gF(\gf,a,t)=0\;\text{ and
}\;\Psi(\gf,a,t)=0 \\
& \Longleftrightarrow (\gf,a,t)=\big(z\psi_0,r\go_0,0\big) +
f(z,r)\;\text{ and }\;\Hat\Psi(z,r)=0 \\
&\Longleftrightarrow (\gf,a,t)=
\begin{cases}
(0,r\go_0,0) &\text{ if } \gf=0 \\
\big(z\psi_0,g(z)\go_0,0\big) + f(z,g(z)) &\text{ if } \gf\neq 0.
\end{cases}
\end{align*}

From the fact that $g$ is a $\U_1$-invariant map satisfying
$g(0)=0$, we immediately obtain that it vanishes of second order.
Moreover, if we write
\[
f(z,g(z))=\big(\gf(z),a(z),t(z)\big),
\]
then $\U_1$-equivariance of $f$ means
\[
\gf(\gl z)=\gl\gf(z),\quad a(\gl z)=a(z)\quad\text{and} \quad
t(\gl z)=t(z),\quad \gl\in\U_1.
\]
Hence, $a$ and $t$ are also $\U_1$-invariant and thus vanish of
second order in 0. To compute $\gf'(0)$ we note that
\[
\begin{split}
0 & =D_0s\big(\lfrac{\partial}{\partial z}
\big|_{z=0}(z\psi_0,g(z)\go_0,0)+\lfrac{\partial}{\partial z}
\big|_{z=0}f(z,g(z))\big)\\
&=\big(\cD_{A_0}\psi_0+\cD_{A_0} \gf'(0),0\big)= \big(\cD_{A_0}
\gf'(0),0\big),
\end{split}
\]
where we are using that $a'(0)=g'(0)=0$ and that $\psi_0$ is
harmonic with respect to $\cD_{A_0}$. We deduce that
$\cD_{A_0}\gf'(0)=0$. On the other hand, $\gf'(0)\perp
\ker\cD_{A_0}$ for the image of $f$ is contained in $(\ker
\cD_{A_0}\oplus i\cH^1)^\perp$. Hence, $\gf'(0)=0$ which shows
that $\gf$ also has a zero of order 2 in 0.
\end{proof}

\begin{remark*}
From the $\U_1$-equivariance and $\U_1$- invariance properties
and the fact that $f$ and $g$ vanish of second order in 0 one
deduces from the above proposition that a neighbourhood of
$[0,A_0,0]$ in $\widehat{\cM}_\eta(\gs)$ is homeomorphic to a
neighbourhood of 0 in
\[
\bigsetdef{(z,x)\in \R_+^0\times \R}{z=0 \text{ or } x=0},
\]
where $\R_+^0:= \R_+\cup \{0\}$. The branch $\R_+\times \{0\}$
corresponds to the irreducible part of the parametrized moduli
space near $[0,A_0,0]$.
\end{remark*}

The next result shows that the spectral flow of
$\{\cD_{A_0+r\go_0}\}$ at $r=0$ determines whether the
irreducible branch hits the circle of reducibles coming from the
left or from the right. More precisely,

\begin{prop}\label{orient:b=1}
If $A_0$ is a slightly degenerate $\eta_0$-monopole, then the
following holds:
\begin{itemize}
\item If $\,\SF(\cD_{A_0+r\go_0}; {\scriptstyle |r|\ll 1})=-1$, then the
irreducible part of $\widehat{\cM}_\eta(\gs)$ near $[0,A_0,0]$ is
entirely contained in $\cB^*\times [-1,0)$.
\item If $\,\SF(\cD_{A_0+r\go_0}; {\scriptstyle |r|\ll 1})=1$, then
the irreducible branch lies in $\cB^*\times (0,1]$.
\end{itemize}
Moreover, if $\,[\psi,A,t]$ is an element of the irreducible
branch close\footnote{The proof will provide a more precise
meaning for that.} to $[0,A_0,0]$, then
\[
\eps(\psi,A)=1.
\]
\end{prop}

\begin{proof}
As the path $\eta$ is chosen according to Corollary
\ref{eta:path:b=1}, we infer from part (i) of this result that
\[
\eta_t\in\cP_k^\pm(\gs)\;\Longleftrightarrow\;\pm\,t>0.
\]
Let $f$ and $g$ be as in Proposition \ref{loc:red:b=1}, and write
$f(z,g(z))=\big(\gf(z),a(z),t(z)\big)$. Then the first first part
of the proposition will be established if we prove that that
\[
\SF(\cD_{A_0+r\go_0}; {\scriptstyle |r|\ll 1}) =\pm 1
\;\Longleftrightarrow\; \eta_{t(z)}\in \cP^\pm_k(\gs)\quad \text{
for }|z|\ll 1.
\]
By definition of $\go_0$, this amounts to the same as proving that
\begin{equation}\label{eta:chamber}
\SF(\cD_{A_0+r\go_0}; {\scriptstyle |r|\ll 1})= \sgn
\LScalar{\eta_{t(z)}-\eta_0}{*\go_0} \quad\text{ for } |z|\ll 1.
\end{equation}
We recall that $\big(z \psi_0+\gf(z),A_0+g(z)\go_0+a(z)\big)$ is
an $\eta_{t(z)}$-monopole. In particular,
\[
\lfrac12 q\big(z\psi_0 + \gf(z)\big) -*\big(F_{A_0}+ g(z) d\go_0
+ d a(z) + \eta_{t(z)}\big)=0.
\]
As $F_{A_0}=-\eta_0$, taking the inner product with $* \go_0$
shows that
\begin{align*}
\LScalar{\eta_{t(z)}-\eta_0}{*\go_0} &= \LSCalar{*
\lfrac{1}{2}q\big(z\psi_0 + \gf(z)\big) - da(z)}{*\go_0}
\\ &=\LSCalar{\lfrac12 q\big(z\psi_0 + \gf(z)\big)}{\go_0}\,,
\end{align*}
In the last line, we have used that $da(z)$ is orthogonal to the
harmonic form $* \go$. Invoking Proposition \ref{q:prop}, we find
that
\begin{align*}
\LScalar{\eta_{t(z)}-&\eta_0}{*\go_0}
=\lfrac{1}{4}\LSCalar{c(\go_0)(z\psi_0+
\gf(z))}{z\psi_0+\gf(z))}\\
& = \lfrac{1}{2}|z|^2\gk + \lfrac{1}{2}\Re \LScalar{z
c(\go_0)\psi_0}{\gf(z )} + \lfrac{1}{4}\LScalar{c(\go_0)\gf(z
)}{\gf(z )}.
\end{align*}
Here, we employ the term $\gk$ as defined in \eqref{kappa}. Since
$\gf$ vanishes of second order in 0, the right hand side of the
above equation equals $\lfrac{1}{2}|z|^2\gk $ up to a term which
vanishes of order three at $z=0$. From the Taylor expansion of
the right hand side we thus infer that the signs of
$\LScalar{\eta_{t(z)}-\eta_0}{*\go_0}$ and $\gk$ coincide for
small $z$. On the other hand, we have already observed in
\eqref{sign:kappa} that $\sgn \gk =\SF(\cD_{A_0+r\go_0};
{\scriptstyle |r|\ll 1})$. Putting these observations together
proves \eqref{eta:chamber} and thus the first part of the
proposition.\\

Given $[\psi,A,t]$ in the irreducible branch close to $[0,A_0,0]$,
we now want to compute $\eps(\psi,A)$. Recall that this number is
defined as the orientation transport along the family
$\{T_{(s\psi,A)}\}$ associated to the affine path from $(0,A)$ to
$(\psi,A)$. Part (i) of Lemma \ref{eps:prop} shows that in order
to compute $\eps(\psi,A)$ we can equally use the affine path from
$(0,A_0)$ to $(\psi,A)$. This path is, however, homotopic to the
path of configurations associated to the part of the irreducible
branch connecting $[0,A_0,0]$ with $[\psi,A,t]$, i.e., to the path
\[
[0,z_0]\to \cC(\gs),\quad z \mapsto \big(\psi(z),A(z)\big),
\]
with
\begin{equation}\label{orient:b=1:1}
\big(\psi(z),A(z)\big):=\big(z\psi_0,A_0+g(z)\go_0\big) +
\big(\gf(z),a(z)\big).
\end{equation}
Here, $z_0$ is chosen in such a way that
$[\psi(z_0),A(z_0),t(z_0)]=[\psi,A,t]$. Recall that $\gf(z)$ and
$a(z)$ denote the spinor and the 1-form part of $f(z,g(z))$
respectively. We now want to compute the orientation transport
along $T_z := T_{(\psi(z),A(z))}$ for $z\in [0,z_0]$. To simplify
the corresponding calculation, we make the following
consideration: As in the proof of Theorem \ref{b>1}, we can
represent the tangent space to the irreducible part of the
paramterized moduli space at $[\psi,A,t]$ via
\[
T_{[\psi,A,t]}\widehat\cM_{\eta}^* \;\cong\; \ker
\big(T_{(\psi,A)} + K_{(\psi,A,t)}\big)\subset L_1^2(M,E\oplus
i\R)\oplus \R,
\]
where $K_{(\psi,A,t)}(x):= x\cdot \frac d{dt}|_t
\SW_{\eta_t}(\psi,A)$. This shows that $T_{(\psi,A)}$ is
injective whenever the projection
$T_{[\psi,A,t]}\widehat\cM_{\eta}^*\to \R$ is an isomorphism.
According to Sard's Theorem, this is true for $t$ in a dense open
subset of $[-1,1]$ because the projection $\widehat\cM_\eta^*\to
[-1,1]$ is a $C^3$ map between 1-dimensional manifolds.

This implies that by possibly choosing $[\psi,A,t]$ closer to
$[0,A_0,0]$ it can be guaranteed that for each $0<z \le z _0$,
the operator $T_z := T_{(\psi(z),A(z))}$ is invertible.
Therefore, the only contribution to the orientation transport
along $T_z$ is encoded in the spectral flow of $T_z$ at $z=0$.
The latter can be understood by means of the crossing operator
\[
C_T(0):=\Proj_{\ker T_0}\circ \big(\lfrac{d}{dz}\big|_{z=0}
T_z\big)\big|_{\ker T_0}\,,
\]
cf. Definition \ref{transversal}. The operator $T_z$ is
explicitly given by
\begin{multline*}
T_z\begin{pmatrix} \gf \\ a \\ f \end{pmatrix} =
\begin{pmatrix}
\cD_{A_0} &0 &0\\
0 &-* d & 2d \\
0 &2d^* &0
\end{pmatrix}\begin{pmatrix} \gf \\ a \\ f \end{pmatrix}\\ +
\begin{pmatrix}
\lfrac12 c\big(g(z)\go_0+a(z)\big)\gf + \lfrac{1}{2}c(a)\psi(z) -
f\psi(z) \\
q\big(\psi(z),\gf\big)\\
-i\Im \Scalar{\gf}{\psi(z)}
\end{pmatrix}.
\end{multline*}
Note that the first term does not depend on $z$. According
to\eqref{orient:b=1:1}, we have that $\psi'(0)=\psi_0$.
Furthermore, $a(z)$ and $g(z)$ vanish of second order in 0. We
thus conclude
\[
\big(\lfrac{d}{dz}\big|_{z=0} T_z\big)
\begin{pmatrix} \gf \\ a \\ f \end{pmatrix}
= \begin{pmatrix} \lfrac{1}{2}c(a)\psi_0 - f\psi_0 \\
q\big(\psi_0,\gf\big)\\
-i\Im \Scalar{\gf}{\psi_0}
\end{pmatrix}.
\]
Recalling that $\ker T_0 = \ker \cD_{A_0} \oplus \cH^1(M)\oplus
i\R$, we employ real coordinates
\[
(u,v,x,y)\mapsto \Big((u+iv)\psi_0,\,x
\lfrac{\go_0}{\|\go_0\|},\,iy\Big)\in \ker T_0.
\]
The operator $\big(\lfrac{d}{dz}\big|_{z=0} T_z\big)\big|_{\ker
T_0}$ is then represented by
\[
(u,v,x,y)\mapsto
\begin{pmatrix}
\lfrac{1}{2}c(x\lfrac{\go_0}{\|\go_0\|})\psi_0 - iy\psi_0 \\
q\big(\psi_0,u\psi_0+vi\psi_0\big)\\
-i\Im \Scalar{u\psi_0+vi\psi_0}{\psi_0}
\end{pmatrix}=\begin{pmatrix}
\lfrac{1}{2}c(x\lfrac{\go_0}{\|\go_0\|})\psi_0 - iy\psi_0 \\
u q(\psi_0)\\
-i v |\psi_0|^2
\end{pmatrix}.
\]
On the other hand, the orthogonal projection $\Proj_{\ker T_0}$
is given by
\[
(\gf,a,f)\mapsto \Big(\Re\LScalar{\gf}{\psi_0}\psi_0+
\Re\LScalar{\gf}{i\psi_0}i\psi_0,\,\LScalar{a}{\lfrac{\go_0}{\|\go_0\|}}
\lfrac{\go_0}{\|\go_0\|},\,i\|f\|_{L^2} \Big)
\]
since $(\psi_0,i\psi_0,\lfrac{\go_0}{\|\go_0\|},i)$ forms an
orthonormal basis of $\ker T_0$. With respect to this basis, the
operator $C_T(0)$ corresponds to
\[
(u,v,x,y)\mapsto
\begin{pmatrix}
\Re\LScalar{\lfrac{1}{2}c\big(x\lfrac{\go_0}{\|\go_0\|}
\big)\psi_0}{\psi_0}\\
\Re\LScalar{-iy\psi_0}{i\psi_0}\\
\LScalar{u q(\psi_0)}{\lfrac{\go_0}{\|\go_0\|}}\\
-v\|\psi_0\|_{L^2}\end{pmatrix}=
\begin{pmatrix}
x\lfrac{\gk}{\|\go_0\|}\\
-y\\
u\lfrac{\gk}{\|\go_0\|}\\
-v \end{pmatrix}.
\]
Note that we have applied Proposition \ref{q:prop} to express the
third row in terms of $\gk$.

We now conclude that the crossing operator has the matrix
description
\[
C_T(0)\begin{pmatrix} u\\ v\\ x\\ y\\ \end{pmatrix}=
\begin{pmatrix}
0 &0 &\lfrac{\gk}{\|\go_0\|} &0 \\
0 &0 &0 &-1 \\
\lfrac{\gk}{\|\go_0\|} &0 &0 &0\\
0 &-1 &0 &0
\end{pmatrix}
\begin{pmatrix} u \\ v \\ x \\ y \end{pmatrix}.
\]
Therefore, $\det C_T(0) = \lfrac{\gk^2}{\|\go_0\|^2}>0$ which
proves that the crossing operator at the starting point of the
path $T_z$ can only have an even number of negative eigenvalues.
Therefore, the spectral flow of $T_z$ is even. According to the
orientation transport formula of Theorem \ref{OT=SF}, we may
finally deduce that $\eps(T_z)=1$ which proves the claimed
formula.
\end{proof}

After having analysed the local structure of the parametrized
moduli space near the circle of reducibles, we are now able to
prove the main result of this section.

\begin{theorem}[Wall-Crossing Formula]\label{b=1}
\index{Seiberg-Witten invariant!wall-crossing formula} Let $M$ be
a closed, oriented 3-manifold with first Betti number $b_1=1$.
Suppose that $M$ is equipped with a \spinc structure $\gs$ and an
orientation of the second cohomology. For an arbitrary metric $g$
on $M$ let $\eta_{\pm 1}\in\cP_k^\pm(\gs)$ be suitable
perturbations with respect to $g$. Then for any path $\eta$
connecting $\eta_{-1}$ with $\eta_1$ and satisfying the
properties of Corollary \ref{eta:path:b=1},
\begin{equation}\label{b=1:form}
\sw_{\eta_1}(\gs)-\sw_{\eta_{-1}}(\gs) =
\SF(\cD_{A_0+r\go_0};\;{\scriptstyle r\in[0,1)}),
\end{equation}
where $A_0$ is an arbitrary reducible $\eta_0$-monopole, while
$\go_0$ is the generator of reducible $\eta_0$ monopoles.
Therefore,
\begin{equation*}
\sw^+(\gs) - \sw^-(\gs) = -\frac{1}{2} \int_{M} [\lfrac{1}{4\pi
i}\go_0]\wedge c(\gs).
\end{equation*}
\end{theorem}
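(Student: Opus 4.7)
The plan is to mirror the strategy of Theorem \ref{b>1}: we interpret the parametrized moduli space $\widehat{\cM}_\eta(\gs)$ associated to a path $\eta$ chosen as in Corollary \ref{eta:path:b=1} as a singular cobordism between $\cM_{\eta_{-1}}(\gs)$ and $\cM_{\eta_1}(\gs)$, and exploit the homotopy invariance of the orientation transport. The new feature is that the irreducible part $\widehat{\cM}_\eta^*(\gs)$ is no longer compact: its closure meets the circle of reducibles at $t=0$ along the finitely many slightly degenerate reducibles $A_0+r\go_0$, $r\in[0,1)$. By Propositions \ref{loc:red:b=1:nd}, \ref{loc:red:b=1} and \ref{orient:b=1}, exactly one irreducible arc emanates from each such singular point, entering $\cB^*\times(0,1]$ or $\cB^*\times[-1,0)$ according to the local spectral flow sign $\delta\in\{\pm 1\}$ of the family $\{\cD_{A_0+r\go_0}\}$ at that point, and any nearby point of the arc carries sign $\eps=+1$.

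The first step is to classify the connected components of $\widehat{\cM}_\eta^*(\gs)$. Closed loops and arcs with both endpoints at slightly degenerate reducibles avoid $\{t=\pm 1\}$ and contribute nothing to $\sw_{\eta_1}(\gs)-\sw_{\eta_{-1}}(\gs)$; arcs connecting two boundary points give zero net contribution by the cancellation argument from the proof of Theorem \ref{b>1}. The essential contributions come from arcs $c:[a,b]\to\widehat{\cM}_\eta^*$ with $c(a)$ a slightly degenerate reducible of local spectral flow sign $\delta$ and $c(b)$ on $\{t=\pm 1\}$. Choosing a nearby irreducible point $c(a')$ on the arc and combining the orientation-transport identity from Theorem \ref{b>1} with $\eps(T_s^0)=1$ and $\eps(c(a'))=+1$ yields
\[\eps(c(b))\;=\;\sgn\bigl(t'(b)/t'(a')\bigr).\]
Since $\sgn(t'(a'))=\delta$ and $\sgn(t'(b))=\pm 1$ according to whether $t(b)=\pm 1$, each such arc contributes $\delta$ to $\sw_{\eta_1}(\gs)$ when $t(b)=1$, respectively $-\delta$ to $\sw_{\eta_{-1}}(\gs)$ when $t(b)=-1$. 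The same reasoning applied to arcs with both endpoints at degenerate reducibles forces those two local signs to be opposite, so each such arc consumes one degenerate point of each sign. With $N_\pm$ denoting the numbers of slightly degenerate reducibles of each sign, bookkeeping all contributions yields
\[\sw_{\eta_1}(\gs)-\sw_{\eta_{-1}}(\gs)\;=\;N_+-N_-\;=\;\SF\bigl(\cD_{A_0+r\go_0};\;r\in[0,1)\bigr),\]
which is the first identity.

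For the topological formula, observe that since $[\go_0]$ generates $H^1_{dR}(M;4\pi i\Z)$ there exists a gauge transformation $\gamma\in\cG$ with $2\gamma^{-1}d\gamma=\go_0$, so that $A_0$ and $A_0+\go_0$ are gauge equivalent. The family $\{\cD_{A_0+r\go_0}\}_{r\in[0,1]}$ therefore descends to a loop of self-adjoint Fredholm operators on $\cA(\gs)/\cG$, and the Atiyah--Patodi--Singer index theorem identifies its spectral flow with the index of the associated four-dimensional $\Spinc$-Dirac operator on the mapping torus, diffeomorphic to $M\times S^1$ with the $\Spinc$-structure inherited from $\gs$ and the twisting encoded by $[\go_0]$. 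A direct application of the Atiyah--Singer index formula in dimension four then yields
\[\SF\bigl(\cD_{A_0+r\go_0};\;r\in[0,1)\bigr)\;=\;-\lfrac{1}{2}\int_M[\lfrac{1}{4\pi i}\go_0]\wedge c(\gs).\]
The main obstacle will be the meticulous sign bookkeeping in the arc-counting argument---especially verifying the orientation-transport formula along sub-arcs whose endpoints approach slightly degenerate reducibles---and reconciling conventions in the final spectral-flow-to-index identification.
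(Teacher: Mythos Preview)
Your proposal is correct and follows essentially the same route as the paper's proof: both classify the arcs of the parametrized moduli space according to whether their endpoints lie on the boundary or on the circle of reducibles, use Proposition \ref{orient:b=1} together with the orientation-transport computation from Theorem \ref{b>1} to determine each arc's contribution (including the observation that arcs joining two reducibles must carry opposite local spectral-flow signs), and then invoke the Atiyah--Patodi--Singer identification of the spectral flow with a four-dimensional index to derive the topological formula. The paper organizes the case analysis from the boundary endpoint rather than the reducible one, but the substance is identical.
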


\begin{proof}
According to part (ii) of Corollary \ref{eta:path:b=1}, the
irreducible part of $\widehat{\cM}_\eta(\gs)$ is a 1-dimensional
$C^3$-submanifold of $\cB^*\times [-1,1]$. Hence, it can be
written as the union of a finite number of $C^3$-arcs
\[
c_i:[a_i,b_i]\to \cB^*\times[-1,1],\quad
c_i(s)=[\psi_i(s),A_i(s),t_i(s)],\quad i=1,\ldots, n.
\]
As in the proof of Theorem \ref{b>1}, the contribution to
$\sw_{\eta_1}(\gs)-\sw_{\eta_{-1}}(\gs)$ of an arc connecting
points lying in the boundary $\big(\cM_{\eta_{-1}}\times
\{-1\}\big) \cup \big(\cM_{\eta_1}\times \{1\}\big)$ is always 0.
However, some endpoints may lie in the circle of reducibles (cf.
Fig. \ref{param:b1}). We have to show that the contribution of
these arcs is exactly the of the spectral flow of
$\cD_{A_0+r\go_0}$. Let us assume from now on that $c[a,b]\to
\cB\times [-1,1]$ is an arc for which, say, $c(b)$ lies in the
reducible part. As a consequence of Lemma \ref{loc:red:b=1:nd},
this implies that $A(b)$ is degenerate. Part (iii) of Corollary
\ref{eta:path:b=1} guarantees, though, that $A(b)$ is only
slightly degenerate. Therefore, Proposition \ref{orient:b=1}
describes the way in which way the image set of $c$ near $c(b)$
meets the circle of reducibles. To relate
$\eps\big(\psi(a),A(a)\big)$ with the spectral flow of
$\cD_{A(b)+r\go_0}$ we have to account for several different
situations.

Let us assume first that $c(a)\in\cM_{\eta_{-1}}(\gs)\times
\{-1\}.$ Then there are two possibilities of how $c$ might meet
the circle of reducibles (cf. Fig. \ref{b1:case1}).

\begin{figure}
\begin{center}
\setlength{\unitlength}{0.00035in}
\begingroup\makeatletter\ifx\SetFigFont\undefined%
\gdef\SetFigFont#1#2#3#4#5{%
  \reset@font\fontsize{#1}{#2pt}%
  \fontfamily{#3}\fontseries{#4}\fontshape{#5}%
  \selectfont}%
\fi\endgroup%
{\renewcommand{\dashlinestretch}{30}
\begin{picture}(11915,3974)(0,-10)
\texture{0 0 0 888888 88000000 0 0 80808
    8000000 0 0 888888 88000000 0 0 80808
    8000000 0 0 888888 88000000 0 0 80808
    8000000 0 0 888888 88000000 0 0 80808 }
\color{gray}
\shade\path(12,838)(12,3170)(789,3947)
    (789,1615)(12,838)
\path(12,838)(12,3170)(789,3947)
    (789,1615)(12,838)
\shade\path(4247,838)(4247,3170)(5025,3947)
    (5025,1615)(4247,838)
\path(4247,838)(4247,3170)(5025,3947)
    (5025,1615)(4247,838)
\shade\path(2130,838)(2130,3170)(2907,3947)
    (2907,1615)(2130,838)
\path(2130,838)(2130,3170)(2907,3947)
    (2907,1615)(2130,838)
\path(409,706)(4644,706)
\path(409,706)(4644,706)
\shade\path(6687,838)(6687,3170)(7464,3947)
    (7464,1615)(6687,838)
\path(6687,838)(6687,3170)(7464,3947)
    (7464,1615)(6687,838)
\shade\path(10922,838)(10922,3170)(11700,3947)
    (11700,1615)(10922,838)
\path(10922,838)(10922,3170)(11700,3947)
    (11700,1615)(10922,838)
\path(7084,706)(11319,706)
\path(7084,706)(11319,706)
\shade\path(8805,838)(8805,3170)(9582,3947)
    (9582,1615)(8805,838)
\path(8805,838)(8805,3170)(9582,3947)
    (9582,1615)(8805,838)
\color{black}
\path(1062,2775)(1137,2625)(987,2625)
\path(8337,1350)(8412,1275)(8337,1200)
\thicklines
\path(409,2426)(410,2427)(414,2431)
    (420,2436)(429,2445)(441,2457)
    (457,2471)(476,2489)(498,2508)
    (522,2529)(549,2551)(577,2573)
    (606,2595)(637,2616)(668,2636)
    (700,2654)(734,2670)(768,2684)
    (804,2695)(842,2703)(881,2707)
    (922,2707)(963,2702)(1005,2691)
    (1042,2676)(1077,2657)(1108,2635)
    (1134,2610)(1156,2585)(1173,2558)
    (1187,2531)(1196,2504)(1203,2477)
    (1207,2451)(1208,2424)(1209,2397)
    (1209,2371)(1209,2345)(1210,2318)
    (1211,2292)(1215,2267)(1221,2241)
    (1230,2216)(1243,2191)(1259,2168)
    (1280,2147)(1305,2128)(1334,2112)
    (1367,2101)(1402,2095)(1438,2096)
    (1472,2102)(1505,2113)(1535,2127)
    (1561,2145)(1583,2164)(1602,2185)
    (1618,2206)(1631,2229)(1642,2251)
    (1651,2275)(1659,2298)(1667,2321)
    (1674,2345)(1682,2369)(1691,2393)
    (1702,2418)(1715,2443)(1731,2468)
    (1750,2493)(1772,2517)(1798,2542)
    (1828,2566)(1861,2588)(1895,2608)
    (1931,2625)(1969,2639)(2005,2649)
    (2038,2655)(2069,2656)(2097,2655)
    (2122,2651)(2146,2644)(2168,2636)
    (2188,2626)(2207,2614)(2225,2601)
    (2241,2587)(2257,2573)(2271,2559)
    (2283,2546)(2295,2533)(2304,2522)
    (2312,2512)(2318,2505)(2323,2499)
    (2326,2495)(2327,2493)(2328,2492)
\path(6912,1575)(6913,1575)(6915,1574)
    (6920,1573)(6927,1571)(6937,1568)
    (6950,1564)(6967,1559)(6988,1553)
    (7012,1546)(7039,1538)(7070,1530)
    (7103,1520)(7140,1510)(7178,1499)
    (7219,1488)(7262,1477)(7306,1465)
    (7351,1453)(7397,1441)(7444,1429)
    (7493,1417)(7541,1405)(7591,1393)
    (7641,1382)(7693,1370)(7745,1359)
    (7798,1348)(7853,1337)(7909,1327)
    (7966,1317)(8024,1307)(8083,1298)
    (8143,1290)(8203,1282)(8262,1275)
    (8340,1267)(8412,1262)(8475,1259)
    (8529,1257)(8572,1258)(8605,1259)
    (8630,1262)(8647,1266)(8657,1271)
    (8661,1276)(8662,1282)(8659,1288)
    (8656,1294)(8652,1300)(8649,1306)
    (8649,1313)(8653,1319)(8662,1324)
    (8677,1330)(8699,1335)(8730,1339)
    (8770,1343)(8819,1346)(8876,1348)
    (8942,1350)(9012,1350)(9071,1350)
    (9130,1349)(9187,1347)(9241,1344)
    (9292,1341)(9338,1336)(9381,1331)
    (9420,1325)(9455,1319)(9486,1311)
    (9514,1304)(9540,1296)(9563,1288)
    (9584,1279)(9605,1271)(9625,1262)
    (9644,1254)(9664,1246)(9685,1239)
    (9706,1232)(9730,1226)(9756,1221)
    (9784,1217)(9815,1215)(9848,1214)
    (9885,1215)(9924,1218)(9965,1224)
    (10008,1232)(10052,1243)(10096,1258)
    (10137,1275)(10175,1295)(10209,1317)
    (10238,1340)(10264,1363)(10285,1386)
    (10303,1408)(10318,1429)(10329,1449)
    (10339,1468)(10345,1486)(10350,1503)
    (10354,1518)(10355,1533)(10356,1547)
    (10356,1561)(10356,1575)(10355,1589)
    (10353,1603)(10351,1618)(10349,1634)
    (10347,1651)(10345,1669)(10343,1690)
    (10340,1711)(10337,1735)(10334,1761)
    (10329,1789)(10324,1819)(10318,1851)
    (10310,1883)(10300,1917)(10287,1950)
    (10268,1989)(10246,2026)(10222,2058)
    (10198,2086)(10175,2111)(10152,2131)
    (10131,2147)(10111,2160)(10092,2170)
    (10074,2178)(10057,2185)(10041,2190)
    (10024,2194)(10008,2198)(9991,2202)
    (9973,2206)(9953,2212)(9932,2219)
    (9909,2227)(9883,2237)(9855,2249)
    (9825,2263)(9792,2278)(9758,2294)
    (9722,2310)(9687,2325)(9644,2342)
    (9605,2355)(9571,2364)(9542,2370)
    (9517,2373)(9495,2373)(9477,2372)
    (9460,2369)(9446,2365)(9434,2360)
    (9423,2354)(9414,2349)(9406,2343)
    (9400,2337)(9395,2333)(9391,2329)
    (9389,2327)(9388,2326)(9387,2325)
\thinlines
\color{gray2}
\put(2527,2360){\ellipse{398}{1192}}
\put(9202,2360){\ellipse{398}{1192}}
\color{black}
\put(650,0){\makebox(0,0)[lb]{Case 1}}
\put(100,399){\makebox(0,0)[lb]{$\scriptstyle -1$}}
\put(2394,399){\makebox(0,0)[lb]{$\scriptstyle t$}}
\put(4512,399){\makebox(0,0)[lb]{$\scriptstyle 1$}}
\put(200,2700){\makebox(0,0)[lb]{$\scriptstyle c(a)$}}
\put(1900,2925){\makebox(0,0)[lb]{$\scriptstyle c(b)$}}
\put(7350,0){\makebox(0,0)[lb]{Case 2}}
\put(6800,399){\makebox(0,0)[lb]{$\scriptstyle -1$}}
\put(9069,399){\makebox(0,0)[lb]{$\scriptstyle t$}}
\put(11187,399){\makebox(0,0)[lb]{$\scriptstyle 1$}}
\put(6850,1750){\makebox(0,0)[lb]{$\scriptstyle c(a)$}}
\put(9700,2425){\makebox(0,0)[lb]{$\scriptstyle c(b)$}}
\end{picture}
}
\caption{The two possibilities how an arc might approach a
reducible}\label{b1:case1}
\end{center}
\end{figure}
\begin{Cases}

\item \textit{$c(s)$ is contained in $\cB^*\times [-1,0)$ for $s$
close to $b$:} Under this assumption, we can find $s_0$ in any
arbitrarily small neighbourhood of $b$ such that $t'(s_0)>0$.
Exactly as in the proof of Theorem \ref{b>1}, this results in
\[
\eps\big(T_{(\psi(s),A(s))};{\scriptstyle s\in[a,s_0]}\big)=1.
\]
In combination with homotopy invariance of the orientation
transport this proves
\[
\eps\big(\psi(a),A(a)\big)=\eps\big(\psi(s_0),A(s_0)\big).
\]
On the other hand, we infer from the last assertion of
Proposition \ref{orient:b=1} that
\[
\eps\big(\psi(s_0),A(s_0)\big)=1,
\]
where we possibly have to adjust $s_0$. Moreover, since $c(s)$
meets $[0,A(b),0]$ coming from $\cB^*\times [-1,0)$, we deduce
from the first part of the same result that
\[
\SF(\cD_{A(b)+r\go_0}; {\scriptstyle |r|\ll 1})=-1.
\]
Combining the above observations, we conclude that
\[
\eps\big(\psi(a),A(a)\big)= -\SF(\cD_{A(b)+r\go_0};{\scriptstyle
|r|\ll 1}).
\]
Therefore, the contribution of $c$ to
$\sw_{\eta_1}(\gs)-\sw_{\eta_{-1}}(\gs)$ is
$\SF(\cD_{A(b)+r\go_0};{\scriptstyle |r|\ll 1})$.

\item \textit{$c(s)$ is contained in $\cB^*\times (0,1]$ for $s$ close to
1:} Invoking Proposition \ref{orient:b=1} in the same way as
before, we find that in this case
\[
\eps\big(\psi(a),A(a)\big)=-\eps\big(\psi(s_0),A(s_0)\big)=-1,
\]
where $s_0$ is chosen appropriately. On the other hand,
\[
\SF(\cD_{A(b)+r\go_0}; {\scriptstyle |r|\ll 1})=1,
\]
and therefore again,
\[
\eps\big(\psi(a),A(a)\big)= - \SF(\cD_{A(b)+r\go_0};{\scriptstyle
|r|\ll 1}).
\]

Let us now assume that $c$ connects an element of
$\cM_{\eta_1}(\gs)\times \{1\}$ with a reducible. Similar
arguments show that independently of the direction from which $c$
meets the circle of reducibles,
\[
\eps(\psi(a),A(a))= \SF(\cD_{A(b)+r\go_0}; {\scriptstyle |r|\ll
1})
\]
so that again, the contribution to
$\sw_{\eta_1}(\gs)-\sw_{\eta_{-1}}(\gs)$ is the spectral flow of
$\cD_{A(b)+r\go_0}$ near $A(b)$.

However, we have not yet taken all possibilities into account for
$c$ might also connect two distinct reducibles $[0,A(a),0]$ and
$[0,A(b),0]$. In this situation, there are further cases to
distinguish (cf. Fig. \ref{b1:case3}).

\begin{figure}
\begin{center}
\setlength{\unitlength}{0.00035in}
\begingroup\makeatletter\ifx\SetFigFont\undefined%
\gdef\SetFigFont#1#2#3#4#5{%
  \reset@font\fontsize{#1}{#2pt}%
  \fontfamily{#3}\fontseries{#4}\fontshape{#5}%
  \selectfont}%
\fi\endgroup%
{\renewcommand{\dashlinestretch}{30}
\begin{picture}(11229,3974)(0,-10)
\texture{0 0 0 888888 88000000 0 0 80808
    8000000 0 0 888888 88000000 0 0 80808
    8000000 0 0 888888 88000000 0 0 80808
    8000000 0 0 888888 88000000 0 0 80808 }
\color{gray}
\path(307,706)(4542,706)
\path(307,706)(4542,706)
\path(6982,706)(11217,706)
\path(6982,706)(11217,706)
\shade\path(8703,838)(8703,3170)(9480,3947)
    (9480,1615)(8703,838)
\path(8703,838)(8703,3170)(9480,3947)
    (9480,1615)(8703,838)
\shade\path(2028,838)(2028,3170)(2805,3947)
    (2805,1615)(2028,838)
\path(2028,838)(2028,3170)(2805,3947)
    (2805,1615)(2028,838)
\color{black}
\path(1185,1650)(1035,1725)
\path(1185,1650)(1035,1575)
\path(1260,3000)(1410,3075)
\path(1260,3000)(1410,2925)
\path(8160,2175)(8010,2100)(8160,2025)
\path(9885,1725)(10035,1650)(9900,1525)
\thicklines
\path(2310,2850)(2309,2850)(2306,2851)
    (2301,2852)(2293,2854)(2282,2856)
    (2268,2859)(2250,2863)(2228,2868)
    (2203,2873)(2175,2880)(2144,2886)
    (2111,2893)(2075,2900)(2037,2908)
    (1998,2916)(1958,2923)(1917,2931)
    (1875,2938)(1832,2946)(1789,2953)
    (1745,2960)(1700,2966)(1655,2972)
    (1608,2978)(1561,2983)(1512,2988)
    (1462,2992)(1412,2995)(1361,2998)
    (1310,2999)(1260,3000)(1197,2999)
    (1139,2997)(1087,2993)(1043,2989)
    (1006,2985)(975,2980)(950,2976)
    (929,2972)(913,2968)(900,2964)
    (888,2960)(879,2956)(870,2952)
    (861,2948)(852,2943)(841,2937)
    (830,2931)(817,2923)(803,2914)
    (788,2904)(772,2893)(757,2880)
    (744,2865)(735,2850)(732,2831)
    (736,2813)(748,2797)(764,2783)
    (784,2772)(806,2763)(831,2757)
    (856,2751)(883,2747)(910,2744)
    (937,2740)(964,2736)(990,2730)
    (1016,2723)(1040,2713)(1062,2701)
    (1081,2686)(1096,2668)(1106,2647)
    (1110,2625)(1107,2605)(1100,2586)
    (1090,2568)(1079,2553)(1067,2541)
    (1056,2530)(1045,2522)(1034,2515)
    (1024,2509)(1014,2504)(1004,2500)
    (993,2496)(981,2491)(967,2486)
    (952,2480)(933,2473)(911,2464)
    (884,2454)(853,2441)(818,2428)
    (778,2414)(735,2400)(694,2388)
    (654,2378)(615,2370)(580,2364)
    (548,2360)(521,2358)(496,2358)
    (475,2360)(456,2363)(440,2366)
    (424,2371)(410,2375)(396,2379)
    (381,2383)(365,2387)(348,2389)
    (329,2389)(307,2388)(282,2385)
    (255,2378)(226,2369)(196,2358)
    (165,2343)(135,2325)(107,2304)
    (83,2282)(64,2260)(49,2240)
    (38,2222)(31,2206)(26,2192)
    (23,2180)(22,2169)(22,2160)
    (23,2150)(24,2140)(26,2130)
    (27,2119)(30,2106)(32,2091)
    (34,2073)(37,2053)(40,2030)
    (45,2005)(51,1978)(60,1950)
    (73,1921)(87,1895)(101,1873)
    (113,1855)(124,1842)(133,1831)
    (140,1824)(145,1819)(149,1815)
    (154,1812)(159,1810)(165,1807)
    (173,1803)(184,1798)(200,1791)
    (220,1781)(246,1769)(279,1756)
    (317,1740)(360,1725)(398,1713)
    (436,1703)(471,1694)(502,1686)
    (529,1680)(552,1676)(571,1673)
    (586,1671)(598,1669)(607,1669)
    (615,1669)(623,1669)(630,1669)
    (639,1669)(650,1669)(663,1669)
    (681,1668)(704,1667)(732,1665)
    (766,1663)(806,1660)(853,1657)
    (905,1653)(960,1650)(1012,1647)
    (1062,1645)(1109,1643)(1152,1641)
    (1189,1638)(1221,1636)(1247,1634)
    (1269,1631)(1288,1629)(1302,1626)
    (1315,1624)(1325,1621)(1335,1619)
    (1345,1616)(1355,1614)(1368,1613)
    (1382,1612)(1401,1611)(1423,1611)
    (1449,1612)(1481,1615)(1518,1618)
    (1561,1624)(1608,1631)(1658,1639)
    (1710,1650)(1765,1663)(1818,1678)
    (1867,1694)(1913,1710)(1954,1727)
    (1993,1744)(2028,1761)(2061,1778)
    (2092,1795)(2121,1813)(2148,1830)
    (2174,1847)(2198,1864)(2220,1879)
    (2240,1894)(2258,1908)(2273,1920)
    (2285,1930)(2295,1937)(2302,1943)
    (2306,1947)(2309,1949)(2310,1950)
\path(8910,2175)(8909,2175)(8906,2175)
    (8900,2174)(8891,2174)(8879,2173)
    (8864,2172)(8844,2170)(8822,2168)
    (8796,2166)(8767,2164)(8736,2162)
    (8703,2159)(8669,2156)(8633,2153)
    (8596,2150)(8558,2147)(8520,2144)
    (8481,2140)(8441,2137)(8400,2133)
    (8358,2129)(8315,2125)(8270,2120)
    (8225,2116)(8178,2111)(8131,2105)
    (8085,2100)(8024,2093)(7970,2086)
    (7924,2081)(7889,2077)(7862,2075)
    (7843,2075)(7830,2075)(7823,2077)
    (7819,2079)(7816,2081)(7814,2083)
    (7811,2085)(7806,2086)(7797,2085)
    (7783,2082)(7764,2076)(7739,2068)
    (7708,2057)(7673,2042)(7635,2025)
    (7602,2007)(7571,1989)(7543,1972)
    (7518,1956)(7497,1942)(7479,1929)
    (7463,1919)(7449,1910)(7437,1902)
    (7426,1894)(7416,1887)(7407,1881)
    (7398,1873)(7388,1865)(7379,1855)
    (7370,1843)(7360,1829)(7352,1812)
    (7344,1793)(7337,1772)(7334,1749)
    (7335,1725)(7340,1705)(7347,1686)
    (7355,1670)(7363,1654)(7371,1642)
    (7378,1631)(7384,1622)(7388,1614)
    (7391,1608)(7393,1604)(7395,1600)
    (7396,1597)(7398,1594)(7400,1591)
    (7403,1588)(7408,1585)(7415,1581)
    (7426,1576)(7441,1570)(7461,1564)
    (7486,1555)(7517,1546)(7555,1535)
    (7600,1524)(7652,1512)(7710,1500)
    (7757,1491)(7806,1483)(7854,1476)
    (7901,1469)(7945,1463)(7987,1458)
    (8025,1453)(8059,1449)(8091,1445)
    (8119,1441)(8144,1438)(8166,1436)
    (8186,1433)(8205,1431)(8222,1429)
    (8238,1427)(8254,1425)(8270,1423)
    (8286,1422)(8304,1420)(8324,1419)
    (8346,1418)(8371,1416)(8399,1415)
    (8431,1415)(8467,1414)(8508,1413)
    (8553,1413)(8603,1414)(8658,1415)
    (8716,1416)(8779,1418)(8844,1421)
    (8910,1425)(8980,1430)(9047,1436)
    (9110,1443)(9169,1450)(9222,1457)
    (9269,1463)(9310,1470)(9346,1477)
    (9377,1483)(9403,1488)(9425,1494)
    (9444,1499)(9460,1504)(9474,1509)
    (9486,1514)(9498,1519)(9509,1524)
    (9520,1529)(9533,1534)(9547,1539)
    (9564,1545)(9583,1552)(9605,1559)
    (9631,1566)(9660,1575)(9694,1584)
    (9731,1593)(9773,1604)(9817,1615)
    (9864,1626)(9912,1638)(9960,1650)
    (10031,1668)(10094,1684)(10146,1697)
    (10187,1707)(10218,1714)(10241,1719)
    (10257,1721)(10267,1721)(10274,1720)
    (10279,1719)(10283,1717)(10287,1717)
    (10292,1717)(10301,1720)(10312,1725)
    (10327,1733)(10346,1745)(10367,1760)
    (10389,1779)(10410,1800)(10425,1821)
    (10436,1841)(10443,1861)(10448,1878)
    (10451,1894)(10452,1907)(10451,1918)
    (10450,1928)(10447,1936)(10445,1943)
    (10441,1950)(10437,1957)(10433,1964)
    (10428,1972)(10422,1982)(10415,1993)
    (10407,2006)(10397,2022)(10385,2039)
    (10371,2059)(10354,2079)(10335,2100)
    (10311,2122)(10287,2141)(10265,2157)
    (10244,2171)(10227,2182)(10212,2191)
    (10200,2198)(10190,2204)(10181,2209)
    (10172,2213)(10164,2216)(10154,2220)
    (10143,2223)(10129,2227)(10111,2232)
    (10089,2236)(10063,2241)(10032,2245)
    (9997,2249)(9960,2250)(9923,2249)
    (9889,2245)(9859,2240)(9833,2233)
    (9813,2226)(9798,2219)(9787,2211)
    (9778,2203)(9772,2195)(9766,2187)
    (9761,2180)(9755,2171)(9746,2163)
    (9735,2154)(9721,2145)(9702,2136)
    (9678,2127)(9650,2117)(9619,2108)
    (9585,2100)(9544,2092)(9506,2086)
    (9473,2083)(9444,2081)(9417,2081)
    (9394,2082)(9372,2083)(9353,2086)
    (9335,2089)(9320,2092)(9307,2094)
    (9297,2097)(9291,2099)(9287,2100)(9285,2100)
\thinlines
\color{gray2}
\put(2425,2360){\ellipse{398}{1192}}
\put(9100,2360){\ellipse{398}{1192}}
\color{black}
\put(650,0){\makebox(0,0)[lb]{Case 3}}
\put(2394,399){\makebox(0,0)[lb]{$\scriptstyle t$}}
\put(1800,3050){\makebox(0,0)[lb]{$\scriptstyle c(a)$}}
\put(1800,1350){\makebox(0,0)[lb]{$\scriptstyle c(b)$}}
\put(7350,0){\makebox(0,0)[lb]{Case 4}}
\put(9069,399){\makebox(0,0)[lb]{$\scriptstyle t$}}
\put(9500,2400){\makebox(0,0)[lb]{$\scriptstyle c(b)$}}
\put(8250,2300){\makebox(0,0)[lb]{$\scriptstyle c(a)$}}
\end{picture}
}
\caption{Paths connecting distinct reducibles}\label{b1:case3}
\end{center}
\end{figure}

\item \textit{$c(s)\in \cB^*\times [-1,0)$ for both, $s\to a$ and
$s\to b$:} Choose $s_a$ and $s_b$ appropriately close to $a$ and
$b$ respectively. Then the last assertion of Proposition
\ref{orient:b=1} shows that
\[
\eps\big(\psi(s_a),A(s_a)\big)=1= \eps\big(\psi(s_b),A(s_b)\big).
\]
On the other hand, transferring the arguments of Theorem
\ref{b>1} to the situation at hand yields
\[
\eps\big(T_{(\psi(s),A(s))};{\scriptstyle s\in[s_a,s_b]}\big)=-1
\]
which contradicts the above. Hence, this case can actually not
occur. Clearly, the same occurs if we assume $c(s)\in \cB^*\times
(0,1]$ for $s$ close to $a$ and $b$.

\item \textit{$c(s)\in \cB^*\times [-1,0)$ for $s\to a$, and
$c(s)\in \cB^*\times (0,1]$ for $s\to b$:} In contrast to the
above case such an arc may indeed exist. According to the first
part of Proposition \ref{orient:b=1}, we then have
\[
\SF(\cD_{A(a)+r\go_0}; {\scriptstyle |r|\ll 1}) = -1 = -
\SF(\cD_{A(b)+r\go_0}; {\scriptstyle |r|\ll 1}).
\]
Therefore, such an arc neither contributes to the left hand side
nor to the right hand side of the equation we aim to prove.
\end{Cases}

These considerations show that every arc $c$ hitting the circle
of reducibles in some point $[0,A_i,0]$ yields a summand
$\SF(\cD_{A_i+r\go_0}; {\scriptstyle |x|\ll 1})$ in the left hand
side of \eqref{b=1:form}. Therefore,
\[
\sw_{\eta_1}(\gs)-\sw_{\eta_{-1}}(\gs) = \sum_{[0,A_i,0]}
\SF(\cD_{A_i+r\go_0}; {\scriptstyle |r|\ll 1}),
\]
where the sum is taken over all gauge equivalence classes
$[0,A_i,0]$ hit by an arc in the parametrized moduli space.

Letting $A_0$ be an arbitrary reducible $\eta_0$-monopole, part
(iii) of Corollary \ref{eta:path:b=1} ensures that
$\{\cD_{A_0+r\go_0}\}_{r\in[0,1)}$ is a transversal family with
only simple crossings. Hence, the spectral flow of
$\{\cD_{A_0+r\go_0}\}_{r\in[0,1)}$ is given by summing up the
contributions near all slightly degenerate reducibles. These are
exactly the points where an arc in the parametrized moduli space
hits the circle of reducibles. Therefore,
\[
\sum_{[0,A_i,0]} \SF(\cD_{A_i+r\go_0}; {\scriptstyle |r|\ll 1}) =
\SF(\cD_{A_0+r\go_0};{\scriptstyle r\in [0,1)})
\]
which proves the first version of the wall-crossing formula.\\

We will only briefly sketch how the second version follows from
the first one. Since $[\go_0]\in H_{dR}^1(M;4\pi i\Z)$, it
follows from \eqref{derham} that there exists a gauge
transformation $\gamma$ such that
\[
 [\go_0]=[2\gamma^{-1}d\gamma].
\]
This gauge transformation determines a line bundle $\Hat
L(\gs)\to S^1\times M$ by means of identifying the ends of the
pullback of $L(\gs)$ to $[0,1]\times M$ via $\gamma$. The family
$\{A+x\go_0\}_{x\in[0,1]}$ then gives rise to a connection $\Hat
A$ on $\Hat L(\gs)$. It turns out that $\Hat L(\gs)$ is the
canonical line bundle associated to the pullback\footnote{For a
similar discussion concerning manifolds of the type $[0,1]\times
M$ see \cite{Nic:SW}, Sec.~2.4.1.} of the \spinc structure $\gs$
on $M$ to $S^1\times M$. A result\footnote{The theorem we are
referring to seems to be rather a folklore result than a
well-established fact; it is, however, well motivated in loc. cit.
Moreover, Robbin \& Salamon \cite{RobSal:SF} give a rigorous
proof in a context which slightly differs from the situation at
hand.} of Atiyah et. al. \cite{AtiPatSin:SAR} ensures that the
spectral flow of $\{\cD_{A_0+x\go_0}\}_{x\in[0,1]}$ equals the
index of the \spinc Dirac operator associated to $\Hat A$. This
index can be computed using the Atiyah-Singer index Theorem and it
turns out that (see Lim \cite{Lim:SW}, Sec.~4.2)\index{Dirac
operator!Atiyah-Singer index Theorem}
\[
\SF(\cD_{A_0+x\go_0};{\scriptstyle x\in[0,1)})=\frac{1}{8}
\int_{[0,1]\times M}\lfrac{i}{2\pi}F_{\Hat A}\wedge
\lfrac{i}{2\pi}F_{\Hat A}\,.
\]
Since $\Hat A= A_0+x\go_0$, we compute
\begin{align*}
\lfrac{i}{2\pi}F_{\Hat A}\wedge \lfrac{i}{2\pi}F_{\Hat A}&=
(\lfrac{i}{2\pi}F_{A_0} + \lfrac{i}{2\pi}dx\wedge \go_0)^2 = 2
dx\wedge \lfrac{i}{2\pi}\go_0\wedge \lfrac{i}{2\pi}F_{A_0}.
\end{align*}
Therefore,
\[
\SF(\cD_{A_0+x\go_0};{\scriptstyle x\in[0,1)})=-\frac{1}{2}
\int_{M} \lfrac{1}{4\pi i}\go_0\wedge \lfrac{i}{2\pi}F_{A_0}=
-\frac{1}{2} \int_{M} [\lfrac{1}{4\pi i}\go_0]\wedge c(\gs).
\qedhere
\]
\end{proof}

\section[Manifolds with $b_1=0$]{Manifolds with
$\boldsymbol{b}_1\mathbf{=0}$}\label{=0}

Let $M$ be a closed, oriented 3-manifold with first Betti number
$b_1=0$. Hence, $H^1(M;\R)=H^2(M;\R)=0$. A manifold of this type
is usually called a {\em rational homology sphere} which refers to
the fact that its rational cohomology or, equivalently, its real
cohomology is the one of $S^3$. A manifold whose singular
homology equals $H_*(S^3;\Z)$ is then called an {\em integer
homology sphere}.

Let us first consider some specialities related to the vanishing
of $b_1$. First of all, the possible number of \spinc structures
is very limited. Since $H^2(M;\R)=0$, the image of the canonical
class of a \spinc structure $\gs$ in the real cohomology always
vanishes. Therefore, the canonical line bundle $L(\gs)$ is flat.
Hence, there exists up to gauge equivalence a unique flat
connection, which we denote\footnote{Since $A^\flat$ is a 1-form,
no notational confusion with the isomorphism $\flat:TM\to T^*M$
induced by the metric should be feasible.} by $A^\flat$.
Moreover, we have already observed in Corollary
\ref{red:exist:cor} that for each $\eta\in Z_k^2(M;i\R)$, the
perturbed moduli space $\cM_\eta(\gs)$ contains exactly one
reducible point. Up to a choice of $A^\flat$, a canonical
representative of this reducible is given by $A^\flat -
d^{-1}\eta$, where $d^{-1}\eta$ denotes the unique co-closed
1-form $\go$ such that $d\go=\eta$. Here, we are using that
$H^1(M;\R)=0$ which in association with Hodge decomposition
implies that $\gO^1(M)=\ker d\oplus \ker d^*$. Note, however,
that the map $d^{-1}$ depends on the metric.\\

\noindent\textbf{Count of monopoles in the case
$\boldsymbol{b_1=0}$.} Since the definition of $\sw_\eta(\gs)$ in
Definition \ref{suit:pert:b>0} is not well-suited if the
underlying manifold is a rational homology sphere, we have to
invoke some further considerations. As before, we call a reducible
$\eta$-monopole $A$ {\em non-degenerate} if $\cD_A$ is invertible
and {\em slightly degenerate} if $\cD_A$ has a one dimensional
kernel.

\begin{prop}\label{red:isol}
Let $M$ be a rational homology 3-sphere with Riemannian metric
$g$. Furthermore, let $\gs$ be a \spinc structure on $M$ and let
$A^\flat$ be a flat connection on $L(\gs)$. Then, for any $\eta\in
Z_k^2(M;i\R)$ such that $A^\flat-d^{-1}\eta$ is non-degenerate,
the reducible $\eta$-monopole lies isolated in $\cM_\eta(\gs;g)$.
\end{prop}
\begin{proof}
Let $A_0:=A^\flat-d^{-1}\eta$. We infer from the slice theorem
that a neighbourhood of $[0,A_0]$ in $\cB$ is homeomorphic to a
neighbourhood of 0 in $\big(L^2_1(M,S)\oplus \ker
(d^*|_{L_1^2})\big)/\U_1$. Modulo $\U_1$, the moduli space near
$[0,A_0]$ is then given by the zeros of
\[
\begin{split}
L^2_1(M,S)\oplus \ker (d^*|_{L_1^2})&\to
L^2(M,S)\oplus\ker d^*,\\
(\gf,a)&\mapsto \Proj_{L^2(M,S)\oplus\ker d^*}\SW(\gf,A_0+a).
\end{split}
\]
Note that the projection onto $L^2(M,S)\oplus\ker d^*$ does not
produce any new zeros. This is for the same reason as in the case
of the parametrized moduli space in Lemma \ref{s:lemma}. The
differential of the above map at the point 0 is given by
\[
\begin{split}
L^2_1(M,S)\oplus \ker (d^*|_{L_1^2})&\to L^2(M,S)\oplus\ker
d^*,\\ (\gf,a)&\mapsto \big(\cD_{A_0}\gf,-* da\big).
\end{split}
\]
Its kernel is clearly equal to $\ker\cD_{A_0}\oplus\ker d|_{\ker
d^*}$. Since $A_0$ is non-degenerate, $\ker\cD_{A_0}=0$.
Furthermore, $\ker d\cap\ker d^*=0$ for rational homology spheres.
Hence, the differential is invertible so that the inverse
function theorem shows that $0$ is an isolated point of the zero
set. Therefore, $[0,A_0]$ is an isolated point of
$\cM_\eta(\gs;g)$.
\end{proof}

\begin{dfn}\label{suit:pert:b=0}
Let $M$ be a rational homology 3-sphere equipped with a
Riemannian metric $g$ and a \spinc structure $\gs$.
\begin{enumerate}
\item An element $\eta\in Z^2_k(M;i\R)$ is called a {\em suitable
perturbation} with respect to $g$ if the $\eta$-perturbed moduli
space $\cM_\eta(\gs;g)$ consists only of non-degenerate points.
\item If $\eta\in Z^2_k(M;i\R)$ is a suitable perturbation with
respect to $g$, we define
\[
\sw_\eta(\gs;g):=\sum_{[\psi,A]\in\cM_\eta^*(\gs;g)} \eps(\psi,A),
\]
\end{enumerate}
\end{dfn}
Note that the sum is taken over the irreducible part of the
moduli space which is finite since, according to Proposition
\ref{red:isol}, the reducible point is isolated and thus cannot
be an accumulation point for irreducible monopoles. Recall from
Theorem \ref{generic:eta} that the condition that irreducible
$\eta$-monopoles are non-degenerate is a generic property. As we
shall see below, this is true also for the condition that the
reducible $\eta$-monopole is non-degenerate.\\

\noindent\textbf{Finding suitable paths.} Ultimately, we are
going to establish is a formula describing the dependence of
$\sw_\eta(\gs;g)$ on $g$ and $\eta$. Therefore, as in the
previous sections, we need to find an appropriate path connecting
two suitable perturbations.

\begin{prop}\label{Dirac:pert:b=0}
Let $M$ be a rational homology 3-sphere with \spinc structure
$\gs$, and let $A^\flat$ be a fixed flat connection on $L(\gs)$.
If $\{g_t\}_{t\in [-1,1]}$ is a smooth path of Riemannian metrics
on $M$, then a generic $C^m$-path $a_t:[-1,1]\to L^2_k(M,iT^*M)$
has the following properties:
\begin{enumerate}
\item The family $\{\cD_{A^\flat+a_t}^t\}_{t\in[-1,1]}$ is
transversal with only simple crossings,
\item If $\psi$ is a non-vanishing harmonic spinor with respect to
$\cD_{A^\flat+a_t}^t$, then
\[
\LScalar{d^{-1}*_t q_t(\psi)}{q_t(\psi)}^t\neq 0,
\]
\end{enumerate}
where the index $t$ refers to the metric $g_t$.
\end{prop}

\begin{proof}
We may consider both parts independently since the intersection
of two generic sets is again generic. The proof of the first part
is  very similar to the corresponding proof in the case $b_1=1$.
First of all, we fix a Riemannian metric $g$ on $M$ and consider
again the Hilbert bundle $V\to X$ over
$X:=L^2_1(M,S)\setminus\{0\}$, defined by
$V_\psi:=\ker\Re\Lscalar{i\psi}{.}$. Define
\[
\gF: X\to V,\quad \gF(\psi):=\cD_{A^\flat}\psi.
\]
This section is Fredholm of $\R$-index 1. Exactly as before, we
can make this section transversal by invoking the perturbation
\[
\Hat\gF: X\times P\to \pr_1^*V,\quad
\Hat\gF(\psi,a):=\cD_{A^\flat+a}\psi\;,
\]
where $P:=L^2_k(M,iT^*M)$ denotes the perturbation space.

Now let $g_t$ be a smooth path of Riemannian metrics on $M$.
Applying Proposition \ref{param:levelset}, we draw the conclusion
that for a generic $C^m$-path $a_t$, the set
\[
\bigcup_{t\in[-1,1]}\ker \cD_{A_0+a_t}^t\setminus \{0\}\times
\{t\}
\]
is either empty or carries the structure of a 2-dimensional real
$C^m$-sub\-mani\-fold of $X\times [-1,1]$. The same arguments as
in Proposition \ref{Dirac:pert:b=1} then ensure that $a_t$
satisfies part (i) of the assertion.\\

\noindent Turning our attention to part (ii), we fix the metric
again and consider the section
\[
\gF':X\to V\oplus \R,\quad \gF'(\psi):=
\Big(\cD_{A^\flat}\psi,\LScalar{d^{-1}*\Pi\circ
q(\psi)}{q(\psi)}\Big),
\]
where $\Pi:=\Proj_{\ker d^*}$. The relation
$(d^{-1})^*=*\,d^{-1}*$ together with a simple computation shows
that
\[
D_\psi\gF'(\gf)=\Big(\cD_{A^\flat}\gf, \LScalar{d^{-1}*\Pi\circ
q(\psi)}{4q(\psi,\gf)}\Big).
\]
We want to prove that $D_\psi\gF' : L_1^2(M,S)\to \ker
\Re\Lscalar{i\psi}{.}\oplus \R$ is a Fredholm operator of index
0. For this we consider the formal adjoint. Let $\gf_0\in
\big(\ker\Re\Lscalar{i\psi}{.}\cap L^2_1\big)$ and $r\in \R$.
Then for any $\gf\in L^2_1(M,S)$,
\begin{align*}
\LSCalar{D_\psi&\gF'(\gf)}{(\gf_0,r)}\\ &=
\Re\LScalar{\cD_{A^\flat}\gf}{\gf_0}+ r\cdot
\LScalar{d^{-1}*\Pi\circ q(\psi)}{4\cdot q(\psi,\gf)} \\
&=\Re\LSCalar{\gf}{\cD_{A^\flat}\gf_0 + 2r\cdot
c\big(d^{-1}*\Pi\circ q(\psi)\big)\psi}\;,
\end{align*}
where we have used the formula
$\LScalar{a}{q(\psi,\gf)}=\lfrac{1}{2}\Re\LScalar{c(a)\psi}{\gf}$
of Proposition \ref{q:prop}. Therefore, the formal adjoint of
$D_\psi\gF'$ is given by
\[
\begin{split}
\big(\ker\Re\Lscalar{i\psi}{.}\cap L^2_1\big)\oplus\R&\to L^2(M,S),\\
(\gf_0,r)&\mapsto \cD_{A^\flat}\gf_0 + 2r\cdot
c\big(d^{-1}*\Pi\circ q(\psi)\big)\psi\;.
\end{split}
\]
The first summand is a Fredholm operator of index 0. Since the
second term is compact, we deduce that the formal adjoint of
$D_\psi\gF'$ is Fredholm of index 0. Therefore, $D_\psi\gF'$ is
also Fredholm of index 0.

We now proceed in the spirit of the perturbation results in
Section \ref{reg:val} and make $\gF'$ transversal to the zero
section. Let
\[
\Hat\gF': X \times L^2_k(M,iT^*M) \to V\oplus\R
\]
be defined by
\[
\Hat\gF'(\psi,a):=\gF'(\psi)+\big(\lfrac{1}{2}c(a)\psi,0\big).
\]
Since $\cD_{A^\flat+a}\psi=0$ at a zero $(\psi,a)$ of $\Hat\gF'$,
we deduce from Proposition \ref{dastq} that $q(\psi)$ is
co-closed, i.e., $\Pi\circ q(\psi)=q(\psi)$. The differential of
$\Hat\gF'$ is then given by
\[
D_{(\psi,a)}\Hat\gF'(\gf,b)= \Big(\cD_{A^\flat+a}\gf + \lfrac12
c(b)\psi, \LScalar{d^{-1}* q(\psi)}{4q(\psi,\gf)}\Big).
\]
We claim that this map is surjective. Let $(\gf_0,r)$ be
$L^2$-orthogonal to the image of $D_{(\psi,a)}\Hat\gF'$. This
implies that
\[
\cD_{A^\flat+a}\gf_0 + 2r\cdot c\big(d^{-1}* q(\psi)\big)\psi=0
\quad\text{and}\quad \lfrac{1}{2}\Re\LScalar{c(b)\psi}{\gf_0)}=0
\]
for any $b\in L^2_k(M,iT^*M)$. As in the proof of Proposition
\ref{pert:map:trans}, the latter equation implies that there
exists $f\in L^2_1(M,i\R)$ with $\gf_0=f\psi$. Inserting this in
the first equation then shows that
\[
c\big(df + 2r\cdot d^{-1}* q(\psi)\big)\psi=0.
\]
By virtue of the unique continuation principle, $\psi$ is nowhere
vanishing on a dense open subset of $M$. Hence, necessarily $df +
2r\cdot d^{-1}* q(\psi)=0$. As the first summand of left hand
side is closed and the second summand is co-closed, we infer that
both terms vanish. Therefore, $f$ is constant and $r\cdot d^{-1}*
q(\psi)=0$. The first fact implies $\gf_0\equiv 0$ because
$\Re\Lscalar{i\psi}{f\psi}=\Re\Lscalar{i\psi}{\gf_0}=0$. For the
second term note that $q(\psi)$ is zero only at points where
$\psi$ vanishes. Therefore, $d^{-1}* q(\psi)$ cannot vanish
everywhere which implies that $r=0$.

To finish the proof of part (ii), we now consider an arbitrary
smooth path $g_t$ of Riemannian metrics on $M$. An $m$-times
continuously differentiable path $a_t:[-1,1]\to L^2_k(M,iT^*M)$
defines the parametrized zero set
\[
\bigcup_{t\in[-1,1]}\Hat\gF_{g_t}'(.,a_t)^{-1}(0)\times \{t\}.
\]
Proposition \ref{param:levelset} guarantees that for a generic
choice of such path, this set is either empty or a 1-dimensional
submanifold of $X\times [-1,1]$. Part (ii) may now be proved by
contradiction. Assume that there exists a tuple $(\psi,t)$ such
that $\cD_{A^\flat+a_t}^t\psi=0$ and $\LScalar{d^{-1}*_t
q_t(\psi)}{q_t(\psi)}=0$. Clearly, every multiple of $\psi$ by a
real constant also satisfies these two equations. On the other
hand, they also hold for $i\psi$ and its real multiples because
$q(i\psi)=q(\psi)$. Therefore, the fibre
$\Hat\gF_{g_t}'(.,a_t)^{-1}(0)$ is at least of real dimension 2,
which contradicts the fact that the parametrized zero set is
1-dimensional.
\end{proof}

\begin{cor}\label{eta:path:b=0}
Let $M$ be a rational homology 3-sphere, endowed with a \spinc
structure $\gs$ and a flat connection $A^\flat$ on $L(\gs)$.
Moreover, let $g_{-1}$ and $g_1$ be Riemannian metrics on $M$.
Suppose $\eta_{-1}$ and $\eta_1$ are respectively chosen suitable
perturbations. Then there exist $C^m$-paths $g_t$ and $\eta_t$ of
metrics and perturbations connecting $\,g_{-1}$ with $g_1$ and
$\eta_{-1}$ with $\eta_1$ respectively and having the following
properties:
\begin{enumerate}
\item The family $\{\cD_{A^\flat-d^{-1}\eta_t}^t\}_{t\in[-1,1]}$
is transversal with only simple crossings.
\item $\LScalar{d^{-1}*_t q_t(\psi)}{q_t(\psi)}^t\neq 0$
whenever $\psi$ is a non-vanishing harmonic spinor with respect to
$\cD_{A^\flat-d^{-1}\eta_t}^t$.
\item The irreducible part of the parametrized moduli space
$\widehat{\cM}_\eta(\gs;g)$ is a 1-dimensional $C^m$-submanifold
of $\cB^*\times [-1,1]$.
\end{enumerate}
\end{cor}
\begin{proof}
Let $g_t$ be a path of Riemannian metrics connecting $g_{-1}$ and
$g_1$. Proposition \ref{Dirac:pert:b=0} shows that a generic
$C^m$-path of imaginary valued 1-forms defines a family
$\{\cD_{A^\flat+a_t}^t\}$ satisfying the first two properties
respectively. Due to Hodge decomposition we can write
\[
a_t=df_t + d^*\mu_t,
\]
where $f_t:[-1,1]\to L^2_k(M,i\R)$ and $\mu_t:[-1,1]\to
L^2_k(M,i\gL^2 T^*M)$ are $C^m$-paths. Using the bounded inverse
$d^{-1}$ of $d:\im d^*\to Z^2(M;i\R)$, we find a path $\eta_t$ of
closed, imaginary valued 1-forms such that
\[
d^*\mu_t = - d^{-1}\eta_t.
\]
Therefore, the path $a_t$ is gauge equivalent to $-d^{-1}\eta_t$.
Making the simple but important observation that properties (i)
and (ii) are preserved if we apply a path of gauge
transformations to $a_t$, we deduce that the family
$\{\cD_{A^\flat-d^{-1}\eta_t}^t\}$ satisfies these properties as
well. Moreover, one achieves that the paths $\eta_t$ obtained in
this manner form a generic subset of the set paths connecting
$\eta_{-1}$ and $\eta_1$.

On the other hand, an application of Theorem \ref{generic:eta} as
in the proof of Theorem \ref{b>1} shows that a generic path
$\eta$ of closed imaginary valued 2-forms satisfies property
(iii). Since the intersection of two generic sets is again
generic, the assertion of the proposition follows.
\end{proof}

\begin{remark*}
A slightly modified consideration shows that the set of all
$\eta\in Z^2_k(M;i\R)$ such that $\cD_{A^\flat-d^{-1}\eta}$
is invertible forms a generic subset: \\
From the proof of Proposition \ref{Dirac:pert:b=0}, we know that a
generic choice of $a\in L^2_{k+1}(M,iT^*M)$ gives rise to an
invertible operator $\cD_{A^\flat+a}$. Possibly applying a gauge
transformation, we may assume that $a=-d^{-1}\eta$ for some
closed, imaginary valued 2-form $\eta$.\\
\end{remark*}

\noindent\textbf{Local structure of the parametrized moduli
space.} Fixing a $C^m$-path $\eta$ with $m\ge 2$ as in the above
corollary, we will now make a similar analysis as in Section
\ref{=1}.

The parametrized moduli space consists of a finite union of arcs,
one of which is the reducible branch parametrized by
$[0,A^\flat-d^{-1}\eta_t,t]$. The irreducible part forms a
1-dimensional $C^m$-submanifold of $\cB^*\times [-1,1]$, and
singularities occur whenever an irreducible arc meets the
reducible one (cf. Fig. \ref{param:b0}). We thus need to
understand the local structure of $\widehat{\cM}_{\eta}(\gs;g)$
near a reducible point $[0,A^\flat-d^{-1}\eta_{t_0},t_0]$.
Without loss of generality, we may assume in the following that
$t_0=0$. We then define $A_0:=A^\flat-d^{-1}\eta_0$.

\begin{figure}
\begin{center}
\setlength{\unitlength}{0.0005in}
\begingroup\makeatletter\ifx\SetFigFont\undefined%
\gdef\SetFigFont#1#2#3#4#5{%
  \reset@font\fontsize{#1}{#2pt}%
  \fontfamily{#3}\fontseries{#4}\fontshape{#5}%
  \selectfont}%
\fi\endgroup%
{\renewcommand{\dashlinestretch}{30}
\begin{picture}(5937,4001)(0,-10)
\texture{0 0 0 888888 88000000 0 0 80808
    8000000 0 0 888888 88000000 0 0 80808
    8000000 0 0 888888 88000000 0 0 80808
    8000000 0 0 888888 88000000 0 0 80808 }
\color{gray}
\shade\path(12,450)(12,3093)(893,3974)
    (893,1331)(12,450)
\path(12,450)(12,3093)(893,3974)
    (893,1331)(12,450)
\shade\path(4812,450)(4812,3093)(5693,3974)
    (5693,1331)(4812,450)
\path(4812,450)(4812,3093)(5693,3974)
    (5693,1331)(4812,450)
\path(462,360)(5262,360)
\color{black}
\thicklines
\thicklines
\path(462,1320)(463,1320)(464,1318)
    (468,1316)(473,1313)(480,1308)
    (490,1302)(503,1294)(518,1284)
    (537,1273)(558,1260)(583,1245)
    (610,1228)(640,1210)(673,1191)
    (708,1170)(746,1149)(786,1127)
    (827,1104)(870,1081)(914,1057)
    (960,1034)(1007,1011)(1056,988)
    (1105,965)(1156,942)(1207,921)
    (1260,899)(1315,879)(1371,859)
    (1428,840)(1487,822)(1548,805)
    (1611,789)(1676,775)(1744,761)
    (1813,750)(1885,740)(1958,731)
    (2034,725)(2110,721)(2187,720)
    (2267,721)(2345,726)(2420,732)
    (2490,740)(2555,751)(2615,762)
    (2669,775)(2718,788)(2760,802)
    (2798,817)(2830,832)(2858,848)
    (2881,864)(2901,879)(2917,895)
    (2931,911)(2942,928)(2952,944)
    (2960,960)(2968,976)(2976,993)
    (2984,1009)(2994,1026)(3005,1043)
    (3018,1060)(3033,1077)(3051,1094)
    (3073,1112)(3099,1130)(3129,1148)
    (3164,1166)(3204,1185)(3249,1203)
    (3299,1221)(3354,1240)(3414,1257)
    (3478,1274)(3546,1291)(3616,1306)
    (3687,1320)(3766,1334)(3843,1345)
    (3917,1354)(3989,1362)(4057,1367)
    (4123,1371)(4184,1373)(4243,1374)
    (4300,1373)(4353,1371)(4405,1368)
    (4454,1365)(4501,1360)(4547,1354)
    (4591,1348)(4633,1342)(4674,1335)
    (4714,1327)(4751,1319)(4787,1311)
    (4822,1304)(4854,1296)(4884,1288)
    (4911,1281)(4936,1274)(4958,1268)
    (4977,1263)(4994,1258)(5007,1254)
    (5017,1251)(5025,1249)(5031,1247)
    (5034,1246)(5036,1245)(5037,1245)
\color{gray2}
\thinlines
\path(387,2220)(388,2221)(389,2222)
    (392,2225)(396,2229)(403,2234)
    (411,2242)(422,2252)(435,2263)
    (450,2276)(468,2291)(488,2308)
    (511,2326)(535,2345)(562,2364)
    (590,2385)(620,2405)(652,2426)
    (685,2447)(720,2467)(756,2487)
    (794,2506)(833,2523)(875,2540)
    (918,2556)(964,2570)(1012,2583)
    (1062,2594)(1116,2603)(1173,2610)
    (1234,2615)(1298,2617)(1366,2617)
    (1437,2613)(1511,2606)(1587,2595)
    (1653,2583)(1719,2569)(1783,2552)
    (1844,2535)(1902,2516)(1956,2496)
    (2005,2476)(2050,2455)(2091,2434)
    (2127,2413)(2158,2392)(2186,2371)
    (2210,2350)(2230,2329)(2247,2309)
    (2262,2288)(2275,2268)(2285,2248)
    (2295,2228)(2304,2208)(2312,2189)
    (2320,2169)(2330,2149)(2340,2129)
    (2351,2109)(2365,2089)(2382,2068)
    (2401,2048)(2424,2027)(2451,2006)
    (2482,1984)(2518,1963)(2559,1941)
    (2605,1919)(2657,1898)(2715,1877)
    (2778,1856)(2847,1836)(2920,1817)
    (2998,1800)(3079,1784)(3162,1770)
    (3242,1759)(3323,1751)(3402,1744)
    (3480,1740)(3556,1738)(3630,1738)
    (3703,1739)(3773,1742)(3841,1746)
    (3907,1752)(3972,1759)(4034,1767)
    (4095,1776)(4155,1786)(4213,1796)
    (4271,1808)(4326,1820)(4381,1833)
    (4435,1847)(4488,1861)(4539,1875)
    (4589,1890)(4638,1904)(4686,1919)
    (4731,1934)(4775,1948)(4817,1962)
    (4856,1976)(4893,1989)(4928,2001)
    (4959,2012)(4988,2023)(5013,2032)
    (5035,2040)(5055,2048)(5070,2054)
    (5083,2059)(5094,2063)(5101,2066)
    (5106,2068)(5110,2069)(5111,2070)(5112,2070)
\color{black}
\thicklines
\path(2262,2295)(2263,2297)(2264,2301)
    (2267,2309)(2270,2320)(2275,2335)
    (2280,2353)(2285,2373)(2290,2396)
    (2295,2420)(2298,2445)(2300,2472)
    (2301,2500)(2299,2530)(2295,2562)
    (2288,2596)(2277,2632)(2262,2670)
    (2246,2703)(2229,2733)(2214,2758)
    (2201,2778)(2190,2792)(2182,2803)
    (2177,2809)(2173,2812)(2170,2813)
    (2168,2814)(2166,2814)(2161,2816)
    (2155,2819)(2145,2826)(2130,2837)
    (2110,2853)(2083,2874)(2049,2901)
    (2008,2934)(1962,2970)(1927,2997)
    (1891,3024)(1857,3050)(1825,3075)
    (1796,3098)(1771,3120)(1748,3140)
    (1730,3158)(1714,3175)(1700,3191)
    (1689,3206)(1679,3219)(1670,3232)
    (1662,3245)(1653,3257)(1644,3270)
    (1633,3282)(1621,3295)(1605,3308)
    (1587,3322)(1565,3335)(1538,3350)
    (1507,3364)(1471,3378)(1430,3391)
    (1386,3403)(1337,3413)(1287,3420)
    (1236,3423)(1186,3423)(1138,3420)
    (1092,3414)(1048,3406)(1006,3396)
    (967,3384)(929,3370)(893,3355)
    (858,3338)(825,3321)(793,3303)
    (761,3284)(732,3265)(703,3246)
    (676,3227)(651,3209)(628,3192)
    (607,3176)(589,3162)(574,3150)
    (561,3140)(552,3132)(545,3127)
    (540,3123)(538,3121)(537,3120)
\path(3162,1770)(3162,1771)(3161,1773)
    (3160,1777)(3159,1783)(3158,1791)
    (3156,1801)(3154,1813)(3153,1827)
    (3151,1843)(3150,1860)(3150,1878)
    (3151,1897)(3153,1916)(3157,1936)
    (3162,1956)(3169,1978)(3179,1999)
    (3192,2022)(3208,2045)(3228,2069)
    (3251,2094)(3280,2120)(3312,2145)
    (3345,2168)(3379,2189)(3414,2208)
    (3448,2226)(3480,2242)(3510,2256)
    (3538,2268)(3564,2279)(3589,2289)
    (3612,2298)(3633,2306)(3654,2313)
    (3675,2320)(3695,2327)(3715,2333)
    (3736,2339)(3758,2345)(3781,2351)
    (3805,2357)(3830,2362)(3857,2367)
    (3884,2371)(3912,2374)(3939,2375)
    (3965,2374)(3987,2370)(4007,2361)
    (4019,2348)(4024,2331)(4020,2313)
    (4008,2292)(3991,2271)(3968,2249)
    (3942,2226)(3913,2203)(3881,2180)
    (3849,2157)(3818,2135)(3788,2113)
    (3762,2091)(3739,2070)(3722,2050)
    (3713,2032)(3713,2016)(3725,2003)
    (3748,1995)(3786,1992)(3837,1995)
    (3873,2000)(3912,2008)(3954,2017)
    (3998,2028)(4043,2040)(4091,2054)
    (4139,2069)(4188,2085)(4238,2102)
    (4288,2120)(4339,2139)(4391,2158)
    (4443,2178)(4495,2198)(4548,2219)
    (4601,2240)(4653,2261)(4706,2283)
    (4758,2304)(4809,2326)(4860,2347)
    (4909,2368)(4956,2388)(5000,2407)
    (5042,2425)(5081,2441)(5116,2457)
    (5148,2470)(5175,2482)(5199,2492)
    (5218,2501)(5233,2507)(5245,2512)
    (5253,2516)(5258,2518)(5261,2519)(5262,2520)
\thinlines
\put(5112,0){\makebox(0,0)[lb]{$1$}}
\put(312,0){\makebox(0,0)[lb]{$-1$}}
\put(2690,0){\makebox(0,0)[lb]{$t$}}
\put(950,1450){\makebox(0,0)[lb]{\scriptsize reducible}}
\put(1400,1200){\makebox(0,0)[lb]{\scriptsize branch}}
\dashline{50.000}(2500,1850)(2050,1420)
\put(2750,3650){\makebox(0,0)[lb]{\scriptsize slightly degenerate}}
\put(3200,3400){\makebox(0,0)[lb]{\scriptsize points}}
\dashline{50.000}(2362,2350)(3000,3400)
\dashline{50.000}(3120,1875)(3100,3400)
\end{picture}
}
\caption{The parametrized moduli space in the case
$b_1=0$}\label{param:b0}
\end{center}
\end{figure}

Using the notation of Section \ref{app:spinc}.\ref{met:dep}, we
fix $g_0$ as a reference metric and employ the isometries $\Hat
k_t:L^2(M,T^*M;g_0)\to L^2(M,T^*M;g_t)$ and $\Hat
\gk_t:L^2(M,S;g_0)\to L^2(M,S;g_t)$ to identify configurations
associated to different metrics. According to the slice theorem,
a neighbourhood of $[0,A_0,0]$ in $\cB\times [-1,1]$ is
homeomorphic to $U/\U_1\times (-\eps,\eps)$, where $U$ is a
$\U_1$-invariant open subset of $L^2_1(M,S;g_0)\times \ker
(d^*|_{L_1^2})$.\footnote{For notational convenience, we are
dropping the reference to the metric $g_0$ in the adjoint of
$d$.} Modulo $\U_1$, the parametrized moduli space near
$[0,A_0,0]$ is then readily seen to be given by elements
$(\gf,a,t)\in U\times (-\eps,\eps)$ satisfying
\begin{equation}\label{loc:sol:b=0}
\big(\cD_{A^\flat-d^{-1}\eta_t+a}^t\gf,\;
q^t(\Hat\gk_t\gf)-*_tda\big)=0.
\end{equation}
As in the analogous situation in Section \ref{=1}, we now define
\[
s:L^2_1(M,S)\oplus\ker (d^*|_{L_1^2})\times
(-\eps,\eps)\longrightarrow L^2(M,S)\oplus \ker d^*
\]
by letting
\[
s(\gf,a,t):=\Proj_{L^2(M,S)\oplus\ker d^*} \Big(
\cD_{A^\flat-d^{-1}\eta_t+a}^t\gf,\; \Hat k_t^{-1}\big(
q^t(\Hat\gk_t\gf)-*_tda\big)\Big).
\]
Observe that we have to employ the isometry $\Hat k_t^{-1}$ since
$q^t(\Hat\gk_t\gf)-*_tda\in \ker d^{*_t}$ for any solution of
\eqref{loc:sol:b=0} but not necessarily
$q^t(\Hat\gk_t\gf)-*_tda\in \ker d^*$. Formula \eqref{dast:met}
shows that $\Hat k_t$ induces an isomorphism $\ker d^*\to \ker
d^{*_t}$, and we deduce exactly in the same manner as before that
the solutions of \eqref{loc:sol:b=0} coincide with the zeros of
$s$.

We shall need the differential
\[
D_0s:L^2_1(M,S)\oplus \ker (d^*|_{L_1^2})\oplus \R \to
L^2(M,S)\oplus \ker d^*
\]
of $s$ at the point 0 in order to invoke the implicit function
theorem. A short computation yields
\begin{equation*}
D_0s(\gf,a,t)=\Proj_{L^2(M,S)\oplus\ker d^*}\big(\cD_{A_0}\gf, -*
da\big)= \big(\cD_{A_0}\gf, -* da\big)
\end{equation*}
Since $\ker d^*\cap \ker d= 0$ on a rational homology sphere, we
readily infer that
\begin{align*}
\ker D_0s &= \ker \cD_{A_0}\oplus \{0\}\oplus \R \\
\coker D_0s &= \ker \cD_{A_0} \oplus\{0\}\;.
\end{align*}

Very similar as before we then deduce the following.
\begin{lemma}
If $A_0$ is non-degenerate, then the parametrized moduli space
near $[0,A_0,0]$ is locally homeomorphic to the reducible branch
\[
\bigcup_{t\in [-1,1]} [0,A^\flat-d^{-1}\eta_t,t].
\]
\end{lemma}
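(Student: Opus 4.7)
The plan is to deduce the lemma by applying the implicit function theorem to the map $s$ at the origin, together with the uniqueness of the reducible in each slice $\cM_{\eta_t}(\gs;g_t)$.

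First I would exploit the calculation immediately preceding the statement. Non-degeneracy of $A_0$ means $\ker\cD_{A_0}=0$, hence
\[
\ker D_0 s \;=\; \ker\cD_{A_0}\oplus\{0\}\oplus \R \;=\; \{0\}\oplus\{0\}\oplus\R
\quad\text{and}\quad\coker D_0 s \;=\; 0,
\]
so $D_0 s$ is a surjective Fredholm operator of index $1$, whose kernel is precisely the $t$-direction. By the implicit function theorem for Banach manifolds (which is applicable since the map $s$ is $C^m$ and the domain splits as $\ker D_0 s$ plus a closed complement in $L^2_1(M,S)\oplus\ker(d^*|_{L_1^2})\times(-\eps,\eps)$), there exist an open neighbourhood $W$ of $0$ in $L^2_1(M,S)\oplus\ker(d^*|_{L_1^2})\times(-\eps,\eps)$ and a $C^m$-map $(\gf,a):(-\gd,\gd)\to L^2_1(M,S)\oplus\ker(d^*|_{L_1^2})$ with $(\gf(0),a(0))=(0,0)$ such that
\[
s^{-1}(0)\cap W \;=\; \bigsetdef{\big(\gf(t),a(t),t\big)}{t\in(-\gd,\gd)}.
\]

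Next I would identify this local solution curve with the reducible branch. Corollary \ref{red:exist:cor} ensures that for every $t\in(-\gd,\gd)$ the moduli space $\cM_{\eta_t}(\gs;g_t)$ contains exactly one reducible point. Written in the local model of Lemma \ref{s:lemma}, this yields a continuous curve $t\mapsto (0,\tilde a(t),t)$ lying in $s^{-1}(0)$ with $\tilde a(0)=0$, obtained by expressing each reducible $[0,A^\flat-d^{-1}_t\eta_t,t]$ in slice coordinates at $(0,A_0)$. After possibly shrinking $\gd$, this curve is entirely contained in $W$. The uniqueness statement of the implicit function theorem applied fibrewise over $t$ then forces
\[
\big(\gf(t),a(t)\big) \;=\; \big(0,\tilde a(t)\big)\qquad\text{for all }|t|<\gd;
\]
in particular $\gf(t)\equiv 0$, i.e., every solution of $s=0$ in $W$ is reducible.

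Finally, since $\U_1$ acts only on the spinor part, its action is trivial along the reducible branch, so the quotient $(s^{-1}(0)\cap W)/\U_1$ coincides with $s^{-1}(0)\cap W$ itself. Combining this with Lemma \ref{s:lemma}, a neighbourhood of $[0,A_0,0]$ in $\widehat{\cM}_\eta(\gs;g)$ is homeomorphic to the arc $\{(0,\tilde a(t),t):|t|<\gd\}$, which is precisely the portion of the reducible branch near $[0,A_0,0]$. No serious obstacles arise—the only care needed is to ensure that the continuous curve of reducibles, parametrized by $t$, actually sits inside the neighbourhood $W$ produced by the implicit function theorem, which is immediate by continuity once $\gd$ is shrunk.
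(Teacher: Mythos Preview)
Your proof is correct and follows the same strategy as the paper (implicit function theorem applied to $s$, using that $D_0 s$ is surjective with one-dimensional kernel equal to the $t$-axis). The paper, referring back to the analogous Lemma~\ref{loc:red:b=1:nd}, streamlines your step~3: in the chosen parametrization the reducible at time $t$ is exactly the point $(0,0,t)$, since $s(0,0,t)=0$ is immediate from the formula for $s$ (this is spelled out in the proof of Proposition~\ref{loc:red:b=0}); hence $\tilde a(t)\equiv 0$ and there is no need to invoke Corollary~\ref{red:exist:cor} or pass through slice coordinates.
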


\begin{prop}\label{loc:red:b=0}
Suppose that $A_0$ is slightly degenerate, and let $\psi_0\in
L^2_1(M,S)$ be a spinor of norm 1 spanning $\ker \cD_{A_0}$. Then
in a neighbourhood of 0 in $L^2_1(M,S)\oplus \ker (d^*|_{L_1^2})
\times (-\eps,\eps)$, the following holds:
\[
s(\gf,a,t)=0 \Longleftrightarrow (\gf,a,t)=
\begin{cases}
(0,0,t) &\text{ if } \gf=0 \\
\big(z\psi_0,0,h(z)\big) + f(z,h(z)) &\text{ if } \gf\neq 0
\end{cases}\;,
\]
where
\[
f:\C\times\R\to (\ker \cD_{A_0}\oplus \R)^\perp
\]
is a $\U_1$-equivariant $C^m$-map, and $h:\C\to \R$ is a
$\U_1$-invariant $C^{m-2}$-map. Both maps vanish of second order
in 0.
\end{prop}
\begin{proof}
We apply the Kuranishi technique again. Let
\[
\gF:=\Proj_{\im D_0s}\circ s\quad \text{ and } \quad
\Psi:=\Proj_{\coker D_0s}\circ s.
\]
Transferring the arguments from the case $b_1=1$, we obtain a
$\U_1$-equivariant $C^m$-map
\[
f:\ker \cD_{A_0}\oplus \R\to (\ker \cD_{A_0}\oplus \R)^\perp
\]
such that
\[
\gF(\gf,a,t)=0 \Longleftrightarrow (\gf,a,t)=(z\psi_0,0,t)+
f(z,t),
\]
where we are using coordinates $z\in\C$ with respect to $\psi_0$.
The Kuranishi obstruction map is then given by
\[
\Hat \Psi:\C\times\R\to \ker \cD_{A_0},\quad \Hat\Psi(z,t):=
\Psi\big((z\psi_0,0,t)+ f(z,t)\big). \;
\]
Since $t\mapsto A^\flat-d^{-1}\eta_t$ parametrizes the reducible
part of $\widehat{\cM}_\eta(\gs;g)$, we find that $s(0,0,t)=0$ and
hence also $f(0,t)=\Hat\Psi(0,t)=0$. As before, we thus study
\[
\Hat\Psi(z,t)=z\cdot \Hat\Psi_1(z,r),\quad \text{where }
\Hat\Psi_1(z,t):=\begin{cases} \frac{\Hat\Psi(z,t)}{z}, &z\neq
0,\\ \frac{\partial}{\partial z}\big|_{(0,t)}\Hat\Psi(z,t), & z=0.
\end{cases}
\]
As $\Hat\Psi$ is a $C^m$-map, the function $\Hat\Psi_1$ is at
least $C^{m-2}$.  Before applying the implicit function theorem
again, we have to ascertain that the $t$ derivative of this
function does not vanish in $(0,0)$. Letting
$\Pi:=\Proj_{\ker\cD_{A_0}}$, we compute
\begin{multline*}
\lfrac{\partial}{\partial z}\big|_{(0,t)}\Hat\Psi(z,t)
=\Pi\Big(\lfrac{\partial}{\partial z}\big|_{(0,t)}
s(z\psi_0,0,t)\Big) + \Pi \Big(\lfrac{\partial}{\partial
z}\big|_{(0,t)} (s\circ f)\Big)
\\
=\LScalar{\cD_{A^\flat-d^{-1}\eta_t}^t\psi_0}{\psi_0}\cdot\psi_0 +
\LScalar{\lfrac{\partial}{\partial z}\big|_{(0,t)}
\cD_{A_0+a(z,t)}\gf(z,t)}{\psi_0}\cdot \psi_0\,,
\end{multline*}
where $\gf(z,t)$ and $a(z,t)$ denote the spinor and the 1-form
part of $f(z,t)$ respectively. With exactly the same arguments as
in the proof of Proposition \ref{loc:red:b=1}, one deduces that
the second term in the above equation's last line vanishes. Thus,
\begin{equation}\label{kappa:b=0}
\begin{split}
\sgn\LScalar{&\lfrac{\partial}{\partial t}\big|_{(0,0)}
\Hat\Psi_1(z,t)}{\psi_0} =
\sgn\LScalar{\lfrac{\partial^2}{\partial t\partial
z}\big|_{(0,0)}\Hat\Psi(z,t)}{\psi_0} \\
&=\sgn \LScalar{\lfrac{d}{dt}\big|_{t=0}
\cD_{A^\flat-d^{-1}\eta_t}^t\psi_0}{\psi_0}\\
&=\SF\big(\cD_{A^\flat-d^{-1}\eta_t}^t;\;{\scriptstyle |t|\ll
1}\big)=\pm 1\;,
\end{split}
\end{equation}
where we are using that $\{\cD_{A^\flat-d^{-1}\eta_t}^t\}$ has
transversal spectral flow with only simple crossings. In
particular, $\lfrac{\partial}{\partial t}\big|_{(0,0)}
\Hat\Psi_1(z,t)\neq 0$. Hence, the implicit function theorem
produces a $\U_1$-invariant $C^{m-2}$-map $h:\C\to\R$ such that in
a neighbourhood of (0,0),
\[
\Hat \Psi_1(z,t) = 0 \Longleftrightarrow t=h(z).
\]

Very similar to the situation in the preceding chapter, we then
infer that in a neighbourhood of 0, the claimed condition holds.
The arguments concerning the property that $f$ and $h$ vanish of
second order in 0 are also the same as before.
\end{proof}

\begin{remark*}
As a result, we find that a neighbourhood of $[0,A_0,0]$ in
$\widehat{\cM}_\eta(\gs)$ is homeomorphic to a neighbourhood of 0
in
\[
\bigsetdef{(z,x)\in \R_+^0\times \R}{z=0 \text{ or } x=0},
\]
where $\R_+\times \{0\}$ corresponds to the irreducible part. The
branch $\{0\}\times\R$ corresponds to the reducible arc
$[0,A^\flat-d^{-1}\eta_t,t]$.
\end{remark*}

According to Proposition \ref{loc:red:b=0}, the $t$-component of
the irreducible branch near $[0,A_0,0]$ is given by the value of
$h$, which we shall henceforth regard as a function $\R_+^0\to
\R$. Therefore, $h$ encodes information about on which side of
$\cB\times \{0\}$ the irreducible branch near $[0,A_0,0]$ is
located. Since $h$ is a $C^{m-2}$-function vanishing of second
order in 0, it is promising to assume that $m\ge 4$ and study the
second derivative of $h$: If $h''(0)<0$, then $h$ has a maximum
in 0 so that the irreducible branch near $[0,A_0,0]$ is contained
in $\cB\times [-1,0)$. If $h''(0)>0$, then this branch lies in
$\cB\times (0,1]$.

The next result shows how to relate this with the spectral flow
$\SF(\cD_{A^\flat-d^{-1}\eta_t};\;{\scriptstyle |x|\ll 1})$ and
the number $\LScalar{d^{-1}* q(\psi_0)}{q(\psi_0)}$. Recall that
both numbers do not vanish according to the choice of $\eta$.

\begin{prop}\label{orient:b=0}
Assume that $m\ge 4$. Then the second derivative of $h$ is given
by the formula
\begin{equation*}
h''(0)\cdot\LScalar{\lfrac{d}{dt}\big|_{t=0}
\cD_{A^\flat-\eta_t}^t\psi_0}{\psi_0} = -2\LScalar{d^{-1}*
q(\psi_0)}{q(\psi_0)}
\end{equation*}
and thus never equals 0. Moreover, if $m\ge 6$ and if $[\psi,A,t]$
is an element of the irreducible branch close to $[0,A_0,0]$, then
\begin{equation*}
\eps(\psi,A)= \sgn h''(0)\cdot
\SF(\cD_{A^\flat-d^{-1}\eta_t};\;{\scriptstyle |t|\ll 1}).
\end{equation*}
\end{prop}

\begin{proof}
Let us write $f(z,h(z))=\big(\gf(z),a(z),0\big)$. The irreducible
branch is then locally parametrized by
\[
(0,z_0)\to \cB\times[-1,1],\quad z\mapsto [\psi(z),A(z),h(z)],
\]
where $z_0\in\R_+$ is appropriately small, and where
\[
\psi(z):=z\psi_0 +\gf(z)\quad\text{and}\quad
A(z):=A^\flat-d^{-1}\eta_{h(z)}+a(z)\,.
\]
According to the above proposition, the path
$\big(\psi(z),A(z),h(z)\big)$ can be extended to $z=0$ in a twice
continuously differentiable way since we have chosen $m\ge 4$.

We differentiate the Dirac equation for $(\psi(z),A(z))$ two
times and obtain
\[
0=\lfrac{d^2}{dz^2} \Big(\cD_{A(z)}^{h(z)}\Big)\psi(z) +
2\lfrac{d}{dz}\Big(\cD_{A(z)}^{h(z)}\Big)\psi'(z) +
\cD_{A(z)}^{h(z)}\psi''(z).
\]
If we now take the inner product with $\psi_0$, then a careful
analysis shows that we can differentiate the resulting equation
at $z=0$ once again. For this we observe by making use of
self-adjointness of $\cD_{A(z)}^{h(z)}$ and the chain rule
\begin{align*}
\lfrac1z\LSCalar{\cD_{A(z)}^{h(z)}\psi''(z)}{&\psi_0}=
\LSCalar{\psi''(z)}{\lfrac1z\cD_{A(z)}^{h(z)}\psi_0}\\
&\xrightarrow{z\to 0}
\LSCalar{\psi''(0)}{h'(0)\lfrac{d}{dt}\big|_{t=0}
\cD_{A^\flat-\eta_t}^t\psi_0+c(a'(0))\psi_0}=0
\end{align*}
because $a$ and $h$ vanish of second order. Moreover,
\begin{align*}
\lfrac1z\LSCalar{\lfrac{d^2}{dz^2}
\Big(\cD_{A(z)}^{h(z)}\Big)\psi(z)}{\psi_0} &\xrightarrow{z\to 0}
\LSCalar{\lfrac{d^2}{dz^2}\big|_{z=0}
\Big(\cD_{A(z)}^{h(z)}\Big)\psi'(0)}{\psi_0}.
\end{align*}
Thus, the chain rule implies that
\begin{align*}
0&=\LSCalar{\lfrac{d^2}{dz^2}\big|_{z=0}
\big(\cD_{A(z)}^{h(z)}\big)\psi'(0)}{\psi_0}\\
&=\LSCalar{\lfrac{d}{dz}\big|_{z=0}\Big(h'(z)\lfrac{d}{dt}
\big|_{t=h(z)}\cD_{A^\flat-\eta_t}^t+
\lfrac12c(a'(z))\Big)\psi'(0)}{\psi_0} \\
&=\LSCalar{h''(0)\lfrac{d}{dt}\big|_{t=0}
\cD_{A^\flat-\eta_t}^t\psi_0+ \lfrac12c(a''(0))\psi_0 }{\psi_0},
\end{align*}
where we invoke that $\psi'(0)=\psi_0$ and that $h$ and $a$ vanish
of second order in 0. Therefore,
\[
h''(0)\cdot\LScalar{\lfrac{d}{dt}\big|_{t=0}
\cD_{A^\flat-\eta_t}^t\psi_0}{\psi_0} = -
\lfrac{1}{2}\LScalar{c(a''(0))\psi_0}{\psi_0}.
\]
The term on the right hand side can be computed by making use of
the second part of the Seiberg-Witten equations. Since
\[
q_{h(z)}\big(\Hat\gk_{h(z)}\psi(z)\big)= *_{h(z)}da(z),
\]
differentiating twice and using that $\psi(0)=0$ and
$a(0)=a'(0)=0$ yields:
\[
2q(\psi_0)=* da''(0),\quad\text{i.e.,}\quad a''(0)=2d^{-1}*
q(\psi_0).
\]
Combining this with the above result and invoking Proposition
\ref{q:prop}, we infer that
\begin{equation*}
h''(0)\cdot\LScalar{\lfrac{d}{dt}\big|_{t=0}
\cD_{A^\flat-\eta_t}^t\psi_0}{\psi_0} = -2\LScalar{d^{-1}*
q(\psi_0)}{q(\psi_0)}.
\end{equation*}
This proves the first assertion.\\

We shall now determine the number $\eps(\psi,A)$ for an
irreducible $\eta_t$-monopole such that $[\psi,A,t]$ lies on the
irreducible branch close to $[0,A_0,0]$. As in the corresponding
situation before, this amount to compute the spectral flow of the
family
\[
T_z:=T_{(\psi(z),A(z))}^{h(z)}\quad z\in[0,z_0],
\]
where we suppose that
$\big(\psi(z_0),A(z_0),h(z_0)\big)=(\psi,A,t)$. In addition, we
may assume that $T_z$ is invertible for any $z\neq 0$. We write
\[
T_z=D_z+ K_z,
\]
where $D_z$ and $K_z$ are given by
\[
D_z \begin{pmatrix}\gf \\a \\ f\end{pmatrix} =
\begin{pmatrix}
\cD_{A^\flat-d^{-1}\eta_{h(z)}}^{h(z)}\gf\\
-*_{\scriptscriptstyle h(z)}da + 2df \\
2d^{*_{h(z)}}a\end{pmatrix}, \quad
\begin{pmatrix}\gf
\\a \\ f\end{pmatrix}\in L^2_1(M,E\oplus i\R)
\]
and
\[
K_z
\begin{pmatrix}\gf \\a \\ f\end{pmatrix} =
\begin{pmatrix}\lfrac{1}{2}c(a(z))\gf+\lfrac{1}{2}c(a)\psi(z)
-f\psi(z) \\
q_{h(z)}\big(\psi(z),\gf\big) \\
-i\Im\Scalar{\gf}{\psi(z)}_{h(z)}\end{pmatrix}, \quad
\begin{pmatrix}\gf
\\a \\ f\end{pmatrix}\in L^2_1(M,E\oplus i\R)
\]
respectively. As $h(z)$ vanishes of second order in 0, we infer
from the chain rule that
\[
\lfrac{d}{dz}\big|_{z=0} D_z = 0.
\]
Since $\psi'(0)=\psi_0$ and $a'(0)=0$, this results in:
\begin{equation}\label{orient:b=0:1}
\big(\lfrac{d}{dz}\big|_{z=0} T_z\big)
\begin{pmatrix} \gf \\ a \\ f \end{pmatrix} =
\begin{pmatrix}
\lfrac{1}{2}c(a)\psi_0 - f\psi_0 \\
q\big(\psi_0,\gf\big)\\
-i\Im \Scalar{\gf}{\psi_0}
\end{pmatrix},\quad
\begin{pmatrix} \gf\\ a\\ f\end{pmatrix}\in L^2_1(M,E\oplus i\R).
\end{equation}
Next observe that
\[
\ker T_0=\ker\cD_{A_0}\oplus\{0\}\oplus i\R= \Span_\R
\{\psi_0,i\psi_0,i\}.
\]
If we employ real coordinates $x_1$, $x_2$ and $y$ with respect
to the above orthonormal basis, then a very similar computation
as in the proof of Proposition \ref{orient:b=1} shows that the
crossing operator $C_T(0)=\Proj_{\ker T_0}\circ
\big(\lfrac{d}{dz}\big|_{z=0} T_z\big)|_{\ker T_0}$ has the
matrix description
\begin{equation}\label{orient:b=0:2}
C_T(0)\begin{pmatrix}x_1 \\ x_2 \\ y \end{pmatrix} =
\begin{pmatrix}
0 &0 &0 \\ 0 &0 &-1 \\ 0 &-1 &0
\end{pmatrix}\begin{pmatrix}
x_1 \\ x_2 \\ y \end{pmatrix}.
\end{equation}
We conclude that $\spec(C_T(0))=\{-1,0,1\}$.

In association with Kato's Selection Theorem \ref{Kato:sel}, this
corresponds to the following situation: We can choose three
$C^1$-paths $\gt_i:[0,z_0]\to \R$, $i\in\{-1,0,1\}$ parametrizing
the eigenvalues of $T_z$ which equal zero for $z=0$. The
structure of the crossing operator then shows that $\gt'_i(0)=i$.
In particular, the eigenvalue $\gt_0$ vanishes of second order in
0 so that we cannot compute the spectral flow by means of the
crossing operator. We thus need a more refined analysis of
$\gt_0$'s behaviour near 0.
\begin{remark*}
In the literature, the path of operators $T_z$ is usually assumed
to be analytically. One then finds an analytic parametrization of
eigenvalues and corresponding eigenvectors (cf. \cite{Che:CI},
\cite{Nic:SW3}). However, the author of this thesis does not see
how to achieve analyticity of $T$. Fortunately, the
considerations in Appendix \ref{app:SF&OT} provide a way out of
this trouble if we choose $m\ge 6$. Nevertheless, the involved
computations remain essentially the same.
\end{remark*}
According to Example \ref{gl:sec}, we may assume that $\gt_0$ is
twice continuously differentiable, if $m\ge 6$.  Moreover, we
find a $C^2$-family $v_z\in L^2_1(M,E\oplus i\R)$ locally
corresponding to the eigenvectors associated to $\gt_0$. Formula
\eqref{gl:sec:der} then shows that
\begin{equation}\label{gt''}
\begin{split}
\gt_0''(0)&=\lfrac{d}{dz}\big|_{z=0}
\Re\LScalar{\big(\lfrac{d}{dz}\big|_z T_z\big)v_z}{v_z} \\
&=\Re\LScalar{T_0''v_0 + T_0'v_0'}{v_0} +
\Re\LScalar{T_0'v_0}{v_0'} \\ &=\Re\LScalar{T_0''v_0 +
2T_0'v_0'}{v_0},
\end{split}
\end{equation}
where we are using obvious abbreviations and invoke
self-adjointness of $T_0'$.

First of all, note that $v_0$ lies in the kernel of $T_0$. Letting
$P:=\Proj_{\ker T_0}$, we thus have $v_0=Pv_0$. Moreover,
differentiating the equation $T_z v_z =\gt_0(z)v_z$ implies that
\[
T_0'v_0 + T_0 v_0' = \gt_0'(0)v_0 + \gt_0(0)v_0'= 0
\]
for $\gt_0=\gt_0'=0$. From this and the fact that $PT_0=0$ we
deduce
\[
0=PT_0'v_0 + PT_0 v_0' = PT_0'Pv_0 = C_T(0)v_0.
\]
Therefore, $v_0\in \ker C_T(0)$. As a consequence of
\eqref{orient:b=0:2}, the kernel of the crossing operator is given
by $\Span_\R \psi_0$. Henceforth, we may thus assume that
$v_0=\psi_0$. To determine $v_0'$ we deduce from the above
computations and the explicit formula of $T_0'$ in
\eqref{orient:b=0:1} that
\[
-T_0v_0'=T_0'v_0=T_0'\psi_0 = \begin{pmatrix} 0\\ q(\psi_0)\\
0 \end{pmatrix}.
\]
Recalling that $T_0$ is explicitly given by
\[
T_0 = \begin{pmatrix} \cD_{A_0} &0 &0 \\
                0 &-* d &2d \\
                0 &2d^* &0 \end{pmatrix},
\]
we infer that
\[
v_0'\in d^{-1}* q(\psi_0) +\ker T_0,\quad\text{i.e., }
v_0'=\begin{pmatrix} z\psi_0 \\ d^{-1}* q(\psi_0)\\ iy
\end{pmatrix}
\]
for suitable $z\in\C$ and $y\in\R$. According to
\eqref{orient:b=0:1}, we thus find the following:
\begin{align*}
\Re\LScalar{T_0'v_0'}{v_0} &= \Re\LScalar{\lfrac12
c\big(d^{-1}* q(\psi_0)\big)\psi_0-iy\psi_0}{\psi_0} \\
&=\LScalar{ d^{-1}* q(\psi_0)}{q(\psi_0)}.
\end{align*}
Considering the second term in \eqref{gt''}, we immediately deduce
from the definition of $T_z$ that
\[
\Re\LScalar{T_0''v_0}{v_0}=\Re
\LSCalar{\lfrac{d^2}{dz^2}\big|_{z=0}
\Big(\cD_{A(z)}^{h(z)}\Big)\psi_0}{\psi_0}.
\]
As a result of the computations performed in the first part of
this proof, this term equals zero. Hence, we finally draw the
following conclusion:
\begin{align*}
\gt_0''(0)&=2\Re\LScalar{T_0'v_0'}{v_0}= 2\LScalar{ d^{-1}*
q(\psi_0)}{q(\psi_0)}\\
&=-h''(0)\cdot\LScalar{\lfrac{d}{dt}\big|_{t=0}
\cD_{A^\flat-\eta_t}^t\psi_0}{\psi_0},
\end{align*}
where we have employed the first part of this result.

\begin{figure}
\begin{center}
\setlength{\unitlength}{0.00035in}
\begingroup\makeatletter\ifx\SetFigFont\undefined%
\gdef\SetFigFont#1#2#3#4#5{%
  \reset@font\fontsize{#1}{#2pt}%
  \fontfamily{#3}\fontseries{#4}\fontshape{#5}%
  \selectfont}%
\fi\endgroup%
{\renewcommand{\dashlinestretch}{30}
\begin{picture}(9794,5084)(0,-10)
\path(6247,322)(6247,4672)
\path(6247,2422)(9547,2422)
\path(22,322)(22,4672)
\path(22,2422)(3322,2422)
\dashline{60.000}(22,1900)(3322,1900)
\dashline{60.000}(6247,1900)(9547,1900)
\color{gray2}
\thicklines
\path(6247,2422)(6248,2423)(6250,2426)
    (6254,2431)(6261,2439)(6271,2451)
    (6284,2467)(6300,2487)(6320,2511)
    (6343,2539)(6370,2571)(6400,2607)
    (6433,2647)(6468,2689)(6506,2734)
    (6545,2781)(6586,2829)(6627,2878)
    (6669,2927)(6711,2975)(6752,3024)
    (6794,3071)(6834,3117)(6874,3162)
    (6913,3206)(6951,3248)(6987,3288)
    (7023,3326)(7058,3363)(7092,3399)
    (7125,3433)(7157,3465)(7189,3496)
    (7220,3526)(7250,3555)(7280,3583)
    (7310,3610)(7340,3636)(7370,3661)
    (7399,3686)(7429,3711)(7460,3734)
    (7491,3759)(7524,3784)(7557,3808)
    (7590,3832)(7624,3856)(7658,3879)
    (7693,3903)(7728,3926)(7764,3950)
    (7801,3973)(7838,3996)(7876,4019)
    (7914,4042)(7952,4064)(7991,4087)
    (8030,4109)(8070,4131)(8109,4152)
    (8149,4173)(8188,4194)(8228,4215)
    (8267,4235)(8306,4254)(8344,4274)
    (8382,4292)(8420,4310)(8457,4328)
    (8493,4345)(8529,4361)(8564,4377)
    (8598,4393)(8631,4408)(8664,4422)
    (8696,4436)(8727,4449)(8757,4462)
    (8787,4474)(8816,4487)(8844,4498)
    (8872,4509)(8908,4524)(8944,4538)
    (8978,4552)(9013,4565)(9047,4578)
    (9081,4591)(9115,4604)(9150,4616)
    (9186,4629)(9222,4642)(9260,4655)
    (9298,4668)(9338,4682)(9378,4695)
    (9419,4709)(9461,4723)(9502,4736)
    (9543,4749)(9582,4762)(9619,4774)
    (9653,4785)(9683,4794)(9709,4802)
    (9731,4809)(9747,4814)(9759,4818)
    (9766,4820)(9770,4821)(9772,4822)
\path(6247,2422)(6248,2421)(6251,2417)
    (6255,2411)(6263,2401)(6273,2387)
    (6287,2368)(6305,2344)(6326,2315)
    (6351,2282)(6379,2244)(6410,2201)
    (6444,2156)(6480,2107)(6518,2055)
    (6557,2002)(6597,1948)(6637,1894)
    (6676,1839)(6716,1786)(6754,1733)
    (6792,1681)(6828,1631)(6863,1583)
    (6896,1537)(6928,1492)(6958,1449)
    (6987,1409)(7015,1369)(7041,1332)
    (7066,1296)(7089,1262)(7112,1229)
    (7134,1197)(7154,1166)(7174,1136)
    (7193,1106)(7212,1078)(7230,1050)
    (7247,1022)(7268,988)(7289,954)
    (7309,921)(7328,888)(7347,854)
    (7366,820)(7385,786)(7404,751)
    (7423,716)(7442,679)(7461,641)
    (7480,601)(7500,561)(7521,519)
    (7541,476)(7562,431)(7583,387)
    (7604,342)(7624,298)(7643,255)
    (7662,214)(7679,175)(7695,141)
    (7708,110)(7720,84)(7729,63)
    (7736,47)(7741,35)(7744,28)
    (7746,24)(7747,22)
\path(6247,2422)(6249,2423)(6253,2424)
    (6262,2428)(6275,2432)(6293,2439)
    (6316,2448)(6345,2459)(6379,2472)
    (6417,2487)(6459,2503)(6503,2521)
    (6550,2539)(6597,2558)(6645,2577)
    (6692,2596)(6738,2615)(6782,2634)
    (6825,2652)(6865,2670)(6904,2687)
    (6940,2704)(6975,2720)(7008,2736)
    (7039,2751)(7068,2767)(7096,2782)
    (7123,2797)(7149,2812)(7174,2828)
    (7198,2844)(7222,2859)(7247,2877)
    (7272,2895)(7296,2913)(7320,2932)
    (7344,2952)(7368,2973)(7392,2994)
    (7416,3017)(7439,3040)(7463,3064)
    (7486,3089)(7509,3115)(7531,3142)
    (7553,3170)(7574,3198)(7595,3227)
    (7616,3256)(7635,3287)(7654,3317)
    (7672,3348)(7689,3379)(7706,3411)
    (7721,3442)(7736,3474)(7750,3506)
    (7763,3539)(7776,3571)(7788,3605)
    (7799,3638)(7810,3672)(7818,3701)
    (7826,3730)(7834,3761)(7841,3792)
    (7848,3825)(7855,3858)(7861,3893)
    (7868,3930)(7874,3969)(7880,4009)
    (7886,4052)(7891,4097)(7897,4145)
    (7903,4194)(7908,4246)(7914,4301)
    (7919,4357)(7924,4415)(7929,4474)
    (7934,4534)(7939,4594)(7944,4653)
    (7948,4711)(7953,4766)(7956,4818)
    (7960,4865)(7963,4908)(7965,4944)
    (7967,4975)(7969,5000)(7970,5019)
    (7971,5032)(7972,5040)(7972,5045)(7972,5047)
\path(22,2422)(23,2423)(25,2426)
    (29,2431)(36,2439)(46,2451)
    (59,2467)(75,2487)(95,2511)
    (118,2539)(145,2571)(175,2607)
    (208,2647)(243,2689)(281,2734)
    (320,2781)(361,2829)(402,2878)
    (444,2927)(486,2975)(527,3024)
    (569,3071)(609,3117)(649,3162)
    (688,3206)(726,3248)(762,3288)
    (798,3326)(833,3363)(867,3399)
    (900,3433)(932,3465)(964,3496)
    (995,3526)(1025,3555)(1055,3583)
    (1085,3610)(1115,3636)(1145,3661)
    (1174,3686)(1204,3711)(1235,3734)
    (1266,3759)(1299,3784)(1332,3808)
    (1365,3832)(1399,3856)(1433,3879)
    (1468,3903)(1503,3926)(1539,3950)
    (1576,3973)(1613,3996)(1651,4019)
    (1689,4042)(1727,4064)(1766,4087)
    (1805,4109)(1845,4131)(1884,4152)
    (1924,4173)(1963,4194)(2003,4215)
    (2042,4235)(2081,4254)(2119,4274)
    (2157,4292)(2195,4310)(2232,4328)
    (2268,4345)(2304,4361)(2339,4377)
    (2373,4393)(2406,4408)(2439,4422)
    (2471,4436)(2502,4449)(2532,4462)
    (2562,4474)(2591,4487)(2619,4498)
    (2647,4509)(2683,4524)(2719,4538)
    (2753,4552)(2788,4565)(2822,4578)
    (2856,4591)(2890,4604)(2925,4616)
    (2961,4629)(2997,4642)(3035,4655)
    (3073,4668)(3113,4682)(3153,4695)
    (3194,4709)(3236,4723)(3277,4736)
    (3318,4749)(3357,4762)(3394,4774)
    (3428,4785)(3458,4794)(3484,4802)
    (3506,4809)(3522,4814)(3534,4818)
    (3541,4820)(3545,4821)(3547,4822)
\path(22,2422)(23,2421)(26,2417)
    (30,2411)(38,2401)(48,2387)
    (62,2368)(80,2344)(101,2315)
    (126,2282)(154,2244)(185,2201)
    (219,2156)(255,2107)(293,2055)
    (332,2002)(372,1948)(412,1894)
    (451,1839)(491,1786)(529,1733)
    (567,1681)(603,1631)(638,1583)
    (671,1537)(703,1492)(733,1449)
    (762,1409)(790,1369)(816,1332)
    (841,1296)(864,1262)(887,1229)
    (909,1197)(929,1166)(949,1136)
    (968,1106)(987,1078)(1005,1050)
    (1022,1022)(1043,988)(1064,954)
    (1084,921)(1103,888)(1122,854)
    (1141,820)(1160,786)(1179,751)
    (1198,716)(1217,679)(1236,641)
    (1255,601)(1275,561)(1296,519)
    (1316,476)(1337,431)(1358,387)
    (1379,342)(1399,298)(1418,255)
    (1437,214)(1454,175)(1470,141)
    (1483,110)(1495,84)(1504,63)
    (1511,47)(1516,35)(1519,28)
    (1521,24)(1522,22)
\path(22,2422)(24,2422)(29,2421)
    (37,2419)(50,2416)(69,2412)
    (94,2407)(124,2401)(160,2394)
    (202,2385)(247,2375)(297,2364)
    (349,2353)(403,2341)(458,2328)
    (512,2315)(567,2302)(620,2289)
    (672,2276)(722,2263)(770,2250)
    (815,2237)(859,2225)(900,2212)
    (940,2199)(978,2186)(1014,2173)
    (1048,2160)(1081,2146)(1113,2132)
    (1145,2118)(1175,2103)(1205,2088)
    (1235,2072)(1264,2056)(1293,2038)
    (1322,2021)(1351,2002)(1380,1983)
    (1409,1963)(1438,1942)(1467,1920)
    (1496,1898)(1526,1875)(1555,1851)
    (1585,1827)(1614,1801)(1644,1776)
    (1673,1749)(1702,1723)(1731,1696)
    (1760,1668)(1788,1641)(1816,1613)
    (1843,1586)(1870,1558)(1897,1531)
    (1922,1503)(1947,1476)(1972,1449)
    (1996,1423)(2019,1397)(2041,1371)
    (2063,1346)(2085,1321)(2106,1296)
    (2127,1271)(2147,1247)(2168,1221)
    (2190,1196)(2211,1170)(2232,1144)
    (2253,1117)(2274,1090)(2296,1063)
    (2319,1034)(2342,1005)(2365,974)
    (2390,942)(2416,908)(2442,873)
    (2470,837)(2498,799)(2527,761)
    (2556,722)(2586,682)(2615,643)
    (2643,605)(2670,569)(2695,535)
    (2718,503)(2739,476)(2756,453)
    (2770,434)(2781,419)(2788,409)
    (2793,402)(2796,399)(2797,397)
\color{black}
\put(6500,697){\makebox(0,0)[lb]{$\scriptstyle\gt_{-1}$}}
\put(9097,4097){\makebox(0,0)[lb]{$\scriptstyle\gt_1$}}
\put(7797,2872){\makebox(0,0)[lb]{$\scriptstyle\gt_0$}}
\put(2150,1447){\makebox(0,0)[lb]{$\scriptstyle\gt_0$}}
\put(2122,3722){\makebox(0,0)[lb]{$\scriptstyle\gt_1$}}
\put(350,697){\makebox(0,0)[lb]{$\scriptstyle\gt_{-1}$}}
\end{picture}
}
\caption{Spectral flow of $T_z$}\label{teta}
\end{center}
\end{figure}

The spectral flow of $T_z$ in 0 is now obtained in the following
way (cf. Fig. \ref{teta}): If $\gt_0''(0)>0$, then the function
$\gt$ has a minimum in 0. Therefore, the only eigenvalue leaving 0
in the negative direction is $\gt_{-1}$. Due to our convention of
counting eigenvalues at the endpoints, the spectral flow of $T_z$
then equals 1. In accordance with the orientation transport
formula in Theorem \ref{OT=SF}, we conclude that
$\eps(\psi,A)=-1$. If, on the other hand, $\gt_0''(0)<0$, then the
eigenvalue $\gt_0(z)$ is also negative for small $z>0$. It thus
contributes to the spectral flow of $T_z$, and we find that
$\eps(\psi,A)=1$. Therefore,
\[
\eps(\psi,A)= -\sgn \gt_0''(0)= \sgn
h''(0)\cdot\sgn\LScalar{\lfrac{d}{dt}\big|_{t=0}
\cD_{A^\flat-\eta_t}^t\psi_0}{\psi_0}.
\]
This proves the assertion since the last term in the above
equation equals the spectral flow of
$\{\cD_{A^\flat-d^{-1}\eta_t}\}$ in $t=0$.
\end{proof}

\begin{theorem}\label{b=0}
Let $M$ be a rational homology 3-sphere endowed with a \spinc
structure $\gs$ and a flat connection $A^\flat$ on $L(\gs)$.
Suppose that $g_{-1}$ and $g_1$ be Riemannian metrics on $M$
together with respective suitable perturbations $\eta_{-1}$ and
$\eta_1$. Then
\begin{equation}\label{wallcross:b=0}
\sw_{\eta_{-1}}(\gs;g_{-1})-\sw_{\eta_1}(\gs;g_1)=
-\SF\big(\cD_{A^\flat-d^{-1}\eta_t}^t;{\scriptstyle t\in
[-1,1]}\big),
\end{equation}
where $g_t$ and $\eta_t$ are arbitrary $C^1$-path of metrics and
perturbations  connecting $g_{-1}$ with $g_1$ and $\eta_{-1}$
with $\eta_1$ respectively.
\end{theorem}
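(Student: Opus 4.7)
The plan is to imitate the cobordism argument from the proof of Theorem \ref{b>1}, modified by the singular analysis of the parametrized moduli space near the reducible branch that was carried out in Theorem \ref{b=1}.

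First, I would invoke Corollary \ref{eta:path:b=0} with $m\ge 6$ (so that Proposition \ref{orient:b=0} is applicable) to choose a $C^m$-path $(g_t,\eta_t)$ connecting the given data such that (i) the family $\{\cD_{A^\flat-d^{-1}\eta_t}^t\}_{t\in[-1,1]}$ is transversal with only simple crossings, (ii) $\LScalar{d^{-1}*_t q_t(\psi)}{q_t(\psi)}^t\neq 0$ for every non-vanishing harmonic spinor, and (iii) the irreducible part of the parametrized moduli space $\widehat\cM_\eta(\gs;g)$ is a one-dimensional $C^m$-submanifold of $\cB^*\times[-1,1]$. Proposition \ref{pert:moduli:comp} provides sequential compactness of this submanifold, and the local model of Proposition \ref{loc:red:b=0} shows that its only accumulation points in $\cB\times[-1,1]$ lying outside $\cB^*\times[-1,1]$ are the finitely many slightly degenerate reducibles $[0,A^\flat-d^{-1}\eta_{t_i},t_i]$, where $t_i$ runs over the crossing times of the Dirac family. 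The irreducible part therefore decomposes as a finite union of $C^m$-arcs whose endpoints lie either on the boundary slices $\cM^*_{\eta_{\pm 1}}\times\{\pm 1\}$ or on one of these reducibles.

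I would then process each arc separately. Exactly as in the proof of Theorem \ref{b>1}, any arc with both endpoints on boundary slices contributes $0$ to $\sw_{\eta_{-1}}(\gs;g_{-1})-\sw_{\eta_1}(\gs;g_1)$ by homotopy invariance of $\eps$. For an arc with one endpoint on a boundary slice and the other at a reducible $[0,A^\flat-d^{-1}\eta_{t_0},t_0]$, Proposition \ref{orient:b=0} gives
\[
\eps\bigl(\psi(s),A(s)\bigr)=\sgn h''(0)\cdot \SF\bigl(\cD_{A^\flat-d^{-1}\eta_t}^t;\,|t-t_0|\ll 1\bigr)
\]
near the reducible endpoint, while $\sgn h''(0)$ simultaneously records on which side of the hyperplane $\{t=t_0\}$ the arc sits. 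Combining this with the $t'(a)/t'(b)$-sign computation already used in Theorem \ref{b>1}, a case analysis on the location of the non-reducible endpoint and on the direction from which the arc meets the reducible branch (paralleling Cases 1 and 2 of Theorem \ref{b=1}) shows that such an arc contributes precisely $-\SF(\cD_{A^\flat-d^{-1}\eta_t}^t;\,|t-t_0|\ll 1)$ to $\sw_{\eta_{-1}}-\sw_{\eta_1}$. Arcs whose two endpoints both lie on the reducible branch are treated as in Cases 3 and 4 of Theorem \ref{b=1}: the constraint imposed by the orientation transport along the full arc forces the two local Dirac spectral flows at its two endpoints to be opposite, so their contributions to the right-hand side of \eqref{wallcross:b=0} cancel. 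Summing over all arcs and invoking additivity of the spectral flow along the subdivision of $[-1,1]$ by the crossing times $t_i$ yields \eqref{wallcross:b=0}.

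The independence of the right-hand side of \eqref{wallcross:b=0} from the chosen connecting path, required to make the statement well-posed, is immediate from the homotopy invariance of the spectral flow: any two admissible $C^1$-paths of metrics and perturbations are homotopic rel endpoints, and the endpoint operators $\cD_{A^\flat-d^{-1}\eta_{\pm 1}}^{g_{\pm 1}}$ are invertible because $\eta_{\pm 1}$ are suitable perturbations. The main obstacle will be the sign bookkeeping in the case analysis at the reducible branch: using the explicit identity $h''(0)\cdot\LScalar{\lfrac{d}{dt}|_{t=0}\cD_{A^\flat-\eta_t}^t\psi_0}{\psi_0}=-2\LScalar{d^{-1}*q(\psi_0)}{q(\psi_0)}$ of Proposition \ref{orient:b=0}, I need to verify that the geometric side on which an irreducible arc attaches to a reducible is correctly correlated with $\sgn h''(0)$ and therefore with the sign of the local Dirac spectral flow, so that all local signs conspire into a single clean instance of $-\SF$ over the entire path.
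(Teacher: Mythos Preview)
Your proposal is correct and follows essentially the same approach as the paper: choose a path via Corollary \ref{eta:path:b=0} (with $m\ge 6$), decompose the compact one-dimensional irreducible part of the parametrized moduli space into arcs, and carry out a case analysis on the arc endpoints using Proposition \ref{orient:b=0} together with the $t'(a)/t'(b)$ orientation-transport argument from Theorem \ref{b>1}. The only minor deviation is your analogy with Cases~3--4 of Theorem~\ref{b=1}: there Case~3 was shown to be \emph{impossible}, whereas here the analogous case (both endpoints approached from the same side of $t=t_0$) can occur, but the orientation-transport relation $\eps(s_a)=-\eps(s_b)$ still forces the two local Dirac spectral flows to be opposite, so the cancellation you describe goes through.
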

\begin{proof}
We will proceed exactly as in the proof of the wall-crossing
formula in the preceding section. Therefore, we immediately
restrict our attention to the arcs of the parametrized moduli
space which meet the reducible branch. Let
\[
c(s)=[\psi(s),A(s),t(s)]:[a,b]\to \cB\times [-1,1]
\]
be an arc meeting a slightly degenerate reducible in
$c(b)=[0,A(b),t(b)]$. To begin with, we make the assumption that
$c(a)$ does not lie on the reducible branch (cf. Fig.
\ref{b0:case1}).

\begin{figure}
\begin{center}
\setlength{\unitlength}{0.00035in}
\begingroup\makeatletter\ifx\SetFigFont\undefined%
\gdef\SetFigFont#1#2#3#4#5{%
  \reset@font\fontsize{#1}{#2pt}%
  \fontfamily{#3}\fontseries{#4}\fontshape{#5}%
  \selectfont}%
\fi\endgroup%
{\renewcommand{\dashlinestretch}{30}
\begin{picture}(11712,3974)(0,-10)
\texture{0 0 0 888888 88000000 0 0 80808
    8000000 0 0 888888 88000000 0 0 80808
    8000000 0 0 888888 88000000 0 0 80808
    8000000 0 0 888888 88000000 0 0 80808 }
\color{gray}
\shade\path(12,838)(12,3170)(789,3947)
    (789,1615)(12,838)
\path(12,838)(12,3170)(789,3947)
    (789,1615)(12,838)
\shade\path(4247,838)(4247,3170)(5025,3947)
    (5025,1615)(4247,838)
\path(4247,838)(4247,3170)(5025,3947)
    (5025,1615)(4247,838)
\path(409,706)(4644,706)
\path(409,706)(4644,706)
\shade\path(6687,838)(6687,3170)(7464,3947)
    (7464,1615)(6687,838)
\path(6687,838)(6687,3170)(7464,3947)
    (7464,1615)(6687,838)
\shade\path(10922,838)(10922,3170)(11700,3947)
    (11700,1615)(10922,838)
\path(10922,838)(10922,3170)(11700,3947)
    (11700,1615)(10922,838)
\path(7084,706)(11319,706)
\path(7084,706)(11319,706)
\color{gray2}
\path(462,2325)(463,2325)(465,2324)
    (469,2323)(474,2321)(483,2318)
    (494,2315)(508,2310)(525,2305)
    (545,2299)(568,2292)(594,2284)
    (623,2275)(654,2266)(688,2257)
    (723,2247)(760,2237)(800,2227)
    (840,2216)(882,2206)(926,2196)
    (970,2186)(1017,2177)(1064,2167)
    (1114,2158)(1165,2150)(1219,2142)
    (1274,2134)(1332,2127)(1393,2120)
    (1456,2115)(1523,2110)(1592,2106)
    (1664,2103)(1737,2101)(1812,2100)
    (1887,2101)(1959,2103)(2029,2106)
    (2094,2110)(2153,2115)(2206,2121)
    (2254,2127)(2296,2134)(2331,2142)
    (2362,2150)(2387,2158)(2408,2166)
    (2425,2175)(2439,2183)(2450,2192)
    (2459,2201)(2467,2210)(2475,2219)
    (2482,2228)(2490,2237)(2499,2245)
    (2511,2254)(2525,2262)(2542,2271)
    (2564,2279)(2590,2287)(2621,2294)
    (2658,2301)(2701,2307)(2750,2313)
    (2806,2318)(2867,2322)(2935,2324)
    (3007,2326)(3083,2326)(3162,2325)
    (3237,2322)(3311,2318)(3384,2313)
    (3455,2307)(3524,2300)(3590,2292)
    (3653,2284)(3715,2275)(3774,2265)
    (3831,2255)(3886,2244)(3939,2233)
    (3991,2222)(4041,2210)(4090,2198)
    (4138,2186)(4184,2174)(4229,2162)
    (4273,2149)(4315,2137)(4356,2125)
    (4395,2113)(4432,2101)(4466,2090)
    (4499,2080)(4528,2070)(4555,2062)
    (4578,2054)(4599,2047)(4616,2041)
    (4630,2036)(4641,2032)(4650,2029)
    (4655,2027)(4659,2026)(4661,2025)(4662,2025)
\path(7137,2325)(7138,2325)(7140,2324)
    (7144,2323)(7149,2321)(7158,2318)
    (7169,2315)(7183,2310)(7200,2305)
    (7220,2299)(7243,2292)(7269,2284)
    (7298,2275)(7329,2266)(7363,2257)
    (7398,2247)(7435,2237)(7475,2227)
    (7515,2216)(7557,2206)(7601,2196)
    (7645,2186)(7692,2177)(7739,2167)
    (7789,2158)(7840,2150)(7894,2142)
    (7949,2134)(8007,2127)(8068,2120)
    (8131,2115)(8198,2110)(8267,2106)
    (8339,2103)(8412,2101)(8487,2100)
    (8562,2101)(8634,2103)(8704,2106)
    (8769,2110)(8828,2115)(8881,2121)
    (8929,2127)(8971,2134)(9006,2142)
    (9037,2150)(9062,2158)(9083,2166)
    (9100,2175)(9114,2183)(9125,2192)
    (9134,2201)(9142,2210)(9150,2219)
    (9157,2228)(9165,2237)(9174,2245)
    (9186,2254)(9200,2262)(9217,2271)
    (9239,2279)(9265,2287)(9296,2294)
    (9333,2301)(9376,2307)(9425,2313)
    (9481,2318)(9542,2322)(9610,2324)
    (9682,2326)(9758,2326)(9837,2325)
    (9912,2322)(9986,2318)(10059,2313)
    (10130,2307)(10199,2300)(10265,2292)
    (10328,2284)(10390,2275)(10449,2265)
    (10506,2255)(10561,2244)(10614,2233)
    (10666,2222)(10716,2210)(10765,2198)
    (10813,2186)(10859,2174)(10904,2162)
    (10948,2149)(10990,2137)(11031,2125)
    (11070,2113)(11107,2101)(11141,2090)
    (11174,2080)(11203,2070)(11230,2062)
    (11253,2054)(11274,2047)(11291,2041)
    (11305,2036)(11316,2032)(11325,2029)
    (11330,2027)(11334,2026)(11336,2025)(11337,2025)
\color{black}
\thicklines
\path(237,1650)(238,1649)(240,1648)
    (245,1645)(251,1640)(261,1633)
    (273,1625)(288,1614)(306,1602)
    (327,1588)(351,1572)(377,1555)
    (404,1537)(434,1519)(464,1500)
    (496,1481)(529,1462)(563,1443)
    (598,1424)(634,1406)(671,1389)
    (709,1372)(748,1355)(788,1340)
    (831,1326)(874,1312)(920,1300)
    (967,1290)(1014,1281)(1062,1275)
    (1116,1271)(1166,1270)(1212,1270)
    (1252,1272)(1286,1275)(1315,1279)
    (1339,1283)(1357,1287)(1372,1291)
    (1384,1294)(1392,1298)(1400,1302)
    (1406,1306)(1412,1311)(1418,1316)
    (1426,1322)(1435,1330)(1447,1338)
    (1462,1349)(1481,1362)(1503,1377)
    (1529,1396)(1559,1417)(1592,1442)
    (1627,1469)(1662,1500)(1698,1535)
    (1732,1572)(1761,1609)(1788,1645)
    (1811,1681)(1832,1716)(1850,1751)
    (1866,1784)(1881,1818)(1893,1851)
    (1905,1883)(1915,1915)(1924,1945)
    (1932,1974)(1939,2001)(1945,2025)
    (1950,2046)(1954,2064)(1957,2078)
    (1960,2088)(1961,2095)(1962,2098)(1962,2100)
\path(6987,1425)(6988,1425)(6990,1424)
    (6994,1423)(7000,1421)(7009,1418)
    (7020,1414)(7035,1409)(7053,1403)
    (7074,1396)(7098,1388)(7126,1380)
    (7156,1370)(7189,1360)(7225,1349)
    (7262,1337)(7302,1325)(7343,1313)
    (7385,1301)(7429,1288)(7474,1276)
    (7520,1264)(7567,1252)(7615,1240)
    (7664,1228)(7714,1217)(7764,1206)
    (7816,1195)(7870,1185)(7924,1175)
    (7981,1166)(8038,1158)(8098,1150)
    (8159,1143)(8221,1137)(8285,1132)
    (8348,1128)(8412,1125)(8491,1124)
    (8565,1124)(8635,1126)(8698,1129)
    (8755,1132)(8805,1136)(8848,1140)
    (8886,1144)(8918,1147)(8945,1151)
    (8969,1155)(8989,1158)(9007,1162)
    (9022,1165)(9037,1169)(9051,1173)
    (9065,1177)(9080,1182)(9096,1188)
    (9113,1194)(9133,1202)(9154,1211)
    (9179,1221)(9205,1234)(9234,1248)
    (9265,1264)(9297,1283)(9329,1303)
    (9360,1326)(9387,1350)(9414,1382)
    (9433,1414)(9445,1445)(9451,1473)
    (9451,1498)(9446,1520)(9438,1540)
    (9428,1556)(9415,1571)(9400,1584)
    (9385,1595)(9368,1606)(9351,1617)
    (9334,1629)(9316,1642)(9298,1657)
    (9280,1674)(9263,1694)(9245,1717)
    (9228,1744)(9211,1773)(9194,1806)
    (9178,1840)(9162,1875)(9142,1919)
    (9123,1958)(9107,1993)(9093,2022)
    (9080,2048)(9069,2070)(9059,2089)
    (9051,2106)(9042,2121)(9035,2134)
    (9029,2146)(9023,2155)(9019,2163)
    (9016,2168)(9014,2172)(9013,2174)(9012,2175)
\thinlines
\put(650,0){\makebox(0,0)[lb]{Case  1a}}
\put(100,399){\makebox(0,0)[lb]{$\scriptstyle -1$}}
\put(2394,399){\makebox(0,0)[lb]{$\scriptstyle t$}}
\put(4512,399){\makebox(0,0)[lb]{$\scriptstyle 1$}}
\put(162,1650){\makebox(0,0)[lb]{$\scriptstyle c(a)$}}
\put(2800,3600){\makebox(0,0)[lb]{\scriptsize reducible}}
\put(3237,3315){\makebox(0,0)[lb]{\scriptsize branch}}
\put(1600,2300){\makebox(0,0)[lb]{$\scriptstyle c(b)$}}
\dashline{60.000}(2712,2400)(3150,3300)

\put(7350,0){\makebox(0,0)[lb]{Case  1b}}
\put(6800,399){\makebox(0,0)[lb]{$\scriptstyle -1$}}
\put(9069,399){\makebox(0,0)[lb]{$\scriptstyle t$}}
\put(11187,399){\makebox(0,0)[lb]{$\scriptstyle 1$}}
\put(9500,3600){\makebox(0,0)[lb]{\scriptsize reducible}}
\put(10062,3300){\makebox(0,0)[lb]{\scriptsize branch}}
\put(8700,2400){\makebox(0,0)[lb]{$\scriptstyle c(b)$}}
\put(6837,1580){\makebox(0,0)[lb]{$\scriptstyle c(a)$}}
\dashline{60.000}(9900,3200)(9612,2400)

\end{picture}
}
\caption{An irreducible branch meeting the reducible one
}\label{b0:case1}
\end{center}
\end{figure}

\begin{Cases}
\item $c(a)\in\cM_{\eta_{-1}}(\gs;g_{-1})$: Let us primarily
assume that $t(s)<t(b)$ for $s$ close to $b$. In the notation of
Proposition \ref{loc:red:b=0}, we have $t=h(z)$, and this yields
that $h''(0)<0$. The familiar considerations of Theorem \ref{b>1}
imply that for an appropriate choice of $s_0$ close to $b$, we
have
\[
\eps\big(\psi(a),A(a)\big)=\eps\big(\psi(s_0),A(s_0)\big)
\]
On the other hand, the fact that $h''(0)<0$ together with
Proposition \ref{orient:b=0} shows that
\[
\eps\big(\psi(s_0),A(s_0)\big)=
-\SF(\cD_{A^\flat-d^{-1}\eta_t};\;{\scriptstyle |t-t(b)|\ll 1}),
\]
where we might possibly have to adjust $s_0$ a little. Hence,
\[
\eps\big(\psi(a),A(a)\big)=
-\SF(\cD_{A^\flat-d^{-1}\eta_t};\;{\scriptstyle |t-t(b)|\ll 1}).
\]

If, on the other hand, $t(s)>t(b)$ for $s$ close to $b$, then
\[
\eps\big(\psi(a),A(a)\big)= -\eps\big(\psi(s_0),A(s_0)\big)
\]
for appropriately chosen $s_0$. Another application of
Proposition \ref{orient:b=0} implies that in this situation
\[
\eps\big(\psi(s_0),A(s_0)\big)=
\SF(\cD_{A^\flat-d^{-1}\eta_t};\;{\scriptstyle |t-t(b)|\ll 1}).
\]
We thus obtain the same formula for $\eps\big(\psi(a),A(a)\big)$
again.

\item $c(a)\in\cM_{\eta_1}(\gs;g_1)$:
With exactly the same arguments, one straightforwardly deduces
that
\[
\eps\big(\psi(a),A(a)\big)=
\SF(\cD_{A^\flat-d^{-1}\eta_t};\;{\scriptstyle |t-t(b)|\ll 1})
\]
irrespective of how $c$ meets the reducible branch.

We are therefore left to consider paths connecting two points on
the reducible branch.

\item\textit{$t(s)<t(a)$ as $s\to a$ and $t(s)<t(b)$ as $s\to
b$:} Under this assumption, we find that for $s_a$ close to $a$
and $s_b$ close to $b$,
\[
\eps\big(\psi(s_a),A(s_a)\big)=-\eps\big(\psi(s_b),A(s_b)\big).
\]
On the other hand, the analysis of Proposition \ref{orient:b=0}
shows that for $i=a,b$,
\[
\eps\big(\psi(s_i),A(s_i)\big)=
\SF(\cD_{A^\flat-d^{-1}\eta_t};\;{\scriptstyle |t-t(i)|\ll 1}).
\]
Therefore, when studying the spectral flow of the entire path
$\cD_{A^\flat-d^{-1}\eta_t}$, the contributions at $t_a$ and at
$t_b$ cancel each other out.

\item\textit{$t(s)<(t(a))$ if $s\to a$ and $t(s)>t(b)$ if $s\to
b$:} In this case,
\[
\eps\big(\psi(s_a),A(s_a)\big)=\eps\big(\psi(s_b),A(s_b)\big).
\]
However, we also have
\[
\eps\big(\psi(s_a),A(s_a)\big)=
\SF(\cD_{A^\flat-d^{-1}\eta_t};\;{\scriptstyle |t-t(a)|\ll 1})
\]
and
\[
\eps\big(\psi(s_b),A(s_b)\big)= -
\SF(\cD_{A^\flat-d^{-1}\eta_t};\;{\scriptstyle |t-t(b)|\ll 1}).
\]
Therefore, the contributions at $t_a$ and at $t_b$ to the
spectral flow cancel each other in this case as well.

The remaining cases are treated accordingly.
\end{Cases}

Proceeding with those paths which do not meet the reducible branch
exactly as in the proof of the wall-crossing formula, the above
considerations establish the claimed formula.
\end{proof}

\noindent\textbf{Producing an invariant for rational homology
spheres.} The above result shows that $\sw_\eta(\gs;g)$ depends
profoundly on the metric $g$ and the perturbation parameter
$\eta$. Thus we do not obtain an invariant for rational homology
spheres. Due to the resemblance with the gauge theoretical
construction of the Casson invariant as performed by Taubes
\cite{Tau:CI}, it was soon conjectured by Kronheimer that adding
an appropriate counter term to $\sw_\eta(\gs;g)$, one should
obtain an invariant which on a homology sphere again equals the
Casson invariant. This is indeed true as Lim \cite{Lim:CI} and
Chen \cite{Che:CI} independently proved. In the remaining part of
this thesis, we will only give a formula for the counter term,
briefly motivating why the resulting number is an invariant of
the underlying manifold.

Let $P$ be a first-order self-adjoint elliptic operator on a
closed, oriented manifold $M$ of odd dimension. Atiyah et al.
\cite{AtiPatSin:SAR} proved that the function
\[
\eta_P(s):=\sum_{z\in \spec(P)\setminus \{0\}}\sgn(z)|z|^{-s}
\]
converges for $\Re s\gg 0$ and extends to a meromorphic function
on the whole $s$-plane, with a finite value at $s=0$. The {\em
$\eta$-invariant} of $P$ is then defined as\index{>@$\eta_P$,
$\eta$-invariant}\index{Dirac operator!$\eta$-invariant}
\[
\eta_P:=\eta_P(0).
\]

Let $\gs$ be a \spinc structure on a rational homology 3-sphere
$M$, and let $g$ be a Riemannian metric with corresponding
suitable perturbation $\nu\in Z^2_k(M;i\R)$. Here, we are using
the notation $\nu$ instead of $\eta$ to avoid a confusion with the
$\eta$-invariant. We then define $\eta_{\dir}(g,\nu)$ as the
$\eta$-invariant of the \spinc Dirac operator
$\cD_{A^\flat-d^{-1}\nu}^g$, where $A^\flat$ is a fixed flat
connection on $L(\gs)$.\index{>@$\eta_{\dir}(g,\nu)$}

If $g_t$ and $\nu_t$ are paths as in Theorem \ref{b=0}, then it
follows from the work of Atiyah, Patodi and Singer that
\begin{align*}
\frac{1}{2}\big(\eta_{\dir}(g_1,\nu_1)&
-\eta_{\dir}(g_{-1},\nu_{-1})\big) \\ &=
\SF(\cD_{A^\flat-d^{-1}\nu_t};{\scriptstyle t\in [-1,1]}) +
\frac{1}{8}\int_{[-1,1]\times
M}\big(-\lfrac{1}{3}p_1(\Hat\nabla)+c_1(\Hat A)^2\big),
\end{align*}
where $p_1$ is the first Pontryagin class, and $\Hat
A=A^\flat-d^{-1}\nu$ and $\Hat \nabla$ respectively denote the
pullbacks of the connection on $L(\gs)$ and the Levi-Civita
covariant derivative to the corresponding bundles over
$[-1,1]\times M$. A survey of the (nontrivial) results leading to
this formula can be found in Nicolaescu's book \cite{Nic:SW},
Sec.~4.1.3. Note that we have already invoked that the operator
$\cD_{A^\flat-d^{-1}\nu_t}$ is invertible for $t=-1,1$ so that
the dimensions of the kernels do not occur on the above formula's
left hand side.

On the other hand, we have the odd signature operator on $(M,g)$,
i.e.,\index{>@$\eta_{\sign}(g)$}
\[
\cD_{\sign}^g:= \begin{pmatrix} * d &-d\\ -d^*
&0\end{pmatrix}:\gO^1\oplus\gO^0\to \gO^1\oplus\gO^0.
\]
This is also a first-order self-adjoint elliptic differential
operator, and we denote the associated $\eta$-invariant by
$\eta_{\sign}(g)$. If $g_t$ is a path of metrics, then the
corresponding formula is (see Atiyah et al. \cite{AtiPatSin:SAR}):
\[
\eta_{\sign}(g_1)-\eta_{\sign}(g_{-1})=\frac{1}{3}
\int_{[-1,1]\times M}p_1(\Hat\nabla).
\]
Note that there is no spectral flow term since the kernels of
$\cD_{\sign}^{g_t}$ have constant dimensions.

Considering paths $g_t$ and $\nu_t$ as in Theorem \ref{b=0}, we
deduce that the term $4\eta_{\dir}(g,\nu)+\eta_{\sign}(g)$
behaves in the following way:
\begin{equation}\label{kreck}
\begin{split}
4\eta_{\dir}(g_1,\nu_1)&+\eta_{\sign}(g_1)-
4\eta_{\dir}(g_{-1},\nu_{-1})-\eta_{\sign}(g_{-1})\\
&=8\SF(\cD_{A^\flat-d^{-1}\nu_t};{\scriptstyle t\in [-1,1]}) +
\int_{[-1,1]\times M}c_1(\Hat A)^2 \\
\end{split}
\end{equation}

The second summand in the last line can be split up into a
difference of two terms which only depend on $\nu_{-1}$ and
$\nu_1$ respectively. For notational convenience, we let $a_t:=
-d^{-1}\nu_t$. Then
\[
F_{\Hat A}=F_{A^\flat} +da_t + dt\wedge\lfrac{\partial}{\partial
t}a_t = da_t + dt\wedge\lfrac{\partial}{\partial t}a_t.
\]
Hence, we compute
\begin{align*}
F_{\Hat A}\wedge F_{\Hat A}&= dt\wedge da_t\wedge
\lfrac{\partial}{\partial t}a_t +
dt\wedge\lfrac{\partial}{\partial t}a_t \wedge da_t =
dt\wedge\big(da_t\wedge \lfrac{\partial}{\partial
t}a_t + \lfrac{\partial}{\partial t}a_t \wedge da_t  \big)\\
&= dt\wedge\big(d (a_t\wedge \lfrac{\partial}{\partial t}a_t) +
a_t\wedge\lfrac{\partial}{\partial t}da_t +
\lfrac{\partial}{\partial
t}a_t \wedge da_t  \big)\\
&= dt\wedge\big(d (a_t\wedge \lfrac{\partial}{\partial t}a_t) +
\lfrac{\partial}{\partial t}(a_t\wedge da_t)\big).
\end{align*}
Using Stoke's Theorem and performing the integration over
$[-1,1]$, one finds
\begin{align*}
\int_{[-1,1]\times M}c_1(\Hat A)^2 &=
-\frac{1}{4\pi^2}\int_{[-1,1]\times M} dt\wedge
\lfrac{\partial}{\partial t}(a_t\wedge da_t) \\
&=-\frac{1}{4\pi^2}\int_M\big(a_1\wedge da_1 - a_{-1}\wedge
da_{-1}\big)\\
&= -\frac{1}{4\pi^2}\int_M\big(d^{-1}\nu_1\wedge \nu_1 -
d^{-1}\nu_{-1}\wedge \nu_{-1}\big).
\end{align*}
A combination of this computation with formula
\eqref{wallcross:b=0} of Theorem \ref{b=0} and formula
\eqref{kreck} easily establishes the following result:

\begin{theorem}\index{Seiberg-Witten invariant!modified}
Let $M$ be a rational homology 3-sphere with \spinc structure
$\gs$. If $\nu$ is a suitable perturbation with respect to a
Riemannian metric $g$, then the ``modified Seiberg-Witten
invariant"
\[
\gl_\nu(\gs;g):= \sw_\nu(\gs;g)-\lfrac12 \eta_{\dir}(g,\nu) -
\lfrac18 \eta_{\sign}(g) - \lfrac{1}{32\pi^2}\int_M
d^{-1}\nu\wedge \nu
\]
is independent of $\nu$ and $g$. Therefore, it gives rise to a
smooth invariant of $M$.
\end{theorem}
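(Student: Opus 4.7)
The plan is to compare the quantity $\gl_\nu(\gs;g)$ for two sets of data $(g_{-1},\nu_{-1})$ and $(g_1,\nu_1)$ by joining them with a generic path $(g_t,\nu_t)_{t\in[-1,1]}$ of the type produced by Corollary \ref{eta:path:b=0}, and then combining the three variation formulas displayed just above the theorem. First I would substitute the wall-crossing formula \eqref{wallcross:b=0} of Theorem \ref{b=0},
\[
\sw_{\nu_1}(\gs;g_1)-\sw_{\nu_{-1}}(\gs;g_{-1})
=\SF\big(\cD^t_{A^\flat-d^{-1}\nu_t};\,{\scriptstyle t\in[-1,1]}\big),
\]
so that the difference $\sw_{\nu_1}(\gs;g_1)-\sw_{\nu_{-1}}(\gs;g_{-1})$ is expressed purely in spectral-flow terms.

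Next, I would bring in the two Atiyah--Patodi--Singer-type variation formulas recorded in the excerpt. Multiplying the one for $\eta_{\dir}$ by $4$ and adding the one for $\eta_{\sign}$, the $p_1(\Hat\nabla)$ contributions cancel and we obtain exactly equation \eqref{kreck}, namely
\begin{align*}
4\eta_{\dir}(g_1,\nu_1)+\eta_{\sign}(g_1)&
-4\eta_{\dir}(g_{-1},\nu_{-1})-\eta_{\sign}(g_{-1})\\
&=8\,\SF\big(\cD^t_{A^\flat-d^{-1}\nu_t};\,{\scriptstyle t\in[-1,1]}\big)
+\int_{[-1,1]\times M} c_1(\Hat A)^2 .
\end{align*}

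Then I would invoke the Stokes-type computation that was carried out in the excerpt immediately before the theorem, which expresses
\[
\int_{[-1,1]\times M} c_1(\Hat A)^2
=-\frac{1}{4\pi^2}\int_M\bigl(d^{-1}\nu_1\wedge\nu_1-d^{-1}\nu_{-1}\wedge\nu_{-1}\bigr).
\]
Dividing \eqref{kreck} by $8$, solving for the spectral flow, and substituting into the wall-crossing formula, the spectral-flow term is eliminated and what remains rearranges into the identity $\gl_{\nu_1}(\gs;g_1)=\gl_{\nu_{-1}}(\gs;g_{-1})$; this is really just bookkeeping with the coefficients $\tfrac12$, $\tfrac18$ and $\tfrac1{32\pi^2}$ in the definition of $\gl_\nu(\gs;g)$.

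The one delicate point in the argument is the admissibility of the connecting path. We need $\nu_{-1},\nu_1$ to be suitable in the sense of Definition \ref{suit:pert:b=0} and, simultaneously, we need the APS variation formulas to apply, which in particular requires that $\cD^t_{A^\flat-d^{-1}\nu_t}$ be invertible at the endpoints so that no endpoint kernel corrections enter. Corollary \ref{eta:path:b=0} together with the generic invertibility remark following Corollary \ref{eta:path:b=0} provides such a path; a minor additional check is that the wall-crossing identity of Theorem \ref{b=0} and the APS variation formulas may be applied with respect to the \emph{same} path, so that the spectral-flow terms genuinely cancel. Once this compatibility is in place, the independence of $\gl_\nu(\gs;g)$ from $(g,\nu)$ follows, and since any two admissible pairs on $M$ can be joined by such a path, $\gl_\nu(\gs;g)$ descends to a smooth invariant of $M$ (depending only on $\gs$ and the choice of $A^\flat$, which is itself unique up to gauge).
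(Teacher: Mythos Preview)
Your proposal is correct and follows essentially the same approach as the paper: the argument is precisely the combination of the wall-crossing formula \eqref{wallcross:b=0}, the APS-type variation identity \eqref{kreck}, and the Stokes computation of $\int_{[-1,1]\times M} c_1(\Hat A)^2$, assembled so that the spectral-flow terms cancel and only the boundary terms defining $\gl_\nu(\gs;g)$ remain. Your remarks on the compatibility of the path (via Corollary \ref{eta:path:b=0}) and the endpoint invertibility needed for the APS formula are the right checks, and are implicit in the paper's setup.
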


\begin{remark*}\quad
\begin{enumerate}
\item Note that $\gl_\nu(\gs;g)$ is generally not integer valued.
In fact, it is $\Z$-valued provided that $H_1(M;\Z)=0$.
Otherwise, $8h\cdot \gl_\nu(\gs;g)\in \Z$ where $h:=|H_1(M;\Z)|$
denotes the order of the first homology group (cf. Lim
\cite{Lim:SW}, Prop.~17).
\item If $M$ happens to be an integer homology sphere, then the
cohomology group $H^2(M;\Z)$ is trivial. Hence, there exists only
one \spinc structure on $M$. In \cite{Lim:CI}, Y. Lim establishes
that the unique number obtained in this way equals the {\em
Casson invariant} of an integer homology sphere.
\item For rational homology spheres, Marcolli \& Wang
\cite{MarWan:SW} investigate an averaged version of the modified
Seiberg-Witten invariants---obtained by summing over all \spinc
structures. They prove that it equals the so-called {\em
Casson-Walker invariant}.
\item On the other hand, Nicolaescu \cite{Nic:Rat} shows that a
combination of the Casson-Walker invariant and certain refined
torsion invariants (due to V.G. Turaev) determine all modified
Seiberg-Witten invariants of a rational homology sphere.
\end{enumerate}
\end{remark*}

\cleardoublepage

\cleardoublepage
\renewcommand{\chaptermark}[1]%
   {\markboth{\textsc{Appendix \thechapter.\ #1}}{}}
\renewcommand{\sectionmark}[1]%
   {\markright{\textsc{\thesection.\ #1}}}
\renewcommand{\thetheorem}{(\Alph{chapter},~\arabic{section}.\arabic{theorem})}
\renewcommand{\theequation}{\Alph{chapter},~\arabic{equation}}
\addtocontents{toc}{\newpage}
\addtocontents{toc}{\contentsline{chapter}{Appendix}{}}

\appendix

\chapter{Elliptic Equations on Compact Manifolds}\label{app:ell}

This appendix summarizes the basic notions and results we need
from the theory of elliptic differential equations on compact
manifolds. In Section \ref{sob}, we recall the definition of
Sobolev spaces on manifolds. As the Seiberg-Witten equations are
nonlinear, it is not sufficient to work only in the well-known
setting of the Hilbert spaces $L^2_k$; we also have to consider
the Banach spaces $L_k^p$ for arbitrary $1\le p<\infty$. We state
the versions of Sobolev embedding and Rellich's compactness
result in this more general context, and provide a Sobolev
multiplication theorem as a corollary. The corresponding proofs
can be found in standard references like the books of Gilbarg \&
Trudinger \cite{GT}, Adams \cite{Ad}, Taylor \cite{Tay:PDE}, and
Aubin \cite{Au}. The latter book includes the formulation on
manifolds which we need.

Section \ref{diffop} is dedicated to differential operators acting
on sections of vector bundles. Basically, we will only list the
fundamental properties of elliptic partial differential operators
and refer for most proofs to the wide range of literature (e.g.
Gilbarg \& Trudinger \cite{GT}). Brief expositions, yet including
proofs, can also be found in many books on differential geometry
(e.g. Warner \cite{War:LG} or Nicolaescu \cite{Nic:GM}).

\section{Sobolev spaces}\label{sob}\index{Sobolev spaces|(}
Let $(M,g)$ be a closed\footnote{As in the main part of the
thesis we use the convention that a closed manifold is compact,
connected, and has no boundary.} and oriented Riemannian
manifold. The volume form $dv_g$ induces a Lebesgue measure on
$M$. For each $1\le p<\infty$, we can thus define the space
$L^p(M,\K)$ of (equivalence classes of) $\K$-valued measurable
functions $f$ on $M$ for which
\[
\|f\|_p:=\Big(\int_M |f|^pdv_g\Big)^{1/p}<\infty.
\]
Suppose $\pi:E\to M$ is a Hermitian or Euclidean vector bundle
endowed with a connection $\nabla$ which is compatible with the
metric. We let $L^p(M,E)$ denote the space of $L^p$-sections of
$E$, i.e., the space of (equivalence classes of) measurable maps
$u:M\to E$ which satisfy $\pi\circ u = \id_M$ almost everywhere
and $|u|\in L^p(M,\R)$. For each $k\in\N$ the Sobolev space
$L_k^p(M,E)$ consists of all sections $u\in L^p(M,E)$ for
which\index{<@$L^p_k(E)$, Sobolev space} there exists $v\in
L^p\big(M,T^*M^{\otimes m}\otimes E\big)$ such that for all $w\in
C^\infty\big(M,T^*M^{\otimes m}\otimes E\big)$ and any $m\le k$,
\[
\int_M\scalar{v}{w}dv_g=\int_M\scalar{u}{(\nabla^m)^t w}dv_g
\]
Here,
\[
(\nabla^m)^t:C^\infty\big(M,T^*M^{\otimes m}\otimes E\big)\to
C^\infty(M,E)
\]
denotes the formal adjoint of $\nabla^m$ (cf. also Section
\ref{diffop} below). Then $v$ is called the \emph{weak} $m$-th
covariant derivative of $u$ and is denoted by $\nabla^m u$. Note
that it is always defined as a distribution. Therefore, the above
can be reformulated by saying
\[
u\in L^p_k(M,E)\quad\Longleftrightarrow\quad\nabla^mu\in
L^p\big(T^*M^{\otimes m}\otimes E\big).
\]
Each $L_k^p(M,E)$ is a Banach space with respect to the
norm\index{<@$\parallel.\parallel_{k,p}=
\parallel.\parallel_{L_k^p}$, Sobolev norm}
\[
\|u\|_{L_k^p}:=\sum_{m\le k}\|\nabla^m u\|_{L^p}= \sum_{m\le
k}\Big(\int_M|\nabla^m u|^pdv_g\Big)^{1/p}.
\]
Moreover, $L^p_k(M,E)$ lies dense in $L^p(M,E)$ and contains the
smooth functions as a dense subspace with respect to the norm
$\|.\|_{L_k^p}$ (cf. \cite{Au}, Thm.2.4). Furthermore, it turns
out that compactness of $M$ guarantees that the definition of
$L_k^p(M,E)$ is independent of all choices made. Any choice of
different metrics and connections yields equivalent norms (cf.
Aubin \cite{Au}, Thm.2.20).

\begin{remark*}
If $p\neq 2$, the spaces $L_k^p$ cannot conveniently be defined
for by making use of the Fourier transformation as in the case of
$L^2_k$. Introducing $L_s^p$ for every $s\in\R$ requires
interpolation theory (cf. Taylor \cite{Tay:PDE}, Sec.~13.6).\\
\end{remark*}

\noindent\textbf{Embedding theorems.} There are two very
important results relating Sobolev spaces both with each other
and with spaces of $C^r$ functions. These theorems are
generalizations of the well-known Sobolev embedding Theorem and
of Rellich's lemma in the case $p=2$. Let $n$ denote the dimension
of $M$. We define the \emph{scaling weight} of $L_k^p(M,E)$ by
letting
\[
w(k,p):=k-\lfrac{n}{p}.
\]
\begin{theorem}[Sobolev embedding]\label{sobolev}{\rm(cf. \cite{Au},
Thm.~2.20)}.\index{Sobolev spaces!Sobolev embedding}\\ Consider a
closed, oriented manifold $M$ and a vector bundle $E\to M$.
\begin{enumerate}
\item Suppose $k_1\ge k_2$ and $w(k_1,p_1)\ge w(k_2,p_2)$. Then there
is a bounded inclusion $L_{k_1}^{p_1}(M,E)\subset
L_{k_2}^{p_2}(M,E)$.
\item Suppose $w(k,p)> r\in\N$. Then every $L_k^p$-section
of $E$ can be represented by a $C^r$-section. Moreover, the
inclusion $L_k^p(M,E)\subset C^r(E)$ is bounded with respect to
the norm on $C^r(E)$ given by uniform convergence of the involved
derivatives.
\end{enumerate}
\end{theorem}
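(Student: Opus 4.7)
The plan is to reduce the statement to the classical Euclidean Sobolev embedding theorem by exploiting compactness of $M$ together with local trivializations of $E$. The equivalence of the intrinsic norm $\|\cdot\|_{L^p_k}$ with a norm built from local charts (already mentioned in the text via \cite{Au}, Thm.~2.20) is the bridge that makes this reduction work.

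First I would choose a finite atlas $\{(U_\alpha,\varphi_\alpha)\}_{\alpha=1}^N$ of $M$ such that each $U_\alpha$ is geodesically convex, $E|_{U_\alpha}$ is trivialized by a local orthonormal frame, and $\varphi_\alpha(U_\alpha)$ is a bounded open subset of $\R^n$ with smooth boundary (say a ball). Pick a smooth partition of unity $\{\rho_\alpha\}$ subordinate to this cover. Writing $u = \sum_\alpha \rho_\alpha u$, each piece $\rho_\alpha u$ pushes forward under $\varphi_\alpha$ (and the frame trivialization) to a compactly supported $\K^r$-valued function $\tilde u_\alpha$ on $\varphi_\alpha(U_\alpha)$. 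A routine computation, using that the transition between covariant derivatives and partial derivatives involves only smooth, uniformly bounded tensors (Christoffel symbols and connection 1-forms) together with the fact that $M$ is compact, yields the two-sided equivalence
\[
C^{-1}\,\|u\|_{L^p_k(M,E)} \;\le\; \sum_{\alpha=1}^N \|\tilde u_\alpha\|_{W^{k,p}(\R^n)} \;\le\; C\,\|u\|_{L^p_k(M,E)}\,,
\]
where $W^{k,p}$ denotes the standard Euclidean Sobolev space of compactly supported functions. This turns both (i) and (ii) into statements purely about $W^{k,p}(\R^n)$ functions supported in a fixed bounded set.

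For part (i), I would invoke the Euclidean Sobolev embedding in the sharp form: if $k_1\ge k_2$ and $w(k_1,p_1)\ge w(k_2,p_2)$, then $W^{k_1,p_1}(\Omega)\hookrightarrow W^{k_2,p_2}(\Omega)$ continuously for bounded $\Omega$. The standard proof proceeds by induction on $k_1-k_2$ and reduces, via the fundamental theorem of calculus applied to Riesz potentials, to the borderline inequality $w(k,p)=w(0,q)$, i.e.\ the Gagliardo--Nirenberg--Sobolev inequality $\|u\|_{L^q}\le C\|\nabla^k u\|_{L^p}$ for $\tfrac1q = \tfrac1p - \tfrac kn$ (when $kp<n$), with the cases $kp=n$ and $kp>n$ handled separately. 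Summing the resulting local estimates over $\alpha$ and applying the norm equivalence gives the bounded inclusion on $M$. For part (ii), the Euclidean Morrey-type embedding $W^{k,p}(\Omega)\hookrightarrow C^r(\overline\Omega)$ whenever $w(k,p)>r$ supplies a continuous $C^r$ representative of each $\tilde u_\alpha$; reassembling $u = \sum_\alpha \rho_\alpha u$ yields a $C^r$ section on $M$ with a uniform bound, and density of $C^\infty$ in $L^p_k$ shows that this representative is unique as an $L^p$-class.

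The main obstacle is not any one of these steps in isolation, which are all classical, but rather the bookkeeping of the norm equivalence: one must verify that passing to local coordinates and frame trivializations distorts neither the $L^p$ norms (controlled by a lower and upper bound on $\det g_{ij}$ valid on compact $M$) nor the higher covariant derivatives (controlled because iterated Christoffel symbols and connection coefficients are uniformly bounded in $C^\infty$). I would therefore present this equivalence once, carefully, and then simply cite the Euclidean results from, say, Gilbarg--Trudinger \cite{GT} or Adams \cite{Ad} for the actual analytic content, rather than reproducing their inductive arguments in detail.
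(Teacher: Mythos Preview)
Your sketch is correct and is exactly the standard reduction-to-$\R^n$ argument. However, the paper does not actually prove this theorem: it is stated with the parenthetical reference ``(cf.\ \cite{Au}, Thm.~2.20)'' and no proof is given, consistent with the appendix's stated aim of merely summarizing results and deferring to Aubin, Adams, and Gilbarg--Trudinger for proofs. So there is nothing to compare your argument against; you have supplied a proof where the paper deliberately omits one.
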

\begin{theorem}[Rellich-Kondrachov]\label{rellich}{\rm(cf.
\cite{Au}, Thm.~2.34)}.\index{Sobolev spaces!Rellich-Kondrachov}\\
Consider a closed, oriented manifold $M$ and a vector bundle
$E\to M$. If $k_1>k_2$ and $w(k_1,p_1)>w(k_2,p_2)$, then the
inclusion map $L_{k_1}^{p_1}(M,E)\subset L_{k_2}^{p_2}(M,E)$ is a
compact operator. Moreover, the embedding in part {\rm (ii)} is
always compact.
\end{theorem}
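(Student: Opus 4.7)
The plan is to reduce the statement on $(M,E)$ to the analogous compactness result on bounded Euclidean domains and then to invoke the classical Rellich--Kondrachov theorem for scalar functions on balls in $\R^n$. First I would fix a finite atlas $\{(U_\alpha,\varphi_\alpha)\}$ of $M$ over which $E$ admits orthonormal trivializations, together with a subordinate partition of unity $\{\chi_\alpha\}$. Using the equivalence of Sobolev norms under different choices of metric and connection (recalled just before the embedding theorems), a section $u\in L^{p_1}_{k_1}(M,E)$ is controlled by the local Sobolev norms of the components of $\chi_\alpha u$ pushed forward to $\varphi_\alpha(U_\alpha)\subset\R^n$, and conversely. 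Hence an $L^{p_1}_{k_1}$-bounded sequence on $M$ yields, componentwise and chartwise, an $L^{p_1}_{k_1}$-bounded sequence of compactly supported functions on a bounded domain $\Omega\subset\R^n$.

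Next, I would appeal to the Euclidean Rellich--Kondrachov theorem in the form: if $k_1>k_2$ and $w(k_1,p_1)>w(k_2,p_2)$, then $L^{p_1}_{k_1}(\Omega)\hookrightarrow L^{p_2}_{k_2}(\Omega)$ is compact for bounded $\Omega$ with sufficiently nice boundary (or for compactly supported functions, which is the case here). The standard route is to chain the strict inequality $w(k_1,p_1)>w(k_2,p_2)$ through the Gagliardo--Nirenberg--Sobolev inequality to obtain a bounded embedding $L^{p_1}_{k_1}\hookrightarrow L^{q}_{k_2}$ for some $q>p_2$ with $w(k_2,q)$ still exceeding $w(k_2,p_2)$, and then to use mollification together with Arzel\`a--Ascoli (applied to the derivatives of order $\le k_2$ after smoothing) to extract a subsequence convergent in $L^{p_2}_{k_2}$. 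Reassembling the chartwise convergent subsequences via a diagonal argument gives a subsequence convergent in $L^{p_2}_{k_2}(M,E)$.

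For the second assertion, that the embedding $L^p_k(M,E)\hookrightarrow C^r(M,E)$ is compact whenever $w(k,p)>r$, I would pick an intermediate pair $(k',p')$ with $w(k,p)>w(k',p')>r$ and factor the embedding as a compact embedding $L^p_k\hookrightarrow L^{p'}_{k'}$ (by the first part) followed by the bounded embedding $L^{p'}_{k'}\hookrightarrow C^r$ from the Sobolev embedding theorem; composition of a compact with a bounded operator is compact. Alternatively, an $L^p_k$-bounded sequence has a $C^r$-bounded image by Sobolev embedding, and Arzel\`a--Ascoli combined with the strict inequality $w(k,p)>r$ (which gives uniform control on an additional fractional derivative, i.e., equicontinuity of all derivatives up to order $r$) produces a convergent subsequence in $C^r$.

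The main technical obstacle is the Euclidean compactness step itself when $p_1\neq p_2$: one cannot simply rely on Fourier analysis as for $p=2$, and must instead carefully combine the Gagliardo--Nirenberg--Sobolev inequality with mollification estimates in $L^p$. Packaging this cleanly so that the final subsequence converges in the \emph{strong} $L^{p_2}_{k_2}$-norm, rather than in some weaker intermediate space, is where the proof in Aubin's book devotes most of its effort; I would follow that presentation rather than reprove it from scratch.
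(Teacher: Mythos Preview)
The paper does not give its own proof of this theorem: it is stated as a black box with a reference to Aubin (Thm.~2.34), and the introduction to the appendix explicitly says that proofs of the embedding and compactness results are deferred to the standard references. So there is nothing to compare your argument against here.

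That said, your outline is the standard one and is essentially what the cited references do: localize to charts with a partition of unity, trivialize $E$, reduce to compactly supported scalar functions on bounded domains in $\R^n$, and then invoke the Euclidean Rellich--Kondrachov theorem. Your factorization argument for the $C^r$ embedding (compact followed by bounded) is clean and correct. One minor remark: in the second assertion you need to choose the intermediate pair $(k',p')$ with $k'>r$ (not just $w(k',p')>r$) so that the Sobolev embedding into $C^r$ actually applies; this is easy to arrange since $w(k,p)>r$ and $k\ge 1$ is an integer, but worth stating explicitly.
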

The Sobolev embedding Theorem combined with the well-known
H\"{o}lder inequality leads to multiplication theorems for the
Sobolev spaces $L_k^p$.
\begin{prop}[Sobolev
multiplication]\label{sob:mult}\index{Sobolev spaces!Sobolev
multiplication} Consider a closed, oriented manifold $M$ and
vector bundles $E_1,E_2,F\to M$, endowed with a bilinear bundle
map $b:E_1\oplus E_2\to F$. Let $k_1,k_2,l\in\N$ with $k_1,k_2\ge
l$, and $p_1,p_2,q\in(1,\infty)$ such that $p_1,p_2\ge
q$.\footnote{Observe that this implies that $w(k_i,p_i)\ge
w(l,q)$.} If $w(k_1,p_1)+w(k_2,p_2)>w(l,q)$, then $b$ extends to
a bounded bilinear map
\[
b:L_{k_1}^{p_1}(M,E_1)\times L_{k_2}^{p_2}(M,E_2)\to L_l^q(M,F).
\]
\end{prop}
\begin{remark*}
Although this theorem is frequently used in the analysis of
nonlinear partial differential equations, it is difficult to find
a reference in the literature. In a slightly different form, the
theorem can be found in Palais \cite{Pal:GA}, Ch.~9. To be
self-contained, we shall present a proof.
\end{remark*}
\begin{proof}
\begin{steps}
\item\textit{The H\"older inequality
revisited:}\index{inequalities!Holder}\\ We recall that for any
$p_1',p_2',q'\in(1,\infty)$ such that $p_1',p_2'> q'$, the
H\"older inequality implies that there is a continuous
multiplication $L^{p_1'}\times L^{p_2'}\to L^{q'}$ provided that
$\lfrac{1}{p_1'} +\lfrac{1}{p_1'}=\lfrac{1}{q'}$. Moreover, on
compact manifolds there is a continuous embedding of $L^{q'}$ in
$L^q$ whenever $q'\ge q$. Hence, under the assumption that
$\lfrac{1}{p_1'}+\lfrac{1}{p_1'}\le \lfrac{1}{q}$ or,
equivalently, if $w(0,p_1')+w(0,p_2')\ge w(0,q)$, we obtain a
continuous multiplication $L^{p_1'}\times L^{p_2'}\to L^q$.

\item \textit{The case $l=0$:}\\
We have to show that there exists a continuous multiplication
\begin{equation}\tag{$*$}
L_{k_1}^{p_1}\times L_{k_2}^{p_2}\to L^q.
\end{equation}
For this we have to study different cases:
\begin{Cases}
\item\textit{There exists $i\in\{1,2\}$ such that
$w(k_i,p_i)>0$.} Without loss of generality, we may suppose that
$i=1$. Then $L_{k_1}^{p_1}$ embeds continuously in $C^0$.
Furthermore, we have an inclusion of $L_{k_2}^{p_2}$ in
$L^{p_2}$. Therefore, there is a bounded map
\[
L_{k_1}^{p_1}\times L_{k_2}^{p_2} \to L^{p_2}.
\]
Moreover, $L^{p_2}$ embeds continuously in $L^q$ for $p_2\ge q$.
This proves ($*$).
\item\textit{$w(k_i,p_i)\le 0$ for $i=1,2$.} Since
$w(k_1,p_1)+w(k_2,p_2)> w(0,q)$, this implies that $w(k_i,p_i)>
w(0,q)$ for $i=1,2$. Hence, we can find $p_i'\in (q,\infty)$ such
that
\[
w(k_i,p_i)\ge w(0,p_i') > w(0,q)\quad\text{and}\quad
w(0,p_1')+w(0,p_2')\ge w(0,q).
\]
Applying the considerations of \textit{Step 1}, we deduce
that there exists a continuous multiplication
\[
L^{p_1'}\times L^{p_2'}\to L^q.
\]
On the other hand, there are bounded inclusions
$L_{k_i}^{p_i}\subset L^{p_i'}$ which yields ($*$) in the case at
hand.
\end{Cases}

\item\textit{The general case:} \\
Let us now assume that $l$ is chosen arbitrarily. Fixing $m\le
l$, we deduce that for every section $u\in L_{k_1}^{p_1}(M,E_1)$
and every $v\in L_{k_2}^{p_2}(M,E_2)$,
\begin{multline*}\quad
\big|\nabla^m(b(u,v))\big|\le
\const\cdot\Big|\sum_{\overset{\scriptstyle k+i+j}{=
m}}(\nabla^kb)(\nabla^iu,\nabla^jv)\Big| \\
\le\const\cdot\sum_{\overset{\scriptstyle k+i+j}{=
m}}\big|(\nabla^kb)(\nabla^iu,\nabla^jv)\big|
\le\const\cdot\sum_{i+j\le
m}\big|\nabla^iu\big|\cdot\big|\nabla^jv\big|\;.
\end{multline*}
Observe that $\nabla^iu\in L^{p_1}_{k_1-i}$ and $\nabla^jv\in
L^{p_2}_{k_2-j}$. Whenever $i+j\le m$,
\[
w(k_1-i,p_1)+w(k_2-j,p_2)>w(l-m,q)\ge w(0,q).
\]
Hence, $\nabla^iu$ and $\nabla^jv$ satisfy the conditions of
\textit{Step 2}. Therefore,
\begin{align*}
\big\|\nabla^m(b(u,v))\big\|_{L^q}&\le \const\cdot\sum_{i+j\le m}
\big\| |\nabla^iu|\cdot|\nabla^jv| \big\|_{L^q} \\
&\le\const\cdot\sum_{i+j\le m} \big\|\nabla^iu
\big\|_{L_{k_1-i}^{p_1}}
\cdot\big\|\nabla^jv\big\|_{L_{k_2-j}^{p_2}}\\
&\le \const\cdot\sum_{i+j\le m} \|u\|_{L_{k_1}^{p_1}}\cdot
\|v\|_{L_{k_2}^{p_2}} \le \const\cdot\|u\|_{L_{k_1}^{p_1}}\cdot
\|v\|_{L_{k_2}^{p_2}}
\end{align*}
so that
$\|b(u,v)\|_{L_l^q}\le\const\cdot\|u\|_{L_{k_1}^{p_1}}\cdot
\|v\|_{L_{k_2}^{p_2}}$. This proves the assertion.\qedhere
\end{steps}
\end{proof}

\begin{example}\label{sob:mult:n=3}
Let $M$ be a compact and oriented Riemannian 3-manifold, and let
$b:E_1\otimes E_2\to F$ a bilinear bundle morphism between
arbitrary vector bundles over $M$. The above proposition shows
that $b$ induces a bounded bilinear map
\begin{equation}\label{sob:mult:n=3:1}
b:L^2_k(M,E_1)\times L^2_k(M,E_2)\longrightarrow L^2_k(M,F)
\end{equation}
whenever $k$ satisfies $2k-\lfrac{6}{2}> k-\lfrac{3}{2}$, that is,
whenever $k\ge 2$.

In particular, if $E$ is a bundle of algebras, then the Hilbert
spaces $L^2_k(M,E)$ are Banach algebras provided that $k\ge 2$.
This observation yields natural choices for the involved Sobolev
orders when we want to equip a group of gauge transformations with
a Sobolev structure. Furthermore, if the bundle of algebras acts
on some vector bundle $F$, then the proposition shows that for an
appropriate choice of $l$, the Banach algebra $L^2_k(M,E)$ acts
continuously on $L^2_l(M,F)$. For example, we have an associated
continuous multiplication
\[
b:L^2_2(M,E)\times L^2_1(M,F)\to L^2_1(M,F),
\]
i.e., $L^2_1(M,F)$ is an $L^2_2(M,E)$-module. This fact plays an
important role when the action of the group of gauge
transformations on the space of sections of a vector bundle is
modelled in the context of Sobolev spaces.\index{Sobolev spaces|)}
\end{example}

\section{Analytic properties of elliptic
operators}\label{diffop}\index{differential operators|(}

Let $E$ and $F$ denote Euclidean or Hermitian vector bundles over
a compact and oriented Riemannian manifold $M$. Furthermore,
suppose that $P:C^\infty(M,E)\to C^\infty(M,F)$ is a differential
operator of order $m$, i.e., $P$ is expressed in local coordinates
as
\[
P=\sum_{|\ga|\le m}a_\ga(x)\frac{\partial^{|\ga|}}{\partial
x_\ga},
\]
where $a_\ga$ are smooth matrix valued functions, while $\ga$ is a
multi index. Then the \emph{principal symbol} of $P$ is locally
defined by
\[
\gs_m(P)_{(x,\xi)}=\sum_{|\ga|=m}a_\ga(x)\xi^\ga.
\]
It gives rise to a bundle map $\gs_m(P):\pi^*E\to\pi^*F$, where
$\pi:T^*M\to M$ denotes the bundle projection of the cotangent
bundle.

Associated to every differential operator $P$ there is the
so-called \textit{formal adjoint}\footnote{In contrast to the
main part of this text, we now denote the formal adjoint with the
superscript $t$ instead of $*$. This is because we want to
distinguish it clearly from the functional analytic adjoint (see
below).} $P^t:C^\infty(M,F)\to C^\infty(M,E)$. It is defined by
the property that for every $u\in C^\infty(M,E)$ and every $v\in
C^\infty(M,F)$,\index{differential operators!formal adjoint}
\[
\int_M \scalar{Pu}{v}_F\,dv_g = \int_M \scalar{u}{P^tv}_E\,dv_g.
\]
Deriving an explicit formula via integration by parts yields that
the formal adjoint always exists and is again a differential
operator of order $m$. Moreover, it is uniquely characterized by
the above property. The principal symbols of $P$ and $P^t$ are
related by
\[
\gs_m(P^t)=(-1)^m\gs_m(P)^*,
\]
where $\gs_m(P)^*$ denotes the adjoint of the bundle map
$\gs_m(P)$ with respect to the induced metrics.\\

\noindent\textbf{Unbounded operators.} Working in the context of
Sobolev spaces is readily appreciated by the elementary fact that
for each $k\in\N$ and $p\in [1,\infty)$, a differential operator
$P$ induces bounded linear maps
\[
P_{k,p}:L^p_{k+m}(M,E)\to L^p_k(M,F)
\]
and
\[
P^t_{k,p}=(P^t)_{k,p}:L^p_{k+m}(M,F)\to L^p_k(M,E).
\]
Investigating the functional analytic properties of these maps
leads naturally to the theory of unbounded operators in Banach
spaces (cf. Kato \cite{K}, Sec.~III.5 and Sec.~V.3). We briefly
want to fix some notation in this context. For notational
convenience we will drop the reference to $p$ and simply write
$P_k$, the value of $p$ being understood from the context.

As a consequence of the above observation, $P$ induces an
unbounded operator $P_0:L^p(M,E)\supset L^p_m(M,E)\to L^p(M,F)$
with a dense domain $\dom(P_0):=L^p_m(M,E)$. Mutatis mutandis,
the same holds for $P^t$. If $p=2$, there is no reason to expect,
though, that the operator $P^t_0$ coincides with the
\emph{functional analytic} adjoint $(P_0)^*$. Recall that
\[
v\in\dom(P_0)^*:\Longleftrightarrow\;\exists_{w\in L^2(M,E)}
\forall_{u\in
\dom(P_0)}:\;\int_M\scalar{Pu}{v}_F\,dv_g=\int_M\scalar{u}{w}_E\,dv_g,
\]
and $(P_0)^* v:=w$. The operator $(P_0)^*$ is linear and densely
defined because $C^\infty(M,F)\subset \dom(P_0)^*$. If $E=F$ and
$P_0=(P_0)^*$ as unbounded operators in Hilbert space, then $P_0$
is called \emph{self-adjoint}. If we only have $P=P^t$, then we
call $P$ \emph{formally} self-adjoint.

\begin{remark*}
Note that the notion of the \emph{functional analytic} adjoint is
only well-defined if $p=2$ because only then $L^p$ is a Hilbert
space. In contrast to that, the \emph{formal} adjoint of a
differential operator makes perfect sense in the $L^p$ context
for all $p$ since it is a differential operator in its own
right.\\
\end{remark*}

\noindent\textbf{Elliptic operators}. The unbounded operators
induced by $P$ have surprising functional analytic properties if
$P$ is \textit{elliptic}, i.e., if its principal symbol
$\gs_m(P):\pi^*E\to \pi^*F$ is an isomorphism off the zero
section. We note that this implies that $E$ and $F$ have the same
rank. Due to the relation of their principal symbols, ellipticity
of $P$ implies that $P^t$ is elliptic as well. The following
theorem lies at the heart of the theory of elliptic operators.

\begin{theorem}\label{ell:thm}\index{differential operators!elliptic
estimate}\index{differential operators!elliptic regularity} Let
$P:C^{\infty}(M,E)\to C^{\infty}(M,F)$ be an elliptic operator.
Then the following holds.
\begin{enumerate}
\item ``Elliptic estimate'': For $u\in L^p_{k+m}(M,E)$,
\begin{equation}\label{ell:est}
\|u\|_{L_{k+m}^p}\le \const\cdot
\big(\|Pu\|_{L_k^p}+\|u\|_{L_k^p}\big),
\end{equation}
where $k\in\N$ and $p\in [1,\infty)$.
\item ``Elliptic regularity'': If $u\in L^p(M,E)$ satisfies
$Pu\in L^p_k(M,F)$ \emph{weakly}, i.e., if there exists $v\in
L^p_k(M,F)$ such that
\[
\forall_{w\in C^\infty(M,F)}:\ \int_M \scalar{u}{P^tw}_E\,dv_g =
\int_M \scalar{v}{w}_E\,dv_g,
\]
then $u\in L^p_{k+m}(M,E)$.

In particular, if $u\in L^p_{k+m}(M,E)$ satisfies $Pu=0$, then
$u$ is smooth. Hence, the kernels of the operators $P_k$ coincide
and consist solely of smooth sections.
\end{enumerate}
\end{theorem}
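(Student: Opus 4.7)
The plan is to establish both parts simultaneously by reducing to a local statement on Euclidean space and then constructing an approximate inverse (parametrix) for $P$ modulo smoothing operators. The main ingredients are: (a) the invertibility of the principal symbol $\sigma_m(P)$ off the zero section, which allows one to invert $P$ up to lower order terms in the frequency domain; and (b) $L^p$-boundedness of pseudodifferential operators of order $0$, i.e. the Calderón–Zygmund theory. The case $p=2$ can be handled via Plancherel alone, but the general $L^p$ case requires genuine singular-integral machinery, and this is where the main technical effort lies.

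First I would prove the local version of \eqref{ell:est} on $\R^n$ under the assumption that $P$ has constant coefficients. In that case, $\widehat{Pu}(\xi) = \sigma_m(P)(\xi)\,\widehat{u}(\xi)$, and ellipticity ensures that $\sigma_m(P)(\xi)$ is invertible for $|\xi|\ge R_0$; one can then construct a parametrix $Q$ such that $QP = \mathrm{id} - S$ with $S$ a smoothing operator. For $p=2$ this gives the estimate immediately by Plancherel. For general $p$, one invokes the Mikhlin multiplier theorem (or its Hörmander refinement) to conclude that the operator with symbol $\chi(\xi)\sigma_m(P)(\xi)^{-1}(1+|\xi|^2)^{m/2}$ is $L^p$-bounded. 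Next, I would allow variable coefficients by freezing them at a point $x_0$: the difference between $P$ and its constant-coefficient approximation at $x_0$ can be absorbed into the lower-order term $\|u\|_{L^p_k}$ provided $u$ has support in a sufficiently small ball, using the uniform continuity of the coefficients on compact sets.

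Second, I would globalize by choosing a finite covering $\{U_\alpha\}$ of $M$ by coordinate charts trivializing $E$ and $F$, together with a subordinate partition of unity $\{\varphi_\alpha\}$. For $u\in L^p_{k+m}(M,E)$, each $\varphi_\alpha u$ has compact support in $U_\alpha$ and satisfies a local elliptic estimate. Since $P(\varphi_\alpha u) = \varphi_\alpha P u + [P,\varphi_\alpha] u$, where the commutator $[P,\varphi_\alpha]$ is a differential operator of order at most $m-1$, summing the local estimates yields
\[
\|u\|_{L^p_{k+m}} \le \mathrm{const}\cdot\bigl(\|Pu\|_{L^p_k} + \|u\|_{L^p_{k+m-1}}\bigr),
\]
and the standard interpolation inequality $\|u\|_{L^p_{k+m-1}} \le \varepsilon\|u\|_{L^p_{k+m}} + C_\varepsilon\|u\|_{L^p_k}$ (proved by induction using the same local parametrix) lets us absorb the intermediate term into the left-hand side, giving \eqref{ell:est}.

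For part (ii), the regularity statement, I would use the Friedrichs mollifier technique: let $J_\varepsilon$ be a family of convolution smoothings, apply the elliptic estimate to $J_\varepsilon u$ (which is smooth), and show that the right-hand side remains bounded as $\varepsilon\to 0$ by exploiting the weak hypothesis $Pu\in L^p_k$ together with commutator estimates $\|[P,J_\varepsilon]u\|_{L^p_k}\le C\|u\|_{L^p_{k+m-1}}$. A standard weak-compactness argument then yields $u\in L^p_{k+m}$. Iterating gives smoothness of solutions of $Pu=0$ via Sobolev embedding. The main obstacle throughout is the $L^p$ boundedness of order-$0$ pseudodifferential operators for $p\ne 2$; once this singular-integral fact is granted, the rest of the argument is largely bookkeeping with partitions of unity and commutators.
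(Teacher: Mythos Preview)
The paper does not actually prove this theorem: it states the result and immediately refers to standard references (Aubin \cite{Au}, Sec.~3.6, and Gilbarg \& Trudinger \cite{GT}) for the elliptic estimate, adding only that elliptic regularity ``follows from the elliptic estimate via smoothing arguments.'' Your outline is precisely the standard textbook argument these references carry out---parametrix construction via symbol inversion, Mikhlin/H\"ormander multipliers for $L^p$-boundedness, freezing coefficients plus partition of unity for globalization, and Friedrichs mollifiers for regularity---so there is nothing to compare against in the paper itself, and your sketch is correct as a roadmap for the proof.
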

The proof of the elliptic estimate is quite technical and can be
found in many textbooks (cf. Aubin \cite{Au}, Sec.~3.6 and
references therein, or Gilbarg \& Trudinger \cite{GT}). Elliptic
regularity follows from the elliptic estimate via smoothing
arguments and can also be found in the textbooks cited above.

This theorem together with well-known results from functional
analysis are the key to an understanding of the analytic
properties of elliptic operators. We now start including some
proofs since our discussion in the main part of this thesis
relies on a detailed understanding of the next results.

\begin{prop}
For any $k\in\N$, $P_k$ is a closed unbounded operator
$L^p_k(M,E)\to L^p_k(M,F)$ with domain $L^p_{k+m}(M,E)$.
\end{prop}
\begin{proof}
We have to show that the graph of $P_k$, which is given by
\[
\bigsetdef{(u,P_k u)}{u\in\dom(P_k)}\subset L^p_k(M,E)\times
L^p_k(M,F),
\]
is a closed subspace. Suppose that $(u_n)$ is a sequence in
$\dom(P_k)$, i.e., in $L^p_{k+m}(M,E)$ such that $u_n\to u$ in
$L^p_k(M,E)$ and $Pu_n\to v$ in $L^p_k(M,F)$. We claim that $u\in
L^p_{k+m}(M,E)$ and $Pu=v$. The elliptic estimate
\[
\|u_n-u_{n'}\|_{L_{k+m}^p}\le \const\cdot
\big(\|Pu_n-Pu_{n'}\|_{L_k^p}+\|u_n-u_{n'}\|_{L^p_k}\big)
\]
shows that $(u_n)$ is a Cauchy sequence in $L^p_{k+m}(M,E)$ thus
converging in $L^p_{k+m}(M,E)$ to (the same limit point) $u$. By
continuity of
\[
P:L^p_{k+m}(M,E)\to L^p_k(M,F),
\]
we infer that $Pu=v$.
\end{proof}

\begin{prop}\label{funct:ana:adj}
If $p=2$, $P$ and its formal adjoint $P^t$ satisfy
$(P_0)^*=P^t_0$ and $(P^t_0)^*=P_0$.
\end{prop}
\begin{proof}
It suffices to show that $\dom(P_0)^*=\dom(P^t_0)\,(=L^2_m(M,F))$.
Suppose that $v\in\dom(P_0)^*$. Then there exists $w\in L^2(M,E)$
such that for any $u\in\dom(P_0)$,
\[
\int_M\scalar{Pu}{v}_F\,dv_g=\int_M\scalar{u}{w}_E\,dv_g.
\]
Since $C^\infty(M,E)\subset\dom(P_0)$, this implies that $w=P^t v$
weakly in $L^2(M,E)$. By elliptic regularity of $P^t$, we conclude
that $v\in L^2_m(M,F)$. Hence, $(P_0)^*\subset P^t_0$. The other
inclusion is obvious. As $P^t$ is also elliptic, the second
equality is proved in the same way.
\end{proof}

In addition to the elliptic estimate, we shall need the following
result.
\begin{prop}[Poincar\'e inequality]\index{inequalities!Poincare}
Let $p\ge 2$, and
\[
u\in (\ker P)^\perp\cap \dom(P_k),
\]
where we take the orthogonal complement in $L^2(M,E)$. Then
\begin{equation}\label{poinc}
\|u\|_{L_{k+m}^p}\le \const\cdot \|Pu\|_{L_k^p},
\end{equation}
\end{prop}
\begin{proof}
Invoking the elliptic estimate, we have to show that
\[
\|u\|_{L_k^p}\le\const\cdot\|Pu\|_{L_k^p}.
\]
Arguing by contradiction, we assume that there exists a sequence
$(u_n)$ in $L^p_{k+m}(M,E)$ such that all $u_n$ are
$L^2$-orthogonal to $\ker P$ and
\[
\|u_n\|_{L_k^p} > n\cdot \|Pu_n\|_{L_k^p}.
\]
Without loss of generality we may also demand that
$\|u_n\|_{L_k^p}=1$. Then the last inequality shows that
$\|Pu_n\|_{L_k^p}\to 0$, and the elliptic estimate imposes an
$L^p_{k+m}$-bound on $(u_n)$. We deduce from the
Rellich-Kondrachov Theorem that a subsequence of $(u_n)$
converges in $L^p_k(M,E)$ to, say, $u$. In particular,
$\|u\|_{L_k^p}=1$. As $Pu_n\to 0$ and $P_k$ is closed, we infer
that $u\in L^p_{k+m}(M,E)$ and $Pu=0$. On the other hand, all
$u_n$ are $L^2$-orthogonal to $\ker P$ which yields that the same
holds for $u$. Hence, $u\in\ker P\cap (\ker P)^\perp$ so that
necessarily $u=0$. This contradicts $\|u\|_{L_k^p}=1$.
\end{proof}

As an application of this result, we now deduce the Fredholm
property of elliptic operators.

\begin{theorem}\label{Fred:prop}\index{differential
operators!Hodge decomposition} Let $M$ be a closed, oriented
manifold and let $P:C^\infty(M,E)\to C^\infty(M,F)$ be an elliptic
differential operator. Then the following holds.
\begin{enumerate}
\item For every $k\in\N$ and $p\ge 2$, the operators $P_k$ and
$P^t_k$ are semi-Fredholm, i.e., they have closed ranges and
finite dimensional kernels.
\item ``Hodge decomposition'': There is an $L^2$-orthogonal
decomposition
\begin{equation}\label{Hodge:dec}
L^p_k(M,F)=\im P_k \oplus \ker (P^t).
\end{equation}
\end{enumerate}
In particular, $P_k$ is Fredholm, i.e., its kernel and cokernel
are finite dimensional. Moreover, the Fredholm index
\[
\ind P_k:= \dim (\ker P_k) - \dim (\coker P_k)
\]
neither depends on $k$ nor on $p$.
\end{theorem}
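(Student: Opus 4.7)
The plan is to derive both assertions from the elliptic estimate \eqref{ell:est} and the Poincar\'e inequality \eqref{poinc} established above, together with Proposition \ref{funct:ana:adj} on the functional-analytic adjoint.

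First I would analyse the kernels. By elliptic regularity (Theorem \ref{ell:thm}(ii)), any $u\in\ker P_k$ is smooth, so $\ker P_k=\ker P\subset C^\infty(M,E)$ sits inside every $L^p_{k+m}$. Since $Pu=0$ on the kernel, the elliptic estimate \eqref{ell:est} collapses to $\|u\|_{L^p_{k+m}}\le C\|u\|_{L^p_k}$, so the $L^p_k$- and $L^p_{k+m}$-topologies coincide on $\ker P$. By Rellich-Kondrachov (Theorem \ref{rellich}) the inclusion $L^p_{k+m}\hookrightarrow L^p_k$ is compact, so the closed unit ball of $\ker P$ is compact, forcing $\dim\ker P<\infty$. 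The same argument applied to $P^t$ gives $\dim\ker P^t<\infty$.

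Next I would show that $P_k$ has closed range using \eqref{poinc}. Because $\ker P$ is finite-dimensional, $L^2$-orthogonal projection onto $(\ker P)^\perp$ is well-defined and preserves $L^p_{k+m}(M,E)$ (again via elliptic regularity applied to $\ker P$ being smooth). If $Pu_n\to v$ in $L^p_k(M,F)$, replace $u_n$ by its projection onto $(\ker P)^\perp$ without changing $Pu_n$; then \eqref{poinc} yields
\[
\|u_n-u_{n'}\|_{L^p_{k+m}}\le C\|Pu_n-Pu_{n'}\|_{L^p_k},
\]
so $(u_n)$ is Cauchy in $L^p_{k+m}$, its limit $u$ lies in $\dom(P_k)$ by closedness, and $Pu=v\in\im P_k$. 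The same argument works for $P^t_k$, giving (i).

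For (ii) I would first prove the Hilbert case $p=2$, $k=0$. Since $\im P_0$ is closed, the standard identity $(\im P_0)^\perp=\ker(P_0)^*$ applies, and Proposition \ref{funct:ana:adj} gives $(P_0)^*=P^t_0$, yielding the $L^2$-orthogonal decomposition $L^2(M,F)=\im P_0\oplus\ker P^t$. To bootstrap to arbitrary $L^p_k$ with $p\ge 2$, note that elements of $\ker P^t$ are smooth, so $\ker P^t\subset L^p_k(M,F)$. Given $v\in L^p_k\subset L^2$, write $v=Pu+w$ with $u\in L^2_m(M,E)$ and $w\in\ker P^t$; then $Pu=v-w\in L^p_k$ because $w$ is smooth, and a final application of elliptic regularity promotes $u$ to $L^p_{k+m}(M,E)$. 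Thus $v\in\im P_k\oplus\ker P^t$ with orthogonality inherited from $L^2$. Index independence is then immediate: $\ker P_k=\ker P$ by elliptic regularity, and $\coker P_k\cong\ker P^t$ by the Hodge decomposition, neither depending on $k$ or $p$.

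The main obstacle I anticipate is the bootstrapping step in (ii): one must verify carefully that the component $u$ produced by the $L^2$ Hodge decomposition acquires full $L^p_{k+m}$-regularity, which hinges on the fact that $\ker P^t$ consists of smooth sections so that $Pu=v-w$ retains the original Sobolev class of $v$. Everything else is a routine combination of \eqref{ell:est}, \eqref{poinc}, the Rellich-Kondrachov compactness, and the closed-range/adjoint formalism for unbounded operators.
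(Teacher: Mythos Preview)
Your proposal is correct and follows essentially the same route as the paper: finite-dimensionality of $\ker P$ via the elliptic estimate plus Rellich--Kondrachov, closed range via the Poincar\'e inequality after projecting off $\ker P$, the $L^2$ Hodge decomposition from closed range and Proposition \ref{funct:ana:adj}, and the bootstrap to $L^p_k$ by noting $\ker P^t\subset C^\infty$ so that elliptic regularity upgrades $u$. Your packaging of the kernel argument (observing that the $L^p_k$ and $L^p_{k+m}$ topologies agree on $\ker P$) is a slight rephrasing of the paper's sequential-compactness argument, but the content is identical.
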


\begin{proof}
Let $(v_n)$ be a sequence in the image of $P_k$ which converges
in $L^p_k(M,F)$ and let $v$ denote the limit point. We choose a
sequence $(u_n)$ in $L^p_{k+m}(M,E)$ such that $Pu_n=v_n$. Since
$\ker P\subset C^\infty(M,E)$ and $L^p_k\subset L^2$ (because
$p\ge 2$), we may assume that all $u_n$ are $L^2$-orthogonal to
$\ker P$. We then infer from the Poincar\'e inequality
\eqref{poinc} that $(u_n)$ is a Cauchy sequence in $L^p_{k+m}$
hence converging to, say, $u\in L^p_{k+m}(M,E)$. Since
$P_k:L_{k+m}^p(M,E)\to L_k^p(M,F)$ is continuous, $Pu=v$.
Therefore, $\im P_k$ is closed in $L^p_k(M,F)$.

To prove the finite dimensionality of $\ker P$, we want to use the
well-known fact that a Banach space is finite dimensional if and
only if the unit sphere is sequentially compact. Hence, suppose
$(u_n)$ is a sequence in $\ker P$ with $\|u_n\|_{L_{k+m}^p}=1$.
As a consequence of the Rellich-Kondrachov Theorem, $(u_n)$
contains a subsequence which converges in $L^p_k(M,E)$ to some
limit point $u$. Since $Pu_n=0$, the elliptic estimate yields
that this subsequence is also a Cauchy sequence in
$L^2_{k+m}(M,E)$ thus also converging to $u$ with respect to the
$L^2_{k+m}$-topology. Continuity of $P_k$ implies that $Pu=0$ so
that $u\in \ker P$. Clearly, $\|u\|_{L_{k+m}^p}=1$ and this shows
that the unit sphere in $\ker P$ is sequentially compact. In the
same way, replacing $P$ with $P^t$, we get that $P^t$ is a
semi-Fredholm operator. Thus we have proved (i).

Concerning (ii), let us first content ourselves to the case $k=0$
and $p=2$. Since $P_0$ has a closed range in $L^2(M,F)$, we have
an $L^2$-orthogonal decomposition
\[
L^2(M,F)=\im P_0 \oplus \ker (P_0)^*=\im P_0 \oplus \ker P^t_0.
\]
Here, we have used that $(P_0)^*$ and $P^t_0$ coincide. Since
$P^t$ is elliptic, $\ker P^t_0=\ker P^t$. Hence, the assertion is
proved in the case at hand.

To obtain the general case, we intersect the above decomposition
with $L^p_k(M,F)$. Since $L^p_k\subset L^2$ and $\ker P^t\in
C^\infty(M,F)$, this yields an $L^2$-orthogonal decomposition
\[
L^p_k(M,F)= \big(\im P_0\cap L^p_k(M,F)\big) \oplus \ker P^t.
\]
Elliptic regularity of $P$ implies that $\im P_0\cap
L^p_k(M,F)=\im P_k$ which establishes (ii).

From the Hodge decomposition we deduce that $\coker P_k\cong\ker
P^t$ where the right hand side is finite dimensional and
independent of $k$ and $p$. Moreover, we have already observed
that $\ker P$ is finite dimensional and neither depends on $k$
nor $p$. This implies the assertion about the independence of the
Fredholm index.\qedhere \\
\end{proof}

\noindent\textbf{Discrete spectrum.} Let $P:C^\infty(M,E)\to
C^\infty(M,E)$ be a formally self-adjoint, elliptic differential
operator. Then, according to Proposition \ref{funct:ana:adj}, the
associated operator $P_0$ in $L^2(M,E)$ is self-adjoint. This
implies that its spectrum,
\[
\spec(P_0):=\bigsetdef{\gl\in\C}{P_0-\gl:L^2_m(M,E)\to L^2(M,E)
\text{ is not invertible}},
\]
is a (closed) subset of $\R$. As a consequence of the Rellich
Lemma, any element $\gl\in\C\setminus\spec(P_0)$ defines a compact
operator $(P_0-\gl)^{-1}:L^2(M,E)\to L^2_m(M,E)\subset L^2(M,E)$.
Therefore, $P_0$ is said to have \textit{compact resolvent}. From
the spectral theory of compact operators, it is easy to deduce
that the spectrum of a self-adjoint operator having compact
resolvent consists solely of discrete eigenvalues of finite
multiplicity, which form an unbounded subset of $\R$ (cf. Kato
\cite{K}, Thm.~III.6.29). One sometimes refers to this property by
calling $P_0$ an operator with \textit{discrete spectrum}. In
addition to that, elliptic regularity of $P_0$ implies that the
corresponding eigenvectors are smooth.\\

\noindent\textbf{Injectively elliptic
operators.}\index{differential operators!injectively elliptic}
The requirement that $\gs_m(P)$ is an isomorphism naturally splits
into two parts, namely injectivity and surjectivity of the
symbol. A differential operator $P:C^\infty(M,E)\to C^\infty(M,F)$
of order $m$ is called \textit{injectively elliptic} if its
principal symbol is injective off the zero section. The elliptic
estimate \eqref{ell:est} also holds for differential operators of
this kind:

\begin{theorem}\label{inj:ell:thm}
Let $P:C^{\infty}(M,E)\to C^{\infty}(M,F)$ be an injectively
elliptic operator of order $m$. Then the following holds.
\begin{enumerate}
\item ``Elliptic estimate'': For all $k\in\N$ and $u\in
L^2_{k+m}(M,E)$,
\begin{equation*}
\|u\|_{L_{k+m}^2}\le \const\cdot
\big(\|Pu\|_{L_k^2}+\|u\|_{L_k^2}\big).
\end{equation*}
\item ``Elliptic regularity'': If $u\in L^2(M,E)$ satisfies
$Pu\in L^2_k(M,F)$ \emph{weakly}, then $u\in L^2_{k+m}(M,E)$.
\end{enumerate}
\end{theorem}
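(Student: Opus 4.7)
The strategy is to reduce both assertions to the genuinely elliptic Theorem \ref{ell:thm} by passing to the second-order composite $P^tP$. A direct symbol computation gives
\[
\gs_{2m}(P^tP)(x,\xi)=\gs_m(P^t)(x,\xi)\circ\gs_m(P)(x,\xi)=(-1)^m\gs_m(P)(x,\xi)^*\circ\gs_m(P)(x,\xi),
\]
which is an isomorphism for every $\xi\neq 0$: injectivity of $\gs_m(P)(x,\xi)$ makes $\gs_m(P)(x,\xi)^*\gs_m(P)(x,\xi)$ a positive definite endomorphism of the fibre. Hence $P^tP:C^\infty(M,E)\to C^\infty(M,E)$ is elliptic of order $2m$, and Theorem \ref{ell:thm} applies to it in full.

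For part (ii), suppose $u\in L^2(M,E)$ and $Pu\in L^2_k(M,F)$ holds weakly. The polarization identity $\int\langle Pu,Pw\rangle\,dv_g=\int\langle u,P^tPw\rangle\,dv_g$, valid for all $w\in C^\infty(M,E)$, combined with the weak hypothesis on $Pu$, identifies the distribution $P^tPu$ with $P^t(Pu)\in L^2_{k-m}$. When $k\ge m$ this places $P^tPu$ in an honest Sobolev space, and Theorem \ref{ell:thm}(ii) applied to $P^tP$ (an order-$2m$ elliptic operator) yields $u\in L^2_{k+m}$. When $0\le k<m$ one first extends elliptic regularity of $P^tP$ to distributions of negative Sobolev order by duality and bootstraps: from $u\in L^2$ and $P^tPu\in L^2_{-m}$ weakly one obtains $u\in L^2_m$; reinserting this improved regularity and iterating climbs to $L^2_{k+m}$.

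For part (i), the ellipticity of $P^tP$ furnishes a pseudodifferential parametrix $R$ of order $-2m$ with $R\circ(P^tP)=I+S_1$ and $S_1$ smoothing. Setting $Q:=R\circ P^t$, a pseudodifferential operator of order $-m$, one obtains a left parametrix $Q\circ P=I+S_1$ for $P$ itself. For $u\in L^2_{k+m}(M,E)$ the identity $u=Q(Pu)-S_1 u$ gives
\[
\|u\|_{L^2_{k+m}}\le\|Q(Pu)\|_{L^2_{k+m}}+\|S_1u\|_{L^2_{k+m}}\le\const\cdot\bigl(\|Pu\|_{L^2_k}+\|u\|_{L^2_k}\bigr),
\]
using that $Q$ has order $-m$ (so maps $L^2_k\to L^2_{k+m}$ boundedly) and that $S_1$ is smoothing. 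An equivalent route avoiding parametrices is G\r{a}rding's inequality for the positive elliptic operator $P^tP$, which at level $k=0$ reads $\|Pu\|_{L^2}^2=(P^tPu,u)\ge c\|u\|_{L^2_m}^2-C\|u\|_{L^2}^2$ and yields general $k$ by applying the base estimate to the covariant derivatives of $u$. The main obstacle is not conceptual but infrastructural: the reduction $P\rightsquigarrow P^tP$ is immediate, but the parametrix, G\r{a}rding inequality, or duality extension of Theorem \ref{ell:thm}(ii) needed to treat the range $0\le k<m$ all rest on pseudodifferential calculus not developed explicitly earlier. Once those standard tools are granted, the theorem reduces to a direct appeal to Theorem \ref{ell:thm} for $P^tP$.
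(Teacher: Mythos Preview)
Your reduction to the elliptic operator $P^tP$ is exactly the paper's strategy, and your treatment of part (ii) matches the paper's almost verbatim, including the remark that the range $0\le k<m$ requires passing to Sobolev spaces of negative order. The one genuine difference is in part (i): you build a left parametrix $Q=R\circ P^t$ from a parametrix $R$ for $P^tP$ (or alternatively invoke G\aa rding's inequality), whereas the paper avoids pseudodifferential machinery altogether. It simply applies the elliptic estimate \eqref{ell:est} to $P^tP$ at level $k-m$,
\[
\|u\|_{L^2_{k+m}}\le\const\cdot\bigl(\|P^tPu\|_{L^2_{k-m}}+\|u\|_{L^2_{k-m}}\bigr),
\]
and then uses that $P^t:L^2_k\to L^2_{k-m}$ is bounded to replace $\|P^tPu\|_{L^2_{k-m}}$ by $\const\cdot\|Pu\|_{L^2_k}$. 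This is shorter and stays entirely within the toolkit already set up in Theorem~\ref{ell:thm}; your parametrix route is of course correct and more in the spirit of the general theory, but it imports infrastructure the paper has not developed.
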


\begin{proof}[Sketch of proof] We shall only give a proof in the
case $k\ge m$, since then the assertions are immediate
consequences of Theorem \ref{ell:thm} applied to the elliptic
operator
\[
P^tP: C^\infty(M,E)\to C^\infty(M,E)
\]
of order $2m$. Since $P^t_{k-m}:L^2_k(M,F)\to L^2_{k-m}(M,E)$ is
bounded, the elliptic estimate for $P^tP$,
\[
\|u\|_{L_{k+m}^2}\le
\const\cdot\big(\|P^tPu\|_{L_{k-m}^2}+\|u\|_{L_{k-m}^2}\big),
\]
implies that also
\[
\|u\|_{L_{k+m}^2}\le \const\cdot\big(\|Pu\|_{L_k^2} +
\|u\|_{L_{k-m}^2}\big).
\]
Since $\|.\|_{L_{k-m}^2}\le \|.\|_{L_k^2}$, we obtain the elliptic
estimate for $P$ and thus (i). Similarly, we get (ii) in the
following way: Let $u\in L^2(M,E)$ and $Pu=v$ weakly with $v\in
L_k^2(M,F)$, $k\ge m$. Then it is immediate that
\[
P^tP u = P^t v\in L_{k-m}^2(M, E)
\]
weakly. Therefore, $u\in L_{k-m+2m}^2=L_{k+m}^2$. The proof in
the general case has to be carried out by introducing Sobolev
spaces of negative order as the dual spaces---endowed with the
operator norm---to the corresponding spaces of positive order.
The elliptic estimate continues to hold for these spaces, and the
above arguments carry over to this setting.
\end{proof}

\begin{remark*}
Although we have only given a rigid proof of the above theorem in
the case $k\ge m$, we shall use it for all $k\ge 0$. Yet, we have
formulated the theorem only for $p=2$ since otherwise, the above
proof does not easily carry over to the case $k<m$.
\end{remark*}

One now establishes functional analytic properties much as
before, taking care, however, that all arguments involving
ellipticity of $P$'s formal adjoint are no longer valid in the
context of injectively elliptic operators. The following
Proposition summarizes the results we need.

\begin{prop}\label{inj:ell}
Let $P:C^\infty(M,E)\to C^\infty(M,F)$ be an injectively elliptic
operator of order $m$ and let $k\in\N$. Then
\begin{enumerate}
\item The unbounded operator $P_k$ is a closed unbounded
semi-Fredholm operator
\[
P_k:L_k^2(M,E)\supset L_{k+m}^2(M,E)\to L_k^2(M,F)
\]
with finite dimensional kernel.
\item The formal adjoint of $P$ satisfies $(P^t_0)^* = P_0$.
\item There is an $L^2$-orthogonal decomposition
\begin{equation}\label{Hodge:dec:inj}
L^2_m(M,F)=\im P_m \oplus \ker P^t_0.
\end{equation}
\end{enumerate}
\end{prop}

\begin{proof}
Part (i) and (ii) are proved exactly as before. For this note that
the Poincar\'e inequality \eqref{poinc} also holds for injectively
elliptic operators. Regarding (iii), we first have an
$L^2$-orthogonal decomposition
\begin{equation}\label{Hodge:dec:inj:2}
L^2(M,F)=\im P_0 \oplus \ker (P_0)^*,
\end{equation}
for which we invoke that $P_0$ has closed range. Suppose $v\in
L^2_m(M,F)$. As an element of $L^2(M,F)$, we may decompose $v$
according to the above as $v=Pu + w$, with $u\in L^2_m(M,E)$ and
$w\in\ker P^*_0$. Applying $P^t$ to this equation, we deduce that
$P^tv=P^tPu$ weakly in $L^2(M,E)$. Elliptic regularity of $P^tP$
then guarantees that $u\in L^2_{2m}(M,E)$ so that in particular,
$w=v-Pu\in L^2_m(M,F)$. As $\ker P^*_0\cap L^2_m(M,F)=\ker
P^t_0$, the assertion follows.
\end{proof}

In contrast to the elliptic case, $\ker (P_0)^*$ is in general
neither finite dimensional nor does it coincide with the kernel
of the formal adjoint. However, the above proposition implies

\begin{cor}\label{L^2:compl}
Let $P:C^\infty(M,E)\to C^\infty(M,F)$ be an injectively elliptic
operator of order $m$. Then
\[
\im P_0 = \overline{\im P_m}^{L^2}\quad\text{ and }\quad \ker
(P_0)^*= \overline{\ker P^t_0}^{L^2}.
\]
\end{cor}
\begin{proof}
Since $\im P_m\subset \im P_0$ and $\ker P^t_0\subset \ker
(P_0)^*$,
\begin{equation}\label{L^2:compl:1}
\overline{\im P_m}^{L^2}\subset \im P_0 \quad\text{ and }\quad
\overline{\ker P^t_0}^{L^2}\subset \ker (P_0)^*
\end{equation}
because the subspaces $\im P_0$ and $\ker (P_0)^*$ are closed in
$L^2(M,F)$. As the subspace $L_m^2(M,F)$ is dense in $L^2(M,F)$,
we deduce from \eqref{Hodge:dec:inj:2} and \eqref{Hodge:dec:inj}
that
\[
\im P_0 \oplus \ker (P_0)^*= \overline{\im P_m}^{L^2} \oplus
\overline{\ker P^t_0}^{L^2}
\]
as $L^2$-orthogonal decompositions. Together with
\eqref{L^2:compl:1}, this implies the assertion.\qedhere\\
\end{proof}

\noindent\textbf{Non-smooth coefficients.} In gauge theory one
usually works with nonlinear partial differential equations. In
view of the implicit function theorem, it is promising to model
the nonlinear partial differential operator on a suitable Sobolev
space and study its differential in order to gather information
about the set of solutions. This linearization fits into the
context of linear differential operators we have described above,
though, with a slight modification: The point in the Sobolev
space, where we are linearizing the operator enters the
differential and we usually obtain a differential operator with
Sobolev coefficients. However, the actual generalization we have
to make is only a minor one. The underlying observation is that
in the situation we shall encounter, the nonlinear part of the
partial differential equation is of order 0. This implies that
linearizing the equation at different points yields operators
differing only by a compact operator. It is then a
well-established fact that the functional analytic properties of
the two linear operators are essentially the same.

\begin{theorem}\label{relative:comp}\index{differential
operators!relatively compact perturbation} Let $P:C^\infty(M,E)\to
C^\infty(M,F)$ be an injective elliptic operator of order $m$.
Moreover, let $T:L^2_m(M,E)\to L^2(M,E)$ be a compact\footnote{In
this situation, $T$ is called \textit{relatively compact} with
respect to $P$.} operator and consider $P+T$ with
$\dom(P+T)=L^2_m(M,E)$. Then the following holds
\begin{enumerate}
\item The unbounded operator $P+T$ is closed and semi-Fredholm
operator with finite dimensional kernel.
\item $L^2(M,F)=\im (P+T) \oplus \ker (P+T)^*$.
\item Suppose that $F=E$ and that $P$ is formally self-adjoint, and
$T$ is symmetric with respect to the $L^2$ scalar product. Then
$P+T$ is a self-adjoint Fredholm operator in $L^2(M,E)$ with
compact resolvent.
\end{enumerate}
\end{theorem}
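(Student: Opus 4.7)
The plan is to reduce each statement to the corresponding assertion for the injectively elliptic operator $P_0$ already established in Proposition \ref{inj:ell}, exploiting the fact that compact operators are stable perturbations within the semi-Fredholm class. First I would note that since $T:L^2_m(M,E)\to L^2(M,F)$ is compact, it is in particular bounded, so $P+T$ with domain $L^2_m(M,E)$ is a bounded perturbation of the closed operator $P_0$ and is therefore itself closed. For part (i), finite-dimensionality of $\ker(P+T)$ would be established by a variant of the unit-sphere argument in the proof of Proposition \ref{inj:ell}: given a sequence $(u_n)$ in the kernel with $\|u_n\|_{L^2_m}=1$, the identity $Pu_n=-Tu_n$ combined with compactness of $T$ gives, after passing to a subsequence, a limit for $(Tu_n)$ in $L^2$, and then the elliptic estimate
\[
\|u_n-u_k\|_{L^2_m}\le \const\cdot\big(\|P(u_n-u_k)\|_{L^2}+\|u_n-u_k\|_{L^2}\big)
\]
together with Rellich-Kondrachov for the embedding $L^2_m\hookrightarrow L^2$ yields a Cauchy subsequence in $L^2_m$. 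To get closed range, I would show the Poincaré-type estimate $\|u\|_{L^2_m}\le C\|(P+T)u\|_{L^2}$ on an $L^2$-orthogonal complement of $\ker(P+T)$ in $L^2_m$, again by contradiction: a normalized sequence with $(P+T)u_n\to 0$ would, using compactness of $T$ to extract $Tu_n\to w$ and hence $Pu_n\to -w$, combine with closedness of $P_0$ and the elliptic estimate to produce a non-zero limit in $\ker(P+T)$ lying in its complement.

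For part (ii), once closed range is in hand the orthogonal decomposition is automatic: $P+T$ is densely defined and closed, hence $\im(P+T)^\perp=\ker(P+T)^*$ and, since $\im(P+T)$ is closed in $L^2(M,F)$, the decomposition $L^2(M,F)=\im(P+T)\oplus\ker(P+T)^*$ follows from the standard closed range theorem. For part (iii), under the symmetry hypotheses I would first observe that $P_0$ is self-adjoint in $L^2(M,E)$ by Proposition \ref{funct:ana:adj}. Because $T$ is $P$-compact (compact as a map from $\dom P_0$ with the graph norm into $L^2$), it has $P$-bound equal to zero, so the Kato-Rellich theorem applies and shows that $P+T$ is self-adjoint on the unchanged domain $L^2_m(M,E)$. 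Combining self-adjointness with (i) and (ii) forces $\dim\ker(P+T)=\dim\ker(P+T)^*<\infty$, so $P+T$ is Fredholm. Finally, since $\pm i$ lie in the resolvent set of any self-adjoint operator, $(P+T\mp i)^{-1}$ is a bounded operator from $L^2(M,E)$ onto $L^2_m(M,E)$, and post-composition with the compact Rellich embedding $L^2_m(M,E)\hookrightarrow L^2(M,E)$ gives compact resolvent; the spectral theorem for operators with compact resolvent then yields the discrete spectrum.

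The main obstacle I anticipate is the verification in (iii) that $T$ can really be treated as a Kato-Rellich type perturbation. Since $T$ is only defined on $L^2_m(M,E)$ and not necessarily on all of $L^2$, one must check the relative-bound statement
\[
\|Tu\|_{L^2}\le \varepsilon\|Pu\|_{L^2}+C_\varepsilon\|u\|_{L^2}\quad\text{for every }\varepsilon>0,
\]
which follows from $P$-compactness by a standard contradiction argument using the elliptic estimate and Rellich, but requires some care to cast in a form where the classical self-adjointness perturbation theorem genuinely applies. The other steps are routine combinations of the elliptic estimate, Rellich-Kondrachov, and the closed range theorem for unbounded operators.
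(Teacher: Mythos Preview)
Your proposal is correct and matches the paper's approach in substance: the paper does not spell out a proof but simply refers to Kato's Theorem~V.5.26 on relatively compact perturbations of semi-Fredholm operators for parts (i) and (ii), and to a standard self-adjointness perturbation result (Weidmann, Thm.~9.9; Kato, Sec.~V.4) for part (iii). What you have written is precisely a self-contained unpacking of those cited results---the semi-Fredholm stability via the elliptic estimate plus Rellich, and the Kato--Rellich route to self-adjointness with $P$-bound zero---so the content is the same, only made explicit.
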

Part (i) and (ii) follow from Proposition \ref{inj:ell} and
Theorem V.5.26 of \cite{K} about relatively compact perturbations
of semi-Fredholm operators. Part (iii) is an immediate consequence
of Theorem 9.9 in \cite{Wei:LO}, see also Sec.~V.4 in \cite{K}.
\index{differential operators|)}

\cleardoublepage

\chapter{The determinant line bundle}\label{app:det}
\setcounter{section}{1}

In this appendix we present a version of how to construct a
canonical line bundle over the space of Fredholm operators. Since
we shall not need this notion in the context of Banach spaces, we
restrict ourselves immediately to Hilbert spaces although the
situation is more or less the same. For a discussion of different
possibilities to construct the determinant bundle, we refer to
\cite{BB}, Ch.~3.

Let $\sF(H_1,H_2)$ denote the set of bounded Fredholm operators
between two separable $\K$-Hilbert spaces $H_1$ and $H_2$.
\begin{dfn}\index{<@$\sF(H_1,H_2)$, Fredholm op.}
The {\em determinant line} of $T\in \sF(H_1,H_2)$ is the vector
space
\[
\det T:= \det(\ker T)\otimes \big(\det(\coker T)\big)^*\;,
\]
where $(\ldots)^*$ denotes the dual space. Recall that for each
$n$-dimensional $\K$-vector space $V$, the space $\det(V)$ is
defined as the top exterior power $\gL^nV$. In
particular, $\det\{0\}$ is the underlying scalar field $\K$.\\
\end{dfn}

\noindent\textbf{The space of Fredholm operators.} We equip
$\sF(H_1,H_2)$ with the topology induced by the operator norm on
$\sL(H_1,H_2)$, the latter denoting the Banach space of bounded
liner maps. Let $\sU\subset \sF(H_1,H_2)$ be a connected open
subset. It is well-known that the map $T\mapsto \ind(T)$ is
constant on $\sU$. Assume for a moment that the assignment
$T\mapsto \dim(\ker T)$ is also constant on $\sU$. Under this
assumption, defining $(Ker)_T:= \ker T$ and $(Coker)_T:=\coker T$
for every $T\in\sU$ yields vector bundles $Ker \to \sU$ and $Coker
\to \sU$. We can thus form the line bundle
\[
Det:=\det(Ker)\otimes \big(\det(Coker)\big)^* \longrightarrow
\sU\;.
\]
However, there is no immediate way of endowing the collection
$\bigcup_T\det T$ with the structure of a line bundle if
$\dim(\ker T)$ varies with $T$. To achieve this, we introduce the
following concept.

\begin{dfn}\index{families of operators!stabilizer}
Let $\sU\subset \sF(H_1,H_2)$ and let $K:\sU\to \sL(V,H_2)$ be a
continuous map, where $V$ is a finite dimensional Hilbert space.
If the operator
\[
T_K: H_1\oplus V\to H_2,\quad (e,v)\mapsto Te + K(T)v,
\]
is surjective for every $T\in \sU$, we call $K$ a
\emph{stabilizer over $\sU$}.
\end{dfn}

Let $K:\sU\to\sL(V,H_2)$ be a stabilizer over some open subset
$\sU \subset \sF_n$, the latter denoting the component of Fredholm
operators of index $n$. Then for every $T\in \sU$, the operator
$T_K$ is surjective and Fredholm with index equal to $n + \dim V$.
Therefore, the collection $\bigcup_{T\in\sU}\ker T_K$ forms a
well-defined vector bundle which we denote by $Ker_K\to\sU$. We
can then form the line bundle
\[
\det(Ker_K)\longrightarrow \sU.
\]
We shall see in Proposition \ref{can:isom} below, that there
exists a natural isomorphism
\begin{equation}\label{can:isom:prev}
\det T \cong \det(\ker T_K)\otimes (\det V)^*
\end{equation}
for any $T\in \sU$. Thus, the idea is to define the structure of a
line bundle on $\bigcup_{T\in \sU}\det T$ via the above
isomorphisms. Before doing so, let us first recall the well-known
fact that stabilizers exist in abundance.

\begin{lemma}\label{Stab:exists}
For each $T_0\in \sF(H_1,H_2)$, there exist a finite dimensional
subspace $V\subset H_2$ and an open neighbourhood $\sU$ of $T_0$
such that the constant map $K:=(V\hookrightarrow H_2)$ is a
stabilizer over $\sU$.
\end{lemma}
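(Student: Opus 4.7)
The natural candidate for $V$ is a complement of $\im T_0$: since $T_0$ is Fredholm, $\coker T_0$ is finite dimensional, so $V:=(\im T_0)^\perp\subset H_2$ is a finite dimensional subspace. With $K:\sF(H_1,H_2)\to\sL(V,H_2)$ taken to be the constant map equal to the inclusion $\iota:V\hookrightarrow H_2$, the operator $(T_0)_K:H_1\oplus V\to H_2$, $(e,v)\mapsto T_0e+v$, has image $\im T_0 + V = H_2$ by construction, so it is surjective at $T=T_0$.

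The content of the lemma is therefore to produce an open neighbourhood $\sU$ of $T_0$ on which surjectivity of $T_K$ persists. I would do this via the usual right-inverse perturbation argument. Since $(T_0)_K$ is surjective and $H_2$ is a Hilbert space, the closed subspace $\ker (T_0)_K$ admits an orthogonal complement, and the open mapping theorem yields a bounded linear right inverse $R:H_2\to H_1\oplus V$ with $(T_0)_KR=\mathrm{id}_{H_2}$. The assignment $T\mapsto T_K$ is a bounded affine map $\sL(H_1,H_2)\to\sL(H_1\oplus V,H_2)$, hence continuous in operator norm; therefore, for $T$ sufficiently close to $T_0$,
\[
\bigl\|(T_K-(T_0)_K)R\bigr\|\;\le\;\|T-T_0\|\cdot\|R\|\;<\;1.
\]
Then $T_KR=\mathrm{id}_{H_2}+\bigl(T_K-(T_0)_K\bigr)R$ is invertible by a Neumann series, which forces $T_K$ to be surjective. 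Taking $\sU:=\{T\in\sF(H_1,H_2):\|T-T_0\|<\|R\|^{-1}\}$ gives the required open neighbourhood (and we may intersect with the open set of Fredholm operators to stay in $\sF(H_1,H_2)$).

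\textbf{Main obstacle.} There is no genuinely hard point; the only place requiring care is the existence of the bounded right inverse $R$, which reduces via the open mapping theorem to the fact that a closed subspace of a Hilbert space is complemented. Everything else is a direct application of the definitions and a standard Neumann series perturbation.
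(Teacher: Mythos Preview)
Your proof is correct and essentially the same as the paper's: the paper restricts $T_K$ to $W\oplus V$ with $W:=(\ker T_0)^\perp$, notes that $S(T_0):=(T_0)_K|_{W\oplus V}$ is invertible, and uses openness of invertibles in $\sL(W\oplus V,H_2)$; your bounded right inverse $R$ is precisely $S(T_0)^{-1}$ (since $(\ker(T_0)_K)^\perp=W\oplus V$), so your Neumann series argument for $T_KR=S(T)S(T_0)^{-1}$ is the same openness-of-invertibles statement in disguise.
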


\begin{proof}
Let $V:=\im T_0^\perp$. Since $T_0$ is Fredholm, $V$ is a finite
dimensional subspace of $H_2$. We define $W:=(\ker
T_0)^\perp\subset H_1$ and $K:=(V\hookrightarrow H_2)$. Then the
map
\[
S:\sF(H_1,H_2)\to \sL(W\oplus V,H_2),\quad S(T):=T_K|_{W\oplus V}
\]
is continuous. Moreover, the construction is made in such a way
that $S(T_0)$ is invertible. It is well-known that the set of
invertible elements is open in $\sL(W\oplus V,H_2)$. Hence, there
exists an open neighbourhood $\sU$ of $T_0$ such that for every
$T\in\sU$, the operator $S(T):W\oplus V\to H_2$ is invertible. In
particular, the operator $T_K:H_1\oplus V\to H_2$ is
surjective.\qedhere\\
\end{proof}

\noindent\textbf{The determinant line of a Fredholm operator.}
Our next aim is to describe the isomorphism
\eqref{can:isom:prev}. This requires some linear algebra so that
we shall restrict to the case of a single Fredholm operator
$T:H_1\to H_2$ in order to simplify notation.

\begin{lemma}\label{det}
Let $K:V\to H_2$ be a stabilizer of $T\in\sF(H_1,H_2)$, and let
$P_V:=\Proj_V:H_1\oplus V\to V$, and $F:=\Proj_{\coker T}\circ K$,
where $\Proj$ denotes the orthogonal projection. Then the sequence
\begin{equation}\label{det:seq}
\begin{CD}
0 @>>> \ker T @>>> \ker T_K  @>{P_V}>> V @>{F}>> \coker T @>>> 0
\end{CD}
\end{equation}
is exact. Hence, there exists a natural isomorphism
\begin{equation}\label{det:isom}
\det(\ker T) \otimes \det V\; \cong\; \det(\ker
T_K)\otimes\det(\coker T).
\end{equation}
\end{lemma}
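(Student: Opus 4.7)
The plan is to verify exactness of \eqref{det:seq} at each of its four nontrivial positions by direct computation, and then deduce the isomorphism \eqref{det:isom} from the standard alternating-product formula for determinants of an exact sequence of finite-dimensional vector spaces.

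For exactness, I would first note that the arrow $\ker T \hookrightarrow \ker T_K$ is just $e\mapsto (e,0)$, which is evidently injective and lands in $\ker T_K$ since $T_K(e,0)=Te$. At $\ker T_K$, an element $(e,v)$ with $P_V(e,v)=v=0$ satisfies $T_K(e,0)=Te=0$, so $(e,0)$ comes from $\ker T$; the reverse inclusion is trivial. At $V$, if $v=P_V(e,v)$ for some $(e,v)\in\ker T_K$, then $Kv=-Te\in \im T$, so $Fv=\Proj_{\coker T}(Kv)=0$; conversely, if $Fv=0$ then $Kv\in\im T$, say $Kv=-Te$, so $(e,v)\in\ker T_K$ has $P_V$-image $v$. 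Finally, at $\coker T$, surjectivity of $F$ is precisely a reformulation of surjectivity of $T_K$: given any class $[w]\in\coker T$, pick $(e,v)\in H_1\oplus V$ with $Te+Kv=w$, and then $Fv=[Kv]=[w-Te]=[w]$.

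With exactness of \eqref{det:seq} in hand, the isomorphism \eqref{det:isom} follows from the general linear-algebra principle that any exact sequence $0\to A\to B\to C\to D\to 0$ of finite-dimensional $\K$-vector spaces induces a canonical isomorphism
\[
\det A\otimes \det C\;\cong\;\det B\otimes \det D,
\]
obtained by choosing a splitting $C=P_V(\ker T_K)\oplus C'$ with $C'\cong \coker T$ via $F$, together with a splitting $\ker T_K=\ker T\oplus R$ with $R\cong P_V(\ker T_K)$ via $P_V$, and then taking wedge products of the corresponding bases. Applying this with $A=\ker T$, $B=\ker T_K$, $C=V$, $D=\coker T$ gives exactly \eqref{det:isom}.

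The routine aspect is the four diagram chases and the bookkeeping for the splittings; the only mildly delicate point is ensuring that the resulting isomorphism \eqref{det:isom} is \emph{natural}, in the sense that it is independent of the chosen splittings. The standard way to handle this is to define the isomorphism intrinsically via the formula $(\gw_A\wedge_{\mathrm{lift}}\gw_C)/\!\sim$ where the equivalence is generated by permuting elements of $B$ along the splitting induced by \eqref{det:seq}; any two splittings differ by a unipotent change of basis on the quotient, which has determinant $1$. This naturality is what will allow us in the sequel (after Proposition \ref{can:isom}) to patch the pointwise isomorphisms into a genuine line-bundle isomorphism over the open set $\sU$ on which $K$ stabilizes $T$, so this technical point is the one worth writing out carefully.
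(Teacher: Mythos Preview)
Your proposal is correct and follows essentially the same approach as the paper: verify exactness of \eqref{det:seq} by direct diagram chase (the paper only spells out the step at $V$, calling the rest ``fairly trivial''), then invoke the general alternating-determinant isomorphism for an exact sequence of finite-dimensional spaces (the paper states this as a separate Lemma~\ref{det:gen}). Your additional discussion of naturality via unipotent base changes is compatible with the paper's treatment, which handles the bookkeeping through adapted bases and the Knudsen--Mumford sign conventions introduced immediately afterwards.
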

\begin{proof}
The existence of the isomorphism \eqref{det:isom} follows from
exactness of \eqref{det:seq} and Lemma \ref{det:gen} below.
Hence, we are left to show exactness of the sequence. Since this
is fairly trivial, we only mention that $P_V(\ker T_K) = \ker F$
because
\[
v\in P_V(\ker T_K)\;\Longleftrightarrow\; Kv\in \im T
\;\Longleftrightarrow\; F v = \Proj_{\coker T}\circ Kv =
0.\qedhere
\]
\end{proof}

\begin{lemma}\label{det:gen}
Let
\begin{equation}\label{det:gen:seq}
\begin{CD}
0 @>>> V_n @>{f_n}>> V_{n-1}  @>{f_{n-1}}>> \ldots @>{f_1}>> V_0
@>>> 0
\end{CD}
\end{equation}
be an exact sequence of finite dimensional vector spaces. Then
there exists a natural isomorphism
\begin{equation}\label{det:gen:isom}
\bigotimes_{2k \le n} \det V_{n-2k} \cong \bigotimes_{2k \le n-1}
\det V_{n-1-2k}
\end{equation}
\end{lemma}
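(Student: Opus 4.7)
The plan is to proceed by induction on $n$, reducing the general case to the well-known determinant isomorphism for a short exact sequence. Throughout, recall that for a short exact sequence $0\to U\to V\to W\to 0$ of finite dimensional vector spaces, there is a canonical isomorphism $\det V \cong \det U\otimes \det W$; it is induced by choosing bases $(u_i)$ of $U$ and $(w_j)$ of $W$, lifting the $w_j$ to elements $\tilde w_j\in V$, and mapping $(u_1\wedge\cdots\wedge u_p)\otimes(w_1\wedge\cdots\wedge w_q)$ to $u_1\wedge\cdots\wedge u_p\wedge\tilde w_1\wedge\cdots\wedge\tilde w_q$; a standard check shows this is independent of the lifts.

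For $n=1$ the sequence reads $0\to V_1\to V_0\to 0$, so $f_1$ is an isomorphism and $\det V_1\cong\det V_0$, which is exactly \eqref{det:gen:isom}. For $n=2$ the claim $\det V_2\otimes\det V_0\cong\det V_1$ is the short exact sequence statement recalled above. For the inductive step $n\ge 3$, set $W:=\ker f_1 = \im f_2$. Exactness of \eqref{det:gen:seq} produces two shorter exact sequences,
\[
0\longrightarrow W \longrightarrow V_1 \xrightarrow{f_1} V_0 \longrightarrow 0
\]
and
\[
0\longrightarrow V_n \xrightarrow{f_n}\cdots \xrightarrow{f_3} V_2 \xrightarrow{\bar f_2} W \longrightarrow 0,
\]
where $\bar f_2$ is $f_2$ with its codomain restricted to its image. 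The first yields the short-exact-sequence isomorphism $\det V_1\cong \det W\otimes \det V_0$, and the second has length $n-1$, so the induction hypothesis applies to it (with the roles of $V_0,\dots,V_{n-1}$ played by $W,V_2,\dots,V_n$).

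The rest is bookkeeping: substitute the two isomorphisms into $\bigotimes_{2k\le n-1}\det V_{n-1-2k}$ (the right hand side of \eqref{det:gen:isom}) and check that the factor of $\det W$ produced by the short exact sequence cancels against the factor of $\det W$ appearing in the inductive isomorphism for the shorter sequence, leaving exactly $\bigotimes_{2k\le n}\det V_{n-2k}$. The cancellation works out because $W$ sits in position $0$ of the shorter sequence, so it appears on the ``even-from-the-top'' side precisely when it appears on the ``odd-from-the-top'' side of the original sequence after insertion via $\det V_1\cong\det W\otimes\det V_0$; the parity shift is exactly compensated by the length decrease from $n$ to $n-1$.

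The main obstacle is not conceptual but notational: keeping track of which terms land on which side of the tensor product after the two substitutions. It is cleanest to reformulate the claim as saying that for any bounded exact sequence the ``alternating determinant'' $\bigotimes_k(\det V_k)^{(-1)^k}$ is canonically isomorphic to $\K$ (interpreting $(\det V)^{-1}:=(\det V)^*$), since this form is manifestly preserved under both the splitting into a short exact sequence and the passage to the shorter long exact sequence. Rearranging the resulting canonical trivialisation and dualising gives \eqref{det:gen:isom}.
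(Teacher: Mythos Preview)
Your inductive argument is correct. The splitting at $W=\ker f_1$ works exactly as you describe, and the parity bookkeeping is right: in the truncated sequence $0\to V_n\to\cdots\to V_2\to W\to 0$ the space $W$ sits in position $0$, which (after the shift of length by one) lands on the opposite side from $V_1$, and the substitution $\det V_1\cong\det W\otimes\det V_0$ then cancels the $\det W$ factor cleanly. Your reformulation via the alternating determinant $\bigotimes_k(\det V_k)^{(-1)^k}\cong\K$ is a good way to make this transparent.

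The paper, however, does not argue by induction. It writes down the isomorphism directly in one step: any element of the left-hand side can be expressed in the form
\[
\omega_n\otimes\bigl(f_{n-1}(\omega_{n-1})\wedge\omega_{n-2}\bigr)\otimes\cdots
\]
for suitable $\omega_i\in\Lambda^{c_i}V_i$ with $c_i=\dim(V_i/\ker f_i)$, and the isomorphism simply ``shifts the bracketing'' to
\[
\bigl(f_n(\omega_n)\wedge\omega_{n-1}\bigr)\otimes\bigl(f_{n-2}(\omega_{n-2})\wedge\omega_{n-3}\bigr)\otimes\cdots.
\]
The payoff of this explicit description is that the paper immediately extracts from it the notion of an \emph{adapted basis}, which is then used verbatim in the explicit formulae for $\Phi_K$ and in the sign computations of the orientation transport (Propositions on $\Phi_K$ and $\Phi_1$). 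Your inductive proof establishes existence and naturality just as well, but if you went on to use the lemma as the paper does, you would still have to unwind the induction to obtain the same explicit map; the paper's approach front-loads that work.
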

\begin{proof}[Sketch of proof]
Let $\eta_n\otimes \eta_{n-2}\otimes \ldots$ be an element of the
left-hand side of \eqref{det:gen:isom}. For each $i$ let $c_i:=
\dim(V_i /\ker f_i)$. It follows from exactness of
\eqref{det:gen:seq} that there exist $\go_i\in \gL^{c_i}V_i$ such
that
\begin{equation*}
\eta_n\otimes \eta_{n-2}\otimes \ldots\; =\; \go_n \otimes
(f(\go_{n-1})\wedge \go_{n-2})\otimes \ldots
\end{equation*}
Then
\[
\go_n \otimes (f(\go_{n-1})\wedge \go_{n-2})\otimes \ldots
\longmapsto (f(\go_n)\wedge \go_{n-1})\otimes (f(\go_{n-2})\wedge
\go_{n-3})\otimes \ldots
\]
gives the desired isomorphism. It is routine to check that this
isomorphism does not depend on the particular choices of
$\go_n,\ldots,\go_0$.
\end{proof}

\begin{dfn}
We call a basis of the form
\[
\go_n \otimes (f(\go_{n-1})\wedge \go_{n-2})\otimes \ldots \in
\bigotimes_{2k \le n} \det V_{n-2k}
\]
an \emph{adapted basis} associated to the sequence
\eqref{det:gen:seq}.
\end{dfn}

\begin{example*}
Let us illustrate the independence of the adapted basis in the
case $n=2$, i.e., for a short exact sequence
\[
\begin{CD}
0 @>>> V_2  @>{f_2}>> V_1  @>{f_1}>> V_0 @>>> 0.
\end{CD}
\]
Let $\eta_2\otimes \eta_0\in \det V_2\otimes \det V_0$ and choose
$\go_i, \go_i' \in \gL^\bullet V_i$ such that
\[
\eta_2\otimes \eta_0 = \go_2\otimes f_1(\go_1) = \go_2'\otimes
f_1(\go_1').
\]
If this expression is nonzero---what we will assume
henceforth---there exists $\gl\in\K^*$ such that $\go_2'=\gl\cdot
\go_2$ and $f_1(\go_1')=\gl^{-1}\cdot f_1(\go_1')$. Note that
this does \emph{not} imply that $\go_1'=\gl^{-1}\cdot\go_1$ since
$\go_1$ and $\go_1'$ might span determinant lines of different
complements to $\ker f_1$. Yet, writing
$\go_2=u_1\wedge\ldots\wedge u_{c_2}$,
$\go_1=v_1\wedge\ldots\wedge v_{c_1}$, and
$\go_1'=v_1'\wedge\ldots\wedge v_{c_1}'$, we obtain two ordered
bases of $V_1$,
\[
\big(f_2(u_1),\ldots, f_2(u_{c_2}), v_1,\ldots, v_{c_1}\big)\quad
\text{and}\quad \big(f_2(u_1),\ldots, f_2(u_{c_2}), v_1',\ldots,
v_{c_1}'\big).
\]
The corresponding transition matrix has the form
\[
\begin{pmatrix} 1 & 0\\ * & A \end{pmatrix},
\]
and thus, $f_2(\go_2)\wedge \go_1'=\det(A)\cdot f_2(\go_2)\wedge
\go_1$. Moreover, since $f_1(\go_1')=\gl^{-1}\cdot f_1(\go_1)$ and
$f_2(u_i)\in \ker f_1$, it follows that $\det A=\gl^{-1}$. Then
\begin{align*}
f_2(\go_2')\wedge \go_1' &= f_2(\gl\cdot \go_2)\wedge \go_1' =
\gl\cdot f_2(\go_2)\wedge \go_1'\\ &= \gl\cdot (\det A)\cdot
f_2(\go_2)\wedge \go_1 = f_2(\go_2)\wedge \go_1.\\
\end{align*}
\end{example*}

\noindent\textbf{Sign conventions.} Recall that for any
1-dimensional $\K$-vector spaces $L$ and $L'$, there are canonical
isomorphisms
\begin{equation}\label{can:isoms}
\begin{split}
 L\otimes L'\; \cong\; L'\otimes L, \quad u\otimes u'\longmapsto
 u'\otimes u  \\
 L^*\otimes L\;\cong\; \K,\quad u^*\otimes v\longmapsto u^*[v].
\end{split}
\end{equation}
With these rules it is easy to get \eqref{can:isom:prev} from
\eqref{det:isom}. However, there are some subtleties concerning
signs. In Appendix \ref{app:SF&OT} we want to extract a sign, the
so-called \emph{orientation transport}, from the determinant line
bundle of a path of self-adjoint Fredholm operators. For this
reason we have to be very careful about how to deal with signs. We
shall use the Knudsen-Mumford sign conventions \cite{KnuMum:Det}
which we recall now (see also Nicolaescu \cite{Nic:Tor},
Sec.~1.2).

Let $V$ be a finite dimensional $\K$-vector space. Then the
determinant line $\det V$ carries a natural \emph{weight}, namely
the natural number $\dim V$. The general concept lying behind this
is the following:

\begin{dfn}
Let $L$ be a 1-dimensional vector space, and let $w\in\Z$. Then
the tuple $(L,w)$ is called a \emph{weighted line}. We define
\[
(L,w)^* := (L^*,-w),
\]
and, if $(L',w')$ is another weighted line,
\[
(L,w)\otimes (L',w'):= (L\otimes L', w+w').
\]
From now on we consider a determinant line $\det V$ as a weighted
line with weight $\dim V$.
\end{dfn}

In the context of weighted lines $(L,w)$ and $(L',w')$, the
canonical isomorphisms \eqref{can:isoms} are altered in the
following way:
\begin{equation}\label{sign:conv}
\begin{split}
 L\otimes L'\; \cong\; L'\otimes L, \quad u\otimes u'\longmapsto
(-1)^{ww'} u'\otimes u \\
 L^*\otimes L\;\cong\; \K,\quad u^*\otimes v\longmapsto (-1)^{\frac
{w(w-1)}2} u^*[v].
\end{split}
\end{equation}
These are the so-called \emph{Knudsen-Mumford sign conventions}.
For the remaining part of this appendix it is understood that we
are using \eqref{sign:conv}.

\begin{example*}
The application to Lemma \ref{det} in mind, we shall have a closer
look at a four term exact sequence
\[
\begin{CD}
0 @>>> V_3 @>{f_3}>> V_2  @>{f_2}>> V_1  @>{f_1}>> V_0 @>>> 0.
\end{CD}
\]
Then the isomorphism $\det V_3\otimes \det V_1\cong\det
V_2\otimes \det V_0$ together with \eqref{sign:conv} shows that
\[
\begin{split}
\det V_3\otimes (\det V_0)^*& \cong \det V_3\otimes \det
V_1\otimes (\det V_1)^*\otimes (\det V_0)^*\\ &\cong \det V_2
\otimes \det V_0 \otimes (\det V_1)^*\otimes (\det V_0)^*\\
&\cong \det V_2\otimes (\det V_1)^* \otimes \det V_0\otimes (\det
V_0)^* \\ &\cong \det V_2\otimes (\det V_1)^*
\end{split}
\]
Taking the sign conventions into account, one checks that, in
terms of an adapted basis
\[
\go_3\otimes (f_2(\go_2)\wedge \go_1) \in \det V_3\otimes \det
V_1,
\]
the induced isomorphism $\det V_3\otimes (\det V_0)^*\cong \det
V_2\otimes (\det V_1)^*$ is given by
\[
\go_3\otimes (f_1(\go_1))^* \longmapsto
(-1)^{\frac{(n_0+n_1)(n_0+n_1+1)}2}(f_3(\go_3)\wedge \go_2)
\otimes (f_2(\go_2)\wedge \go_1)^*,
\]
where $n_i:=\dim V_i$, and $(\ldots)^*$ denotes the operation of
taking the dual.
\end{example*}

Translating this example to the situation of Lemma \ref{det}, we
obtain the result we were aiming at in \eqref{can:isom:prev}:

\begin{prop}\label{can:isom}
Let $T\in\sF(H_1,H_2)$ and let $K:V\to H_2$ be a stabilizer of
$T$. Define $F:=\Proj_{\coker T}\circ K$. Then there is a natural
isomorphism
\[
\gF_K:\det(\ker T)\otimes \big(\det(\coker T)\big)^*\to \det(\ker
T_K)\otimes (\det V)^*.
\]
This isomorphism is uniquely defined in the following way. If
\[
\xi \otimes (P_V(\eta)\wedge \go) \in \det(\ker T)\otimes (\det
V),
\]
is an adapted basis associated to the sequence \eqref{det:seq},
then
\begin{equation}\label{can:isom:expl}
\gF_K\big(\xi\otimes (F(\go))^*\big)=
(-1)^{\frac{(n_0+n_1)(n_0+n_1+1)}2} (\xi\wedge \eta)\otimes
(P_V(\eta)\wedge \go)^*,
\end{equation}
where $n_0:=\dim(\coker T)$ and $n_1:=\dim V$.
\end{prop}

The advantage of regarding determinant lines as weighted lines
together with the above sign conventions is that the isomorphism
\eqref{det:gen:isom} behaves functorial with respect to morphisms
of exact sequences. Instead of going into further detail in the
abstract setting, we restrict to the application to determinant
lines of Fredholm operators.

Let $K_1:V_1\to H_2$ and $K_2:V_2\to H_2$ be stabilizers of
$T\in\sF(H_1,H_2)$. Then $K_1+K_2:V_1\oplus V_2\to H_2$ is also a
stabilizer of $T$. Hence, Proposition \ref{can:isom} yields
isomorphisms
\begin{align*}
\gF_{K_1+K_2}:\det(\ker T)\otimes (\det(\coker T))^* &\to
\det(\ker
T_{K_1+K_2})\otimes (\det(V_1\oplus V_2))^*\\
\intertext{and, for $i=1,2$,} \gF_{K_i}:\det(\ker T)\otimes
(\det(\coker T))^* &\to \det(\ker T_{K_i}) \otimes (\det V_i)^*.
\end{align*}
The next result shows that these isomorphisms are naturally
related.

\begin{prop}\label{can:isom:rel}
Let $K_1:V_1\to H_2$ and $K_2:V_2\to H_2$ be stabilizers of
$T\in\sF(H_1,H_2)$. Then, for $i\in\{1,2\}$, there exists a
natural isomorphisms
\[
\gF_i: \det(\ker T_{K_i}) \otimes (\det V_i)^*\to \det(\ker
T_{K_1+K_2})\otimes (\det(V_1\oplus V_2))^*
\]
such that the following diagram commutes.
\begin{equation}\label{can:isom:rel:diag}
\begindc[1]
\obj(74,1){$\det(\ker T)\otimes \det(\coker T)^*$}[T]
\obj(74,150){$\det (\ker T_{K_1+K_2})\otimes\det(V_1\oplus
V_2)^*$}[K1K2] \obj(1,75){$\det(\ker T_{K_1}) \otimes (\det
V_1)^*$}[K1] \obj(148,75){$\det(\ker T_{K_2}) \otimes (\det
V_2)^*$}[K2] \mor{T}{K1}{$\gF_{K_1}$}
\mor{T}{K2}{$\gF_{K_2}$}[\atright,\solidarrow]
\mor{K1}{K1K2}{$\gF_1$}
\mor{K2}{K1K2}{$\gF_2$}[\atright,\solidarrow] \mor{T}{K1K2}{}
\enddc
\end{equation}
\end{prop}

\begin{proof}
We show the assertion for the left triangle of
\eqref{can:isom:rel:diag}. It is easy to check that, with
$F_i:=\Proj_{\coker T}\circ K_i$,
\[
\begin{CD}
    @.  @.   0   @.    0     @.   @.\\
  @.  @.   @AAA     @AAA     @.      @.\\
    @.     0    @>>> V_2     @>{\id}>>     V_2    @>>>    0    @.\\
  @.     @AAA  @A{P_{V_2}}AA      @A{P_{V_2}}AA     @AAA   @.\\
  0 @>>> \ker T @>>> \ker T_{K_1+K_2}  @>{P_{V_1\oplus V_2}}>>
  V_1\oplus V_2 @>{F_1+F_2}>> \coker T @>>> 0 \\
  @.     @A{\id}AA   @AAA @AAA @A{\id}AA  @.\\
  0 @>>> \ker T @>>> \ker T_{K_1}   @>{P_{V_1}}>>  V_1  @>{F_1}>>
  \coker T @>>> 0 \\
  @.     @AAA        @AAA         @AAA     @AAA   @.\\
    @.     0    @.   0     @.    0    @.   0   @.\\
\end{CD}
\]
is a commutative diagram with exact rows and columns. Note that
the first vertical short exact sequence stems from the fact that
$T_{K_1+K_2}$ is a stabilizer of $T_{K_1}$. It yields that
\begin{equation}\label{can:isom:rel:1}
\det(\ker T_{K_1})\otimes \det V_2 \cong \det(\ker T_{K_1+K_2}).
\end{equation}
Moreover, we get from the second vertical short exact sequence
that
\begin{equation}\label{can:isom:rel:2}
\det V_1\otimes \det V_2 \cong \det(V_1\oplus V_2).
\end{equation}
Now, we deduce from these two isomorphisms that the isomorphism
$\gF_1$ we are looking for is naturally given by the composition
\[
\begin{split}
\det(\ker T_{K_1}) &\otimes (\det V_1)^* \\
&\cong \det(\ker T_{K_1}) \otimes (\det V_1)^* \otimes \det
V_2\otimes (\det V_2)^*\\
&\cong \det(\ker T_{K_1})\otimes \det V_2 \otimes (\det
V_1)^*\otimes (\det V_2)^*\\
&\cong \det(\ker T_{K_1+K_2}) \otimes \big(\det(V_1\oplus
V_2)\big)^*.
\end{split}
\]
Note that in the last line we have employed that $(L_1\otimes
L_2)^*\cong L_1^*\otimes L_2^*$ for any two weighted lines $L_1$
and $L_2$.

To give an explicit description of $\gF_1$, consider adapted
bases,
\[
\eta_1\otimes P_{V_2}(\eta_2)\in \det (\ker T_{K_1})\otimes \det
V_2
\]
associated to \eqref{can:isom:rel:1}, and
\[
\go_1\otimes P_{V_2}(\go_2)\in \det V_1 \otimes \det V_2
\]
associated to \eqref{can:isom:rel:2}. Note that we may take
$\go_2:=P_{V_1\oplus V_2}(\eta_2)$. This follows from
commutativity of the big diagram which particularly shows that
\begin{equation}\label{can:isom:rel:3}
P_{V_2}(\go_2)= P_{V_2}\circ P_{V_1\oplus
V_2}(\eta_2)=P_{V_2}(\eta_2).
\end{equation}
Using the sign conventions \eqref{sign:conv}, and letting
$n_1:=\dim V_1$ and $n_2:=\dim V_2$, one readily checks that
$\gF_1$ is now given by
\begin{equation}\label{phi1}
\begin{split}
\gF_1(\eta_1 \otimes \go_1^*) &= (-1)^{n_1n_2+\frac{n_2(n_2+1)}2}
\cdot (\eta_1 \wedge\eta_2 )\otimes \big(\go_1 \wedge
P_{V_2}(\eta_2)\big)^*\\
&= (-1)^{n_1n_2+\frac{n_2(n_2+1)}2} \cdot (\eta_1 \wedge\eta_2
)\otimes \big(\go_1 \wedge P_{V_1\oplus V_2}(\eta_2)\big)^*,
\end{split}
\end{equation}

It remains to check that $\gF_1\circ \gF_{K_1} = \gF_{K_1+K_2}$.
To use the explicit description \eqref{can:isom:expl} of
$\gF_{K_1}$, we consider an adapted basis
\[
\xi \otimes \big(P_{V_1}(\eta_1')\wedge \go_1'\big) \in \det(\ker
T)\otimes (\det V_1)
\]
so that
\[
\gF_{K_1}\big(\xi\otimes F_1(\go_1')^*\big) =
(-1)^{\frac{(n_0+n_1)(n_0+n_1+1)}2}(\xi\wedge \eta_1')\otimes
\big(P_{V_1}(\eta_1')\wedge \go_1'\big)^*,
\]
where $n_0:=\dim(\coker T)$. Letting $\eta_1:= \xi\wedge \eta_1'$
and $\go_1:= P_{V_1}(\eta_1')\wedge \go_1'$ in \eqref{phi1}, we
apply $\gF_1$ to this and get
\begin{multline*}
\gF_1\circ \gF_{K_1}\big(\xi\otimes F(\go_1')^*\big)=
(-1)^{\frac{(n_0+n_1)(n_0+n_1+1)}2+n_1n_2+\frac{n_2(n_2+1)}2} \\
\cdot \big((\xi\wedge \eta_1')\wedge\eta_2 \big)\otimes
\big((P_{V_1}(\eta_1')\wedge \go_1') \wedge
P_{V_2}(\eta_2)\big)^*.
\end{multline*}
To compute $\gF_{K_1+K_2}\big(\xi\otimes F(\go_1')^*\big)$, first
note that
\[
\xi\otimes \big(P_{V_1\oplus V_2}(\eta_1'\wedge\eta_2)\wedge
\go_1'\big)  = \xi \otimes \big(P_{V_1}(\eta_1')\wedge
P_{V_2}(\eta_2)\wedge \go_1'\big)
\]
is an adapted basis associated to the exact sequence
\eqref{det:seq} in the case $K=K_1+K_2$. Since $(F_1+F_2)(\go_1')
= F_1(\go_1')$,
\begin{multline*}
\gF_{K_1+K_2}\big(\xi\otimes F_1(\go_1')^*\big)
=(-1)^{\frac{(n_0+n_1+n_2)(n_0+n_1+n_2+1)}2}\\
\cdot\big(\xi\wedge (\eta_1'\wedge \eta_2)\big)\otimes
\big(P_{V_1}(\eta_1')\wedge P_{V_2}(\eta_2)\wedge \go_1'\big)^*.
\end{multline*}
Since $\eta_2\in \gL^{n_2}(\ker T_{K_1+K_2})$ and $\go_1'\in
\gL^{n_0}V_1$,
\begin{equation}\label{sign:must}
P_{V_1}(\eta_1')\wedge \go_1' \wedge P_{V_2}(\eta_2) = (-1)^{n_0
n_2}\cdot P_{V_1}(\eta_1')\wedge P_{V_2}(\eta_2)\wedge \go_1'.
\end{equation}
Therefore, to prove that $\gF_1\circ \gF_{K_1}\big(\xi\otimes
F(\go_1')^*\big)$ and $\gF_{K_1+K_2}\big(\xi\otimes
F_1(\go_1')^*\big)$ are equal it remains to observe that
\[
(-1)^{\frac{(n_0+n_1)(n_0+n_1+1)}2+n_1n_2+\frac{n_2(n_2+1)}2} =
(-1)^{\frac{(n_0+n_1+n_2)(n_0+n_1+n_2+1)}2 + n_0 n_2}.\qedhere
\]
\end{proof}
\begin{remark*}
We have been so explicit in the last proof to show that the sign
conventions are essential. If we had rather used the isomorphisms
of \eqref{can:isoms}, we would have had a problem with signs.
This is because the factor $(-1)^{n_0n_2}$ in \eqref{sign:must}
would not have cancelled out.\\
\end{remark*}

\noindent\textbf{The determinant line bundle.} The considerations
in the last paragraph enable us to define the determinant line
bundle over the space of Fredholm operators.

\begin{theorem}\label{det:line}
There exists a canonical line bundle $Det \to \sF$ with fibres
$Det_T = \det T$ and the following property: For every finite
dimensional Hilbert space $V$ and every open subset
$\sU\subset\sF$ such that there exists a stabilizer
$K:\sU\to\sL(V,H_2)$, the fibrewise isomorphisms
\begin{equation}\label{det:line:1}
\gF_{K,T}:\det T \longrightarrow \det(\ker T_K)\otimes (\det
V)^*,\quad T\in\sU,
\end{equation}
given by Proposition \ref{can:isom} yield a well-defined bundle
isomorphism
\begin{equation*}
\gF_K: Det|_{\sU} \longrightarrow \det(Ker_K)\otimes(\det
V)^*|_{\sU}.
\end{equation*}
\end{theorem}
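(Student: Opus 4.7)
The plan is to define $Det\to\sF$ by gluing together the local models $\det(Ker_K)\otimes(\det V)^*$ over open sets admitting stabilizers, and to use Proposition~\ref{can:isom:rel} to produce the required transition isomorphisms. First, Lemma~\ref{Stab:exists} yields an open cover $\{\sU_\alpha\}$ of $\sF$ together with constant stabilizers $K_\alpha:V_\alpha\hookrightarrow H_2$ (with $V_\alpha$ a finite dimensional subspace of $H_2$). Over each $\sU_\alpha$ the map $T\mapsto T_{K_\alpha}$ is a continuous family of surjective Fredholm operators of constant index $\ind T+\dim V_\alpha$, so the assignment $T\mapsto\ker T_{K_\alpha}$ defines a finite-dimensional vector bundle $Ker_{K_\alpha}\to\sU_\alpha$ (the kernels depend continuously on $T$ by the usual Fredholm perturbation arguments). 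Consequently $L_\alpha:=\det(Ker_{K_\alpha})\otimes(\det V_\alpha)^*\to\sU_\alpha$ is a genuine continuous line bundle.

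The idea is then to declare the $L_\alpha$ to be local trivializations of $Det$ via the fibrewise isomorphisms $\gF_{K_\alpha,T}$ of Proposition~\ref{can:isom}. To glue, I would define the transition over $\sU_\alpha\cap\sU_\beta$ as follows. Consider the combined constant stabilizer $K_\alpha+K_\beta:V_\alpha\oplus V_\beta\to H_2$; this is still a stabilizer over $\sU_\alpha\cap\sU_\beta$, giving the auxiliary line bundle $L_{\alpha\beta}:=\det(Ker_{K_\alpha+K_\beta})\otimes(\det(V_\alpha\oplus V_\beta))^*$. Proposition~\ref{can:isom:rel} applied fibrewise yields isomorphisms $\gF_1:L_\alpha|_{\sU_\alpha\cap\sU_\beta}\to L_{\alpha\beta}$ and $\gF_2:L_\beta|_{\sU_\alpha\cap\sU_\beta}\to L_{\alpha\beta}$ making the triangle \eqref{can:isom:rel:diag} commute, and I would set
\[
g_{\beta\alpha}:=\gF_2^{-1}\circ\gF_1:L_\alpha|_{\sU_\alpha\cap\sU_\beta}\longrightarrow L_\beta|_{\sU_\alpha\cap\sU_\beta}.
\]
Commutativity of \eqref{can:isom:rel:diag} immediately tells us that $\gF_{K_\beta,T}=g_{\beta\alpha,T}\circ\gF_{K_\alpha,T}$ on fibres, so that the proposed gluing is the only one compatible with the prescribed fibrewise identifications $\gF_{K,T}$.

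The remaining two issues are continuity of $g_{\beta\alpha}$ in $T$ and the cocycle condition. For continuity, I would note that the explicit formula \eqref{can:isom:expl} in Proposition~\ref{can:isom} expresses $\gF_{1,T}$ and $\gF_{2,T}$ algebraically in terms of the subspaces $\ker T_{K_\alpha},\ker T_{K_\beta},\ker T_{K_\alpha+K_\beta}\subset H_1\oplus V_\alpha\oplus V_\beta$ and the fixed projections $P_{V_\alpha},P_{V_\beta}$; since these kernels are the fibres of the continuous subbundles $Ker_{K_\alpha},Ker_{K_\beta},Ker_{K_\alpha+K_\beta}$, the transition is continuous. For the cocycle condition on triple overlaps $\sU_\alpha\cap\sU_\beta\cap\sU_\gamma$, I would form the ``total'' stabilizer $K_\alpha+K_\beta+K_\gamma:V_\alpha\oplus V_\beta\oplus V_\gamma\to H_2$ and apply Proposition~\ref{can:isom:rel} twice (once for each pairing with the total stabilizer) to produce a common line bundle $L_{\alpha\beta\gamma}$ together with isomorphisms from each of $L_\alpha,L_\beta,L_\gamma,L_{\alpha\beta},L_{\beta\gamma},L_{\alpha\gamma}$ into it; the cocycle identity $g_{\gamma\alpha}=g_{\gamma\beta}\circ g_{\beta\alpha}$ will then fall out of the naturality encoded in that proposition. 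Granting these compatibilities, the local bundles $L_\alpha$ glue to a global line bundle $Det\to\sF$ whose fibre at $T$ is canonically $\det T$, and for any stabilizer $K$ over an open $\sU\subset\sF$ the map $\gF_K$ assembled from the $\gF_{K,T}$ is then, by the very construction, a trivialization $Det|_{\sU}\cong\det(Ker_K)\otimes(\det V)^*$.

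The main obstacle I expect is verifying the triple-overlap cocycle identity carefully with the Knudsen-Mumford signs; the bilateral argument above is conceptually clean, but bookkeeping the signs coming from \eqref{sign:conv} when passing from $L_\alpha\to L_{\alpha\beta}\to L_{\alpha\beta\gamma}$ versus $L_\alpha\to L_{\alpha\gamma}\to L_{\alpha\beta\gamma}$ is where an honest proof has to do work---this is precisely the reason the weight conventions were introduced in the first place, and verifying their functoriality here is what makes the construction canonical.
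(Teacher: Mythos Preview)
Your approach is correct and essentially identical to the paper's: cover $\sF$ by opens admitting stabilizers, use the local models $\det(Ker_K)\otimes(\det V)^*$, and invoke Proposition~\ref{can:isom:rel} to define the transitions $\gF_2^{-1}\circ\gF_1$. Your concern about the triple-overlap cocycle and its attendant sign bookkeeping is unfounded, however: since the fibres $\det T$ are defined \emph{a priori} and you already observed that $g_{\beta\alpha,T}=\gF_{K_\beta,T}\circ\gF_{K_\alpha,T}^{-1}$, the cocycle identity is automatic---both $g_{\gamma\alpha,T}$ and $g_{\gamma\beta,T}\circ g_{\beta\alpha,T}$ equal $\gF_{K_\gamma,T}\circ\gF_{K_\alpha,T}^{-1}$, with no further signs to chase. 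The paper exploits exactly this by pulling back the line bundle structure to the pre-existing set $\bigcup_T\det T$ and checking only the double-overlap compatibility; the Knudsen--Mumford signs have already done their work inside Proposition~\ref{can:isom:rel}.
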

\begin{proof}
Let $T_0\in \sF$, and consider a stabilizer $K$ over some open
neighbourhood $\sU$ of $T_0$. Then we use the isomorphisms
\eqref{det:line:1} to pull back the line bundle structure of
$\det(Ker_K)\otimes(\det V)^*|_{\sU}$ to $\bigcup_{T\in\sU} \det
T$. Note that according to Lemma \ref{Stab:exists}, we can cover
$\bigcup_{T\in\sF} \det T$ in this way. Yet, we still have to
prove that the thus obtained line bundles patch together and do
not depend on the stabilizers we have chosen. For this it
suffices to check that given two stabilizers
$K_1:\sU\to\sL(V_1,H_2)$ and $K_2:\sU\to\sL(V_2,H_2)$ over a
common subset $\sU\subset \sF$, there exists a bundle isomorphism
\[
\gF_{12}: \det(Ker_{K_1})\otimes(\det V_1)^*|_{\sU} \to
\det(Ker_{K_2})\otimes(\det V_2)^*|_{\sU}
\]
such that for every $T\in\sU$ the following diagram commutes:
\[
\begindc[1]
\obj(89,1){$\det T$}[T] \obj(1,90){$\det(\ker T_{K_1}) \otimes
(\det V_1)^*$}[K1] \obj(178,90){$\det(\ker T_{K_2}) \otimes (\det
V_2)^*$}[K2] \mor{T}{K1}{$\gF_{K_1,T}$}
\mor{T}{K2}{$\gF_{K_2,T}$}[\atright,\solidarrow]
\mor{K1}{K2}{$\gF_{12,T}$}
\enddc.
\]
It is immediately clear from \eqref{can:isom:rel:diag} that in
the notation of Proposition \ref{can:isom:rel}, letting $\gF_{12}
:= \gF_2^{-1} \circ \gF_1$ makes the above diagram commutative.
Moreover, the explicit formula $\eqref{phi1}$ shows that the
collection of fibrewise maps yields, in fact, a {\em bundle}
isomorphism $\gF_{12}$. This completes the proof.
\end{proof}

\begin{dfn}
Let $T:X\to\sF(H_1,H_2)$ be a continuous family of Fredholm
operators, where $X$ is an arbitrary topological space. Then the
pullback bundle
\[
\det T:= T^*(Det)\longrightarrow X
\]
is called the {\em determinant line bundle of the family $T$}. If
$U\subset X$ is an open set such that there exists a stabilizer
$K:T(U)\to \sL(H_1\oplus V,H_2)$ we shall call $K\circ T$ a {\em
stabilizer of $T$ over $U$} (and usually denote it also by $K$).
\end{dfn}

\begin{remark*}
In particular, the line bundle $\det T\to X$ is defined in the
following setting: Let $\{T_x\}_{x\in X}$ be a family of closed,
densely defined Fredholm operators in a separable Hilbert space
$H$ such that $\dom T_x=W$ is independent of $x$. Moreover, we
assume that the induced graph norms on $W$ are all equivalent.
Then $T$ can be regarded as a map $T:X\to \sF(W,H)$, and thus the
above definition applies provided that the map is continuous.
\end{remark*}

Now suppose that $T:X\to \sF(H_1,H_2)$ is continuous where $X$ is
{\em compact}. According to Lemma \ref{Stab:exists}, we can cover
$X$ with finitely many $U_i$ such that there exist finite
dimensional subspaces $V_i\subset H_2$ such that
$K_i:=(V_i\hookrightarrow H_2)$ is a stabilizer over $U_i$. It is
then clear that letting $V:=\bigoplus_i V_i$ we may take
$V\hookrightarrow H_2$ as a stabilizer over $X$. Hence,

\begin{lemma}\label{stab:compact}
If $X$ is compact and $T:X\to \sF(H_1,H_2)$ is continuous, then
there exists a finite dimensional subspace $V\subset H_2$ such
that the constant $K:=(V\hookrightarrow H_2)$ defines a
stabilizer of $T$ over $X$. In particular, the determinant line
bundle $\det T\to X$ is globally isomorphic to
\[
\det(\ker T_K)\otimes (\det V)^*\longrightarrow X.
\]
\end{lemma}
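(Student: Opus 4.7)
The plan is to proceed along the lines suggested in the paragraph preceding the statement: produce the stabilizer by a compactness argument, then invoke Theorem \ref{det:line} to get the global trivialization for free. I would first apply Lemma \ref{Stab:exists} pointwise: for each $T_0 \in T(X) \subset \sF(H_1,H_2)$ there exist a finite-dimensional subspace $V_{T_0} \subset H_2$ and an open neighbourhood $\sW_{T_0} \subset \sF(H_1,H_2)$ of $T_0$ on which the inclusion $V_{T_0} \hookrightarrow H_2$ is a constant stabilizer. Pulling back by the continuous map $T$, the sets $U_{T_0} := T^{-1}(\sW_{T_0})$ form an open cover of $X$. Compactness of $X$ gives a finite subcover $U_1,\dots,U_n$, with corresponding finite-dimensional subspaces $V_1,\dots,V_n \subset H_2$.

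Next, I would define $V := V_1 + \cdots + V_n \subset H_2$ (a finite-dimensional subspace, since it is a finite sum of finite-dimensional ones) and take $K := (V \hookrightarrow H_2)$ as a candidate global stabilizer. To verify this, fix $x \in X$; then $x \in U_i$ for some $i$, so by construction $(T_x)_{K_i}\colon H_1 \oplus V_i \to H_2$ is already surjective. Since $V \supset V_i$, the image of $(T_x)_K\colon H_1 \oplus V \to H_2$ contains the image of the restriction to $H_1 \oplus V_i$ and is therefore all of $H_2$. Hence $K$ is a stabilizer of $T$ over all of $X$.

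Finally, the asserted global isomorphism $\det T \cong \det(\ker T_K) \otimes (\det V)^*$ of line bundles over $X$ is an immediate consequence of Theorem \ref{det:line}: on any open subset admitting a stabilizer $K$ with constant target $V$, the fibrewise isomorphisms $\gF_{K,T_x}$ assemble into a bundle isomorphism between the restriction of $Det$ and $\det(\ker T_K)\otimes (\det V)^*$; since here $K$ stabilizes $T$ over all of $X$, this bundle isomorphism is defined globally.

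I do not anticipate any real obstacle here. The one cosmetic issue is that $V_1 + \cdots + V_n$ need not be an internal direct sum inside $H_2$, but this is irrelevant for the stabilizer condition, which only demands surjectivity of $T_K$; the Hilbert space structure on $V$ can be taken to be any inner product, e.g.\ the one inherited from $H_2$. The heart of the argument is simply the elementary observation that enlarging the auxiliary space in the definition of a stabilizer preserves surjectivity, combined with compactness of $X$.
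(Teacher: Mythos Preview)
Your proposal is correct and follows essentially the same approach as the paper, whose proof appears in the paragraph immediately preceding the lemma: cover $X$ via Lemma~\ref{Stab:exists}, extract a finite subcover by compactness, and take $V$ to be the span (the paper writes $\bigoplus_i V_i$) of the local stabilizing subspaces. Your version is more explicit about why enlarging $V$ preserves surjectivity and about invoking Theorem~\ref{det:line} for the global isomorphism, but the argument is the same.
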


\chapter{Spectral Flow and Orientation Transport}\label{app:SF&OT}

In this appendix, we summarize the definition and elementary
properties of the spectral flow assigned to a path of
self-adjoint elliptic operators. We shall not attempt to give the
most general definition and restrict ourselves to the special
cases which will actually occur in the applications we have in
mind. However, this requires some nontrivial considerations about
how the spectrum changes along a path of self-adjoint operators.
We will only state the results we need referring to the literature
for proofs.

In the second part we shall then see how to extract a
number---the so-called orientation transport---from the
determinant line bundle of a path of self-adjoint operators. We
will then prove a formula relating this notion with the spectral
flow. Due to the conventions we have used in Appendix
\ref{app:det} there are some subtleties about signs. Therefore, we
include all proofs concerning the orientation transport.

\section{Spectral flow}\label{app:SF}

The spectral flow is assigned to a continuous path of bounded
self-adjoint Fredholm operators by counting with multiplicity the
number of eigenvalues passing through zero in the positive
direction (see Fig. \ref{sf:1}). The original definition due to
M. Atiyah and G. Lusztig is via the intersection number of the
spectrum's graph with the zero line (cf. Atiyah et al.
\cite{AtiPatSin:SAR}). To get a rigorous definition one possibly
has to perturb the path slightly in order to make certain that
the intersection number is well-defined.

\begin{remark*}
Nowadays there are more satisfactory, yet logically equivalent
definitions of the spectral flow. J. Phillips' approach
\cite{Phi:SF} is perhaps the most appealing. An extensive
treatment for paths of unbounded self-adjoint operators can be
found in Boo\ss-Bavnbek et al. \cite{BoLePhi:SF}. Nevertheless,
we will follow the original ideas of Atiyah since they are better
suited for the actual computations which occur in this thesis.\\
\end{remark*}

\begin{figure}
\begin{center}
\setlength{\unitlength}{0.0003in}
\begingroup\makeatletter\ifx\SetFigFont\undefined%
\gdef\SetFigFont#1#2#3#4#5{%
  \reset@font\fontsize{#1}{#2pt}%
  \fontfamily{#3}\fontseries{#4}\fontshape{#5}%
  \selectfont}%
\fi\endgroup%
{\renewcommand{\dashlinestretch}{30}
\begin{picture}(9344,7239)(0,-10)
\texture{0 0 0 888888 88000000 0 0 80808
    8000000 0 0 888888 88000000 0 0 80808
    8000000 0 0 888888 88000000 0 0 80808
    8000000 0 0 888888 88000000 0 0 80808 }
\color{white}
\shade\path(22,7212)(9322,7212)(9322,6912)
    (8872,6762)(7747,6687)(6847,6762)
    (5647,6612)(5122,6537)(3997,6537)
    (2797,6687)(1372,6612)(247,6687)
    (172,6687)(22,7212)
\path(22,7212)(9322,7212)(9322,6912)
    (8872,6762)(7747,6687)(6847,6762)
    (5647,6612)(5122,6537)(3997,6537)
    (2797,6687)(1372,6612)(247,6687)
    (172,6687)(22,7212)
\shade\path(22,12)(9322,12)(9247,387)
    (8497,312)(7372,462)(5122,387)
    (3097,462)(1222,312)(397,462)
    (22,312)(22,12)
\path(22,12)(9322,12)(9247,387)
    (8497,312)(7372,462)(5122,387)
    (3097,462)(1222,312)(397,462)
    (22,312)(22,12)
\thicklines
\color{gray2}
\path(22,5112)(23,5112)(24,5111)
    (26,5109)(30,5106)(35,5102)
    (42,5096)(52,5089)(63,5080)
    (77,5069)(93,5057)(112,5042)
    (133,5026)(157,5007)(183,4987)
    (211,4965)(241,4942)(273,4916)
    (307,4890)(343,4861)(380,4832)
    (419,4801)(459,4770)(500,4737)
    (542,4703)(585,4669)(630,4633)
    (675,4597)(720,4560)(767,4522)
    (815,4483)(863,4443)(913,4402)
    (964,4359)(1016,4316)(1069,4271)
    (1123,4225)(1179,4178)(1237,4129)
    (1296,4078)(1357,4025)(1419,3971)
    (1484,3914)(1549,3856)(1616,3797)
    (1684,3736)(1753,3674)(1822,3612)
    (1893,3547)(1963,3483)(2030,3420)
    (2093,3360)(2153,3302)(2208,3248)
    (2259,3197)(2305,3149)(2346,3105)
    (2382,3064)(2414,3026)(2441,2992)
    (2464,2961)(2483,2932)(2499,2907)
    (2512,2883)(2522,2862)(2530,2842)
    (2536,2824)(2540,2807)(2543,2792)
    (2545,2777)(2547,2762)(2549,2748)
    (2551,2733)(2554,2718)(2559,2703)
    (2565,2686)(2573,2669)(2584,2650)
    (2598,2630)(2615,2608)(2635,2584)
    (2660,2558)(2689,2530)(2722,2500)
    (2760,2468)(2804,2433)(2853,2397)
    (2906,2359)(2965,2319)(3029,2277)
    (3097,2236)(3169,2194)(3245,2152)
    (3322,2112)(3401,2074)(3479,2039)
    (3556,2006)(3630,1975)(3702,1948)
    (3769,1922)(3833,1900)(3891,1879)
    (3945,1861)(3995,1845)(4040,1831)
    (4080,1819)(4116,1809)(4149,1800)
    (4177,1792)(4203,1785)(4225,1780)
    (4245,1775)(4263,1771)(4279,1768)
    (4294,1766)(4308,1764)(4322,1762)
    (4336,1761)(4350,1760)(4366,1759)
    (4383,1758)(4401,1758)(4422,1757)
    (4446,1757)(4473,1757)(4503,1757)
    (4538,1757)(4577,1757)(4620,1758)
    (4669,1758)(4723,1759)(4782,1761)
    (4846,1763)(4916,1766)(4991,1770)
    (5070,1776)(5154,1782)(5241,1790)
    (5331,1800)(5422,1812)(5513,1826)
    (5603,1842)(5690,1859)(5773,1877)
    (5853,1895)(5927,1914)(5996,1932)
    (6060,1949)(6118,1967)(6171,1983)
    (6218,1999)(6261,2014)(6298,2028)
    (6331,2041)(6360,2054)(6386,2066)
    (6408,2077)(6427,2088)(6444,2098)
    (6459,2108)(6472,2118)(6485,2127)
    (6497,2137)(6509,2147)(6521,2157)
    (6535,2168)(6549,2179)(6566,2192)
    (6585,2205)(6606,2220)(6631,2235)
    (6659,2253)(6691,2272)(6727,2293)
    (6768,2316)(6813,2341)(6864,2368)
    (6920,2397)(6980,2429)(7046,2463)
    (7117,2500)(7192,2539)(7271,2580)
    (7353,2623)(7437,2667)(7522,2712)
    (7606,2758)(7689,2804)(7770,2849)
    (7849,2894)(7925,2939)(7998,2982)
    (8068,3024)(8136,3065)(8201,3105)
    (8263,3144)(8323,3183)(8380,3220)
    (8435,3256)(8489,3292)(8540,3327)
    (8590,3361)(8639,3395)(8685,3428)
    (8731,3460)(8775,3492)(8818,3523)
    (8860,3554)(8900,3584)(8939,3613)
    (8977,3641)(9013,3669)(9047,3695)
    (9080,3721)(9111,3745)(9140,3767)
    (9167,3789)(9192,3808)(9215,3826)
    (9235,3842)(9253,3857)(9269,3869)
    (9282,3880)(9293,3889)(9302,3896)
    (9309,3902)(9314,3906)(9318,3909)
    (9320,3911)(9321,3912)(9322,3912)
\path(22,2412)(22,2411)(22,2410)
    (23,2407)(24,2403)(25,2397)
    (27,2389)(30,2379)(33,2366)
    (37,2350)(42,2332)(48,2311)
    (55,2287)(62,2260)(71,2231)
    (81,2199)(92,2165)(104,2129)
    (117,2090)(132,2050)(147,2008)
    (164,1964)(182,1920)(202,1875)
    (223,1829)(245,1783)(269,1736)
    (294,1690)(321,1644)(349,1599)
    (379,1555)(411,1511)(444,1469)
    (480,1428)(518,1388)(558,1351)
    (600,1314)(645,1280)(693,1248)
    (744,1219)(798,1191)(855,1166)
    (916,1144)(981,1125)(1049,1110)
    (1123,1097)(1200,1088)(1283,1083)
    (1370,1082)(1463,1086)(1560,1094)
    (1663,1107)(1771,1125)(1883,1148)
    (2001,1177)(2122,1212)(2214,1241)
    (2306,1273)(2400,1307)(2494,1344)
    (2587,1382)(2679,1422)(2771,1463)
    (2861,1505)(2949,1548)(3035,1592)
    (3118,1636)(3199,1680)(3278,1724)
    (3354,1769)(3427,1813)(3498,1857)
    (3565,1900)(3630,1943)(3692,1986)
    (3752,2028)(3809,2070)(3863,2111)
    (3915,2152)(3965,2192)(4012,2232)
    (4058,2271)(4101,2309)(4143,2347)
    (4183,2385)(4222,2422)(4259,2459)
    (4295,2496)(4330,2532)(4365,2568)
    (4398,2604)(4431,2640)(4464,2676)
    (4497,2712)(4530,2748)(4563,2784)
    (4596,2821)(4630,2857)(4664,2894)
    (4699,2932)(4736,2970)(4773,3008)
    (4812,3047)(4853,3087)(4895,3127)
    (4939,3168)(4985,3210)(5033,3253)
    (5083,3297)(5136,3342)(5191,3387)
    (5249,3434)(5309,3482)(5372,3530)
    (5438,3580)(5507,3631)(5579,3683)
    (5653,3735)(5731,3789)(5811,3843)
    (5894,3899)(5980,3955)(6068,4011)
    (6158,4068)(6251,4125)(6345,4182)
    (6440,4239)(6536,4295)(6633,4351)
    (6730,4406)(6827,4460)(6922,4512)
    (7064,4588)(7201,4659)(7333,4725)
    (7458,4786)(7578,4842)(7691,4893)
    (7797,4940)(7898,4981)(7993,5019)
    (8082,5052)(8166,5082)(8245,5108)
    (8319,5131)(8389,5150)(8455,5167)
    (8518,5181)(8576,5192)(8632,5201)
    (8684,5208)(8734,5213)(8781,5217)
    (8826,5218)(8869,5219)(8909,5217)
    (8947,5215)(8983,5212)(9017,5207)
    (9049,5202)(9080,5197)(9108,5191)
    (9135,5184)(9159,5177)(9182,5170)
    (9203,5164)(9222,5157)(9239,5150)
    (9254,5144)(9268,5139)(9279,5133)
    (9290,5129)(9298,5125)(9305,5121)
    (9310,5118)(9315,5116)(9318,5115)
    (9320,5113)(9321,5113)(9322,5112)
\path(22,6087)(23,6086)(25,6085)
    (28,6084)(32,6081)(37,6078)
    (44,6074)(53,6069)(64,6063)
    (77,6056)(92,6048)(110,6039)
    (129,6028)(152,6017)(176,6004)
    (203,5990)(233,5975)(265,5959)
    (299,5942)(335,5925)(374,5906)
    (415,5887)(458,5867)(503,5847)
    (550,5826)(599,5805)(650,5784)
    (703,5762)(758,5740)(815,5718)
    (873,5696)(933,5675)(996,5653)
    (1060,5631)(1126,5610)(1194,5589)
    (1265,5568)(1338,5548)(1413,5528)
    (1491,5508)(1572,5489)(1656,5470)
    (1743,5451)(1833,5434)(1927,5416)
    (2025,5400)(2126,5384)(2232,5368)
    (2342,5354)(2457,5340)(2576,5327)
    (2701,5314)(2830,5303)(2964,5293)
    (3102,5284)(3245,5277)(3392,5270)
    (3543,5265)(3697,5262)(3823,5261)
    (3950,5260)(4078,5261)(4205,5262)
    (4332,5265)(4459,5268)(4584,5272)
    (4708,5277)(4831,5283)(4952,5289)
    (5072,5297)(5190,5304)(5307,5313)
    (5422,5321)(5536,5331)(5648,5341)
    (5758,5351)(5867,5362)(5975,5373)
    (6082,5385)(6187,5397)(6291,5410)
    (6394,5422)(6496,5436)(6597,5449)
    (6697,5463)(6796,5477)(6894,5491)
    (6991,5506)(7087,5520)(7182,5535)
    (7277,5550)(7370,5566)(7462,5581)
    (7554,5597)(7644,5612)(7733,5628)
    (7821,5643)(7907,5659)(7992,5674)
    (8075,5689)(8157,5705)(8236,5720)
    (8314,5734)(8390,5749)(8463,5763)
    (8534,5777)(8602,5790)(8667,5803)
    (8730,5815)(8790,5827)(8846,5839)
    (8900,5849)(8950,5860)(8997,5869)
    (9040,5878)(9080,5886)(9116,5894)
    (9150,5901)(9179,5907)(9206,5912)
    (9229,5917)(9249,5922)(9266,5925)
    (9281,5928)(9293,5931)(9302,5933)
    (9309,5934)(9314,5935)(9318,5936)
    (9320,5937)(9321,5937)(9322,5937)
\color{black}
\path(22,3012)(9322,3012)
\path(22,7212)(22,12)
\path(9322,7212)(9322,12)
\put(-350,2920){\makebox(0,0)[lb]{$\scriptstyle 0$}}
\put(2200,3312){\makebox(0,0)[lb]{$\scriptstyle -1$}}
\put(4600,2512){\makebox(0,0)[lb]{$\scriptstyle +1$}}
\put(8000,2512){\makebox(0,0)[lb]{$\scriptstyle +1$}}
\put(3200,6748){\makebox(0,0)[lb]{\scriptsize essential spectrum}}
\put(3200,4400){\makebox(0,0)[lb]{\scriptsize discrete spectrum}}
\end{picture}
}
\caption{Spectral flow}\label{sf:1}
\end{center}
\end{figure}

\noindent\textbf{Paths of Hermitian matrices.} Before we can
define the spectral flow, we have to recall some results from
finite dimensional perturbation theory.

Let $\{A_t\}_{t\in[a,b]}$ be a path of self-adjoint operators in
a finite dimensional $\K$-Hilbert space $H$. Modulo the choice of
a basis of $H$, the path $\{A_t\}_{t\in[a,b]}$ consists of
Hermitian\footnote{In what follows, ``Hermitian'' has to be
replaced with ``symmetric'' if $\K=\R$.} matrices. We want to
find suitable paths of eigenvalues parametrizing the graph of the
path's spectrum, i.e., the subset
\[
\bigcup_{t\in[a,b]} \{t\}\times\spec(A_t)\subset [a,b]\times \R
\]
If $A_t$ is a $C^k$-path such that there are only {\em simple}
eigenvalues, then the implicit function theorem ensures that it is
possible to find $C^k$-functions parametrizing the graph. In
addition, it is then easy to see that there exist $C^k$-families
of corresponding eigenvectors.

Whenever an eigenvalue has higher multiplicity, the situation
becomes much more delicate. Nevertheless, there is one
comparatively simple observation: We can find continuous paths of
eigenvalues whenever $A_t$ is continuous in $t$. Repeating each
eigenvalue according to its multiplicity we only have to number
the eigenvalues in ascending order and the paths obtained in this
way turn out to be continuous (cf. Kato \cite{K}, Sec.~II.5.2).
If $A_t$ is assumed to be $C^k$, smooth or analytic, it is
natural to ask whether the parametrization of the spectrum's
graph can be chosen to have a corresponding regularity. However,
according to classical results due to Rellich, this is only
partially true. We shall now recall some of those results.

\begin{theorem}\textnormal{(cf. \cite{K}, Thm.~II.6.1)}. Let
$A_t$ be a real analytic path of Hermitian matrices. Then there
are two sets of real analytic families representing the repeated
eigenvalues and the corresponding eigenvectors respectively.
\end{theorem}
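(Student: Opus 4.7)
The plan is to prove the statement locally around each $t_0 \in [a,b]$ and then glue the pieces using compactness of $[a,b]$ together with the identity principle for real analytic functions.

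For the local argument I would invoke Kato's spectral projection construction. Let $\mu_1, \ldots, \mu_k$ denote the distinct eigenvalues of $A_{t_0}$ with multiplicities $m_1,\ldots,m_k$, and enclose each $\mu_j$ by a small circle $\Gamma_j$ separating it from the rest of the spectrum. Since the resolvent $R(t,\zeta)=(\zeta I-A_t)^{-1}$ is jointly real analytic in $(t,\zeta)$ on its domain, the spectral projections
$$P_j(t) := -\frac{1}{2\pi i}\oint_{\Gamma_j}R(t,\zeta)\,d\zeta$$
are real analytic in $t$ for $t$ close to $t_0$, self-adjoint (because $A_t$ is Hermitian, so $R(t,\overline{\zeta})=R(t,\zeta)^*$), and of constant rank $m_j$. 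Applying $P_j(t)$ to a fixed orthonormal basis of $\operatorname{Im} P_j(t_0)$ yields an analytic basis of $\operatorname{Im} P_j(t)$, which analytic Gram--Schmidt then orthonormalizes. In the resulting analytic orthonormal basis, $A_t$ becomes block diagonal with analytic Hermitian blocks $B_j(t)$ on $\mathbb{C}^{m_j}$ satisfying $B_j(t_0)=\mu_j I_{m_j}$. The problem is thus reduced to the case where $A_{t_0}$ is a scalar matrix on a single block.

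For such a block, I would Taylor-expand $B(t)=\sum_{k\geq 0}B_k(t-t_0)^k$. If every $B_k$ is a scalar multiple of the identity then $B(t)=\beta(t)I$ for an analytic scalar $\beta$, and the claim is trivial; otherwise let $p\geq 1$ be the smallest index with $B_p$ non-scalar, split off an analytic scalar $\lambda(t)I$ to write $B(t)=\lambda(t)I+(t-t_0)^p C(t)$, with $C(t)$ analytic Hermitian and $C(t_0)$ non-scalar. The eigenvalues of $B(t)$ are then $\lambda(t)+(t-t_0)^p c_i(t)$, where the $c_i(t)$ are those of $C(t)$, with matching eigenvectors, so the problem transfers to $C(t)$. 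Since $C(t_0)$ is non-scalar, applying the projection construction to $C(t)$ yields proper sub-blocks of strictly smaller dimension, and induction on the block dimension closes the local argument.

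The main obstacle is ensuring termination of this recursion together with the preservation of real analyticity at each stage---in particular, the analytic Gram--Schmidt procedure and the splitting-off of $\lambda(t)I$ must be carried out so that no square-root or other branch phenomena are introduced, which is precisely the place where Hermiticity (forcing $C(t_0)$ to be genuinely non-scalar with real eigenvalues) is used. Once the local real analytic parametrization is obtained, a finite cover of $[a,b]$ provided by compactness combined with the identity principle allows consistent gluing of the local eigenvalue and eigenvector branches, possibly after reconciling the index labels on overlaps by permutations, producing the desired global real analytic families on $[a,b]$.
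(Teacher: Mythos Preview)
The paper does not give its own proof of this theorem; it merely cites Kato's book (Thm.~II.6.1) as part of a list of background results from finite-dimensional perturbation theory. So there is no ``paper's proof'' to compare against, and your sketch should be judged on its own merits.

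Your outline is essentially the classical Rellich--Kato argument and the local part is sound: the analyticity of the Riesz projections $P_j(t)$, the analytic Gram--Schmidt to produce an analytic orthonormal frame of $\operatorname{Im}P_j(t)$, and the reduction process (factor out the scalar part, pull out $(t-t_0)^p$, and recurse on the strictly smaller block arising from the non-scalar $C(t_0)$) are exactly what Kato does. The induction on block size terminates because each application of the projection step to a non-scalar Hermitian $C(t_0)$ produces at least two proper sub-blocks.

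The one place where your sketch is thinner than it should be is the globalization, specifically for the \emph{eigenvectors}. For the eigenvalues your argument is fine: on an overlap, two analytic parametrizations agree as multisets pointwise, so at a point where all values are distinct the matching permutation is unique, and the identity principle propagates it; if no such point exists, some of the local eigenvalue functions coincide identically and can be merged. For the eigenvectors, however, once eigenvalues are matched, the two local orthonormal eigenbases within a (possibly repeated) eigenvalue group differ by an analytic unitary $U(t)$ on the overlap, and you need to explain why this can be used to redefine the frame on one side so as to extend analytically across the overlap. This is doable---for instance by solving the linear ODE $v'(t)=Q(t)v(t)$ with $Q(t)=P'(t)P(t)-P(t)P'(t)$ for the total projection $P(t)$, which transports frames analytically and globally---but it is not quite the one-line ``reconcile by a permutation'' you suggest. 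Flesh out that step and the proof is complete.
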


\begin{theorem}\label{Kato:st1}\textnormal{(cf. \cite{K},
Thm.~II.6.8)}. Assume that $A_t$ is a $C^1$-path of Hermitian
matrices. Then there exist continuously differentiable functions
$\gl_i$ representing the repeated eigenvalues.
\end{theorem}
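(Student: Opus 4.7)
The plan is to proceed locally and reduce, via Kato's spectral-projection technique, to the case of an isolated eigenvalue group. Fix $t_0\in[a,b]$; it suffices to produce $C^1$ parametrizations on a neighborhood of $t_0$, since on any subinterval where all eigenvalues remain simple the implicit function theorem already yields $C^1$ branches, and the local branches produced near collision points can be patched to global ones (up to relabeling) by tracking multiplicities.

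For the local reduction, let $\lambda_1^{(0)} < \cdots < \lambda_p^{(0)}$ be the distinct eigenvalues of $A_{t_0}$ with multiplicities $m_j$, and choose pairwise disjoint small contours $\Gamma_j\subset\C$ encircling the $\lambda_j^{(0)}$. For $t$ sufficiently close to $t_0$, the operators
\[
P_j(t) := \frac{1}{2\pi i}\oint_{\Gamma_j}(zI - A_t)^{-1}\,dz
\]
are spectral projections onto $A_t$-invariant subspaces of dimension $m_j$, and depend $C^1$ on $t$ because $A_t$ does and the resolvent is smooth in its data uniformly in $z\in\Gamma_j$. Kato's transformation operator $U_j(t)$, defined as the unique solution of the linear ODE $\dot U_j = [\dot P_j, P_j]U_j$ with $U_j(t_0)=I$, then furnishes a $C^1$ family of unitaries intertwining $P_j(t_0)$ and $P_j(t)$, so $B_j(t) := U_j(t)^{*} A_t U_j(t)\big|_{\mathrm{ran}\,P_j(t_0)}$ is a $C^1$ path of $m_j\times m_j$ Hermitian matrices with $B_j(t_0) = \lambda_j^{(0)} I$. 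The problem thus reduces to the following assertion: every $C^1$ path $B_t$ of $m\times m$ Hermitian matrices satisfying $B_{t_0}=\lambda I$ admits $C^1$ eigenvalue parametrizations.

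For this reduced problem I would argue by induction on $m$. Write $B_t = \lambda I + (t-t_0)C_t$ where $C_t$ extends continuously to $t_0$ with value $B'_{t_0}$; the eigenvalues of $B_t$ near $\lambda$ are then of the form $\lambda + (t-t_0)\mu_i(t)$ with $\mu_i(t)$ an eigenvalue of $C_t$. If $B'_{t_0}$ has simple spectrum, the implicit function theorem applied to the characteristic polynomial of $C_t$ yields continuous branches $\mu_i$, hence $C^1$ branches $\lambda_i(t) = \lambda + (t-t_0)\mu_i(t)$ with $\lambda_i'(t_0) = \mu_i(t_0)$. Otherwise, $B'_{t_0}$ itself has a degenerate eigenvalue, and one reapplies the projector decomposition of the previous paragraph to $C_t$ within each eigenspace of $B'_{t_0}$.

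The main obstacle is that the rescaled family $C_t = (B_t - \lambda I)/(t-t_0)$ is only continuous at $t_0$, not $C^1$, so the spectral-projection reduction cannot be iterated naively inside $C_t$ to extract $C^1$ branches. Overcoming this is the hard part of Kato's proof: one must either approximate $A_t$ by real-analytic paths (for which Rellich's theorem gives analytic branches) and extract a $C^1$ limit via a careful compactness argument, or argue directly that the potential obstruction to $C^1$-regularity of the \emph{unordered} eigenvalue branches, encoded in the discriminant of the characteristic polynomial of $C_t$, is itself compatible with a $C^1$ continuation obtained by allowing the branches to cross (rather than reflect) at coincidence points. Either route completes the induction and proves the theorem.
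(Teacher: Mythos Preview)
The paper does not contain a proof of this statement; it is quoted verbatim from Kato's book with only a citation. So there is no ``paper's own proof'' to compare against, and your proposal should be read as an attempt to reconstruct Kato's argument rather than to match the exposition here.

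On the substance: your reduction via the Riesz projections $P_j(t)$ and the transformation operator $U_j(t)$ is correct and is indeed the standard first step. The ODE for $U_j$ has continuous coefficients (since $P_j$ is $C^1$), so $U_j$ is $C^1$, and the reduced block $B_j(t)$ is a genuine $C^1$ path with $B_j(t_0)=\lambda_j^{(0)} I$. Up to here the argument is sound.

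The gap is exactly where you flag it yourself. Writing $B_t = \lambda I + (t-t_0)C_t$ gives a $C_t$ that is merely continuous at $t_0$, so you cannot re-apply the projector decomposition to $C_t$ and iterate; your ``induction on $m$'' therefore does not close as stated. You then list two possible escapes (analytic approximation plus a compactness argument, or a direct discriminant analysis) and assert that ``either route completes the induction,'' but you carry out neither. This is precisely the nontrivial content of Kato's Theorem~II.6.8: as the paper itself remarks just after the statement, ``the proof of the second theorem is rather complicated and does not carry over to higher orders of differentiability.'' In particular, the analytic-approximation idea is delicate because convergence of the approximating paths in $C^1$ does not automatically give $C^1$ convergence of a consistent labeling of eigenvalue branches (branches can swap under small perturbations), and one has to control this uniformly. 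Your proposal is thus a correct skeleton with the hard step left as a promissory note; to make it a proof you would need to actually execute one of the two routes, and that execution is where all the work lies.
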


It should be noted that the proof of the second theorem is rather
complicated and does not carry over to higher orders of
differentiability. Moreover, we cannot hope that there exist
corresponding $C^1$-families of eigenvectors. This is illustrated
by a famous example due to Rellich (cf. Kato \cite{K},
Ex.~II.5.3). It is not difficult, though, to show that the total
projection on the eigenspaces near a $k$-fold eigenvalue is
continuously differentiable:

\begin{theorem}\label{Kato:st2}\textnormal{(cf. \cite{K},
Thm.~II.5.4)}. Let $A_t$ be a $C^1$-path of Hermitian matrices,
and suppose that $\gl$ is a $k$-fold eigenvalue of $A_{t_0}$. If
$\gl_1,\ldots,\gl_k$ denote $C^1$-functions representing
eigenvalues of $A_t$ such that $\gl_i(t_0)=\gl$, then the total
projection
\[
\Proj_{\ker(A-\gl_1)}+\ldots + \Proj_{\ker(A-\gl_k)}
\]
is continuously differentiable near $t_0$.
\end{theorem}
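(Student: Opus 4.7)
The plan is to construct the total projection via a Riesz--Dunford contour integral, which is a standard device for separating part of the spectrum by a finite curve in $\C$. Concretely, choose $\delta>0$ small enough that $\gl$ is the only eigenvalue of $A_{t_0}$ in the closed disc $\overline{D_\delta(\gl)}$, and let $\gG$ be the positively oriented circle of radius $\delta$ about $\gl$. Because each $\gl_i(t)$ is continuous (being $C^1$ by Theorem \ref{Kato:st1}) with $\gl_i(t_0)=\gl$, and because the remaining eigenvalues of $A_t$ depend continuously on $t$ (by Kato's general continuity result quoted just before the statement), there is a neighbourhood $I$ of $t_0$ on which all of $\gl_1(t),\dots,\gl_k(t)$ lie strictly inside $\gG$ and every other eigenvalue of $A_t$ lies strictly outside $\gG$.

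For such $t$, the Riesz projection
\[
P(t) := -\frac{1}{2\pi i}\oint_\gG (A_t - z)^{-1}\,dz
\]
coincides with the total projection $\Proj_{\ker(A_t-\gl_1(t))}+\dots+\Proj_{\ker(A_t-\gl_k(t))}$; this is the elementary fact from finite-dimensional functional calculus that the Cauchy integral of the resolvent around a set of eigenvalues equals the sum of spectral projections onto the corresponding eigenspaces. Thus it suffices to show that $t\mapsto P(t)$ is $C^1$ near $t_0$.

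For $z\notin \spec(A_t)$ the resolvent satisfies the identity
\[
(A_t-z)^{-1} - (A_{t_0}-z)^{-1} = -(A_t-z)^{-1}(A_t-A_{t_0})(A_{t_0}-z)^{-1},
\]
and since $t\mapsto A_t$ is $C^1$ this yields
\[
\lfrac{d}{dt}(A_t-z)^{-1} = -(A_t-z)^{-1}\,A_t'\,(A_t-z)^{-1},
\]
the derivative being continuous jointly in $(t,z)$ on $I\times\gG$ because the distance from $\gG$ to $\spec(A_t)$ is bounded below uniformly in $t\in I$. One then differentiates under the integral sign to obtain
\[
P'(t) = \frac{1}{2\pi i}\oint_\gG (A_t-z)^{-1}\,A_t'\,(A_t-z)^{-1}\,dz,
\]
with $P'$ continuous in $t$ by uniform continuity of the integrand on the compact set $\gG$.

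The routine technical point is verifying that $\gG$ can be chosen once and for all on a whole neighbourhood of $t_0$; this is the only place where continuity of the spectrum (not of individual eigenspaces) enters, and it is precisely what makes the argument work in spite of the fact that the individual projections $\Proj_{\ker(A_t-\gl_i(t))}$ may fail to vary $C^1$ (or even continuously) through crossings. The main subtlety---and the reason this theorem is more delicate than Theorem \ref{Kato:st1}---is therefore conceptual rather than computational: one must resist the temptation to differentiate the individual summands and instead work with the group as a whole, where Rellich's obstruction to smooth diagonalisation disappears.
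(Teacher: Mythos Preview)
Your argument is correct and is precisely the standard proof: the paper does not supply its own proof but only cites Kato \cite{K}, Thm.~II.5.4, where exactly this Riesz--Dunford contour-integral construction of the total projection is carried out. Your identification of the key point---that one must treat the eigenvalue group as a whole via the resolvent integral rather than differentiate individual eigenprojections---matches both the content and the spirit of the cited result.
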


In addition to these well-known results, we want to point out a
recent result due to Alekseevsky, Kriegl, Michor and Losik:

\begin{theorem}\textnormal{(cf. \cite{AleKriMic:Smo}, Thm.~7.6)}.
Let $A_t$ be a smooth family of Hermitian matrices such that no
two of the continuous eigenvalues meet of infinite order at any
$t$ if they are not equal for all $t$. Then all the eigenvalues
and all eigenvectors can be chosen smoothly in $t$ on the whole
parameter domain.
\end{theorem}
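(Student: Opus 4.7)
The plan is to reduce the global statement to a local analysis near the ``bad'' parameter values, then to use the finite-order hypothesis to smoothly factor the characteristic polynomial.

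First I would localize: away from parameters $t_0$ at which two continuous eigenvalues coincide, the implicit function theorem together with Theorem II.6.8 already produces smooth eigenvalues and eigenvectors. The problem is thus genuinely local around coincidence points, and by working with the characteristic polynomial $p_t(\lambda)=\det(\lambda I-A_t)$ I would reduce the spectral question to a question about smooth families of monic polynomials with only real roots. Hermiticity of $A_t$ means $p_t$ splits completely over $\mathbb R$, and the continuous selection of roots is already guaranteed; what must be upgraded is regularity.

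Next, I would handle the simplest non-trivial model first: two continuous eigenvalues $\lambda_1(t),\lambda_2(t)$ which coincide at $t_0$ but are not identically equal. The elementary symmetric functions $s(t)=\lambda_1(t)+\lambda_2(t)$ and $q(t)=\lambda_1(t)\lambda_2(t)$ extend to smooth functions of $t$ (they are restrictions of traces of invariant polynomials in the entries of $A_t$), and the discriminant $D(t)=(\lambda_1-\lambda_2)^2=s^2-4q$ is therefore smooth. The finite-order hypothesis forces $D$ to vanish to some finite even order $2k$ at $t_0$, so $D(t)=(t-t_0)^{2k}\,u(t)$ with $u$ smooth and positive near $t_0$; hence $D$ admits a smooth square root (up to sign), which smoothly separates $\lambda_1$ from $\lambda_2$. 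I would then iterate this symmetric-function-plus-finite-order-root-extraction argument on the factorization of $p_t$ — effectively performing a smooth resolution of the root scheme — using the assumption that no two continuous eigenvalues touch to infinite order to guarantee at every step that the discriminant of each emerging quadratic factor has a smooth square root.

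Once smooth eigenvalues $\lambda_i(t)$ are available, smooth eigenvectors follow. On each maximal open interval where the multiplicities are locally constant this is the standard implicit function theorem applied to $(A_t-\lambda_i(t))v=0$. At a crossing point, Theorem II.5.4 provides smooth total projections onto the sum of eigenspaces of the colliding eigenvalues; combining these smooth finite-rank projector families with the now-smooth individual eigenvalues $\lambda_i(t)$ allows one to solve $(A_t-\lambda_i(t))v=0$ inside the smoothly varying invariant subspace and then apply smooth Gram--Schmidt (or parallel transport in the eigenbundle using a smooth connection) to obtain a global smooth orthonormal eigenframe.

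The main obstacle is step three: handling \emph{nested} coincidences, where three or more continuous eigenvalues meet, or where pairs split in several successive stages. For a single pair the symmetric-function/discriminant trick is clean, but for a higher-multiplicity root of $p_{t_0}$ one needs a careful induction on the number of distinct eigenvalues, together with a smooth analogue of Puiseux or Weierstrass preparation adapted to finite-order touchings — essentially the content of the Alekseevsky--Kriegl--Michor--Losik desingularization. The finite-order hypothesis is used at every level of the induction to ensure that each extracted root remains smooth; this bookkeeping, rather than any single identity, is the technical heart of the proof.
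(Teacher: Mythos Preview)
The paper does not prove this theorem; it is only quoted from \cite{AleKriMic:Smo} with a pointer to Thm.~7.6 there, and the thesis immediately moves on to an example. So there is no ``paper's own proof'' to compare against.

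Your outline is a reasonable sketch of the strategy in the cited reference: smoothness of the elementary symmetric functions of the eigenvalues, finite-order vanishing of the discriminants to extract smooth roots, and induction on the multiplicity pattern. You correctly flag the genuine difficulty, namely the inductive handling of higher-order coincidences. One place where your sketch is too optimistic is the eigenvector step at a crossing: having smooth individual eigenvalues $\lambda_i(t)$ and a smooth \emph{total} projection (Kato II.5.4) onto the colliding block does not by itself let you ``solve $(A_t-\lambda_i(t))v=0$ inside the invariant subspace'' smoothly through $t_0$, since the rank of $A_t-\lambda_i(t)$ on that subspace jumps there; one has to rerun the whole smooth-diagonalization argument on the restricted family, which is exactly the inductive step you defer to the reference. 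With that caveat, your plan matches the spirit of the Alekseevsky--Kriegl--Michor--Losik argument.
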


Using the proof of the last results as a guideline, we now want to
consider an example which we will need in the main part of this
thesis.
\begin{example}\label{gl:sec}
Let $\{A_t\}_{t\in[-1,1]}$ be a $C^4$-path of self-adjoint
operators acting on a finite dimensional Hilbert space $H$ and
suppose that $A_0=0$ and that the eigenvalues of $A'_0$ are
simple. Then the following holds:
\begin{enumerate}
\item The eigenvalues of $A_t$ near 0 can be parametrized by
$C^2$-functions $\gl_i$, and there exist corresponding $C^2$-maps
of normalized eigenvectors $v_i$.
\item If in addition one of the eigenvalues of $A_0'$ vanishes
then the corresponding $C^2$-path of eigenvalues, say $\gl$,
satisfies
\begin{equation}\label{gl:sec:der}
\gl''(0)=\lfrac{d}{dt}\big|_{t=0} \scalar{A'_tv_t}{v_t}\;,
\end{equation}
where $v_t$ is the path of eigenvectors associated to $\gl$ near
0.
\end{enumerate}
\begin{proof}
Since $A_0=0$ and $A_t$ is $C^4$, we can use the Taylor
approximation to write
\[
A_t= A_0' t + \frac{A_0^{(2)}}2 t^2 + \frac{A_0^{(3)}}6 t^3 +
R(t),
\]
with the Taylor remainder $R(t)$, which is $C^4$ and satisfies
$\|R(t)\|\le C |t|^4$ for all $t$. This implies that
$\frac{R(t)}t$ is a $C^2$-map with a zero of order 3 in 0. We can
thus write $A_t=tB_t$, where $B_t$ is a $C^2$-path of
self-adjoint operators on $H$ satisfying $B_0=A_0'$. From Theorem
\ref{Kato:st1} we thus obtain $C^1$-functions $\mu_i$
parametrizing the eigenvalues of $B_t$. Due to our assumptions,
each $\mu_i(0)$ is a simple eigenvalue of $B_0=A_0'$ so that we
can find $\eps<1$ such that all $\mu_i(t)$ are simple eigenvalues
of $B_t$ for each $|t|<\eps$. Therefore, by virtue of the implicit
function theorem, each $\mu_i(t)$ is necessarily a $C^2$-function
on $(-\eps,\eps)$. This shows in particular that
$\ker(B_t-\mu_i(t))$ forms a $C^2$-line bundle over
$(-\eps,\eps)$. Choosing normalized sections $v_i(t)$ produces
$C^2$-paths of eigenvectors of $B_t$. Defining $C^2$-functions
$\gl_i(t):= t\mu_i(t)$ we now observe that for $|t|<\eps$,
\[
A_t v_i(t) = t B_t v_i(t)= t\mu_i(t) v_i(t) = \gl_i(t) v_i(t).
\]
Therefore, each $\gl_i$ is a $C^2$-path of eigenvalues of $A_t$
near 0 with corresponding $C^2$-path of eigenvectors $v_i$. This
proves part (i).

Now assume that $\gl=\gl_{i_0}$ satisfies $\gl'(0)=0$. For any
$t\in(-\eps,\eps)$, we then have $A_t v_t = \gl(t) v_t$ with a
$C^2$-path of eigenvectors $v_t$ of unit length. Differentiating
this yields
\begin{equation}\label{gl:sec:1}
A_t'v_t + A_t v_t' = \gl'(t) v_t + \gl(t) v_t'.
\end{equation}
Taking the inner product with $v_t$ results in
\begin{equation*}
\gl'(t)=\scalar{A_t'v_t}{v_t} + 2\Re\scalar{A_tv_t}{v'_t}\;,
\end{equation*}
where we have used self-adjointness of $A_t$. Differentiating
again and evaluating at $t=0$, we infer that
\[
\gl''(0)=\lfrac{d}{dt}\big|_{t=0} \scalar{A'_tv_t}{v_t} +
2\Re\big(\scalar{A_0'v_0}{v_0'} + \scalar{A_0v_0'}{v_0'} +
\scalar{A_0v_0}{v_0''}\big).
\]
Since $A_0=0$, $\gl(0)=0$ and $\gl'(0)=0$ we deduce from
\eqref{gl:sec:1} that $A_0' v_0= 0$. Therefore, the three
summands on the right hand side of the above equation vanish, and
this proves part (ii).\qedhere\\
\end{proof}
\end{example}

\noindent\textbf{Paths of self-adjoint operators.} After this
short digression into finite dimensional Hilbert space theory, we
return to the setting we actually have in mind. Let
$\{T_t\}_{t\in [a,b]}$ be a family of unbounded, self-adjoint
operators\footnote{In our applications, $T_t$ will be a family of
formally self-adjoint, elliptic operators of constant order $m$,
i.e., operators in $H=L^2$ with domain $W=L^2_m$ (cf. the
discussion in App.~\ref{app:ell}, Sec.~\ref{diffop}).} in a
separable $\K$-Hilbert space $H$. Assume that there exists a
Hilbert space $W$ such that all $T_t$ have $W$ as a common domain
with graph norm equivalent to the given norm on $W$, i.e., we
suppose that $T_t$ is a function with values in the set
\index{<@$\sL_{sa}(W,H)$, self-adjoint operators}
\begin{equation}\label{Lsa}
\sL_{sa}(W,H):= \bigsetdef{T\in\sL(W,H)}{T \text{ self-adjoint
operator in } H}.
\end{equation}
In contrast to that, let \index{<@$\sL_{sym}(W,H)$, symmetric
operators}
\begin{equation}\label{Lsym}
\sL_{sym}(W,H):=\bigsetdef{T\in\sL(W,H)}{T\text{ symmetric
operator in } H}.
\end{equation}
Recall that $\sL_{sym}(W,H)$ is a Banach space if endowed with
the operator norm topology. Due to the Kato-Rellich Theorem (cf.
\cite{K}, Thm.~V.4.3), $\sL_{sym}(W,H)$ contains $\sL_{sa}(W,H)$
as an open subset. Therefore, in the case at hand, we may speak of
continuity and differentiability of the family $T_t$ with respect
to the operator norm topology on $\sL_{sa}(W,H)$.

Moreover, we assume that $W$ embeds compactly in $H$ which ensures
that each $T_t$ has discrete spectrum consisting of real
eigenvalues with finite multiplicity.

The following result makes a precise definition of the spectral
flow possible. It is a consequence of Theorem \ref{Kato:st1} and
Theorem \ref{Kato:st2}. A proof can be found in Robbin \& Salamon
\cite{RobSal:SF}, Cor.~4.29.

\begin{theorem}[Kato's Selection
Theorem]\label{Kato:sel}\index{families of operators!Kato's
Selection Theorem} Suppose that $T_t$ is continuously
differentiable. Let $t_0\in [a,b]$ and $c>0$ such that $\pm
c\notin\spec(T_{t_0})$, and let $n$ be the dimension of the
subspace spanned by eigenvectors corresponding to eigenvalues in
$(-c,c)$. Then there exists a constant $\eps>0$ and
$C^1$-functions
\[
\gl_i:(t_0-\eps,t_0+\eps)\to (-c,c),\quad i\in\{1,\ldots,n\}
\]
with the following properties:
\begin{itemize}
\item $\gl_i(t)\in\spec(T_t)$
\item $\gl_i'(t)\in \spec\big(P_i(t)\circ T_t'\circ P_i(t)\big)$,
where $P_i(t)=\Proj_{\ker(T_t-\gl_i(t))}$.
\item If $\gl\in \spec(T_t)\cap (-c,c)$ with corresponding
spectral projection $P$ and $\mu\in \spec(P\circ T_t'\circ P)$ is
an eigenvalue of multiplicity $m$, then there are precisely $m$
indices $i_1,\ldots,i_m$ such that $\gl_{i_j}(t)=\gl$ and
$\gl_{i_j}'(t)=\mu$ for $j=1,\ldots,m$.
\end{itemize}
\end{theorem}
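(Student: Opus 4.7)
The strategy is the standard Riesz‐projection reduction to finite dimensions, followed by an appeal to the finite‐dimensional results of Kato (Theorems~\ref{Kato:st1} and \ref{Kato:st2}) that have already been quoted.

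First, because $\pm c\notin\spec(T_{t_0})$ and $T_{t_0}$ has compact resolvent, the spectrum of $T_{t_0}$ in $[-c,c]$ consists of finitely many eigenvalues, together spanning an $n$-dimensional subspace. Let $\Gamma\subset\C$ be a positively oriented smooth Jordan curve whose interior contains $\spec(T_{t_0})\cap(-c,c)$ but no other spectral point and which stays a positive distance away from $\spec(T_{t_0})$. The map $T\mapsto(z-T)^{-1}$ from $\sL_{sa}(W,H)\setminus(z-\{z\})$ into $\sL(H,W)$ is $C^1$ on the complement of the spectrum, so by continuity of $t\mapsto T_t$ there exists $\eps>0$ with $\Gamma\subset \C\setminus\spec(T_t)$ for every $|t-t_0|<\eps$. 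The Riesz projection
\[
P(t):=\frac{1}{2\pi i}\oint_\Gamma (z-T_t)^{-1}\,dz
\]
is therefore a $C^1$-family of bounded projections on $H$; since $P(t)$ varies continuously and has integer rank, $\rank P(t)=n$ for all $t$ in a (possibly smaller) interval around $t_0$. Moreover each $P(t)$ is the orthogonal projection onto the spectral subspace $V(t):=\im P(t)$, by self-adjointness of $T_t$, and $V(t)$ is invariant under $T_t$, with $T_t|_{V(t)}$ self‐adjoint and spectrum equal to $\spec(T_t)\cap(-c,c)$.

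Next I would trivialize the moving bundle $t\mapsto V(t)$ by Kato's construction: one defines $U(t)$ as the solution of the linear ODE $U'(t)=[P'(t),P(t)]\,U(t)$, $U(t_0)=\mathrm{id}$, which yields a $C^1$-family of unitaries on $H$ satisfying $U(t)P(t_0)U(t)^{-1}=P(t)$. Pulling $T_t$ back by $U(t)$ produces a $C^1$-family of self-adjoint operators $B(t):=U(t)^{-1}T_t U(t)|_{V(t_0)}$ acting on the fixed finite-dimensional Hilbert space $V(t_0)\cong\K^n$. Theorem~\ref{Kato:st1} now supplies $n$ continuously differentiable functions $\gl_i:(t_0-\eps,t_0+\eps)\to\R$ enumerating, with multiplicities, the eigenvalues of $B(t)$; since the spectra of $B(t)$ and of $T_t|_{V(t)}$ coincide, these $\gl_i$ are precisely the repeated eigenvalues of $T_t$ inside $(-c,c)$, which gives the first property.

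To obtain the derivative formula I would fix $t$, let $\gl$ be a $k$-fold eigenvalue of $T_t$ in $(-c,c)$ with indices $i_1,\ldots,i_k$ where $\gl_{i_j}(t)=\gl$, and apply Theorem~\ref{Kato:st2} to $B$ at that instant. It yields that the total projection $Q(s):=\sum_j\Proj_{\ker(B(s)-\gl_{i_j}(s))}$ is $C^1$; differentiating the identity $Q(s)B(s)Q(s)=\sum_j\gl_{i_j}(s)\Proj_{\ker(B(s)-\gl_{i_j}(s))}$ at $s=t$ and conjugating back by $U(t)$ yields $PT_t'P=\sum_j\gl_{i_j}'(t)P_{i_j}(t)+[\text{off-diagonal in }\ker(T_t-\gl)]$, where $P=U(t)Q(t)U(t)^{-1}$ is the spectral projection for the eigenvalue $\gl$ of $T_t$. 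Reading off the spectrum of $PT_t'P$ gives the second and third properties simultaneously. The main technical obstacle is the last paragraph: extracting from $Q(s)B(s)Q(s)$ a formula for $\gl_{i_j}'(t)$ in terms of $P_{i_j}(t)T_t'P_{i_j}(t)$ requires carefully identifying derivatives of $U$ with commutators with $P$ so that the pullback does not introduce spurious first-order terms, and keeping track of multiplicity bookkeeping when several eigenvalues collide at $t$.
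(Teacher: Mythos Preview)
The paper does not give its own proof of this theorem; it merely states that the result is a consequence of Theorems~\ref{Kato:st1} and~\ref{Kato:st2} and refers to Robbin \& Salamon \cite{RobSal:SF}, Cor.~4.29, for details. Your plan---Riesz projection onto the spectral window, Kato's intertwining ODE to trivialize the resulting $C^1$-bundle of finite-dimensional subspaces, then invoking the finite-dimensional selection theorems---is exactly the standard argument behind that citation and is consistent with what the paper indicates.

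One remark on your final paragraph: the derivative identification can be done more cleanly than the computation you sketch. Once you have the $C^1$-family $B(t)$ on the fixed space $V(t_0)$, the statement you need is purely finite-dimensional: for a $C^1$-family of Hermitian matrices with $C^1$ eigenvalue branches $\gl_i$, at any fixed $t$ and eigenvalue $\gl$ with spectral projection $Q$, the derivatives $\gl_{i_j}'(t)$ of the branches through $\gl$ are precisely the eigenvalues of $QB'(t)Q$ counted with multiplicity. This is standard first-order degenerate perturbation theory and does not require differentiating $Q(s)B(s)Q(s)$ globally; it suffices to differentiate $(B(s)-\gl_{i_j}(s))Q_j(s)=0$ at $s=t$, compress by $Q$, and use that the individual projections $Q_j(s)$ are at least continuous (Theorem~\ref{Kato:st2} gives more, but continuity is enough here). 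The conjugation by $U(t)$ then transports $QB'(t)Q$ to $PT_t'P$ without extra terms, since $U'(t)=[P'(t),P(t)]U(t)$ maps $V(t_0)$ into $V(t)^\perp$ and is therefore killed by the compression. This removes the ``spurious first-order terms'' worry you flag.
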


\begin{dfn}\label{transversal}\index{<@$C_T$, crossing
operator}\index{families of operators!transversal}\index{families
of operators!spectral flow|(} Assume that $T:[a,b]\to
\sL_{sa}(W,H)$ is continuously differentiable. Then we define the
{\em crossing operator} of $T$ at $t\in [a,b]$ by letting
\[
C_T(t):=\Proj_{\ker T_t}\circ\;T_t'|_{\ker T_t}:
\ker T_t \to \ker T_t\;.
\]
The path $T$ is called \textit{transversal} if $C_T(t)$ is
invertible for each $t\in [a,b]$. Furthermore, $T$ is said to
have {\em simple crossings} if $\dim \ker T_t \le 1$ for every
$t\in [a,b]$.
\end{dfn}

\begin{remark*}
Whenever $T$ is a transversal path of self-adjoint operators,
Kato's Selection Theorem ensures that the graph $\bigcup_{t\in
[a,b]}\{t\}\times \spec(T_t)$ intersects the line $[a,b]\times
\{0\}$ transversally. This explains the chosen terminology.
\end{remark*}

An application of Sard's Theorem yields

\begin{theorem}{\rm(cf. \cite{RobSal:SF}, Thm.~4.22)}. Suppose
$T:[a,b]\to \sL_{sa}(W,H)$ is continuously differentiable. Then
the path $T+\gd$ is transversal for almost every $\gd\in\R$.
\end{theorem}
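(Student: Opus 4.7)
The plan is to reduce the statement to an application of Sard's theorem via Kato's Selection Theorem. The key observation is that for any $\gd\in\R$, the crossing operator of the shifted path is
\[
C_{T+\gd}(t)=\Proj_{\ker(T_t+\gd)}\circ T_t'|_{\ker(T_t+\gd)},
\]
since $(T+\gd)'_t = T_t'$. Hence $T+\gd$ fails to be transversal at $t$ precisely when $-\gd\in\spec(T_t)$ \emph{and} the operator $P\circ T_t'\circ P$ (with $P=\Proj_{\ker(T_t+\gd)}$) has $0$ as an eigenvalue. By Theorem \ref{Kato:sel}, the eigenvalues of $P\circ T_t'\circ P$ are exactly the derivatives $\gl_i'(t)$ of the $C^1$-eigenvalue branches $\gl_i$ satisfying $\gl_i(t)=-\gd$. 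Thus $T+\gd$ fails to be transversal if and only if $-\gd$ is a \emph{critical value} of one of the local $C^1$-parametrizations of the spectrum of $T_t$.

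First I would localize. Fix any $R>0$. For each $t_0\in[a,b]$, choose $c>R$ with $\pm c\notin\spec(T_{t_0})$; Kato's Selection Theorem produces an open neighbourhood of $t_0$ together with finitely many $C^1$-functions $\gl_i^{(t_0)}$ representing the eigenvalues of $T_t$ in $(-c,c)$. By compactness of $[a,b]$ I can extract a finite subcover $I_1,\ldots,I_N$ and, on each $I_j$, finitely many $C^1$-functions $\gl_1^{(j)},\ldots,\gl_{n_j}^{(j)}:I_j\to(-c_j,c_j)$ with $(-c_j,c_j)\supset[-R,R]$ that parametrize (with multiplicity) all eigenvalues of $T_t$ lying in $[-R,R]$, for every $t\in I_j$.

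Next I would invoke Sard's theorem. Each $\gl_i^{(j)}$ is a $C^1$-map of a compact interval into $\R$, so its set of critical values $\Sigma_i^{(j)}\subset\R$ has Lebesgue measure zero. Let
\[
B_R:=\bigcup_{j=1}^{N}\bigcup_{i=1}^{n_j}\bigl(-\Sigma_i^{(j)}\bigr),
\]
which is a finite union of measure-zero sets, hence of measure zero. By the characterization above, every $\gd\in(-R,R)\setminus B_R$ yields a transversal path $T+\gd$ on $[a,b]$ restricted to the interval where $-\gd$ could appear in the spectrum: indeed if $-\gd\in\spec(T_t)$ for some $t\in I_j$, then $-\gd\in[-R,R]\subset(-c_j,c_j)$, so one of the branches $\gl_i^{(j)}$ satisfies $\gl_i^{(j)}(t)=-\gd$, and the assumption $\gd\notin B_R$ forces $(\gl_i^{(j)})'(t)\neq 0$, i.e., $0$ is not an eigenvalue of the crossing operator $C_{T+\gd}(t)$.

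Finally I would let $B:=\bigcup_{R\in\N}B_R$, which is a countable union of null sets and therefore null. Any $\gd\in\R\setminus B$ lies in some $(-R,R)$ with $\gd\notin B_R$, and the preceding paragraph shows that $T+\gd$ is transversal. The main technical subtlety is the patching step: one must verify that the \emph{finite} Kato selection on each $I_j$ really captures \emph{all} eigenvalues of $T_t$ in a fixed window $[-R,R]$ uniformly for $t\in I_j$ (not merely those near a specific value), which is ensured by choosing $c_j>R$ so that $\pm c_j\notin\spec(T_t)$ for all $t\in I_j$ after possibly shrinking $I_j$. Once this localization is set up, the remainder is a straightforward countable application of Sard's theorem.
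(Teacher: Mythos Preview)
Your argument is correct and is precisely the approach the paper has in mind: the paper does not spell out a proof but simply states that the result follows ``by an application of Sard's Theorem'' (citing Robbin--Salamon), and your reduction via Kato's Selection Theorem to finitely many local $C^1$ eigenvalue branches, followed by Sard and a countable exhaustion over $R\in\N$, is exactly how that application is carried out. Your identification of the bad set with the critical values of the branches is justified by the third clause of Theorem~\ref{Kato:sel}, and the patching subtlety you flag is handled by that same clause, which asserts the exhaustiveness of the branches for all $t$ in the Kato neighbourhood, not just at $t_0$.
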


\begin{dfn}\label{SF:def}\index{$\SF(T)$, spectral
flow}\index{<@$\SF(T)$, spectral flow} Let $T:[a,b]\to
\sL_{sa}(W,H)$ be a continuously differentiable path. Since $T_a$
and $T_b$ have discrete spectrum, there exists $\gd>0$ such that
the operators $T_a+\eps$ and $T_b+\eps$ are invertible for all
$0<\eps\le\gd$. According to the above result there exists $\eps$
among these such that the path $T+\eps$ is transversal. We define
the {\em spectral flow} of $T$ by letting
\[
\SF(T):=\sum_{t\in(a,b)} \sign C_{T+\eps}(t)\;,
\]
where ``sign" denotes the signature of a Hermitian endomorphism,
i.e., the number of positive eigenvalues minus the number of
negative ones.
\end{dfn}

By virtue of Kato's Selection Theorem, the above definition is
independent of the choices made provided that $\gd$ is chosen
sufficiently small.

If $\eps$ can be chosen in such a way that in addition, $T+\eps$
has only simple crossings, then the spectral flow of $T$ is
clearly given by
\[
\SF(T)=\sum_{t\in(a,b)}\sgn\Scalar{T_t'v_t}{v_t}\;,
\]
where each $v_t$ is a vector spanning $\ker(T_t+\eps)$. Here,
``sgn" denotes the sign whereas ``sign" is reserved for the
signature of a Hermitian endomorphism.

\begin{remark*}
Observe that our definition has a built-in convention of how to
treat zero eigenvalues of $T_a$ and $T_b$: Instead of counting
crossings with the zero line, we take the intersection number of
$\bigcup_{t\in [a,b]}\{t\}\times \spec(T_t)$ with the line
$[a,b]\times \{-\eps\}$ (cf. Fig. \ref{sf:eps}). Moreover, note
that this corresponds to subtracting negative eigenvalues of
$C_T(a)$ and adding positive eigenvalues of $C_T(b)$, i.e.,
\begin{multline*}
\sum_{t\in(a,b)} \sign C_{T+\eps}(t) =  \sum_{t\in(a,b)} \sign
C_T(t) - \#\setdef{\gl\in\spec C_T(a)}{\gl<0}\\ +
\#\setdef{\gl\in\spec C_T(b)}{\gl>0}.
\end{multline*}

\end{remark*}

\begin{figure}
\begin{center}
\setlength{\unitlength}{0.0003in}
\begingroup\makeatletter\ifx\SetFigFont\undefined%
\gdef\SetFigFont#1#2#3#4#5{%
  \reset@font\fontsize{#1}{#2pt}%
  \fontfamily{#3}\fontseries{#4}\fontshape{#5}%
  \selectfont}%
\fi\endgroup%
{\renewcommand{\dashlinestretch}{30}
\begin{picture}(9622,7239)(0,-10)
\path(300,3012)(9600,3012)
\path(300,7212)(300,12)
\path(9600,7212)(9600,12)
\dashline{60.000}(300,2512)(9600,2512)
\thicklines
\color{gray2}
\path(300,3012)(301,3013)(304,3014)
    (309,3017)(317,3021)(329,3027)
    (344,3036)(365,3047)(390,3060)
    (420,3076)(456,3095)(497,3117)
    (543,3141)(594,3169)(650,3198)
    (711,3230)(776,3265)(845,3301)
    (918,3339)(993,3378)(1070,3419)
    (1149,3460)(1230,3502)(1311,3545)
    (1393,3587)(1475,3629)(1556,3671)
    (1636,3712)(1716,3753)(1794,3792)
    (1870,3831)(1945,3869)(2018,3905)
    (2089,3940)(2158,3974)(2226,4007)
    (2291,4039)(2354,4069)(2415,4098)
    (2474,4126)(2532,4153)(2588,4178)
    (2642,4203)(2694,4226)(2745,4248)
    (2795,4269)(2843,4290)(2890,4309)
    (2936,4328)(2982,4345)(3026,4362)
    (3069,4378)(3112,4394)(3154,4409)
    (3196,4423)(3238,4437)(3283,4452)
    (3329,4466)(3374,4480)(3419,4492)
    (3464,4505)(3509,4516)(3554,4527)
    (3600,4538)(3645,4547)(3691,4556)
    (3737,4564)(3783,4572)(3829,4579)
    (3876,4584)(3923,4590)(3971,4594)
    (4018,4597)(4067,4600)(4115,4601)
    (4164,4602)(4213,4601)(4263,4600)
    (4313,4598)(4363,4595)(4413,4590)
    (4464,4585)(4515,4579)(4566,4572)
    (4617,4563)(4669,4554)(4720,4544)
    (4772,4533)(4823,4521)(4875,4508)
    (4927,4494)(4979,4479)(5031,4463)
    (5083,4446)(5135,4428)(5187,4409)
    (5240,4390)(5292,4370)(5345,4348)
    (5398,4326)(5451,4303)(5505,4279)
    (5559,4255)(5614,4229)(5669,4202)
    (5725,4175)(5725,4174)(5768,4153)
    (5811,4131)(5856,4108)(5900,4084)
    (5946,4059)(5992,4034)(6039,4008)
    (6088,3981)(6137,3953)(6187,3924)
    (6239,3893)(6292,3862)(6346,3830)
    (6402,3796)(6459,3761)(6518,3725)
    (6578,3687)(6641,3648)(6705,3608)
    (6771,3566)(6839,3523)(6909,3478)
    (6981,3431)(7055,3383)(7131,3333)
    (7209,3282)(7289,3230)(7371,3176)
    (7454,3120)(7540,3063)(7627,3005)
    (7716,2946)(7806,2885)(7897,2824)
    (7989,2762)(8081,2700)(8174,2637)
    (8267,2574)(8359,2512)(8451,2449)
    (8541,2388)(8630,2327)(8717,2268)
    (8801,2211)(8883,2155)(8961,2101)
    (9036,2050)(9106,2002)(9172,1957)
    (9234,1914)(9290,1875)(9342,1840)
    (9388,1808)(9430,1779)(9466,1755)
    (9497,1733)(9524,1715)(9545,1700)
    (9563,1688)(9576,1679)(9586,1672)
    (9593,1667)(9597,1664)(9599,1663)(9600,1662)
\path(300,6012)(301,6011)(303,6010)
    (306,6007)(311,6002)(319,5995)
    (330,5986)(344,5974)(361,5959)
    (383,5940)(408,5918)(438,5893)
    (472,5864)(510,5831)(553,5794)
    (601,5753)(653,5709)(709,5661)
    (770,5610)(835,5555)(903,5497)
    (975,5437)(1050,5374)(1129,5308)
    (1210,5241)(1293,5172)(1378,5102)
    (1465,5030)(1553,4958)(1643,4885)
    (1733,4811)(1823,4738)(1914,4665)
    (2005,4593)(2095,4521)(2185,4450)
    (2274,4380)(2362,4312)(2449,4244)
    (2535,4178)(2620,4114)(2704,4051)
    (2786,3990)(2867,3931)(2947,3873)
    (3025,3817)(3101,3763)(3176,3710)
    (3250,3660)(3323,3611)(3394,3564)
    (3464,3519)(3532,3475)(3600,3433)
    (3666,3392)(3732,3354)(3796,3316)
    (3859,3281)(3922,3246)(3984,3213)
    (4046,3182)(4106,3152)(4167,3123)
    (4227,3095)(4286,3068)(4346,3043)
    (4405,3019)(4464,2995)(4523,2973)
    (4582,2952)(4641,2932)(4700,2912)
    (4761,2893)(4822,2874)(4884,2857)
    (4945,2841)(5007,2825)(5070,2810)
    (5133,2797)(5197,2784)(5261,2772)
    (5326,2761)(5392,2751)(5459,2742)
    (5527,2734)(5597,2726)(5667,2720)
    (5739,2714)(5812,2709)(5887,2705)
    (5964,2702)(6042,2700)(6122,2698)
    (6203,2698)(6287,2698)(6372,2699)
    (6459,2701)(6548,2703)(6639,2706)
    (6733,2710)(6827,2715)(6924,2720)
    (7023,2727)(7123,2733)(7224,2741)
    (7327,2749)(7432,2758)(7537,2767)
    (7643,2777)(7749,2787)(7856,2797)
    (7962,2808)(8068,2819)(8174,2831)
    (8278,2842)(8380,2854)(8480,2866)
    (8578,2877)(8673,2889)(8764,2900)
    (8852,2911)(8936,2922)(9015,2932)
    (9090,2942)(9160,2951)(9224,2959)
    (9283,2967)(9337,2975)(9385,2981)
    (9427,2987)(9464,2992)(9496,2997)
    (9523,3001)(9545,3004)(9562,3006)
    (9576,3008)(9586,3010)(9593,3011)
    (9597,3012)(9599,3012)(9600,3012)
\path(300,837)(301,837)(304,838)
    (308,840)(316,843)(327,846)
    (342,852)(362,859)(386,867)
    (415,877)(450,889)(490,903)
    (535,919)(585,937)(640,957)
    (700,979)(764,1002)(833,1027)
    (905,1053)(980,1081)(1058,1110)
    (1139,1140)(1221,1171)(1304,1202)
    (1388,1234)(1473,1267)(1557,1300)
    (1642,1333)(1725,1366)(1808,1399)
    (1889,1432)(1969,1465)(2048,1497)
    (2125,1530)(2200,1562)(2273,1594)
    (2345,1626)(2414,1657)(2482,1689)
    (2548,1720)(2613,1751)(2675,1782)
    (2736,1812)(2796,1843)(2854,1874)
    (2911,1904)(2966,1935)(3021,1966)
    (3074,1998)(3126,2029)(3178,2061)
    (3229,2094)(3279,2127)(3329,2160)
    (3378,2194)(3427,2228)(3476,2264)
    (3525,2299)(3570,2334)(3616,2368)
    (3661,2404)(3707,2440)(3752,2476)
    (3798,2514)(3844,2552)(3890,2590)
    (3936,2630)(3983,2670)(4029,2711)
    (4077,2752)(4124,2794)(4172,2837)
    (4220,2881)(4268,2925)(4317,2970)
    (4366,3015)(4415,3061)(4464,3107)
    (4514,3154)(4564,3202)(4614,3250)
    (4665,3298)(4715,3346)(4766,3395)
    (4817,3444)(4868,3493)(4919,3542)
    (4970,3592)(5022,3641)(5073,3690)
    (5124,3739)(5175,3788)(5226,3837)
    (5276,3885)(5327,3934)(5377,3981)
    (5427,4029)(5477,4076)(5527,4122)
    (5576,4168)(5625,4213)(5674,4258)
    (5723,4302)(5771,4345)(5818,4388)
    (5866,4430)(5913,4471)(5959,4512)
    (6006,4551)(6052,4590)(6098,4628)
    (6143,4666)(6188,4702)(6233,4738)
    (6278,4773)(6323,4807)(6367,4841)
    (6411,4874)(6456,4906)(6500,4937)
    (6549,4971)(6598,5004)(6648,5036)
    (6698,5068)(6748,5099)(6799,5129)
    (6850,5159)(6902,5187)(6955,5216)
    (7009,5244)(7064,5271)(7120,5298)
    (7177,5325)(7236,5351)(7296,5377)
    (7358,5403)(7421,5429)(7486,5455)
    (7553,5481)(7622,5506)(7693,5532)
    (7765,5558)(7840,5583)(7916,5609)
    (7994,5634)(8073,5660)(8154,5686)
    (8236,5711)(8319,5736)(8402,5761)
    (8486,5786)(8570,5810)(8654,5834)
    (8736,5858)(8817,5880)(8896,5902)
    (8973,5923)(9046,5943)(9117,5962)
    (9183,5980)(9245,5996)(9302,6011)
    (9354,6025)(9401,6037)(9442,6047)
    (9478,6057)(9509,6064)(9535,6071)
    (9555,6076)(9571,6080)(9583,6083)
    (9591,6085)(9596,6086)(9599,6087)(9600,6087)
\color{black}
\put(0,2920){\makebox(0,0)[lb]{$\scriptstyle 0$}}
\put(-250,2450){\makebox(0,0)[lb]{$\scriptstyle-\eps$}}
\put(3950,2150){\makebox(0,0)[lb]{$\scriptstyle +1$}}
\put(7700,2150){\makebox(0,0)[lb]{$\scriptstyle -1$}}
\end{picture}
}
\caption{Convention for counting zero eigenvalues at the endpoints
}\label{sf:eps}
\end{center}
\end{figure}

\noindent\textbf{Properties of the spectral flow.} For any pair
$T_1:[a,b]\to\sL_{sa}(W_1,H_1)$ and
$T_2:[a,b]\to\sL_{sa}(W_2,H_2)$ of paths of self-adjoint
operators, we can form the direct sum
\[
T_1\oplus T_2:[a,b]\to \sL_{sa}(W_1\oplus W_2, H_1\oplus H_2)\;.
\]
Given $T_1:[a,b]\to\sL_{sa}(W,H)$ and $T_2:[b,c]\to\sL_{sa}(W,H)$
satisfying $T_1(b)=T_2(b)$, we can also build the concatenation
$T_1\#T_2:[a,c]\to\sL_{sa}(W,H)$, defined by
\[
(T_1\#T_2)(t):=\begin{cases}T_1(t)\quad\text{\rm if } t\in [a,b] \\
                              T_2(t)\quad\text{\rm if } t\in
                              [b,c]\;.
\end{cases}
\]

\begin{prop}\label{SF}{\rm(cf. \cite{RobSal:SF}, Thm.~4.23)}.
Let $T_1$ and $T_2$ be continuously differentiable paths of
self-adjoint operators in $H$ with domain $W$.
\begin{enumerate}
\item If $T_1$ is a constant family, then $\SF(T_1)=0$.
\item If $T_1$ and $T_2$ are homotopic relative endpoints, then
$\SF(T_1)=\SF(T_2)$.
\item $\SF(T_1\oplus T_2)=\SF(T_1)+\SF(T_2)$, where we allow $T_1$
and $T_2$ to be defined in different Hilbert spaces.
\item $\SF(T_1\# T_2)=\SF(T_1)+\SF(T_2)$, whenever the left-hand
side is well-defined.
\end{enumerate}
\end{prop}

\begin{remark*}
It is not difficult to see that a homotopy invariant on the space
of $C^1$-paths gives also rise to a homotopy invariant for
continuous paths. Thus we could go on and define the spectral
flow in this more general setting like, for example, in Sec.~4 of
\cite{RobSal:SF}. However, the situation we shall actually
encounter does not require any further considerations so that we
refer to the literature for a more extensive treatment.
\end{remark*}
\index{families of operators!spectral flow|)}

\section{Orientation transport}\label{app:OT}

Let $H$ be a separable $\R$-Hilbert space and let $W\subset H$ be
a dense subspace. We denote by
\[
\sF_{sa}(W,H):= \sL_{sa}(W,H)\cap \sF(W,H)
\]
the space of closed, unbounded, self-adjoint Fredholm operators
with domain $W$. Note that $\sF_{sa}(W,H)$ is an open subset of
$\sL_{sym}(W,H)$ if we use the operator norm. We now consider the
restriction of the determinant line bundle $Det\to \sF(W,H)$ of
Appendix \ref{app:det} to $\sF_{sa}(W,H)$. For every
$T\in\sF_{sa}(W,H)$ we have $(\im T)^\perp=\ker T$. Using the
Knudsen-Mumford sign conventions \eqref{sign:conv}, we thus have:

\begin{lemma}
For every $T\in\sF_{sa}(W,H)$ let $n_0(T):=\dim(\ker T)$. Then
there is a canonical isomorphism
\begin{equation}\label{sa:fibres}
\Psi_T:Det_T=\det(\ker T)\otimes \det(\coker T)^*\to \R,
\end{equation}
given by
\[
\Psi_T(\xi\otimes \go^*):=(-1)^{\frac{n_0(T)(n_0(T)+1)}2}
\go^*[\xi].
\]
\end{lemma}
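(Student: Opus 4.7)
The plan is to extract $\Psi_T$ from the fact that self-adjointness identifies $\coker T$ with $\ker T$. Since $T\in\sF_{sa}(W,H)$ is Fredholm, its range $\im T$ is closed; combined with the standard relation $(\im T)^\perp=\ker T^*=\ker T$, we obtain the orthogonal decomposition $H=\im T\oplus\ker T$. The composition $\phi_T:\ker T\hookrightarrow H\twoheadrightarrow\coker T$ is therefore a canonical isomorphism of finite dimensional $\R$-vector spaces, which induces a canonical isomorphism $\det(\phi_T):\det(\ker T)\to\det(\coker T)$ on top exterior powers.

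I would then define $\Psi_T$ as the composition of three canonical maps: first, transport the dual factor via $\mathrm{id}\otimes(\det\phi_T)^{*}:\det(\ker T)\otimes\det(\coker T)^*\to\det(\ker T)\otimes\det(\ker T)^*$; second, swap the two factors using the Knudsen-Mumford flip from \eqref{sign:conv}; and third, apply the canonical evaluation pairing $\det(\ker T)^*\otimes\det(\ker T)\to\R$, again using the sign convention \eqref{sign:conv}. Each step is an isomorphism, so $\Psi_T$ is automatically bijective.

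The main obstacle—indeed essentially the only nontrivial bookkeeping—is to verify the overall sign. Writing $n_0=n_0(T)$ for brevity, the weighted line $\det(\ker T)$ has weight $n_0$ and its dual has weight $-n_0$. The swap step therefore produces a factor $(-1)^{n_0\cdot(-n_0)}=(-1)^{n_0^2}=(-1)^{n_0}$, using that $n_0^2\equiv n_0\pmod 2$. The evaluation step then contributes a further $(-1)^{n_0(n_0-1)/2}$. The resulting exponent $n_0+n_0(n_0-1)/2$ simplifies to $n_0(n_0+1)/2$, which is precisely the sign appearing in the statement of the lemma.

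Finally, unraveling the definitions shows that $\Psi_T(\xi\otimes\go^*)=(-1)^{n_0(n_0+1)/2}\go^*[\xi]$, where $\go^*[\xi]$ abbreviates the pairing $\go^*\bigl(\det(\phi_T)(\xi)\bigr)$ between $\go^*\in\det(\coker T)^*$ and the image of $\xi$ under the canonical isomorphism $\det(\phi_T)$. This matches the formula claimed in the lemma, and completes the proof.
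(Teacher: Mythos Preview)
Your proof is correct and follows precisely the approach the paper implicitly indicates: the paper states the lemma immediately after observing that $(\im T)^\perp=\ker T$ for self-adjoint $T$ and invoking the Knudsen-Mumford sign conventions \eqref{sign:conv}, leaving the verification to the reader. You have carried out exactly this verification, making the canonical identification $\phi_T:\ker T\to\coker T$ explicit and tracking the two sign contributions from the swap and the evaluation to obtain $(-1)^{n_0(n_0+1)/2}$.
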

\begin{remark*}
Note that this lemma does not necessarily imply that the line
bundle $Det|_{\sF_{sa}}\to \sF_{sa}$ is trivial since the
collection $\{\Psi_T\}_{T\in\sF_{sa}}$ need not give rise to a
{\em continuous} map $Det|_{\sF_{sa}} \to \sF_{sa}\times \R$.\\
\end{remark*}

\noindent\textbf{Orientation transport.}\index{families of
operators!orientation transport|(}\index{<@$\eps(T)$, orientation
transport} Let $T:[a,b]\to \sF_{sa}(W,H)$ be a continuous path.
Since $[a,b]$ is contractible, there exists a trivialization of
the determinant line bundle $\det T\to [a,b]$. This induces an
isomorphism
\begin{equation}\label{det:triv:isom}
\Psi_{T_aT_b}:\det T_a \to \det T_b.
\end{equation}
By concatenation with the isomorphisms of \eqref{sa:fibres} at
the endpoints $T_a$ and $T_b$ we get a chain of isomorphisms
\[
\begin{CD}
\R @>{\Psi_{T_a}^{-1}}>> \det T_a @>{\Psi_{T_aT_b}}>> \det T_b
@>{\Psi_{T_b}}>> \R.
\end{CD}
\]
It is immediate that the parity of the isomorphism $\R\to \R$
given in this way is independent of the particular choice of
trivialization of $\det T\to [a,b]$. Hence we may define:

\begin{dfn}\label{OT:def}
Let $T:[a,b]\to\sF_{sa}(W,H)$ be a continuous path, and let
$\Psi_{T_aT_b}:\det T_a \to \det T_b$ be an isomorphism induced by
a trivialization of $\det T\to [a,b]$. Then
\[
\eps(T):=\sgn\big(\Psi_{T_b}\circ \Psi_{T_aT_b}\circ
\Psi_{T_a}^{-1}(1)\big)
\]
is called the {\em orientation transport} along $T$.
\end{dfn}

We are now going to derive an alternative formula for the
orientation transport in the case of a continuous path
$T:[a,b]\to\sF_{sa}(W,H)$ which has invertible endpoints. Let
$K:[a,b]\to\sL(W\oplus V,H_2)$ be a stabilizer of $T$. Modulo the
canonical isomorphism $\det T\cong \det(\ker T_K)\otimes (\det
V)^*$, the isomorphisms \eqref{sa:fibres} at the endpoints $T_a$
and $T_b$,
\[
\Psi_{K_t}:\det(\ker (T_K)_t)\otimes (\det V)^*\longrightarrow
\R,\quad t\in\{a,b\},
\]
are given by
\begin{equation}\label{can:isom:expl:inv}
\Psi_{K_t}(\eta\otimes P_V(\eta)^*)= (-1)^{\frac{n(n+1)}2},
\end{equation}
where $n:=\dim V$. The orientation transport along $T$ is given by
\[
\eps(T)=\sgn\big( \Psi_{K_b}\circ \Psi_{T_aT_b}\circ
\Psi_{K_a}^{-1}(1)\big),
\]
with an isomorphism
\[
\Psi_{T_aT_b}:\det\big(\ker (T_K)_a\big)\otimes (\det V)^* \to
\det\big(\ker (T_K)_b\big)\otimes (\det V)^*
\]
induced by a trivialization of $\det(\ker T_K)\otimes(\det
V)^*\to[a,b]$. Clearly, choosing a trivialization of $\ker
T_K\to[a,b]$ yields an isomorphism
\[
\Psi_a^b: \ker (T_K)_a\to \ker (T_K)_b,
\]
and we may then take
\[
\Psi_{T_aT_b}(\eta\otimes\go^*):=\Psi_a^b(\eta)\otimes \go^*.
\]
Formula \eqref{can:isom:expl:inv} implies that for any basis
$\eta\in\det \ker (T_K)_a$,
\begin{equation}\label{OT:inv:2}
\Psi_{K_b}\circ \Psi_{T_aT_b}\circ \Psi_{K_a}^{-1}(1) =
(-1)^{\frac {n(n+1)}2}\cdot \Psi_{K_b}\big( \Psi_a^b(\eta)\otimes
P_V (\eta)^* \big).
\end{equation}
Now, to use \eqref{can:isom:expl:inv} for $t:=b$, we observe that
\begin{equation}\label{OT:inv:3}
\Psi_a^b(\eta)\otimes P_V(\eta)^* = \det(\gF_V) \cdot
\Psi_a^b(\eta)\otimes \big[P_V\circ\Psi_a^b(\eta)\big]^*,
\end{equation}
where $\gF_V:V\to V$ is defined by the following commutative
diagram:
\begin{equation}\label{OT:CD}
\begin{CD}
\ker(T\oplus K)_a  @>{\Psi_a^b}>>   \ker(T\oplus K)_b\\
@VV{P_V}V                    @VV{P_V}V \\
V                   @>{\gF_V}>>        V
\end{CD}
\end{equation}
Here, the projection $P_V:\ker(T_K)_t \to V$ is an isomorphism
because $T_t$ is invertible for $t\in\{a,b\}$. Inserting
\eqref{OT:inv:3} in \eqref{OT:inv:2}, we conclude
\[
\Psi_{K_b}\circ \Psi_{T_aT_b}\circ \Psi_{K_a}^{-1}(1) =
(-1)^{\frac {n(n+1)}2}\cdot (-1)^{\frac {n(n+1)}2}\cdot \det
\gF_V =\det \gF_V.
\]
Hence, we have proved:

\begin{lemma}\label{OT:inv}
Let $T:[a,b]\to\sF_{sa}(W,H)$ be a continuous path with
invertible endpoints, and let $K:[a,b]\to \sL(W\oplus V,H)$ be a
stabilizer of $T$, and define $\gF_V:V\to V$ via the commutative
diagram \eqref{OT:CD}. Then
\[
\eps(T)= \sgn(\det \gF_V).
\]
\end{lemma}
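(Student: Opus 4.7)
The plan is to unravel the definition of $\eps(T)$ in terms of the stabilizer $K$ and reduce everything to the finite-dimensional map $\gF_V$ on $V$. First I would use the canonical isomorphism from Theorem \ref{det:line} to identify $\det T_t \cong \det(\ker (T_K)_t) \otimes (\det V)^*$ for every $t \in [a,b]$, which gives a concrete model for the determinant line bundle $\det T \to [a,b]$ and consequently for any trivialization-induced isomorphism $\Psi_{T_aT_b}$. Since $\ker T_K \to [a,b]$ is a genuine finite-rank vector bundle over a contractible base, I can pick a trivialization of it to obtain a linear isomorphism $\Psi_a^b : \ker(T_K)_a \to \ker(T_K)_b$, and take $\Psi_{T_aT_b}(\eta \otimes \go^*) := \Psi_a^b(\eta) \otimes \go^*$.

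Next I would exploit the invertibility of $T_a$ and $T_b$. Surjectivity of $T_K$ combined with invertibility of $T_t$ at the endpoints forces the orthogonal projection $P_V : \ker(T_K)_t \to V$ to be an isomorphism, since any element $(e,v) \in \ker(T_K)_t$ with $v = 0$ would satisfy $T_t e = 0$, hence $e = 0$. This makes the diagram \eqref{OT:CD} well-defined with $\gF_V = P_V \circ \Psi_a^b \circ P_V^{-1}$ genuinely an endomorphism of $V$.

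The main step is then a direct computation: starting from $1 \in \R$, apply $\Psi_{K_a}^{-1}$ using the explicit formula \eqref{can:isom:expl} to produce an adapted basis element $\eta \otimes P_V(\eta)^* \in \det(\ker(T_K)_a) \otimes (\det V)^*$ with sign prefactor $(-1)^{n(n+1)/2}$, where $n=\dim V$. Transport by $\Psi_{T_aT_b}$ gives $\Psi_a^b(\eta) \otimes P_V(\eta)^*$. The key algebraic move is to rewrite the dual basis, replacing $P_V(\eta)^*$ by $P_V(\Psi_a^b(\eta))^*$ at the cost of the determinant of the change-of-basis $\gF_V$; this is where the diagram \eqref{OT:CD} enters, yielding the factor $\det \gF_V$. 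Finally, applying $\Psi_{K_b}$ via \eqref{can:isom:expl} introduces a second factor $(-1)^{n(n+1)/2}$, which cancels the first, leaving $\det \gF_V$, and hence $\eps(T) = \sgn(\det \gF_V)$.

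The main obstacle, though a bookkeeping one rather than a conceptual one, is to verify that the Knudsen-Mumford sign conventions \eqref{sign:conv} are correctly handled at every step so that the two $(-1)^{n(n+1)/2}$ factors indeed cancel and no stray sign is introduced when passing between $P_V(\eta)^*$ and $P_V(\Psi_a^b(\eta))^*$. It is also worth checking that the answer is independent of the chosen trivialization of $\ker T_K$, which is automatic because two such trivializations differ by a continuous family of linear automorphisms whose determinants have constant sign on the connected interval $[a,b]$; that sign appears twice in the chain of maps and thus cancels.
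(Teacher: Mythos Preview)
Your proposal is correct and follows essentially the same approach as the paper: identify $\det T$ with $\det(\ker T_K)\otimes(\det V)^*$ via the stabilizer, choose a trivialization of $\ker T_K$ to obtain $\Psi_a^b$, use invertibility of $T_a,T_b$ to see that $P_V$ is an isomorphism at the endpoints, and then track the composition $\Psi_{K_b}\circ\Psi_{T_aT_b}\circ\Psi_{K_a}^{-1}$ explicitly, with the two factors $(-1)^{n(n+1)/2}$ cancelling and the change of dual basis producing $\det\gF_V$. The only cosmetic difference is that the paper records the specialized formula \eqref{can:isom:expl:inv} (the case $n_0=0$ of \eqref{can:isom:expl}) for $\Psi_{K_t}$ before stating the lemma, whereas you invoke \eqref{can:isom:expl} directly.
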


\noindent\textbf{Properties of the orientation transport.} The
orientation transport has some properties which are reminiscent
of what we observed for the spectral flow in Proposition
\ref{SF}. Using the definition of direct sum and concatenation as
given there, we have:

\begin{prop}\label{OT}
Let $T_0\in\sF_{sa}(W,H)$ be a self-adjoint Fredholm
operator.
\begin{enumerate}
\item The orientation transport along the constant family $T_t:= T_0$
is 1.
\item If there exists $\gd>0$ such that $T_t:=T_0+t$ is invertible
for $0<t\le \gd$, then
\[
\eps(T_t;\;{\scriptstyle 0\le t\le \gd})=(-1)^{\dim(\ker T)}.
\]
\end{enumerate}
Moreover, if $T_0$, $T_1:[a,b]\to\sF_{sa}(W,H)$ are continuous
paths.
\begin{enumerate}
\setcounter{enumi}{2}
\item If $T_0$ and $T_1$ are homotopic relative
endpoints, then $\eps(T_0)=\eps(T_1)$.
\item $\eps(T_0\oplus T_1)=\eps(T_0)\cdot \eps(T_1)$, where we
allow $T_0$ and $T_1$ to be defined in different Hilbert spaces.
\item $\eps(T_0\# T_1)=\eps(T_0)\cdot \eps(T_1)$, whenever the
left-hand side is well-defined.
\end{enumerate}
\end{prop}

\begin{proof}
We only prove (ii) and (iii) since the other properties are
straightforward. We start with the proof of (ii) using the same
approach as in Lemma \ref{OT:inv}. Letting $V:=\ker T_0$, we
consider
\[
(T_V)_t: W\oplus V\to H,\quad (x,v)\mapsto T_t x + v, \quad
t\in[0,\gd].
\]
The determinant line bundle $\det T\to[0,\gd]$ obtains its line
bundle structure via the natural isomorphism $\det T \cong
\det(\ker T_V)\otimes (\det V)^*$. The bundle $\ker(T_V)\to
[0,\gd]$ is canonically trivial via
\[
[0,\gd]\times V \to \ker T_V ,\quad (t,v)\mapsto (v,-tv)\;.
\]
Hence, we get an isomorphism
\[
\Psi_0^\gd:\ker (T_V)_0\to\ker (T_v)_\gd,\quad (v,0)\mapsto
(v,-\gd v)
\]
which in turn induces an isomorphism on the level of determinants,
\[
\begin{split}
\Psi_{T_0T_\gd}:\det(\ker (T_V)_0)\otimes (\det V)^*&\to \det(\ker
(T_V)_\gd)\otimes (\det V)^*,\\
\xi\otimes \go^* &\mapsto \Psi_0^\gd(\xi)\otimes \go^*.
\end{split}
\]
Modulo the identification $\det T \cong \det(\ker T_V)\otimes
(\det V)^*$, the isomorphisms \eqref{sa:fibres} at the endpoints
$T_0$ and $T_\gd$ take the form
\[
\begin{split}
\Psi_{T_0}:\det (\ker (T_V)_0) \otimes (\det V)^* &\to \R,\quad
\Psi_{T_0}(\xi\otimes \go^*)= (-1)^{n+\frac{n(n+1)}2 }\cdot
\go^*[\xi],\\
\Psi_{T_\gd}:\det (\ker (T_V)_\gd) \otimes (\det V)^* &\to
\R,\quad \Psi_{T_\gd}\big(\eta\otimes P_V(\eta)^*\big) =
(-1)^{\frac{ n(n+1)}2},
\end{split}
\]
where $n:=\dim V$. Now, the orientation transport along $T$ is
given by
\[
\eps(T)=\sgn\big(\Psi_{T_\gd}\circ \Psi_{T_0T_\gd}\circ
\Psi_{T_0}^{-1}(1)\big).
\]
From the explicit formula for $\Psi_0^\gd$ one readily gets that
$P_V(\Psi_0^\gd(\xi))= (-\gd)^n\cdot \xi$. This implies
\[
\xi^*= (-\gd)^n\cdot P_V(\Psi_0^\gd(\xi))^*.
\]
Hence, the expressions for $\Psi_{T_0}$ and $\Psi_{T_\gd}$ imply
that
\[
\begin{split}
\Psi_{T_\gd}\circ \Psi_{T_0T_\gd}\circ \Psi_{T_0}^{-1}(1) &=
(-1)^{n+\frac{n(n+1)}2}\cdot\Psi_{T_\gd}\big(\Psi_0^\gd(\xi)\otimes
\xi^* \big) \\
&= (-1)^{n+\frac{n(n+1)}2}\cdot \Psi_{T_\gd}\big((-\gd)^n \cdot
\Psi_0^\gd(\xi)\otimes P_V(\Psi_0^\gd(\xi))^*\big)\\
&= (-1)^{n+\frac{n(n+1)}2}\cdot (-\gd)^n \cdot
(-1)^{\frac{n(n+1)}2} =\ \gd^n\ .
\end{split}
\]
Since $\gd>0$, the orientation transport along $T$ is 1.

\noindent To prove (iii), suppose that
\[
S:[a,b]\times [0,1]\to \sF_{sa}(W,H)
\]
is a homotopy connecting $T_0$ and $T_1$ and leaving the
endpoints fixed. We then consider the determinant bundle $\det
S\to [a,b]\times [0,1]$. Note that then, we have the following
isomorphism of line bundles:
\[
\det T_0= \det S|_{[a,b]\times \{0\}}\quad\text{ and }\quad \det
T_1=\det S|_{[a,b]\times \{1\}}.
\]
Since $S$ leaves the endpoints fixed, we may define for arbitrary
$s\in[0,1]$
\[
L_a:= \det S(a,s) =\det (T_0)_a = \det (T_1)_a
\]
and similarly $L_b$. As $[a,b]\times [0,1]$ is contractible, there
exists a nowhere vanishing section $\Psi: [a,b]\times [0,1] \to
\det S$. Restricting $\Psi$ to $[a,b]\times \{0\}$ and
$[a,b]\times \{1\}$, we get isomorphisms
\[
\Psi^0:L_a\to L_b\quad\text{ and }\quad \Psi^1:L_a\to L_b.
\]
As both are induced by $\Psi$, they are homotopic in the space of
isomorphisms $L_a\to L_b$. This shows that using $\Psi^0$ and
$\Psi^1$ as in the definition to compute the orientation transport
along $T_0$ and $T_1$ respectively, we get
$\eps(T_0)=\eps(T_1)$.\qedhere\\
\end{proof}

\noindent\textbf{Orientation transport and spectral flow.} The
above proposition suggests an important connection between the
spectral flow and the orientation transport of a path of
self-adjoint Fredholm operators. Since we defined the spectral
flow only in a more restrictive context, we now consider
$C^1$-paths and require that $W$ embeds
compactly\footnote{Consequently, elements of $\sL_{sa}(W,H)$ have
compact resolvent, so that they are automatically Fredholm.} in
$H$.

\begin{theorem}\label{OT=SF} \index{families of operators!spectral
flow}\index{families of operators!orientation transport formula}
Let $T:[a,b]\to\sL_{sa}(W,H)$ be a continuously differentiable
path. Then
\[
\eps(T)=(-1)^{\SF(T)}\;.
\]
\end{theorem}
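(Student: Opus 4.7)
The overall strategy is to exploit the fact that both $\eps(T)$ and $(-1)^{\SF(T)}$ satisfy the same formal properties: homotopy invariance relative to endpoints, multiplicativity under concatenation of paths (Propositions \ref{SF}(iv) and \ref{OT}(v)), and multiplicativity under direct sums. The plan is to reduce the theorem to a short list of elementary model paths on which both quantities can be computed directly and shown to agree.

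First I would partition $[a,b]$ by choosing $a = t_0 < t_1 < \cdots < t_N = b$ with $T_{t_i}$ invertible for $0 < i < N$; by the perturbation theorem preceding Definition \ref{SF:def}, after possibly shifting the path by a small generic constant (and keeping track, via homotopy, of the endpoint bookkeeping built into both definitions), I may assume that each restriction $T|_{[t_i,t_{i+1}]}$ is transversal with only simple crossings. The concatenation property on both sides then reduces the identity to each subinterval separately, and further subdivision allows me to assume that each interior subinterval contains at most one simple crossing in its open interior.

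Next I would verify the identity in each model case. If $T$ is invertible throughout some $[c,d]$, then every fibre $\det T_t$ is canonically identified with $\R$ via $\Psi_{T_t}$, this identification depends continuously on $t$, and so the trivialization of the determinant bundle yields a positive scalar, giving $\eps = 1 = (-1)^0 = (-1)^{\SF}$. If there is exactly one simple crossing at some interior $t_0$, then by Kato's Selection Theorem together with Theorem \ref{Kato:st2} there is a $C^1$-family of rank-one spectral projections $P(t)$ onto the eigenline spanned by the crossing eigenvalue $\lambda(t)$ with $\lambda(t_0)=0$ and $\lambda'(t_0)\neq 0$; the complementary spectral projection then furnishes a direct-sum decomposition $T = T_1 \oplus T_2$ in which $T_2$ is invertible throughout a small neighbourhood of $t_0$. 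The $T_2$-summand is of the invertible type treated above, while a reparametrization of time reduces the one-dimensional factor $T_1$ to the model $s \mapsto s \cdot \mathrm{id}_\R$ on $[-\delta,\delta]$, for which Proposition \ref{OT}(ii) applied to the two halves yields $\eps = -1$, whereas $\SF = \pm 1$, so that $(-1)^{\SF} = -1 = \eps$. The boundary subintervals, where one endpoint may have nontrivial kernel, are handled analogously via the same direct-sum decomposition, with care taken to match the endpoint sign $(-1)^{n_0(n_0+1)/2}$ from $\Psi_T$ against the $\SF$-convention of subtracting negative eigenvalues of $C_T(a)$ and adding positive eigenvalues of $C_T(b)$.

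The main technical obstacle will lie in the direct-sum step near a simple crossing: one must verify that the $C^1$-family of finite-rank spectral projections $P(t)$ provided by Kato's theorem yields a genuine splitting of $T$ in $\sL_{sa}(W,H)$, and that this splitting is respected by the stabilizer construction used to define the determinant line bundle in Appendix \ref{app:det}, so that the direct-sum multiplicativity of $\eps$ can be legitimately invoked. Once this compatibility is in place, the remaining sign bookkeeping—including the reconciliation of the endpoint convention in $\SF$ with the Knudsen--Mumford signs built into the canonical fibre isomorphism $\Psi_T$—amounts to a direct calculation on finite-dimensional fibres.
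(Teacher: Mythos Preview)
Your overall plan is sound and would work, but it takes a different and somewhat more laborious route than the paper. Two differences are worth noting. First, you reduce all the way to \emph{simple} crossings and then split off the crossing eigenline as a direct summand via a $C^1$-family of spectral projections; as you correctly flag, making this into an honest direct-sum decomposition in fixed Hilbert spaces (so that Proposition~\ref{OT}(iv) applies) requires straightening the $t$-dependent projections by a path of unitaries and checking that $\eps$ is conjugation-invariant. Second, the reduction to simple crossings by adding a generic constant is plausible but not literally supplied by the perturbation theorem preceding Definition~\ref{SF:def}, which only yields transversality; you would need a short additional Sard-type argument on the $C^1$ eigenvalue curves from Kato's Selection Theorem.

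The paper sidesteps both of these issues. After reducing (as you do) to a transversal path with invertible endpoints and, by concatenation, to a path $T:[-1,1]\to\sL_{sa}(W,H)$ with $T_t$ invertible for $t\neq 0$ and $\dim\ker T_0=n$ arbitrary, it takes $V:=\ker T_0$ itself as a stabilizer over the whole interval and invokes Lemma~\ref{OT:inv} to write $\eps(T|_{[-t_0,t_0]})=\sgn\det\gF_V$. Choosing $C^1$ trivializing sections $e_i(t)$ of $\ker T_V$ with $e_i(0)$ an orthonormal basis of $V$, writing $e_i(t)=(e_i(0)+w_i(t),v_i(t))$ and expanding $T_t=T_0+tT_0'+o(t)$, one finds $v_i(t)=-tP T_0' e_i(0)+o(t)$ with $P=\Proj_{\ker T_0}$, and hence $\det(\langle v_i(-t_0),v_j(t_0)\rangle)\to(-1)^n\det(PT_0'P)^2$ as $t_0\to 0$. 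Since $PT_0'P=C_T(0)$ is invertible by transversality, this gives $\eps(T)=(-1)^n$ directly, with no need for simple crossings or a direct-sum splitting. Your approach buys conceptual clarity (reducing to a one-dimensional model), while the paper's buys economy: one limiting computation with the stabilizer replaces your splitting lemma and the extra genericity step.
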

\begin{proof}
Recall that $\SF(T)$ is defined as $\sum_{t\in(a,b)} \sign
C_{T+\gd}(t)$, where $C_{T+\gd}(t)$ is the crossing operator and
$\gd>0$ is such that $T+\gd$ is transversal and $T+t$ has
invertible endpoints for $0<t\le \gd$. Employing (ii), (iii) and
(v) of Proposition \ref{OT}, one straightforwardly shows that
$\eps(T)=\eps(T+\gd)$ so that without loss of generality, we may
assume that $T$ is already a transversal family with invertible
endpoints and drop $\gd$ from the notation. We are then to
show that
\begin{equation}\label{OT=dim}
\begin{split}
\eps(T)=&(-1)^{\sum_{t\in(a,b)} \sign C_T(t)}\\ &\qquad=
\prod_{t\in(a,b)}(-1)^{\sign C_T(t)} =\prod_{t\in(a,b)}
(-1)^{\dim(\ker T_t)}\;.
\end{split}
\end{equation}
Invoking (v) of Proposition \ref{OT}, we can clearly restrict to
the case of a continuously differentiable path $T:[-1,1]\to
\sL_{sa}(W,H)$ with the property that $T_t$ is invertible for
each $t\neq 0$. Under this assumption, $V:=\ker T_0$ is a
stabilizer of $T$ over $[-1,1]$. Since $\ker T_V\to [-1,1]$ is a
$C^1$-vector bundle over a contractible space, we may choose
trivializing $C^1$-sections
\[
e_i(t):[-1,1]\to \ker T_V.
\]
Moreover, we can clearly achieve that
$\big(e_1(0),\ldots,e_n(0)\big)$ is an orthonormal basis of $V$.
Write
\[
e_i(t)=\big(e_i(0)+w_i(t),v_i(t)\big)
\]
with appropriate $C^1$-maps $w_i:[-1,1]\to W$ and $v_i:[-1,1]\to
V$. Note that $w_i(t)\to 0$ and $v_i(t)\to 0$ as $t\to 0$.

We shall now compute $\eps(T)$ via $\eps\big(T|_{[-t_0,t_0]}\big)$
by taking the limit $t_0\to 0$. For this note that the
orientation transport along $T|_{[-t_0,t_0]}$ is independent of
$t_0$ because $T|_{[-1,-t_0]}$ and $T|_{[t_0,1]}$ are paths of
invertible operators, thus giving no contribution. We use Lemma
\ref{OT:inv} to compute $\eps\big(T|_{[-t_0,t_0]}\big)$. One
readily checks that the isomorphism $\gF_V:V\to V$ given by the
diagram \eqref{OT:CD} in the situation at hand is uniquely
determined by
\[
\gF_V\big(v_i(-t_0)\big)= v_i(t_0).
\]
Hence, the orientation transport along $T|_{[-t_0,t_0]}$ is given
by
\[
\eps\big(T|_{[-t_0,t_0]}\big)=\sgn
\det\Big(\scalar{v_i(-t_0)}{v_j(t_0)}\Big)_{ij}\;.
\]
As we want to determine this sign by letting $t_0\to 0$, we
write---using that $T$ is continuously differentiable---
\[
T_t=T_0+tT_0'+o(t).
\]
Since $(T_V)_t e_i(t)=0$, $T_0 e_i(0)=0$, and $t w_i(t)=o(t)$, we
deduce that
\[
0=(T_V)_t\big(e_i(0)+w_i(t),v_i(t)\big) = T_0w_i(t) + tT_0'e_i(0)
+ v_i(t) + o(t)\;.
\]
Applying $P:=\Proj_V=\Proj_{\ker T_0}$ to the above equation
deletes the first term, and thus,
\[
v_i(t)=- tPT_0'e_i(0)+o(t)
\]
From this we conclude that for small $t_0>0$,
\begin{align*}
\det\Big(\scalar{v_i(-t_0)}{v_j(t_0)}\Big)_{ij} &=
\det\Big(\Scalar{\lfrac{v_i(-t_0)}{t_0}}{\lfrac{v_j(t_0)}{t_0}}\Big)_{ij}
\\
&=\det\Big(\Scalar{PT_0'e_i(0) + \lfrac{o(t_0)}{t_0}}{-PT_0'e_j(0)
+ \lfrac{o(t_0)}{t_0}}\Big)_{ij}\;.
\end{align*}
This expression allows to perform the limit $t_0\to 0$, which
produces
\[
\begin{split}
\eps(T)&=\sgn\det\Big(\Scalar{-PT_0'e_i(0)}{PT_0'e_j(0)}\Big)_{ij}\\&=
\sgn\Big((-1)^n\det(PT_0'P)^2\Big)=(-1)^n.
\end{split}
\]
As $n=\dim(\ker T_0)$, equation \eqref{OT=dim} is established.
\end{proof}\index{families of operators!orientation transport|)}

\cleardoublepage

\chapter{Spin$^c$ Manifolds}\label{app:spinc}

In this appendix we give a summary of the constructions related to
so-called \spinc manifolds. First of all, we make some algebraic
remarks concerning the group $\Spinc$ and its representation
theory. As we assume some familiarity with the definition of
Clifford algebras, the Spin group, and their representation
theory, the presentation in Section \ref{spc} will be rather
sketchy, not containing proofs. We refer to the wide range of
literature, in particular Lawson \& Michelsohn \cite{LM}, Ch.~I,
or Berline et al. \cite{BGV}, Ch.~3, for a more detailed
treatment.

However, differences between \spinc and spin become more
intriguing when we consider the geometric framework in Section
\ref{spc:st}. Here, we shall go in more detail since an
understanding of the special nature of \spinc structures is an
important prerequisite for studying Seiberg-Witten theory. Once we
have established a suitable setting in which to define the
so-called \spinc Dirac operator, the discussion of the related
analytic properties proceeds in almost the same manner as for the
spin Dirac operator. Hence, in Section \ref{spc:do}, we will again
simply state the results, giving references for the proofs.

Section \ref{met:dep} contains some material related to the
question of how the \spinc Dirac operator depends on the metric.
This will be needed when we compare the structures of
Seiberg-Witten moduli spaces for different Riemannian metrics. It
is placed here in order not to interrupt the line of argument in
the main part of this thesis.

\section{The group \Spinc} \label{spc}

\noindent\textbf{The Clifford algebra.}\index{=@$\cl(V)$,
Clifford algebra} Let $(V,g)$ be a Euclidean vector space with
corresponding Clifford algebra $\cl(V)$. If $V=\R^n$, we shall
simply write $\cl_n$. Recall that $\cl(V)$ is the associative
real algebra generated by 1 and elements of $V$ subject to the
relations
\[
v\cdot w + w\cdot v= -2g(v,w)1,\quad v,w\in V.
\]
We denote the complexified Clifford algebra by
\index{=@$\clc(V)$, complex Clifford algebra}
\[
\clc(V):=\cl(V)\otimes\C.
\]
Then $\clc(V)$ can be interpreted as the Clifford algebra of the
complex vector space $V\otimes\C$ endowed with the complex
bilinear extension of $g$.

The representation theory of the complex Clifford algebra turns
out to be rather simple. Let $n=\dim V$.
\begin{itemize}
\item If $n=2k$, then there exists up to isomorphism exactly one
irreducible $\clc(V)$-module, denoted by $\gD$, which has
dimension $2^k$.
\item If $n=2k+1$, then there are two irreducible
$\clc(V)$-modules, $\gD^+$ and $\gD^-$, of dimension $2^k$.
\end{itemize}\index{=@$\gD$, irreducible Clifford
module} We recall how to distinguish the inequivalent
representations in the case that $n$ is odd. Let \index{=@$\go^c$,
complex volume element}
\begin{equation}\label{compl:vol}
\go^c := i^{[\frac{n+1}{2}]} e_1\cdots e_n
\end{equation}
be the {\em complex volume element} of the Clifford algebra.
Here, for a real number $a$, $[a]$ denotes the greatest integer
smaller than $a$. Then $\go^c$ is an involution (irrespective of
the parity of $n$), i.e., $(\go^c)^2=1$. Therefore, each
$\clc(V)$-module splits into $\pm 1$ eigenspaces of $\go^c$. If
$n$ is odd, then $\go^c$ is central and therefore, its eigenspaces
are $\clc(V)$-invariant. Then $\gD^\pm$ is the irreducible module
on which $\go^c$ acts as $\pm \id$.\\

\noindent\textbf{Spin and $\Spinc$.}\index{=@$\Spin(V)$, spin
group} Recall that the Spin group associated to $(V,g)$ is defined
by
\[
\Spin(V):=\bigsetdef{v_1\cdot\ldots\cdot v_m}{
\text{$m$ even},\,|v_i|=1}\subset \cl(V)^*\,,
\]
where $\cl(V)^*$ denotes the group of units in $\cl(V)$. It turns
out that $\Spin(V)$ is a compact, connected Lie group which is
simply connected if $\dim V\ge 3$. We shall always write
$\Spin_n:=\Spin(\R^n)$. The subspace $V$ of $\cl(V)$ is invariant
with respect to conjugation by elements of $\Spin(V)$. Moreover,
it turns out that for each $g\in\Spin(V)$, the endomorphism
\[
V\to V,\quad v\mapsto g v g^{-1}
\]
is, in fact, in $\SO(V)$ and that the Lie group homomorphism
obtained in this way, say,
\begin{equation}\label{xi0}
\xi_0:\Spin(V)\longrightarrow \SO(V),
\end{equation}
is surjective with $\ker \xi_0=\{\pm 1\}$. Therefore, $\xi_0$
gives a twofold covering of $\SO(V)$ which is universal if $\dim
V\ge 3$.
\begin{dfn}\index{=@$\Spinc(V)$, complex spin group} The {\em
complex spin group}, $\Spinc(V)$, is the group generated by
$\Spin(V)$ and $\U_1$ inside the group $\clc(V)^*$. If $V=\R^n$,
we write $\Spinc_n:=\Spinc(\R^n)$.
\end{dfn}
Since $\U_1$ lies in the center of $\clc(V)$ and $\Spin(V)\cap
\U_1 =\{\pm 1\}$, it is clear that
\[
\Spinc(V)=\Spin(V)\times_{\Z_2}\U_1\,,
\]
where $\Z_2=\{\pm 1\}$ acts diagonally. This action is free, hence
$\Spinc(V)$ inherits the structure of a compact, connected real
Lie group. The covering map $\xi_0:\Spin(V)\to\SO(V)$ gives rise
to an exact sequence of Lie groups
\begin{equation}\label{xi0c}
1\longrightarrow\U_1\longrightarrow\Spinc(V)
\overset{\xi_0^c}{\longrightarrow}\SO(V)\longrightarrow 1\,,
\end{equation}
where $\xi_0^c([g,z]):=\xi_0(g)$. Note that this is well-defined
since $\ker \xi_0=\Z_2$. Defining $\gz^c([g,z]):=z^2$, we obtain
another exact sequence
\begin{equation}\label{zeta}
1\longrightarrow\Spin(V)\longrightarrow\Spinc(V) \overset{
\gz^c}{\longrightarrow}\U_1\longrightarrow 1\,.
\end{equation}
The maps $\xi_0^c$ and $ \gz^c$ induce a two sheeted covering of
Lie groups
\begin{equation}\label{xi}
\xi:\Spinc(V)\to \SO(V)\times \U_1,\quad [g,z]\mapsto
(\xi_0(g),z^2)\,.
\end{equation}

\noindent\textbf{Spin representation.} Let us now turn to the
representation theory of the group \Spinc.
\begin{dfn}\label{spinrep}
Let $W$ be an irreducible $\clc(V)$-module. By restricting the
action of $\clc(V)$ we get a representation of $\Spinc(V)$ on
$W$. The representation obtained in this way is called a {\em spin
representation}.
\end{dfn}
Using the classification of irreducible $\clc$-modules and
analysing the restriction to $\Spinc$ yields the following:
\begin{itemize}
\item If $n$ is even, then the decomposition $\gD=\gD^+\oplus
\gD^-$ of the unique irreducible $\clc$-module is
\Spinc-invariant, inducing the irreducible {\em half spin
representations} of \Spinc;
\item if $n$ is odd, then the two non-isomorphic, irreducible
$\clc$-modules give rise to equivalent irreducible representations
of \Spinc.
\end{itemize}

\section{Spin$^c$ structures} \label{spc:st}
\index{spin$^c$ manifolds!spic$^c$
structure|(}\index{=@$P_{\SO}(g)$, principal $\SO$-bundle}

Let $(M,g)$ be an oriented, $n$-dimensional Riemannian manifold,
and let $P_{\SO}(g)$ denote its principal $\SO_n$-bundle of
oriented, orthonormal frames. Before we define the notion of a
\spinc structure, we first recall the definition of a spin
manifold.
\begin{dfn}\index{=@$(M,\eps)$, spin manifold} A {\em spin
structure} $\eps$ on $M$ is a principal $\Spin_n$-bundle
$P_{\Spin}(\eps)$ together with a bundle map
$\xi:P_{\Spin}(\eps)\to P_{\SO}$, which is Spin-equivariant with
respect to the two sheeted covering $\xi_0:\Spin_n\to\SO_n$.
Here, equivariance means that $\xi(pg)=\xi(p)\xi_0(g)$ for every
$p\in P_{\Spin}(\eps)$ and every $g\in\Spin_n$. The pair
$(M,\eps)$ is called a {\em spin manifold}.
\end{dfn}
Imitating the above definition, we now introduce the notion of a
\spinc structure:
\begin{dfn}\index{=@$(M,\gs)$, spin$^c$
manifold}\index{=@$P_{\Spinc}(\gs)$, $\Spinc$-bundle associated to
a \spinc structure} A {\em spin$^c$ structure} on $M$, denoted by
$\gs$, consists of a principal $\Spinc_n$-bundle $P_{\Spinc}(\gs)$
together with a bundle map $\xi^c:P_{\Spinc}(\gs)\to P_{\SO}$
which is $\Spinc_n$-equivariant with respect to the homomorphism
$\xi_0^c:\Spinc_n\to\SO_n$. The pair $(M,\gs)$ is called a {\em
spin$^c$ manifold}.
\end{dfn}
Another bundle is encoded in the definition of a \spinc manifold
$(M,\gs)$. Recall that to any principal bundle we can associate
new principal bundles and vector bundles via group homomorphisms
and representations of the structure group. Hence, via the map
$\gz^c:\Spinc_n\to \U_1$ of \eqref{zeta} we obtain the principal
$\U_1$-bundle
\[
P_{\U_1}(\gs):=P_{\Spinc}(\gs)\times_{ \gz^c}\U_1
\]
which is the quotient of $P_{\Spinc}(\gs)\times\U_1$ with respect
to $(p,z)\sim (pg,\gz^c(g^{-1})z)$ endowed with the right action
of $\U_1$ on the second factor. Equivalently, we may consider the
Hermitian line bundle
\[
L(\gs):=P_{\Spinc}(\gs)\times_{ \gz^c}\C.
\]
\begin{dfn}\label{canline}\index{=@$L(\gs)$, canonical line
bundle}\index{=@$c(\gs)$, canonical class}\index{spin$^c$
manifolds!spic$^c$ structure!canonical line bundle}\index{spin$^c$
manifolds!spic$^c$ structure!canonical class} The line bundle
$L(\gs)$ is called the {\em canonical line bundle} of the \spinc
structure $\gs$. Its topological first Chern class is called the
{\em canonical class} and will be denoted by $c(\gs)\in
H^2(M;\Z)$.
\end{dfn}
\begin{remark*}\quad
\begin{enumerate}
\item In the main part of this thesis, we shall also refer to the
image of $c(\gs)$ in $H^2(M;\R)$ as the canonical class of $\gs$.
As the meaning should be understood from the context, we can
avoid a notational distinction like ``$c^{\text{top}}(\gs)$" and
``$c^{\text{geom}}(\gs)$".
\item We note that a \spinc structure could equally well be
defined by the following data:
\begin{itemize}
\item A principal $\U_1$-bundle $P_{\U_1}(\gs)\to M$,
\item a principal $\Spinc_n$-bundle $P_{\Spinc}(\gs)\to M$,
\item a $\Spinc_n$-equivariant bundle map $\xi:P_{\Spinc}(\gs)\to
P_{\SO}\times P_{\U_1}(\gs)$.
\end{itemize}
Here, $P_{\SO}\times P_{\U_1}(\gs)$ denotes the fibre product of
the bundles $P_{\SO}\to M$ and $P_{\U_1}(\gs)\to M$. Note that
$\xi:P_{\Spinc}(\gs)\to P_{\SO}\times P_{\U_1}(\gs)$ is a twofold
covering.
\end{enumerate}
\end{remark*}

\begin{dfn}\index{=@$\text{Spin}^c(M)$, equivalence classes} Two
\spinc structures $\gs$ and $\gs'$ on $M$ are called {\em
equivalent} if there exists a bundle isomorphism
$\gF:P_{\Spinc}(\gs)\to P_{\Spinc}(\gs')$ inducing a commutative
diagram:
\begin{equation}\label{spinc:eq} \begindc[25]
\obj(3,1){$M$}[M]\obj(3,3){$P_{\SO}$}[PSO]
\obj(1,5){$P_{\Spin}(\gs)$}[PSp]
\obj(5,5){$P_{\Spin}(\gs')$}[PSpP] \mor{PSO}{M}{}
\mor{PSp}{PSO}{$\xi^c$} \mor{PSp}{PSpP}{$\gF$}
\mor{PSpP}{PSO}{${\xi^c}'$}[\atright,\solidarrow] \mor{PSp}{M}{}
\mor{PSpP}{M}{}
\enddc\end{equation}
The set of equivalence classes of \spinc structures shall be
denoted by spin$^c$($M$).
\end{dfn}

Before we study possible topological obstructions to the existence
of \spinc structures, let us first consider some examples.
\begin{example} \label{spin=>spinc}
\textit{Each spin structure induces a canonical \spinc structure.} \\
Given a spin manifold $(M,\eps)$, we can form a $\Spinc_n$-bundle
by letting
\[
P_{\Spinc}:=P_{\Spin}(\eps)\times_{\Z_2}\U_1\,,
\]
where $\U_1$ denotes the trivial bundle $M\times \U_1$, and
$\Z_2$ acts diagonally. The map $\gz=z^2:\U_1\to\U_1$ and the
bundle map $P_{\Spin}(\eps)\to P_{\SO}$ are $\Z_2$-invariant thus
giving a $\Spinc_n$-equivariant bundle map
\[
\xi:P_{\Spin}(\eps)\times_{\Z_2}\U_1\longrightarrow P_{\SO}\times
\U_1\,.
\]
Therefore, we obtain the so-called {\em canonical spin$^c$
structure}, $\gs(\eps)$, on $M$. Clearly, the canonical line
bundle $L(\gs(\eps))$ is the trivial bundle so that in particular,
$c(\gs(\eps))=0$.
\end{example}
We state another example which is important in four
dimensional Seiberg-Witten theory, especially when dealing with
Hermitian or K\"{a}hler manifolds:
\begin{example}
\textit{Every almost complex manifold has a canonical \spinc
structure}. Let $M$ be a $2k$-dimensional Riemannian manifold
which admits a compatible almost complex structure, i.e., an
orthogonal bundle map $J:TM\to TM$ with $J^2=-\id$. Then $(TM,J)$
carries the structure of a complex vector bundle of rank $k$ over
$M$ with an induced Hermitian metric. Therefore, the structure
group of $TM$ can be reduced to $\U_k$, i.e., we can construct a
principal $\U_k$-bundle $P_{\U_k}$ on $M$ such that $(TM,J)$ is
the vector bundle associated to $P_{\U_k}$ via the standard
representation of $\U_k$ on $\C^k$. The inclusion
$i:\U_k\hookrightarrow \SO_{2k}$ gives rise to a group
homomorphism $(i,\det):\U_k\to\SO_{2k}\times\U_1$. One checks
that the fundamental groups are related via
$(i,\det)_*\pi_1(\U_k)\subset \xi_*\pi_1(\Spinc_{2k})$ so that
there exists a unique lifting
\[
\begindc
\obj(1,1){$\U_k$}[Uk] \obj(4,1){$\SO_{2k}\times\U_1$}[SOxU]
\obj(4,3){$\Spinc_{2k}$}[Spc]
\mor{Uk}{SOxU}{$(i,\det)$}[\atright,\solidarrow]
\mor{Spc}{SOxU}{$\xi$}\mor{Uk}{Spc}{$j$}[\atleft,\dasharrow]
\enddc
\]
We now obtain a canonical \spinc structure $\gs_J$ by letting
\[
P_{\Spinc}(\gs_J):=P_{\U_k}\times_j \Spinc_{2k}.
\]
It turns out that the canonical line bundle of the \spinc
manifold $(M,\gs_J)$ is precisely the dual of $K$, the canonical
line bundle of the almost complex manifold $(M,J)$, i.e.,
\[
L(\gs_J)=\gL^{k,0}TM=K^*.
\]
\end{example}

\noindent\textbf{Principal bundles and \v{C}ech cohomology.} To
understand the topological obstructions to the existence of
\spinc structures, we briefly recall the interaction between the
local description of principal bundles and \v{C}ech cohomology.
For a more detailed exposition, the reader is referred to
Hirzebruch \cite{Hi}, Ch.~I. \index{=@$H^1(M;\Z)$, |v{C}ech
cohomology}

Let $\{U_{\ga}\}$ be a good open cover of a manifold $M$, i.e., a
covering by open sets such that all intersections are
contractible. Suppose $P\to M$ is a principal $G$-bundle, where
$G$ is a Lie group. Since $P$ can be trivialized over
$\{U_{\ga}\}$, we obtain a corresponding family of transition
functions $\{g_{\ga\gb}:U_{\ga}\cap U_{\gb}\to G\}$ fulfilling
the cocycle condition
\begin{equation}\label{coc:1}
g_{\ga\gb} g_{\gb\gamma} = g_{\ga\gamma}\quad\text{on } U_\ga\cap
U_\gb\cap U_\gamma\,.
\end{equation}
On the other hand, there is a well-known procedure to define a
principal $G$-bundle given such a family of transition functions.
Two cocycles $\{g_{\ga\gb}\}$ and $\{g'_{\ga,\gb}\}$ define
isomorphic $G$-bundles if and only if there exists a family
$\{\gF_\ga:U_\ga\to G\}$ such that
\begin{equation}\label{coc:2}
g_{\ga\gb}'=\gF_\ga g_{\ga\gb}\gF_\gb^{-1}\quad\text{on
}U_\ga\cap U_\gb\,.
\end{equation}
More generally, let $\mu:G\to H$ be a Lie group homomorphism and
$Q\to M$ a principal $H$-bundle with transition functions
$\{h_{\ga\gb}:U_{\ga}\cap U_{\gb}\to H\}$. A bundle map $\gF:G\to
H$ which is equivariant with respect to $\mu$ turns out to be the
same as a family $\{\gF_\ga:U_\ga\to H\}$ satisfying
\begin{equation}\label{coc:map}
h_{\ga\gb}=\gF_\ga\mu(g_{\ga\gb})\gF_\gb^{-1}\quad\text{on
}U_\ga\cap U_\gb\,.
\end{equation}
Let $\underline G$ be the sheaf of germs of differentiable
$G$-valued functions on $M$. Then formula \eqref{coc:1} is
exactly the condition for the \v{C}ech 1-chain $\{g_{\ga\gb}\}$
to define an element
\[
[g_{\ga\gb}]\in H^1(M;\underline G)\,.\footnote{If $G$ is
non-abelian, one can define $H^1(M;\underline G)$ in the same way
as for abelian $G$---with the difference that it will not be a
group but only a pointed set (with the trivial $G$-bundle as a
base point). The long exact cohomology sequence associated to an
exact sequence of sheaves (see \cite{Hi}, Sec.~I.2) then
terminates at the $H^2$ level if a non-abelian group is involved.
Note that this sequence is then an exact sequence of pointed
sets.}
\]
Furthermore, equation \eqref{coc:2} is equivalent to
$[g_{\ga\gb}]=[g_{\ga\gb}']\in H^1(M;\underline G)$. Hence, there
is a natural correspondence between the isomorphism classes of
principal $G$-bundles and the first cohomology $H^1(M;\underline
G)$.
\begin{example}\label{exp:seq}
Consider the exponential sequence
\[
0\to \Z\longrightarrow\underline\R \overset{\exp(2\pi
ix)}{\longrightarrow} \underline\U_1\to 0.
\]
As an exact sequence of sheaves it yields a long exact sequence
in cohomology. The sheaf of germs of differentiable, $\R$-valued
functions admits partitions of unity and therefore,
$H^k(M;\underline{\R})=0$ for every $k\ge 1$.\footnote{Note the
difference between $H^k(M;\underline\R)$ and the cohomology
groups $H^k(M;\R)$, associated to the sheaf $\R$ of locally
constant functions. The latter cohomology groups are isomorphic
to the deRham cohomology. Hence, in general, $H^k(M;\R)\neq 0$.}
This shows that for $k\ge 1$, the connecting homomorphism
\[
\gd^k:H^k(M;\underline \U_1)\to H^{k+1}(M;\Z)
\]
is an isomorphism. In particular, the set of isomorphism classes
of principal $\U_1$-bundles is isomorphic to $H^2(M;\Z)$. It
turns out (cf. Wells \cite{W}, Sec.~III.4) that, if $P\to M$ is a
$\U_1$-bundle with transition functions $\{\gl_{\ga\gb}:U_\ga\cap
U_\gb\to \U_1\}$, then
\[
\gd^1([\gl_{\ga\gb}])=c_1(P),
\]
the latter denoting first Chern class of $P$.\\
\end{example}

\noindent\textbf{Local description of spin and \spinc structures.}
In this paragraph we shall treat only spin structures since the
discussion for \spinc structures is completely analogous.

Let $\{g_{\ga\gb}:U_\ga\cap U_\gb\to\SO_n\}$ be a cocycle
defining the $\SO_n$ bundle of $M$. A spin structure consists of a
principal $\Spin_n$-bundle $P_{\Spin}\to M$, given by a cocycle
$\{h_{\ga\gb}:U_\ga\cap U_\gb\to \Spin_n\}$ and a bundle map
$\xi:P_{\Spin}\to P_{\SO}$ which is equivariant with respect to
the Lie group homomorphism $\xi_0:\Spin_n\to \SO_n$. According to
\eqref{coc:map}, the map $\xi$ is equivalently given by a family
$\{\xi_\ga:U_\ga\to \SO_n\}$ satisfying
\begin{equation}\label{spin:cov}
g_{\ga\gb}(x)=\xi_\ga(x)\xi_0\big(h_{\ga\gb}(x)\big)
\xi_\gb^{-1}(x), \quad x\in U_\ga\cap U_\gb.
\end{equation}
Since all $U_\ga$ are contractible and $\xi_0$ is a
covering\footnote{When discussing \spinc structures, the
corresponding map $\xi^c_0:\Spinc_n\to\SO_n$ is a $\U_1$-fibration
and thus also has the lifting property.} map, we can find maps
$\gF_\ga:U_\ga\to \Spin_n$ such that
\[
\xi_0\circ\gF_\ga=\xi_\ga.
\]
Letting $h_{\ga\gb}':=\gF_\ga h_{\ga\gb}\gF_\gb^{-1}$ we obtain a
family of transition function defining a $\Spin_n$-bundle
$P_{\Spin}'\to M$ and a bundle isomorphism $\gF:P_{\Spin}\to
P_{\Spin}'$. Hence, $P_{\Spin}'$ and $\xi':=\xi\circ \gF^{-1}$
define a spin structure which is equivalent to the original one.
However, equation \eqref{spin:cov} is simplified since
\[
g_{\ga\gb}=\xi_\ga\xi_0\big(h_{\ga\gb}\big) \xi_\gb^{-1}=
\xi_0(\gF_\ga)\xi_0(h_{\ga\gb})\xi_0(\gF_\gb^{-1})
=\xi_0(h_{\ga\gb}').
\]
Hence, modulo equivalence, a spin structure is always given by a
cocycle $\{h_{\ga\gb}:U_\ga\cap U_\gb\to \Spin_n\}$ lifting the
family $\{g_{\ga\gb}\}$ via $\xi_0$. Note that this implies that
the bundle map $\xi:P_{\Spin}\to P_{\SO}$ is given by the family
$\{\xi_\ga=\id:U_\ga\to \Spin_n\}$. Mutatis mutandis, the same
holds for \spinc structures. For brevity, transition functions
with the above property will be called \emph{fitting} cocycles.
\begin{remark*}
Notice, however, that isomorphic $\Spin_n$-bundles covering
$P_{\SO}$ can give rise to nonequivalent spin structures if the
corresponding diagram \eqref{spinc:eq} is not commutative. An
example for this phenomenon is given by Milnor in \cite{Mil:SS}
(see also Lawson \& Michelsohn \cite{LM}, II.1.14).\\
\end{remark*}

\noindent\textbf{The set spin$^c$($\boldsymbol{M}$).} The above
local description yields a possibility to analyse the set
spin$^c(M)$ of all possible \spinc structures on a manifold $M$.

\begin{prop}\label{spinc(M)}\index{spin$^c$ manifolds!spic$^c$
structure!canonical line bundle} Let $M$ be a manifold admitting
a \spinc structure. Then there exists a natural action
\[
\text{\rm spin}^c(M)\times H^1(M;\underline \U_1)\to\text{\rm
spin}^c(M),
\]
denoted by
\[
(\gs,L)\mapsto \gs\otimes L,
\]
which is free and transitive. Hence, up to fixing a \spinc
structure, {\rm spin$^c(M)$} is isomorphic\footnote{A set
carrying a free and transitive group action is usually called a
\emph{torsor}. Thus, spin$^c(M)$ is an $H^2(M;\Z)$ torsor.} to
$H^2(M;\Z)$, cf. Example \ref{exp:seq}.  Moreover,
\[
L(\gs\otimes L)=L(\gs)\otimes L^2\quad\text{or,
equivalently,}\quad c(\gs\otimes L)=c(\gs)+2c_1(L)\,,
\]
where $c_1(L)$ is the first Chern class of $L$.
\end{prop}

\begin{proof}
Let $\{U_\ga\}$ be a good open cover of $M$, and let
$\{g_{\ga\gb}\}$ define the principal $\SO_n$-bundle of $M$.
Suppose $\gs$ is a \spinc structure, and let $\{h_{\ga\gb}\}$ be a
fitting cocycle, i.e., $\xi_0^c\circ h_{\ga\gb}=g_{\ga\gb}$,
where $\xi_0^c$ is the group homomorphism of \eqref{xi0c}.
Moreover, let $L$ be a Hermitian line bundle and
$\{\gl_{\ga\gb}:U_\ga\cap U_\gb\to \U_1\}$ a set of transition
functions. Then $\{h_{\ga\gb}\gl_{\ga\gb}\}$ fulfills the cocycle
condition and, since $\U_1=\ker \xi_0^c$, also lifts the family
$\{g_{\ga\gb}\}$. Hence, it is a fitting cocycle for a \spinc
structure which we denote by $\gs\otimes L$. One readily checks
that isomorphic line bundles give equivalent \spinc structure.
Thus, we obtain a well-defined right action of $H^1(M,\U_1)$ on
spin$^c(M)$.

Given another \spinc structure $\gs'$, with fitting cocycle
$\{h_{\ga\gb}'\}$, we have
\[
\xi_0^c\circ h_{\ga\gb}=\xi_0^c\circ h_{\ga\gb}'=g_{\ga\gb}\,.
\]
Since $\ker \xi_0^c=\U_1$, there exists a family
$\{\gl_{\ga\gb}:U_\ga\cap U_\gb\to \U_1\}$ such that
\[
h_{\ga\gb}'=h_{\ga\gb}\gl_{\ga\gb}\,.
\]
Clearly, $\{\gl_{\ga\gb}\}$ fulfills the cocycle condition
\eqref{coc:1} hence defining an element in $H^1(M;\underline
\U_1)$. Therefore, the action is transitive.

Now suppose that $\gs=\gs\otimes L$ for some line bundle $L$. This
implies that the corresponding principal $\Spinc_n$-bundles are
isomorphic, i.e., if $\{h_{\ga\gb}\}$ and $\{\gl_{\ga\gb}\}$ are
transition functions corresponding to $P_{\Spinc}(\gs)$ and $L$,
we can find a family $\{\gF_\ga:U_\ga\to \Spinc_n\}$ such that
\[
h_{\ga\gb}\gl_{\ga\gb} = \gF_\ga h_{\ga\gb} \gF_\gb^{-1}\,.
\]
Since $\{h_{\ga\gb}\}$ is a fitting cocycle, the bundle map
$\xi^c:P_{\Spinc}(\gs)\to P_{\SO}$ is given by the family
$\{\xi^c_\ga:=\id:U_\ga\to \SO_n\}$. As the same holds for
$\{h_{\ga\gb}\gl_{\ga\gb}\}$, commutativity of the diagram
\eqref{spinc:eq} implies that
\[
\xi_0^c\circ \gF_\ga = 1,\quad \text{i.e.,}\quad\gF_\ga:U_\ga\to
\ker\xi_0^c=\U_1.
\]
Since $\U_1$ lies in the center of $\Spinc_n$, this shows that
necessarily,
\[
\gl_{\ga\gb} = \gF_\ga\gF_\gb^{-1}.
\]
Hence, $L$ is isomorphic to the trivial line bundle. This proves
that the action of $H^1(M;\underline \U_1)$ on spin$^c(M)$ is
free.
\end{proof}

We now want to understand the additional structure of spin$^c(M)$
in the case of spin manifolds.

\begin{example}
Let $M$ be a spin manifold and assume that $H^2(M;\Z)$ has no
2-torsion elements. Let $\eps$ and $\eps'$ be two spin structures
on $M$ and let $\gs(\eps)$ and $\gs(\eps')$ denote the
corresponding canonical \spinc structures as defined in Example
\ref{spin=>spinc}. According to Proposition \ref{spinc(M)} there
exists a Hermitian line bundle $L$ that fulfills
\[
\gs(\eps')=\gs(\eps)\otimes L\quad\text{ and }\quad
c(\gs(\eps'))=c(\gs(\eps))+2c_1(L).
\]
Since $c(\gs(\eps))=c(\gs(\eps'))=0$, this yields $2c_1(L)=0$.
Hence, according to our assumption $c_1(L)=0$. Therefore, $L$ is
isomorphic to the trivial line bundle, i.e., $\gs(\eps)$ is
equivalent to $\gs(\eps')$. This shows that all spin structures on
$M$ induce equivalent \spinc structures. We conclude that on a
spin manifold $M$ there is a canonical ``origin" of spin$^c(M)$,
whenever there are no 2-torsion elements in $H^2(M;\Z)$.\\
\end{example}

\noindent\textbf{Existence of \spinc structures.} The interplay
between the local description of principal bundles and \v{C}ech
cohomology lies at the heart of understanding possible
topological obstructions to the existence of \spinc structures.

The canonical group homomorphisms described in Section \ref{spc}
can be assembled in the following commutative diagram, which has
exact rows and columns.
\[
\begin{CD}
    @.   1    @.   1        @.            @.           \\
  @.     @VVV      @VVV                   @.         @.\\
  1 @>>> \Z_2 @>>> \Spin_n  @>{\xi_0}>>   \SO_n @>>> 1 \\
  @.     @VVV      @VVV                   @|         @.\\
  1 @>>> \U_1 @>>> \Spinc_n @>{\xi_0^c}>> \SO_n @>>> 1 \\
  @.     @VV{\gz}V @VV{ \gz^c}V              @.         @.\\
    @.   \U_1 @=   \U_1     @.                  @.     \\
  @.     @VVV      @VVV                   @.         @.\\
    @.   1    @.   1        @.                  @.
\end{CD}
\]
The corresponding commutative diagram in \v{C}ech cohomology reads
\begin{equation}\label{diag}\begindc[4]
\obj(81,0){$\vdots$}[Z4]\obj(11,10){$H^1(M;\underline \U_1)$}[A1]
\obj(34,10){$H^1(M;\underline \U_1)$}[A2]
\obj(81,10){$H^2(M;\underline \U_1)$}[A4]
\obj(11,20){$H^1(M;\underline \U_1)$}[B1]
\obj(34,20){$H^1(M;\underline \Spinc)$}[B2]
\obj(58,20){$H^1(M;\underline \SO)$}[B3]
\obj(81,20){$H^2(M;\underline \U_1)$}[B4]
\obj(11,30){$H^1(M;\Z_2)$}[C1] \obj(34,30){$H^1(M;\underline
\Spin)$}[C2] \obj(58,30){$H^1(M;\underline \SO)$}[C3]
\obj(81,30){$H^2(M;\Z_2)$}[C4] \obj(11,40){$\vdots$}[D1]
\mor(17,10)(28,10){}[\atright,\solidline]
\mor(17,11)(28,11){}[\atright,\solidline] \mor{B1}{B2}{}
\mor{B2}{B3}{} \mor{B3}{B4}{$\gd_{\xi_0^c}^1$} \mor{C1}{C2}{}
\mor{C2}{C3}{} \mor{C3}{C4}{$\gd_{\xi_0}^1$} \mor{D1}{C1}{}
\mor{C1}{B1}{} \mor{B1}{A1}{} \mor{C2}{B2}{} \mor{B2}{A2}{}
\mor(60,30)(60,20){}[\atright,\solidline]
\mor(59,30)(59,20){}[\atright,\solidline] \mor{C4}{B4}{}
\mor{B4}{A4}{} \mor{A4}{Z4}{}
\cmor((11,7)(12,3)(16,2)(31,2)(41,2)(45,3)(46,7)(46,12)(46,33))
\pup(48,13){$\gd_{\gz}^1$}[\solidline]
\cmor((46,33)(47,37)(51,38)(61,38)(76,38)(80,37)(81,33))
\pdown(0,0){}
\enddc\end{equation}
Here, the $\gd_{\dots}^1$ denote the various connecting
homomorphisms. According to the preceding considerations, we can
interpret a spin structure on $M$ as an element $[h_{\ga\gb}]\in
H^1(M;\underline \Spin_n)$ which is mapped to $[g_{\ga\gb}]$,
i.e.,
\[
\xi_0[h_{\ga\gb}]:=[\xi_0\circ h_{\ga\gb}]=[g_{\ga\gb}].
\]
It now follows from the exactness of diagram \eqref{diag} that
there exists a spin structure on $M$ if and only if
\begin{equation}\label{spin:exist}
[w_{\ga\gb\gamma}]:=\gd_{\xi_0}^1[g_{\ga\gb}]=0\in H^2(M;\Z_2)\,
\end{equation}
In the same way, we conclude that there
exists a \spinc structure on $M$ if and only if
\begin{equation}\label{spinc:exist1}
\gd_{\xi_0^c}^1[g_{\ga\gb}]=0\in H^2(M;\underline \U_1)\cong
H^3(M;\Z)\,,
\end{equation}
where the isomorphism is the one described in Example
\ref{exp:seq}. Since $\gd_{\xi_0^c}^1[g_{\ga\gb}]$ is the image of
$[w_{\ga\gb\gamma}]$ in $H^2(M;\underline \U_1)$, $M$ is \spinc if
and only if $[w_{\ga\gb\gamma}]$ is mapped to 0, i.e., if and only
if
\begin{equation} \label{spinc:exist2}
[w_{\ga\gb\gamma}]\in \Im\left(\gd_{\gz}^1:H^1(M;\underline
\U_1)\to H^2(M;\Z_2)\right).
\end{equation}
It is not difficult to verify that under the isomorphism
$H^1(M;\underline \U_1)\cong H^2(M;\Z)$, the connecting
homomorphism $\gd_{\gz}^1$ corresponds to mod 2 reduction
$H^2(M;\Z)\to H^2(M;\Z_2)$. Therefore, we can reformulate the
above in the following way: \textit{$M$ is \spinc if and only if
$[w_{\ga\gb\gamma}]$ is the mod 2 reduction of an integral class}.

The diagram also shows that a lift of $[g_{\ga\gb}]$ to a
$\Spinc_n$-bundle $[h_{\ga\gb}^c]\in H^1(M;\underline \Spinc)$
gives a $\U_1$-bundle via
\[
[\gl_{\ga\gb}]:=\gz^c[h_{\ga\gb}^c]=[\gz^c\circ h_{\ga\gb}^c]
\]
This $\U_1$-bundle is the canonical line bundle of the \spinc
structure (cf. Def.\ref{canline}). By commutativity of the
diagram \eqref{diag} we conclude that
\[
\gd_{\gz}^1[\gl_{\ga\gb}]=[w_{\ga\gb\gamma}],
\]
hence---in retrospect---condition \eqref{spinc:exist2} for the
existence of a \spinc structure $\gs$ is fulfilled by the
representative of $L(\gs)$ in $H^1(M;\underline \U_1)$.
\begin{remark*}
The class $[w_{\ga\gb\gamma}]$ which we have constructed above is
a characteristic class of $M$ depending only on the homotopy type.
It is the so-called {\em second Stiefel-Whitney class} $w_2(M)$.
The general topological construction of Stiefel-Whitney classes
can be found in Milnor \& Stasheff's book \cite{MS}. Moreover, it
can be proved directly (cf. Lawson \& Michelsohn \cite{LM},
Sec.~II.1) that the class $[w_{\ga\gb\gamma}]$ satisfies the
characterizing properties of the second Stiefel-Whitney class.
\end{remark*}
We summarize the above considerations in the following
proposition:
\begin{prop}
Let $M$ be an oriented Riemannian manifold. Then $M$ admits a
spin structure if and only if its second Stiefel-Whitney class
$w_2(M)\in H^2(M;\Z_2)$ vanishes. $M$ admits a \spinc structure if
and only if $w_2(M)$ is the mod 2 reduction of an integral class.
\end{prop}
\begin{prop}\label{3mfd=spinc}
Let $M$ be a connected, compact and oriented 3-manifold. Then $M$
admits a \spinc structure.
\end{prop}
\begin{proof}
We use the condition \eqref{spinc:exist1}. Since $M$ is
connected, $H_0(M;\Z)=\Z$. Then Poincar\'e duality shows that
$H^3(M;\Z)=\Z$ as well. In particular, $H^3(M;\Z)$ contains no
torsion elements. Therefore, the elements of $H^2(M;\Z_2)$ are
mapped to zero in $H^3(M;\Z)$.
\end{proof}
\begin{remark*}
It should be pointed out that by making use of more efficient
topological methods one can prove that every compact and oriented
3-manifold is not only \spinc but even spin. Moreover, a famous
result by Wu, Hirzebruch and Hopf \cite{HiHo:FE} guarantees that
every compact and oriented 4-manifold is spin$^c$.\\
\end{remark*}\index{spin$^c$ manifolds!spic$^c$ structure|)}

\noindent\textbf{Gauge transformations.}\index{gauge
transformations|(} Let $(M,\gs)$ be a \spinc manifold. An
automorphism of $(M,\gs)$ is a bundle automorphism
$\gF:P_{\Spinc}(\gs)\to P_{\Spinc}(\gs)$ such that the
corresponding diagram \eqref{spinc:eq} is commutative. In other
words, $\gF$ is given by a collection of smooth maps
$\{\gF_{\ga}:U_{\ga}\to \U_1\}$ satisfying
\[
h_{\ga\gb}=\gF_{\ga}h_{\ga\gb}\gF_{\gb}^{-1},
\]
where $\{h_{\ga\gb}\}$ is a fitting cocycle for $\gs$. Since
elements of $\U_1$ commute with elements of $\Spinc_n$, we
conclude that
\[
\gF_{\ga}|_{U_{\ga}\cap U_{\gb}}= \gF_{\gb}|_{U_{\ga}\cap
U_{\gb}}.
\]
Therefore, the family $\{\gF_{\ga}\}$ defines a smooth map, say,
$\gamma:M\to \U_1$.

\begin{dfn} \label{gg}\index{>@$\cG$, group
of gauge transformations} The automorphism group of a \spinc
manifold $(M,\gs)$ is called the {\em group of gauge
transformations}. It is denoted by $\cG$. According to the above,
\[
\cG=C^\infty(M,\U_1)=H^0(M;\underline \U_1).
\]
\end{dfn}

Consider the exponential sequence\footnote{Note the difference
compared to the sequence in Example \ref{exp:seq}. The modified
version is more suitable when we consider $\U_1$ as a Lie group
with Lie algebra $i\R$.}
\[
1\to 2\pi i\Z\longrightarrow \underline{i\R}
\xrightarrow{e^x}\underline\U_1\to 1,
\]
and let $\gd: H^0(M;\underline\U_1)\to H^1(M;2\pi i \Z)$ denote
the connecting homomorphism in the long exact cohomology sequence.
Since $2\pi i\Z$ is a subsheaf of the sheaf of locally constant
$i\R$-valued functions\footnote{Recall that $\underline{\R}$ is
the sheaf of differentiable functions on $M$; the sheaf of
locally constant $\R$-valued functions is simply denoted by
$\R$.} on $M$, there exists a natural map $H^1(M;2\pi i\Z)\to
H^1(M;i\R)$. According to the deRham Theorem, $H^1(M;\R)$ is
isomorphic to $H^1_{dR}(M;\R)$, the space of closed 1-forms
modulo exact 1-forms. We thus obtain a natural
map\index{=@$H^1_{dR}(M;\R)$, deRham
cohomology}\index{=@$H^1_{dR}(M;\Z)$}
\[
\gr:H^1(M;2\pi i\Z)\to H^1_{dR}(M;i\R).
\]
Using the notation $H^1_{dR}(M;2\pi i\Z):= \im\gr$, we have the
map
\[
\gr\circ\gd:H^0(M;\underline \U_1)\to H^1_{dR}(M;2\pi i\Z)
\]
assigning to a gauge transformation a cohomology class of an
imaginary valued, closed 1-form. Since $\underline{i\R}$ admits
partitions of unity, $\gd$ is surjective. This implies that the
above composition is also surjective. Moreover,
\[
\ker\gr\circ\gd=\ker \gd=\setdef{\exp(f)}{f\in
H^0(M;\underline{i\R})}
\]
which is essentially the identity component of the group of gauge
transformations (cf. also Proposition \ref{action:diff}). An
explicit formula for $\gr\circ\gd$ is
\begin{equation}\label{derham}
\gr\circ\gd:H^0(M;\underline \U_1)\to H^1_{dR}(M;2\pi i\Z),\quad
\gamma\mapsto [\gamma^{-1}d\gamma]\,.
\end{equation}
\begin{proof}
We use a description of the involved maps as it can be found, for
example, in Wells' book (cf. \cite{W}, Sec.~III.4). Let $\gamma\in
H^0(M;\underline \U_1)$ be a gauge transformation and let
$\{U_\ga\}$ be a good open cover of $M$. As $U_\ga$ is
contractible, $\gamma_\ga:=\gamma|_{U_\ga}$ can be lifted to
$f_\ga:U_\ga\to i\R$ via $\exp$. Since $\gamma_\ga=\gamma_\gb$ on
$U_\ga\cap U_\gb$,
\[
f_\gb - f_\ga: U_\ga\cap U_\gb \to 2\pi i\Z\subset i\R.
\]
This defines a $2\pi i\Z$ valued \v{C}ech 1-cocycle whose
cohomology class is $\gd\gamma$. On the other hand, each $f_\ga$
satisfies
\[
df_\ga= \exp(-f_\ga)d\exp(f_\ga)=\gamma_\ga^{-1}d\gamma_\ga\,.
\]
Therefore, the explicit description of the deRham isomorphism
shows that $\gr[f_\gb-f_\ga]=[\gamma^{-1}d\gamma]$. Thus,
$\gr\circ\gd(\gamma)=[\gamma^{-1}d\gamma]$.
\end{proof}
\index{gauge transformations|)}

\section{The spin$^c\,$ Dirac operator}\label{spc:do}

In this section we shall associate to each \spinc structure a
vector bundle over $M$, which turns out to have a rich geometrical
structure. This will give the background to introduce the \spinc
Dirac operator. \\

\noindent\textbf{Spinor bundles.}\index{spin$^c$ manifolds!spinor
bundle|(} Any \spinc manifold is endowed with a vector bundle
carrying reflecting much of the rich geometric structure of the
underlying manifold.

\begin{dfn}\label{spinorbdl}\index{=@$S(\gs)$, spinor bundle}
Let $(M,\gs)$ be an $n$-dimensional \spinc manifold. If
$\gr:\Spinc_n\to \GL(\gD)$ is a spin representation as in
Definition \ref{spinrep}, the vector bundle associated to
$P_{\Spinc}(\gs)$ via $\gr$, i.e.,
\[
S(\gs):=P_{\Spinc}(\gs)\times_{\gr}\gD,
\]
is called a {\em fundamental spinor bundle} on $M$. A section
$\psi$ of $S(\gs)$ is a {\em spinor field} or simply a {\em
spinor}.
\end{dfn}

The action of gauge transformations on $P_{\Spinc}(\gs)$ induces
an action on the space of spinor fields which is given by scalar
multiplication\footnote{Whether one lets $\cG$ act via $\gamma$
or $\gamma^{-1}$ is merely a matter of taste. Choosing the
inverse action for spinors has the advantage that $\cG$ must act
on gauge fields (cf. \eqref{gg:on:A} below) in the usual way.}
\begin{equation} \label{gg:on:spinor}\index{gauge
transformations!action on spinor fields} \cG\times
C^\infty(M,S(\gs)) \longrightarrow C^\infty(M,S(\gs)),\quad
(\gamma,\psi)\longmapsto \gamma^{-1}\psi\,.
\end{equation}

As a result of Proposition \ref{spinc(M)}, any other \spinc
structure on $M$ is equivalent to $\gs\otimes L$ for some
appropriate Hermitian line bundle $L\to M$. The corresponding
fundamental spinor bundles are related via
\begin{equation}
S(\gs\otimes L)=S(\gs)\otimes L\,.
\end{equation}
\begin{proof}
Let $\{h_{\ga\gb}:U_{\ga}\cap U_{\gb}\to \Spinc_n\}$ be a fitting
cocycle for $\gs$ and $\{\gl_{\ga\gb}:U_{\ga}\cap U_{\gb}\to
\U_1\}$ a family transition functions for $L$ respectively. Then
$P_{\Spinc}(\gs\otimes L)$ is defined by the cocycle
$\{h_{\ga\gb}\gl_{\ga\gb}\}$ and the associated vector bundle
$S(\gs\otimes L)$ is given by $\{\gr(h_{\ga\gb}\gl_{\ga\gb})\}$.
Since Clifford multiplication by scalars is just scalar
multiplication in $\gD$, we obtain
\[
\gr(h_{\ga\gb}\gl_{\ga\gb})=\gr(h_{\ga\gb})
\gr(\gl_{\ga\gb})=\gr(h_{\ga\gb})\gl_{\ga\gb}.
\]
This implies the asserted formula because the right hand side
defines a family of transition functions of $S(\gs)\otimes L$.
\end{proof}

\begin{example}
Suppose $(M,\eps)$ is a spin manifold with spinor bundle $S(\eps)$
by means of a spin representation of Spin$_n$. It is easy to
check that this bundle equals the spinor bundle associated to the
canonical \spinc structure $\gs(\eps)$ on $M$. In combination
with Proposition \ref{spinc(M)}, the above result shows that we
obtain any other spinor bundle $S(\gs)$ over $M$ by tensoring
$S(\eps)$ with a Hermitian line bundle.\\
\end{example}

\noindent\textbf{The Clifford bundle.}\index{spin$^c$
manifolds!Clifford bundle} Let $(M,g)$ be an oriented Riemannian
manifold. Then we can form the {\em Clifford bundle} $\cl(M;g)$
over $M$ whose fibres consist of the Clifford algebras of the
tangent spaces\footnote{We shall frequently identify $\cl(M;g)$
with the bundle of Clifford algebras associated to the cotangent
bundle, suppressing the explicit reference to the isomorphism
$TM\to T^*M$ induced by the metric.}, i.e.,
\[
\cl(M;g)_x=\cl(T_xM;g_x)\,.
\]
This yields a bundle of algebras which is naturally associated to
the principal $\SO_n$-bundle $P_{\SO}(g)$ by means of the
canonical representation
\[
\SO_n\to \Aut(\cl_n)
\]
which is given in the following way: Note that every $A\in \SO_n$
gives a linear map $\R^n\to \cl_n$, $v\mapsto Av$ which satisfies
\[
(Av)^2= -|Av|^21= -|v|^21.
\]
Hence by the universal property of the Clifford algebra, the map
$v\mapsto Av$ extends to an algebra automorphism of $\cl_n$.

On the other hand, each element of $\Spinc_n$ acts on $\clc_n$ by
conjugation. This leaves the real subalgebra $\cl_n$ fixed so that
there is a canonical representation
\begin{equation*}
\gr_c:\Spinc_n\to \Aut(\cl_n).
\end{equation*}
From the definition of $\xi_0^c:\Spinc_n\to \SO_n$ it is
immediate that the canonical representations of $\SO_n$ and
$\Spinc_n$ on $\cl_n$ are related in the following way:
\[
\begindc
\obj(3,2){$\Aut(\cl_n)$}[cl] \obj(1,1){$\SO_n$}[SO]
\obj(1,3){$\Spinc_n$}[Spc] \mor{SO}{cl}{}
\mor{Spc}{SO}{$\xi_0^c$}[\atright,\solidarrow]\mor{Spc}{cl}{$\gr_c$}
\enddc
\]
Therefore, if $(M,\gs)$ is a \spinc manifold, the Clifford bundle
coincides with the bundle of Clifford algebras associated to
$P_{\Spinc}(\gs)$ via $\gr_c$.

Let $\gD$ be a complex spinor module and $S(\gs)$ the
corresponding spinor bundle. For $g\in\Spinc_n$, $x\in\cl_n$ and
$\psi\in\gD$, we have
\[
\big(\gr_c(g)x\big)\cdot(g\cdot\psi)=(gxg^{-1})\cdot(g\cdot\psi)
=(gx)\cdot\psi.
\]
Since both, $S(\gs)$ and $\cl(M)$, are bundles are associated to
$P_{\Spinc}$, the action of $\cl_n$ on $\gD$ thus extends to a
global action, i.e., to a bundle homomorphism
\[
c:\cl(M;g)\longrightarrow \End(S(\gs)).
\]


\noindent\textbf{Hermitian structure on $\boldsymbol{S(\gs)}$.}
Since a spin representation $\gr:\Spinc_n\to \GL(\gD)$ is a
representation of a compact Lie group on a complex vector space,
there exists a $\Spinc_n$-invariant Hermitian metric
$\scalar{.}{.}$ on $\gD$. However, it turns out (cf. \cite{LM},
Sec.~I.5) that we can also achieve that
\begin{equation} \label{cl:metric:comp}
\langle c(x)\psi,\psi'\rangle
=-\langle\psi,c(x)\psi'\rangle,\quad x\in\R^n,\;\psi,\psi'\in\gD.
\end{equation}
Here, $c:\cl_n\to\End(\gD)$ denotes the irreducible complex
representation of $\cl_n$ which induces the spin representation.

If $(M,g)$ is an $n$-dimensional oriented Riemannian manifold
which admits a \spinc structure $\gs$, then the above metric on
$\gD$ extends to a Hermitian metric on the fundamental spinor
bundle $S(\gs)=P_{\Spinc}(\gs)\times_{\gr}\gD$satisfying
\eqref{cl:metric:comp} with respect to the global Clifford
multiplication of vector fields (or 1-forms) on
$S(\gs)$.\\

\noindent\textbf{Covariant derivatives on spinor
bundles.}\index{spin$^c$ manifolds!spin connection|(} We assume
some familiarity with the definition of connections on principal
bundles and the correspondence between them and covariant
derivatives on associated vector bundles. As a general reference
we refer to Kobayashi \& Nomizu \cite{KN}. Moreover, a
comprehensive presentation of all definitions and results we need
can be found in Lawson \& Michelsohn \cite{LM}, Sec.~II.4.

Let $(M,g)$ be an oriented Riemannian manifold with connection
1-form $\go=\go^g\in\gO^1(P_{\SO}(g))\otimes\cso_n$ associated to
the Levi-Civita covariant derivative $\nabla=\nabla^g$ on
$(TM,g)$. Here, $\cso_n$ denotes the Lie algebra of $\SO_n$,
i.e., the real vector space of skew adjoint $(n\times n)$-matrices
endowed with the canonical Lie bracket. Suppose that
$\{U_{\ga}\}$ is a good open cover of $M$ so that we may choose a
section $e_{\ga}=(e_1,\ldots,e_n)$ of $P_{\SO}(g)|_{U_{\ga}}$ for
each $\ga$. Then $\go$ is determined by
\[
\Tilde{\go}_{ij}:=(e_{\ga}^*\go)_{ij}= g(\nabla
e_i,e_j)\in\gO^1(U_{\ga})
\]
and the Levi-Civita covariant derivative over $U_{\ga}$ is then
locally given by
\[
\nabla=d+\sum_{i<j}\Tilde{\go}_{ij}J_{ij}.
\]
Here, $\{J_{ij}\}$ is the standard basis of $\cso_n$ which is
defined with respect to an orthonormal basis\footnote{Note that on
the one hand, $(e_1,\ldots,e_n)$ is an orthonormal basis of
$\R^n$ and on the other hand, we use the same notation for the
components of a local section $e_\ga$ of $P_{\SO}$. However, this
ambiguity should cause no confusion since the components of
$e_\ga(x)$ form an orthonormal basis for every $x\in M$.}
$(e_1,\ldots,e_n)$ of
$\R^n$ via $J_{ij}e_k=\gd_{ik}e_j-\gd_{jk}e_i$.\\

Let us now assume that in addition, $M$ admits a \spinc structure
$\gs$.
\begin{dfn}\index{=@$\cA(\gs)$, space of gauge fields} The space
of connections on the principal $U_1$-bundle $P_{\U_1}(\gs)$ is
denoted by $\cA(\gs)$. An element of $\cA(\gs)$ is also called a
{\em gauge field}.
\end{dfn}
$\cA(\gs)$ is an affine space modelled on $i\gO^1(M)$, where we
identify the Lie algebra $\cu_1$ of $\U_1$ with the purely
imaginary numbers $i\R$.

Let us fix $A\in\cA(\gs)$. Choosing local sections
$s_{\ga}:U_{\ga}\to P_{\U_1}(\gs)|_{U_{\ga}}$, we define the
imaginary valued 1-forms
\[
A_{\ga}:=s_{\ga}^*A\in i\gO^1(U_{\ga})
\]
Since $\U_1$ is abelian, the family $\{A_{\ga}\}$ satisfies
\[
A_{\gb}=\gl_{\ga\gb}^{-1}A_{\ga}\gl_{\ga\gb}
+\gl_{\ga\gb}^{-1}d\gl_{\ga\gb}=A_{\ga}+\gl_{\ga\gb}^{-1}d\gl_{\ga\gb}\,,
\]
where $\{\gl_{\ga\gb}:U_{\ga}\cap U_{\gb}\to \U_1\}$ is the
cocycle given by the sections $\{s_{\ga}\}$.

The fibre product $P_{\SO}\times P_{\U_1}(\gs)\to M$ is endowed
with a connection $\go\oplus A$ induced by the connection 1-forms
$\go$ and $A$. Let $\xi:P_{\Spinc}(\gs)\to P_{\SO}\times
P_{\U_1}(\gs)$ be the two sheeted covering map encoded in the
\spinc structure $\gs$. Since $\xi$ is equivariant with respect to
\begin{equation}\label{double:cov}
(\xi_0^c,\gz^c):\Spinc_n\to\SO_n\times\U_1,
\end{equation}
the product connection $\go\oplus A$ lifts to a connection on
$P_{\Spinc}(\gs)$ via
\[
\go^A:=\gF^{-1}\circ \xi^*(\go\times A)\in
\gO^1\big(P_{\Spinc}(\gs)\big)\otimes \cspinc_n,
\]
where
\[
\gF:=(\xi_0^c,\gz^c)_*:\cspinc_n\to\cso_n\oplus i\R
\]
is the Lie algebra isomorphism induced by \eqref{double:cov}.

Let us briefly recall an explicit description of $\gF$. Since
there are some subtleties involved, we refer to Lawson \&
Michelsohn \cite{LM}, Sec.~I.6, for more details. As
$\Spinc_n=\Spin_n\times_{\Z_2}\U_1$, there is a canonical
isomorphism
\begin{equation}\label{spinc=spinxiR}
\cspinc_n\cong\cspin_n\oplus i\R\,.
\end{equation}
The Lie algebra $\cspin_n$ can be identified with the subspace of
$\cl_n$ spanned by elements of the form $e_ie_j$ and endowed with
the Lie bracket induced by the commutator in $\cl_n$. Then the
differential of $\xi_0:\Spin_n\to\SO_n$ at the unit element is
given by the action of the basis in the following way
\[
(\xi_0)_*:\cspin_n\to\cso_n,\quad (\xi_0)_*(e_ie_j):= 2J_{ij}\,.
\]
The differential of $\gz:=z^2:\U_1\to\U_1$ at the unit element is
\[
\gz_*: i\R \to i\R,\quad ia \mapsto 2ia\,.
\]
Therefore, with respect to \eqref{spinc=spinxiR}, the isomorphism
$\gF$ is given by
\begin{equation}\label{spinc=so}
\gF(e_ie_j,ia)=(2J_{ij},2ia).
\end{equation}

\begin{dfn}
The connection $\go^A$ is called the {\em Clifford connection} on
$P_{\Spinc}(\gs)$ associated to $A$.
\end{dfn}

The section $(e_{\ga},s_{\ga}):U_{\ga}\to P_{\SO}\times
P_{\U_1}|_{U_{\ga}}$ can be lifted to a section
$t_{\ga}:U_{\ga}\to P_{\Spinc}|_{U_{\ga}}$. Then it follows from
\eqref{spinc=so} that
\[
t_{\ga}^*\go^A =\Big(\lfrac{1}{2}\sum_{i<j}\Tilde{\go}_{ij}e_ie_j,
\lfrac{1}{2}A_{\ga}\Big)\in\gO^1(U_{\ga})\otimes\cspinc_n.
\]
Here, we are using the local connection 1-forms we described
above.

\index{=@$\nabla^A$, covariant derivative} We now consider a spin
representation $\gr:\Spinc_n\to\U(\gD)$. The connection $\go^A$
induces a covariant derivative $\nabla^A$ on the fundamental
spinor bundle $S(\gs)$. It can locally be described by
\begin{equation}\label{cl:cov:loc}
\nabla^A\psi=d\psi+\lfrac{1}{2}\sum_{i<j}\Tilde{\go}_{ij}c(e_i)
c(e_j)\psi + \lfrac{1}{2}A_{\ga}\psi.
\end{equation}
Here, $\psi$ is a spinor and $c$ denotes Clifford multiplication.
Since the representation $\gr$ is unitary, $\nabla^A$ is
compatible with the canonical metric on $S(\gs)$, i.e.,
\[
\scalar{\nabla^A\psi}{\psi'} + \scalar{\psi}{\nabla^A\psi'} =
d\scalar{\psi}{\psi'},\quad \psi,\psi'\in C^{\infty}(S(\gs))\,.
\]
Furthermore, it can be established (cf. \cite{LM}, Sec.~II.4.11)
that $\nabla^A$ satisfies the following compatibility
rule\footnote{This condition can be reformulated by saying that
the Clifford multiplication $c$ is parallel with respect to
$\nabla^A$.}
\begin{equation} \label{cl:cov:comp}
\nabla^A(c(X)\psi)=c(X)\nabla^A\psi + c(\nabla^gX)\psi\,,
\end{equation}
where $X\in C^\infty(M,TM)$ and $\psi\in C^\infty(M,S(\gs))$.

The group of gauge transformations $\cG=C^{\infty}(M,\U_1)$ acts
on the set of covariant derivatives on $S(\gs)$ by
\[
(\gamma,\nabla)\longmapsto \gamma\cdot\nabla:=
\gamma^{-1}\nabla\gamma.
\]
Then we conclude from \eqref{cl:cov:loc} that locally,
\[
\begin{split}
(\gamma\cdot\nabla^A)\psi & = \gamma^{-1}d(\gamma\psi) +
\lfrac{1}{2}\sum_{i<j}\Tilde{\go}_{ij}c(e_i)c(e_j)\psi +
\lfrac{1}{2}A_{\ga}\psi \\
&=d\psi + \lfrac{1}{2}\sum_{i<j}\Tilde{\go}_{ij}c(e_i)c(e_j)\psi +
\lfrac{1}{2}\big(A_{\ga}+2\gamma^{-1}d\gamma\big)\psi \\
& = \nabla^{A+2\gamma^{-1}d\gamma}\psi\,.
\end{split}
\]
Therefore, the natural action of $\cG$ on the space of gauge
fields is given by
\begin{equation}\label{gg:on:A}\index{gauge
transformations!action on gauge fields} \cG\times \cA(\gs)\to
\cA(\gs),\quad\gamma\cdot A:= A+2\gamma^{-1}d\gamma\,.
\end{equation}\index{spin$^c$ manifolds!spinor
bundle|)}\index{spin$^c$ manifolds!spin connection|)}

\noindent\textbf{The spin$^c\,$ Dirac operator.}\index{spin$^c$
manifolds!Dirac operator|see{Dirac operator}}\index{Dirac
operator|(} On a spinor bundle, we now want to construct a
first-order differential operator whose square is a generalized
Laplacian. This construction has a more general background. We
thus briefly recall the structure which is needed to carry out the
construction in general.
\begin{dfn}
Let $E\to M$ be a Hermitian or Euclidean vector bundle over a
Riemannian manifold $(M,g)$. A {\em Dirac structure} on $E$ is
given by the following data:
\begin{itemize}
\item A covariant derivative $\nabla$ on $E$ which is compatible with the
metric,
\item a {\em Clifford structure} on $E$, i.e., a bundle map
$c:T^*M\to \End(E)$ which satisfies
\[
c(\ga)\circ c(\ga')+c(\ga')\circ c(\ga) = -2g(\ga,\ga')\id_E,
\]
and which is skew adjoint with respect to the metric on $E$,
\item the compatibility condition
\[
\nabla(c(\ga)e)=c(\ga)\nabla e + c(\nabla^g\ga)e,\quad
e\in C^{\infty}(M,E), \ga\in\gO^1(M)\,.
\]
\end{itemize}
If $E$ carries a Dirac structure, it is called a {\em Dirac
bundle} over $M$.
\end{dfn}
Equations \eqref{cl:metric:comp} and \eqref{cl:cov:comp} show that
the canonical metric on a fundamental spinor bundle over a \spinc
manifold $M$ together with the Clifford connection satisfies all
of the above conditions. We thus obtain:
\begin{prop}
A fundamental spinor bundle $S(\gs)$ over an oriented Riemannian
\spinc manifold $(M,\gs)$ is a Dirac bundle.
\end{prop}

A Clifford structure yields a bundle map $c:T^*M\otimes E\to E$.
Moreover, a covariant derivative on $E$ is a $\K$-linear map
$\nabla:C^{\infty}(E)\to C^{\infty}(T^*M\otimes E)$. Therefore,
\begin{equation}\label{dirac:def}
\cD:=c\circ\nabla:C^{\infty}(E)\longrightarrow C^{\infty}(E)
\end{equation}
defines a first-order differential operator. Whenever $E$ is a
Dirac bundle over $M$, the operator $\cD:=c\circ\nabla$ is called
a {\em geometric Dirac operator}.

\begin{dfn}\index{=@$\cD_A$, Dirac operator}
Suppose $(M,\gs)$ is a \spinc manifold. Let $A\in\cA(\gs)$ be a
connection on the $\U_1$-bundle $P_{\U_1}(\gs)$. The geometric
Dirac operator $\cD_A$, given by the Dirac structure
$(S(\gs),\nabla^A,c)$, is called the {\em spin$^c$ Dirac operator}
associated to $A$.
\end{dfn}
\begin{remark*}

If $(M,\eps)$ is a spin manifold, the \spinc Dirac operator
associated to the canonical \spinc structure $\gs(\eps)$ and the
flat connection on $L(\gs(\eps))=M\times\C$ is the well-known spin
Dirac operator. Hence, on a spin manifold, all \spinc Dirac
operators are twisted versions of the spin Dirac operator.\\
\end{remark*}

\noindent\textbf{Properties of $\boldsymbol{\cD_A}$.} Geometric
Dirac operators and, specifically, the \spinc Dirac operator have
very important analytical properties some of which we state now.
The corresponding proofs, which are not easy but standard
calculations, can be found in any textbook on spin geometry or
index theory.

\begin{prop}{\rm (cf. \cite{LM}, II.5.3)}.
Let $E$ be a Dirac bundle over a Riemannian manifold $M$ and let
$\cD$ be the geometric Dirac operator.
\begin{enumerate}
\item The principal symbol of $\cD^2$ satisfies
\[
\gs(\cD^2)_{\xi}=-|\xi|^2,\quad \xi\in T^*M\setminus \{0\},
\]
i.e., $\cD^2$ is a generalized Laplacian. In particular, $\cD$ is
an elliptic operator.
\item Suppose $M$ is oriented and compact. Then $\cD$ is formally
self-adjoint with respect to the $L^2$ scalar product on
$C^\infty(M,E)$.
\end{enumerate}
\end{prop}

\begin{prop}[Weitzenb\"{o}ck Formula]\label{weitzenboeck}{\rm
(cf. \cite{LM}, Thm.~D.12)}. Suppose $(M,\gs)$ is a compact,
oriented Riemannian \spinc manifold. Let $\cD_A$ be the \spinc
Dirac operator associated to a gauge field $A\in\cA(\gs)$. Then
\[
\cD_A^2=(\nabla^A)^*\nabla^A+\lfrac{1}{4}s_g+\lfrac{1}{2}c(F_A).
\]
Here, $s_g$ denotes the scalar curvature of $M$ and $F_A$ is the
connection 2-form of $A$.
\end{prop}\index{Dirac operator!Weitzenbock Formula}

\begin{remark*}
Observe that $F_A$ can be interpreted as an imaginary valued
2-form on $M$ since $\U_1$ is abelian. Hence, the expression
$c(F_A)$ is well-defined. Recall that Clifford multiplication by
$k$-forms is defined via the isomorphism of vector spaces
$\gL^\bullet V\cong \cl(V)$.
\end{remark*}
Using the local description of the Clifford connection
\eqref{cl:cov:loc}, one straightforwardly establishes the
following.

\begin{lemma}\label{A+a}
Suppose $M$ is an oriented Riemannian manifold equipped with a
\spinc structure $\gs$. Let $A\in\cA(\gs)$, and let $a\in i
\gO^1(M)$ be an imaginary valued 1-form. Then
\[
\cD_{A+a}=\cD_A+\lfrac{1}{2}c(a).
\]
\end{lemma}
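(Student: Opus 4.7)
The plan is to read off the claim directly from the local description of the Clifford connection in \eqref{cl:cov:loc}. Concretely, pick a good open cover $\{U_\alpha\}$ of $M$ together with local sections $e_\alpha$ of $P_{\SO}$ and $s_\alpha$ of $P_{\U_1}(\gs)$, so that a gauge field $A\in\cA(\gs)$ is represented by the collection $A_\alpha := s_\alpha^* A \in i\gO^1(U_\alpha)$. The first observation I would record is that the affine action of $i\gO^1(M)$ on $\cA(\gs)$ is compatible with this local picture: since $a$ is a globally defined imaginary-valued 1-form, the connection $A+a$ is represented by the local 1-forms $(A+a)_\alpha = A_\alpha + a|_{U_\alpha}$, which still satisfy the correct transformation rule $(A+a)_\beta = (A+a)_\alpha + \gl_{\alpha\beta}^{-1}d\gl_{\alpha\beta}$ because $\U_1$ is abelian.

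Next I would insert this into \eqref{cl:cov:loc}. The Levi-Civita part and the $\tfrac{1}{2}\sum_{i<j}\Tilde\go_{ij}c(e_i)c(e_j)$ term are insensitive to the choice of gauge field, so the only change occurs in the last term, giving
\[
\nabla^{A+a}\psi \;=\; \nabla^A\psi \;+\; \tfrac{1}{2}\,a\otimes\psi
\]
on each $U_\alpha$. Since the right-hand side is manifestly globally defined and independent of the chosen section $s_\alpha$, this identity holds on all of $M$.

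The final step is to apply Clifford multiplication $c:T^*M\otimes S(\gs)\to S(\gs)$ to both sides. Recalling the definition \eqref{dirac:def} of the geometric Dirac operator as $\cD = c\circ\nabla$, this yields
\[
\cD_{A+a}\psi \;=\; c\circ \nabla^{A+a}\psi \;=\; c\circ \nabla^A\psi \;+\; \tfrac{1}{2}c(a)\psi \;=\; \cD_A\psi \;+\; \tfrac{1}{2}c(a)\psi,
\]
which is the claim. There is no genuine obstacle; the only point deserving a moment's attention is the compatibility check ensuring that $A+a$ really is a $\U_1$-connection and that the shift is by the global 1-form $a$, which is immediate from the abelian nature of the structure group.
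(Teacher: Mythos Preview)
Your proof is correct and follows exactly the route the paper indicates: the paper does not spell out a proof but simply remarks that the claim follows straightforwardly from the local description \eqref{cl:cov:loc} of the Clifford connection, which is precisely what you do.
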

\begin{remark*}\index{Dirac operator!Atiyah-Singer index Theorem}
If $M$ is an even dimensional oriented Riemannian manifold which
admits a \spinc structure, then the fundamental spinor bundle
splits into the eigenbundles of the complex volume element
$\go^c\in\clc(M)$. Therefore, $\cD_A$ decomposes into the elliptic
operators
\[
\cD_A^\pm:C^\infty(M,S^\pm(\gs))\to C^\infty(M,S^\mp(\gs)).
\]
If $M$ is compact, then $\cD_A^\pm$ are Fredholm operators. The
famous Atiyah-Singer index Theorem relates $\ind_\C(\cD_A^+)$ to
an integral over characteristic classes of $M$. However, as we are
mainly interested in the three dimensional case, we will not go
in more detail and refer to the literature for a further
discussion.
\end{remark*}\index{Dirac operator|)}

\section{Dependence on the metric}\label{met:dep}
\index{Dirac operator!dependence on the metric|(}

At a first glimpse the notion of a \spinc structure seems to
depend on the metric $g$ on $M$, which is encoded in the bundle
$P_{\SO}(g)$. However, it turns out that this is not the case.

The first observation is that for every Riemannian metric $g$ on
$M$ the inclusion $P_{\SO}(g)\subset P_{\GL^+}$ is an equivariant
homotopy equivalence, where the homotopy inverse is defined via
the Gram-Schmidt orthogonalization process. Hence, for any fixed
principal $\U_1$-bundle $P_{\U_1}$, the equivariant two sheeted
coverings of the fibre product $P_{\SO}(g)\times P_{\U_1}$ are in
natural one-to-one correspondence with the equivariant twofold
coverings of $P_{\GL^+}\times P_{\U_1}$. Therefore, interpreting
a \spinc structure $\gs$ as a choice of principal $\U_1$-bundle
$P_{\U_1}(\gs)$ together with a two sheeted covering of
$P_{\GL^+}\times P_{\U_1}$ yields a possibility to define $\gs$
independently of any Riemannian metric.

Unfortunately, there is no possibility to go on in this way and
construct in a metric independent way a bundle which corresponds
to the spinor bundle $S(\gs)$. The deeper reason for this is that
there exists no representation of $\GL^+_n$ stemming from some
generalization of the spinor representation of $\SO_n$ (cf.
Lawson \& Michelsohn \cite{LM}, II.5.23). We thus have to find a
procedure to identify the spinor bundles $S(\gs;g)$ and
$S(\gs;h)$ associated to different metrics $g$ and $h$. The
material presented here is partly taken from S. Maier's article
\cite{Mai:Met} which includes an excellent summary of the results
due to Bourguignon and Gauduchon in \cite{BouGau:Met}.

To begin with, we take a brief look on how to compare data on $TM$
and $T^*M$ for different metrics. Let $k:P_{\SO}(g)\to
P_{\SO}(h)$ denote the $\SO$-equivariant bundle map induced by
$P_{\SO}(g)\subset P_{\GL^+}\to P_{\SO}(h)$. Note that we can
alternatively describe $k$ in the following way: Let $H:TM \to
TM$ be the unique positive bundle endomorphism defined by
$h(.,.)=g(H.,.)$. Then $H$ is symmetric with respect to $g$, and
we have the relation $k=H^{-1/2}$. The map $k$ also gives an
operation on $T^*M$ via $k(\ga)= \ga \circ k^{-1}$.

We need to compare the Hodge-star-operators $*_g$ and $*_h$
associated to $g$ and $h$ respectively. Since for all
$\ga,\gb\in\gO^j(M)$ we have
\[
\begin{split}
\ga\wedge (k*_g k^{-1}\gb) &= \big( k^{-1}\ga\wedge*_g k^{-1}\gb
\big) \circ k^{-1}\\ &= g\big(k^{-1}\ga, k^{-1}\gb\big) dv_g\circ
k^{-1} = h(\ga,\gb)dv_h,
\end{split}
\]
the result is
\begin{equation}\label{hodge:met}
*_h= k\circ *_g \circ k^{-1}.
\end{equation}
Observe that we have used $dv_h = dv_g\circ k$ which is a
consequence of the fact that $k$ maps an orthonormal frame of
$(TM,g)$ to an orthonormal frame of $(TM,h)$.

In general, $k$ need not give rise to an isometry of Hilbert
spaces, $L^2(M,T^*M;g)\to L^2(M,T^*M;h)$, because $dv_g\neq dv_h$.
We therefore let\index{=@$\Hat k$}
\[
\Hat k:=f^{-1}\cdot k,
\]
where $f$ is defined via $dv_h=f^2dv_g$, that is, $f^2=\det k$. We
then obtain
\[
\int_M h(\Hat k\ga,\Hat k\gb)dv_h = \int_M f^{-2} h(k\ga,k\gb)
f^2 dv_g = \int_M g(\ga,\gb)dv_g.
\]
As a result, $\Hat k$ is a Hilbert space isometry
$L^2(M,T^*M;g)\to L^2(M,T^*M;h)$. This gives a possibility to
establish a relation between $d^{*_h}$ and $d^{*_g}$:
\begin{equation}\label{dast:met}
d^{*_h} = \Hat k^2\circ d^{*_g}\circ\Hat k^{-2}.
\end{equation}
\begin{proof}
Suppose $\ga\in \gO^j(M)$. Then for each $\gb\in\gO^{j-1}(M)$, the
following holds:
\[
\big(\ga,d\gb)_{L^2(h)} = \big(\Hat k^{-2} \ga,d\gb\big)_{L^2(g)}
= \big(d^{*_g}\Hat k^{-2}\ga,\gb\big)_{L^2(g)} = \big(\Hat
k^2d^{*_g}\Hat k^{-2}\ga,\gb\big)_{L^2(h)}\qedhere
\]
\end{proof}
Note that in contrast to \eqref{hodge:met} formula
\eqref{dast:met} contains derivatives of $f$ so that explicit
computations are much more involved.

We will now study the relation between $S(\gs;g)$ and $S(\gs;h)$.
For this let $\gk:P_{\Spinc}(\gs;g)\to P_{\Spinc}(\gs;h)$ be the
$\Spinc$-equivariant bundle map induced by lifting $k\times
\id:P_{\SO}(g)\times P_{\U_1}(\gs) \to P_{\SO}(h)\times
P_{\U_1}(\gs)$ to the corresponding twofold coverings. $\gk$
extends to an isometry $\gk:S(\gs;g)\to S(\gs;h)$ of Hermitian
vector bundles. The following is easily established.
\[
\gk\big(c^g(\ga)\psi\big)= c^h(k(\ga))\gk(\psi).
\]
As before, in order to obtain an isometry of Hilbert spaces
$L^2(M,S;g)\to L^2(M,S;h)$ we have to define\index{=@$\Hat\gk$}
\[
\Hat \gk := f^{-1}\cdot \gk.
\]
Then $\Hat \gk$ provides a suitable instrument for pulling back
the \spinc Dirac operator on $S(\gs;h)$ to $S(\gs;g)$. Let $A$ be
a connection on $P_{\U_1}(\gs)$. Then via the lift of the
corresponding Levi-Civita connection, $A$ gives rise to a
covariant derivative $\nabla^{A;h}$ on $S(\gs;h)$. Let $\cD_A^h$
denote the associated \spinc Dirac operators on $S(\gs;h)$. Then
\begin{equation}\label{Dirac:met}
\Hat \gk^{-1}\circ \cD_A^h \circ \Hat \gk
\end{equation}
defines a first-order elliptic operator on $S(\gs;g)$ which is
formally self-adjoint with respect to the $L^2(g)$-metric.
Fortunately, we shall not need a more explicit description of this
operator. For more information concerning these points we refer
to Maier \cite{Mai:Met} and Bourguignon \& Gauduchon
\cite{BouGau:Met}. \index{Dirac operator!dependence on the
metric|)}

\cleardoublepage

\cleardoublepage
\markboth{\textsc{Bibliography}}{\textsc{Bibliography}}
\addcontentsline{toc}{chapter}{Bibliography}
\bibliography{books,papers}

\providecommand{\bysame}{\leavevmode\hbox to3em{\hrulefill}\thinspace}
\providecommand{\MR}{\relax\ifhmode\unskip\space\fi MR }
\providecommand{\MRhref}[2]{%
  \href{http://www.ams.org/mathscinet-getitem?mr=#1}{#2}
}
\providecommand{\href}[2]{#2}
\begin{thebibliography}{10}

\bibitem{Ad}
R.A. Adams, \emph{Sobolev spaces}, Academic Press, New York-London, 1975, Pure
  and Applied Mathematics, Vol. 65.

\bibitem{AleKriMic:Smo}
D.~Alekseevsky, A.~Kriegl, P.W. Michor, and M.~Losik, \emph{Choosing roots of
  polynomials smoothly}, Israel J. Math. \textbf{105} (1998), 203--233, {\sf
  math.CA/9801026}.

\bibitem{AtiPatSin:SAR}
M.F. Atiyah, V.K. Patodi, and I.M. Singer, \emph{Spectral asymmetry and
  {R}iemannian geometry {I--III}}, Math. Proc. Camb. Phil. Soc., I: {\bf 77}
  (1975), 43--69; II: {\bf 78} (1975), 405--432; III: {\bf 79} (1976), 71--99.

\bibitem{Au}
T.~Aubin, \emph{{Nonlinear analysis on manifolds. {M}onge-{A}mp\`ere
  equations}}, {Grundlehren}, vol. {252}, {Springer-Verlag},
  {Berlin-Hei\-del\-berg-New York}, {1982}.

\bibitem{BGV}
N.~Berline, E.~Getzler, and M.~Vergne, \emph{Heat kernels and {D}irac
  operators}, Grundlehren, vol. 298, Springer-Verlag, Berlin-Hei\-del\-berg-New
  York, 1992.

\bibitem{BB}
B.~Boo\ss-Bavnbek and D.~Bleeker, \emph{{Index theory with applications to
  mathematics and physics}}, {International Press}, {Boston, MA}, to appear
  2005.

\bibitem{BoLePhi:SF}
B.~Boo\ss-Bavnbek, M.~Lesch, and J.~Phillips, \emph{Unbounded {F}redholm
  operators and spectral flow}, preprint, 2001, {\sf math.FA/0108014}, to
  appear in Canad. J. Math.

\bibitem{BW}
B.~Boo\ss-Bavnbek and K.P. Wojciechowski, \emph{{Elliptic boundary problems for
  {D}irac operators}}, {Mathematics: Theory \&{} Applications}, {Birkh\"auser},
  {Boston, MA}, 1993.

\bibitem{BouGau:Met}
J.P. Bourguignon and P.~Gauduchon, \emph{{Spineurs, op\'erateurs de {D}irac et
  variations de m\'etriques}}, Commun. Math. Phys. \textbf{144} (1992), no.~3,
  581--599.

\bibitem{CarWan:SW3}
A.L. Carey, B.-L. Wang, R.B. Zhang, and J.~McCarthy, \emph{{Seiberg-{W}itten
  monopoles in three dimensions}}, Lett. Math. Phys. \textbf{39} (1997), no.~3,
  213--228, {\sf hep-th/9504005}.

\bibitem{Che:CI}
W.~Chen, \emph{{Casson's invariant and {S}eiberg-{W}itten gauge theory}}, Turk.
  J. Math. \textbf{21} (1997), no.~1, 61--81, {\sf dg-ga/9703009}.

\bibitem{Don:Orient}
S.K. Donaldson, \emph{{The orientation of {Y}ang-{M}ills moduli spaces and
  4-manifold topology}}, J. Differ. Geom. \textbf{26} (1987), 397--428.

\bibitem{Don:SW}
\bysame, \emph{{The {S}eiberg-{W}itten equations and 4-manifold topology}},
  Bull. Am. Math. Soc., New Ser. \textbf{33} (1996), no.~1, 45--70.

\bibitem{D}
\bysame, \emph{{Floer homology groups in {Y}ang-{M}ills theory.}}, {Cambridge
  Tracts in Mathematics}, vol. {147}, {Cambridge University Press},
  {Cambridge}, 2002.

\bibitem{DK}
S.K. Donaldson and P.B. Kronheimer, \emph{The geometry of four-manifolds},
  Oxford Mathematical Monographs, Clarendon Press, Oxford, 1990.

\bibitem{Flo:Inst}
A.~Floer, \emph{{An instanton-invariant for three-manifolds}}, Commun. Math.
  Phys. \textbf{118} (1988), no.~2, 215--240.

\bibitem{FU}
D.S. Freed and K.K. Uhlenbeck, \emph{Instantons and four-manifolds}, MSRI
  Publications, vol.~1, Springer-Verlag, New York, 1984.

\bibitem{GT}
D.~Gilbarg and N.S. Trudinger, \emph{Elliptic partial differential equations of
  second order}, second ed., Grundlehren, vol. 224, Springer-Verlag, Berlin,
  1983.

\bibitem{Hi}
F.~Hirzebruch, \emph{{Topological methods in algebraic geometry}},
  {Grundlehren}, vol. {131}, {Springer-Verlag}, {Berlin-Hei\-del\-berg-New
  York}, 1978.

\bibitem{HiHo:FE}
F.~Hirzebruch and H.~Hopf, \emph{Felder von {F}l{\"a}chenelementen in
  4-di\-men\-sio\-nalen {M}annigfaltigkeiten}, Math. Ann. \textbf{136} (1958),
  156--172.

\bibitem{Iga:SWF}
K.~Iga, \emph{Moduli spaces of {S}eiberg-{W}itten flows}, Ph.D. thesis,
  Stanford University, 1998.

\bibitem{Iga:Top}
\bysame, \emph{What do topologists want from {S}eiberg-{W}itten theory?},
  International Journal of Modern Physics A \textbf{17} (2002), no.~30,
  4463--4514, {\sf hep-th/0207271}.

\bibitem{K}
T.~Kato, \emph{Perturbation theory for linear operators}, Classics in
  Mathematics, Springer-Verlag, Berlin, 1995, Reprint of the 1980 edition.

\bibitem{Kir:GT}
P.~Kirk, \emph{{$SU(2)$ representation varieties of 3-manifolds, gauge theory
  invariants and surgery on knots}}, {Proc. of the GARC workshop on geometry
  and topology '93} ({Kim, H.-J.}, ed.), {Seoul National University}, {Seoul},
  1993, pp.~137--176.

\bibitem{KnuMum:Det}
F.F. Knudsen and D.~Mumford, \emph{The projectivity of the moduli space of
  stable curves. {I}. {P}reliminaries on ``det'' and ``{D}iv''}, Math. Scand.
  \textbf{39} (1976), no.~1, 19--55.

\bibitem{KN}
S.~Kobayashi and K.~Nomizu, \emph{Foundations of differential geometry, vol.
  {I+II}}, Wiley Classics Library, John Wiley \& Sons Inc., New York, 1996.

\bibitem{Kro:ES}
P.B. Kronheimer, \emph{{Embedded surfaces and gauge theory in three and four
  dimensions}}, {Lectures on geometry and topology in honor of the 80th
  birthday of C.-C. Hsiung, Harvard University, Cambridge, MA, USA, May 3-5,
  1996} ({Hsiung, C.C.}, ed.), {Surveys in differential geometry}, vol. {III},
  {International Press}, {Boston, MA}, 1998, pp.~243--298.

\bibitem{KroMro:Gen}
P.B. Kronheimer and T.S. Mrowka, \emph{{The genus of embedded surfaces in the
  projective plane}}, Math. Res. Lett. \textbf{1} (1994), no.~6, 797--808.

\bibitem{LM}
H.B. Lawson and M.L. Michelsohn, \emph{Spin geometry}, Princeton University
  Press, Princeton, N.J., 1989.

\bibitem{LeB:Einst}
C.~LeBrun, \emph{{Four-manifolds without {E}instein metrics}}, Math. Res. Lett.
  \textbf{3} (1996), no.~2, 133--147.

\bibitem{Lim:CI}
Y.~Lim, \emph{{The equivalence of {S}eiberg-{W}itten and {C}asson invariants
  for homology $3$-spheres.}}, Math. Res. Lett. \textbf{6} (1999), no.~5-6,
  631--643.

\bibitem{Lim:SW}
\bysame, \emph{{Seiberg-{W}itten invariants for $3$-manifolds in the case
  $b_1=0$ or $1$}}, Pac. J. Math. \textbf{195} (2000), no.~1, 179--204.

\bibitem{Mai:Met}
S.~Maier, \emph{{Generic metrics and connections on spin- and
  $\text{spin}^c$-manifolds}}, Commun. Math. Phys. \textbf{188} (1997), no.~2,
  407--437.

\bibitem{Mar:SWF}
M.~Marcolli, \emph{{Seiberg-{W}itten-{F}loer homology and {H}eegaard
  splittings}}, Int. J. Math. \textbf{7} (1996), no.~5, 671--696, {\sf
  dg-ga/9601011}.

\bibitem{Ma}
\bysame, \emph{{Seiberg-{W}itten gauge theory}}, {Texts and Readings in
  Mathematics}, vol.~{17}, {Hindustan Book Agency}, {New Delhi}, 1999.

\bibitem{MarWan:SWF}
M.~Marcolli and B.L. Wang, \emph{{Equivariant {S}eiberg-{W}itten {F}loer
  homology}}, Commun. Anal. Geom. \textbf{9} (2001), no.~3, 451--639, {\sf
  dg-ga/9606003}.

\bibitem{MarWan:SW}
\bysame, \emph{{Seiberg-{W}itten and {C}asson-{W}alker invariants for rational
  homology 3-spheres.}}, Geom. Dedicata \textbf{91} (2002), 45--58.

\bibitem{MenTau:SW}
G.~Meng and C.H. Taubes, \emph{{$\underline{SW}$={M}ilnor torsion}}, Math. Res.
  Lett. \textbf{3} (1996), no.~5, 661--674.

\bibitem{Mil:SS}
J.W. Milnor, \emph{Spin structures on manifolds}, Enseignement Math. (2)
  \textbf{9} (1963), 198--203.

\bibitem{MS}
J.W. Milnor and J.D. Stasheff, \emph{Characteristic classes}, Annals of
  Mathematics Studies, No. 76, Princeton University Press, Princeton, N.J.,
  1974.

\bibitem{Moo}
J.D. Moore, \emph{{Lectures on {S}eiberg-{W}itten invariants}}, {Lecture
  Notes}, vol. {1629}, {Springer-Verlag}, {Berlin}, 1996.

\bibitem{Mo}
J.W. Morgan, \emph{The {S}eiberg-{W}itten equations and applications to the
  topology of smooth four-manifolds}, Mathematical Notes, No. 44, Princeton
  University Press, Princeton, N.J., 1996.

\bibitem{Nic:SW3}
L.I. Nicolaescu, \emph{Seiberg-{W}itten invariants of 3-manifolds. {P}art 1},
  unpublished notes, {\sf http://www.nd.edu/$\tilde{\;}$lnicolae/}.

\bibitem{Nic:GM}
\bysame, \emph{Lectures on the geometry of manifolds}, World Scientific
  Publishing Co. Inc., River Edge, N.J., 1996.

\bibitem{Nic:SW}
\bysame, \emph{Notes on {S}eiberg-{W}itten theory}, Graduate Studies in
  Mathematics, vol.~28, American Mathematical Society, Providence, RI, 2000.

\bibitem{Nic:Rat}
\bysame, \emph{Seiberg-{W}itten invariants of rational homology spheres},
  pre\-print, 2001, {\sf math.GT/0103020}.

\bibitem{Nic:Tor}
\bysame, \emph{The {R}eidemeister torsion of 3-manifolds}, de Gruyter Studies
  in Mathematics, vol.~30, Walter de Gruyter \& Co., Berlin, 2003.

\bibitem{Pal:GA}
R.S. Palais, \emph{Foundations of global non-linear analysis}, W. A. Benjamin,
  Inc., New York-Amsterdam, 1968.

\bibitem{Phi:SF}
J.~Phillips, \emph{Self-adjoint {F}redholm operators and spectral flow}, Canad.
  Math. Bull. \textbf{39} (1996), no.~4, 460--467.

\bibitem{RobSal:SF}
J.~Robbin and D.~Salamon, \emph{{The spectral flow and the {M}aslov index}},
  Bull. Lond. Math. Soc. \textbf{27} (1995), no.~1, 1--33.

\bibitem{Sma:Sard-Smale}
S.~Smale, \emph{{An infinite dimensional version of {S}ard's theorem}}, Am. J.
  Math. \textbf{87} (1965), 861--866.

\bibitem{Tau:CI}
C.H. Taubes, \emph{{Casson's invariant and gauge theory}}, J. Differ. Geom.
  \textbf{31} (1990), no.~2, 547--599.

\bibitem{Tau:Sym}
\bysame, \emph{{The {S}eiberg-{W}itten invariants and symplectic forms}}, Math.
  Res. Lett. \textbf{1} (1994), no.~6, 809--822.

\bibitem{Tay:PDE}
M.E. Taylor, \emph{Partial differential equations {I-III}}, Applied
  Mathematical Sciences, vol. 115-117, Springer-Verlag,
  Berlin-Hei\-del\-berg-New York, 1996.

\bibitem{toD:TG}
T.~tom Dieck, \emph{Transformation groups}, de Gruyter Studies in Mathematics,
  vol.~8, Walter de Gruyter \& Co., Berlin, 1987.

\bibitem{Wan:SWF}
B.L. Wang, \emph{Seiberg-{W}itten-{F}loer theory for homology 3-spheres},
  preprint, 1996, {\sf dg-ga/96020034}.

\bibitem{War:LG}
F.W. Warner, \emph{Foundations of differentialble manifolds and {L}ie groups},
  Graduate Texts, vol.~94, Springer-Verlag, Berlin-Hei\-del\-berg-New York,
  1983.

\bibitem{Wei:LO}
J.~Weidmann, \emph{Linear operators in {H}ilbert spaces}, Graduate Texts,
  vol.~68, Springer-Verlag, Berlin-Hei\-del\-berg-New York, 1980.

\bibitem{W}
R.O. Wells, \emph{{Differential analysis on complex manifolds}}, {Graduate
  Texts}, vol.~{65}, {Springer-Verlag}, {Berlin-Hei\-del\-berg-New York}, 1980.

\bibitem{Wit:Mon}
E.~Witten, \emph{{Monopoles and four-manifolds}}, Math. Res. Lett. \textbf{1}
  (1994), no.~6, 769--796, {\sf hep-th/9411102}.

\end{thebibliography}
\bibliographystyle{amsplain}

\end{document}